\documentclass[11pt,twoside]{article}

\usepackage{geometry}
\geometry{a4paper, margin=1in}

\input{command-template}

\begin{document}

\begin{center}

  {\bf{\LARGE{ \mbox{Sharp global convergence guarantees for iterative} \\ \mbox{nonconvex optimization: A Gaussian process perspective}}}}

\vspace*{.2in}

{\large{
\begin{tabular}{ccc}
Kabir Aladin Chandrasekher$^{\dagger}$, Ashwin Pananjady$^{\star}$, Christos Thrampoulidis$^{\diamondsuit}$
\end{tabular}
}}
\vspace*{.2in}

\begin{tabular}{c}
$^{\dagger}$Department of Electrical Engineering, Stanford University\\
$^\star$Schools of Industrial \& Systems Engineering and Electrical \& Computer Engineering, \\
Georgia Tech \\
$^{\diamondsuit}$Department of Electrical \& Computer Engineering, University of British Columbia
\end{tabular}

\vspace*{.2in}

\today

\vspace*{.2in}

\begin{abstract}
We consider a general class of regression models with normally distributed covariates, and the associated nonconvex problem of fitting these models from data. We develop a general recipe for analyzing the convergence of iterative algorithms for this task from a random initialization. In particular, provided each iteration can be written as the solution to a convex optimization problem satisfying some natural 
conditions, we leverage Gaussian comparison theorems to derive a deterministic sequence that 
provides sharp upper and lower bounds on the error of the algorithm with sample-splitting.
Crucially, this deterministic sequence accurately captures both the convergence rate of the algorithm and the eventual error floor in the finite-sample regime, and is distinct from the commonly used ``population” sequence that results from taking the infinite-sample limit.
We apply our general framework to derive several concrete consequences for parameter estimation in popular statistical models including phase retrieval and mixtures of regressions. Provided the sample size scales near-linearly in the dimension, we show sharp global convergence rates for both higher-order algorithms based on alternating updates and first-order algorithms based on subgradient descent. These corollaries, in turn, yield multiple consequences, including:
\smallskip

\noindent (a) Proof that higher-order algorithms can converge significantly faster than their first-order counterparts (and sometimes super-linearly), \mbox{even if the two share the same population update;} \\

\noindent (b) Intricacies in super-linear convergence behavior for higher-order algorithms, which can be nonstandard (e.g., with exponent $3/2$) and sensitive to the noise level in the problem.
\smallskip

\noindent We complement these results with extensive numerical experiments, which show excellent agreement with our theoretical predictions.
\end{abstract}
\end{center}

\small
\addtocontents{toc}{\protect\setcounter{tocdepth}{2}}
\tableofcontents

\normalsize

\section{Introduction}
\label{sec:introduction}

In many modern statistical estimation problems involving nonlinear observations, latent variables, or missing data, the log-likelihood---when viewed as a function of the parameters of interest---is nonconcave. Accordingly, even though the maximum likelihood estimator enjoys favorable statistical properties in many of these problems, the more practically relevant question is one at the intersection of statistics and optimization: Can we, in modern problems where the dimension is typically comparable to the sample size, optimize the likelihood in a computationally efficient manner to produce statistically useful estimates? When viewed in isolation, many of these nonconvex model-fitting problems can be shown to be NP-hard. However, the statistically relevant setting---in which data are drawn i.i.d. from a suitably ``nice” distribution---gives rise to random ensembles of optimization problems that are often amenable to iterative algorithms. Following a decade of intense activity, a unifying picture of nonconvex optimization in statistical models has begun to emerge.

Given the rapid development of the field, a few distinct methods now exist for the analysis of these nonconvex procedures. Let us briefly discuss two salient and natural approaches. The first approach is to directly work with iterates of the algorithm. In particular, one can view each iteration as a random operator mapping the parameter space to itself, and study its properties. A typical example of this approach involves tracking some error metric between the iterates and the ``ground truth'' parameter, and showing that applying the operator reduces the error at a certain rate, possibly up to an additive correction to accommodate noise in the observations~\citep[see, e.g., some of the early papers][]{jain2013low,loh2012high}.
The second approach looks instead at the landscape of the loss function that the iterative algorithm is designed to minimize, showing that once the sample size exceeds a threshold, this landscape has favorable properties that make it amenable to iterative optimization.
Several such properties have been established in particular problems, 
including, but not limited to, properties of the loss in a neighborhood of its optimum~\citep[e.g.,][]{loh2015regularized,candes2015phase} and the absence of local minima with high probability~\citep[e.g.,][]{sun2018geometric,ge2016matrix}.

In both recipes alluded to above, we are interested in characterizing properties of random objects: the sample-based operator in the first case and the sample-dependent loss function in the second. A general-purpose tool to carry out both recipes is to first understand deterministic, \emph{population} analogs of these (random) objects in the infinite-sample limit. For instance, taking the sample size to infinity in these problems yields a \emph{population operator} in the first case and a \emph{population likelihood} in the second, and allows one to analyze algorithms deterministically in this limit. Post this point, tools from empirical process theory~\citep[e.g.,][]{balakrishnan2017statistical,mei2018landscape} or more refined leave-one-out techniques~\citep{ma2020implicit,chen2019gradient} can be used to argue that sample-based versions (of the operator/loss) behave similarly. In particular, the first program---showing convergence of the population operator and relating this to its sample-based analog---has established convergence of several algorithms in a variety of settings~\citep[e.g.,][]{balakrishnan2017statistical, daskalakis2017ten,tian2017analytical,chen2019gradient,dwivedi2020singularity,wu2019randomly,xu2018EM,ho2020instability}. The overall style of the analysis is appealing for several reasons: (a)~It applies (in principle) to any iterative algorithm run on any model-fitting problem and, (b)~In contrast to the direct sample-based approach, it does not require the analysis of a complex recursion involving highly nonlinear functions of the random data. In addition, decomposing the analysis into a deterministic optimization-theoretic component applied to the population operator and a stochastic component that captures the eventual statistical neighborhood of convergence provides a natural two-step approach. But does the population operator always provide a reliable prediction of convergence behavior in modern, high dimensional settings?

\subsection{Motivation: Accurate deterministic predictions of convergence behavior}

Toward answering the question posed above,
we run a simulation on what is arguably the simplest nonlinear model resulting in a nonconvex fitting problem: \emph{phase retrieval} with a real signal. This is a regression model in which a scalar response $y$ is related to a $d$-dimensional covariate $\bx$ via $\EE[y|\bx] = | \inprod{\bx}{\thetastar} |$, and the task is to estimate $\thetastar$ from i.i.d. observations $(\bx_i, y_i)$. Two popular algorithms to optimize the nonconcave log-likelihood in this problem---described in detail in Section~\ref{sec:it_alg} to follow---are given by:
\begin{itemize}
\item[(i)] Alternating minimization, an algorithm that dates back to~\citet{gerchberg1972practical} and~\citet{fienup1982phase}. Each (tuning-free) iteration is based on fixing the latent ``signs" according to the current parameter and solving a least squares problem. 
\item[(ii)] Subgradient descent with stepsize $\eta$. This is a simple first-order method on the negative log-likelihood that also goes by the name of reshaped Wirtinger flow~\citep{zhang2017nonconvex}. \\
\end{itemize}

Before running our simulation, we emphasize two aspects of it that form recurring themes throughout the paper. First, we use a sample-splitting device: each iteration of the algorithm is executed using $n$ fresh observations of the model, drawn independently of past iterations. This device has been used extensively in the analysis of iterative algorithms as a simplifying assumption~\citep[e.g.,][]{jain2013low,hardt2014fast,netrapalli2015phase,kwon2019global}, and forms a natural starting point for our investigations. Second, over and above tracking the $\ell_2$ error of parameter estimation, we track a more expressive statistic over iterations. In particular, we associate each parameter $\bt \in \real^d$ with a two-dimensional \emph{state}
\begin{align} \label{eq:def-state}
\parcomp(\bt) = \| \proj_{\thetastar} \bt \|_2 \qquad \text{ and } \qquad \perpcomp(\bt) = \| \proj^{\perp}_{\thetastar} \bt \|_2,
\end{align}
where $\proj_{\thetastar}$ denotes the projection matrix onto the one-dimensional subspace spanned by $\thetastar$ and $\proj^{\perp}_{\thetastar}$ denotes the projection matrix onto the orthogonal complement of this subspace. In words, these two scalars measure the component of $\bt$ parallel to $\thetastar$ and perpendicular to $\thetastar$, respectively. Iterates $\bt_t$ of the algorithm then give rise to a two-dimensional \emph{state evolution} $(\parcomp_t, \perpcomp_t)$, where $\parcomp_t = \parcomp(\bt_t)$ and $\perpcomp_t = \perpcomp(\bt_t)$. As several papers in this space have pointed out~\citep{chen2019gradient,wu2019randomly,tan2019online}, tracking the state evolution instead of the evolving $d$-dimensional parameter provides a useful summary statistic of the algorithm's behavior, and natural losses such as the $\ell_2$ or angular loss of parameter estimation can be expressed solely in terms of the state evolution.
\begin{figure*}[ht!]
	\centering
	\begin{subfigure}[b]{0.48\textwidth}
		\centering
			\begin{tikzpicture}
		\begin{axis}[
			width=\textwidth,
			height=\textwidth,
			xlabel={Iteration},
			ymode=log,
xmin=0, xmax=17,
ymin=0.000000001, ymax=1,
legend style={font=\tiny, fill=none, at={(1,0.75)},anchor=east},
ymajorgrids=true,
grid style=grid,
			]
			
			\addplot[
			color=CadetBlue,
			mark=triangle,
			mark size=1.5pt   
			]
			table[x=iter,y=avgAM] {sections/introduction/intro-data/intro-data.dat};
			\legend{Empirical: AM}
			
			\addplot[
			color=RoyalBlue,
			mark=triangle,
			mark size=1.5pt   
			]
			table[x=iter,y=avgGD] {sections/introduction/intro-data/intro-data.dat};
			\addlegendentry{Empirical: GD}
			
			\addplot[
			color=Black,
			mark=*,
			mark size=1.5pt
			]
			table[x=iter,y=popPR] {sections/introduction/intro-data/intro-data.dat};
			\addlegendentry{Population}
			
%
			
			\addplot+[name path=A-AM,CadetBlue!20, no markers] table[x=iter,y=lowerAM] {sections/introduction/intro-data/intro-data.dat};
			\addplot+[name path=B-AM,CadetBlue!20, no markers] table[x=iter,y=upperAM] {sections/introduction/intro-data/intro-data.dat};
			
			\addplot[CadetBlue!20] fill between[of=A-AM and B-AM];
			
			\addplot+[name path=A-GD,RoyalBlue!20, no markers] table[x=iter,y=lowerGD] {sections/introduction/intro-data/intro-data.dat};
			\addplot+[name path=B-GD,RoyalBlue!20, no markers] table[x=iter,y=upperGD] {sections/introduction/intro-data/intro-data.dat};
			
			\addplot[RoyalBlue!20] fill between[of=A-GD and B-GD];
		\end{axis}
	\end{tikzpicture}
		\caption{The $\ell_2$ error of the population update vs. $\ell_2$ error of the empirics.}    
		\label{subfig:dist-pop}
	\end{subfigure}
	\hfill
	\begin{subfigure}[b]{0.48\textwidth}  
		\centering 
			\begin{tikzpicture}
		\node[above, black] (sourcebeta) at (4.5,1) {$\beta$};
		\node[above, black] (sourcealpha) at (4.5, 5.5) {$\alpha$};
		\begin{axis}[
			width=\textwidth,
			height=\textwidth,
			xlabel={Iteration},
			ymode=log,
			xmin=0, xmax=17,
			ymin=0.000000001, ymax=1,
			legend style={font=\tiny, fill=none, at={(1,0.75)},anchor=east},
			ymajorgrids=true,
			grid style=grid,
			]
			
			\addplot[
			color=CadetBlue,
			mark=triangle,
			mark size=1.5pt   
			]
			table[x=iter,y=avgalphaAM] {sections/introduction/intro-data/intro-data.dat};
			\legend{Empirical: AM}
			
			\addplot[
			color=RoyalBlue,
			mark=triangle,
			mark size=1.5pt   
			]
			table[x=iter,y=avgalphaGD] {sections/introduction/intro-data/intro-data.dat};
			\addlegendentry{Empirical: GD}
			
			\addplot[
			color=Black,
			mark=*,
			mark size=1.5pt
			]
			table[x=iter,y=alphaPR] {sections/introduction/intro-data/intro-data.dat};
			\addlegendentry{Population}
			
			\addplot[
			color=CadetBlue,
			mark=triangle,
			mark size=1.5pt   
			]
			table[x=iter,y=avgbetaAM] {sections/introduction/intro-data/intro-data.dat};
			
			\addplot[
			color=RoyalBlue,
			mark=triangle,
			mark size=1.5pt   
			]
			table[x=iter,y=avgbetaGD] {sections/introduction/intro-data/intro-data.dat};

			\addplot[
			color=Black,
			mark=*,
			mark size=1.5pt
			]
			table[x=iter,y=betaPR] {sections/introduction/intro-data/intro-data.dat};
			
%
			
			\addplot+[name path=Aalpha-AM,CadetBlue!20, no markers] table[x=iter,y=loweralphaAM] {sections/introduction/intro-data/intro-data.dat};
			\addplot+[name path=Balpha-AM,CadetBlue!20, no markers] table[x=iter,y=upperalphaAM] {sections/introduction/intro-data/intro-data.dat};
			
			\addplot[CadetBlue!20] fill between[of=Aalpha-AM and Balpha-AM];
			
			\addplot+[name path=Abeta-AM,CadetBlue!20, no markers] table[x=iter,y=lowerbetaAM] {sections/introduction/intro-data/intro-data.dat};
			\addplot+[name path=Bbeta-AM,CadetBlue!20, no markers] table[x=iter,y=upperbetaAM] {sections/introduction/intro-data/intro-data.dat};
			
			\addplot[CadetBlue!20] fill between[of=Abeta-AM and Bbeta-AM];
			
			\addplot+[name path=Aalpha-GD,RoyalBlue!20, no markers] table[x=iter,y=loweralphaGD] {sections/introduction/intro-data/intro-data.dat};
			\addplot+[name path=Balpha-GD,RoyalBlue!20, no markers] table[x=iter,y=upperalphaGD] {sections/introduction/intro-data/intro-data.dat};
			
			\addplot[RoyalBlue!20] fill between[of=Aalpha-GD and Balpha-GD];
			
			\addplot+[name path=Abeta-GD,RoyalBlue!20, no markers] table[x=iter,y=lowerbetaGD] {sections/introduction/intro-data/intro-data.dat};
			\addplot+[name path=Bbeta-GD,RoyalBlue!20, no markers] table[x=iter,y=upperbetaGD] {sections/introduction/intro-data/intro-data.dat};
			
			\addplot[RoyalBlue!20] fill between[of=Abeta-GD and Bbeta-GD];
		\end{axis}
	\end{tikzpicture}
		\caption{State evolution: $\alpha$ and $\beta$ components~\eqref{eq:def-state} for the population update and empirics.}
		\label{subfig:alpha-beta-pop}
	\end{subfigure}
	\caption{Plots of behavior over iterations for both alternating minimization (AM) and subgradient descent (GD) with step-size $1/2$, alongside the population update, which is identical for both algorithms (see Section~\ref{sec:main-results}). Both algorithms are initialized at $\bt_0 = 0.2 \cdot \btstar + \sqrt{1 - 0.2^2} \proj_{\btstar}^{\perp} \bgam$, with 
	$\bgam$ uniformly distributed on the unit sphere.
	Additionally, the observations are noisy with noise standard deviation $10^{-8}$.  Shaded envelopes around the empirics denote 95\% confidence bands over $100$ independent trials.} 
	\label{fig:intro-pop}
\end{figure*}
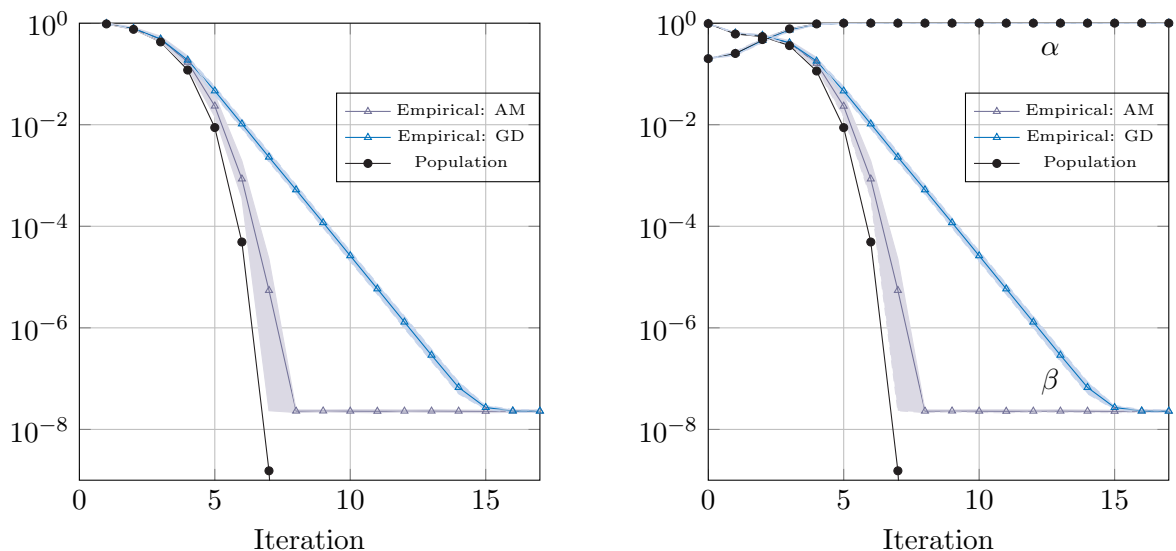

We run our simulation in dimension $d = 600$, and use $n = 12,000$ observations per iteration of the algorithm. We set $\thetastar$ to be the first standard basis vector in $\real^d$, and suppose that the covariates $\bx_i \in \real^{d}$ follow a standard Gaussian distribution. We generate the $i$-th response via $y_i = |\inprod{\bx_i}{\thetastar}| + \epsilon_i$, where $\epsilon_i \sim \NORMAL(0, \sigma^2)$.  In our simulation, we consider $\sigma = 10^{-8}$.  Consider the two iterative algorithms above initialized at a randomly chosen  point \sloppy\mbox{$\bt_0 = 0.2 \cdot \btstar + \sqrt{1 - 0.2^2} \proj_{\btstar}^{\perp} \bgam$}---with 
$\bgam$ uniformly distributed on the unit sphere---and choosing the stepsize $\eta$ to ensure that the population operators of both algorithms coincide\footnote{See Section~\ref{sec:main-results} for the concrete setting, and explicit evaluations of the population update.}.
Figure~\ref{fig:intro-pop} plots both the (random) $\ell_2$-error of parameter estimation and the state evolution for both algorithms along with the analogous quantities for the (deterministic) population update. As we make clear shortly, the population update in the latter case takes the form of a \emph{state evolution} update, in that the state of the next population iterate can be computed as a deterministic function of the state of the current iterate. Two conclusions can be drawn immediately from Figure~\ref{fig:intro-pop}. First, the population update is overly optimistic when predicting the convergence rates of both algorithms. Second, algorithms with the same population update can exhibit very different convergence behaviors.
Looking more closely at Figure~\ref{fig:intro-pop}(b), we see that while the state evolution predictions are comparable to empirical behavior towards the left of the plot (immediately after initialization), the $\perpcomp$ prediction is no longer accurate towards the right, in a local neighborhood of $\thetastar$. 
In Figure~\ref{fig:stepsize_popvgor} in Section~\ref{sec:numerical-illustrations}, we exhibit even more drastic situations in which the population update predicts convergence to $\thetastar$ whereas the empirical iterates stay bounded away from it.

As our simple experiment demonstrates, the population operator is not, at least in general, a very reliable predictor of convergence behavior. The underlying reason is simply that the problem is high dimensional: it is too simplistic to assume that the algorithm's finite-sample behavior will resemble the case when the sample size goes to infinity. This observation naturally leads to the principal question that we attempt to answer in this paper:
\smallskip
\begin{center}
\emph{Is there a more faithful deterministic prediction for the empirical behavior of iterative algorithms in the high dimensional setting?}
\end{center}
\smallskip
To be more specific, we would like such a deterministic update to satisfy several desiderata. First and foremost, we should be able to accurately predict the error of parameter estimation after running one step of the update from any point, allowing us to distinguish cases in which the algorithm gets closer to the ground truth parameter (thereby suggesting convergence) from otherwise. Second, the update should give us \emph{sharp} predictions of convergence behavior that differentiate, for instance, between linear and superlinear convergence. Such a sharp prediction for the iteration complexity can be used in conjunction with a characterization of the per-step computational cost of the algorithm to guide the choice of the fastest procedure to implement for any given task. Third, and related to the previous point, we would like deterministic recursions that provide both upper and lower bounds on the error of the algorithm, at least in a local neighborhood of the solution. This would allow us to rigorously compare and delineate algorithms in terms of their convergence behavior, instead of simply comparing upper bounds with upper bounds.

\begin{figure*}[!htbp]
	\centering
	\begin{subfigure}[b]{0.48\textwidth}   
		\centering 
			\begin{tikzpicture}
		\begin{axis}[
			width=\textwidth,
			height=\textwidth,
			xlabel={Iteration},
			ymode=log,
xmin=0, xmax=17,
ymin=0.000000001, ymax=1,
legend style={font=\tiny, fill=none, at={(1,0.75)},anchor=east},
ymajorgrids=true,
grid style=grid,
			]
			
			\addplot[
			color=CadetBlue,
			mark=triangle,
			mark size=1.5pt   
			]
			table[x=iter,y=avgAM] {sections/introduction/intro-data/intro-data.dat};
			\legend{Empirical: AM}
			
			\addplot[
			color=RoyalBlue,
			mark=triangle,
			mark size=1.5pt   
			]
			table[x=iter,y=avgGD] {sections/introduction/intro-data/intro-data.dat};
			\addlegendentry{Empirical: GD}
			
			
			\addplot[
			color=Maroon,
			mark=*,
			mark size=1.5pt   
			]
			table[x=iter,y=gorAM] {sections/introduction/intro-data/intro-data.dat};
			\addlegendentry{Gordon: AM}
			
			\addplot[
			color=Salmon,
			mark=*,
			mark size=1.5pt   
			]
			table[x=iter,y=gorGD] {sections/introduction/intro-data/intro-data.dat};
			\addlegendentry{Gordon: GD}
			
			\addplot+[name path=A-AM,CadetBlue!20, no markers] table[x=iter,y=lowerAM] {sections/introduction/intro-data/intro-data.dat};
			\addplot+[name path=B-AM,CadetBlue!20, no markers] table[x=iter,y=upperAM] {sections/introduction/intro-data/intro-data.dat};
			
			\addplot[CadetBlue!20] fill between[of=A-AM and B-AM];
			
			\addplot+[name path=A-GD,RoyalBlue!20, no markers] table[x=iter,y=lowerGD] {sections/introduction/intro-data/intro-data.dat};
			\addplot+[name path=B-GD,RoyalBlue!20, no markers] table[x=iter,y=upperGD] {sections/introduction/intro-data/intro-data.dat};
			
			\addplot[RoyalBlue!20] fill between[of=A-GD and B-GD];
		\end{axis}
	\end{tikzpicture}
		\caption{The  $\ell_2$ error of the Gordon update vs. $\ell_2$ error of the empirics.}
		\label{subfig:dist-gor}
	\end{subfigure}
	\hfill
	\begin{subfigure}[b]{0.48\textwidth}   
		\centering 
			\begin{tikzpicture}
		\node[above, black] (sourcebeta) at (4, 1.25) {$\beta$};
		\node[above, black] (sourcealpha) at (4, 5.5) {$\alpha$};
		\begin{axis}[
			width=\textwidth,
			height=\textwidth,
			xlabel={Iteration},
			ymode=log,
			xmin=0, xmax=17,
			ymin=0.000000001, ymax=1,
			legend style={font=\tiny, fill=none, at={(1,0.75)},anchor=east},
			ymajorgrids=true,
			grid style=grid,
			]
			
			\addplot[
			color=CadetBlue,
			mark=triangle,
			mark size=1.5pt   
			]
			table[x=iter,y=avgalphaAM] {sections/introduction/intro-data/intro-data.dat};
			\legend{Empirical: AM}
			
			\addplot[
			color=RoyalBlue,
			mark=triangle,
			mark size=1.5pt   
			]
			table[x=iter,y=avgalphaGD] {sections/introduction/intro-data/intro-data.dat};
			\addlegendentry{Empirical: GD}
			
			\addplot[
			color=Maroon,
			mark=*,
			mark size=1.5pt   
			]
			table[x=iter,y=alphagorAM]{sections/introduction/intro-data/intro-data.dat};
			\addlegendentry{Gordon: AM}
			
			\addplot[
			color=Salmon,
			mark=*,
			mark size=1.5pt   
			]
			table[x=iter,y=alphagorGD] {sections/introduction/intro-data/intro-data.dat};
			\addlegendentry{Gordon: GD}
			
			\addplot[
			color=CadetBlue,
			mark=triangle,
			mark size=1.5pt   
			]
			table[x=iter,y=avgbetaAM] {sections/introduction/intro-data/intro-data.dat};
			
			\addplot[
			color=RoyalBlue,
			mark=triangle,
			mark size=1.5pt   
			]
			table[x=iter,y=avgbetaGD] {sections/introduction/intro-data/intro-data.dat};
			
%

			\addplot[
			color=Maroon,
			mark=*,
			mark size=1.5pt   
			]
			table[x=iter,y=betagorAM]{sections/introduction/intro-data/intro-data.dat};

			\addplot[
			color=Salmon,
			mark=*,
			mark size=1.5pt   
			]
			table[x=iter,y=betagorGD] {sections/introduction/intro-data/intro-data.dat};
			
			\addplot+[name path=Aalpha-AM,CadetBlue!20, no markers] table[x=iter,y=loweralphaAM] {sections/introduction/intro-data/intro-data.dat};
			\addplot+[name path=Balpha-AM,CadetBlue!20, no markers] table[x=iter,y=upperalphaAM] {sections/introduction/intro-data/intro-data.dat};
			
			\addplot[CadetBlue!20] fill between[of=Aalpha-AM and Balpha-AM];
			
			\addplot+[name path=Abeta-AM,CadetBlue!20, no markers] table[x=iter,y=lowerbetaAM] {sections/introduction/intro-data/intro-data.dat};
			\addplot+[name path=Bbeta-AM,CadetBlue!20, no markers] table[x=iter,y=upperbetaAM] {sections/introduction/intro-data/intro-data.dat};
			
			\addplot[CadetBlue!20] fill between[of=Abeta-AM and Bbeta-AM];
			
			\addplot+[name path=Aalpha-GD,RoyalBlue!20, no markers] table[x=iter,y=loweralphaGD] {sections/introduction/intro-data/intro-data.dat};
			\addplot+[name path=Balpha-GD,RoyalBlue!20, no markers] table[x=iter,y=upperalphaGD] {sections/introduction/intro-data/intro-data.dat};
			
			\addplot[RoyalBlue!20] fill between[of=Aalpha-GD and Balpha-GD];
			
			\addplot+[name path=Abeta-GD,RoyalBlue!20, no markers] table[x=iter,y=lowerbetaGD] {sections/introduction/intro-data/intro-data.dat};
			\addplot+[name path=Bbeta-GD,RoyalBlue!20, no markers] table[x=iter,y=upperbetaGD] {sections/introduction/intro-data/intro-data.dat};
			
			\addplot[RoyalBlue!20] fill between[of=Abeta-GD and Bbeta-GD];
		\end{axis}
	\end{tikzpicture}
		\caption{State evolution: $\alpha$  and $\beta$  components~\eqref{eq:def-state} for the Gordon update and the empirics.}    
		\label{subfig:alpha-beta-gor}
	\end{subfigure}
	\caption{Plots of behavior over iterations for both alternating minimization and subgradient descent with step-size $1/2$, in a simulation identical to that of Figure~\ref{fig:intro-pop}. Overlaid is the prediction from their respective Gordon state evolution updates. As is evident from the occlusion of the triangle markers, the Gordon prediction exactly tracks behavior in both cases.} 
	\label{fig:intro-Gordon}
\end{figure*}
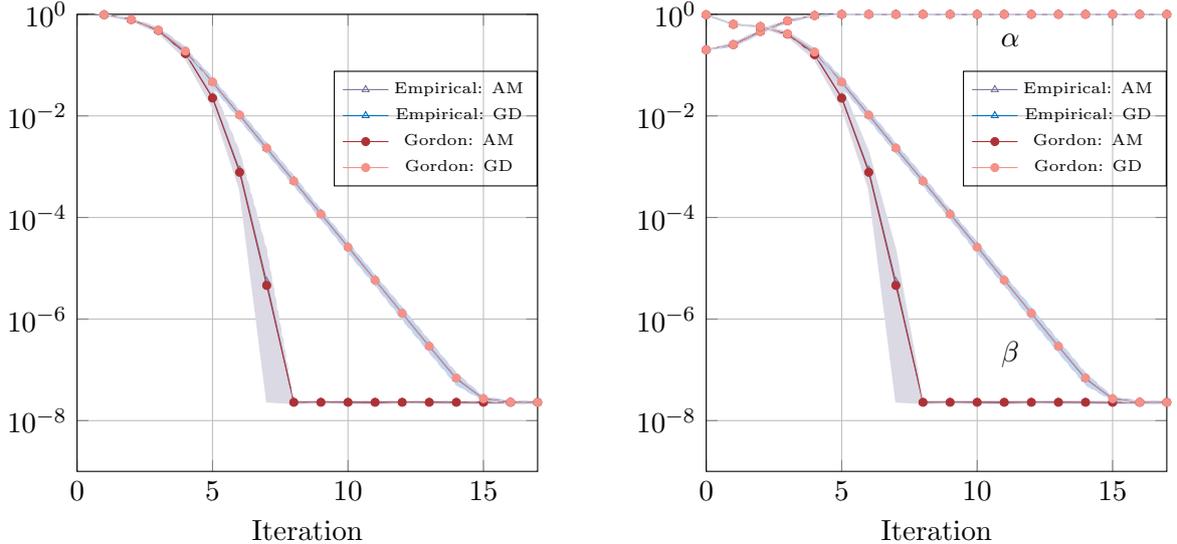

\subsection{Contributions and roadmap}

We consider a general class of regression models with Gaussian covariates (to be introduced precisely in Section~\ref{sec:setup}), and analyze the convergence behavior of iterative algorithms run with sample-splitting. Our contributions are summarized below.
\begin{enumerate}
\item \textbf{Gordon state evolution update:} 
Our main contribution is to use the machinery of Gaussian comparison inequalities---in particular, the convex Gaussian minmax theorem, or CGMT for short~\citep{thrampoulidis2015regularized}---to derive a deterministic~\emph{Gordon state evolution update} (or Gordon update for short) that 
satisfies the desiderata laid out above. This update applies provided each iteration of the algorithm can be written as the solution to a convex optimization problem satisfying some mild assumptions\footnote{In particular, although all of the scenarios we consider in this paper involve optimizing a nonconcave log-likelihood function, our recipe can also be applied to provide sharp convergence guarantees for the iterative minimization of convex loss functions.}.

The Gordon state evolution update is distinct from  the population state evolution update in that it involves an additive
correction term, which is nonzero whenever the sample size is finite. In particular, suppose that $\kappa = n/d$ denotes the oversampling ratio used to implement one step of the algorithm; this is defined precisely in Section~\ref{sec:it_alg}. Then the perpendicular component of the Gordon update takes the form
\begin{align*}
\widebar{\perpcomp}_{t + 1} = \perpcomp^{\pop}_{t + 1} + \order\left( \frac{1}{\kappa} \right) \cdot  D( \alpha_t, \beta_t),
\end{align*}
where $\perpcomp^{\pop}_{t + 1}$ is the analogous component of the population state evolution update run from the point $(\parcomp_t, \perpcomp_t)$, and
$D: \real^2 \to [0, \infty)$ is some function that takes only nonnegative values. 
Thus, taking the sample size (and hence $\kappa$) to infinity, we recover the population state evolution update directly from the Gordon update. However, as we will see shortly, the finite-sample behavior of the update is often dominated by the term  $\order\left( \frac{1}{\kappa} \right) \cdot  D( \alpha, \beta)$, and in these scenarios the population update is a poor predictor of convergence behavior. We showcase the general recipe involved in deriving the Gordon update in Section~\ref{sec:heuristic}.

Our recipe provides not only a deterministic update but also a finite-sample concentration bound, showing that the empirical state evolution concentrates sharply around the point predicted by the Gordon update. This is illustrated for the phase retrieval simulation in Figure~\ref{fig:intro-Gordon}. As is clear from this figure, the Gordon update provides a sharp prediction of convergence behavior, allowing us to distinguish different types of convergence and also providing near-exact predictions of the eventual error floor. As we explore further in Figure~\ref{fig:stepsize_popvgor} (see Section~\ref{sec:numerical-illustrations}), the Gordon update accurately captures behavior even in situations where the empirical iterates do not converge.

\item \textbf{Results for concrete models:} While the machinery that we develop is general, we showcase its utility by deriving global convergence guarantees (i.e., from a random initialization) for both higher-order and first-order algorithms in two statistical models: phase retrieval and mixtures of regressions. 
Some salient takeaways are collected in Table~\ref{table:summary}.
\begin{table}[ht!]
\hspace{5mm}\begin{tabular}{|c|c|c|c|}
\hline
\textbf{Algorithm}       & \textbf{Model}         & \textbf{Metric} & \textbf{Local convergence rate} \\ \hline
Alternating minimization & Phase retrieval        & $\ell_2$        & Superlinear, exponent $3/2$      \\ \hline
Subgradient descent      & Phase retrieval        & $\ell_2$        & Linear                          \\ \hline
Alternating minimization & Mixture of regressions & Angular         & Linear                          \\ \hline
Subgradient AM           & Mixture of regressions & Angular         & Linear                          \\ \hline
\end{tabular}
\hspace{5mm}\caption{Summary of results for specific models and algorithms. In all cases, we provide global convergence guarantees showing
that with high probability, convergence to the local neighborhood of the ground-truth parameter takes places after a number of iterations that is logarithmic in the dimension of the problem. Convergence rates within this neighborhood, as predicted by the Gordon update, are listed above. These exactly match empirical behavior in all cases.} \label{table:summary}
\end{table}

To summarize, for the phase retrieval model, our primary contribution is to make quantitative the behavior observed in Figures~\ref{fig:intro-pop} and~\ref{fig:intro-Gordon}. 
While
the population update predicts quadratic convergence (i.e., superlinear convergence with exponent $2$), we show that both alternating minimization and subgradient descent behave differently from this prediction. The former algorithm does converge superlinearly but with a nonstandard exponent $3/2$, while the latter converges linearly at best.
	For the mixture of regressions model, 
	we propose a first-order method termed \emph{subgradient AM}, which is inspired by the closely related gradient EM update~\citep{dempster1977maximum,neal1998view}. We study it alongside alternating minimization, and 
 show that while both algorithms exhibit linear convergence in the angular metric, they are inconsistent in the $\ell_2$ metric for any nonzero noise level. 
	We exhibit regimes in which the first-order method is competitive (in terms of its iteration complexity) with its higher-order counterpart, suggesting that the first-order method should be preferred in these regimes given its lower per-iteration cost.

\item \textbf{Techniques of independent interest:} Over the course of proving our results, we develop some techniques that may be of broader interest, three of which we highlight below. 
\begin{itemize}
	\item In proving finite-sample concentration bounds around the deterministic Gordon updates, we handle a family of loss functions that is strictly more general than those used for proving analogous results in linear models~\citep{oymak2013squared,miolane2018distribution}. Our techniques are based on arguing about carefully chosen growth properties of these loss functions, and may prove useful in other non-asymptotic instantiations of the CGMT machinery.
	\item Characterizing algorithmic behavior near a random initialization requires a sharper bound on the deviation of the parallel component than what is provided by the general technique alluded to above. We develop a refined bound---applicable to higher-order updates that involve a matrix inversion in each iteration---by using a leave-one-out device. This characterization allows us to replace a polylogarithmic factor in the sample complexity bound with a doubly-iterated logarithm, and the technique may prove more broadly useful in analyzing other higher-order updates from a random initialization. 
	\item Finally, our local convergence analysis for particular algorithms relies on a first-order expansion of the Gordon update. In particular, we show that the Gordon update is contractive in a local neighborhood of the ground truth $\thetastar$, and combine this structural characterization with our refined concentration bounds on the sample state evolution to show deterministic upper \emph{and} lower bounds, i.e., a high-probability envelope around, the error of the empirical trajectory of the algorithm. Such a technique may prove more broadly useful in producing sharp characterizations of convergence behavior in other classes of iterative algorithms.
\end{itemize}
\end{enumerate}

The rest of the paper is organized as follows. In Section~\ref{sec:setup} to follow, we present the formal problem setup and background on the models and algorithms that we use to illustrate the Gordon state evolution machinery. This section also introduces the ``subgradient AM" update for mixtures of regressions. In Section~\ref{sec:heuristic}, we provide the recipe itself, starting with a high level overview of the steps and a heuristic derivation in a special case before stating our main results in Theorems~\ref{thm:one_step_main_HO} and~\ref{thm:one_step_main_FO}. Section~\ref{sec:main-results} collects consequences for two models, phase retrieval and mixtures of linear regressions, and on each we employ two algorithms, one based on alternating projections and another on subgradient descent. Theorems~\ref{thm:AM-PR}-\ref{thm:GD-MLR} establish global convergence results for all of these cases. In Section~\ref{sec:numerical-illustrations}, we present numerical experiments to corroborate our theoretical findings. We discuss future directions in Section~\ref{sec:discussion}, and then turn to our proofs. Theorems~\ref{thm:one_step_main_HO} and~\ref{thm:one_step_main_FO} both have two parts; we present the proof of parts (a) in Sections~\ref{sec:gordon} and the proof of parts (b) in Section~\ref{sec:random-init-signal}. Proofs of Theorems~\ref{thm:AM-PR} through~\ref{thm:GD-MLR} are presented in a unified fashion in Section~\ref{sec:proofs-specific}. 
Our appendices collect proofs of auxiliary technical lemmas.

\subsection{Related work}
The literature on nonconvex optimization in statistical settings is vast, and we cannot hope to cover all of it here. We refer the reader to a few recent monographs~\citep{jain2017non,chen2018harnessing,chi2019nonconvex,zhang2020symmetry} for surveys, and the webpage~\citep{sun2021} for an ever-expanding list of relevant references. We focus in this subsection on describing a few papers that are most closely related to our contributions, categorized for convenience under three broad headings.

\paragraph{Predictions 
in random optimization problems:} As alluded to before, the population update has proven useful in analyzing many algorithms in a variety of settings including Gaussian mixture models~\citep{balakrishnan2017statistical, daskalakis2017ten,xu2016global}, mixtures of regressions~\citep{balakrishnan2017statistical,kwon2019global,klusowski2019estimating}, phase retrieval~\citep{chen2019gradient}, mixtures of experts~\citep{makkuva2019breaking}, and neural networks~\citep{tian2017analytical}. In addition to providing local convergence guarantees, it has enabled researchers to study the more challenging setting with random initialization~\citep{chen2019gradient,dwivedi2020singularity,wu2019randomly}, and also revealed several surprising phenomena related to overparameterization and stability~\citep{xu2018EM,ho2020instability}. The Gordon update that we derive is a much sharper deterministic predictor of convergence behavior than its population counterpart, and we hope that other surprising phenomena---over and above those that we present in the current paper---can be uncovered by making use of it. 

In addition to papers that characterize the random loss landscape by utilizing properties of the population loss~\citep[e.g.,][]{mei2018landscape,davis2020nonsmooth,hand2019global}, we mention another line of inquiry---rooted in the literature on statistical physics---that leads to deterministic predictions. 
This framework is especially appealing when a prior on the underlying parameter is assumed, and employs the approximate message passing (AMP) algorithm~\citep{donoho2009message,donoho2011noise,bayati2011lasso,montanari2013statistical}. AMP is carefully designed to satisfy certain (approximate) independence properties across iterates and leads to a simple state evolution without sample-splitting; see the recent tutorial by~\citet{feng2021unifying} for an introduction. 
The analysis framework has recently been used to explore the (sub-)optimality of first-order methods in terms of their eventual parameter estimation error~\citep{celentano2020estimation}, to predict computational barriers in a variety of problems including phase retrieval in high dimensions~\citep{maillard2020phase}, and to demonstrate that logistic regression is biased in high-dimensions, thereby suggesting an asymptotic correction~\citep{sur2019modern}.
In contrast to our motivation, predictions in this family are not designed with the dual goal of characterizing the (optimization-theoretic) rate of convergence of various algorithms as well as the statistical error of the eventual solution. Instead, they focus on producing a single algorithm that eventually attains statistical optimality, which is typically a member of the AMP family.  

Finally, we note that~\citet{oymak2017sharp} focused on showing sharp time-data tradeoffs in linear inverse problems. In particular, they considered random design linear regression where the underlying parameter was constrained to an arbitrary (possibly nonconvex) set, and showed that employing projected gradient descent on the square loss with a particular choice of stepsize enjoys a linear rate of convergence to an order-optimal neighborhood of the true parameter. They also showed that a linear rate is the best achievable when the constraint set is convex. 
In follow-up work and for the same optimization algorithms, \citet{oymak2016fast}, obtained similar results for  single-index model estimation. Specifically, their measurement model allows a nonlinear link function, but their algorithm assumes a linear one (thus, is agnostic to the nonlinearity) following the paradigm of \citet{brillinger2012generalized,plan2016generalized}.
While these results are compelling, they are restricted to the analysis of a single algorithm, do not provide sharp iterate-by-iterate predictions, and their primary focus is on exploiting structure in the underlying parameter. For comparison and on the one hand, we do not explicitly model structure in the parameter of interest, and also require that each iteration of the algorithm solves a convex program. On the other hand, we allow for arbitrary nonlinear models, and our machinery allows us to derive sharp tradeoffs applying to a broad class of iterative algorithms that go beyond first-order methods {for linear regression}.

\paragraph{Convergence guarantees for iterative algorithms beyond first-order updates:} As made clear shortly, the Gordon state evolution recipe is particularly powerful when dealing with iterative algorithms that go beyond first-order updates, and consequently involve highly non-linear functions of the random data. There are several ``direct'' analyses of such \emph{higher-order} updates in the literature on matrix factorization, mixture models, neural networks, and index models, including for alternating projections~\citep{jain2013low,gunasekar2013noisy,hardt2014fast,yi2014alternating,agarwal2016learning,sun2016guaranteed,waldspurger2018phase,jagatap2017fast,zhang2020phase,ghosh2019max,pananjady2021single}, composite optimization~\citep{duchi2019solving,charisopoulos2021low}, and Gauss--Newton methods~\citep{gao2017phaseless}. For the expectation maximization (EM) algorithm and its Newton (i.e., second-order) analog, the population update has been widely used to prove parameter estimation guarantees~\citep{balakrishnan2017statistical,xu2016global,ho2020instability}, although convergence in function value can be shown via other means~\citep{xu1996convergence,kunstner2021homeomorphic}.
All of the analyses mentioned here are only able to provide upper bounds on the parameter estimation error over iterations, and we expect that employing our recipe in these settings would yield either matching lower bounds or sharper convergence rates.

\paragraph{Gordon's Gaussian comparison theorem in statistical models:} Gordon proved his celebrated minmax theorem for doubly-indexed Gaussian processes in the 1980s~\citep{gordon1985some,gordon1988milman}, which was popularized in statistical signal processing by~\citet{rudelson2006sparse,stojnic2009various}. Following a line of work~\citep{stojnic2013framework,stojnic2013upper,stojnic2013regularly,amelunxen2014living,oymak2013squared}, a sharp version of Gordon's result in the presence of convexity---providing both upper and lower bounds on the minmax value---was formalized in~\citet{thrampoulidis2015regularized}; see~\citet{thrampoulidis2016recovering} for broader historical context. Since then, the \emph{convex} Gaussian minmax theorem (or CGMT for short) has been used to provide sharp performance guarantees for several convex programs with Gaussian data, including regularized M-estimators \citep{thrampoulidis2018precise,thrampoulidis2018symbol}, one-bit compressed sensing \citep{thrampoulidis2015lasso}, the Phase-Max program for phase-retrieval \citep{dhifallah2018phase,salehi2018precise}, regularized logistic regression~\citep{salehi2019impact,taheri2020sharp,taheri2021fundamental,aubin2020generalization,dhifallah2020precise}, adversarial training for linear regression and classification~\citep{javanmard2020preciseregression,javanmard2020precise,taheri2020asymptotic}, max-margin linear classifiers~\citep{montanari2019generalization,Deng2021Model,kammoun2021precise}, distributional characterization of minimum norm linear interpolators \citep{chang2021provable}, and minimum $\ell_1$ norm interpolation and boosting~\citep{liang2020precise}. While this line of work typically uses the Gordon machinery to provide a one-step---and asymptotic---guarantee, the results of our paper are obtained by using the CGMT in each step of the iterative algorithm, which requires a non-asymptotic characterization. 
Having said that, we note that non-asymptotic bounds have been obtained using the CGMT in the context
of the LASSO~\citep{oymak2013squared,miolane2018distribution,celentano2020lasso} and SLOPE~\citep{wang2019does}, 
but existing guarantees of this form appear to have been restricted to the study of sparse linear regression.

\subsection{General notation}

We use boldface small letters to denote vectors and boldface capital letters to denote matrices. We let $\sign(v)$ denote the sign of a scalar $v$, with the convention that $\sign(0) = 1$. We use $\sign(\bv)$ to denote the sign function applied entrywise to a vector $\bv$. Let $\ind{\cdot}$ denote the
indicator function. 
For $p \geq 1$, let $\mathbb{B}_p(\bv; t) = \{ \bx: \| \bx - \bv \|_p \leq t \}$ denote the closed $\ell_p$ ball of 
radius $t$ around a point $\bv$, with the shorthand $\mathbb{B}_p(t) = \mathbb{B}_p(\bm{0}; t)$; the dimension will usually be clear from context. Analogously, let $\mathbb{B}_p(S; t) = \{\bx: \| \bx - \bv \|_p \leq t \text{ for some } \bv \in S \}$
denote the $t$-fattening of a set $S$ in $\ell_p$-norm.
For an operator $\mathcal{A}: \mathbb{S} \to \mathbb{S}$, let $\mathcal{A}^t := \underbrace{ \mathcal{A} \otimes \cdots \otimes \mathcal{A} }_{t \text{ times}}$ denote the operator obtained by $t$ repeated applications of $\mathcal{A}$.

For two sequences of non-negative reals $\{f_n\}_{n
\geq 1}$ and $\{g_n \}_{n \geq 1}$, we use $f_n \lesssim g_n$ to indicate that
there is a universal positive constant $C$ such that $f_n \leq C g_n$ for all
$n \geq 1$. The relation $f_n \gtrsim g_n$ indicates that $g_n \lesssim f_n$,
and we say that $f_n \asymp g_n$ if both $f_n \lesssim g_n$ and $f_n \gtrsim
g_n$ hold simultaneously. We also use standard order notation $f_n = \order
(g_n)$ to indicate that $f_n \lesssim g_n$ and $f_n = \ordertil(g_n)$ to
indicate that $f_n \lesssim
g_n \log^c n$, for a universal constant $c>0$. We say that $f_n = \Omega(g_n)$ (resp. $f_n = \widetilde{\Omega}(g_n)$) if $g_n = \order(f_n)$ (resp. $g_n = \ordertil(f_n)$). The notation $f_n = o(g_n)$ is
used when $\lim_{n \to \infty} f_n / g_n = 0$, and $f_n =
\omega(g_n)$ when $g_n = o(f_n)$. Throughout, we use $c, C$ to denote universal
positive constants, and their values may change from line to line. All logarithms are to the natural base unless
otherwise stated.

 We denote by $\NORMAL(\bm{\mu}, \bSig)$ a normal distribution with mean $\bm{\mu}$ and covariance matrix $\bSig$. Let $\mathsf{Unif}(S)$ denote the uniform distribution on a set $S$, where the distinction between a discrete and continuous distribution can be made from context. We say that $X \overset{(d)}{=} Y$ for two random variables $X$ and $Y$ that are equal in distribution. For $q \geq 1$ and a random variable $X$ taking values in $\real^d$, we write $\| X \|_q = (\EE[ |X|^q ])^{1/q}$ for its $L^{q}$ norm. Finally, for a real valued random variable $X$ and a strictly increasing convex function $\psi: \real_{\geq 0} \to \real_{\geq 0}$ satisfying $\psi(0) = 0$, we write $\| X \|_{\psi} = \inf\{ t > 0 \; \mid \; \EE[\psi(t^{-1} |X| )] \leq 1\}$ for its $\psi$-Orlicz norm. We make particular use of the $\psi_q$-Orlicz norm for $\psi_q(u) = \exp(|u|^q) - 1$. We say that $X$ is sub-Gaussian if $\| X \|_{\psi_2}$ is finite
 and that $X$ is sub-exponential if $\| X \|_{\psi_1}$ is finite.

\section{Background and illustrative examples}
\label{sec:setup}

In this section, we set up our formal observation model, and a general form for the iterative algorithms that we will study. 

\subsection{Observation model}
Suppose that we observe i.i.d. covariate response pairs $(\bd{x}_i, y_i)$ generated according to the model
\begin{align}\label{eq:model}
y_i = f(\langle \bd{x}_i, \thetastar\rangle; q_i) + \epsilon_i, \qquad i = 1, 2, \ldots.
\end{align} 
The covariates $\bx_i$ are assumed to be $d$-dimensional and drawn i.i.d. from the standard normal distribution $\NORMAL(0, \bd{I}_{d})$, and the function $f$ is some known \emph{link function}. The random variable $q_i \sim \mathbb{Q}$ represents a possible \emph{latent variable}, i.e., some source of auxiliary randomness that is unobserved, and $\epsilon_i$ represents additive noise drawn from the distribution $\NORMAL(0, \sigma^2)$; both of these are drawn i.i.d.
Our goal is to use observations of pairs $(\bx_i, y_i)_{i \geq 1}$ to estimate the unknown $d$-dimensional parameter $\thetastar$.
For the rest of this paper, we will make the assumption that $\| \thetastar \|_2 = 1$ in order to simplify statements of our theoretical results.\footnote{This assumption can be removed by straightforward means; in particular, the algorithms that we study will not make explicit use of the fact that $\thetastar$ is unit-norm.} Before proceeding, let us give two canonical examples of the observation model~\eqref{eq:model} that will form the focus of this paper, illustrating why maximum likelihood estimation in these models can be computationally challenging.

\paragraph{Example: Phase retrieval with a real-valued signal.}  Here, there is no auxiliary latent variable, and the function $f$ depends solely on its first argument.  In the \emph{nonsmooth} version, it is given by $f(t; q) = \lvert t \rvert$, so that our model for the $i$-th observation takes the form
\begin{align} \label{eq:PR-model}
y_i = |\langle \bd{x}_i, \thetastar\rangle| + \epsilon_i.
\end{align}
Our goal is to estimate the real-valued \emph{signal} $\thetastar$ from these covariate-response pairs. 
Note that the negative log-likelihood of our observations $(\bX, \by)$ is the shifted least squares objective
\begin{align} \label{eq:LL-PR}
- \log p(\bt; \bX, \by) = \frac{1}{n} \sum_{i = 1}^n \left( y_i - |\langle \bd{x}_i, \bt\rangle| \right)^2 + c_0,
\end{align}
where $c_0$ is a scalar independent of $\bt$. This is a nonconvex function of $\bt \in \real^d$.
\hfill $\clubsuit$

\paragraph{Example: Symmetric mixture of linear regressions.} Here, the latent variables $q_i$ are chosen i.i.d. from a Rademacher distribution $\mathsf{Unif}(\{\pm 1 \})$ and the function $f$ is specified by $f(t; q) = q\cdot t$. This leads to the observation model
\begin{align} \label{eq:MoR-model}
y_i = q_i \cdot \langle \bd{x}_i, \thetastar\rangle + \epsilon_i
\end{align}
for the $i$-th observation. 
The negative log-likelihood of our observations $(\bX, \by)$ is given by
\begin{align} \label{eq:LL-MoR}
-\log p(\bt; \bX, \by) = -\frac{1}{n} \sum_{i = 1}^n  \log \left( \exp\left\{ - \frac{( y_i - \langle \bd{x}_i, \bt\rangle)^2}{2\sigma^2}  \right\} + \exp\left\{ - \frac{( y_i + \langle \bd{x}_i, \bt\rangle)^2}{2\sigma^2} \right\} \right) + c_0,
\end{align}
where $c_0$ is a scalar independent of $\bt$. Clearly, this is a nonconvex function of $\bt$.
\hfill $\clubsuit$
\medskip

An important feature of estimation under the general observation model~\eqref{eq:PR-model} that is exemplified by the specific cases above is that the negative log-likelihood, when viewed as a function of the parameter of interest, is nonconvex. 
Nevertheless, it is common to run iterative algorithms---beginning either from a random initialization or a carefully designed spectral initialization---to attempt to optimize the negative log-likelihood. Our focus will be on studying two such canonical families of iterative algorithms from a random initialization, which we introduce next and under a general framework.

\subsection{Iterative algorithms}\label{sec:it_alg}

We study iterative algorithms designed to recover $\thetastar$ in the observation model~\eqref{eq:model} when run with sample-splitting. In particular, suppose that at each iteration, we form a \emph{fresh} batch\footnote{Owing to sample-splitting, the pair $(\bX, \by)$ can also be thought of as depending on the iteration number $t$, but we suppress this dependence and opt for more manageable notation.} of $n$ observations by collecting the covariates in a matrix $\bX \in \real^{n \times d}$ and the responses in a vector $\by \in \real^n$. By design, the pair $(\bX, \by)$ is statistically independent of the iterations of the algorithm thus far.
At iteration $t$, we update our current estimate of the parameter $\bt_t$ to $\bt_{t + 1}$ by solving an optimization problem of the form
\begin{align} \label{eq:opt_gen_intro}
\thetanext \in \argmin_{\bt \in \mathbb{R}^d} \;
\mathcal{L}(\bt; \thetacur, \bd{X}, \bd{y}),
\end{align}
for some loss function $\mathcal{L}$ that depends implicitly on the current point $\bt_t$ and is formed using the data $(\bX, \by)$.  In general terms, what makes the iterative algorithm tractable is that the optimization problem~\eqref{eq:opt_gen_intro} corresponding to each iteration is solvable efficiently. More often than not, this is enabled by the function $\mathcal{L}$ being convex in $\bt$, a property that we will exploit fruitfully in the examples that we study.

It is important to note that owing to our sample splitting heuristic, the total sample size when the iterative algorithm is run for $T$ iterations is given by $n\cdot T$. In the sequel, it is useful to track the per-iteration oversampling ratio, given by
\begin{align*}
\kappa = \frac{n}{d}.
\end{align*}
We will be interested in the near-linear regime of sample size in which $\kappa$ scales at most poly-logarithmically\footnote{In the specific examples that we study, the number of iterations $T$ required to obtain order-optimal parameter estimates will turn out to be at most logarithmic in the dimension, so that the total sample size $nT$ also scales near-linearly in dimension.}  in the dimension~$d$.

Let us conclude by introducing some equivalent operator-theoretic notation that simplifies some of our exposition. It is common to view a step of the algorithm through the lens of an \emph{empirical operator} $\mathcal{T}_n: \real^d \to \real^d$, with 
\begin{align} \label{eq:Tn}
\mathcal{T}_n(\bt) = \argmin_{\bt' \in \mathbb{R}^d} \; \mathcal{L}(\bt'; \bt, \bd{X}, \bd{y}) \;\; \text{ for each } \bt \in \real^d. 
\end{align}
In other words, equation~\eqref{eq:opt_gen_intro} denotes the evaluation of the operator at $\bt_t$, i.e., with $\bt_{t + 1} = \mathcal{T}_n(\bt_t)$. Note that the operator $\mathcal{T}_n$ is random by virtue of randomness in the data, and that since we are interested in the algorithm run with sample-splitting, one may view the random operator $\mathcal{T}_n$ as being generated i.i.d. at each iteration. Adopting this perspective, the parameter estimate obtained at iteration $k$ when starting from an initial point $\bt_0$ is given by applying the random operator $\mathcal{T}_n$ repeatedly,
so that $\bt_k = \mathcal{T}^k_n (\bt_0)$.

We now discuss two specific classes of algorithms from this general perspective.

\subsubsection{Higher-order update methods}

The first class of methods that we consider are those that do not have an interpretation as first-order methods. In particular, they typically involve running least squares in each iteration. As we will see in the examples to follow, each of these algorithms can be written in 
the form~\eqref{eq:opt_gen_intro} with 
\begin{subequations}  \label{eq:sec_order_gen}
\begin{align} \label{eq:sec_order_gen_loss}
\Lc(\bt; \thetacur,\bX,\by):= \sqrt{ \frac{1}{n}\sum_{i=1}^{n}\left( \wfun( \langle \bd{x}_i, \thetacur\rangle, y_i) - \langle \bd{x}_i, \bt \rangle\right)^2},
\end{align}
with $\wfun: \real^2 \to \real$ denoting a \emph{weight} function that is model and algorithm dependent and the square root is taken for convenience.
The minimizer of the loss~\eqref{eq:sec_order_gen_loss} is given by
\begin{align}
\bt_{t+ 1} = \biggl(\sum_{i=1}^{n} \bx_i \bx_i^{\top}\biggr)^{-1} \biggl(\sum_{i=1}^{n} \omega(\langle \bx_i, \thetacur\rangle, y_i) \cdot  \bx_i\biggr); \label{eq:second-order-update-sum}
\end{align}
\end{subequations}
in other words, we apply the empirical operator \mbox{$\mathcal{T}_n: \bt \mapsto \biggl(\sum_{i=1}^{n} \bx_i \bx_i^{\top}\biggr)^{-1} \biggl(\sum_{i=1}^{n} \omega(\langle \bx_i, \bt \rangle, y_i) \cdot \bx_i\biggr)$}.
Let us provide a few examples of such methods in the specific cases~\eqref{eq:PR-model} and~\eqref{eq:MoR-model} for concreteness.

\paragraph{Example: Alternating projections for phase retrieval.} To motivate the first example, consider the phase retrieval model and write the corresponding negative log-likelihood~\eqref{eq:LL-PR} in the equivalent form
$
- \log g(\bt; \bX, \by) = \frac{1}{n} \sum_{i = 1}^n (\sign(\inprod{\bx_i}{\bt}) \cdot y_i -  \inprod{\bx_i}{\bt} )^2 + c_0.
$
This suggests a heuristic that fixes the signs using the current iterate $\bt_{t}$, and obtains $\bt_{t + 1}$ by minimizing the loss
\begin{subequations} \label{eq:overall-AM-PR}
\begin{align} \label{eq:loss-AM-PR}
\mathcal{L}(\bt; \bt_t, \bX, \by) = \sqrt{ \frac{1}{n}\sum_{i=1}^{n}  \left(\sign(\langle \bd{x}_i, \thetacur\rangle)y_i - \langle \bd{x}_i, \bt \rangle\right)^2}.
\end{align}
Concretely, this results in the update
\begin{align} \label{eq:update-explicit-AM-PR}
\bt_{t + 1} = \biggl(\sum_{i=1}^{n} \bx_i \bx_i^{\top}\biggr)^{-1} \biggl(\sum_{i=1}^{n} \sign(\inprod{\bx_i}{\bt_t}) \cdot y_i \bx_i\biggr). 
\end{align}
\end{subequations}
Clearly, this loss function/update pair takes the general form~\eqref{eq:sec_order_gen} with the specific choice \sloppy \mbox{$\wfun( x, y) = \sign(x) \cdot y$}. \hfill $\clubsuit$

\paragraph{Example: Alternating projections for mixtures of two regressions.} This algorithm stems from the observation that while the negative log-likelihood of $\bt$---given by equation~\eqref{eq:LL-MoR}---may be difficult to optimize, the likelihood of the pair $(\bt, \bq)$ is often easier to reason about. In particular, writing
\begin{align*}
- \log h(\bt, \bq; \bX, \by) = \frac{1}{n} \sum_{i = 1}^n \left( y_i - q_i \cdot \langle \bd{x}_i, \bt\rangle \right)^2 + c_0\,,
\end{align*}
for a scalar $c_0$ that is independent of the pair $(\bt, \bq)$, notice that the function $- \log h$ is now individually convex in each of $\bt$ and $\bq$. This suggests an alternating update algorithm: Suppose that the current parameter is $\bt_t$; then for each $i$, the minimizer of $-\log h$ over $q_i \in \{-1, 1\}$ is given by
$
\argmin_{q \in \{-1, 1\}} |y_i - q \cdot \inprod{\bx_i}{\bt_t}| = \sign(y_i\langle \bd{x}_i, \thetacur\rangle).
$
This in turn yields the one-step loss function
\begin{subequations}
\begin{align} \label{eq:loss-AM-MR}
\mathcal{L}(\bt; \bt_t, \bX, \by) = \sqrt{ \frac{1}{n}\sum_{i=1}^{n}  \left(\sign(y_i\langle \bd{x}_i, \thetacur\rangle) \cdot y_i - \langle \bd{x}_i, \bt \rangle\right)^2}
\end{align}
and the corresponding update
\begin{align} \label{eq:update-explicit-AM-MR}
\thetanext = \biggl(\sum_{i=1}^{n} \bx_i \bx_i^{\top}\biggr)^{-1} \biggl(\sum_{i=1}^{n} \sign(y_i \inprod{\bx_i}{\bt_t}) \cdot y_i \bx_i\biggr),
\end{align}
\end{subequations}
which takes the general form~\eqref{eq:sec_order_gen} with $\wfun( x, y) = \sign(yx) \cdot y$. \hfill $\clubsuit$
\medskip

We note in passing that alternating projections for mixtures of linear regressions coincides with the expectation maximization (EM) algorithm~\citep{dempster1977maximum} when $\sigma = 0$, and that the machinery that we develop also applies to the EM algorithm. 
Let us now turn to a second class of (simpler) iterative algorithms.

\subsubsection{First order methods} 
The second class of methods that we analyze are first-order versions of counterparts presented above. As we will see shortly, 
each of these
methods can also be written in the form~\eqref{eq:opt_gen_intro} with
\begin{subequations} \label{eq:GD_gen}
\begin{align}\label{eq:GD_gen_loss}
\Lc(\bt;\thetacur,\bX,\by):=\frac{1}{2}\|\bt\|_2^2 - \langle \bt , \bt_t \rangle + \frac{2\eta}{n}\sum_{i=1}^n \wfun\big(\inp{\bx_i}{\thetacur},y_i\big) \langle \bx_i , \bt \rangle,
\end{align}
where $\eta > 0$ denotes a stepsize and $\wfun$ is some weight function. It is important to note that the function $\wfun$ will be distinct for the higher-order update and its first-order analog.

Minimizing the loss function~\eqref{eq:GD_gen_loss} over $\bt$, the update in this case can be written as
\begin{align}
\thetanext = \thetacur - \eta\cdot\frac{2}{n}\sum_{i=1}^n \wfun\big(\inp{\bx_i}{\thetacur},y_i\big)\cdot \bx_i,
\end{align}
\end{subequations}
which resembles a gradient update and induces the operator\\
 \mbox{$\mathcal{T}_n: \bt \mapsto \thetacur - \eta\cdot\frac{2}{n}\sum_{i=1}^n \wfun\big(\inp{\bx_i}{\thetacur},y_i\big)\cdot \bx_i$}. Examples are collected below.

\paragraph{Example: Subgradient descent for nonsmooth phase retrieval.} Our first example is given by the subgradient descent algorithm on the objective~\eqref{eq:LL-PR}. In particular, straightforward calculation yields that one iteration of this algorithm run with stepsize $\eta$ takes the form
\begin{subequations} 
\begin{align} \label{eq:update-explicit-GD-PR}
\thetanext &= \thetacur - \eta \cdot \left\{ \frac{2}{n} \sum_{i=1}^{n}\sign(\langle \bd{x}_i, \thetacur\rangle) \cdot (\lvert \langle \bd{x}_i, \thetacur \rangle \rvert - y_i) \cdot \bd{x}_i \right\},
\end{align}
which in turn is the minimizer of the loss function
\begin{align} \label{eq:loss-GD-PR}
\mathcal{L}(\bt; \bt_t, \bX, \by) &=  \frac{1}{2}\vecnorm{\bt}{2}^2 - \langle \bt, \thetacur \rangle + \frac{2\eta}{n} \sum_{i=1}^{n} \sign(\langle \bd{x}_i, \thetacur\rangle) \cdot (\lvert \langle \bd{x}_i, \thetacur\rangle \rvert - y_i) \cdot \langle \bd{x}_i, \bt \rangle.
\end{align}
\end{subequations}
This takes the general form~\eqref{eq:GD_gen} with $\wfun( x,y)=\sign(x) \cdot (|x|-y)  = x - \sign(x) \cdot y$. \hfill $\clubsuit$

\paragraph{Example: Subgradient AM for mixtures of regressions.} This update is obtained by running subgradient descent on the loss function in equation~\eqref{eq:loss-AM-MR}. In particular, running this algorithm with stepsize $\eta$ yields
\begin{subequations} 
\begin{align} \label{eq:update-explicit-GD-MR}
\thetanext &= \thetacur - \eta \cdot \left\{ \frac{2}{n} \sum_{i=1}^{n} (\inprod{\bd{x}_i}{\thetacur} - \sign(y_i \langle \bd{x}_i, \thetacur\rangle) \cdot y_i) \cdot \bd{x}_i \right\},
\end{align}
which is clearly the minimizer of the loss
\begin{align} \label{eq:loss-GD-MR}
\mathcal{L}(\bt; \bt_t, \bX, \by) = \frac{1}{2}\vecnorm{\bt}{2}^2 - \langle \bt, \thetacur \rangle + \frac{2\eta}{n} \sum_{i=1}^{n} (\inprod{\bd{x}_i}{\thetacur} - \sign(y_i \langle \bd{x}_i, \thetacur\rangle) \cdot  y_i) \cdot \inprod{\bd{x}_i}{\bt}.
\end{align}
\end{subequations}
These expressions take the general form~\eqref{eq:GD_gen} with $\wfun( x,y)=x - \sign(xy) \cdot y$. 

We note that this algorithm is analogous to a gradient EM update~\citep{neal1998view} in that it is obtained via a first order method applied to the one-step loss function derived with the objective of performing alternating minimization. However, to our knowledge, this algorithm has not been considered before in the literature on mixtures of linear regressions.
\hfill $\clubsuit$
\medskip

\begin{remark} \label{rem:weight-rel}
As noted before, the weight functions of the higher-order and first order updates corresponding to a particular model \emph{do not} coincide. However, note that in the examples presented above, we have
\begin{align*}
\wfun_{\mathsf{FO}}(x, y) = x - \wfun_{\mathsf{HO}}(x, y),
\end{align*}
where $\wfun_{\mathsf{HO}}$ and $\wfun_{\mathsf{FO}}$ denote the higher-order and first-order weight function, respectively. 
\end{remark}

Having introduced illustrative examples, we are now well-placed to introduce our general recipe for establishing convergence
guarantees on iterative algorithms.

\section{Recipe and main result: The Gordon state evolution update}
\label{sec:heuristic}

We are now ready to describe the Gordon state evolution update in detail. We begin with a high-level overview, in Section~\ref{sec:high-level-steps}, of the steps involved in the recipe, and then provide a heuristic but illustrative derivation for a specific algorithm in Section~\ref{subsec:heuristic-second-order}. Having conveyed the high-level intuition about how one might derive these updates in concrete problems, we then proceed to a rigorous result, in Section~\ref{sec:main-result-Gordon}, showing that the empirical iteration concentrates around the Gordon state evolution update.

\subsection{High-level sketch of the steps} \label{sec:high-level-steps}

We begin with the ansatz---which will be intuitively justified in the heuristic derivation of Section~\ref{subsec:heuristic-second-order} and proved rigorously when establishing the main results to follow---that it suffices to track the two dimensional state evolution $(\parcomp(\bt), \perpcomp(\bt))$ defined in equation \eqref{eq:def-state}. In particular, when one step of the algorithm is run from the parameter $\bt_t$ to obtain $\bt_{t + 1}$, we are interested in a \emph{deterministic} prediction $(\widebar{\parcomp}_{t + 1}, \widebar{\perpcomp}_{t + 1})$ for the random pair $(\parcomp(\bt_{t + 1}), \perpcomp(\bt_{t + 1}))$ that is (a) a function only of the pair $(\parcomp(\bt_t), \perpcomp(\bt_t))$, and (b) accurate up to a small error. 
We use several steps  to derive such a deterministic \emph{state evolution update}. 
Let us begin by introducing the convex Gaussian minmax theorem, or CGMT, which forms the bedrock of our recipe.  
\begin{proposition}[CGMT~\citep{thrampoulidis2015regularized}]
	\label{thm:gordon}
	Let $\bG$ 
	denote an $n \times d$ standard Gaussian random matrix, and let $\bh \in \mathbb{R}^d$ and $\bg \in \mathbb{R}^n$ denote standard Gaussian random vectors drawn independently of each other and of $\bG$. Let $\bL \in \real^{d \times d}$ and $\bM \in \real^{n \times n}$ denote two fixed matrices. Also, let $\mathcal{U} \subseteq \mathbb{R}^{d}$ and $\mathcal{V} \subseteq \mathbb{R}^n$ denote compact sets, and let $Q: \mathcal{U} \times \mathcal{V} \rightarrow \mathbb{R}$
	denote a continuous function. Define 
	\begin{subequations}
	\begin{align}
		P(\bG) &:= \min_{\bu \in \mathcal{U}} \max_{\bv \in \mathcal{V}} \; \langle \bM \bv, \bG \bL \bu \rangle + Q(\bu, \bv) \quad \text{ and }\label{eq:CGMT-PO}\\
		A(\bg, \bh) &:= \min_{\bu \in \mathcal{U}} \max_{\bv \in \mathcal{V}} \; \| \bM \bv \|_2 \cdot \langle \bh, \bL \bu \rangle + \| \bL \bu \|_2 \cdot \langle \bg, \bM \bv \rangle + Q(\bu, \bv) \label{eq:CGMT-AO}.
	\end{align}
	\end{subequations}
	Then
	\begin{enumerate}[label=(\alph*)]
		\item For all $t \in \mathbb{R}$, we have
		\[\Pro\left\{P(\bG) \leq t\right\} \leq 2\Pro\left\{A (\bg, \bh) \leq
		t\right\}.\]
		\item If, in addition, the sets $\mathcal{U}, \mathcal{V}$ are convex and the function $Q$ is convex-concave, then for all $t \in
		\mathbb{R}$, we have
		\[\Pro\left\{P (\bG) \geq t\right\} \leq 2\Pro\left\{A(\bg, \bh) \geq
		t\right\}.\]
	\end{enumerate}
\end{proposition}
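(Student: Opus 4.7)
The plan is to derive both parts as consequences of Gordon's Gaussian comparison inequality for doubly-indexed processes, with the convexity of Part (b) invoked through Sion's minimax theorem. The starting point is to introduce the two centered Gaussian processes
\[
X_{\bu,\bv} \;=\; \langle \bM\bv, \bG\bL\bu\rangle, \qquad Y_{\bu,\bv} \;=\; \|\bM\bv\|_2\,\langle\bh, \bL\bu\rangle + \|\bL\bu\|_2\,\langle\bg, \bM\bv\rangle,
\]
indexed by $(\bu,\bv) \in \mathcal{U}\times\mathcal{V}$, and to rewrite $P(\bG) = \min_{\bu}\max_{\bv}\{X_{\bu,\bv} + Q(\bu,\bv)\}$ and $A(\bg,\bh) = \min_{\bu}\max_{\bv}\{Y_{\bu,\bv} + Q(\bu,\bv)\}$. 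A direct covariance calculation yields
\[
\mathbb{E}[X_{\bu,\bv}X_{\bu',\bv'}] = \langle \bL\bu,\bL\bu'\rangle\langle \bM\bv,\bM\bv'\rangle, \quad \mathbb{E}[Y_{\bu,\bv}Y_{\bu',\bv'}] = \|\bM\bv\|_2\|\bM\bv'\|_2\langle \bL\bu,\bL\bu'\rangle + \|\bL\bu\|_2\|\bL\bu'\|_2\langle \bM\bv,\bM\bv'\rangle.
\]

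For Part (a), I would verify the two Gordon comparison conditions. For fixed $\bu_1 = \bu_2 = \bu$ and varying $\bv_1,\bv_2$, the squared increment of $X$ is $\|\bL\bu\|_2^2\|\bM(\bv_1-\bv_2)\|_2^2$ while the squared increment of $Y$ exceeds this by the nonnegative term $\|\bL\bu\|_2^2(\|\bM\bv_1\|_2-\|\bM\bv_2\|_2)^2$; across distinct $\bu$ indices the reverse inequality holds by a symmetric computation using Cauchy–Schwarz. To align variances along the diagonal (where $X$ has variance $\|\bL\bu\|_2^2\|\bM\bv\|_2^2$ and $Y$ has twice that), I would add an independent Gaussian to $X$ and then pass this nuisance term to zero by a continuity/limit argument after the inequality is established. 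With conditions verified, the classical Slepian--Gordon inequality applied to a net of $\mathcal{U}\times\mathcal{V}$ (approximating compact sets by finite covers and using continuity of $Q$ to pass to the limit) yields a tail comparison between $P$ and $A$, with the factor of $2$ arising from the standard device of turning a one-sided expectation comparison on centered Gaussian processes into a probability statement via union bound over tails.

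For Part (b), convexity enters crucially. Under convex $\mathcal{U},\mathcal{V}$ and convex-concave $Q$, Sion's minimax theorem justifies the interchange $\min_{\bu}\max_{\bv}\{X_{\bu,\bv}+Q(\bu,\bv)\} = \max_{\bv}\min_{\bu}\{X_{\bu,\bv}+Q(\bu,\bv)\}$. (A subtle point: the inner objective $X_{\bu,\bv}+Q(\bu,\bv)$ is linear in $\bG$ and hence trivially convex-concave in $(\bu,\bv)$ conditionally on $\bG$, so the swap is valid almost surely.) After swapping, I would rerun the Gordon comparison in the opposite direction — now comparing $\max_{\bv}\min_{\bu}$ objectives — which flips the inequality and delivers the upper tail bound $\Pr\{P(\bG) \geq t\} \leq 2\Pr\{A(\bg,\bh) \geq t\}$. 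The key observation enabling this second application is that the auxiliary process $Y_{\bu,\bv}$ is itself already in a separated bilinear form whose swapped max-min is still tractable.

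The main obstacle I anticipate is the variance-matching step in the Gordon comparison: the two processes $X$ and $Y$ do not have matching diagonal variances as written, so some regularization (adding independent Gaussians, or equivalently applying Kahane-style inequalities that do not require equal variances on the diagonal) is needed. A secondary technical nuisance is the passage from finite index sets (where Gordon is classical) to compact $\mathcal{U},\mathcal{V}$, which requires uniform continuity of $Q$ on compacts and a careful approximation argument. Both are standard but must be handled carefully to avoid losing the probability bound to anything worse than the stated factor of $2$.
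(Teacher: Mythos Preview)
The paper does not prove this proposition. It is stated as a citation from \citet{thrampoulidis2015regularized}, with the authors remarking only that the generalization to include the fixed matrices $\bL,\bM$ ``follows identically.'' There is therefore no in-paper proof to compare against.

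That said, your outline is the standard route to the CGMT and is broadly correct: compute covariances, apply Gordon's Gaussian min-max inequality for part~(a), and for part~(b) use Sion to swap the min-max on the primary side and reapply Gordon with the roles reversed. Two imprecisions are worth flagging. First, your account of the factor of $2$ is off: Gordon's inequality is already a probability statement, not merely an expectation bound. The factor of $2$ arises because the variance-matched comparison involves the augmented process $X_{\bu,\bv} + \gamma\,\|\bL\bu\|_2\|\bM\bv\|_2$ with $\gamma\sim\NORMAL(0,1)$ independent, and one removes $\gamma$ by conditioning on its sign (each sign event has probability $1/2$). A ``pass to zero by continuity'' argument would not produce the correct constant. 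Second, in part~(b), after swapping min and max on the primary side via Sion, the auxiliary side only needs the weak duality $\max_{\bv}\min_{\bu}\{\cdot\}\le\min_{\bu}\max_{\bv}\{\cdot\}$, which holds unconditionally; no additional ``tractability'' of $Y$ is required beyond this.
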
 
Strictly speaking, Proposition~\ref{thm:gordon} is a generalization of the result appearing in~\citet{thrampoulidis2015regularized}, which is stated without the matrix pair $(\bL, \bM)$. However, its proof follows identically, and we choose to state the more general result since it is most useful for our development. Following the terminology from~\citet{thrampoulidis2015regularized}, we refer to equation~\eqref{eq:CGMT-PO} as the \emph{primary optimization problem} or PO, and to equation~\eqref{eq:CGMT-AO} as the \emph{auxiliary optimization problem} or AO.
Having stated the CGMT, let us now provide a rough outline of the steps involved in deriving the Gordon state evolution update. These are then concretely instantiated in heuristic derivations carried out in Section~\ref{subsec:heuristic-second-order}. In this section, we will deliberately avoid technical details; Section~\ref{sec:gordon} to follow makes all the steps rigorous in the general case, along the way to proving our main results in Theorems~\ref{thm:one_step_main_HO} and~\ref{thm:one_step_main_FO}.

\paragraph{Step 1: Write one iteration of algorithm as solution to convex optimization problem.}
As alluded to in Section~\ref{sec:it_alg}, each iteration of most algorithms---even on nonconvex log-likelihood functions---can be written as the solution to a convex optimization problem~\eqref{eq:opt_gen_intro}. To recall this more explicitly, suppose that running one step of the algorithm from the parameter $\bt_t$ results in the update $\bt_{t + 1} = \argmin_{\bt \in \real^d} \mathcal{L}(\bt; \bt_t, \bX, \by)$, where $\mathcal{L}$ is convex in $\bt$ for each fixed triple $(\bt_t, \bX, \by)$. This was indeed the case in all the illustrative examples in Section~\ref{sec:setup}, but is true more broadly with many iterative algorithms. 

\paragraph{Step 2: Write equivalent auxiliary optimization problem.} In this step, our goal is to write the minimization of the loss function $\mathcal{L}$---which is a function of the Gaussian design matrix $\bX$---as a \emph{simpler} minimization involving fewer Gaussian random variables. In particular, we would like to show that
\begin{align} \label{eq:aux-step-full}
\min_{\bt \in \real^d} \mathcal{L}(\bt; \bt_t, \bX, \by) \approx \min_{\bt \in \real^d} \mathfrak{L}(\bt; \bt_t, \bh, \bg),
\end{align}
where $\bh$ and $\bg$ denote (either $n$ or $d$-dimensional) standard Gaussian random \emph{vectors} and the $\approx$ symbol denotes some form of approximate equality in distribution motivated by Proposition~\ref{thm:gordon}. The latter optimization problem is typically easier to solve and admits a representation in terms of a small number of decision variables \citep{thrampoulidis2018precise}.

The key workhorse in this step is the CGMT, and the program typically consists of two substeps:
\begin{enumerate}[label=(\roman*)]
\item \underline{Frame optimization problem in the form~\eqref{eq:CGMT-PO}:} First, we show that there exists a standard Gaussian random matrix $\bG$ and a pair of fixed matrices $(\bL, \bM)$ such that the convex optimization problem~\eqref{eq:opt_gen_intro} can be written in the form~\eqref{eq:CGMT-PO}, i.e., 
\[
\min_{\theta \in \mathbb{R}^d} \mathcal{L}(\bt; \bt_t, \bX, \by) \overset{(d)}{=} \underbrace{ \min_{\bu \in \mathcal{U}} \max_{\bv \in \mathcal{V}} \; \langle \bM \bv, \bG \bL \bu \rangle + Q(\bu, \bv) }_{P(\bG)},
\]
where the pair of decision variables $(\bu, \bv)$ is determined by the parameters $(\bt, \bt_t)$.
Here, it is important to note that the function $Q$
may depend on randomness \emph{independent} of $\bG$.
\item \underline{Invoke the CGMT and formulate the auxiliary optimization problem:} 
Next, we use the CGMT to simplify the problem. Proposition~\ref{thm:gordon} shows that $P(\bG)$ is very well approximated (in distribution) by $A(\bg, \bh)$, and moreover, the optimization problem~\eqref{eq:CGMT-AO} involves two Gaussian random vectors $\bh$ and $\bg$ and in many cases is easier to solve. Applying this leads to an equivalence of the form~\eqref{eq:aux-step-full}, as desired.
\end{enumerate}

\paragraph{Step 3: Scalarize to obtain deterministic Gordon state evolution update:} As mentioned before, writing the optimization problem in terms of the objective $\mathfrak{L}$ was motivated by the fact that this objective could be \emph{scalarized} in terms of a low dimensional function. In this step, our goal is to establish the approximate equivalence
\begin{align}
\min_{\bt \in \real^d} \; \mathfrak{L}(\bt; \bt_t, \bh, \bg) \approx  \min_{\bxi} \; \widebar{L}(\bxi ; \bxi_t),
\end{align}
where
$\widebar{L}(\cdot ; \bxi_t): \real^3 \to \real$ is a deterministic function solely of a three-dimensional \emph{state}, and moreover, 
depends on the previous iterate only through its own three-dimensional state $\bxi_t$. 
The minimizers of the optimization problem on the RHS---along with some algebraic simplification---will then yield the deterministic, two-dimensional Gordon state evolution update $(\widebar{\parcomp}_{t + 1}, \widebar{\perpcomp}_{t + 1})$ as alluded to in the ansatz. As before, this step is typically accomplished via two further substeps:
\begin{enumerate}[label=(\roman*)]
\item \underline{Argue equivalence to a random low-dimensional function $\widebar{L}_n$:} This is often easy to do just via a change of variables, expressing the $d$-dimensional parameters $\bt$ and $\bt_t$ in terms of their respective states $\bxi$ and $\bxi_t$. It is important to note however that the objective function $\widebar{L}_n: \real^3 \to \real$ that results from this transformation is still random.
\item \underline{Use the LLN to obtain population loss $\widebar{L} = \lim_{n \to \infty} \; \widebar{L}_n$, and solve:} The key technique enabled by the scalarization above is that since we are now in low (i.e., $3$) dimensions, passing to the population loss still provides an accurate prediction of behavior even when $n$ is moderately large. Solving for the minimizers of $\widebar{L}$ can be done readily; typically, the solutions to this low-dimensional optimization problem will coincide with the solutions to a nonlinear system of equations (in three variables)\footnote{In the examples, we consider in this paper, the solutions turn out to be computable in closed form.}.
\end{enumerate}

\paragraph{Step 4: Argue that the empirical state evolution is tracked by the Gordon update.}
The final step is to use growth properties of the objective functions $\mathfrak{L}$ and $\widebar{L}_n$ around their minima to show that if their optimum values coincide, then so must their optimizers. This is the most technical step of the recipe, and a large portion of the proof is dedicated to establishing these properties.
\medskip

The following subsection clarifies these abstract steps by carrying out a concrete derivation on an example.  We emphasize that the derivations are heuristic and aim to illustrate the recipe.  We defer precise statements and their proofs to Section~\ref{sec:main-result-Gordon}.

\subsection{Implementing the recipe: A heuristic derivation in a special case}
\label{subsec:heuristic-second-order}

To illustrate the steps sketched above, we present a heuristic derivation of the Gordon update in the case of alternating minimization for {noiseless} phase retrieval~\eqref{eq:overall-AM-PR}.

\paragraph{Step 1: One-step update as a convex optimization problem.}
Letting $\odot$ denote the Hadamard product between two vectors of the same dimension, notice that the update when run from $\thetacur$ is given by
\begin{align*}
\thetanext &= 
\argmin_{\bt \in \mathbb{R}^d} \frac{1}{\sqrt{n}} \| \bX\bt - \mathsf{sgn}(\bX \bt_t) \odot \by\|_2,
\end{align*}
which is clearly the minimizer of the convex loss $\mathcal{L}(\bt; \bt_t, \bX, \by) =  \frac{1}{\sqrt{n}} \| \bX\bt - \mathsf{sgn}(\bX \bt_t) \odot \by\|_2$. 

\paragraph{Step 2: Equivalent auxiliary optimization problem.} Let us detail the two substeps individually:
\begin{enumerate}[label=(\roman*)]
\item \underline{Frame optimization problem in the form~\eqref{eq:CGMT-PO}:} First, observe that via the dual norm characterization of the $\ell_2$ norm, we have
\begin{align*}
\mathcal{L}(\bt; \bt_t, \bX, \by) =  \frac{1}{\sqrt{n}} \| \bX\bt - \mathsf{sgn}(\bX \bt_t) \odot \by\|_2 = \max_{\| \bv \|_2 \leq 1} \; \frac{1}{\sqrt{n}}\langle \bv, \bX \bt \rangle - \frac{1}{\sqrt{n}}\langle \bv, \mathsf{sgn}(\bX \bt_t) \odot \by \rangle.
\end{align*}
The first term in the RHS above is bilinear in the Gaussian random matrix $\bX$, but the second term also depends on $\bX$ and so does not immediately take the form required in equation~\eqref{eq:CGMT-PO}.
To remedy this issue, consider the fixed subspace $S_t = \mathsf{span}(\thetastar, \thetacur)$ 
and write 
\begin{align*}
\mathcal{L}(\bt; \bt_t, \bX, \by) =  \max_{\| \bv \|_2 \leq 1} \; \frac{1}{\sqrt{n}}\langle \bv, \bX \proj_{S_t}^{\perp}\bt \rangle + \frac{1}{\sqrt{n}}\langle \bv, \bX \proj_{S_t}\bt \rangle- \frac{1}{\sqrt{n}}\langle \bv, \mathsf{sgn}(\bX \bt_t) \odot \by \rangle,
\end{align*}
where $\proj_{S_t}$ and $\proj_{S_t}^{\perp}$ denote projection matrices onto the subspaces $S_t$ and $S_t^{\perp}$, respectively.
By construction, the first term on the RHS is independent of the rest, and so we may replace the matrix $\bX$ in this term with an independent copy $\bG$ to obtain
\begin{align*}
\mathcal{L}(\bt; \bt_t, \bX, \by) \overset{(d)}{=}
\max_{\| \bv \|_2 \leq 1} \; \frac{1}{\sqrt{n}}\langle \bv, \bG \proj_{S_t}^{\perp}\bt \rangle + Q(\bv, \bt), 
\end{align*}
where $Q(\bv, \bt) = \frac{1}{\sqrt{n}}\langle \bv, \bX \proj_{S_t}\bt \rangle- \frac{1}{\sqrt{n}}\langle \bv, \mathsf{sgn}(\bX \bt_t) \odot \by \rangle$ is now independent of $\bG$. 
This leads to the definition
\begin{align*}
P(\bG) = \min_{\bt \in \real^d} \max_{\| \bv \|_2 \leq 1} \; \frac{1}{\sqrt{n}}\langle \bv, \bG \proj_{S_t}^{\perp}\bt \rangle + Q(\bv, \bt), 
\end{align*}
which takes the form~\eqref{eq:CGMT-PO}.

\item \underline{Invoke the CGMT and approximate the minimum of loss function.}
Given that this is a heuristic derivation, we ignore for the moment that the set $\real^d$ is not compact and use the CGMT to write $P(\bG) \approx A(\bg, \bh)$,
where
\[
A(\bg, \bh) = \min_{\bt \in \real^d} \max_{\| \bv \|_2 \leq 1} \frac{1}{\sqrt{n}} \| \bv \|_2 \langle \bh, \proj_{S_t}^{\perp} \bt \rangle + \frac{1}{\sqrt{n}}\| \proj_{S_t}^{\perp} \bt \|_2 \langle \bg, \bv \rangle + Q(\bv, \bu),
\]
and the approximation $T_1 \approx T_2$ signifies that the CDFs of the two random variables $T_1$ and $T_2$ match up to a factor $2$ (see Proposition~\ref{thm:gordon}).
Note that heuristically speaking, we have shown through the previous steps that \sloppy
\mbox{$\min_{\bt \in \real^d} \mathcal{L}(\bt; \bt_t, \bX, \by) \approx \min_{\bt \in \real^d}\mathfrak{L}(\bt; \bt_t, \bh, \bg)$}, where $\mathfrak{L}(\bt; \bt_t, \bh, \bg)$ has the variational representation
\[
\max_{\| \bv \|_2 \leq 1} \; \frac{1}{\sqrt{n}} \| \bv \|_2 \cdot \langle \bh, \proj^{\perp}_{S_t}\bt \rangle + \frac{1}{\sqrt{n}} \| \proj^{\perp}_{S_t}\bt  \|_2 \cdot \langle \bv, \bg \rangle + \frac{1}{\sqrt{n}}  \langle \bv, \bX \proj_{S_t}\bt \rangle- \frac{1}{\sqrt{n}}\langle \bv, \mathsf{sgn}(\bX \bt_t) \odot \by \rangle.
\]
\end{enumerate}

\paragraph{Step 3: Scalarize and obtain Gordon update.} We now scalarize the problem by introducing the change of variables
\begin{align}
	\label{eq:heuristic-gordon-change-of-var}
\parcomp = \langle \bt, \thetastar \rangle, \qquad \perpone = \frac{\langle \bt, \proj_{\thetastar}^{\perp}\thetacur\rangle}{\| \proj_{\thetastar}^{\perp}\bt_t \|_2}, \qquad \text{ and } \perptwo = \| \proj_{S_t}^{\perp} \bt \|_2, 
\end{align}
where as before
$\parcomp$ denotes the projection of the decision variable $\bt$ onto the ground-truth $\thetastar$ (since by assumption $\| \thetastar \|_2 = 1$), but the perpendicular component $\perpcomp$ (cf.~\eqref{eq:def-state}) has been split into two further components based on the current iterate $\bt_t$. The scalar $\perpone$ is the projection of $\bt$ onto the component of the current iterate $\thetacur$ orthogonal to the ground-truth (i.e., onto the unit vector $\proj_{\thetastar}^{\perp}\bt_t/\|\proj_{\thetastar}^{\perp}\bt_t\|_2$), and the scalar $\perptwo$ is the magnitude of the portion of $\bt$ orthogonal to the subspace spanned by the ground-truth and the current iterate.  Analogously, let
$\parcomp_t = \langle \bt_t, \thetastar \rangle$  and  $\perpcomp_t = \| \proj_{\thetastar}^{\perp} \bt_t \|_2$
and define the independent, Gaussian random vectors
$\bz_1 = \bX \thetastar$  and $\bz_2 = \frac{\bX \proj_{\thetastar}^{\perp} \thetacur}{\| \proj_{\thetastar}^{\perp} \thetacur \|_2}$.
With this notation, we have
\[
\bX \bt_t = \parcomp_t \bz_1 + \perpcomp_t \bz_2, \qquad \by = \lvert \bz_1 \rvert, \qquad \text{ and } \qquad \bX\proj_{S_t} \bt = \parcomp \bz_1 + \perpone \bz_2.
\]
Use these to define, for two scalar Gaussian variates $(Z_1, Z_2)$, the random variable \sloppy
\mbox{$\Omega_t = \mathsf{sgn}(\parcomp_t Z_1 + \perpcomp_t Z_2) \lvert Z_1 \rvert$} as well as the random vector $\bomega_t = \mathsf{sgn}(\parcomp_t \bz_1 + \perpcomp_t \bz_2) \lvert \bz_1 \rvert$. A sequence of steps, detailed in Appendix~\ref{app:step4-heuristic}, implements both substeps referenced above to show that
\begin{subequations}
\begin{align} 
\min_{\bt \in \real^d} \; \mathfrak{L}(\bt; \bt_t, \bX, \by) &\approx \min_{\parcomp \in \mathbb{R}, \perpone \in \mathbb{R}, \perptwo \geq 0} \Bigl( - \frac{\perptwo \| \proj_{S_t}^{\perp} \bh\|_2}{\sqrt{n}}  + \frac{1}{\sqrt{n}}\|\bomega_t - \parcomp \bz_1 - \perpone \bz_2 - \perptwo \bg\|_2 \Bigr)_{+} \label{eq:Lbarn-opt-heuristic}\\
&\approx  \min_{\parcomp \in \mathbb{R}, \perpone \in \mathbb{R}, \perptwo \geq 0} \Bigl( - \frac{\perptwo}{\sqrt{\kappa}}  + \sqrt{ \EE \bigl\{\bigl(\Omega_t   - \perptwo H - \parcomp Z_1 - \perpone Z_2\bigr)^2\bigr\} } \Bigr)_{+},\label{eq:astar-opt}
\end{align}
\end{subequations}
where we have used the shorthand $x_+ = \max\{x, 0\}$. Letting $\bxi = (\parcomp, \perpone, \perptwo)$ and $\bxi_t = (\parcomp_t, \perpone_t, \perptwo_t)$, notice that the RHS of Eq.~\eqref{eq:Lbarn-opt-heuristic} is given by minimizing a random loss $\widebar{L}_n(\bxi; \bxi_t)$ over all $\bxi \in \real^2 \times [0, \infty)$ and the RHS of Eq.~\eqref{eq:astar-opt} is given by minimizing a deterministic loss $\widebar{L}(\bxi; \bxi_t)$ over the same domain.

Since this computation only involves optimizing over a few variables, it can be shown via straightforward calculation---detailed for convenience in Appendix~\ref{app:step5-heuristic}---that the minimizers of the RHS in equation~\eqref{eq:astar-opt} are given by
\begin{align} \label{eq:second-step-calc}
\widebar{\parcomp} = \EE\{ Z_1 \Omega_t\}, \qquad \widebar{\perpone} = \EE\{ Z_2 \Omega_t\}, \quad \text{ and } \; \widebar{\perptwo} = \sqrt{\frac{\EE\{\Omega_t^2\} - (\EE\{Z_1 \Omega_t\})^2 - (\EE\{Z_2 \Omega_t\})^2}{\kappa - 1}}.
\end{align}
Letting $\anglecurr_t = \tan^{-1}(\perpcomp_t/\parcomp_t)$, some calculation shows that
\begin{align*}
	\EE\{Z_1 \Omega_t\} = 1 - \frac{1}{\pi}(2 \anglecurr_t - \sin(2\anglecurr_t)), \qquad \EE\{Z_2 \Omega_t\} = \frac{2}{\pi} \sin^2(\anglecurr_t), \qquad \text{ and } \qquad \EE\{\Omega_t^2\} = 1.
\end{align*}
Finally, recalling the change of variables~\eqref{eq:heuristic-gordon-change-of-var}
 and noting that
$\| \proj_{\thetastar}^{\perp} \bt \|_2 = \sqrt{\perptwo^2 + \perpone^2}$, we have
the Gordon state evolution update
\begin{align} 
\begin{split} \label{eq:Gordon-update-example}
	\widebar{\parcomp}_{t+1} &= 1 - \frac{1}{\pi}(2 \anglecurr_t - \sin(2\anglecurr_t)), \text{ and } \\
	\widebar{\perpcomp}_{t+1} &= \sqrt{\frac{4}{\pi^2} \sin^4(\anglecurr_t) + \frac{1 - (1 - \frac{1}{\pi}(2 \anglecurr_t - \sin(2\anglecurr_t)))^2 - \frac{4}{\pi^2} \sin^4(\anglecurr_t)}{\kappa - 1}}.
	\end{split}
\end{align}

\paragraph{Step 4: Random state evolution is tracked by Gordon update:} The final step is to show that both $|\inprod{\bt_{t + 1}}{\thetastar} - \widebar{\parcomp}_{t + 1}|$ and $| \| \proj^{\perp}_{\thetastar} \bt_{t + 1} \|_2 - \widebar{\perpcomp}_{t + 1} |$ are small, so that the deterministic Gordon update faithfully tracks the random pair $\parcomp_{t + 1} = \inprod{\bt_{t + 1}}{\thetastar}$ and $\perpcomp_{t + 1} = \| \proj^{\perp}_{\thetastar} \bt_{t + 1} \|_2$.  This is achieved by showing \mbox{(a) a} growth condition (typically strong convexity) around the minimum of the scalarized auxiliary loss $\widebar{L}_n$ and (b) that the empirical minimizers 
\[
\bxi_n := (\parcomp_n, \perpone_n, \perptwo_n) = \argmin_{\parcomp \in \mathbb{R}, \perpone \in \mathbb{R}, \perptwo \geq 0} \widebar{L}_n(\parcomp, \perpone, \perptwo)
\]
are close to the deterministic state $\widebar{\bxi} = (\widebar{\parcomp}, \widebar{\perpone}, \widebar{\perptwo})$.  With these two ingredients in hand, we show that for any vector $\bt$ for which $\parcomp(\bt), \perpone(\bt),$ or $\perptwo(\bt)$ is far from $\widebar{\parcomp}, \widebar{\perpone}$, or $\widebar{\perptwo}$, respectively, the value $\mathcal{L}(\bt)$ is far from the deterministic value~\eqref{eq:astar-opt}.  Additionally, we show that the minimum of $\mathcal{L}$ over the entire domain $\mathbb{R}^d$ is close to the deterministic value~\eqref{eq:astar-opt}.  Thus, it must be the case that if $\bt$ is the minimizer of the loss $\mathcal{L}$, then the quantities $\parcomp(\bt), \perpone(\bt)$, and $\perptwo(\bt)$ are close to the respective deterministic quantities $\widebar{\parcomp}, \widebar{\perpone}$, and $\widebar{\perptwo}$. 

\begin{remark} \label{rem:SE-op}
Two key observations to make at this juncture are that (a) the Gordon state evolution update can be run from any  point $\bt \in \real^d$, not just $\bt_t$, and (b) the update equations~\eqref{eq:Gordon-update-example} define a map $(\parcomp_t, \perpcomp_t) \mapsto (\widebar{\parcomp}_{t + 1}, \widebar{\perpcomp}_{t + 1})$. As postulated in the ansatz at the beginning of Section~\ref{sec:high-level-steps}, the Gordon state evolution update takes the form of a \emph{state evolution operator}, mapping $\mathbb{R}^2$ to itself. This will also be true in our other specific examples, and so we use this terminology in the sequel alongside the notation $\Sopgordon = (\parcompgordon, \perpcompgordon)$ to denote this operator.
\end{remark}

With the intuition gained from this heuristic derivation, we are now in a position to state our general result obtained via this recipe.

\subsection{The general result}
\label{sec:main-result-Gordon}
We now formally derive and prove concentration of the one-step Gordon updates for higher-order and first-order methods run on a generic class of problems.
As observed in Remark~\ref{rem:SE-op}, the Gordon state evolution update is well-defined when run from any current iterate $\bt$.
Accordingly, fix an \emph{arbitrary} $d$-dimensional parameter $\bt$ and consider the one-step update~\eqref{eq:Tn}, restated below for convenience
\begin{align}\label{eq:gordon_one_step}
\mathcal{T}_n(\bt) \in \arg\min_{\bt'} \Lc(\bt';\bt,\bX,\by),
\end{align}
where the loss function takes either of the forms in equations~\eqref{eq:sec_order_gen} or~\eqref{eq:GD_gen}. For convenience, use the shorthand 
\begin{align} \label{eq:SE-shorthand}
(\parcomp, \perpcomp) = (\parcomp(\bt), \perpcomp(\bt)) \quad \text{ and } \quad (\parcompplus,\perpcompplus) = (\parcomp(\mathcal{T}_n(\bt)), \perpcomp(\mathcal{T}_n(\bt))).
\end{align}
The main result of this section shows that for algorithms whose one-step updates take the form~\eqref{eq:sec_order_gen} or~\eqref{eq:GD_gen}, the pair $(\parcompplus,\perpcompplus)$ concentrates around the deterministic Gordon state evolution update run from $(\parcomp, \perpcomp)$, i.e., the pair $\Sopgordon(\parcomp, \perpcomp)$. 

This result holds under some mild assumptions on the weight function used to define these algorithms. In particular,
recall that the losses in equations~\eqref{eq:sec_order_gen} and~\eqref{eq:GD_gen} are parameterized by a weight function \sloppy
\mbox{$\omega:\R\times\R\rightarrow\R$}. Also recall the model~\eqref{eq:model}, and let $Q$ denote a random variable drawn from the latent variable distribution $\mathbb{Q}$. Let $(Z_1, Z_2, Z_3)$ denote a triple of i.i.d. standard Gaussian vectors, and 
let
\begin{align}\label{eq:omegat_def_main}
\Omega = \omega\big(\parcomp Z_1 +\perpcomp Z_2\,,\,f(Z_1; Q)+\noisestd Z_3 \big).
\end{align}
The first assumption requires that this random variable is light-tailed. The second assumption is technical, and requires a lower bound on a particular functional of $\Omega$.
\begin{assumption} \label{ass:omega}
The random variable $\Omega$~\eqref{eq:omegat_def_main} is sub-Gaussian with Orlicz norm bounded as $\| \Omega \|_{\psi_2} \leq K_1$, for some parameter $K_1 > 0$.
\end{assumption}

\begin{assumption}\label{ass:omega-lb}
For a parameter $K_2 > 0$, we have
\begin{align*}
\EE[\Omega^2]-\left(\EE[Z_1\Omega]\right)^2-\left(\EE[Z_2\Omega]\right)^2 \geq K_2. 
\end{align*} 
\end{assumption}
We show in Section~\ref{sec:main-results} to follow that several models and algorithms satisfy Assumptions~\ref{ass:omega} and~\ref{ass:omega-lb}. Before stating our main result, it is helpful to first define the deterministic Gordon updates themselves.
\begin{definition}[Gordon state evolution update: Higher-order methods]
\label{def:starnext_main_HO}
Let $(Z_1,Z_2, Z_3)$ denote a triple of independent standard Gaussian random variables and use these to define the random variable $\Omega$ as in equation~\eqref{eq:omegat_def_main}. If the loss function $\Lc$ is as in equation \eqref{eq:sec_order_gen}, then define
\begin{align} \label{eq:theta_starnext_sec_main}
\parcompgordon= \EE[Z_1\Omega]  \quad \text{ and } \quad 
\perpcompgordon= \sqrt{ (\EE[Z_2\Omega])^2 +  \frac{1}{\kappa - 1} \left( \EE[\Omega^2]-\left(\EE[Z_1\Omega] \right)^2- \left( \EE[Z_2\Omega] \right)^2 \right) }.
\end{align}
\end{definition}
Next, we state the update for first-order methods, assuming that\footnote{For larger stepsizes, similar update equations still apply, but some delicacy is required to handle the signs correctly.} $\eta \leq 1/2$.
\begin{definition} [Gordon state evolution update: First-order methods]
\label{def:starnext_main_FO}
Suppose $\eta \leq 1/2$. Let $(Z_1,Z_2, Z_3)$ denote a triple of independent standard Gaussian random variables and use these to define the random variable $\Omega$ as in equation~\eqref{eq:omegat_def_main}. If $\Lc$ is as in equation \eqref{eq:GD_gen}, then define
\begin{align} \label{eq:theta_starnext_gd_main}
\parcompgordon= \parcomp  - 2 \eta \cdot \EE \left[Z_1 \Omega\right] \quad \text{ and } \quad
\perpcompgordon= \sqrt{ ( \perpcomp  - 2\eta \cdot \EE\left[Z_2 \Omega\right] )^2  + \frac{4\eta^2}{\kappa} \cdot \EE\left[\Omega^2\right] }.
\end{align}
\end{definition}

In the sequel, we will evaluate the expressions in equations~\eqref{eq:theta_starnext_sec_main} and~\eqref{eq:theta_starnext_gd_main} for concrete models and algorithms. However, at this level of generality, a salient similarity between higher-order and first-order updates is already apparent, since it can be shown that the population state evolution update can be obtained by taking $\kappa \to \infty$ in its Gordon counterpart.

\begin{remark}[Population updates coincide for stepsize $\eta =1/2$] \label{rem:pop-coincide}
Set $\eta = 1/2$ and send $\kappa \to \infty$, so that the Gordon update now coincides with its population counterpart. Then using Remark~\ref{rem:weight-rel} to relate the weight functions for higher and first-order updates, we obtain that
the two Gordon updates in equations~\eqref{eq:theta_starnext_sec_main} and~\eqref{eq:theta_starnext_gd_main} coincide. On the other hand, for finite $\kappa$, these updates are always distinct.
\end{remark}
We return to explore this phenomenon in Section~\ref{sec:main-results} to follow, deriving convergence guarantees for first-order updates when $\eta = 1/2$ by using the Gordon state evolution update in place of the population update.
But first, we state our main results characterizing the concentration of the random pair $(\parcompplus, \perpcompplus)$ around $(\parcompgordon, \perpcompgordon)$. We state two very similar theorems for convenience since they apply under a slightly different set of assumptions. The first theorem applies to higher-order updates under both Assumptions~\ref{ass:omega} and~\ref{ass:omega-lb}, and the second theorem applies to first-order updates but requires only Assumption~\ref{ass:omega} to hold.

\begin{theorem} [Higher-order deterministic prediction]
\label{thm:one_step_main_HO}
Consider the general model~\eqref{eq:model} for the data, and procedures that obey the general one-step update~\eqref{eq:sec_order_gen}. Recall the shorthand $(\parcomp, \perpcomp, \parcomp^+, \perpcomp^+)$ from equation~\eqref{eq:SE-shorthand}. Suppose that Assumptions~\ref{ass:omega} and~\ref{ass:omega-lb} hold on the associated weight function $\wfun$ with parameters $K_1$ and $K_2$, respectively. Consider the pair of scalars $(\parcompgordon, \perpcompgordon)$ from Definition~\ref{def:starnext_main_HO}.  There exists a universal positive constant $C_1$ as well as a pair of positive constants $(C_K, C_K')$ depending only on the pair $(K_1, K_2)$ such that the following is true.
If $\kappa \geq C_1$, then 
 \begin{subequations} \label{eq:main-conc_HO}
 \begin{enumerate}[label=(\alph*)]
  \item Provided we further have $n \geq C_{K}' \cdot \log(1/\delta)$, the perpendicular component satisfies
 \begin{align}\label{eq:sigma_conc_main_HO}
 \Pro\left\{ |\perpcompplus-\perpcompgordon| \geq C_K \left( \frac{\log (1/\delta)}{n} \right)^{1/4} \right\} \leq \delta, \text{ and }
 \end{align}
 \item The parallel component satisfies
 \begin{align}\label{eq:xi_conc_main_HO}
 \Pro\left\{ \bigl \lvert \parcompplus-\parcompgordon \bigr\rvert  \geq C_K \left( \frac{\log^7 (1 / \delta)}{n} \right)^{1/2} \right\} \leq \delta.
 \end{align}
 \end{enumerate}
 \end{subequations}
\end{theorem}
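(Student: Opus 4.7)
My plan is to follow the four-step CGMT recipe laid out in Section~\ref{sec:high-level-steps}, instantiated at the general weight function $\omega$. Part~(a) on $\perpcompplus$ will come from a direct application of the recipe to the convex program~\eqref{eq:gordon_one_step}; part~(b) on $\parcompplus$ will require a separate, sharper argument based on a leave-one-out expansion of the closed-form update~\eqref{eq:second-order-update-sum}.

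\textbf{Part (a): CGMT plus scalarization.} Fixing $\bt$, I would let $S_t=\mathrm{span}(\thetastar,\bt)$ and collect $\omega_i=\omega(\langle\bx_i,\bt\rangle,y_i)$ into a vector $\bomega\in\real^n$. Since $\bomega$ depends on $\bX$ only through $\bX\bt$, the dual representation
\[
\min_{\bt'}\tfrac{1}{\sqrt n}\|\bX\bt'-\bomega\|_2
 = \min_{\bt'}\max_{\|\bv\|_2\le 1}\tfrac{1}{\sqrt n}\langle\bv,\bX\proj_{S_t}^{\perp}\bt'\rangle + \tfrac{1}{\sqrt n}\langle\bv,\bX\proj_{S_t}\bt'-\bomega\rangle
\]
isolates the independent Gaussian block $\bX\proj_{S_t}^{\perp}$, which Proposition~\ref{thm:gordon} allows me to swap for its Gordon surrogate after a standard truncation that confines $\bt'$ to a high-probability compact ball. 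Scalarizing via the change of variables in~\eqref{eq:heuristic-gordon-change-of-var} reduces the auxiliary problem to minimizing a random three-dimensional objective $\widebar L_n(\alpha,\mu,\nu)$ on $\real^2\times[0,\infty)$ that, under Assumption~\ref{ass:omega}, concentrates pointwise and uniformly on compacts around a deterministic limit $\widebar L$ at rate $n^{-1/2}$. Explicit minimization of $\widebar L$ identifies the optimizer with $(\parcompgordon,\widebar\mu,\widebar\nu)$ as in Definition~\ref{def:starnext_main_HO}, and the identity $\perpcompgordon=\sqrt{\widebar\mu^2+\widebar\nu^2}$ packages the perpendicular coordinate. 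To pass from value concentration to minimizer concentration I would establish a quantitative quadratic growth for $\widebar L_n$ around its minimum, using Assumption~\ref{ass:omega-lb} to keep the composite square-root-of-quadratic loss smooth and strongly convex there. Combining these two ingredients yields an $n^{-1/4}$-rate deviation bound for the empirical minimizers $(\widehat\mu,\widehat\nu)$, hence for $\perpcompplus=\sqrt{\widehat\mu^2+\widehat\nu^2}$, as claimed in~\eqref{eq:sigma_conc_main_HO}.

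\textbf{Part (b): leave-one-out for the parallel component.} Applying the CGMT argument of part (a) directly to $\parcompplus$ would give only the suboptimal $n^{-1/4}$ rate, which is insufficient for the random-initialization analysis used later. Instead, my plan is to work with the explicit closed form
\[
\parcompplus = \thetastar^{\top}\Bigl(\tfrac{1}{n}\bX^{\top}\bX\Bigr)^{-1}\tfrac{1}{n}\sum_{i=1}^{n}\omega_i\,\bx_i,
\]
decompose $\bx_i=z_{1,i}\thetastar+\widetilde{\bx}_i$ with $z_{1,i}=\langle\bx_i,\thetastar\rangle$ independent of $\widetilde{\bx}_i=\proj_{\thetastar}^{\perp}\bx_i$, and use a Sherman--Morrison/leave-one-out manipulation to peel off the rank-one contribution of $\thetastar$ from $(n^{-1}\bX^{\top}\bX)^{-1}$. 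This expresses $\parcompplus$ as the scalar empirical average $n^{-1}\sum_i z_{1,i}\,\omega_i$ plus corrections of size $\ordertil(1/n)$. The leading term concentrates at rate $n^{-1/2}$ around $\EE[Z_1\Omega]=\parcompgordon$ by Bernstein's inequality for sub-exponential variables, with Assumption~\ref{ass:omega} furnishing the requisite $\psi_1$ bound on $z_{1,i}\omega_i$; the corrections are controlled using Wishart concentration for $(\bX^{\top}\bX)^{-1}$ restricted to the complement of $\thetastar$, together with the sub-Gaussian tail of $\bomega$. The $\log^{7}(1/\delta)$ factor in~\eqref{eq:xi_conc_main_HO} accumulates from nested union bounds across the leave-one-out terms, the Wishart event, and the sub-Gaussian level sets.

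\textbf{Main obstacle.} The hardest step will be the quadratic-growth/strong-convexity bound for $\widebar L_n$ needed to turn value concentration into minimizer concentration: the scalarized loss is a positive-part of a square root of a quadratic (cf.~\eqref{eq:Lbarn-opt-heuristic}), and extracting a growth modulus that depends only on $(K_1,K_2)$ and is stable under sub-Gaussian perturbations, uniformly over a sublevel set whose radius shrinks with $n$, is precisely what Assumption~\ref{ass:omega-lb} is designed to enable by keeping the value at the deterministic minimum bounded away from zero. A secondary challenge, specific to part~(b), is forcing the leave-one-out expansion to yield only polylogarithmic dependence on $1/\delta$, which requires a carefully chained rather than naive union bound over the $n$ leave-one-out terms and the levels of the sub-Gaussian tail of $\bomega$.
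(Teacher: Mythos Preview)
Your plan for part~(a) is essentially what the paper does: CGMT applied to the dualized least-squares step, scalarization to three variables, uniform concentration of the scalarized auxiliary loss, and strong convexity of $\widebar{L}_n$ (this is where Assumption~\ref{ass:omega-lb} enters) to convert value concentration into minimizer concentration at the $n^{-1/4}$ rate.

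Your plan for part~(b) diverges from the paper and, as written, has a gap. You propose a \emph{feature-wise} Sherman--Morrison: peel off the rank-one contribution of $\thetastar$ from $(\bX^\top\bX)^{-1}$. The paper instead does a \emph{sample-wise} leave-one-out: writing $Z=\langle\thetastar,\mathcal{T}_n(\bt)\rangle$, it resamples one observation $(\bx_j,y_j)\mapsto(\bx_j',y_j')$ to form $Z_j'$, applies Sherman--Morrison to $\bSig=\bSig_j+\bx_j\bx_j^\top$, and controls $\|Z-\EE Z\|_q$ via the Boucheron--Lugosi--Massart moment inequality in terms of $\sum_j\|\EE[(Z-Z_j')^2\mid\text{data}]\|_{q}$. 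Carefully bounding each summand with Wishart moment estimates yields $\|Z-\EE Z\|_q\lesssim K_1\,q^{7/2}/\sqrt{n}$; the exponent $7/2$ is exactly what produces $\log^{7}(1/\delta)$ after converting moments to tails. The bias $|\EE Z-\parcompgordon|$ is bounded separately by $O(K_1/\sqrt{n})$, again via the sample-wise Sherman--Morrison.

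Two concrete problems with your sketch. First, the claim that the $\log^7$ ``accumulates from nested union bounds'' does not match any actual mechanism; in the paper it is an artifact of the moment exponent, not of chaining. Second, your assertion that after peeling off $\thetastar$ the corrections are $\ordertil(1/n)$ is unsubstantiated: $\omega_i$ depends on $\widetilde{\bx}_i$ through $\langle\bx_i,\bt\rangle$ (since $\bt$ has nonzero $\proj_{\thetastar}^\perp$-component), so the residualized inner product you would obtain from Frisch--Waugh still carries nontrivial dependence on $\widetilde{\bX}$, and even the leading prefactor $n/\|z_1^\perp\|_2^2\approx n/(n-d)$ already introduces an $O(1/\kappa)$ bias that must be shown to cancel against the correction. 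The sample-wise route avoids this because the population operator $\langle\thetastar,\mathcal{T}(\bt)\rangle$ equals $\parcompgordon$ exactly, so one only has to control a fluctuation plus a bias that is analyzed directly.
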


The main theorem for first-order methods is extremely similar, except that we make the assumption\footnote{This assumption is not required for higher-order methods because the sub-Gaussianity of the $\omega$ function suffices to ensure that the pair $(\parcomp^{\gor}, \perpcomp^{\gor})$ remains bounded (see Definition~\ref{def:starnext_main_HO}). The same is not true for first-order methods; as is evident from Definition~\ref{def:starnext_main_FO}, we also require the pair $(\alpha, \beta)$ to be bounded.} $\parcomp \vee \perpcomp \leq 3/2$ and obtain sharper logarithmic factors. We also state the theorem for stepsize $\eta \leq 1/2$ for convenience.

\begin{theorem} [First-order deterministic prediction]
\label{thm:one_step_main_FO}
Consider the general model~\eqref{eq:model} for the data, and procedures that obey the general one-step update~\eqref{eq:GD_gen} for some $\eta \leq 1/2$. Recall the shorthand $(\parcomp, \perpcomp, \parcomp^+, \perpcomp^+)$ from equation~\eqref{eq:SE-shorthand} and assume that $\parcomp \vee \perpcomp \leq 3/2$. Suppose that Assumption~\ref{ass:omega} holds on the associated weight function $\wfun$ with parameter $K_1$. Consider the pair of scalars $(\parcompgordon, \perpcompgordon)$ from Definition~\ref{def:starnext_main_FO}.  There exists a universal positive constant $C_1$ as well as a pair of positive constants $(C_K, C_K')$, depending only on $K_1$ such that the following is true.
If $\kappa \geq C_1$, then 
 \begin{subequations} \label{eq:main-conc_FO}
 \begin{enumerate}[label=(\alph*)]
  \item Provided we further have $n \geq C_K' \cdot \log(1/\delta)$, the perpendicular component satisfies
 \begin{align}\label{eq:sigma_conc_main_FO}
 \Pro\left\{ |\perpcompplus-\perpcompgordon| \geq C_K \left( \frac{\log (1/\delta)}{n} \right)^{1/4}  \right\} \leq \delta, \text{ and }
 \end{align}
 \item The parallel component satisfies
 \begin{align}\label{eq:xi_conc_main_FO}
 \Pro\left\{ \bigl \lvert \parcompplus-\parcompgordon \bigr\rvert  \geq C_K \left( \frac{\log (1 / \delta)}{n} \right)^{1/2} \right\} \leq \delta.
 \end{align}
 \end{enumerate}
 \end{subequations}
\end{theorem}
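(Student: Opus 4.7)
My plan is to exploit the explicit closed form of the first-order update in~\eqref{eq:GD_gen}, namely $\thetanext = \thetacur - (2\eta/n)\bX^\top \bw$ with $w_i = \wfun(\langle \bx_i,\thetacur\rangle, y_i)$, and reduce the analysis to direct sub-Gaussian and chi-squared concentration. The CGMT route of Proposition~\ref{thm:gordon} used for the higher-order Theorem~\ref{thm:one_step_main_HO} offers an alternate, unified treatment, but the linearity of the first-order update shortens the argument considerably and avoids the need for the uniform growth arguments required in the more complex higher-order case.

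\paragraph{Decomposing the update.} First I would orthogonalize the design. Choose an orthonormal basis $\bU = [\thetastar \mid \bu_2 \mid \bu_3 \mid \cdots \mid \bu_d]$ of $\R^d$ with $\bu_2 = \proj_{\thetastar}^{\perp}\thetacur/\perpcomp$. Then $\bX\bU = [\bz_1 \mid \bz_2 \mid \tilde{\bG}]$, where $\bz_1,\bz_2 \sim \NORMAL(0,\bI_n)$ are independent and $\tilde{\bG}\in\R^{n\times(d-2)}$ has iid $\NORMAL(0,1)$ entries independent of $(\bz_1,\bz_2)$. Since $\bw$ depends on $\bX$ only through $(\bz_1,\bz_2)$ together with the noise and latent variables, it is independent of $\tilde{\bG}$. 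Projecting the update onto the basis yields
\begin{align*}
\parcompplus &= \parcomp - \tfrac{2\eta}{n}\langle \bz_1,\bw\rangle, \\
\perpcompplus &= \sqrt{\bigl(\perpcomp - \tfrac{2\eta}{n}\langle \bz_2,\bw\rangle\bigr)^{2} + \tfrac{4\eta^{2}}{n^{2}}\|\tilde{\bG}^\top \bw\|_{2}^{2}}.
\end{align*}

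\paragraph{Concentration.} By Assumption~\ref{ass:omega}, the entries $w_i$ are iid sub-Gaussian copies of $\Omega$ with $\psi_2$-norm at most $K_1$. Hence each $z_{j,i}w_i$ is sub-exponential with $\psi_1$-norm bounded by a constant depending on $K_1$, and Bernstein's inequality gives, with probability at least $1-\delta$ and for $j\in\{1,2\}$,
\begin{align*}
\left|\tfrac{1}{n}\langle \bz_j,\bw\rangle - \E[Z_j\Omega]\right| \;\lesssim_{K_1}\; \sqrt{\log(1/\delta)/n},
\end{align*}
provided $n\gtrsim_{K_1}\log(1/\delta)$, with the same reasoning applied to $w_i^{2}-\E[\Omega^2]$ yielding $|\|\bw\|_{2}^{2}/n-\E[\Omega^2]|\lesssim_{K_1}\sqrt{\log(1/\delta)/n}$. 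For the chi-squared piece, conditioning on $\bw$ gives $\|\tilde{\bG}^\top \bw\|_{2}^{2} \overset{d}{=} \|\bw\|_{2}^{2}\cdot V$ with $V\sim\chi^{2}_{d-2}$ independent of $\bw$; the Laurent--Massart inequality gives $|V/n-(d-2)/n|\lesssim\sqrt{\log(1/\delta)/(\kappa n)}+\log(1/\delta)/n$. Multiplying the two concentrations (and using $\kappa\geq C_1$ to absorb $1/\kappa$ into the constant) yields
\begin{align*}
\left|\tfrac{4\eta^{2}}{n^{2}}\|\tilde{\bG}^\top \bw\|_{2}^{2}-\tfrac{4\eta^{2}}{\kappa}\E[\Omega^2]\right| \;\lesssim_{K_1}\; \sqrt{\log(1/\delta)/n}.
\end{align*}

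\paragraph{Assembly and main obstacle.} Part (b) of the theorem, bound~\eqref{eq:xi_conc_main_FO}, is then immediate. For part (a), combining the three displays above (and using $\perpcomp\le 3/2$ together with $|\E[Z_2\Omega]|\leq K_1$ to control the cross term from expanding the squared first summand) yields $|(\perpcompplus)^{2}-(\perpcompgordon)^{2}|\lesssim_{K_1}\sqrt{\log(1/\delta)/n}$, and the elementary inequality $|\sqrt{a}-\sqrt{b}|\leq\sqrt{|a-b|}$ then produces~\eqref{eq:sigma_conc_main_FO}. The principal difficulty---and the reason for the weaker $(\log(1/\delta)/n)^{1/4}$ exponent in the perpendicular direction---is that $\perpcompgordon$ can be arbitrarily close to $0$, so the sharper identity $|\sqrt{a}-\sqrt{b}|=|a-b|/(\sqrt{a}+\sqrt{b})$ cannot be applied with a dimension-free constant. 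The hypothesis $\parcomp\vee\perpcomp\leq 3/2$ is also essential at this step, since otherwise the cross term $\perpcomp\cdot\E[Z_2\Omega]$ from expanding $(\perpcomp-2\eta\E[Z_2\Omega])^{2}$ would scale with the iterate and amplify the concentration error; this is a feature of first-order methods that the higher-order update implicitly avoids through the regularizing effect of the $(\bX^\top\bX)^{-1}$ factor.
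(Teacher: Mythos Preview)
Your proposal is correct. For part (b), your direct Bernstein argument is exactly the paper's own proof (Proposition~\ref{prop:concentration-signal}(a) in Section~\ref{sec:random-init-signal}). For part (a), however, you take a genuinely different route from the paper.

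The paper proves part (a) via the full CGMT machinery of Section~\ref{sec:gordon}: it writes the first-order loss~\eqref{eq:GD_gen_loss} in the form required by Assumption~\ref{ass:loss}, applies Proposition~\ref{prop:first-three-steps} to reduce to the scalarized auxiliary loss $\widebar{L}_n$~\eqref{eq:Lbar-first-order}, and then establishes growth and concentration of $\widebar{L}_n$ (Lemma~\ref{lem:ao-analysis-FO}). This is done so that higher-order and first-order methods can be treated in a unified way under the same proposition. Your argument instead bypasses CGMT entirely by exploiting the explicit linearity of the update $\thetanext = \thetacur - (2\eta/n)\bX^\top\bw$: after orthogonalizing $\bX$ along $(\thetastar,\bu_2)$, the component of $\thetanext$ in $S_\sharp^\perp$ is exactly $(2\eta/n)\tilde{\bG}^\top\bw$ with $\tilde{\bG}$ independent of $\bw$, so the problem reduces to a conditional chi-squared bound plus two one-dimensional Bernstein inequalities. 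This is shorter and more transparent for first-order methods; the price is that it does not extend to the higher-order update~\eqref{eq:sec_order_gen}, where the $(\bX^\top\bX)^{-1}$ factor destroys the independence structure and the CGMT route (or the leave-one-out argument of Section~\ref{sec:random-init-signal}) becomes necessary. Your identification of why the $n^{-1/4}$ rate arises (no lower bound on $\perpcompgordon$ forcing the use of $|\sqrt{a}-\sqrt{b}|\le\sqrt{|a-b|}$) and why $\parcomp\vee\perpcomp\le 3/2$ is needed (to bound the cross term $|A+B|$ in $|A^2-B^2|$) matches the paper's reasoning precisely.
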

We formally derive the one-step Gordon updates $(\parcompgordon,\perpcompgordon)$ 
in a unified fashion for both these theorems in Section \ref{sec:gordon}, with rigorous justifications of the steps outlined in Sections~\ref{sec:high-level-steps} and~\ref{subsec:heuristic-second-order}. In particular, this program is carried out under a weaker set of assumptions on the one-step loss function, which includes equations~\eqref{eq:sec_order_gen} and~\eqref{eq:GD_gen} as special cases (see Assumption~\ref{ass:loss} in Section~\ref{sec:gordon}). 
Section~\ref{sec:gordon} also provides a proof that both $\parcompplus$ and $\perpcompplus$ concentrate at the rate $\ordertil(n^{-1/4})$ around their Gordon counterparts, thereby proving part (a) of both theorems.
In Section \ref{sec:random-init-signal}, we refine the concentration rate for the parallel component $\parcomp$ and establish part (b) of both theorems. 

It should be emphasized that Theorems~\ref{thm:one_step_main_HO} and~\ref{thm:one_step_main_FO} are both non-asymptotic results, in contrast to results typically derived using the CGMT machinery. A non-asymptotic characterization is essential in our case because we intend to apply these results iteratively, once per step of the algorithm. As alluded to in the heuristic derivation, our proof of the $\ordertil(n^{-1/4})$ rate of concentration of the pair $(\alpha^+, \beta^+)$ around the deterministic update follows a generic proof technique reasoning about the growth properties of the scalarized loss function around its minimum, and generalizes results from the linear case~\citep{miolane2018distribution}.  
This technique may prove to be of independent interest in other applications of the CGMT.

While a deviation result of $\ordertil(n^{-1/4})$ can be obtained via this general technique, this rate does not suffice for the parallel update $\parcomp^+$ near a random initialization. In particular, for a random initialization we have $\parcomp \asymp d^{-1/2}$, and it can be shown that 
the deterministic prediction arising from one step of the algorithm also satisfies $\alpha^\gor \asymp d^{-1/2}$. Thus, showing that $\alpha^+$ is within $\ordertil(n^{-1/4})$ of $\parcompgordon$ is only a nontrivial statement---guaranteeing say a nonzero parallel component at the next step---when $n = \widetilde{\Omega}(d^2)$, or equivalently, when $\kappa = \widetilde{\Omega}(d)$. On the other hand, we would like to prove global convergence in the regime $\kappa = \ordertil(1)$, and so dedicate significant effort to improving this concentration result to $\ordertil(n^{-1/2})$, thereby allowing us to obtain part (b) of the theorems. This proof, presented in Section~\ref{sec:random-init-signal},
requires significant subtlety especially for higher-order algorithms since the update~\eqref{eq:second-order-update-sum} involves a matrix inversion. We employ a leave-one-out trick
 to show a sharpened version of a result by~\citet{zhang2020phase}, and believe that this technique will prove more broadly useful in analyzing other higher-order updates from a random initialization. 
Our refined characterization for the parallel component $\alpha^+$ raises the question of whether deviation of $\perpcomp^+$ can also be improved to $\ordertil(n^{-1/2})$. While we conjecture that this is indeed the case, we leave this question open for future investigation, 
turning now to deriving corollaries of the main theorems in two specific models.

\section{Consequences for some concrete statistical models}
\label{sec:main-results}

In this section, we state consequences of our main results for two specific models and algorithms, although it is important to note that the Gordon recipe itself---as sketched in the previous section---is much more broadly applicable.  In particular, we will consider phase retrieval and a symmetric mixture of linear regressions, as well as the algorithms covered in Section~\ref{sec:setup}.
It is important to note that in both these models, the global sign of the parameter $\thetastar$ is not identifiable from observations, and so parameter estimates should be assessed in terms of their ``distance'' to the set $\{ - \thetastar, \thetastar \}$.

As mentioned before, we track the two-dimensional state
$(\parcomp(\bt), \perpcomp(\bt))$ of each parameter $\bt \in \real^d$, with
$\parcomp(\bt) = \inprod{\bt}{\thetastar}$ and $\perpcomp(\bt) = \| \proj_{\thetastar}^{\perp} \bt \|_2$. The sign ambiguity will be resolved by the initialization, so we assume throughout that $\parcomp(\bt) \geq 0$ for parameters $\bt$ that we consider. For any two-dimensional state evolution element $\bzeta = (\parcomp, \perpcomp)$, define two metrics
\begin{align} \label{eq:metrics}
\DeltaSELtwo(\bzeta) := \sqrt{ (1 - \parcomp)^2 + \perpcomp^2} \qquad \text{ and } \qquad \DeltaSEangle(\bzeta) := \tan^{-1} (\perpcomp/\parcomp).
\end{align}
When $\parcomp = \parcomp(\bt)$ and $\perpcomp = \perpcomp(\bt)$, the quantity $\DeltaSELtwo(\parcomp, \perpcomp)$ measures the $\ell_2$ distance between $\bt$ and the set $\{ - \thetastar, \thetastar \}$, i.e., we have $\DeltaSELtwo(\parcomp, \perpcomp) = \min \{ \| \bt - \thetastar \|_2, \| \bt + \thetastar \|_2 \}$. Similarly, the angular metric satisfies \mbox{$\DeltaSEangle(\parcomp, \perpcomp) = \min \{ \angle \left(\bt, \thetastar \right), \angle \left(\bt, -\thetastar \right) \}$}.

As alluded to in the previous sections (see Remark~\ref{rem:SE-op}), a \emph{state evolution operator} is a function mapping $\mathbb{R}^2$ to itself. We begin with a few useful definitions for such operators. First, 
for any state evolution operator $\mathcal{S}$, recall that $\mathcal{S}^t$ denotes the operator formed by $t$ iterated applications of $\mathcal{S}$. Next, we define an $\mathbb{S}$-faithful state evolution operator.
\begin{definition}[$\mathbb{S}$-faithful operator]
For a set $\mathbb{S} \subseteq \real^2$, a state evolution operator $\mathcal{S}: \mathbb{R}^2 \to \mathbb{R}^2$ is said to be $\mathbb{S}$-faithful if $\mathcal{S}(\bzeta) \in \mathbb{S}$ for all $\bzeta \in \mathbb{S}$.
\end{definition}
Next, we present two formal definitions of convergence rates, measuring linear (geometric) and faster-than-linear convergence. 

\begin{definition}[Linear convergence of state evolution] \label{def:linear}
For parameters $0 < c \leq C < 1$, a state evolution operator $\mathcal{S}: \mathbb{R}^2 \to \mathbb{R}^2$ is said to exhibit $(c, C, t_0)$-linear convergence in the metric $\DeltaSE$ within the set $\mathbb{S}$ to level $\varepsilon$ if $\Sop$ is $\mathbb{S}$-faithful, and for all $\bzeta \in \mathbb{S}$, we have 
\begin{align} \label{eq:rate-L}
c \cdot \DeltaSE(\mathcal{S}^{t}(\bzeta)) + \frac{\varepsilon}{2} \leq \DeltaSE(\mathcal{S}^{t+1}(\bzeta)) \leq C \cdot \DeltaSE(\mathcal{S}^{t}(\bzeta)) + \varepsilon \text{ for all } t \geq t_0. 
\end{align}
\end{definition}

\begin{definition}[Super-linear convergence] \label{def:superlinear}
Set parameters $0 < c \leq C$ and $\lambda > 1$, and suppose that $\mathbb{S} \subseteq \{ \bzeta: \DeltaSE(\bzeta) \leq C^{1 - \lambda} \}$.
A state evolution operator $\mathcal{S}: \mathbb{R}^2 \to \mathbb{R}^2$ is said to exhibit $(c, C, \lambda, t_0)$-super-linear convergence in the metric $\DeltaSE$ within the set $\mathbb{S}$ to level $\varepsilon$ if $\Sop$ is $\mathbb{S}$-faithful, and for all $\bzeta \in \mathbb{S}$, we have 
\begin{align} \label{eq:rate-SL}
c \cdot [\DeltaSE(\mathcal{S}^{t}(\bzeta))]^{\lambda} + \frac{\varepsilon}{2} \leq \DeltaSE(\mathcal{S}^{t+1}(\bzeta)) \leq C \cdot [\DeltaSE(\mathcal{S}^{t}(\bzeta))]^{\lambda} + \varepsilon  \text{ for all } t \geq t_0. 
\end{align}
\end{definition}
A few comments on our definitions are worth making. First, note that both definitions require both upper and lower bounds on the per-step behavior of the algorithm, where the bounds apply after a ``transient'' period of $t_0$ iterations. 
This is a key feature of our framework, in that we are able to exactly characterize the convergence behavior as opposed to solely providing upper bounds. Both upper and lower bounds are characterized both by a rate of decrease of the error (linear in the case of equation~\eqref{eq:rate-L} and super-linear in the case of equation~\eqref{eq:rate-SL}) and the eventual statistical neighborhood $\varepsilon$. Second, our choice of defining the lower bounds in equations~\eqref{eq:rate-L} and~\eqref{eq:rate-SL} with $\varepsilon/2$ is arbitrary;
any absolute constant other than $2$ preserves the qualitative convergence behavior.

As is common in the analysis of nonconvex optimization problems, our convergence guarantee will be established in two stages. In the first stage, we will show that the algorithm converges (typically slowly) to a ``good region'' around the optimal solution; once in the good region, the algorithm converges much faster. For both of the models that we consider, the following definition of the good region suffices. It is important to note that the numerical constants in this definition have not been optimized to be sharp.
\begin{definition}[Good region] \label{def:good-region}
Define the region
\[
\Goodset = \{ (\parcomp, \perpcomp) \mid \; 0.55 \leq \parcomp \leq 1.05, \text{ and } \parcomp/\perpcomp \geq 5 \}.
\]
With slight abuse of terminology, we say that $\bt \in \Goodset$ if $(\parcomp(\bt), \perpcomp(\bt)) \in \Goodset$.
\end{definition}
We are now in a position to present our guarantees for two specific models: phase retrieval and a symmetric mixture of linear regressions.

\subsection{Phase retrieval}

Our first example is the phase retrieval model~\eqref{eq:PR-model}. We characterize the convergence behavior of both the alternating minimization algorithm and the subgradient descent method for this model.

\subsubsection{Alternating minimization}

Recall from equation~\eqref{eq:update-explicit-AM-PR} that the empirical update run from the point $\theta$ is given by
\begin{align} \label{eq:empirics-AM-PR}
\mathcal{T}_n(\bt) = \left( \frac{1}{n} \sum_{i = 1}^n \bx_i \bx_i^\top \right)^{-1} \; \left( \frac{1}{n} \sum_{i = 1}^n \sign(\inprod{\bx_i}{\bt}) \cdot y_i \cdot \bx_i \right).
\end{align}
The following corollary follows from Theorem~\ref{thm:one_step_main_HO}; in it, we state both the explicit Gordon state evolution and the concentration of the empirical iterates assuming that the update is run from some arbitrary ``current'' point $\bt$. Its proof can be found in Appendix~\ref{pf-cor1}.
\begin{corollary} \label{lem:Gordon-SE-AM-PR}
Let $\parcomp = \parcomp(\bt)$ and $\perpcomp = \perpcomp(\bt)$ with $\bzeta = (\parcomp, \perpcomp)$ and $\anglecurr = \tan^{-1}\left(\frac{\perpcomp}{\parcomp}\right)$. Let $(\parcompgordon, \perpcompgordon) = \mathcal{S}_{\gor}(\bzeta)$ denote the Gordon state evolution from Definition~\ref{def:starnext_main_HO}. \\ 
(a) We have
\begin{subequations} \label{eq:Gordon-AM-PR}
\begin{align}
\parcompgordon &= 1 - \frac{1}{\pi}(2\anglecurr - \sin(2\anglecurr))  \quad \text{ and } \label{eq:Gordon-AM-PR-xi} \\
\perpcompgordon &= \sqrt{\frac{4}{\pi^2}\sin^4(\anglecurr) + \frac{1}{\kappa - 1}\left(1 - (1 - \frac{1}{\pi}(2\anglecurr - \sin(2\anglecurr)))^2 - \frac{4}{\pi^2}\sin^4(\anglecurr) + \noisestd^2 \right)}. \label{eq:Gordon-AM-PR-sigma}
\end{align}
(b) Suppose $\noisestd > 0$. There is a constant $C_\noisestd > 0$ depending only on $\sigma$ such that the following holds. With $\mathcal{T}_n$ as defined in equation~\eqref{eq:empirics-AM-PR}, the empirical state evolution $(\parcomp^+, \perpcomp^+) = (\parcomp(\mathcal{T}_n(\bt)), \perpcomp(\mathcal{T}_n(\bt)))$ satisfies
\begin{align*}
\Pro \left\{ | \parcomp^+ - \parcompgordon | \leq  C_\noisestd \left( \frac{\log^7 (1/\delta) }{n} \right)^{1/2} \right\} \leq \delta \quad \text{ and } \quad \Pro \left\{ | \perpcomp^+ - \perpcompgordon | \leq C_\noisestd \left( \frac{\log (1 / \delta) }{n} \right)^{1/4} \right\} \leq \delta.
\end{align*}
\end{subequations}
\end{corollary}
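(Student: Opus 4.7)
\textbf{Plan for Corollary~\ref{lem:Gordon-SE-AM-PR}.}

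For part (a), I would specialize Definition~\ref{def:starnext_main_HO} to the weight function $\omega(x,y) = \sign(x)\cdot y$ for AM on phase retrieval (cf.~\eqref{eq:overall-AM-PR}), together with the nonlinearity $f(z;q) = |z|$. The random variable in~\eqref{eq:omegat_def_main} then takes the form
\begin{equation*}
\Omega \;=\; \sign(\parcomp Z_1 + \perpcomp Z_2) \cdot \bigl(|Z_1| + \noisestd Z_3\bigr),
\end{equation*}
and the plan is to evaluate $\EE[Z_1\Omega]$, $\EE[Z_2\Omega]$, and $\EE[\Omega^2]$ in closed form and substitute into~\eqref{eq:theta_starnext_sec_main}. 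Since $Z_3$ is independent of $(Z_1,Z_2)$ and mean zero, the noise contributes only via $\EE[\Omega^2] = 1 + \noisestd^2$; the other two moments are unchanged from the noiseless case.

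To compute those two moments, I would pass to polar coordinates $(Z_1,Z_2) = (R\cos\Theta, R\sin\Theta)$, using the identity $\sign(\parcomp Z_1 + \perpcomp Z_2) = \sign(\cos(\Theta - \anglecurr))$ with $\anglecurr = \tan^{-1}(\perpcomp/\parcomp)$. Combined with $\EE[R^2] = 2$, the problem reduces to the angular integrals $\tfrac{1}{2\pi}\int_0^{2\pi}\cos\theta\,|\cos\theta|\,\sign(\cos(\theta-\anglecurr))\,d\theta$ and its analogue with the leading $\cos\theta$ replaced by $\sin\theta$. These are elementary trigonometric computations yielding
\begin{equation*}
\EE[Z_1\Omega] \;=\; 1 - \tfrac{1}{\pi}\bigl(2\anglecurr - \sin(2\anglecurr)\bigr), \qquad \EE[Z_2\Omega] \;=\; \tfrac{2}{\pi}\sin^2(\anglecurr),
\end{equation*}
matching the identities already recorded (without derivation) in the heuristic of Section~\ref{subsec:heuristic-second-order}. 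Substituting the three moments into~\eqref{eq:theta_starnext_sec_main} then gives exactly~\eqref{eq:Gordon-AM-PR-xi} and~\eqref{eq:Gordon-AM-PR-sigma}.

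For part (b), the plan is to invoke Theorem~\ref{thm:one_step_main_HO} directly, so it suffices to verify Assumptions~\ref{ass:omega} and~\ref{ass:omega-lb} with constants depending only on $\noisestd$. Assumption~\ref{ass:omega} is immediate from the pointwise bound $|\Omega| \leq |Z_1| + \noisestd|Z_3|$, which yields $\|\Omega\|_{\psi_2} \lesssim 1 + \noisestd$. For Assumption~\ref{ass:omega-lb}, the identities from part (a) give
\begin{equation*}
\EE[\Omega^2] - (\EE[Z_1\Omega])^2 - (\EE[Z_2\Omega])^2 \;=\; \noisestd^2 + \Bigl(1 - \bigl[1-\tfrac{1}{\pi}(2\anglecurr-\sin(2\anglecurr))\bigr]^2 - \tfrac{4}{\pi^2}\sin^4(\anglecurr)\Bigr).
\end{equation*}
The bracketed expression is the squared $L^2$-residual of projecting $\sign(\parcomp Z_1 + \perpcomp Z_2)|Z_1|$ onto the orthonormal pair $(Z_1, Z_2)$ in $L^2$, and is therefore nonnegative by Bessel's inequality. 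Hence $K_2 = \noisestd^2$ works, and the hypothesis $\noisestd > 0$ is precisely what ensures a strictly positive lower bound; this is why part (b) requires nonzero noise. The probability bounds then follow by applying Theorem~\ref{thm:one_step_main_HO} and absorbing the resulting $(K_1, K_2)$-dependent constants into a single $\noisestd$-dependent constant $C_{\noisestd}$.

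The only step that is not fully routine is the closed-form evaluation of $\EE[Z_1\Omega]$ and $\EE[Z_2\Omega]$ in part (a); conceptually nothing is hard, but the trigonometric bookkeeping has to be carried out cleanly to land on the forms involving $2\anglecurr - \sin(2\anglecurr)$ and $\sin^2(\anglecurr)$.
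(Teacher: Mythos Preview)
Your proposal is correct and follows essentially the same route as the paper: specialize $\Omega$, compute the three moments via polar coordinates to obtain the closed forms, substitute into Definition~\ref{def:starnext_main_HO}, and then verify Assumptions~\ref{ass:omega} and~\ref{ass:omega-lb} to invoke Theorem~\ref{thm:one_step_main_HO}. The one small difference is that for Assumption~\ref{ass:omega-lb} the paper simply asserts the nonnegativity of $1 - (\EE[Z_1\Omega])^2 - (\EE[Z_2\Omega])^2$ ``can be verified for each $0 \le \anglecurr \le \pi/2$,'' whereas your Bessel/projection-residual argument gives a cleaner conceptual justification of the same fact.
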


From equation~\eqref{eq:Gordon-AM-PR}, it is possible to recover the following population update by letting $\kappa \to \infty$, which is given by
\begin{align}\label{eq:Pop-AM-PR}
\parcomppop = 1 - \frac{1}{\pi}(2\anglecurr - \sin(2\anglecurr))  \quad \text{ and }  \quad \perpcomppop = \frac{2}{\pi}\sin^2(\anglecurr).
\end{align}
It is easy to show that the population state evolution predicts super-linear convergence with exponent~$2$ (i.e., quadratic convergence) in the good region. The following fact is proved in Appendix~\ref{pf-fact1}.
\begin{fact} \label{prop:Pop-PR}
The population state evolution $\mathcal{S}_{\pop} = (\parcomppop, \perpcomppop)$ is $(\frac{1}{20}, 1, \lambda, t_0)$-super-linearly convergent in the $\ell_2$ metric\footnote{In fact, the population state evolution~\eqref{eq:Pop-AM-PR} enjoys \emph{global} quadratic convergence in the angular metric $\DeltaSEangle$; see Remark~\ref{lem:quad-angle-PR} in the appendix.} $\DeltaSELtwo$ within the region $\Goodset$ to level $\varepsilon = 0$, where $\lambda = 2$ and $t_0 = 1$.
\end{fact}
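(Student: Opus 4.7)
The plan is to prove the claim in three stages: verifying $\Goodset$-faithfulness, establishing the super-linear upper bound, and establishing the matching lower bound. A universal identity, obtained by writing $\parcomp = r\cos\anglecurr$ and $\perpcomp = r\sin\anglecurr$ with $r = \|\bzeta\|_2$, yields
\begin{align*}
\DeltaSELtwo(\bzeta)^2 \;=\; \sin^2\anglecurr + (r - \cos\anglecurr)^2,
\end{align*}
so in particular $\DeltaSELtwo(\bzeta)^2 \geq \sin^2\anglecurr$ for every $\bzeta$. This is the main lever connecting the angle to the $\ell_2$ metric. For $\Goodset$-faithfulness, any $\bzeta \in \Goodset$ has $\anglecurr \leq \tan^{-1}(1/5) < 1/5$, and the elementary bounds $0 \leq 2x - \sin 2x \leq (2x)^3/6$ and $\sin^2 x \leq x^2$ give $\parcomppop \geq 1 - (2/5)^3/(6\pi) > 0.99$ and $\perpcomppop \leq 2\anglecurr^2/\pi < 0.026$, so $\mathcal{S}_{\pop}(\bzeta)\in\Goodset$.

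For the super-linear upper bound, fix $\bzeta' \in \Goodset$ with angle $\anglecurr'$. By direct substitution,
\begin{align*}
\DeltaSELtwo(\mathcal{S}_{\pop}(\bzeta'))^2 \;=\; \frac{(2\anglecurr' - \sin 2\anglecurr')^2}{\pi^2} + \frac{4\sin^4\anglecurr'}{\pi^2}.
\end{align*}
Using $(2\anglecurr' - \sin 2\anglecurr')^2 \leq 16\anglecurr'^6/9$ together with $\sin^2 \anglecurr' \geq \anglecurr'^2(1-\anglecurr'^2/6)^2$, a short calculation verifies $\DeltaSELtwo(\mathcal{S}_{\pop}(\bzeta'))^2 \leq C_1 \sin^4\anglecurr'$ for some explicit $C_1 < 1$ over the permitted range $\anglecurr' \in [0, \tan^{-1}(1/5)]$. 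Combined with $\DeltaSELtwo(\bzeta')^2 \geq \sin^2\anglecurr'$ from the key identity, this yields $\DeltaSELtwo(\mathcal{S}_{\pop}(\bzeta')) \leq \sqrt{C_1}\cdot \DeltaSELtwo(\bzeta')^2 \leq \DeltaSELtwo(\bzeta')^2$, establishing the upper bound with $C = 1$.

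The matching lower bound is the main technical obstacle, since a bound of the form $\DeltaSELtwo(\mathcal{S}_{\pop}(\bzeta')) \gtrsim \DeltaSELtwo(\bzeta')^2$ \emph{cannot} hold on all of $\Goodset$: for instance, the point $(0.55, 0) \in \Goodset$ has $\sin\anglecurr = 0$ yet $\DeltaSELtwo = 0.45$, and is mapped by $\mathcal{S}_{\pop}$ directly to $(1,0)$, so the error drops to zero in a single step. This is precisely why the hypothesis is $t_0 = 1$: for $t \geq 1$, the state $\bzeta' = \mathcal{S}_{\pop}^t(\bzeta_0)$ lies in the image $\mathcal{S}_{\pop}(\Goodset)$, on which $1 - \parcomp'$ is cubic in the preceding angle $\anglecurr_0$ while $\perpcomp'$ is quadratic, so the ratio $(1-\parcomp')^2/(\perpcomp')^2$ is at most a small absolute constant. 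Plugging this into the key identity gives a converse bound $\DeltaSELtwo(\bzeta')^2 \leq C_3 \sin^2\anglecurr'$ with $C_3$ only slightly greater than one. Combined with the trivial lower bound $\DeltaSELtwo(\mathcal{S}_{\pop}(\bzeta'))^2 \geq 4\sin^4\anglecurr'/\pi^2$ (obtained by discarding the first summand), this yields $\DeltaSELtwo(\mathcal{S}_{\pop}(\bzeta')) \geq (2/(\pi C_3))\cdot \DeltaSELtwo(\bzeta')^2$, which dominates $\tfrac{1}{20}\DeltaSELtwo(\bzeta')^2$ after explicitly bounding $C_3$. The delicate aspect is controlling the constants uniformly over $\mathcal{S}_{\pop}(\Goodset)$ to meet the specific threshold $1/20$.
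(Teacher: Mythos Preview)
Your proposal is correct and follows essentially the same approach as the paper: both verify $\Goodset$-faithfulness, establish the upper bound via $(1-\Fbar)^2 \lesssim \anglecurr^6$ and $\Gbar^2 \lesssim \anglecurr^4$ together with $\DeltaSELtwo(\bzeta)^2 \geq \sin^2\anglecurr$, and handle the lower bound by observing that on the image $\mathcal{S}_{\pop}(\Goodset)$ the component $1-\alpha'$ is cubic (hence negligible) compared to the quadratic $\beta'$, which is exactly why $t_0=1$ is required. The paper organizes the lower-bound computation slightly differently---working with the explicit two-step quantities $(F_+,G_+)$ and the relation $(1-F)^4 \lesssim G^6$ rather than your polar identity---but the underlying argument is the same.
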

However, the following theorem shows that the empirics are instead tracked faithfully by the Gordon state evolution, which converges more slowly than the population state evolution.
\begin{theorem} \label{thm:AM-PR}
Consider the alternating minimization update $\mathcal{T}_n$ from equation~\eqref{eq:empirics-AM-PR} and the associated Gordon state evolution update $\mathcal{S}_{\gor}$ from equation~\eqref{eq:Gordon-AM-PR}. There is a universal positive constant $C$ such that the following is true. If $\kappa \geq C (1 + \noisestd^2)$, then:

\noindent (a) The Gordon state evolution update 
\begin{center}
$\mathcal{S}_{\gor}$ is $(c_{\kappa}, C_\kappa, \lambda, t_0)$-super-linearly convergent in the $\ell_2$ metric $\DeltaSELtwo$ within $\Goodset$ to level $\varepsilon_{n, d} = \frac{\noisestd}{\sqrt{\kappa}}$,
\end{center} 
where $0 \leq c_\kappa \leq C_{\kappa} \leq 1$ are constants depending solely on $\kappa$, and we have
\begin{align*}
\lambda = 3/2 \qquad \text{ and } \qquad t_0 = 1.
\end{align*}

\noindent (b) If $\noisestd > 0$, then there exist $C_\noisestd, C'_\noisestd > 0$ depending only on $\noisestd$ such that for all $n \geq C'_\noisestd$ and for any $\bt$ such that $\bzeta = (\parcomp(\bt), \perpcomp(\bt)) \in \Goodset$, we have
\begin{align*}
\max_{1 \leq t \leq T} \; | \DeltaSELtwo(\mathcal{S}^t_{\gor}(\bzeta)) - \| \mathcal{T}^t_n(\bt) - \thetastar \|_2 | \leq C_\noisestd \left( \frac{\log n}{n} \right)^{1/4}
\end{align*}
with probability exceeding $1 - 2T n^{-10}$.

\noindent (c) Suppose $\bt_0$ denotes a point such that $\frac{\parcomp(\bt_0)}{\perpcomp(\bt_0)} \geq \frac{1}{50 \sqrt{d}}$ and further suppose that \sloppy \mbox{$\kappa \geq C''_{\noisestd} \cdot \log^7 \left( \frac{1 + \log d}{\delta} \right)$} for $C''_\sigma$ depending solely on $\noisestd$. 
Then for some $\ttilde \leq C\log d$, we have
\begin{align*}
\mathcal{T}_{n}^{\ttilde}(\bt_0) \in \Goodset
\end{align*} 
with probability exceeding $1 - \delta$.
\end{theorem}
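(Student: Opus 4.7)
The proof comprises three interlocking pieces; in each, the plan is to combine the explicit Gordon formulas of Corollary~\ref{lem:Gordon-SE-AM-PR} with the concentration bounds from Theorem~\ref{thm:one_step_main_HO}.

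For part~(a), the plan is to analyze the map $\Sop_\gor$ directly in the angle variable $\anglecurr = \tan^{-1}(\perpcomp/\parcomp)$. Inside $\Goodset$ one has $\anglecurr$ small, and Taylor expansion of equations~\eqref{eq:Gordon-AM-PR-xi}--\eqref{eq:Gordon-AM-PR-sigma} gives
\begin{align*}
1 - \parcompgordon \;=\; \tfrac{4}{3\pi}\anglecurr^3 + O(\anglecurr^5), \qquad
\perpcompgordon^2 \;=\; \tfrac{4}{\pi^2}\anglecurr^4 \;+\; \tfrac{1}{\kappa-1}\Bigl(\tfrac{8}{3\pi}\anglecurr^3 + \noisestd^2\Bigr) \;+\; O\bigl(\tfrac{\anglecurr^5}{\kappa}\bigr).
\end{align*}
Since $\DeltaSELtwo(\bzeta) \asymp \anglecurr$ on $\Goodset$ and $(1-\parcompgordon)^2 = O(\anglecurr^6) \ll \perpcompgordon^2$, we obtain $\DeltaSELtwo(\Sop_\gor(\bzeta)) \asymp \perpcompgordon$. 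For $\kappa \geq C(1+\noisestd^2)$, the finite-sample correction $\sqrt{8\anglecurr^3/(3\pi(\kappa-1))}$ dominates the quartic term, so two-sided bounds yield $\DeltaSELtwo(\Sop_\gor(\bzeta)) \asymp \kappa^{-1/2}\bigl(\DeltaSELtwo(\bzeta)^{3/2} + \noisestd\bigr)$, which is precisely the claimed $(c_\kappa, C_\kappa, 3/2, 1)$ super-linear rate with floor $\varepsilon_{n,d} = \noisestd/\sqrt{\kappa}$. The same expansion confirms $\Goodset$-faithfulness: $\parcompgordon \geq 1 - O(\anglecurr^3) \geq 0.55$ and $\parcompgordon/\perpcompgordon \geq 5$ persist whenever $\kappa$ is sufficiently large relative to $\noisestd$.

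For part~(b), the plan is to invoke Corollary~\ref{lem:Gordon-SE-AM-PR}(b) at each iteration with failure probability $n^{-10}$. Sample-splitting makes the errors across iterations independent, so a union bound over $t \in \{1, \ldots, T\}$ gives that the one-step deviation of both state components from their Gordon counterparts is at most $\ordertil(n^{-1/4})$ for every $t$, on an event of probability at least $1 - 2T n^{-10}$. On this event, an induction controls the deviation between the empirical and Gordon trajectories: the Lipschitz constant of $\Sop_\gor$ on $\Goodset$ is uniformly bounded, and by part~(a) the local contraction factor on the scales of interest is strictly below $1$, so per-step errors are damped rather than amplified. Combined with the Lipschitzness of $\DeltaSELtwo$, the cumulative deviation in $\ell_2$ distance stays at $\ordertil(n^{-1/4})$ throughout, proving part~(b).

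For part~(c), the hypothesis $\parcomp(\bt_0)/\perpcomp(\bt_0) \geq 1/(50\sqrt d)$ forces $\parcomp_0 \gtrsim 1/\sqrt{d}$ and $\perpcomp_0 = O(1)$, so $\anglecurr_0 = \pi/2 - \Theta(1/\sqrt d)$. Writing $\varepsilon_t = \pi/2 - \anglecurr_t$ and Taylor-expanding~\eqref{eq:Gordon-AM-PR} near $\anglecurr \uparrow \pi/2$ gives $\parcompgordon = \tfrac{4\varepsilon_t}{\pi} + O(\varepsilon_t^3)$ and $\perpcompgordon = \tfrac{2}{\pi} + O(\varepsilon_t^2) + O(\kappa^{-1/2})$, whence the Gordon recursion yields $\varepsilon_{t+1} \geq (2 - o(1))\varepsilon_t$: the angle retreats from $\pi/2$ geometrically. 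Hence after $\ttilde = O(\log d)$ Gordon steps one has $\varepsilon_{\ttilde} = \Theta(1)$ and the trajectory enters $\Goodset$. To promote this to the empirical iterates I invoke the sharper rate~\eqref{eq:xi_conc_main_HO} of Theorem~\ref{thm:one_step_main_HO}(b), which gives per-step deviation $\ordertil(\log^{7/2}(1/\delta_t)/\sqrt{n})$ for the parallel component. Setting per-step failure probability $\delta_t \asymp \delta/\ttilde$ and union bounding, the hypothesis $\kappa \geq C_\noisestd'' \log^7((1+\log d)/\delta)$ ensures this noise is $o(1/\sqrt{d}) = o(\parcomp_t)$, so the doubling of $\varepsilon_t$ is preserved by the empirical iteration; an analogous but easier argument controls $\perpcomp_t$ via the cruder $n^{-1/4}$ rate of~\eqref{eq:sigma_conc_main_HO} since $\perpcomp_t = \Theta(1)$ throughout the transient phase. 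The main obstacle is precisely this transient analysis: one must simultaneously track both state components and verify that the $\ordertil(n^{-1/2})$ fluctuations in $\parcomp$ never wipe out the $\Theta(1/\sqrt{d})$-sized signal before geometric doubling completes, and the $\log^7$ factor in the sample-size requirement is inherited from equation~\eqref{eq:xi_conc_main_HO} together with the union bound over the $\log d$ transient steps.
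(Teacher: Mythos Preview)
Your outline for parts~(b) and~(c) is essentially correct and mirrors the paper's approach: part~(b) is exactly an instance of Lemma~\ref{lem:generalb}, whose hypothesis (that the $\ell_1$-norms of $\nabla F$ and $\nabla G$ are at most $1-\tau$ on a neighborhood of $\Goodset$) is what you call ``local contraction,'' and part~(c) is Lemma~\ref{lem:generalc}, where your doubling $\varepsilon_{t+1}\geq (2-o(1))\varepsilon_t$ is a slightly sharper version of the paper's condition~C2 ($F\geq 1.06\,\alpha$ for $\rho\geq 2$).

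There is, however, a real gap in part~(a). Your claim that $\DeltaSELtwo(\bzeta)\asymp\anglecurr$ on $\Goodset$ is false: take $\alpha=0.6$, $\beta=10^{-3}$, so $\bzeta\in\Goodset$, but $\DeltaSELtwo(\bzeta)\approx 0.4$ while $\anglecurr\approx 1.7\times 10^{-3}$. Only the one-sided inequality $\DeltaSELtwo(\bzeta)\geq\beta\gtrsim\anglecurr$ holds in general. This does not hurt your \emph{upper} bound, since $\DeltaSELtwo(\Sop_\gor(\bzeta))\lesssim\anglecurr^{3/2}\leq[\DeltaSELtwo(\bzeta)]^{3/2}$, but it breaks your \emph{lower} bound: from $\beta^{\gor}\gtrsim\anglecurr^{3/2}/\sqrt{\kappa}$ you cannot conclude $\DeltaSELtwo(\Sop_\gor(\bzeta))\gtrsim[\DeltaSELtwo(\bzeta)]^{3/2}$, because $(1-\alpha)$ is not controlled by~$\anglecurr$ for a generic $\bzeta\in\Goodset$.

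The fix---and the reason the paper sets $t_0=1$ (see Remark~\ref{rem:transient})---is that points on the Gordon \emph{trajectory} do satisfy the missing structural relation: one has $G^2\geq\frac{1}{\kappa-1}(1-F^2)$ directly from equation~\eqref{eq:Gordon-AM-PR-sigma}, so after one step $|1-\alpha^{\gor}|$ is tied to $\beta^{\gor}$. The paper therefore proves the lower bound as a \emph{two-step} statement, bounding $G_+^2+(1-F_+)^2$ from below by $c_\kappa\bigl(G^2+(1-F)^2\bigr)^{3/2}$ via the chain $G_+^2\gtrsim\kappa^{-1}(1-F_+^2)\gtrsim\kappa^{-1}G^3$ and $G^6\gtrsim\kappa^{-3}(1-F)^3$. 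Your Taylor-expansion argument can be salvaged along the same lines, but you must explicitly invoke this relation rather than assert $\DeltaSELtwo\asymp\anglecurr$ on all of $\Goodset$.
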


Note that if $\bt_0$ is chosen at random from the $d$-dimensional unit ball $\mathbb{B}_2(1)$ with $d \geq 130$, then we have $\frac{\parcomp(\bt_0)}{\perpcomp(\bt_0)} \geq \frac{1}{50 \sqrt{d}}$ with probability at least $0.95$ (see Lemma~\ref{lem:init-properties}(a) in the appendix). Theorem~\ref{thm:AM-PR} then shows that after $\tau = \mathcal{O}( \log d + \log \log (\kappa/\noisestd^2))$ iterations, the empirics satisfy
\begin{align} \label{eq:final-AM-PR}
\| \mathcal{T}_{n}^{\tau}(\bt) - \thetastar \|_2= \order \left( \noisestd \sqrt{\frac{d}{n}} \right) + \ordertil \left( n^{-1/4} \right)
\end{align}
with high probability. Concretely, after taking $\order(\log d)$ steps to converge to the good region $\Goodset$, 
the AM update converges \emph{very} fast to within statistical error of the optimal parameter. 

Some remarks on specific aspects of Theorem~\ref{thm:AM-PR} are in order. First, note that this theorem predicts super-linear convergence with nonstandard exponent $3/2$ whenever $\kappa$ is bounded above. Comparing with Fact~\ref{prop:Pop-PR}, we see that the population update is overly optimistic, and this corroborates what we saw in Figures~\ref{fig:intro-pop} and~\ref{fig:intro-Gordon} in the introduction. Nonstandard super-linear convergence was recently observed in the noiseless case of this problem~\citep{ghosh2020alternating}, but a larger exponent was conjectured. Theorem~\ref{thm:AM-PR} shows that the exponent $3/2$ is indeed sharp, since we obtain both upper and lower bounds on the error of the algorithm. 
Furthermore, the convergence rate is super-linear with exponent $3/2$ for every value of the noise level. As we will see shortly, this is not the case for the closely related model of a symmetric mixture of regressions, in which the convergence rate of this algorithm is linear for any constant noise level.

Second, note that part (b) of the theorem shows that the (random) empirical state evolution is within $\ell_2$ distance $n^{-1/4}$ of its (deterministic) Gordon counterpart once the iterates enter the good region. Consequently, the final result~\eqref{eq:final-AM-PR} on the empirical error has two terms. Note that this error is dominated by the $\noisestd/\sqrt{\kappa}$ term in modern high dimensional problems.

Third, our convergence result is global, and holds from a random initialization. In particular, part (c) of the theorem guarantees that within $O(\log d)$ iterations, the iterations enter the good region $\Goodset$, at which point parts (a) and (b) of the theorem become active. Convergence from a random initialization is also established by showing that the empirical state evolution tracks its Gordon counterpart closely. But rather than showing two deterministic envelopes around the empirical trajectory, we leverage closeness of the updates iterate-by-iterate. It is worth noting that this is the only step that requires the condition $n \asymp d \log^7 (\log d)$; all other steps only require sample complexity that is linear in dimension.

Finally, we note that our assumption that $\noisestd^2 / \kappa$ be bounded above by a universal constant should not be viewed as restrictive. If this condition does not hold, then one can show using our analysis that running just one step of the algorithm from a random initialization already satisfies $\| \bt_1 - \thetastar \|^2 = \order(1) = \order(\noisestd^2 / \kappa)$, thereby providing an estimate with order-optimal error.

\subsubsection{Subgradient descent}

To contrast with the super-linear convergence shown in the previous section, we now consider subgradient descent with step-size $1/2$. As alluded to in Remark~\ref{rem:pop-coincide} and shown explicitly below, this update shares the same population update as AM, considered before. 
As derived in equation~\eqref{eq:update-explicit-GD-PR}, the general subgradient method for PR is given by the update
\begin{align} \label{eq:GD-PR}
\mathcal{T}_n(\bt) = \bt - \frac{2\eta}{n} \cdot \sum_{i = 1}^n (|\inprod{\bx_i}{\bt}| - y_i) \cdot \sign(\inprod{\bx_i}{\bt}) \cdot \bx_i,
\end{align}
where $\eta > 0$ denotes the step-size. The Gordon state evolution update is given by the following corollary of Theorem~\ref{thm:one_step_main_FO}, proved in Appendix~\ref{pf-cor2}.
\begin{corollary} \label{lem:Gordon-SE-GD-PR}
Let $\parcomp = \parcomp(\bt)$ and $\perpcomp = \perpcomp(\bt)$ with $\anglecurr = \tan^{-1}\left(\frac{\perpcomp}{\parcomp}\right)$. Let $(\parcompgordon, \perpcompgordon) = \Sopgordon(\parcomp, \perpcomp)$ denote the Gordon state evolution update for the subgradient descent operator~\eqref{eq:GD-PR}, given by Definition~\ref{def:starnext_main_FO}. Let $\eta \leq 1/2$. \\
(a) We have
\begin{subequations} \label{eq:Gordon-GD-PR}
\begin{align}
\parcompgordon &= (1 - 2\eta) \parcomp + 2\eta \left(1 - \frac{1}{\pi} (2 \anglecurr - \sin(2 \anglecurr)) \right), \text{ and } \label{eq:Gordon-GD-PR-xi}\\
\perpcompgordon &= \Big(  \Big\{ (1 - 2\eta) \perpcomp + 2 \eta \cdot \frac{2}{\pi} \sin^2 \anglecurr \Big\}^2 \notag \\
&\qquad \qquad \qquad + \frac{4 \eta^2}{\kappa} \Big\{ \parcomp^2 + \perpcomp^2 - 2\parcomp \left(1 - \frac{1}{\pi} (2 \anglecurr - \sin(2 \anglecurr)) \right) - 2\perpcomp \cdot \frac{2}{\pi} \sin^2 \anglecurr + 1 + \noisestd^2 \Big\} \Big)^{1/2}. \label{eq:Gordon-GD-PR-sigma}
\end{align}
\end{subequations}
(b) Suppose $\noisestd > 0$ and $\alpha \vee \beta \leq 3/2$. Then there is a positive constant $C_\noisestd$ depending only on $\noisestd$ such that with $\mathcal{T}_n$ as defined in equation~\eqref{eq:GD-PR}, the empirical state evolution $(\parcomp^+, \perpcomp^+) = (\parcomp(\mathcal{T}_n(\bt)), \perpcomp(\mathcal{T}_n(\bt)))$ satisfies
\begin{align*}
\Pro \left\{ | \parcomp^+ - \parcompgordon | \leq  C_\noisestd \left( \frac{\log (1/\delta) }{n} \right)^{1/2} \right\} \leq \delta \quad \text{ and } \quad \Pro \left\{ | \perpcomp^+ - \perpcompgordon | \leq C_\noisestd \left( \frac{\log (1 / \delta) }{n} \right)^{1/4} \right\} \leq \delta.
\end{align*}
\end{corollary}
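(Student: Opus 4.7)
The plan is to derive both parts as essentially mechanical consequences of Definition~\ref{def:starnext_main_FO} and Theorem~\ref{thm:one_step_main_FO}. The subgradient descent update~\eqref{eq:GD-PR} fits the general first-order framework~\eqref{eq:GD_gen} with weight function $\wfun_{\mathsf{FO}}(x, y) = \sign(x)(|x| - y) = x - \sign(x) \cdot y$, and phase retrieval corresponds to link function $f(t; q) = |t|$ with no latent variable. Thus, for $(Z_1, Z_2, Z_3)$ i.i.d. standard Gaussians, writing $W = \parcomp Z_1 + \perpcomp Z_2$, the relevant auxiliary random variable takes the form
\begin{align*}
\Omega = W - \sign(W) \cdot (|Z_1| + \noisestd Z_3).
\end{align*}

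For part (a), I would substitute this expression into Definition~\ref{def:starnext_main_FO} and reduce the three needed expectations $\EE[Z_1\Omega]$, $\EE[Z_2\Omega]$, $\EE[\Omega^2]$ to Gaussian integrals involving $\sign(W) |Z_1|$ that have already been evaluated in the heuristic derivation of Section~\ref{subsec:heuristic-second-order}. Specifically, we have $\EE[\sign(W) |Z_1| Z_1] = 1 - \frac{1}{\pi}(2\anglecurr - \sin 2\anglecurr)$ and $\EE[\sign(W) |Z_1| Z_2] = \frac{2}{\pi} \sin^2 \anglecurr$ with $\anglecurr = \tan^{-1}(\perpcomp / \parcomp)$. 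Using independence of $Z_3$ from $(Z_1, Z_2)$ kills the $\noisestd Z_3$ cross-terms, yielding
\begin{align*}
\EE[Z_1 \Omega] = \parcomp - \bigl(1 - \tfrac{1}{\pi}(2\anglecurr - \sin 2\anglecurr)\bigr), \qquad \EE[Z_2 \Omega] = \perpcomp - \tfrac{2}{\pi}\sin^2 \anglecurr.
\end{align*}
Substituting into the formula $\parcompgordon = \parcomp - 2\eta \EE[Z_1 \Omega]$ and $\perpcomp - 2\eta \EE[Z_2 \Omega] = (1 - 2\eta)\perpcomp + 2\eta \cdot \frac{2}{\pi}\sin^2 \anglecurr$ recovers~\eqref{eq:Gordon-GD-PR-xi} and the first squared term in~\eqref{eq:Gordon-GD-PR-sigma}. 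For the second term I would expand $\Omega^2 = W^2 - 2|W|(|Z_1| + \noisestd Z_3) + (|Z_1| + \noisestd Z_3)^2$, use $\EE[W^2] = \parcomp^2 + \perpcomp^2$, $\EE[(|Z_1| + \noisestd Z_3)^2] = 1 + \noisestd^2$, and the identity $\EE[|W|(|Z_1| + \noisestd Z_3)] = \EE[W \sign(W) |Z_1|] = \parcomp(1 - \frac{1}{\pi}(2\anglecurr - \sin 2\anglecurr)) + \perpcomp \cdot \frac{2}{\pi}\sin^2 \anglecurr$, which combine to give exactly the term inside the $4\eta^2/\kappa$ factor in~\eqref{eq:Gordon-GD-PR-sigma}.

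For part (b), I would verify Assumption~\ref{ass:omega} and then invoke Theorem~\ref{thm:one_step_main_FO}. Since $\parcomp \vee \perpcomp \leq 3/2$, the variable $W$ is sub-Gaussian with Orlicz norm bounded by a universal constant; similarly $|Z_1| + \noisestd Z_3$ is sub-Gaussian with Orlicz norm $\lesssim 1 + \noisestd$. The sum $\Omega = W - \sign(W)(|Z_1| + \noisestd Z_3)$ is then sub-Gaussian with $\|\Omega\|_{\psi_2} \leq K_1(\noisestd)$ for some $K_1$ depending only on $\noisestd$, verifying Assumption~\ref{ass:omega}. (Assumption~\ref{ass:omega-lb} is not required in Theorem~\ref{thm:one_step_main_FO}.) The stated concentration bounds on $|\parcomp^+ - \parcompgordon|$ and $|\perpcomp^+ - \perpcompgordon|$ then follow directly from parts (a) and (b) of Theorem~\ref{thm:one_step_main_FO}, with the constant $C_\noisestd$ absorbing the $\noisestd$-dependence from $K_1$.

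I do not expect substantial obstacles here: everything reduces to Gaussian integrals already computed for the alternating minimization derivation and a routine Orlicz-norm estimate. The mildly delicate step is simply bookkeeping the identity $\EE[|W|(|Z_1| + \noisestd Z_3)] = \EE[W \sign(W)(|Z_1| + \noisestd Z_3)]$ and ensuring the $\noisestd Z_3$ contributions cancel in the cross-moments but contribute $\noisestd^2$ to $\EE[\Omega^2]$, yielding the noise floor in~\eqref{eq:Gordon-GD-PR-sigma}.
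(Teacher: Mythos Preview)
Your proposal is correct and matches the paper's own proof essentially line for line: the paper identifies the same weight function $\wfun(x,y) = x - \sign(x)\cdot y$, reduces the needed expectations to the already-computed quantities~\eqref{eq:exp1-AMPR} and~\eqref{eq:exp2-AMPR} via Remark~\ref{rem:weight-rel}, and verifies Assumption~\ref{ass:omega} through the bound $\Omega^2 \leq 2(1+\parcomp^2)Z_1^2 + 2\perpcomp^2 Z_2^2 + 2\noisestd^2 Z_3^2$ before invoking Theorem~\ref{thm:one_step_main_FO}. Your slightly more explicit computation of $\EE[\Omega^2]$ and Orlicz-norm bookkeeping are just presentational variants of the same argument.
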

Sending $\kappa \to \infty$ in equation~\eqref{eq:Gordon-GD-PR} recovers the infinite-sample population state evolution update
\begin{align}\label{eq:Pop-GD-PR}
\parcomppop &= (1 - 2\eta) \parcomp + 2\eta \left(1 - \frac{1}{\pi} (2 \anglecurr - \sin(2 \anglecurr)) \right) \quad \text{ and } \notag \\ 
\perpcomppop &=  (1 - 2\eta) \perpcomp + 2 \eta \cdot \frac{2}{\pi} \sin^2 \anglecurr.
\end{align}

As previously noted, our interest\footnote{Our techniques are also applicable to analyzing the algorithm with general stepsize $\eta$, but we do not do so in this paper since a variety of other analysis methods tailored to first order updates~\citep[e.g.,][]{zhang2017nonconvex,chen2019gradient,tan2019phase} also work in this case.} will be in analyzing the special case $\eta = 1/2$ so as to compare and contrast with the AM update. In this case, the population updates~\eqref{eq:Pop-AM-PR} and~\eqref{eq:Pop-GD-PR} coincide, and so Fact~\ref{prop:Pop-PR} suggests that subgradient descent ought to converge quadratically fast. This would be quite surprising for a first-order method, and already suggests that the population update may be even more optimistic than before. However, the Gordon state evolution updates~\eqref{eq:Gordon-AM-PR} and~\eqref{eq:Gordon-GD-PR} are distinct even when $\eta = 1/2$, and as we saw before, these provide much more faithful predictions of convergence behavior. 

\begin{theorem} \label{thm:GD-PR}
Consider the subgradient descent update $\mathcal{T}_n$~\eqref{eq:GD-PR} and the associated Gordon state evolution update $\mathcal{S}_{\gor}$ from equation~\eqref{eq:Gordon-GD-PR}, with stepsize $\eta = 1/2$. There is a universal positive constant $C$ such that the following is true. If $\kappa \geq C (1 + \noisestd^2)$, then:

\noindent (a) The Gordon state evolution update
\begin{center}
$\mathcal{S}_{\gor}$  is $(c_{\kappa}, C_\kappa, 0)$-linearly convergent in the $\ell_2$ metric $\DeltaSELtwo$ on $\Goodset$ to level $\varepsilon_{n, d} = \frac{\noisestd}{\sqrt{\kappa}}$.
\end{center}
Here $0 \leq c_\kappa \leq C_{\kappa} < 1$ are constants depending solely on $\kappa$.

\noindent (b) Suppose $\noisestd > 0$. Then there are positive constants $C_\noisestd, C'_\noisestd$ depending only on $\noisestd$ such that for all $n \geq C'_\noisestd$ and for any $\bt$ such that $\bzeta = (\parcomp(\bt), \perpcomp(\bt)) \in \Goodset$, we have
\begin{align*}
\max_{1 \leq t \leq T} \; | \DeltaSELtwo(\mathcal{S}^t_{\gor}(\bzeta)) - \| \mathcal{T}_n^t(\bt) - \thetastar \|_2 | \leq C_\noisestd \left( \frac{\log n}{n} \right)^{1/4}
\end{align*}
with probability exceeding $1 - 2T n^{-10}$.

\noindent (c) Suppose $\bt_0$ denotes a point such that $\frac{\parcomp(\bt_0)}{\perpcomp(\bt_0)} \geq \frac{1}{50 \sqrt{d}}$ and $\parcomp(\bt_0) \lor \perpcomp(\bt_0) \leq 3/2$, and further suppose that \sloppy \mbox{$\kappa \geq C''_{\noisestd} \cdot \log \left( \frac{1 + \log d}{\delta} \right)$} for $C''_\sigma$ depending solely on $\noisestd$. 
Then for some $\ttilde \leq C\log d$, we have
\begin{align*}
\mathcal{T}_{n}^{\ttilde}(\bt_0) \in \Goodset
\end{align*} 
with probability exceeding $1 - \delta$.
\end{theorem}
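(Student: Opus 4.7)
The plan is to mirror the proof strategy of Theorem~\ref{thm:AM-PR}: first analyze the deterministic Gordon state evolution~\eqref{eq:Gordon-GD-PR} to establish part (a), then use the one-step concentration Corollary~\ref{lem:Gordon-SE-GD-PR}(b) iteratively to obtain parts (b) and (c). The qualitative difference from AM is that while the AM perpendicular update carries a factor $\tfrac{1}{\kappa-1}$ multiplying a quantity that vanishes at the optimum (giving the superlinear $3/2$ exponent), the subgradient formula~\eqref{eq:Gordon-GD-PR-sigma} with $\eta=1/2$ also contains a term of order $\tfrac{1}{\kappa}\perpcomp^2$ that contracts only linearly. For part (a), within $\Goodset$ the parallel update (identical to AM) gives $1-\parcompgordon \lesssim \perpcomp^3$ by Taylor expansion. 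A direct expansion of~\eqref{eq:Gordon-GD-PR-sigma} around $\parcomp=1,\perpcomp=0$ yields $(\perpcompgordon)^2 \leq \tfrac{4}{\pi^2}\sin^4\anglecurr + \tfrac{1}{\kappa}(\perpcomp^2 + \sigma^2 + \order(\perpcomp^3))$, so $\perpcompgordon \leq C_\kappa \perpcomp + C\sigma/\sqrt{\kappa}$ with $C_\kappa < 1$ whenever $\kappa \gtrsim 1+\sigma^2$. Combined, these imply linear contraction of $\DeltaSELtwo^2 = (1-\parcomp)^2 + \perpcomp^2$ at rate $C_\kappa$ down to floor $\sigma/\sqrt{\kappa}$; the matching lower bound in~\eqref{eq:rate-L} follows by reversing each Taylor inequality, and $\Goodset$-faithfulness is checked directly from the closed forms.

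For part (b), starting from $\bzeta\in\Goodset$ I would apply Corollary~\ref{lem:Gordon-SE-GD-PR}(b) iteratively with per-step failure probability $n^{-11}$; a union bound over $T$ steps gives total failure probability at most $2Tn^{-10}$. The key inductive invariant is that the $t$-th empirical iterate lies within $\ell_2$ distance $\order(n^{-1/4})$ of $\Sopgordon^t(\bzeta)$, which keeps the iterate inside a slight enlargement of $\Goodset$---so in particular $\parcomp\vee\perpcomp\leq 3/2$ continues to hold, as required by the Corollary---and bounds the accumulated error uniformly in $t$ by the geometric sum $\sum_{k\geq 0} C_\kappa^k \cdot \order(n^{-1/4}) = \order(n^{-1/4})$. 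One way to formalize this is to construct two deterministic envelope sequences $\underline{d}_t, \overline{d}_t$ satisfying affine recursions with the same contraction rate $C_\kappa$ and shifts $\sigma/\sqrt{\kappa}\pm\order(n^{-1/4})$, both of which have fixed points within $\order(n^{-1/4})$ of $\sigma/\sqrt{\kappa}$ and sandwich $\DeltaSELtwo(\bzeta^{\mathrm{emp}}_t)$ from above and below uniformly in $t$.

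For part (c), write $\anglecurr_t = \pi/2 - \phi_t$ with $\phi_0 \gtrsim 1/\sqrt{d}$ from the initialization hypothesis. Taylor expanding the parallel Gordon update around $\anglecurr=\pi/2$ yields $\parcompgordon_{t+1} = \tfrac{4}{\pi}\phi_t + \order(\phi_t^3)$, while $\perpcompgordon_{t+1} = \tfrac{2}{\pi}\sin^2\anglecurr_t + \order(1/\sqrt{\kappa})$ stays $\Theta(1)$ throughout this cold regime. Consequently $\phi_{t+1} \geq 2\phi_t(1-o(1))$, giving geometric growth until $\phi_t = \Omega(1)$ after $\order(\log d)$ Gordon iterations, at which point $\Sopgordon^t(\bzeta_0)$ enters $\Goodset$. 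To transfer this to the empirical trajectory, I iterate the sharp $n^{-1/2}$ bound for the parallel component from Theorem~\ref{thm:one_step_main_FO}(b); a union bound over the $\order(\log d)$ warmup iterations produces the factor $\log((1+\log d)/\delta)$ in the sample-complexity requirement. Provided the per-step parallel deviation $\order(n^{-1/2})$ is much smaller than $\parcompgordon_t \asymp \phi_t$ at every step---which holds under $\kappa \geq C''_\sigma\log((1+\log d)/\delta)$---the empirical ratio inherits the geometric growth and reaches $\Goodset$ within $\order(\log d)$ empirical iterations. The hypothesis $\parcomp(\bt_0)\vee\perpcomp(\bt_0)\leq 3/2$ ensures the Corollary applies at step $0$ and is preserved along the trajectory by the boundedness of the Gordon update for $\kappa\gtrsim 1+\sigma^2$.

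The principal obstacle is in part (c): near random initialization $\parcompgordon_t$ can be as small as $1/\sqrt{d}$, so any deviation exceeding $\parcompgordon_t$ would destroy the geometric-growth recursion and stall progress. The cruder $n^{-1/4}$ rate from the general one-step analysis is inadequate here; only the sharpened $n^{-1/2}$ rate of Theorem~\ref{thm:one_step_main_FO}(b)---derived in Section~\ref{sec:random-init-signal} via a leave-one-out device---suffices to preserve the dynamics through the full $\order(\log d)$ warmup. A secondary delicacy is ensuring the ball invariant $\parcomp\vee\perpcomp\leq 3/2$ is maintained along the entire empirical trajectory so that Corollary~\ref{lem:Gordon-SE-GD-PR}(b) continues to apply at every step; this is resolved by verifying that the Gordon update preserves this invariant under $\kappa\gtrsim 1+\sigma^2$ and transferring it along the trajectory via the same concentration bounds.
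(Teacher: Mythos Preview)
Your proposal is correct and follows essentially the same approach as the paper: part (a) via direct analysis of the Gordon maps $(F,g)$ on $\Goodset$ (the paper handles the lower bound through a rearrangement plus Young's inequality rather than by reversing Taylor bounds, but both routes work), part (b) via an iterated concentration argument with geometric error accumulation (packaged in the paper as Lemma~\ref{lem:generalb}, after verifying the gradient conditions $\|\nabla F\|_1 \vee \|\nabla g\|_1 \leq 1-\tau$), and part (c) via geometric growth of the parallel component combined with the sharp $n^{-1/2}$ concentration (packaged as Lemma~\ref{lem:generalc}(b), verifying conditions C1--C4 and C5b). One minor inaccuracy: for first-order methods the $n^{-1/2}$ rate in Theorem~\ref{thm:one_step_main_FO}(b) is obtained by a direct Bernstein argument (Proposition~\ref{prop:concentration-signal}(a)), not via leave-one-out---the leave-one-out device is needed only for the higher-order update in part (b) of that proposition.
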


To be concrete once again, suppose $d \geq 130$. Then using $n \geq d$ observations $(\bx_i, y_i)_{i = 1}^n$ from the model~\eqref{eq:PR-model} and setting $\bt_0 = \sqrt{\frac{1}{n} \sum_{i = 1}^{n} y_i^2 } \cdot \bu$ with the vector $\bu$ chosen uniformly at random from the unit ball, we obtain the required initialization condition with probability greater than $0.95$ (see Lemma~\ref{lem:init-properties}(b) in the appendix). The theorem then guarantees that for some $\tau = \mathcal{O}( \log d + \log(\kappa/\noisestd^2))$, the empirics satisfy
\begin{align}
\| \mathcal{T}_{n}^{\tau}(\bt_0) - \thetastar \| = \order \left( \noisestd\sqrt{\frac{d}{n}} \right) + \ordertil \left( n^{-1/4} \right)
\end{align}
with high probability.
Given our extensive discussion of Theorem~\ref{thm:AM-PR} and that most of these comments also apply here, we make just one
remark in passing that focuses on the difference. Note that as expected, Theorem~\ref{thm:GD-PR} shows that subgradient descent only converges linearly in the good region. This corroborates what we saw in Figures~\ref{fig:intro-pop} and~\ref{fig:intro-Gordon}, and shows once again---and more dramatically than before---that the (quadratically convergent) population update can be significantly optimistic in predicting convergence behavior. 

\subsection{Mixture of regressions}

While the symmetric mixture of linear regressions model is statistically equivalent (for parameter estimation) to the phase retrieval model without additive noise (i.e., $\noisestd = 0$), we show in this section that the models and their associated algorithms have distinct behavior for any nonzero noise level. 

\subsubsection{Alternating minimization}

Recall from equation~\eqref{eq:update-explicit-AM-MR} that the empirical update applied at $\bt$ is given by
\begin{align}
	\label{eq:empirics-AM-MLR}
\mathcal{T}_n(\bt) = \left( \frac{1}{n} \sum_{i = 1}^n \bx_i \bx_i^\top \right)^{-1} \; \left( \frac{1}{n} \sum_{i = 1}^n \sign(y_i \cdot \inprod{\bx_i}{\bt}) \cdot y_i \cdot \bx_i \right).
\end{align}
The Gordon updates are given by the following corollary of Theorem~\ref{thm:one_step_main_HO}, proved in Appendix~\ref{pf-cor3}. Before stating it, we define the convenient shorthand
\begin{align} \label{eq:AB-shorthand}
A_{\noisestd}(\rho) := \frac{2}{\pi}\tan^{-1}\left(\sqrt{\rho^2 + \noisestd^2 + \noisestd^2 \rho^2}\right) \quad \text{ and } \quad
B_{\noisestd}(\rho) := \frac{2}{\pi} \frac{\sqrt{\rho^2 + \noisestd^2 + \noisestd^2\rho^2}}{1 + \rho^2}.
\end{align}
\begin{corollary} \label{lem:Gordon-SE-AM-MLR}
Let $\parcomp = \parcomp(\bt)$ and $\perpcomp = \perpcomp(\bt)$ with $\bzeta = (\parcomp, \perpcomp)$ and $\rho = \frac{\perpcomp}{\parcomp}$. Let $(\parcompgordon, \perpcompgordon) = \mathcal{S}_{\gor}(\bzeta)$ denote the Gordon state evolution update 
in this case, given by Definition~\ref{def:starnext_main_HO}. \\
(a) Using the shorthand~\eqref{eq:AB-shorthand}, we have
\begin{subequations} \label{eq:Gordon-AM-MLR}
\begin{align}
\parcompgordon &= 1 - A_{\noisestd}(\rho) + B_{\noisestd}(\rho), \text{ and } \\
\perpcompgordon &= \sqrt{\rho^2 B_{\noisestd}(\rho)^2 + \frac{1}{\kappa - 1}\left(1 + \noisestd^2 - (1 - A_{\noisestd}(\rho) + B_{\noisestd}(\rho))^2 - \rho^2 B_{\noisestd}(\rho)^2\right)}.
\end{align}
\end{subequations}
(b) Suppose $\noisestd > 0$. Then there is a positive constant $C_\noisestd$ depending only on $\noisestd$ such that with $\mathcal{T}_n$ as defined in equation~\eqref{eq:empirics-AM-MLR}, the empirical state evolution $(\parcomp^+, \perpcomp^+) = (\parcomp(\mathcal{T}_n(\bt)), \perpcomp(\mathcal{T}_n(\bt)))$ satisfies
\begin{align*}
\Pro \left\{ | \parcomp^+ - \parcompgordon | \leq  C_\noisestd \left( \frac{\log^7 (1/\delta) }{n} \right)^{1/2} \right\} \leq \delta \quad \text{ and } \quad \Pro \left\{ | \perpcomp^+ - \perpcompgordon | \leq C_\noisestd \left( \frac{\log (1 / \delta) }{n} \right)^{1/4} \right\} \leq \delta.
\end{align*}
\end{corollary}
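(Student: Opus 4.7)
The plan is to directly instantiate Definition~\ref{def:starnext_main_HO} with the weight function $\wfun(x, y) = \sign(xy) \cdot y$ corresponding to the AM update~\eqref{eq:update-explicit-AM-MR}. Setting $U = \parcomp Z_1 + \perpcomp Z_2$ and $V = Q Z_1 + \noisestd Z_3$ with $Q$ a Rademacher random variable independent of $(Z_1, Z_2, Z_3)$, the quantity in~\eqref{eq:omegat_def_main} becomes $\Omega = \sign(UV) \cdot V = |V|\sign(U)$. Since flipping the sign of $Q$ corresponds to flipping the sign of $Z_3$---which preserves the joint law of $(Z_1, Z_2, Z_3)$---the conditional distribution of $(\Omega, Z_1, Z_2)$ given $Q = \pm 1$ is identical, so I may take $Q = 1$ without loss of generality. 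Part (a) thus reduces to computing $\EE[Z_1 \Omega]$, $\EE[Z_2 \Omega]$, and $\EE[\Omega^2]$ for a zero-mean jointly Gaussian collection $(U, V, Z_1, Z_2)$ with explicit covariances.

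The second moment is immediate: $|\Omega| = |V|$ yields $\EE[\Omega^2] = \EE[V^2] = 1 + \noisestd^2$. For the cross moments, I would decompose each $Z_j$ via its $L^2$-projection onto the span of $(U, V)$, writing $Z_j = a_j U + b_j V + W_j$ with $W_j$ orthogonal in $L^2$ to both $U$ and $V$. The coefficients $(a_j, b_j)$ are determined uniquely by the four covariances $\cov(Z_1, U) = \parcomp$, $\cov(Z_1, V) = 1$, $\cov(Z_2, U) = \perpcomp$, $\cov(Z_2, V) = 0$, and take explicit rational form in $(\parcomp, \perpcomp, \noisestd)$. The cross moments then split as
\begin{align*}
\EE[Z_j \Omega] = a_j \EE[|UV|] + b_j \EE[V^2 \sign(UV)],
\end{align*}
and each of $\EE[|UV|]$ and $\EE[V^2 \sign(UV)]$ admits a standard closed-form evaluation for mean-zero jointly Gaussian $(U, V)$ with correlation $r$: the former is $\tfrac{2\sigma_U \sigma_V}{\pi}(\sqrt{1 - r^2} + r \arcsin r)$, and the latter reduces to a combination of $\sigma_V^2$ and $r$ via polar decomposition or Stein's lemma. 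Substituting $\sigma_U^2 = \parcomp^2 + \perpcomp^2$, $\sigma_V^2 = 1 + \noisestd^2$, $\cov(U, V) = \parcomp$, and reparameterizing by $\rho = \perpcomp/\parcomp$, produces $\EE[Z_1 \Omega] = 1 - A_\noisestd(\rho) + B_\noisestd(\rho)$ and $\EE[Z_2 \Omega] = \rho \cdot B_\noisestd(\rho)$. Plugging these three moments into~\eqref{eq:theta_starnext_sec_main} yields the claimed formulas. As a consistency check, setting $\noisestd = 0$ collapses $V$ to $Z_1$ and---via the identities $\tan^{-1} \rho = \anglecurr$ and $\rho/(1 + \rho^2) = \tfrac{1}{2} \sin(2\anglecurr)$---recovers the phase retrieval update of Corollary~\ref{lem:Gordon-SE-AM-PR}.

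Part (b) will then follow by invoking Theorem~\ref{thm:one_step_main_HO}, provided Assumptions~\ref{ass:omega} and~\ref{ass:omega-lb} hold with constants depending only on $\noisestd$. Assumption~\ref{ass:omega} is immediate: $|\Omega| = |V|$ with $V \sim \NORMAL(0, 1 + \noisestd^2)$ yields $\|\Omega\|_{\psi_2} \lesssim \sqrt{1 + \noisestd^2}$. The main obstacle is Assumption~\ref{ass:omega-lb}: since $\EE[\Omega] = 0$ by the symmetry $\Omega \overset{(d)}{=} -\Omega$ under negating $(Z_1, Z_2, Z_3)$, and $(Z_1, Z_2)$ are orthonormal in $L^2$, the target quantity
\begin{align*}
\EE[\Omega^2] - (\EE[Z_1 \Omega])^2 - (\EE[Z_2 \Omega])^2 = \EE\bigl[\bigl(\Omega - \EE[Z_1 \Omega]\, Z_1 - \EE[Z_2 \Omega]\, Z_2\bigr)^2\bigr]
\end{align*}
is the $L^2$-residual of $\Omega$ after linear regression on $(Z_1, Z_2)$. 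By the tower property this exceeds $\EE[\Var(\Omega \mid Z_1, Z_2)]$, and conditional on $(Z_1, Z_2)$ the sign $\sign(U)$ is deterministic while $V \mid Z_1 \sim \NORMAL(Z_1, \noisestd^2)$, so $\Var(\Omega \mid Z_1, Z_2) = \Var(|V| \mid Z_1)$ is a folded-normal variance that, for $\noisestd > 0$, is bounded below uniformly in the conditioning by a positive constant proportional to $\noisestd^2$. This furnishes a constant $K_2 > 0$ depending only on $\noisestd$, completing the verification of the assumptions needed to invoke Theorem~\ref{thm:one_step_main_HO} and obtain the claimed deviation rates.
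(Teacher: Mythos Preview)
Your proposal is correct and follows the same high-level structure as the paper---compute $\EE[\Omega^2]$, $\EE[Z_1\Omega]$, $\EE[Z_2\Omega]$, then verify Assumptions~\ref{ass:omega} and~\ref{ass:omega-lb} and invoke Theorem~\ref{thm:one_step_main_HO}---but the tactical choices differ in two places worth noting.

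For the cross-moment computation in part~(a), the paper eliminates $Q$ exactly as you do, then performs a Gram--Schmidt change of basis (writing $Z_1$ in terms of the normalized $U$ and an independent remainder, and collecting the remainder with $\noisestd Z_3$ into a second unit Gaussian) and evaluates the resulting integrals in polar coordinates. Your route---project $Z_j$ onto $\mathrm{span}(U,V)$ and reduce to the two bivariate-Gaussian functionals $\EE[|UV|]$ and $\EE[V^2\sign(UV)]$---is equally valid and arguably cleaner, though you would still need to carry out the second functional explicitly to recover the $A_\noisestd$ and $B_\noisestd$ formulas; the paper's polar computation does this work directly.

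For Assumption~\ref{ass:omega-lb}, the paper takes the explicit expression $H(\rho) = 1 + \noisestd^2 - (1 - A_\noisestd(\rho) + B_\noisestd(\rho))^2 - \rho^2 B_\noisestd(\rho)^2$, differentiates, shows $H'(\rho) \geq 0$, and concludes $H(\rho) \geq H(0) = \noisestd^2$. Your conditional-variance argument is a genuinely different and more conceptual alternative: it sidesteps the explicit formulas entirely and yields a lower bound of order $\noisestd^2(1 - 2/\pi)$ via the folded-normal variance. Both deliver a positive $K_2$ depending only on $\noisestd$; the paper's gives the sharper constant, while yours would survive unchanged even if the moment formulas in part~(a) were left implicit.
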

By taking $\kappa \to \infty$ in equation~\eqref{eq:Gordon-AM-MLR}, we recover the population update for this case, given by
\begin{align}\label{eq:Pop-AM-MLR}
\parcomppop = 1 - A_{\noisestd}(\rho) + B_{\noisestd}(\rho)  \quad \text{ and }  \quad \perpcomppop = \rho B_{\noisestd}(\rho).
\end{align}
The update~\eqref{eq:Pop-AM-MLR} has no dependence on $\kappa$ and thus cannot recover the noise floor of the problem. On the other hand, and similarly to before, the following theorem shows that the empirics are tracked instead by the Gordon update~\eqref{eq:Gordon-AM-MLR}.
\begin{theorem} \label{thm:AM-MLR}
Consider the alternating minimization update $\mathcal{T}_n$ given in equation~\eqref{eq:empirics-AM-MLR} and the associated Gordon state evolution update $\mathcal{S}_{\gor}$~\eqref{eq:Gordon-AM-MLR}. There are universal positive constants $(c, C)$ such that the following is true. If $\kappa \geq C$ and $0 < \noisestd \leq c$, then:

\noindent (a) The Gordon state evolution update 
\begin{center}
$\mathcal{S}_{\gor}$ is $(c_{\kappa, \noisestd}, C_{\kappa, \noisestd}, 0)$-linearly convergent in the angular metric $\DeltaSEangle$ on $\Goodset$ to level $\varepsilon_{n, d} = \frac{\noisestd}{\sqrt{\kappa}}$,
\end{center} 
where $0 \leq c_{\kappa, \noisestd} \leq C_{\kappa, \noisestd} \leq 1$ are constants depending solely on the pair $(\kappa, \noisestd)$. 

\noindent (b) If $n \geq C'_\noisestd$, then for any $\bt$ such that $\bzeta = (\parcomp(\bt), \perpcomp(\bt)) \in \Goodset$, we have
\begin{align*}
\max_{1 \leq t \leq T} \; | \DeltaSEangle(\mathcal{S}^t_{\gor}(\bzeta)) - \angle(\bt, \thetastar) | \leq C_\noisestd \left( \frac{\log n}{n} \right)^{1/4}
\end{align*}
with probability exceeding $1 - 2T n^{-10}$. Here $C_\noisestd$ and $C'_\sigma$ are positive constants depending solely on $\noisestd$.

\noindent (c) Suppose $\bt_0$ denotes a point such that $\frac{\parcomp(\bt_0)}{\perpcomp(\bt_0)} \geq \frac{1}{50 \sqrt{d}}$ and further suppose that \sloppy \mbox{$\kappa \geq C''_{\noisestd} \cdot \log^7 \left( \frac{1 + \log d}{\delta} \right)$} for $C''_\sigma$ depending solely on $\noisestd$. 
Then for some $\ttilde \leq C\log d$, we have
\begin{align*}
\mathcal{T}_{n}^{\ttilde}(\bt_0) \in \Goodset
\end{align*} 
with probability exceeding $1 - \delta$.
\end{theorem}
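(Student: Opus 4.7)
\textbf{Proof proposal for Theorem~\ref{thm:AM-MLR}.} The plan is to mirror the three-part structure of the statement: (a) a deterministic analysis of the one-dimensional Gordon recursion in the good region, (b) an iterate-by-iterate coupling between the empirical trajectory and the Gordon trajectory using the one-step concentration of Corollary~\ref{lem:Gordon-SE-AM-MLR}, and (c) a random-initialization escape argument that uses the refined $\ordertil(n^{-1/2})$ bound on the parallel component provided by Theorem~\ref{thm:one_step_main_HO}(b). The angular metric $\DeltaSEangle$ plays a distinguished role here because, unlike the $\ell_2$ metric, it remains informative even when the $\sigma>0$ noise floor precludes $\ell_2$-consistency.

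For part (a), I would reduce the two-dimensional Gordon map $(\alpha,\beta)\mapsto(\parcompgordon,\perpcompgordon)$ to a one-dimensional recursion on the ratio $\rho = \beta/\alpha$, since $\DeltaSEangle(\alpha,\beta)=\tan^{-1}(\rho)$. Using the shorthand~\eqref{eq:AB-shorthand}, the population part of the recursion reads
\begin{align*}
\rho^{\pop} \;=\; \frac{\rho B_\sigma(\rho)}{1-A_\sigma(\rho)+B_\sigma(\rho)},
\end{align*}
and a short Taylor expansion around $\rho=0$, valid on the good region $\{\rho\leq 1/5\}$, shows $\rho^{\pop}\leq C_\sigma\,\rho$ with a contraction constant $C_\sigma<1$ when $\sigma$ is sufficiently small. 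The Gordon correction contributes an additive $(\kappa-1)^{-1/2}$-order term to $\beta^{\gor}$, which translates into an additive noise floor of order $\sigma/\sqrt\kappa$ in $\rho^{\gor}$. Combining these gives both the upper envelope $\rho^{\gor}\leq C_{\kappa,\sigma}\rho+\varepsilon_{n,d}$ and, by the same expansion carried to one higher order and the positivity of the correction term, the matching lower envelope $\rho^{\gor}\geq c_{\kappa,\sigma}\rho+\varepsilon_{n,d}/2$. One must also verify that the map is $\Goodset$-faithful: this amounts to checking $\parcompgordon\in[0.55,1.05]$ and the $\rho\le 1/5$ bound after one step, both of which follow from monotonicity of $A_\sigma,B_\sigma$ and a direct bound on the $(\kappa-1)^{-1/2}$ term.

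For part (b), I would argue by induction. Given $\bt_t\in\Goodset$, the one-step bound in Corollary~\ref{lem:Gordon-SE-AM-MLR}(b), after a union bound, gives $|\parcomp^+-\parcompgordon|\lesssim (\log^7 n/n)^{1/2}$ and $|\perpcomp^+-\perpcompgordon|\lesssim (\log n/n)^{1/4}$ with probability $1-O(n^{-10})$. Combining with the contraction in part (a), together with a Lipschitz bound on $\DeltaSEangle$ in the good region (which fails only near $\alpha=0$, hence the role of the $\alpha\geq 0.55$ constraint), shows that $\bt_{t+1}$ remains in $\Goodset$ and that the angular error satisfies the claimed envelope. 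Telescoping the per-step deviation through the contraction yields a geometric series whose sum is dominated by the final fluctuation term, giving the uniform-in-$t$ bound. A union bound over $t=1,\dots,T$ produces the failure probability $2Tn^{-10}$.

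Part (c) is where I expect the principal obstacle to lie, and it essentially re-uses the machinery the paper develops for phase retrieval (Theorem~\ref{thm:AM-PR}(c)) with a sign-preserving modification suited to the weight function $\omega(x,y)=\sign(xy)y$. The strategy is to show that from any starting point with $\alpha_0/\beta_0\gtrsim d^{-1/2}$, each application of $\mathcal{T}_n$ multiplies the ratio $\alpha_t/\beta_t$ by a factor bounded away from one, until the ratio reaches a constant level and $\bt_t$ enters $\Goodset$; after $O(\log d)$ iterations, this occurs. The hard part is that at initialization, $\parcomp_0\asymp d^{-1/2}$ is so small that the generic $\ordertil(n^{-1/4})$ deviation bound is useless for $\parcomp_+$; here one must invoke the sharpened $\ordertil(n^{-1/2})$ bound of Theorem~\ref{thm:one_step_main_HO}(b), whose proof in Section~\ref{sec:random-init-signal} uses a leave-one-out device to handle the matrix inverse in~\eqref{eq:empirics-AM-MLR}. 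The sample-complexity requirement $\kappa\gtrsim\log^7((1+\log d)/\delta)$ enters precisely from this step (the $\log^7$ factor from Theorem~\ref{thm:one_step_main_HO} combined with a doubly-iterated-logarithm union bound over the $O(\log d)$ escape iterations). Verifying that the Gordon prediction for $\alpha^+$ is indeed bounded below by a constant multiple of $\alpha$ when $\alpha\ll\beta$, uniformly in $\sigma$ sufficiently small, is the one model-specific calculation required; this reduces to showing $1-A_\sigma(\rho)+B_\sigma(\rho)$ is bounded away from zero for $\rho$ large, which follows from $A_\sigma(\rho)\leq 1$.
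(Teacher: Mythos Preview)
Your high-level plan matches the paper's, and part (a) is essentially right: both you and the paper reduce to the ratio $\rho=\beta/\alpha$, sandwich $G_\sigma/F_\sigma$ between a contraction and an additive $\sigma/\sqrt{\kappa}$ floor (Lemma~\ref{lem:map-ineqs}(g)), and pass to $\tan^{-1}$ via Lemma~\ref{lem:taninv}. Two points need correction, though.

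\textbf{Part (c).} Your last sentence asserts that ``$1-A_\sigma(\rho)+B_\sigma(\rho)$ is bounded away from zero for $\rho$ large, which follows from $A_\sigma(\rho)\leq 1$.'' This is false: as $\rho\to\infty$ one has $A_\sigma(\rho)\to 1$ and $B_\sigma(\rho)\to 0$, so $F_\sigma=1-A_\sigma+B_\sigma\to 0$. What the escape argument actually needs is the \emph{comparative} bound $F_\sigma(\alpha,\beta)\geq 1.06\,\alpha$ for $\rho\geq 2$ and $\beta\leq 1$ (condition C2 in Lemma~\ref{lem:generalc}). The paper gets this by combining $F_\sigma\geq F_0$ (Lemma~\ref{lem:map-ineqs}(c)) with the phase-retrieval estimate $F_0\geq 1.06\,\alpha$ (Lemma~\ref{lem:map-ineqs}(b)), whose proof is a Taylor expansion of $F_0$ in the complementary angle $\zeta=\pi/2-\phi$ giving $F_0\sim(4/\pi)\,\alpha/\beta$. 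The ratio $F_\sigma/\alpha$ is bounded below by a constant exceeding one; an absolute lower bound on $F_\sigma$ is both false and unnecessary.

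\textbf{Part (b).} Your telescoping step invokes ``the contraction in part (a),'' but the linear-convergence statement $\DeltaSEangle(\mathcal{S}_{\gor}(\bzeta))\leq C_{\kappa,\sigma}\,\DeltaSEangle(\bzeta)+\varepsilon$ does not control how two nearby trajectories separate: for that you need a Lipschitz bound on the Gordon map itself. The paper supplies this in the $(\alpha,\beta)$-plane via the gradient estimates $\|\nabla F_\sigma\|_1\vee\|\nabla G_\sigma\|_1\leq 1-\tau$ on a fattening of $\Goodset$ (Lemma~\ref{lem:grad-map-ineqs}(a),(b)), and then applies the generic coupling Lemma~\ref{lem:generalb} to bound $\|\bzeta_t-\mathcal{S}_{\gor}^t(\bzeta_0)\|_\infty$ uniformly in $t$ by $\Delta/\tau$; only at the end is this converted to the angular metric using the smoothness of $(\alpha,\beta)\mapsto\tan^{-1}(\beta/\alpha)$ on $\{\alpha\geq 0.55\}$. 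Contraction toward the fixed point and Lipschitz contraction of the map are distinct properties, and only the latter prevents per-step fluctuations from accumulating.
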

Owing to the discussion following Theorem~\ref{thm:GD-PR} (see Lemma~\ref{lem:init-properties} in the appendix), we deduce that with a random initialization $\bt_0$ and after $\tau = \mathcal{O}( \log d + \log \log (\kappa/\noisestd^2))$ iterations, the empirics satisfy
\begin{align}
\angle \left( \mathcal{T}_{n}^{\tau}(\bt_0), \thetastar \right) = \order \left( \noisestd \sqrt{\frac{d}{n}} \right) + \ordertil \left( n^{-1/4} \right)
\end{align}
with high probability.

Let us make a few remarks to compare and contrast Theorem~\ref{thm:AM-MLR} with our previous results.
First, note that the convergence result proved here is in the angular metric $\DeltaSEangle$ and not in the (stronger) $\ell_2$ metric $\DeltaSELtwo$.
This is a crucial difference between the  phase retrieval and mixture of regressions models. Indeed, the parameter estimate for AM in mixtures of regressions can be shown to be inconsistent in the $\ell_2$ distance; to see this, note that when $\bt = \thetastar$, we have $\parcompgordon = 1 + \Theta(\noisestd^3)$. Combining this estimate with part (b) of Corollary~\ref{lem:Gordon-SE-AM-MLR}, we see that $\DeltaSELtwo(\parcomp^+, \perpcomp^+) = \Theta(\noisestd^3) + o(1)$, and so for any constant noise level, the algorithm is not consistent. Inconsistency of parameter estimation is a known phenomenon for alternating minimization algorithms in mixture models with noise (for instance, a similar conclusion follows from the results of~\citet{lu2016statistical} on the label recovery error of Lloyd's algorithm in a Gaussian mixture).

Second, note that when $\noisestd = 0$, the mixture of regressions and phase retrieval models coincide. 
However, when there is noise, the convergence behavior predicted by Theorem~\ref{thm:AM-MLR} changes drastically to a linear rate, while in phase retrieval, super-linear convergence is preserved even when the noise level is nonzero (cf. Theorem~\ref{thm:AM-PR}). The Gordon update---and the ensuing sharpness of our upper and lower bounds of the error of the algorithm---enable us to make this distinction.

Finally, note that our assumption on the noise level in this case is that $\noisestd$ (as opposed to $\noisestd / \sqrt{\kappa}$) be bounded above by a universal constant, resulting in a more stringent condition than what we required in phase retrieval. While we make this assumption for convenience in our proof, we conjecture that it can be weakened to accommodate the optimal condition $\noisestd/\sqrt{\kappa} \leq c$.

\subsubsection{Subgradient AM}

For completeness, we also present corollaries for the subgradient version of the AM update. As mentioned before, we are not aware of this algorithm having been considered in the literature, but it is natural for us to study it since when the stepsize $\eta = 1/2$, it shares the same population update as AM (see Remark~\ref{rem:pop-coincide}). Given that AM converges linearly for a mixture of regressions, it is natural to ask if the first-order method---which has a much lower per-iteration cost---enjoys a similar convergence rate.
As derived in equation~\eqref{eq:update-explicit-GD-MR}, the update with stepsize $\eta$ is given by
\begin{align} \label{eq:empirics-GD-MLR}
\mathcal{T}_n(\bt) = \bt - \frac{2\eta}{n} \cdot \sum_{i = 1}^n (\sign(y_i \inprod{\bx_i}{\bt}) \cdot \inprod{\bx_i}{\bt} - y_i) \cdot \sign(y_i \inprod{\bx_i}{\bt}) \cdot \bx_i.
\end{align}
The Gordon state evolution update in this case is given by the following corollary of Theorem~\ref{thm:one_step_main_FO}, proved in Appendix~\ref{pf-cor4}.
\begin{corollary} \label{lem:Gordon-SE-GD-MLR}
Let $\parcomp = \parcomp(\bt)$ and $\perpcomp = \perpcomp(\bt)$ with $\bzeta = (\parcomp, \perpcomp)$ and $\rho = \frac{\perpcomp}{\parcomp}$. Let $(\parcompgordon, \perpcompgordon) = \mathcal{S}_{\gor}(\bzeta)$ denote the Gordon state evolution corresponding to the update~\eqref{eq:empirics-GD-MLR}, given by Definition~\ref{def:starnext_main_FO}. Let $\eta \leq 1/2$. \\
(a) Using the shorthand~\eqref{eq:AB-shorthand}, we have
\begin{subequations} \label{eq:Gordon-GD-MLR}
\begin{align}
\parcompgordon &= (1 - 2\eta) \parcomp + 2 \eta \cdot \left( 1 - A_{\noisestd}(\rho) + B_{\noisestd}(\rho) \right), \text{ and } \\
\perpcompgordon &= \Big( \big\{ (1 - 2 \eta) \perpcomp + 2\eta \cdot \rho B_{\noisestd}(\rho) \big\}^2 \notag \\
&\qquad \qquad + \frac{4 \eta^2}{\kappa}\left\{ \parcomp^2 + \perpcomp^2 - 2\parcomp(1 - A_{\noisestd}(\rho) + B_{\noisestd}(\rho)) - 2 \perpcomp \rho B_{\noisestd}(\rho) + 1 + \noisestd^2 \right\} \Big)^{1/2}.
\end{align}
\end{subequations}
(b) Suppose $\noisestd > 0$. Then there is a positive constant $C_\noisestd$ depending solely on $\noisestd$ such that with $\mathcal{T}_n$ as defined in equation~\eqref{eq:GD-PR}, the empirical state evolution $(\parcomp^+, \perpcomp^+) = (\parcomp(\mathcal{T}_n(\bt)), \perpcomp(\mathcal{T}_n(\bt)))$ satisfies
\begin{align*}
\Pro \left\{ | \parcomp^+ - \parcompgordon | \leq  C_\noisestd \left( \frac{\log (1/\delta) }{n} \right)^{1/2} \right\} \leq \delta \quad \text{ and } \quad \Pro \left\{ | \perpcomp^+ - \perpcompgordon | \leq C_\noisestd \left( \frac{\log (1 / \delta) }{n} \right)^{1/4} \right\} \leq \delta.
\end{align*}
\end{corollary}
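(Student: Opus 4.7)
\textbf{Proof proposal for Corollary~\ref{lem:Gordon-SE-GD-MLR}.} The plan is to invoke Theorem~\ref{thm:one_step_main_FO} together with Definition~\ref{def:starnext_main_FO}, so all that remains is to evaluate the three expectations $\EE[Z_1\Omega]$, $\EE[Z_2\Omega]$, and $\EE[\Omega^2]$ for the weight function associated to subgradient AM and then verify the sub-Gaussian hypothesis of Assumption~\ref{ass:omega}. From equation~\eqref{eq:update-explicit-GD-MR}, the first-order weight function is $\wfun_{\mathsf{FO}}(x,y) = x - \sign(xy)\cdot y$, whereas the higher-order AM update studied in Corollary~\ref{lem:Gordon-SE-AM-MLR} uses $\wfun_{\mathsf{HO}}(x,y) = \sign(xy)\cdot y$. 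Consistent with Remark~\ref{rem:weight-rel}, we therefore have the pointwise identity $\wfun_{\mathsf{FO}}(x,y) = x - \wfun_{\mathsf{HO}}(x,y)$, which translates to $\Omega_{\mathsf{FO}} = (\parcomp Z_1 + \perpcomp Z_2) - \Omega_{\mathsf{HO}}$ under the model~\eqref{eq:MoR-model} with $Q \in \{\pm 1\}$. The key observation is that the moments of $\Omega_{\mathsf{HO}}$ have already been computed in the proof of Corollary~\ref{lem:Gordon-SE-AM-MLR}; indeed, unpacking the formulas for $(\parcompgordon, \perpcompgordon)$ in equation~\eqref{eq:Gordon-AM-MLR} via Definition~\ref{def:starnext_main_HO} yields $\EE[Z_1 \Omega_{\mathsf{HO}}] = 1 - A_\noisestd(\rho) + B_\noisestd(\rho)$ and $\EE[Z_2 \Omega_{\mathsf{HO}}] = \rho B_\noisestd(\rho)$.

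The next step is therefore to compute the three moments for $\Omega_{\mathsf{FO}}$ by linearity. The parallel and perpendicular first moments are immediate:
\begin{align*}
\EE[Z_1 \Omega_{\mathsf{FO}}] &= \parcomp - \bigl(1 - A_\noisestd(\rho) + B_\noisestd(\rho)\bigr), \qquad
\EE[Z_2 \Omega_{\mathsf{FO}}] = \perpcomp - \rho B_\noisestd(\rho).
\end{align*}
For the second moment, expand
\begin{align*}
\EE[\Omega_{\mathsf{FO}}^2] = \EE\bigl[(\parcomp Z_1 + \perpcomp Z_2)^2\bigr] - 2\EE\bigl[(\parcomp Z_1 + \perpcomp Z_2)\Omega_{\mathsf{HO}}\bigr] + \EE[\Omega_{\mathsf{HO}}^2],
\end{align*}
and observe that $\Omega_{\mathsf{HO}} = \sign\bigl((\parcomp Z_1 + \perpcomp Z_2)(QZ_1 + \noisestd Z_3)\bigr)(QZ_1 + \noisestd Z_3)$ satisfies $\Omega_{\mathsf{HO}}^2 = (QZ_1 + \noisestd Z_3)^2$, so that $\EE[\Omega_{\mathsf{HO}}^2] = 1 + \noisestd^2$. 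Combining these pieces gives
\begin{align*}
\EE[\Omega_{\mathsf{FO}}^2] = \parcomp^2 + \perpcomp^2 - 2\parcomp(1 - A_\noisestd(\rho) + B_\noisestd(\rho)) - 2\perpcomp \rho B_\noisestd(\rho) + 1 + \noisestd^2.
\end{align*}
Substituting the three moments into equation~\eqref{eq:theta_starnext_gd_main} of Definition~\ref{def:starnext_main_FO} and simplifying yields exactly equation~\eqref{eq:Gordon-GD-MLR}, establishing part (a).

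For part (b), the goal is to verify Assumption~\ref{ass:omega} for the first-order weight function and apply Theorem~\ref{thm:one_step_main_FO} directly. Since $|\wfun_{\mathsf{FO}}(x,y)| \leq |x| + |y|$ pointwise, we obtain the deterministic bound $|\Omega_{\mathsf{FO}}| \leq |\parcomp Z_1 + \perpcomp Z_2| + |QZ_1 + \noisestd Z_3|$, which is a sum of Gaussian random variables with variances bounded by a function of $(\parcomp, \perpcomp, \noisestd)$. Under the hypothesis $\parcomp \vee \perpcomp \leq 3/2$ (inherited from Theorem~\ref{thm:one_step_main_FO}), this yields $\|\Omega_{\mathsf{FO}}\|_{\psi_2} \leq K_1$ for a constant $K_1$ depending only on $\noisestd$, so the assumption holds with constants depending only on $\noisestd$. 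Invoking parts (a) and (b) of Theorem~\ref{thm:one_step_main_FO} then produces the concentration bounds at rates $\ordertil(n^{-1/4})$ and $\ordertil(n^{-1/2})$ for $\perpcomp^+$ and $\parcomp^+$, respectively, matching the statement. Neither step is genuinely difficult; the only point requiring any care is the bookkeeping in the identity $\Omega_{\mathsf{FO}} = W - \Omega_{\mathsf{HO}}$ that allows us to bypass re-deriving the trigonometric moment integrals by recycling Corollary~\ref{lem:Gordon-SE-AM-MLR}.
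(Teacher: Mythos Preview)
Your proposal is correct and matches the paper's own proof essentially step for step: the paper likewise writes $\Omega_{\mathsf{FO}} = (\alpha Z_1 + \beta Z_2) - \Omega_{\mathsf{HO}}$ via Remark~\ref{rem:weight-rel}, recycles the moment computations~\eqref{eq:mixtures_AM_G1Y}--\eqref{eq:mixtures_AM_G2Y} from Corollary~\ref{lem:Gordon-SE-AM-MLR}, substitutes into Definition~\ref{def:starnext_main_FO}, and then verifies Assumption~\ref{ass:omega} by bounding $\Omega^2$ pointwise before invoking Theorem~\ref{thm:one_step_main_FO}. Your write-up is in fact slightly more explicit than the paper's (which dispatches the algebra in a single sentence), but the strategy is identical.
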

Sending $\kappa \to \infty$ recovers the population update
\begin{align}\label{eq:Pop-GD-MLR}
\parcomppop = (1 - 2 \eta) \parcomp + 2 \eta \cdot (1 - A_{\noisestd}(\rho) + B_{\noisestd}(\rho))  \quad \text{ and }  \quad \perpcomppop = (1 - 2 \eta) \perpcomp + 2\eta \cdot \rho B_{\noisestd}(\rho).
\end{align}
Once again, our interest will be in analyzing the special case $\eta = 1/2$, in which case the population updates~\eqref{eq:Pop-GD-MLR} and~\eqref{eq:Pop-AM-MLR} of both the first-order and higher-order algorithm coincide. The following theorem establishes a sharp characterization of the convergence behavior of the subgradient method. 

\begin{figure*}[!ht]
	\centering
	\begin{subfigure}[b]{0.48\textwidth}
		\centering
			\begin{tikzpicture}
		\begin{axis}[
			width=\textwidth,
			height=\textwidth,
			xlabel={Iteration},
			ylabel={$\angle(\btstar, \bt_t)$},
			ymode=log,
			xmin=0, xmax=12,
			legend style={font=\tiny, fill=none},
			legend pos=north east,
			ymajorgrids=true,
			grid style=grid,
			]
			
			\addplot[
			color=CadetBlue,
			mark=triangle,
			mark size=1.5pt   
			]
			table[x=iter,y=avgAM] {sections/numerical-illustrations/data/smalldelta-largesigma-MLR.dat};
			\legend{Empirical: AM}
			
			\addplot[
			color=RoyalBlue,
			mark=triangle,
			mark size=1.5pt   
			]
			table[x=iter,y=avgGD] {sections/numerical-illustrations/data/smalldelta-largesigma-MLR.dat};
			\addlegendentry{Empirical: GD}
			
			\addplot[
			color=Maroon,
			mark=*,
			mark size=1.5pt   
			]
			table[x=iter,y=gorAM] {sections/numerical-illustrations/data/smalldelta-largesigma-MLR.dat};
			\addlegendentry{Gordon: AM}
			
			\addplot[
			color=Salmon,
			mark=*,
			mark size=1.5pt   
			]
			table[x=iter,y=gorGD] {sections/numerical-illustrations/data/smalldelta-largesigma-MLR.dat};
			\addlegendentry{Gordon: GD}
			
			\addplot+[name path=A-AM,CadetBlue!20, no markers] table[x=iter,y=lowerAM] {sections/numerical-illustrations/data/smalldelta-largesigma-MLR.dat};
			\addplot+[name path=B-AM,CadetBlue!20, no markers] table[x=iter,y=upperAM] {sections/numerical-illustrations/data/smalldelta-largesigma-MLR.dat};
			
			\addplot[CadetBlue!20] fill between[of=A-AM and B-AM];
			
			\addplot+[name path=A-GD,RoyalBlue!20, no markers] table[x=iter,y=lowerGD] {sections/numerical-illustrations/data/smalldelta-largesigma-MLR.dat};
			\addplot+[name path=B-GD,RoyalBlue!20, no markers] table[x=iter,y=upperGD] {sections/numerical-illustrations/data/smalldelta-largesigma-MLR.dat};
			
			\addplot[RoyalBlue!20] fill between[of=A-GD and B-GD];
		\end{axis}
	\end{tikzpicture}
		\caption{Subgradient descent and alternating minimization for MLR with $\sigma = 0.05$ and $\kappa = 20$.}    
		\label{subfig:linear-MLR-largenoise}
	\end{subfigure}
	\hfill
	\begin{subfigure}[b]{0.48\textwidth}  
		\centering 
			\begin{tikzpicture}
		\begin{axis}[
			width=\textwidth,
			height=\textwidth,
			xlabel={Iteration},
			ylabel={$\angle(\btstar, \bt_t)$},
			ymode=log,
			xmin=0, xmax=12,
			legend style={font=\tiny,fill=none},
			legend pos=north east,
			ymajorgrids=true,
			grid style=grid,
			]
			
			\addplot[
			color=CadetBlue,
			mark=triangle,
			mark size=1.5pt   
			]
			table[x=iter,y=avgAM] {sections/numerical-illustrations/data/middelta-largesigma-linearillustration-MLR.dat};
			\legend{Empirical: AM}
			
			\addplot[
			color=RoyalBlue,
			mark=triangle,
			mark size=1.5pt   
			]
			table[x=iter,y=avgGD] {sections/numerical-illustrations/data/middelta-largesigma-linearillustration-MLR.dat};
			\addlegendentry{Empirical: GD}
			
			\addplot[
			color=Maroon,
			mark=*,
			mark size=1.5pt   
			]
			table[x=iter,y=gorAM] {sections/numerical-illustrations/data/middelta-largesigma-linearillustration-MLR.dat};
			\addlegendentry{Gordon: AM}
			
			\addplot[
			color=Salmon,
			mark=*,
			mark size=1.5pt   
			]
			table[x=iter,y=gorGD] {sections/numerical-illustrations/data/middelta-largesigma-linearillustration-MLR.dat};
			\addlegendentry{Gordon: GD}
			
			\addplot+[name path=A-AM,CadetBlue!20, no markers] table[x=iter,y=lowerAM] {sections/numerical-illustrations/data/middelta-largesigma-linearillustration-MLR.dat};
			\addplot+[name path=B-AM,CadetBlue!20, no markers] table[x=iter,y=upperAM] {sections/numerical-illustrations/data/middelta-largesigma-linearillustration-MLR.dat};
			
			\addplot[CadetBlue!20] fill between[of=A-AM and B-AM];
			
			\addplot+[name path=A-GD,RoyalBlue!20, no markers] table[x=iter,y=lowerGD] {sections/numerical-illustrations/data/middelta-largesigma-linearillustration-MLR.dat};
			\addplot+[name path=B-GD,RoyalBlue!20, no markers] table[x=iter,y=upperGD] {sections/numerical-illustrations/data/middelta-largesigma-linearillustration-MLR.dat};
			
			\addplot[RoyalBlue!20] fill between[of=A-GD and B-GD];
		\end{axis}
	\end{tikzpicture}
		\caption{Subgradient descent and alternating minimization for MLR with $\sigma = 0.25$ and $\kappa = 100$.}
		\label{subfig:linear-MLR-largesample}
	\end{subfigure}
	\caption{The AM and subgradient AM algorithms for two settings of $\kappa$ and $\sigma$ initialized with $\bt_0 = \bt_0 = 0.5 \cdot \btstar + \sqrt{1 - 0.5^2} \proj_{\btstar}^{\perp} \bgam$, for 
	$\bgam$ uniformly distributed on the unit sphere, 
	plotted with the respective Gordon predictions. The shaded region denotes the values taken between the minimum and maximum of the empirics.} 
	\label{fig:linearMLR}
\end{figure*}

\begin{theorem} \label{thm:GD-MLR}
Let the stepsize $\eta = 1/2$ and consider the subgradient update $\mathcal{T}_n$~\eqref{eq:empirics-AM-MLR} and the associated Gordon state evolution update $\mathcal{S}_{\gor}$~\eqref{eq:Gordon-GD-MLR}. There are universal positive constants $(c, C)$ such that the following is true. If $\kappa \geq C$ and $\noisestd \leq c$, then:

\noindent (a) The Gordon state evolution update 
\begin{center}
$\mathcal{S}_{\gor}$ is $(c_{\kappa, \noisestd}, C_{\kappa, \noisestd}, 1)$-linearly convergent in the angular metric $\DeltaSEangle$ on $\Goodset$ to level $\varepsilon_{n, d} = \frac{\noisestd}{\sqrt{\kappa}}$,
\end{center} 
where $0 \leq c_{\kappa, \noisestd} \leq C_{\kappa, \noisestd} \leq 1$ are constants depending solely on the pair $(\kappa, \sigma)$.

\noindent (b) If $n \geq C'_\noisestd$, then for any $\bt$ such that $\bzeta = (\parcomp(\bt), \perpcomp(\bt)) \in \Goodset$, we have
\begin{align*}
\max_{1 \leq t \leq T} \; | \DeltaSEangle(\mathcal{S}^t_{\gor}(\bzeta)) - \angle(\bt, \thetastar) | \leq C_\noisestd \left( \frac{\log n}{n} \right)^{1/4}
\end{align*}
with probability exceeding $1 - 2T n^{-10}$. Here $C'_\noisestd$ and $C_\noisestd$ are positive constants depending solely on $\noisestd$.

\noindent (c) Suppose $\bt_0$ denotes a point such that $\frac{\parcomp(\bt_0)}{\perpcomp(\bt_0)} \geq \frac{1}{50 \sqrt{d}}$ and $\parcomp(\bt_0) \lor \perpcomp(\bt_0) \leq 3/2$, and further suppose that \sloppy \mbox{$\kappa \geq C''_{\noisestd} \cdot \log \left( \frac{1 + \log d}{\delta} \right)$} for $C''_\sigma$ depending solely on $\noisestd$. 
Then for some $\ttilde \leq C\log d$, we have
\begin{align*}
\mathcal{T}_{n}^{\ttilde}(\bt_0) \in \Goodset
\end{align*} 
with probability exceeding $1 - \delta$.
\end{theorem}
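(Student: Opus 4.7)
The proof follows the three-part template of Theorem~\ref{thm:GD-PR}, leveraging the Gordon state evolution of Corollary~\ref{lem:Gordon-SE-GD-MLR}(a), the first-order concentration bound of Theorem~\ref{thm:one_step_main_FO}, and the refined parallel-component deviation of Theorem~\ref{thm:one_step_main_FO}(b). A convenient observation that drives the analysis is that with $\eta = 1/2$ the Gordon parallel update $\parcompgordon = 1 - A_{\noisestd}(\rho) + B_{\noisestd}(\rho)$ depends only on the ratio $\rho = \perpcomp/\parcomp$ and in fact coincides with its higher-order AM counterpart from Corollary~\ref{lem:Gordon-SE-AM-MLR}, so part of the analysis can be lifted directly from the proof of Theorem~\ref{thm:AM-MLR}. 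The perpendicular component, by contrast, retains a genuine $O(1/\sqrt{\kappa})$ correction under $\eta=1/2$ that will produce the noise floor $\varepsilon_{n,d} = \noisestd/\sqrt{\kappa}$.

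For part (a), the plan is to study $\rho^+ := \perpcompgordon/\parcompgordon$ as a map from $\rho$ to itself on $\Goodset$. Using~\eqref{eq:AB-shorthand}, the triangle inequality inside the square root gives $\rho B_{\noisestd}(\rho) \leq \tfrac{2}{\pi}(\rho + \noisestd)\rho$, and analogous reasoning yields a matching lower bound of the same order, while a Taylor expansion at $\rho = 0$ shows $\parcompgordon \in [0.9, 1.05]$ once $\noisestd$ is a small absolute constant. Combined with the observation that the bracket $\{\cdots\}$ in~\eqref{eq:Gordon-GD-MLR-sigma} satisfies $\{\cdots\} \asymp \noisestd^2$ at $\bt = \thetastar$ and is uniformly $O(1)$ on $\Goodset$, these bounds imply two-sided control $c_{\kappa,\noisestd}\tan\anglecurr + \tfrac{1}{2}\varepsilon_{n,d} \leq \tan\anglecurr^+ \leq C_{\kappa,\noisestd}\tan\anglecurr + \varepsilon_{n,d}$ with constants $0<c_{\kappa,\noisestd}\leq C_{\kappa,\noisestd}<1$. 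Faithfulness of $\Sopgordon$ on $\Goodset$ follows by reusing the bounds on $\parcompgordon$ and $\rho^+$.

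For part (b), the plan is to first verify Assumption~\ref{ass:omega} for the subgradient-AM weight function $\wfun(x,y) = x - \sign(xy)\,y$: since $|\wfun(x,y)| \leq |x| + |y|$ and $(\parcomp,\perpcomp,\noisestd)$ are bounded throughout $\Goodset$, the random variable $\Omega$ of~\eqref{eq:omegat_def_main} is sub-Gaussian with constant depending only on $\noisestd$. The condition $\parcomp \vee \perpcomp \leq 3/2$ required by Theorem~\ref{thm:one_step_main_FO} holds automatically on $\Goodset$, since $\perpcomp \leq \parcomp/5 \leq 0.21$. Iterating the theorem with $\delta = n^{-10}$ and taking a union bound over $T$ rounds yields the claimed $1 - 2Tn^{-10}$ event, where an inductive argument ensures the iterates remain in $\Goodset$: the per-step angular deviation $C_{\noisestd}(\log n / n)^{1/4}$ is dominated by the $\Goodset$-margin whenever $n$ exceeds a $\noisestd$-dependent constant, so the contraction established in part (a) is preserved from one round to the next.

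Part (c) is the main obstacle, since at random initialization $\rho$ may be as large as $50\sqrt{d}$, far outside $\Goodset$. The plan is to use the large-$\rho$ expansions $A_{\noisestd}(\rho) = 1 - \tfrac{2}{\pi\rho\sqrt{1+\noisestd^2}} + O(\rho^{-3})$ and $B_{\noisestd}(\rho) = \tfrac{2\sqrt{1+\noisestd^2}}{\pi\rho} + O(\rho^{-3})$ to show $\parcompgordon = \tfrac{2(2+\noisestd^2)}{\pi\rho\sqrt{1+\noisestd^2}}(1+o(1))$ and $\rho B_{\noisestd}(\rho) = \tfrac{2\sqrt{1+\noisestd^2}}{\pi}(1+o(1))$, so the Gordon ratio contracts as $\rho^+ \leq \rho\cdot\tfrac{\sqrt{1+\noisestd^2}}{2}(1+o(1))$; hence $O(\log d)$ Gordon iterations reach $\Goodset$. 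To couple the empirics to this trajectory when $\parcomp \asymp 1/\sqrt{d}$, I will invoke the refined bound~\eqref{eq:xi_conc_main_FO}, which gives the $\ordertil(n^{-1/2})$ deviation on $\parcomp^+$ that is indispensable here: the weaker $\ordertil(n^{-1/4})$ rate from the general argument would swamp the signal. Under the stated hypothesis $\kappa \geq C''_{\noisestd}\log((1+\log d)/\delta)$, this deviation is much smaller than $\parcompgordon$ at every round, so $\parcomp_t$ tracks $\parcompgordon$ up to a constant multiplicative factor and remains positive; a union bound over the $O(\log d)$ rounds closes the argument. The bookkeeping of this inductive coupling, requiring that both $\parcomp_t$ stays macroscopically positive and that $\rho_t$ decreases geometrically each step, is the technically delicate portion and parallels the corresponding part of the proof of Theorem~\ref{thm:GD-PR}(c).
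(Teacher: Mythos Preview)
Your overall strategy is sound and you correctly identify the key ingredients: the coincidence of $\parcompgordon$ with the AM map, the refined $\ordertil(n^{-1/2})$ bound on $\parcomp^+$ for part (c), and the need to track angular distance. However, there is a genuine gap in part (a), and your machinery in parts (b) and (c) differs from the paper's.

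\textbf{Part (a), the gap.} Your claimed one-step upper bound $\tan\anglecurr^+ \leq C_{\kappa,\noisestd}\tan\anglecurr + \noisestd/\sqrt{\kappa}$ does not hold for arbitrary $(\parcomp,\perpcomp)\in\Goodset$. The $\kappa^{-1}$ bracket in $\perpcompgordon$ contains $(\parcomp - F_{\noisestd})^2 + (\perpcomp - \rho B_{\noisestd}(\rho))^2$, which for $\parcomp$ near $0.55$ is of order $(1-\parcomp)^2 \approx 0.2$ regardless of how small $\rho$ is. Hence the one-step bound is only $\rho^+ \leq \tfrac{5}{6}\rho + C\noisestd/\sqrt{\kappa} + C'\,\DeltaSELtwo(\parcomp,\perpcomp)/\sqrt{\kappa}$, and the last term is $\Theta(1/\sqrt{\kappa})$, not $\Theta(\noisestd/\sqrt{\kappa})$. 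This is precisely why the theorem specifies transient period $t_0=1$: after one Gordon step, $|1-F_{\noisestd}|\lesssim \rho^3+\noisestd^3$ so $\DeltaSELtwo(F_{\noisestd},g_{\noisestd})$ is controlled by $g_{\noisestd}$ itself, and the paper's two-step argument then absorbs the residual into the contraction factor and recovers the correct floor $\noisestd/\sqrt{\kappa}$. Your proposal misses this mechanism entirely.

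\textbf{Parts (b) and (c), different routes.} For (b) the paper does not argue inductively on the angular metric; instead it invokes a general envelope lemma (Lemma~\ref{lem:generalb}) whose hypothesis is a uniform bound $\|\nabla F_{\noisestd}\|_1\vee\|\nabla g_{\noisestd}\|_1\leq 1-\tau$ on a neighborhood of $\Goodset$, verified via Lemma~\ref{lem:grad-map-ineqs}. Your direct induction could work but would need to reproduce this contractivity to prevent deviations from accumulating. For (c) the paper does not use large-$\rho$ asymptotics of $A_{\noisestd},B_{\noisestd}$ at all; it applies a phase-based lemma (Lemma~\ref{lem:generalc}(b)) that tracks $\parcomp_t$ growing geometrically (condition C2: $\Fbar\geq 1.06\parcomp$ when $\rho\geq 2$) while $\perpcomp_t$ stays below $1$ (condition C5b), rather than tracking $\rho_t$ directly. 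Your ratio-tracking approach is a legitimate alternative and your expansions are essentially correct, but note that the contraction factor you obtain should be $(1+\noisestd^2)/(2+\noisestd^2)$, not $\sqrt{1+\noisestd^2}/2$; and you would still need to check separately that the empirical $\parcomp_t$ stays above $c/\sqrt{d}$, since the ratio alone does not control the sign or magnitude of $\parcomp_t$.
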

As in the case of subgradient descent for phase retrieval (see also Lemma~\ref{lem:init-properties}(b) in the appendix), we see that if $\bt_0 = \sqrt{\frac{1}{n} \sum_{i = 1}^n y_i^2} \cdot \bu$ for a random vector $\bu$ chosen from the unit sphere, then after $\tau = \mathcal{O}( \log d + \log \log (\kappa/\noisestd^2))$ iterations, the empirics satisfy
\begin{align}
\angle \left( \mathcal{T}_{n}^{\tau}(\bt_0), \thetastar \right) = \order \left( \noisestd \sqrt{\frac{d}{n}} \right) + \ordertil \left( n^{-1/4} \right)
\end{align}
with high probability.

The fact that both subgradient descent and alternating minimization (cf. Theorem~\ref{thm:AM-MLR}) converge linearly in the good region suggest that the first order method, which has smaller per-iteration cost, may be a good choice for a mixture of linear regressions. A closer look at the proof suggests that the corresponding coefficients of contraction $C_{\kappa, \noisestd}$ 
may be comparable for even moderately large $\kappa$.
Indeed, this is illustrated in Figure~\ref{fig:linearMLR}, where we see two settings of the pair $(\kappa, \noisestd)$ in which both algorithms exhibit nearly identical behavior. This observation provides further evidence that the subgradient method is a compelling choice in such scenarios.  

\subsection{A glimpse of the convergence proof mechanism}

To conclude this section, we provide a high level overview of our convergence proof technique, aspects of which may be of independent interest. A schematic of the proof mechanism is presented in Figure~\ref{fig:conv-schematic}. The blue curve in the panel (Top) represents the empirical state evolution $(\parcomp_t, \perpcomp_t)$,
and our proof technique relies on tracking the transitions of this curve across three phases. Points $(\parcomp, \perpcomp)$ in Phase I are such that the ratio $\beta / \alpha$ 
is greater than some threshold. Phase II is characterized by $\beta / \alpha$ being between two distinct thresholds. 
Phase III corresponds to being in the good region $\Goodset$, in which the ratio $\beta / \alpha$ is smaller than some small threshold and the parallel component $\alpha$ is larger than a threshold (see Definition~\ref{def:good-region}). In each phase, depicted in detail in the (Left), (Right), and (Bottom) plots of Figure~\ref{fig:conv-schematic}, we track particular Gordon state evolution updates using red dots. 
The shaded light blue regions schematically depict confidence sets that show how each empirical iterate is ``trapped" around its Gordon counterpart with high probability. In Phases I and II, we track Gordon state evolution updates when run from the ``worst possible'' empirical iterate in the previous confidence set, depicted in the figure using light blue triangles. In Phase III, on the other hand, we track the full Gordon \emph{trajectory}, i.e., the deterministic sequence of points that results from iteratively running the Gordon update from the initial dark blue triangle.
The behavior of the Gordon update itself is model-dependent and governed by specific structural properties of the corresponding state evolution maps. We establish these properties in Section~\ref{sec:prelim-lemmas}, and use them to establish part (a) of all our theorems in this section. For now, let us sketch the key ideas underlying our treatment of the empirical iterates in each phase.

\begin{figure}[!ht]
  \begin{center}
    \includegraphics[width=\textwidth]{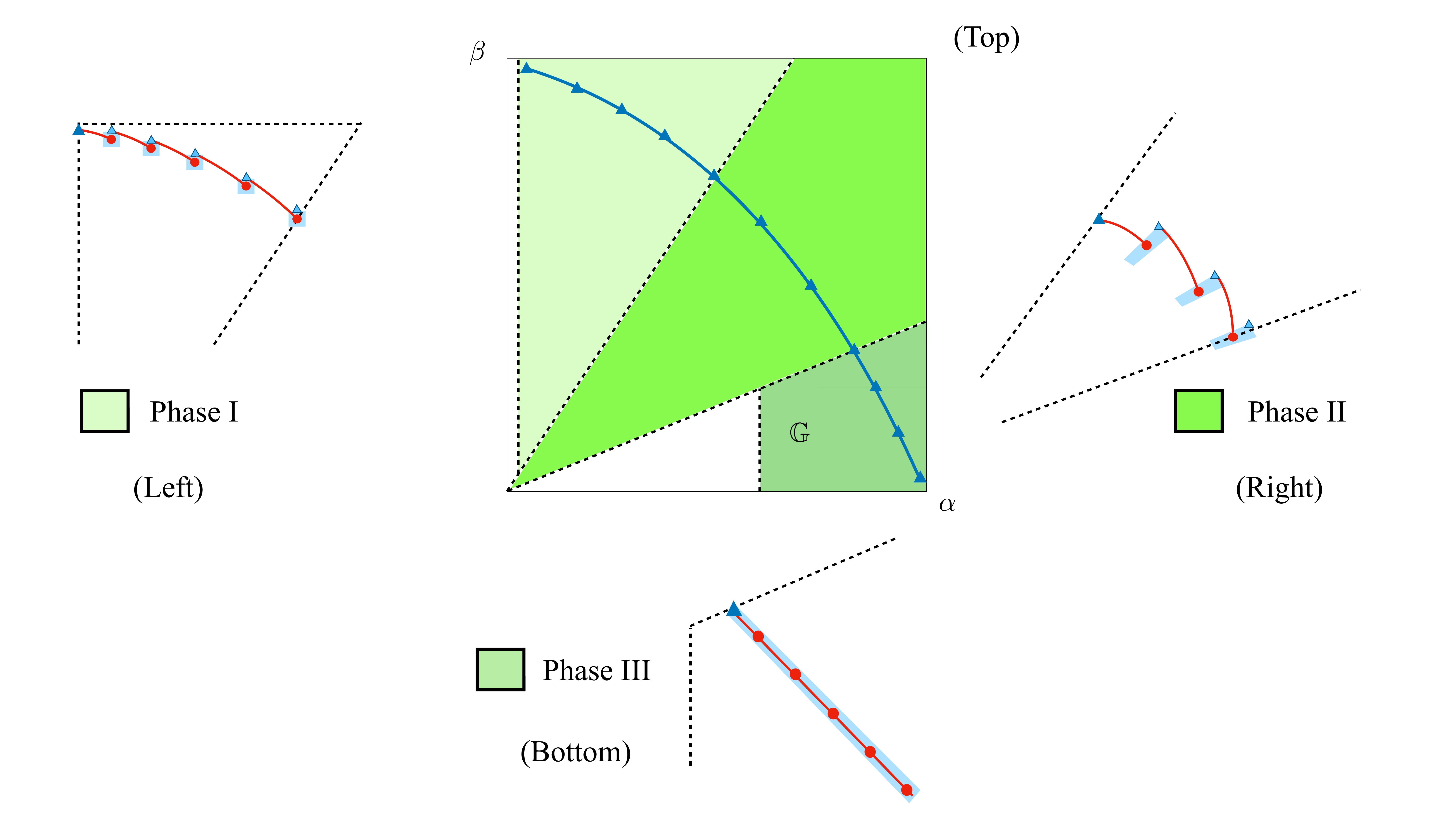}
    \caption{A schematic showing convergence of the algorithm in terms of its state space representation $(\alpha, \beta)$, in three distinct phases. (Top) The triangles in dark blue (and the corresponding curved line) denote the empirical iterates as they proceed through three \emph{phases}. The panels (Left), (Right) and (Bottom) are zoomed-in versions of Phases I, II, and III, respectively, where the dark blue triangle at the start of the phase depicts the point of the trajectory within that phase. Each red circle in these subfigures denotes an iterate of the deterministic Gordon state evolution update when run from the point that it is connected to. 
    The shaded blue regions in all three phases represent high-probability confidence sets for the empirical iterates. 
    In panel (Left), we leverage the fact that the $\beta$-component of each iterate is trapped around that of its Gordon counterpart. In panel (Right), each such region is an angular ``wedge'' around the corresponding Gordon iterate, and in panel (Bottom), the entire region (across iterations) is determined by a small envelope around the full Gordon trajectory. The light blue triangles in Phases I and II denote ``worst-case'' instances of the empirics within the corresponding confidence set. See the accompanying text for a more detailed discussion.}
    \label{fig:conv-schematic}
  \end{center}
\end{figure}

\paragraph{Phase I:} Immediately after initialization, the parallel component $\alpha$ is very small, of the order $d^{-1/2}$. To show that the empirical iterates proceed favorably through Phase I, we use the fact that the Gordon state evolution $\parcompgordon \geq (1 + c) \parcomp$ whenever $\perpcomp/\parcomp$ is large, thereby increasing the parallel component \emph{exponentially} within this phase. The $\order(n^{-1/2})$ concentration of the empirical $\alpha_t$ update around its Gordon prediction 
traps each empirical iterate $\alpha_t$ within a small interval---as depicted in Figure~\ref{fig:conv-schematic}(Left)---and allows us to argue when $n \gtrsim d$ that $\parcomp_t$ also increases exponentially with $t$ in Phase I. At the same time, the $\perpcomp_t$ iterates also remain bounded, so that $\perpcomp_t / \parcomp_t$ decreases below a threshold and enters Phase II.  Phase I takes at most $\order(\log d)$ iterations with high probability.

\paragraph{Phase II:} Next, we show that the ratio $\perpcompgordon/\parcompgordon$ of the Gordon state evolution decreases exponentially, and we translate this convergence to the empirical ratio $\perpcomp_t/ \parcomp_t$ by using the relations~\eqref{eq:main-conc_HO} and~\eqref{eq:main-conc_FO}. This traps each empirical iterate within a small \emph{angular} neighborhood of its Gordon counterpart, and is depicted in Figure~\ref{fig:conv-schematic}(Right). Together with the aforementioned convergence of the Gordon ratio $\perpcompgordon/\parcompgordon$, this ensures that we enter the good region $\Goodset$. We show that with high probability, the iterates stay within Phase II for at most $\order(1)$ iterations.
Along with the previously established convergence in Phase I, this establishes part (c) of all our model-specific theorems, showing that our iterates enter the good region, i.e., Phase III, after at most $\order(\log d)$ steps after random initialization. 

\paragraph{Phase III:} In this final phase, we show a property that, to the best of our knowledge, is absent from local convergence guarantees in prior work. This is collected in part (b) of our individual theorems, and shows that a small \emph{envelope} around the Gordon state evolution trajectory, as depicted in Figure~\ref{fig:conv-schematic}(Bottom), fully traps the random iterates with high probability. The key property that we use to show this is in fact what guides our choice of the good region: The derivatives of the $\parcompgordon$ and $\perpcompgordon$ maps when evaluated for any element in this region are both bounded above by $1-c$ for some universal constant $c > 0$, so that small deviations of the empirics from these maps are not amplified over the course of successive iterations.

\section{Numerical illustrations}
\label{sec:numerical-illustrations}

We provide several numerical simulations to illustrate the sharpness of our results.  For each of the two models and two algorithms we consider, we demonstrate both global convergence 
as well as local convergence.
In particular, for each of the two models, we perform two families of experiments. The first explores convergence from a random initialization for both the higher-order and first-order method. These experiments are performed in dimension $d=800$ with the number of samples $n = 80,000$ (that is, $\kappa = 100$) and noise standard deviation $\sigma = 10^{-6}$.  First, a true parameter vector $\btstar$ is drawn uniformly at random from the unit sphere.  Subsequently, an initialization $\bt_0$ is drawn (independently of $\btstar$) uniformly at random on the unit sphere.  Then, from this vector, we simulate $12$ independent trials of the algorithm for $12$ iterations.  In the second family of experiments, we explore \emph{local} convergence---from an initialization which has constant correlation with the ground truth $\btstar$---for three different settings of noise standard deviation $\sigma$ and oversampling ratio $\kappa$.  Each experiment is performed in dimension $d=500$ with various numbers of samples $n$.  Each simulation is done by first drawing the ground-truth vector $\btstar$ uniformly at random on the unit sphere and subsequently generating an initialization
\begin{align*}
\bt_0 = 0.8 \cdot \btstar + \sqrt{1 - 0.8^2} \proj_{\btstar}^{\perp} \bgam,
\end{align*}
where 
$\bgam$ is uniformly distributed on the unit sphere
 and is independent of all other randomness.  Next, we run $100$ independent trials of both algorithms for $12$ iterations.

\begin{figure*}[!htbp]
	\centering
	\begin{subfigure}[b]{0.48\textwidth}
		\centering
			\begin{tikzpicture}
		\begin{axis}[
			width=\textwidth,
			height=\textwidth,
			xlabel={Iteration},
			ylabel={$\| \bt_t - \bt^* \|_2$},
			ymode=log,
			xmin=0, xmax=12,
			legend style={font=\tiny,fill=none},
			legend pos=north east,
			ymajorgrids=true,
			grid style=grid,
			]
			
			\addplot[
			color=CadetBlue,
			mark=triangle,
			mark size=1.5pt   
			]
			table[x=iter,y=avgAM] {sections/numerical-illustrations/data/global-PR.dat};
			\legend{Empirical: AM}
			
			\addplot[
			color=Maroon,
			mark=*,
			mark size=1.5pt   
			]
			table[x=iter,y=gorAM] {sections/numerical-illustrations/data/global-PR.dat};
			\addlegendentry{Gordon: AM}
			
			\addplot+[name path=A-AM,CadetBlue!20, no markers] table[x=iter,y=lowerAM] {sections/numerical-illustrations/data/global-PR.dat};
			\addplot+[name path=B-AM,CadetBlue!20, no markers] table[x=iter,y=upperAM] {sections/numerical-illustrations/data/global-PR.dat};
			
			\addplot[CadetBlue!20] fill between[of=A-AM and B-AM];
			
		\end{axis}
	\end{tikzpicture}
		\caption{Alternating minimization.}    
		\label{subfig:global-PR-AM}
	\end{subfigure}
	\hfill
	\begin{subfigure}[b]{0.48\textwidth}  
		\centering 
			\begin{tikzpicture}
		\begin{axis}[
			width=\textwidth,
			height=\textwidth,
			xlabel={Iteration},
			ylabel={$\| \bt_t - \bt^* \|_2$},
			ymode=log,
			xmin=0, xmax=12,
			legend style={font=\tiny,fill=none},
			legend pos=north east,
			ymajorgrids=true,
			grid style=grid,
			]

			\addplot[
			color=RoyalBlue,
			mark=triangle,
			mark size=1.5pt   
			]
			table[x=iter,y=avgGD] {sections/numerical-illustrations/data/global-PR.dat};
			\addlegendentry{Empirical: GD}

			\addplot[
			color=Salmon,
			mark=*,
			mark size=1.5pt   
			]
			table[x=iter,y=gorGD] {sections/numerical-illustrations/data/global-PR.dat};
			\addlegendentry{Gordon: GD}

			\addplot+[name path=A-GD,RoyalBlue!20, no markers] table[x=iter,y=lowerGD] {sections/numerical-illustrations/data/global-PR.dat};
			\addplot+[name path=B-GD,RoyalBlue!20, no markers] table[x=iter,y=upperGD] {sections/numerical-illustrations/data/global-PR.dat};
			
			\addplot[RoyalBlue!20] fill between[of=A-GD and B-GD];
		\end{axis}
	\end{tikzpicture}
		\caption{Subgradient descent.}
		\label{subfig:global-PR-GD}
	\end{subfigure}
	\caption{Global convergence in the phase retrieval model.  The hollow triangular marks (barely visible) denote the average over $12$ independent trials and the shaded regions denote the range of values taken by the empirics.} 
	\label{fig:globalPR}
\end{figure*}
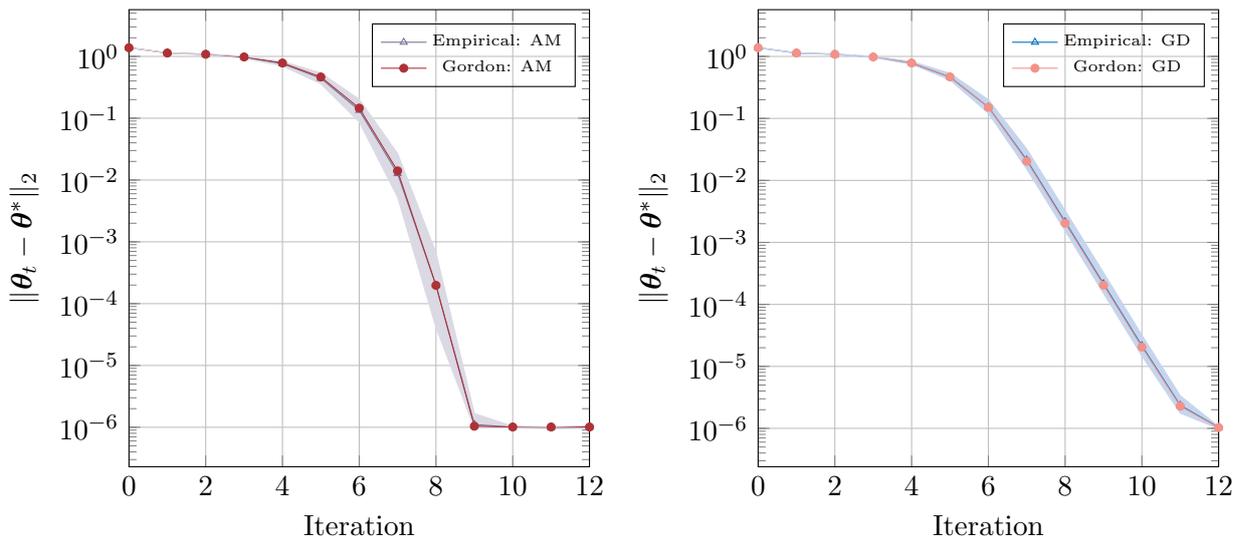

\subsection{Phase retrieval}
\label{subsec:sim-PR}
We first consider phase retrieval.  Figure~\ref{fig:globalPR} illustrates the global convergence of both alternating minimization (in Figure~\ref{subfig:global-PR-AM}) and subgradient descent (in Figure~\ref{subfig:global-PR-GD}).  
  Figure~\ref{subfig:global-PR-AM} plots (i.) filled in circular marks denoting the Gordon state evolution started at the state $(\alpha(\bt_0), \beta(\bt_0))$; (ii.) hollow triangular marks denoting the average of the empirical performance of AM over the $12$ independent trials; and (iii.) a shaded region denoting the region between the minimum and maximum values taken in the empirics.  The same three items are plotted with gradient descent in place of alternating minimization in Figure~\ref{subfig:global-PR-GD}.

\begin{figure*}[!htbp]
	\centering
	\begin{subfigure}[b]{0.48\textwidth}
		\centering
			\begin{tikzpicture}
		\begin{axis}[
			width=\textwidth,
			height=\textwidth,
			xlabel={Iteration},
			ylabel={$\| \bt_t - \bt^* \|_2$},
			ymode=log,
			xmin=0, xmax=12,
			legend style={font=\tiny,fill=none},
			legend pos=north east,
			ymajorgrids=true,
			grid style=grid,
			]
			
			\addplot[
			color=CadetBlue,
			mark=triangle,
			mark size=1.5pt   
			]
			table[x=iter,y=avgAM] {sections/numerical-illustrations/data/lower-noise-PR.dat};
			\legend{Empirical: AM}
			
			\addplot[
			color=RoyalBlue,
			mark=triangle,
			mark size=1.5pt   
			]
			table[x=iter,y=avgGD] {sections/numerical-illustrations/data/lower-noise-PR.dat};
			\addlegendentry{Empirical: GD}
			
			\addplot[
			color=Maroon,
			mark=*,
			mark size=1.5pt   
			]
			table[x=iter,y=gorAM] {sections/numerical-illustrations/data/lower-noise-PR.dat};
			\addlegendentry{Gordon: AM}
			
			\addplot[
			color=Salmon,
			mark=*,
			mark size=1.5pt   
			]
			table[x=iter,y=gorGD] {sections/numerical-illustrations/data/lower-noise-PR.dat};
			\addlegendentry{Gordon: GD}
			
			\addplot+[name path=A-AM,CadetBlue!20, no markers] table[x=iter,y=lowerAM] {sections/numerical-illustrations/data/lower-noise-PR.dat};
			\addplot+[name path=B-AM,CadetBlue!20, no markers] table[x=iter,y=upperAM] {sections/numerical-illustrations/data/lower-noise-PR.dat};
			
			\addplot[CadetBlue!20] fill between[of=A-AM and B-AM];
			
			\addplot+[name path=A-GD,RoyalBlue!20, no markers] table[x=iter,y=lowerGD] {sections/numerical-illustrations/data/lower-noise-PR.dat};
			\addplot+[name path=B-GD,RoyalBlue!20, no markers] table[x=iter,y=upperGD] {sections/numerical-illustrations/data/lower-noise-PR.dat};
			
			\addplot[RoyalBlue!20] fill between[of=A-GD and B-GD];
		\end{axis}
	\end{tikzpicture}
		\caption{$\sigma = 10^{-10}, \kappa = 20$}    
		\label{subfig:smalldelta-smallsigma-local-PR}
	\end{subfigure}
	\hfill
	\begin{subfigure}[b]{0.48\textwidth}   
		\centering 
			\begin{tikzpicture}
		\begin{axis}[
			width=\textwidth,
			height=\textwidth,
			xlabel={Iteration},
			ylabel={$\| \bt_t - \bt^* \|_2$},
			ymode=log,
			xmin=0, xmax=12,
			legend style={font=\tiny,fill=none},
			legend pos=north east,
			ymajorgrids=true,
			grid style=grid,
			]
			
			\addplot[
			color=CadetBlue,
			mark=triangle,
			mark size=1.5pt   
			]
			table[x=iter,y=avgAM] {sections/numerical-illustrations/data/largedelta-midsigma-PR.dat};
			\legend{Empirical: AM}
			
			\addplot[
			color=RoyalBlue,
			mark=triangle,
			mark size=1.5pt   
			]
			table[x=iter,y=avgGD] {sections/numerical-illustrations/data/largedelta-midsigma-PR.dat};
			\addlegendentry{Empirical: GD}
			
			\addplot[
			color=Maroon,
			mark=*,
			mark size=1.5pt   
			]
			table[x=iter,y=gorAM] {sections/numerical-illustrations/data/largedelta-midsigma-PR.dat};
			\addlegendentry{Gordon: AM}
			
			\addplot[
			color=Salmon,
			mark=*,
			mark size=1.5pt   
			]
			table[x=iter,y=gorGD] {sections/numerical-illustrations/data/largedelta-midsigma-PR.dat};
			\addlegendentry{Gordon: GD}
			
			\addplot+[name path=A-AM,CadetBlue!20, no markers] table[x=iter,y=lowerAM] {sections/numerical-illustrations/data/largedelta-midsigma-PR.dat};
			\addplot+[name path=B-AM,CadetBlue!20, no markers] table[x=iter,y=upperAM] {sections/numerical-illustrations/data/largedelta-midsigma-PR.dat};
			
			\addplot[CadetBlue!20] fill between[of=A-AM and B-AM];
			
			\addplot+[name path=A-GD,RoyalBlue!20, no markers] table[x=iter,y=lowerGD] {sections/numerical-illustrations/data/largedelta-midsigma-PR.dat};
			\addplot+[name path=B-GD,RoyalBlue!20, no markers] table[x=iter,y=upperGD] {sections/numerical-illustrations/data/largedelta-midsigma-PR.dat};
			
			\addplot[RoyalBlue!20] fill between[of=A-GD and B-GD];
		\end{axis}
	\end{tikzpicture}
		\caption{$\sigma = 10^{-6}, \kappa=100$}
		\label{subfig:largedelta-largesigma-local-PR}
	\end{subfigure}
	\centering
	\caption{Local convergence for the phase retrieval model.  Each subplot shows: (in purple) the empirics of alternating minimization, (in red) the Gordon updates for alternating minimization, (in blue) the empirics for subgradient descent, and (in orange) the Gordon updates for subgradient descent.  Hollow triangular markers denote the average of the empirics and the shaded regions denote the range of values taken by the empirics over $100$ independent trials.} 
	\label{fig:localPR}
\end{figure*}
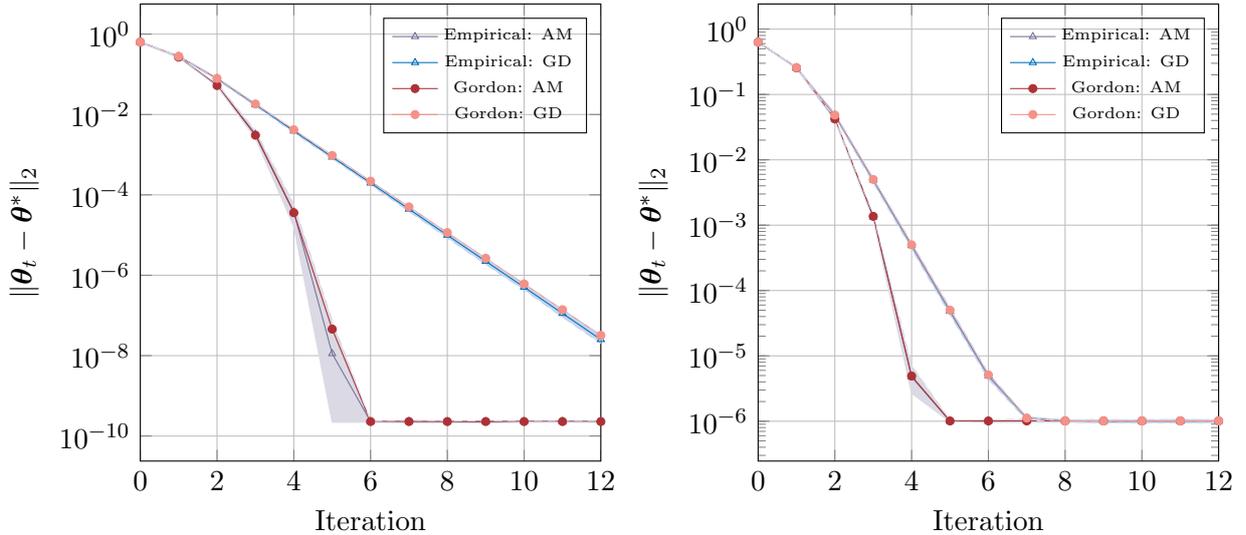

Recall that part (c) of Theorems~\ref{thm:AM-PR} and~\ref{thm:GD-PR} states that each algorithm---when started from a random initialization---first consists of a transient phase which takes $\mathcal{O}(\log{d})$ iterations to reach a ``good" region.  This transient phase is witnessed by the first $5$ iterations of each algorithm, which make very little progress in the $\ell_2$ distance.  Subsequently, parts (a) and (b) of each theorem state that in the ``good" region, the Gordon state evolution converges at a specified rate and the empirics are trapped in a small envelope around this state evolution.  Iterations $5-9$ illustrate the super-linear convergence of alternating minimization (Figure~\ref{subfig:global-PR-AM}) and iterations $5-12$ illustrate the linear convergence of subgradient descent (Figure~\ref{subfig:global-PR-GD}).  We remark that whereas the theorems show the empirics to be trapped in a small envelope surrounding the Gordon state evolution in the ``good" region, the simulations suggest that this may hold even from random initialization---that is, even the transient phase may consist of empirics trapped in an envelope around the Gordon state evolution.

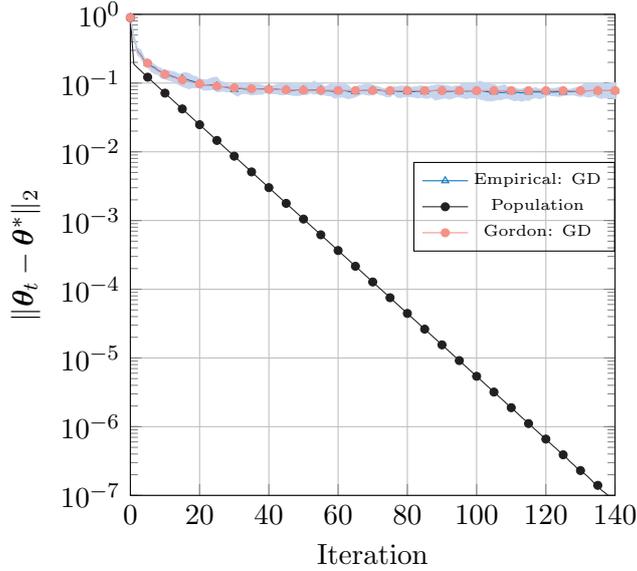
\begin{figure*}[!ht]
	\centering
		\begin{tikzpicture}
		\begin{axis}[
			width=0.5\textwidth,
			height=0.5\textwidth,
			xlabel={Iteration},
			ylabel={$\| \bt_t - \bt^* \|_2$},
			ymode=log,
xmin=0, xmax=140,
ymin=0.0000001, ymax=1,
legend style={font=\tiny, fill=none, at={(1,0.6)},anchor=east},
ymajorgrids=true,
grid style=grid,
			]

			\addplot[
			color=RoyalBlue,
			mark=triangle,
			mark size=1.5pt,
			mark repeat = 5,
			mark phase = 0
			]
			table[x=iter,y=avgGD]{sections/numerical-illustrations/data/pop_convergence_empirics_no_convergence.dat};
			\addlegendentry{Empirical: GD}
			
			\addplot[
			color=Black,
			mark=*,
			mark size=1.5pt,
			mark repeat = 5,
			mark phase = 0
			]
			table[x=iter,y=pop] {sections/numerical-illustrations/data/pop_convergence_empirics_no_convergence.dat};
			\addlegendentry{Population}
			
%
			\addplot[
			color=Salmon,
			mark=*,
			mark size=1.5pt,
			mark repeat = 5,
			mark phase = 0
			]
			table[x=iter,y=gorGD] {sections/numerical-illustrations/data/pop_convergence_empirics_no_convergence.dat};
			\addlegendentry{Gordon: GD}

			\addplot+[name path=A-GD,RoyalBlue!20, no markers] table[x=iter,y=lowerGD] {sections/numerical-illustrations/data/pop_convergence_empirics_no_convergence.dat};
			\addplot+[name path=B-GD,RoyalBlue!20, no markers] table[x=iter,y=upperGD] {sections/numerical-illustrations/data/pop_convergence_empirics_no_convergence.dat};
			
			\addplot[RoyalBlue!20] fill between[of=A-GD and B-GD];
		\end{axis}
	\end{tikzpicture}
	\caption{Subgradient descent for stepsize $\eta =0.95$. Markers are placed once every $5$ iterations.} 
	\label{fig:stepsize_popvgor}
\end{figure*}

Figure~\ref{fig:localPR} zooms in and demonstrates the local convergence for two different settings of noise standard deviation $\sigma$ and oversampling ratio $\kappa$.  
For each of the parameter values, we make two observations.  First, both subfigures make clear the deterministic qualities of the Gordon updates---the distinction between convergence rates as well as the attainment of the error floor---whereby demonstrating part (a) of Theorem~\ref{thm:AM-PR} and~\ref{thm:GD-PR}.  Second, both simulations demonstrate part (b) of the same two theorems: the empirics are trapped in a small envelope surrounding the Gordon state evolution.

\medskip
We provide one final experiment in noiseless phase retrieval to illustrate the effect of stepsize in subgradient descent.  Here, we take dimension $d=250$, the oversampling ratio $\kappa = 10$ and start from an initial correlation $\alpha_0 = 0.6$.  As opposed to setting the stepsize $\eta = 1/2$, in this experiment, we try using a much larger stepsize; namely, we take $\eta = 0.95$.  We then run 
$140$ 
 iterations of subgradient descent and perform $10$ independent trials.  As is evident from Figure~\ref{fig:stepsize_popvgor}, this is a situation in which the population update predicts convergence, yet the empirics fail to converge.  On the other hand, the Gordon updates continue to sharply characterize the empirical performance and are able to predict the lack of convergence to the ground truth parameter.

\subsection{Mixture of linear regressions}
\label{subsec:sim-MLR}
\begin{figure*}[!htbp]
	\centering
	\begin{subfigure}[b]{0.48\textwidth}
		\centering
			\begin{tikzpicture}
		\begin{axis}[
			width=\textwidth,
			height=\textwidth,
			xlabel={Iteration},
			ylabel={$\angle(\bt_t, \btstar)$},
			ymode=log,
			xmin=0, xmax=12,
			legend style={font=\tiny,fill=none},
			legend pos=north east,
			ymajorgrids=true,
			grid style=grid,
			]
			
			\addplot[
			color=CadetBlue,
			mark=triangle,
			mark size=1.5pt   
			]
			table[x=iter,y=avgAM] {sections/numerical-illustrations/data/global-MLR.dat};
			\legend{Empirical: AM}
			
			\addplot[
			color=Maroon,
			mark=*,
			mark size=1.5pt   
			]
			table[x=iter,y=gorAM] {sections/numerical-illustrations/data/global-MLR.dat};
			\addlegendentry{Gordon: AM}
			
			\addplot+[name path=A-AM,CadetBlue!20, no markers] table[x=iter,y=lowerAM] {sections/numerical-illustrations/data/global-MLR.dat};
			\addplot+[name path=B-AM,CadetBlue!20, no markers] table[x=iter,y=upperAM] {sections/numerical-illustrations/data/global-MLR.dat};
			
			\addplot[CadetBlue!20] fill between[of=A-AM and B-AM];
			
		\end{axis}
	\end{tikzpicture}
		\caption{Alternating minimization}    
		\label{subfig:global-MLR-AM}
	\end{subfigure}
	\hfill
	\begin{subfigure}[b]{0.48\textwidth}  
		\centering 
			\begin{tikzpicture}
		\begin{axis}[
			width=\textwidth,
			height=\textwidth,
			xlabel={Iteration},
			ylabel={$\angle(\bt_t, \btstar)$},
			ymode=log,
			xmin=0, xmax=12,
			legend style={font=\tiny,fill=none},
			legend pos=north east,
			ymajorgrids=true,
			grid style=grid,
			]

			\addplot[
			color=RoyalBlue,
			mark=triangle,
			mark size=1.5pt   
			]
			table[x=iter,y=avgGD] {sections/numerical-illustrations/data/global-MLR.dat};
			\addlegendentry{Empirical: GD}

			\addplot[
			color=Salmon,
			mark=*,
			mark size=1.5pt   
			]
			table[x=iter,y=gorGD] {sections/numerical-illustrations/data/global-MLR.dat};
			\addlegendentry{Gordon: GD}

			\addplot+[name path=A-GD,RoyalBlue!20, no markers] table[x=iter,y=lowerGD] {sections/numerical-illustrations/data/global-MLR.dat};
			\addplot+[name path=B-GD,RoyalBlue!20, no markers] table[x=iter,y=upperGD] {sections/numerical-illustrations/data/global-MLR.dat};
			
			\addplot[RoyalBlue!20] fill between[of=A-GD and B-GD];
		\end{axis}
	\end{tikzpicture}
		\caption{Subgradient AM}
		\label{subfig:global-MLR-GD}
	\end{subfigure}
	\caption{Global convergence in the mixture of linear regression model.  Hollow triangular marks denote the average over $12$ independent trials and the shaded regions denote the range of values taken by the empirics.} 
	\label{fig:globalMLR}
\end{figure*}
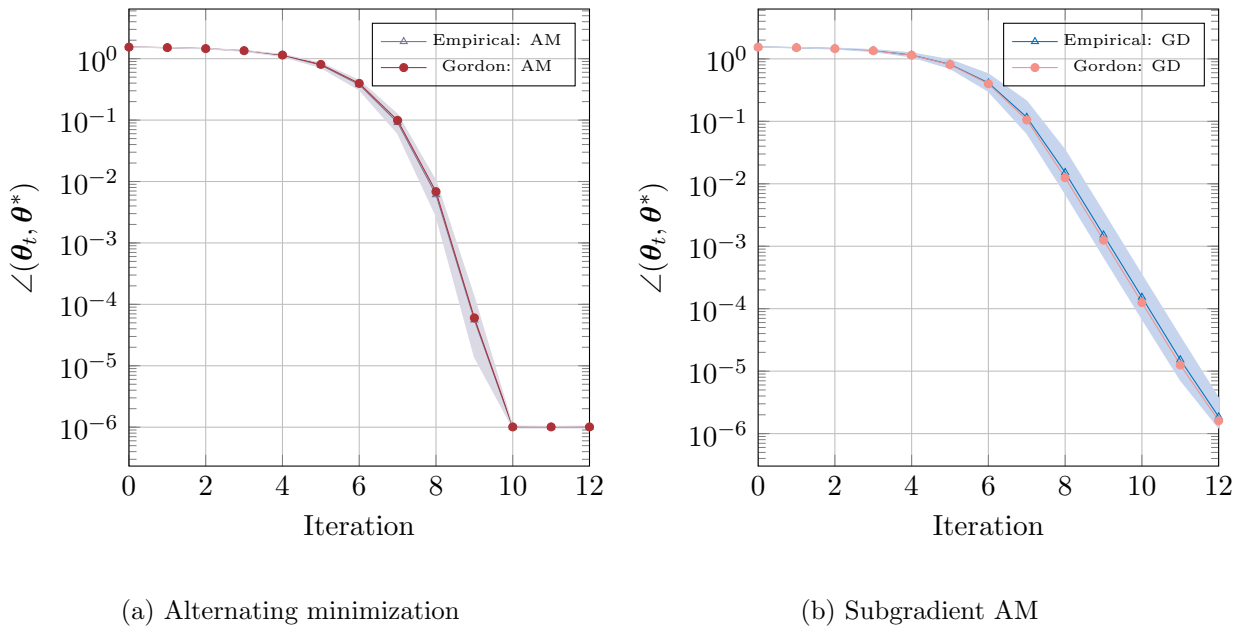
The two sets of simulations performed in this subsection (Figures~\ref{fig:globalMLR} and~\ref{fig:localMLR}) follow the same dichotomy as the two simulations performed in the previous subsection.  An important distinction is that the error metric used is the \emph{angular} metric rather than the $\ell_2$ distance used in the preceding subsection.
Figure~\ref{fig:globalMLR} plots the trajectory of both AM and subgradient AM when started from a random initialization.  As before, the simulations suggest that the empirics are trapped around the Gordon state evolution trajectory even from random initialization.

Next, we turn to the local convergence as illustrated in Figure~\ref{fig:localMLR} under two distinct parameter regimes, with the other details of the setup being identical to local convergence in phase retrieval.  
We pause only to call out the linearly convergent behavior evident in Figure~\ref{subfig:smalldelta-largesigma-local-MLR} as well as the similarity in performance of the two algorithms in the same simulation.  This is an important feature of the mixtures of linear regression model with constant noise.  

\begin{figure*}[!ht]
	\begin{subfigure}[b]{0.48\textwidth}  
		\centering 
			\begin{tikzpicture}
		\begin{axis}[
			width=\textwidth,
			height=\textwidth,
			xlabel={Iteration},
			ylabel={$\angle(\btstar, \bt_t)$},
			ymode=log,
			xmin=0, xmax=12,
			legend style={font=\tiny,fill=none},
			legend pos=north east,
			ymajorgrids=true,
			grid style=grid,
			]
			
			\addplot[
			color=CadetBlue,
			mark=triangle,
			mark size=1.5pt   
			]
			table[x=iter,y=avgAM] {sections/numerical-illustrations/data/mid-noise-MLR.dat};
			\legend{Empirical: AM}
			
			\addplot[
			color=RoyalBlue,
			mark=triangle,
			mark size=1.5pt   
			]
			table[x=iter,y=avgGD] {sections/numerical-illustrations/data/mid-noise-MLR.dat};
			\addlegendentry{Empirical: GD}
			
			\addplot[
			color=Maroon,
			mark=*,
			mark size=1.5pt   
			]
			table[x=iter,y=gorAM] {sections/numerical-illustrations/data/mid-noise-MLR.dat};
			\addlegendentry{Gordon: AM}
			
			\addplot[
			color=Salmon,
			mark=*,
			mark size=1.5pt   
			]
			table[x=iter,y=gorGD] {sections/numerical-illustrations/data/mid-noise-MLR.dat};
			\addlegendentry{Gordon: GD}
			
			\addplot+[name path=A-AM,CadetBlue!20, no markers] table[x=iter,y=lowerAM] {sections/numerical-illustrations/data/mid-noise-MLR.dat};
			\addplot+[name path=B-AM,CadetBlue!20, no markers] table[x=iter,y=upperAM] {sections/numerical-illustrations/data/mid-noise-MLR.dat};
			
			\addplot[CadetBlue!20] fill between[of=A-AM and B-AM];
			
			\addplot+[name path=A-GD,RoyalBlue!20, no markers] table[x=iter,y=lowerGD] {sections/numerical-illustrations/data/mid-noise-MLR.dat};
			\addplot+[name path=B-GD,RoyalBlue!20, no markers] table[x=iter,y=upperGD] {sections/numerical-illustrations/data/mid-noise-MLR.dat};
			
			\addplot[RoyalBlue!20] fill between[of=A-GD and B-GD];
		\end{axis}
	\end{tikzpicture}
		\caption{$\sigma = 10^{-6}, \kappa=20$}
		\label{subfig:smalldelta-largesigma-local-MLR}
	\end{subfigure}
	\hfill
	\begin{subfigure}[b]{0.48\textwidth}   
		\centering 
			\begin{tikzpicture}
		\begin{axis}[
			width=\textwidth,
			height=\textwidth,
			xlabel={Iteration},
			ylabel={$\angle(\btstar, \bt_t)$},
			ymode=log,
			xmin=0, xmax=12,
			legend style={font=\tiny, fill=none},
			legend pos=north east,
			ymajorgrids=true,
			grid style=grid,
			]
			
			\addplot[
			color=CadetBlue,
			mark=triangle,
			mark size=1.5pt   
			]
			table[x=iter,y=avgAM] {sections/numerical-illustrations/data/largedelta-midsigma-MLR.dat};
			\legend{Empirical: AM}
			
			\addplot[
			color=RoyalBlue,
			mark=triangle,
			mark size=1.5pt   
			]
			table[x=iter,y=avgGD] {sections/numerical-illustrations/data/largedelta-midsigma-MLR.dat};
			\addlegendentry{Empirical: GD}
			
			\addplot[
			color=Maroon,
			mark=*,
			mark size=1.5pt   
			]
			table[x=iter,y=gorAM] {sections/numerical-illustrations/data/largedelta-midsigma-MLR.dat};
			\addlegendentry{Gordon: AM}
			
			\addplot[
			color=Salmon,
			mark=*,
			mark size=1.5pt   
			]
			table[x=iter,y=gorGD] {sections/numerical-illustrations/data/largedelta-midsigma-MLR.dat};
			\addlegendentry{Gordon: GD}
			
			\addplot+[name path=A-AM,CadetBlue!20, no markers] table[x=iter,y=lowerAM] {sections/numerical-illustrations/data/largedelta-midsigma-MLR.dat};
			\addplot+[name path=B-AM,CadetBlue!20, no markers] table[x=iter,y=upperAM] {sections/numerical-illustrations/data/largedelta-midsigma-MLR.dat};
			
			\addplot[CadetBlue!20] fill between[of=A-AM and B-AM];
			
			\addplot+[name path=A-GD,RoyalBlue!20, no markers] table[x=iter,y=lowerGD] {sections/numerical-illustrations/data/largedelta-midsigma-MLR.dat};
			\addplot+[name path=B-GD,RoyalBlue!20, no markers] table[x=iter,y=upperGD] {sections/numerical-illustrations/data/largedelta-midsigma-MLR.dat};
			
			\addplot[RoyalBlue!20] fill between[of=A-GD and B-GD];
		\end{axis}
	\end{tikzpicture}
		\caption{$\sigma = 10^{-2}, \kappa=6$}
		\label{subfig:largedelta-smallsigma-local-MLR}
	\end{subfigure}
	\caption{Local convergence for the mixture of linear regression model.  Each subplot shows: (in purple) the empirics of alternating minimization, (in red) the Gordon updates for alternating minimization, (in blue) the empirics for subgradient AM, and (in orange) the Gordon updates for subgradient AM.  Hollow triangular markers denote the average of the empirics and the shaded regions denote the range of values taken by the empirics over $100$ independent trials.} 
	\label{fig:localMLR}
\end{figure*}
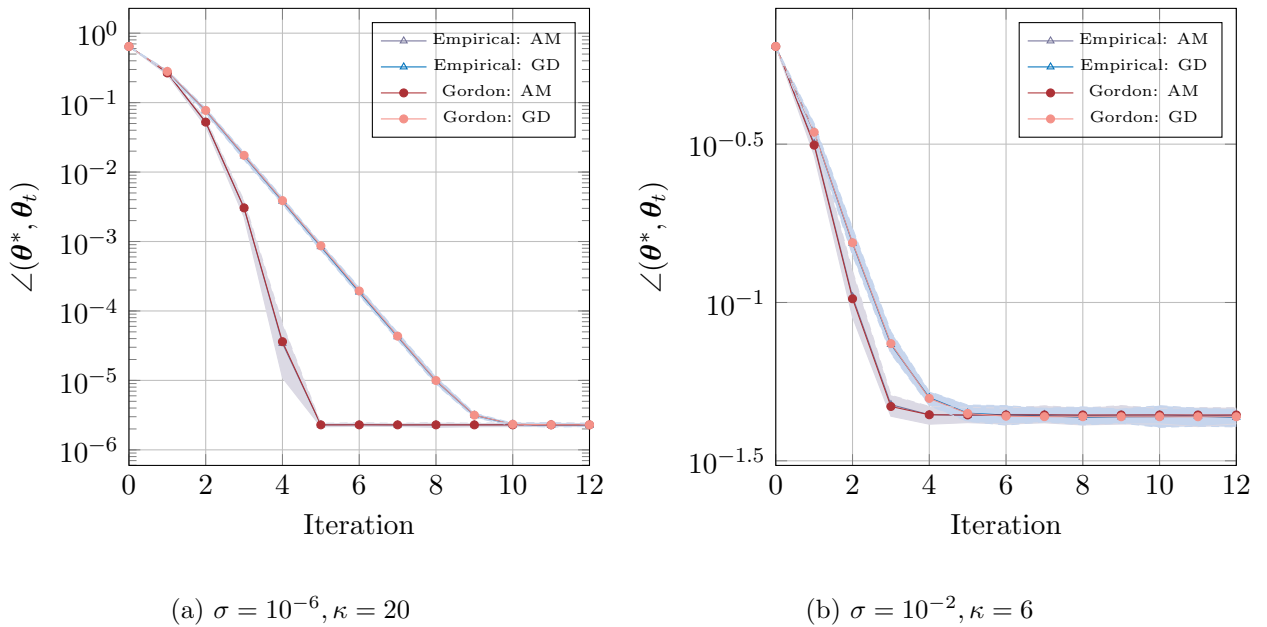


\section{Discussion} \label{sec:discussion}

We presented a recipe for deriving accurate deterministic predictions for the behavior of iterative algorithms in nonconvex Gaussian regression models, which applies provided each iteration can be written as a convex optimization problem satisfying mild decomposability conditions. Rather than decouple the deterministic component of these analyses from its random counterpart by passing to the infinite-sample population limit---which is the most prevalent program in the literature---we used duality and Gaussian comparison theorems to obtain our deterministic Gordon state evolution update. We presented several consequences for both higher-order and first-order algorithms applied to the problems of phase retrieval and mixtures of regressions. These results are in themselves novel, but the key takeaway is our sharp characterization of convergence behavior, which we hope will enable a rigorous comparison between algorithms in other related problems. We conclude by listing a few open questions.

We begin with two technical open questions. We showed that our deterministic predictions of the perpendicular component $\beta$ were within $\order(n^{-1/4})$ of their empirical counterparts. We were able to sharpen this rate to $\ordertil(n^{-1/2})$ for the parallel $\alpha$ component, and conjecture that a similar improvement can be carried out for the $\beta$ component. As a second technical question, we highlight the condition $\noisestd \lesssim 1$ present in our results for mixture of regression models. We conjecture that this condition can be weakened to $\noisestd \lesssim \sqrt{\kappa}$ while preserving the same qualitative behavior of the theorem (i.e. linear angular convergence), but establishing this rigorously is an interesting open problem.

The next set of open questions is broader. Note that our analysis---which relied on Gaussianity of the data independent of the current iterate---required fresh observations at each iteration, and to that end, we used a sample-splitting device to partition the data into disjoint batches. While this is a reasonable method to obtain a practical algorithm---indeed, all the algorithms we analyzed converge very fast, so that at most a logarithmic number of batches suffices---it is more common to run these algorithms without sample splitting. The leave-one-out technique~\citep{ma2020implicit,chen2019gradient} has emerged as a powerful analysis framework for the case without sample-splitting, and it is an interesting open question to what extent this can be combined with our Gordon recipe. 
Even more broadly, there is the question of building an analogous theory under weaker distributional assumptions on the data; indeed, some iterative (higher-order) algorithms considered in the literature are known to converge under weaker assumptions~\citep[e.g.,][]{duchi2019solving,ghosh2020max}. While universality theorems (broadly construed) have been proved in related settings~\citep{bayati2015universality,oymak2018universality,panahi2017universal,el2018impact,abbasi2019universality,paquette2020halting}, 
do similar insights apply here? Can we produce an accurate deterministic prediction if the data is no longer i.i.d., akin to the population update in such settings~\citep{yang2017statistical}? These are interesting and important questions for future work.

Finally, there is the question of broadening the scope of problems to which our analysis applies, and we provide two examples along these lines. First, one could consider ``weak" signal-to-noise regimes in the models that we considered. These regimes have been the subject of recent work~\citep{dwivedi2020singularity,wu2019randomly,ho2020instability}, and it is known that the optimal statistical rates of convergence are different from those in the strong signal-to-noise regimes that we consider in this paper. What are sharp rates of convergence of optimization algorithms in these settings?
Second, and more importantly, phase retrieval and mixtures of regressions are just two models to which our framework applies. There are several other models and algorithms that can be analyzed with the Gordon state evolution machinery to sharply characterize (possibly nonstandard) convergence behavior.

\section{Proof of general results, part (a): Gordon update and deviation bounds}
\label{sec:gordon}

In this section, we prove part (a) of both Theorem~\ref{thm:one_step_main_HO} and Theorem~\ref{thm:one_step_main_FO}. The structure of the proof follows the recipe sketched in Section~\ref{sec:heuristic}. We proceed by carrying out steps 1--3 of the recipe for a broader class of algorithms (captured by one-step updates satisfying Assumption~\ref{ass:loss} to follow), and derive a general Proposition~\ref{prop:first-three-steps}. With this proposition in hand, we then carry out step 4 of the recipe separately for higher-order methods
to prove Theorem~\ref{thm:one_step_main_HO}(a) 
and for first-order methods 
to prove Theorem~\ref{thm:one_step_main_FO}(a).

Throughout this section, we let $\bt^{\sharp}$ denote the ``current'' iterate of the algorithm, with $(\parcomp^{\sharp}, \perpcomp^{\sharp}) = (\parcomp(\btsharp), \perpcomp(\btsharp))$. This frees up the tuple $(\bt, \parcomp, \perpcomp)$ to denote decision variables that will be used throughout the proof.  In addition, as in the heuristic derivation in Section~\ref{subsec:heuristic-second-order}, it is useful in the proof to track a three dimensional state evolution $(\parcomp, \perpone, \perptwo)$, where
\begin{align}
	\label{eq:gordon-3D-state}
\parcomp = \langle \bt, \thetastar \rangle, \qquad \perpone = \frac{\langle \bt, \proj_{\thetastar}^{\perp}\btsharp\rangle}{\| \proj_{\thetastar}^{\perp}\btsharp \|_2}, \qquad \text{and}\qquad \perptwo = \| \proj_{S_\#}^{\perp} \bt \|_2, 
\end{align}
where $S_{\#} = \mathsf{span}(\thetastar, \btsharp)$ and $\proj^\perp_{S_\#}$ is the projection matrix onto the orthogonal complement of this subspace.
Finally, define the independent random variables
\begin{align} \label{eq:zone-ztwo}
\bz_1  = \bX\btstar \qquad \text{ and } \qquad \bz_2 = \frac{\bX\proj_{\btstar}^{\perp} \btsharp}{\| \bX\proj_{\btstar}^{\perp} \btsharp \|_2},
\end{align}
noting that both $\bz_1, \bz_2 \sim \NORMAL(0, \bI_n)$ since $\| \btstar \|_2 = 1$. 
We are now ready to rigorously implement each step of the recipe.

\subsection{General result from steps 1--3 of recipe}
Our general result is derived by implementing steps 1--3 in a setting involving a general decomposability assumption on the one-step loss function $\mathcal{L}$~\eqref{eq:gordon_one_step}. 

\subsubsection{Implementing step 1: One-step convex optimization} 

Our general assumption takes the following form: 
\begin{assumption}[Decomposability and convexity of loss]\label{ass:loss}
	Consider a fixed vector $\bt^{\sharp} \in \mathbb{R}^d$ and a Gaussian random matrix $\bX \in \mathbb{R}^{n \times d}$ and assume $\by$ is generated, given $\bX$ and $\btstar$, according to the generative model~\eqref{eq:model}.  Then one step of the iterative algorithm run from $\btsharp$ can be written in the form~\eqref{eq:opt_gen_intro}, where the loss
	\[
	\Lc(\bt):={\mathcal{L}(\bt; \bt^{\sharp}, \bd{X}, \bd{y})}
	\]
	satisfies the following properties:
	\begin{enumerate}[label=(\alph*)]
		\item There is a pair of functions $g: \mathbb{R}^n \times \mathbb{R}^n \rightarrow \mathbb{R}$ and $h: \mathbb{R}^d \times \mathbb{R}^d \rightarrow \mathbb{R}$, and a (random) function
		\begin{align}
			\label{eq:Fc}
			F : \mathbb{R}^{n} \times \mathbb{R}^{d} &\rightarrow \mathbb{R} \nonumber\\
			F(\bu, \bt) &\mapsto g(\bu, \bX \bt^{\sharp}; \by) + h(\bt, \bt^{\sharp}),
		\end{align}	
		such that
		\begin{align*}
		\Lc(\bt) = F(\bX \bt, \bt) \qquad \text{ for all } \bt \in \real^d.
		\end{align*}
		
		\item The functions $g$ and $h$ are convex in their first arguments.  Moreover, the function $g$ and thus $F$ are $C_L/\sqrt{n}$-Lipschitz in their first argument.
		
		\item The function $h$ depends on $\bt$ only through its lower dimensional projections.  That is, there exists another function
		\begin{align*}
			\hscal: \mathbb{R}^3 \times \mathbb{R}^d &\rightarrow \mathbb{R}\\
			(\alpha, \mu, \nu), \bt^{\sharp} & \mapsto \hscal((\alpha, \mu, \nu), \bt^{\sharp}),
		\end{align*}
		such that 
		\[
		h(\bt, \btsharp) = \hscal\biggl(\langle \bt, \btstar\rangle, \frac{\langle \bt, \proj_{\btstar}^{\perp} \btsharp\rangle}{\| \proj_{\btstar}^{\perp} \btsharp\|_2}, \| \proj_{\mathsf{span}(\btstar, \btsharp)}^{\perp} \bt \|_2, \btsharp
		\biggr)
		\]
		
		\item $\Lc(\bt)$ is coercive.  That is, $\Lc(\bt) \rightarrow \infty$ whenever $ \| \bt \|_2 \rightarrow \infty$.
	\end{enumerate}
\end{assumption}
Note that specifying
\begin{align*}
h(\bt, \bt^{\sharp}) = 0 \qquad  \text{ and } \qquad g(\bu, \bX \bt^{\sharp}; \by) = \frac{1}{\sqrt{n}}\| \omega(\bX \bt^{\sharp}, \by) - \bu  \|_2
\end{align*} 
recovers the higher-order loss functions~\eqref{eq:sec_order_gen}, whereas specifying 
\begin{align*}
h(\bt, \btsharp) = 1/2 \cdot \| \bt \|_2^2 - \langle \bt, \btsharp \rangle \qquad  \text{ and } \qquad g(\bu, \bX\btsharp; \by) = 2 \eta/n \cdot \langle \bu, \omega(\bX \bt^{\sharp}, \by) \rangle
\end{align*}
recovers the first-order loss functions~\eqref{eq:GD_gen}.  The remaining properties (b)-(d) of the assumption can be straightforwardly verified for these two choices. Thus, Assumption~\ref{ass:loss} captures both the special cases corresponding to Theorems~\ref{thm:one_step_main_HO} and~\ref{thm:one_step_main_FO}. Having written one step of the iterative algorithm of interest as a minimization of a convex loss, we are now ready to proceed to step 2 of the recipe.

\subsubsection{Implementing step 2: The auxiliary optimization problem} 
Next, we state a formal definition of the auxiliary loss function $\mathfrak{L}_n$.
\begin{definition}[Auxiliary loss]
\label{def:aux-gordon-loss}
Let $\bg \sim \NORMAL(0, \bI_n)$ and $\bh \sim \NORMAL(0, \bI_d)$ denote independent random vectors drawn independently of the pair $(\bX, \by)$, let $\btsharp \in \mathbb{R}^d$, and define the subspace $S_{\sharp} = \mathsf{span}(\btstar, \btsharp)$.  Further, let $\mathcal{L}$ denote a loss function which satisfies Assumption~\ref{ass:loss} for functions $g$ and $h$.  Then, given a positive scalar $r$ 
define the auxiliary loss function
\begin{align*}
	\mathfrak{L}_n(\bt, \bu; r) := \max_{\bv \in \mathbb{B}_2(r)}\;\frac{1}{\sqrt{n}} \| \bv \|_2 \langle \proj_{S_{\sharp}}^{\perp} \bt, \bh \rangle + h(\bt, \bt^{\sharp}) + g(\bu, \bX \bt^{\sharp}; \by)  + \frac{1}{\sqrt{n}}\langle \bv, \bX \proj_{S_{\sharp}} \bt  + \| \proj_{S_{\sharp}}^{\perp} \bt \|_2 \bg - \bu\rangle.
\end{align*}
\end{definition}
The following lemma shows that our original optimization problem over the loss function $\mathcal{L}$ is essentially equivalent to an auxiliary optimization problem involving the loss $\mathfrak{L}_n$. 
\begin{lemma}
	\label{lem:bilinear}
	Let $\btsharp \in \mathbb{R}^d$ and suppose that the loss function $\Lc(\bt) = \Lc(\bt; \btsharp, \bX, \by)$ satisfies Assumption~\ref{ass:loss} (and recall the Lipschitz constant $C_L$ therein) and associate with it the auxiliary loss $\mathfrak{L}_n$.  Let $D \subseteq \mathbb{B}_2(R)$ denote a closed subset for some positive constant $R$.  Then there exists a positive constant $C_1 \geq 6R$, depending only on $R$, 
	such that for any scalar $r \geq C_L$ and scalar $t \in \mathbb{R}$,
	\[
	\Pro\Bigl\{ \min_{\bt \in D} \Lc(\bt) \leq t\Bigr\} \leq 2\Pro\Bigl\{\min_{\bt \in D, \bu \in \mathbb{B}_2(C_1\sqrt{n}) } \mathfrak{L}_n(\bt, \bu; r) \leq t\Bigr\} + 2e^{-2n}.
	\]
	If, in addition, $D$ is  convex, then
	\[
	\Pro\Bigl\{\min_{\bt \in D} \mathcal{L}(\bt) \geq t\Bigr\} \leq 2\Pro\Bigl\{\min_{\bt \in D, \bu \in \mathbb{B}_2(C_1\sqrt{n}) } \mathfrak{L}_n(\bt, \bu; r) \geq t\Bigr\} + 2e^{-2n}.
	\]
\end{lemma}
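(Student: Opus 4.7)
The plan is to apply the CGMT (Proposition~\ref{thm:gordon}) after reformulating $\min_{\bt \in D}\Lc(\bt)$ as a saddle-point problem in which a single term is bilinear in a Gaussian matrix that is independent of all other randomness. Using the decomposability from Assumption~\ref{ass:loss}(a), write $\Lc(\bt) = g(\bX\bt,\bX\btsharp;\by)+h(\bt,\btsharp)$. The key is to decouple $\bX\bt$ from $\bX\btsharp$ (and hence $\by$) so that, after a suitable dual representation, CGMT is directly applicable. The main technical subtleties, discussed below, are (i) compactifying both the dual variable $\bv$ and the auxiliary primal variable $\bu$ without losing equality to $\min_{\bt\in D}\Lc(\bt)$, and (ii) preserving convex-concavity through these reductions so that both parts of Proposition~\ref{thm:gordon} apply.

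\textbf{Step 1: Dual reformulation and compactification.} Introduce $\bu = \bX\bt$ and enforce the equality by a Lagrange multiplier $\bv$ restricted to $\mathbb{B}_2(r)$, so that
\begin{align*}
\Lc(\bt) \;=\; \min_{\bu\in\mathbb{R}^n}\max_{\bv\in\mathbb{B}_2(r)}\Bigl[g(\bu,\bX\btsharp;\by) + h(\bt,\btsharp) + \tfrac{1}{\sqrt n}\langle \bv,\bX\bt-\bu\rangle\Bigr].
\end{align*}
The inner max over $\bv\in\mathbb{B}_2(r)$ adds the penalty $\tfrac{r}{\sqrt n}\|\bu-\bX\bt\|_2$, and the $C_L/\sqrt n$-Lipschitz property of $g$ in its first argument (Assumption~\ref{ass:loss}(b)) ensures that for any $r\geq C_L$, the minimum over $\bu$ is still attained at $\bu=\bX\bt$. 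Hence the representation is \emph{exact}. To compactify $\bu$, I invoke standard Gaussian matrix concentration: on an event of probability at least $1-2e^{-2n}$ one has $\|\bX\|_{\mathrm{op}}\lesssim \sqrt{n}$, so the minimizer satisfies $\|\bu\|_2 = \|\bX\bt^\star\|_2 \leq \|\bX\|_{\mathrm{op}}\cdot R$ with $R$ from $D\subseteq\mathbb{B}_2(R)$, which sits comfortably inside $\mathbb{B}_2(C_1\sqrt n)$ as soon as $C_1\geq 6R$.

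\textbf{Step 2: Isolate an independent Gaussian matrix and apply CGMT.} Decompose $\bX\bt = \bX\proj_{S_\sharp}\bt + \bX\proj^\perp_{S_\sharp}\bt$, where $S_\sharp = \mathsf{span}(\btstar,\btsharp)$. By Gaussian rotational invariance, conditional on $\bX\proj_{S_\sharp}$ (which carries all dependence of $\by$ and $\bX\btsharp$ on $\bX$), the matrix $\bX\proj^\perp_{S_\sharp}$ is distributionally equivalent to an independent standard Gaussian $\bG$ acting on $\proj^\perp_{S_\sharp}$. This reduces the $\bG$-dependent part of the objective to a single bilinear term $-\tfrac{1}{\sqrt n}\langle \bv,\bG\,\proj^\perp_{S_\sharp}\bt\rangle$. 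Packaging $\tilde{\bu}=(\bt,\bu)\in D\times \mathbb{B}_2(C_1\sqrt n)$, setting $\bM=\bI_n$ and $\bL\tilde{\bu}=-\proj^\perp_{S_\sharp}\bt/\sqrt n$ exposes the CGMT form $\langle \bM\bv,\bG\bL\tilde{\bu}\rangle + Q(\tilde{\bu},\bv)$ with
\begin{align*}
Q(\tilde{\bu},\bv) = g(\bu,\bX\btsharp;\by) + h(\bt,\btsharp) + \tfrac{1}{\sqrt n}\langle \bv,\bu-\bX\proj_{S_\sharp}\bt\rangle,
\end{align*}
which is convex in $\tilde{\bu}$ (using Assumption~\ref{ass:loss}(b)) and linear in $\bv$. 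Conditioning on the $\sigma$-algebra generated by $(\bX\proj_{S_\sharp},\by,\btsharp)$, the matrix $\bG$ is independent of $Q$, so Proposition~\ref{thm:gordon}(a) yields the first (one-sided) inequality and Proposition~\ref{thm:gordon}(b) yields the second when $D$ is convex (so that $D\times \mathbb{B}_2(C_1\sqrt n)$ is convex and $Q$ is convex-concave). The AO value is then identified with $\min_{\bt\in D,\bu\in\mathbb{B}_2(C_1\sqrt n)}\mathfrak{L}_n(\bt,\bu;r)$ of Definition~\ref{def:aux-gordon-loss} by inspection (using $\bh \overset{d}{=} -\bh$ to absorb a sign), and the $2e^{-2n}$ term absorbs the operator-norm failure event from Step 1.

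\textbf{Expected main obstacle.} The hard part is Step 1: establishing \emph{equality} (rather than mere inequality) between $\min_{\bt\in D}\Lc(\bt)$ and its compactified minmax, simultaneously controlling the dual radius $r$ via Lipschitzness of $g$ and the primal radius $C_1\sqrt n$ via a high-probability bound on $\|\bX\|_{\mathrm{op}}$, while ensuring the enlarged constraint sets stay convex so that \emph{both} directions of the CGMT can be used. The coercivity hypothesis in Assumption~\ref{ass:loss}(d) is what I expect to use to rule out minimizing sequences escaping to infinity when verifying the primal compactification step.
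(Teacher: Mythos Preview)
Your proposal is correct and follows essentially the same route as the paper: the paper also splits the argument into (i) a ``bounded optimization'' lemma establishing the exact minmax representation with $\bu\in\mathbb{B}_2(C_1\sqrt{n})$ and $\bv\in\mathbb{B}_2(r)$ via the Lipschitz property of $g$ and the high-probability operator-norm bound on $\bX$, and (ii) an ``invoke-CGMT'' lemma that decomposes along $S_\sharp$, replaces $\bX\proj_{S_\sharp}^\perp$ by an independent Gaussian, and applies Proposition~\ref{thm:gordon}. One small remark: the coercivity hypothesis in Assumption~\ref{ass:loss}(d) is not actually used in the paper's proof of this lemma---the compactification of $\bu$ comes purely from the operator-norm bound and $D\subseteq\mathbb{B}_2(R)$.
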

Given that the minimization over $\mathcal{L}$ can be approximately written as a minimization over an auxiliary loss, we are now ready to proceed to step 3.

\subsubsection{Implementing step 3: Scalarization} 
Next, we define the scalarized auxiliary loss. Recall the definition of the convex conjugate of a function $g: \real^d \to \real$, given by $g^*(\bx) = \sup_{\bx' \in \real^d}\; \inprod{\bx'}{\bx} - g(\bx')$.
\begin{definition}[Scalarized auxiliary loss]
	Let $\bg \sim \NORMAL(0, \bI_n)$, $\bh \sim \NORMAL(0, \bI_d)$, $\bz_1 \sim \NORMAL(0, \bI_n)$, and $\bz_2 \sim \NORMAL(0, \bI_n)$ denote mutually independent random vectors, with the pair $(\bz_1, \bz_2)$ chosen according to equation~\eqref{eq:zone-ztwo}. Let $\btsharp \in \mathbb{R}^d$ and define the subspace $S_{\sharp} = \mathsf{span}(\btstar, \btsharp)$.  Further, let $\mathcal{L}$ denote a loss function which satisfies Assumption~\ref{ass:loss} for functions $g$ and $h$. 
	Then associate with it the \emph{scalarized auxiliary loss}
	\label{def:scalarized-ao}
	\[
	\widebar{L}_n(\alpha, \mu, \nu; \btsharp) := \max_{\bv \in \R^n} \hscal(\alpha, \mu, \nu, \btsharp) - g^{*}(\bv, \bX \btsharp; \by) - \nu \| \proj_{S_{\sharp}}^{\perp}\bh\|_2 \| \bv \|_2 + \langle \nu \bg + \alpha \bz_1 + \mu \bz_2, \bv \rangle,
	\]
	where $g^*$ denotes the convex conjugate of the function $g$ and $\hscal$ is as in part (c) of Assumption~\ref{ass:loss}.
\end{definition}
Our next lemma implements step 3, scalarizing the auxiliary loss $\mathfrak{L}_n$.  Before stating the lemma, we require the definition of a scalarized set and an amenable set.
\begin{definition}[Scalarized set]
	\label{def:scalarized-set}
Let $\btstar \in \mathbb{R}^d$ denote the ground truth, $\btsharp \in \mathbb{R}^d$, and $S_{\sharp}$ the subspace $S_{\sharp} = \mathsf{span}(\btstar, \btsharp)$.  For any subset $D \subseteq \mathbb{R}^d$, define the scalarized set
	\[
\Pc(D) := \biggl\{(\alpha, \mu, \nu) \in \mathbb{R}^3: \bt \in D \text{ and } \alpha = \langle \bt, \btstar \rangle, \mu = \frac{\langle \bt, \proj_{\btstar}^{\perp}\bt^{\sharp}\rangle}{\| \proj_{\btstar}^{\perp} \bt^{\sharp}\|_2}, \nu = \| \proj_{S_{\sharp}}^{\perp} \bt \|_2\biggr\}.
\]
\end{definition}

\begin{definition}[Amenable set]
	\label{def:amenable-set}
Let $\btstar \in \mathbb{R}^d$ denote the ground truth, $\btsharp \in \mathbb{R}^d$, and $S_{\sharp}$ the subspace $S_{\sharp} = \mathsf{span}(\btstar, \btsharp)$.
A subset $D \subseteq \mathbb{R}^d$ is \emph{amenable} with respect to the subspace $S_{\sharp}$ if the set $D\cap {S_{\sharp}}^\perp$ is rotationally invariant, i.e. for all unit vectors $\|\bv\|_2=1$ such that $\bv\in{S_{\sharp}}^\perp$, there exists $\bt\in D$ such that ${\proj_{S_{\sharp}}^{\perp} \bt}/\|{\proj_{S_{\sharp}}^{\perp} \bt}\|=\bv$.
\end{definition}

With these definitions in hand, we have the following lemma, whose proof we provide in Subsection~\ref{subsec:proof-scalarize-ao}.
\begin{lemma}
	\label{lem:scalarize-ao}
	Let $\btsharp \in \mathbb{R}^d$ and $S_{\sharp} = \mathsf{span}(\btstar, \btsharp)$.  Suppose that the loss function $\Lc$ satisfies Assumption~\ref{ass:loss} and associate with it the auxiliary loss $\mathfrak{L}_n$ as well as the scalarized auxiliary loss $\widebar{L}_n$.  Further, let $R$ be a positive constant and suppose that the subset $D \subseteq \mathbb{B}_2(R)$ is amenable with respect to the subspace $S_{\sharp}$.  Then, with $C_1$ as in Lemma~\ref{lem:bilinear}, for all $r \geq C_L$, we have the sandwich relation
	\[
	\min_{ \substack{\bt \in D \\ \bu \in \mathbb{B}_2(C_1\sqrt{n})}}\; \mathfrak{L}_n(\bt, \bu;r) \leq \min_{(\alpha, \mu, \nu) \in \Pc(D)} \widebar{L}_n(\alpha, \mu, \nu; \btsharp)\leq \min_{ \substack{\bt \in D \\ \bu \in \mathbb{B}_2(C_1\sqrt{n})}}\; \mathfrak{L}_n(\bt, \bu;r) 
	+ \frac{3 C_L^2 C_1}{r},
	\]
	with probability at least $1 - 8e^{-n/2}$.
\end{lemma}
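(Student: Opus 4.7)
The plan is to prove the sandwich relation by reducing $\mathfrak{L}_n$ to $\widebar{L}_n$ via the reparametrization enabled by amenability of $D$, and then to handle the boundary effects arising from the compactness constraints on $\bu$ and $\bv$ to obtain the additive error. First I would exploit amenability to reparametrize: each $\bt \in D$ corresponds bijectively to a tuple $((\alpha,\mu,\nu),\bv_\perp)$ with $(\alpha,\mu,\nu)\in\Pc(D)$ and $\bv_\perp$ a unit vector in $S_\sharp^\perp$ satisfying $\proj_{S_\sharp}^\perp\bt = \nu\bv_\perp$. Direct substitution shows that (a) $\bv_\perp$ appears in $\mathfrak{L}_n$ only through the scalar $\langle \bv_\perp,\bh\rangle$, (b) $\bX\proj_{S_\sharp}\bt = \alpha\bz_1+\mu\bz_2$ with $\bz_1,\bz_2$ as in~\eqref{eq:zone-ztwo}, and (c) $\|\proj_{S_\sharp}^\perp\bt\|_2 = \nu$. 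This decouples the dependence on the perpendicular direction from that on the three-dimensional state, and after the change of variables $\bw=\bv/\sqrt{n}$ lines up the $\mathfrak{L}_n$ expression with that of $\widebar{L}_n$.

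For the left (``$\min\mathfrak{L}_n \leq \min\widebar{L}_n$'') inequality, for each $(\alpha,\mu,\nu)\in\Pc(D)$ I would use amenability to select $\bt_{\text{aligned}}\in D$ whose perpendicular direction is $\bv_\perp = -\proj_{S_\sharp}^\perp\bh/\|\proj_{S_\sharp}^\perp\bh\|_2$, so that $\langle \proj_{S_\sharp}^\perp\bt,\bh\rangle = -\nu\|\proj_{S_\sharp}^\perp\bh\|_2$ matches the sign appearing in $\widebar{L}_n$. Under this alignment the integrand is concave in $\bv$ (since the coefficient of the convex function $\|\bv\|_2$ is non-positive) and convex in $\bu$ (by Assumption~\ref{ass:loss}(b)), so Sion's minimax theorem applies on the two compact convex sets $\mathbb{B}_2(r)$ and $\mathbb{B}_2(C_1\sqrt{n})$, interchanging $\min_\bu$ with $\max_\bv$. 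The resulting inner minimum, denoted $\phi(\bw):=\min_{\bu\in\mathbb{B}_2(C_1\sqrt{n})}[g(\bu,\bX\btsharp;\by)-\langle \bw,\bu\rangle]$, is shown to equal $-g^*(\bw,\bX\btsharp;\by)$ whenever the unconstrained minimizer lies in $\mathbb{B}_2(C_1\sqrt{n})$: this is the role of the hypothesis $C_1\ge 6R$, and is verified using the $(C_L/\sqrt{n})$-Lipschitz property of $g$ together with Gaussian concentration on $\|\omega(\bX\btsharp,\by)\|_2$. Since $r\ge C_L$, the ball $\mathbb{B}_2(r/\sqrt{n})$ covers the effective support $\{\|\bw\|_2\le C_L/\sqrt{n}\}$ of $-g^*$ (outside which $-g^*=-\infty$), so the constrained maximum coincides with the unconstrained maximum defining $\widebar{L}_n$.

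For the right (``$\min\widebar{L}_n \leq \min\mathfrak{L}_n + \tfrac{3C_L^2C_1}{r}$'') inequality, I would fix any feasible $(\bt,\bu)$ with projections $(\alpha,\mu,\nu)$ and plug the argmax $\bw^\star$ of $\widebar{L}_n(\alpha,\mu,\nu;\btsharp)$—which necessarily satisfies $\|\bw^\star\|_2\le C_L/\sqrt{n}$, so $\sqrt{n}\bw^\star\in\mathbb{B}_2(C_L)\subseteq\mathbb{B}_2(r)$—into the $\max_{\bv}$ in $\mathfrak{L}_n$. Invoking Fenchel--Young, $g(\bu)-\langle \bw^\star,\bu\rangle \ge -g^*(\bw^\star)$, gives the pointwise comparison
\[
\mathfrak{L}_n(\bt,\bu;r) \;\ge\; \widebar{L}_n(\alpha,\mu,\nu;\btsharp) + \nu\|\bw^\star\|_2\bigl[\langle \bv_\perp,\bh\rangle + \|\proj_{S_\sharp}^\perp\bh\|_2\bigr],
\]
in which the bracketed slack is non-negative. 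The residual $3C_L^2C_1/r$ additive error accounts for the Lagrangian duality gap that arises when the optimizer of $g(\bu)-\langle \bw,\bu\rangle$ escapes the ball $\mathbb{B}_2(C_1\sqrt{n})$ at some $\bw\in\mathbb{B}_2(r/\sqrt{n})$: a careful accounting using the Lipschitz constant of $g$ bounds this boundary contribution by $C_L\cdot(C_LC_1)/r$, where the product $C_LC_1$ quantifies the penalty per unit norm of $\bv$ and $1/r$ reflects the inverse strength of the Lagrangian constraint $\bv\in\mathbb{B}_2(r)$. The probability bound $1-8e^{-n/2}$ follows from a union of standard Gaussian concentration events controlling $\|\omega(\bX\btsharp,\by)\|_2$, $\|\proj_{S_\sharp}^\perp\bh\|_2$, and $\|\bX\proj_{S_\sharp}\bt\|_2$.

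The main obstacle will be making the Lagrangian duality gap quantitative enough to yield the $3C_L^2C_1/r$ bound: one must simultaneously track when the unconstrained conjugate $-g^*$ agrees with the constrained minimum $\phi$, and when the argmax of the $\max_\bv$ in $\mathfrak{L}_n$ is pushed to the boundary of $\mathbb{B}_2(r)$. The Lipschitz property of $g$, combined with the compactness parameter $C_1$ and the radius $r$, provides the right levers but requires a delicate two-sided bound on $\phi(\bw)-(-g^*(\bw))$ uniformly over $\bw\in\mathbb{B}_2(r/\sqrt{n})$ that holds on a high-probability event.
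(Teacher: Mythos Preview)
Your overall architecture—reparametrize via amenability, align the perpendicular direction, then pass to the convex conjugate through Sion's theorem—is close in spirit to the paper's, but the way you split the two inequalities contains a genuine direction error that leaves the left inequality unproved.

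Consider your argument for the ``left'' inequality. After aligning $\bv_\perp$ and applying Sion's, you obtain
\[
\min_{\bu}\mathfrak{L}_n(\bt_{\mathrm{aligned}},\bu;r)=\max_{\|\bw\|_2\le r/\sqrt{n}}\bigl[B(\bw)+\phi(\bw)\bigr],
\qquad
\widebar{L}_n(\alpha,\mu,\nu)=\max_{\bw}\bigl[B(\bw)-g^*(\bw)\bigr],
\]
with $\phi(\bw)=\min_{\|\bu\|_2\le C_1\sqrt{n}}[g(\bu)-\langle\bw,\bu\rangle]$. You then invoke $\phi=-g^*$ on the effective support $\{\|\bw\|_2\le C_L/\sqrt{n}\}$ and conclude that the two maxima ``coincide.'' But this is the wrong direction: since the constrained minimum $\phi$ always dominates the unconstrained infimum $-g^*$, and since $\phi$ remains \emph{finite} outside the support while $-g^*=-\infty$ there, the maximum of $B+\phi$ over the larger ball $\{\|\bw\|_2\le r/\sqrt{n}\}$ can only exceed $\widebar{L}_n$. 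Your Sion's route therefore yields $\min_\bu\mathfrak{L}_n(\bt_{\mathrm{aligned}},\bu;r)\ge\widebar{L}_n(\alpha,\mu,\nu)$—which, once you minimize over $(\alpha,\mu,\nu)$ and use that alignment is optimal, is the \emph{right} inequality (indeed without error), not the left one. Your separate Fenchel--Young ``right inequality'' argument proves the same direction a second time. Neither of your two arguments establishes $\min_{\bt,\bu}\mathfrak{L}_n\le\min\widebar{L}_n$.

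The paper avoids this by inserting an intermediate \emph{hard-constrained} problem $\phi_{\mathrm{con}}$ (see equations defining $\phi_{\mathrm{var}}$ and $\phi_{\mathrm{con}}$): after Cauchy--Schwarz on $\bv$ and alignment, $\min_{\bt,\bu}\mathfrak{L}_n$ becomes a Lagrangian-relaxed problem $\phi_{\mathrm{var}}$ with finite penalty slope $r/\sqrt{n}$, and $\phi_{\mathrm{con}}$ is the corresponding problem with the constraint $\|\nu\bg+\alpha\bz_1+\mu\bz_2-\bu\|_2\le\nu\|\proj_{S_\sharp}^\perp\bh\|_2$ enforced exactly. The left inequality $\phi_{\mathrm{var}}\le\phi_{\mathrm{con}}$ is then immediate (relaxation), and the error $3C_L^2C_1/r$ arises precisely from bounding $\phi_{\mathrm{con}}-\phi_{\mathrm{var}}$ via the $C_L/\sqrt{n}$-Lipschitzness of $g$ (this is the paper's Lemma~\ref{lem:constrained-equivalence}). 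The identification $\phi_{\mathrm{con}}=\min\widebar{L}_n$ is done separately (Lemma~\ref{lem:solve-constrained}) by biconjugating $g$, applying Sion's, and solving the resulting constrained linear minimization in $\bu$ explicitly on a high-probability event. Your attempt to bypass $\phi_{\mathrm{con}}$ and compare $\phi$ to $-g^*$ directly is what causes the direction to flip; to repair it you would need to show that the maximizer of $B+\phi$ over $\{\|\bw\|_2\le r/\sqrt{n}\}$ actually lies in the effective support, which is essentially equivalent to reintroducing the constrained problem.
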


\subsubsection{Putting together steps 1--3}

We are now in a position to put the pieces together and prove a formal equivalence between the original minimization problem over the loss $\mathcal{L}$ and a low-dimensional minimization problem over the loss $\widebar{L}_n$.
\begin{proposition} \label{prop:first-three-steps}
Let $\btsharp \in \mathbb{R}^d$ and $S_{\sharp} = \mathsf{span}(\btstar, \btsharp)$.  Suppose that the loss function $\Lc$ satisfies Assumption~\ref{ass:loss} and associate with it the scalarized auxiliary loss $\widebar{L}_n$.  Let $R$ be a positive constant and suppose that the subset $D \subseteq \mathbb{B}_2(R)$ is amenable with respect to the subspace $S_{\sharp}$.  Then, there exists a positive constant $C_1$, depending only on $R$, such that for each triple of scalars $r \geq C_L, \mathsf{L} \in \mathbb{R}$, and $\epsilon' > 0$, we have
\begin{align*}
	\Pro\Bigl\{\argmin_{\bt \in \mathbb{B}_2(R)} \mathcal{L}(\bt) \in D \Bigr\} &\leq 2 \Pro\Bigl\{\min_{(\alpha, \mu, \nu) \in \Pc( {D} ) }
	\widebar{L}_n(\alpha, \mu, \nu) \leq \mathsf{L} - \frac{3 C_L^2 C_1}{r} + 2\epsilon' \Bigr\} \\
	&\quad  + 2 \Pro\Bigl\{\min_{(\alpha, \mu, \nu) \in \Pc(\mathbb{B}_2(R))} \widebar{L}_n(\alpha, \mu, \nu) > \mathsf{L} + \epsilon' \Bigr\} + 20 e^{-n/2}.
\end{align*}
\end{proposition}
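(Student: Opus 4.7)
I will chain together Lemmas~\ref{lem:bilinear} and~\ref{lem:scalarize-ao} via an event-level decomposition that transfers the probability of interest from the original loss $\mathcal{L}$ to the scalarized auxiliary loss $\widebar{L}_n$. The strategy amounts to carrying out Step~4 of the recipe sketched in Section~\ref{sec:high-level-steps} at the level of probabilities rather than optimizer values.

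\textbf{Step 1 (Event decomposition).} The key structural observation is that if $\bt^{\star} := \argmin_{\bt \in \mathbb{B}_2(R)} \mathcal{L}(\bt)$ lies in $D$, then $\mathcal{L}(\bt^{\star}) = \min_{\bt \in \mathbb{B}_2(R)} \mathcal{L}(\bt)$ and simultaneously $\min_{\bt \in D} \mathcal{L}(\bt) \leq \mathcal{L}(\bt^{\star})$. Consequently, for any pair of scalars $t_2 \leq t_1$,
\[
\bigl\{\argmin_{\bt \in \mathbb{B}_2(R)} \mathcal{L}(\bt) \in D\bigr\} \;\subseteq\; \bigl\{\min_{\bt \in D} \mathcal{L}(\bt) \leq t_1\bigr\} \cup \bigl\{\min_{\bt \in \mathbb{B}_2(R)} \mathcal{L}(\bt) > t_2\bigr\}.
\]
I will take $t_2 = \mathsf{L} + \epsilon'$ and $t_1 = \mathsf{L} - 2 \cdot \tfrac{3 C_L^2 C_1}{r} + 2\epsilon'$, so that after incurring the one-sided slack $\tfrac{3 C_L^2 C_1}{r}$ from Lemma~\ref{lem:scalarize-ao} in Step~3 below, the thresholds land exactly at the values stated in the proposition.

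\textbf{Step 2 (Pass to $\mathfrak{L}_n$ via CGMT).} I apply Lemma~\ref{lem:bilinear} to each of the two events from Step~1. Part~(a) of Lemma~\ref{lem:bilinear} (which does not require convexity) controls the first event, giving a factor-of-two inflation plus an additive $2e^{-2n}$. Since $\mathbb{B}_2(R)$ is convex, I invoke part~(b) of Lemma~\ref{lem:bilinear} on the second event to obtain an analogous bound in terms of $\min_{\mathbb{B}_2(R), \bu} \mathfrak{L}_n(\bt, \bu; r)$, again up to a factor of two and an additive $2e^{-2n}$. This requires only that $r \geq C_L$, which is an assumption of the proposition.

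\textbf{Step 3 (Scalarize).} Both $D$ and $\mathbb{B}_2(R)$ are amenable with respect to $S_{\sharp}$ (the latter trivially, the former by hypothesis), so I can invoke Lemma~\ref{lem:scalarize-ao} to pass from $\mathfrak{L}_n$ to $\widebar{L}_n$. On the good event of probability at least $1 - 8 e^{-n/2}$ guaranteed by that lemma, the sandwich
\[
\min_{\bt, \bu} \mathfrak{L}_n(\bt, \bu; r) \;\leq\; \min_{\Pc(\cdot)} \widebar{L}_n \;\leq\; \min_{\bt, \bu} \mathfrak{L}_n(\bt, \bu; r) + \frac{3 C_L^2 C_1}{r}
\]
yields two implications. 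The right inequality converts $\{\min_{D, \bu} \mathfrak{L}_n \leq t_1\}$ into $\{\min_{\Pc(D)} \widebar{L}_n \leq t_1 + \tfrac{3 C_L^2 C_1}{r}\} = \{\min_{\Pc(D)} \widebar{L}_n \leq \mathsf{L} - \tfrac{3 C_L^2 C_1}{r} + 2\epsilon'\}$, and the left inequality converts $\{\min_{\mathbb{B}_2(R), \bu} \mathfrak{L}_n > t_2\}$ into $\{\min_{\Pc(\mathbb{B}_2(R))} \widebar{L}_n > \mathsf{L} + \epsilon'\}$ without any shift. Combining via a union bound, collecting the two $2 e^{-2n}$ terms from the CGMT applications and the two $2 \cdot 8 e^{-n/2}$ terms from the scalarize step, and using $e^{-2n} \leq e^{-n/2}$ to absorb everything into a single $20 e^{-n/2}$ remainder, yields the stated inequality.

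\textbf{Principal obstacle.} The main subtlety is the bookkeeping of shifts and failure events. Lemma~\ref{lem:scalarize-ao} is asymmetric, introducing its slack $\tfrac{3 C_L^2 C_1}{r}$ on only one side, so the two chained reductions ($\mathcal{L} \to \mathfrak{L}_n \to \widebar{L}_n$) shift the threshold $\mathsf{L}$ in opposite directions for the two tail events. The compatibility requirement $t_2 \leq t_1$ needed for the event decomposition in Step~1 translates into the regime in which $\epsilon'$ dominates the scalarize slack; for smaller $\epsilon'$ the stated bound is trivially valid because the right-hand side exceeds one. Beyond this bookkeeping, the proof is a mechanical combination of the two preceding lemmas.
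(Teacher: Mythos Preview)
Your skeleton is exactly the paper's: split $\{\argmin_{\mathbb{B}_2(R)}\mathcal{L}\in D\}$ into a lower-tail event for $\min_D\mathcal{L}$ and an upper-tail event for $\min_{\mathbb{B}_2(R)}\mathcal{L}$, then chain Lemma~\ref{lem:bilinear} and Lemma~\ref{lem:scalarize-ao}. The paper's initial split is only cosmetically different from yours: it first writes $\Pro\{\argmin\in D\}\le\Pro\{\min_D\mathcal{L}\le\min_{\mathbb{B}_2(R)}\mathcal{L}+\epsilon'\}$ and then cases on whether $\min_{\mathbb{B}_2(R)}\mathcal{L}$ exceeds $\mathsf{L}+\epsilon'$, arriving at thresholds $\mathsf{L}+2\epsilon'$ and $\mathsf{L}+\epsilon'$ at the $\mathcal{L}$ level, with no compatibility constraint to check.

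Your handling of the constraint, however, has a gap: the claim that the right-hand side exceeds $1$ when $\epsilon'$ is small is false. With $a=\mathsf{L}-\tfrac{3C_L^2 C_1}{r}+2\epsilon'$ and $b=\mathsf{L}+\epsilon'$ we have $a<b$ precisely when $\epsilon'<\tfrac{3C_L^2 C_1}{r}$, and since $\min_{\Pc(\mathbb{B}_2(R))}\widebar{L}_n\le\min_{\Pc(D)}\widebar{L}_n$ always, both minima can concentrate in the gap $(a,b]$, driving both probabilities on the right to zero. The underlying issue is the sign on the slack: the sandwich in Lemma~\ref{lem:scalarize-ao} gives, on the good event, only $\{\min_{D,\bu}\mathfrak{L}_n\le t\}\subseteq\{\min_{\Pc(D)}\widebar{L}_n\le t+\tfrac{3C_L^2 C_1}{r}\}$, so carrying the paper's own thresholds through yields $+\tfrac{3C_L^2 C_1}{r}$, not $-\tfrac{3C_L^2 C_1}{r}$, in the first term. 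The paper writes the minus sign at the corresponding step as well, which appears to be a slip; it is harmless because the sole downstream use of the proposition chooses $r$ so that $\tfrac{3C_L^2 C_1}{r}$ is a small multiple of $\epsilon'$, at which point your compatibility constraint is automatically satisfied and the sign is immaterial.
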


\begin{proof}
Applying the law of total probability, we obtain for any $\mathsf{L}$ and any $\epsilon'>0$, the chain of inequalities
\begin{align}
	\Pro\Bigl\{\argmin_{\bt \in \mathbb{B}_2(R)} \mathcal{L}(\bt) \in D \Bigr\} &\leq \Pro\Bigl\{\min_{\bt \in D \cap \mathbb{B}_2(R)} \mathcal{L}(\bt) \leq \min_{\bt \in \mathbb{B}_2(R)} \mathcal{L}(\bt) + \epsilon' \Bigr\}\nonumber\\
	&\leq \Pro\Bigl\{\min_{\bt \in D} \mathcal{L}(\bt) \leq \mathsf{L} + 2\epsilon'\Bigr\} + \Pro\Bigl\{\min_{\bt \in \mathbb{B}_2(R)} \mathcal{L}(\bt) > \mathsf{L} + \epsilon' \Bigr\}, \label{ineq:step1-prop-gor}
\end{align}
where we note that in the second inequality we have used the fact that $D \subseteq \mathbb{B}_2(R)$.  Now, we apply Lemma~\ref{lem:bilinear} to obtain the pair of inequalities
\begin{subequations}\label{ineq:step2-prop-gor}
\begin{align}
	\Pro\Bigl\{\min_{\bt \in D} \mathcal{L}(\bt) \leq \mathsf{L} + 2\epsilon' \Bigr\} &\leq 2\Pro\Biggl\{\min_{\substack{\bt \in D,\\ \bu \in \mathbb{B}_2(C_1\sqrt{n})}}\mathfrak{L}_n(\bt, \bu;r) \leq \mathsf{L} + 2\epsilon' \Biggr\} + 2e^{-2n},\\
	\Pro\Bigl\{\min_{\bt \in \mathbb{B}_2(R)} \mathcal{L}(\bt) > \mathsf{L} + \epsilon' \Bigr\} &\leq 2\Pro\Biggl\{\min_{\substack{\bt \in  \mathbb{B}_2(R),\\ \bu \in \mathbb{B}_2(C_1\sqrt{n})}}\mathfrak{L}_n(\bt, \bu;r) > \mathsf{L} + \epsilon' \Biggr\} + 2e^{-2n}.
\end{align}
\end{subequations}
Next, we note that $D$ is an amenable set with respect to $S_{\sharp}$ (as in Definition~\ref{def:amenable-set}).  Thus, we apply Lemma~\ref{lem:scalarize-ao} to further obtain the pair of inequalities
\begin{subequations}\label{ineq:step3-prop-gor}
	\begin{align}
		\Pro\Biggl\{\min_{\substack{\bt \in D ,\\ \bu \in \mathbb{B}_2(C_1\sqrt{n})}}\mathfrak{L}_n(\bt, \bu;r) \leq \mathsf{L} + 2\epsilon' \Biggr\} &\leq \Pro\Bigl\{\min_{(\alpha, \mu, \nu) \in 
		\Pc(D)
		}\widebar{L}_n(\alpha, \mu, \nu) \leq \mathsf{L} - \frac{3 C_L^2 C_1}{r} + 2\epsilon' \Bigr\} + 8e^{-n/2},\\
		\Pro\Biggl\{\min_{\substack{\bt \in  \mathbb{B}_2(R),\\ \bu \in \mathbb{B}_2(C_1\sqrt{n})}}\mathfrak{L}_n(\bt, \bu;r) > \mathsf{L} + \epsilon' \Biggr\} &\leq \Pro\Bigl\{\min_{(\alpha, \mu, \nu) \in \Pc(\mathbb{B}_2(R))} \widebar{L}_n(\alpha, \mu, \nu) > \mathsf{L}  + \epsilon' \Bigr\}  + 8e^{-n/2}. 
	\end{align}
\end{subequations}
Combining the inequalities~\eqref{ineq:step1-prop-gor}--\eqref{ineq:step3-prop-gor} yields the desired conclusion. 
\end{proof}
Having established steps 1--3 of the recipe under the general Assumption~\ref{ass:loss} on the one-step loss function, we now carry out step 4 of the recipe individually for each theorem. For clarity, we include a schematic diagram of the various ingredients in Figure~\ref{fig:gordon-schematic}.  Recall from the recipe described earlier that the Gordon state evolution update is obtained as the minimizer of the deterministic loss $\widebar{L}$, which is in turn obtained from $\widebar{L}_n$ in the limit $n \to \infty$. We prove each of the two theorems below without making this equivalence explicit, but the connection is evident from the proofs of Lemmas~\ref{lem:ao-analysis} and~\ref{lem:ao-analysis-FO} in the appendix.

\begin{figure}[!ht]
	\centering
	\usetikzlibrary{spy,backgrounds}
	\begin{tikzpicture}[spy using outlines={circle, magnification=2, size=1cm, connect spies}]
	
	\draw[thin,->] (0,0) -- (10,0) node[anchor=south east] {$\mathbb{R}^3$};
	\draw[thin,->] (0,0) -- (0,10);

	\draw[thick, scale=1, domain=0:10, smooth, variable=\x, black] plot ({\x},
	{0.2*(\x - 5.375) * (\x - 5.375)  + 1.5});
	\node[above, black] at (10.1,{0.2*(10 - 5.375) * (10 - 5.375)  + 1.5}) {$\bar{L}_n(\bxi)$};
	
	\draw[thick, scale=1, domain=0:10, smooth, variable=\x, CadetBlue] plot ({\x},
	{0.2*(\x - 5)*(\x - 5) +2});
	\node[above, CadetBlue] at (10, 7) {$\bar{L}(\bxi)$};

	\draw (5.375cm,3pt) -- (5.375cm,-3pt) node[anchor=north west] {$\bxi_n$};
	\draw[thin, dashed] (5.375,0) -- (5.375, 1.5);
	
	\draw (5cm,3pt) -- (5cm,-3pt) node[anchor=north] {$\bxi^{\mathsf{gor}}$};
	\draw[thin, dashed] (5, 0) -- (5, 0.2 * 25 - 2.4 * 5 + 9);
	\draw[thin, dashed] (0,2) -- (10, 2);
	\draw (3pt, 2) -- (-3pt, 2)
	    node[anchor=east] {$\bar{\mathsf{L}}$};
	\draw (3pt, 2.75) -- (-3pt, 2.75)
	    node[anchor=east] {$\bar{\mathsf{L}} + C\epsilon^2$};
	\draw (3pt, 1.25) -- (-3pt, 1.25)
	node[anchor=east] {$\bar{\mathsf{L}} - C\epsilon^2$};
	
	\draw[thick, blue, line width=2] (7.5, 0) -- (2.5, 0) node[anchor=north east]
	{$\mathbb{B}_{\infty}(\bxi^{\gor}; \epsilon)$};

	\draw[thick, OliveGreen, scale=1, domain=0:10, smooth, variable=\x] plot ({\x},
	{(0.2)*(\x - 4.3)*(\x - 4.3) + 2.35 + 0.1*rand});
	\draw (4.3cm,3pt) -- (4.3cm,-3pt) node[anchor=north east] {$\mathcal{T}_n(\btsharp)$};
	\node[above left, OliveGreen] at (9.5, 8) {$\mathcal{L}(\bt)$};
	
	\fill [CadetBlue!40,opacity=0.3] (0, 1.25) rectangle (10,2.75);
\end{tikzpicture}
	\caption{Schematic diagram of the proof of Theorems~\ref{thm:one_step_main_HO} and~\ref{thm:one_step_main_FO}, drawn for $\bt \in \mathbb{R}^3$.  For shorthand, we consider the state $\bxi = (\alpha, \mu, \nu)$.  The quantity $\Tc_n(\btsharp)$ denotes the minimizer of the loss $\Lc$ and the state $\bxi_n = (\alpha_n, \mu_n, \nu_n)$ denotes the minimizer of the scalarized auxiliary loss $\widebar{L}_n$. With $\widebar{L} = \lim_{n \to \infty} \widebar{L}_n$ denoting the deterministic scalarized auxiliary loss in the limit, the state $\bxi^{\gor} = (\alpha^\gor, \mu^\gor, \nu^\gor)$ denotes the minimizer of $\widebar{L}$.  Proposition~\ref{prop:first-three-steps} utilizes the CGMT to connect the minima of the original loss $\Lc$ over any amenable set (which includes the entire set $\mathbb{R}^3$ as well as the set $\mathbb{B}_{\infty}(\bxi^{\gor}; \epsilon)^{c}$) to minima over the simpler empirical loss $\widebar{L}_n$.  The growth conditions and concentration properties given by Lemmas~\ref{lem:ao-analysis} and~\ref{lem:ao-analysis-FO} imply that $\widebar{L}(\bxi^{\gor})$ and $\widebar{L}_n(\bxi_n)$ lie in the purple shaded region as well as the inclusion $\bxi_n \in \mathbb{B}_{\infty}(\bxi^{\gor}; \epsilon)$.  Used in conjunction with Proposition~\ref{prop:first-three-steps}, this shows that the minimizer of the original loss $\Lc$ also satisfies the inclusion $\Tc_n(\btsharp) \in \mathbb{B}_{\infty}(\bxi^{\gor}; \epsilon)$.}
	\label{fig:gordon-schematic}
\end{figure}
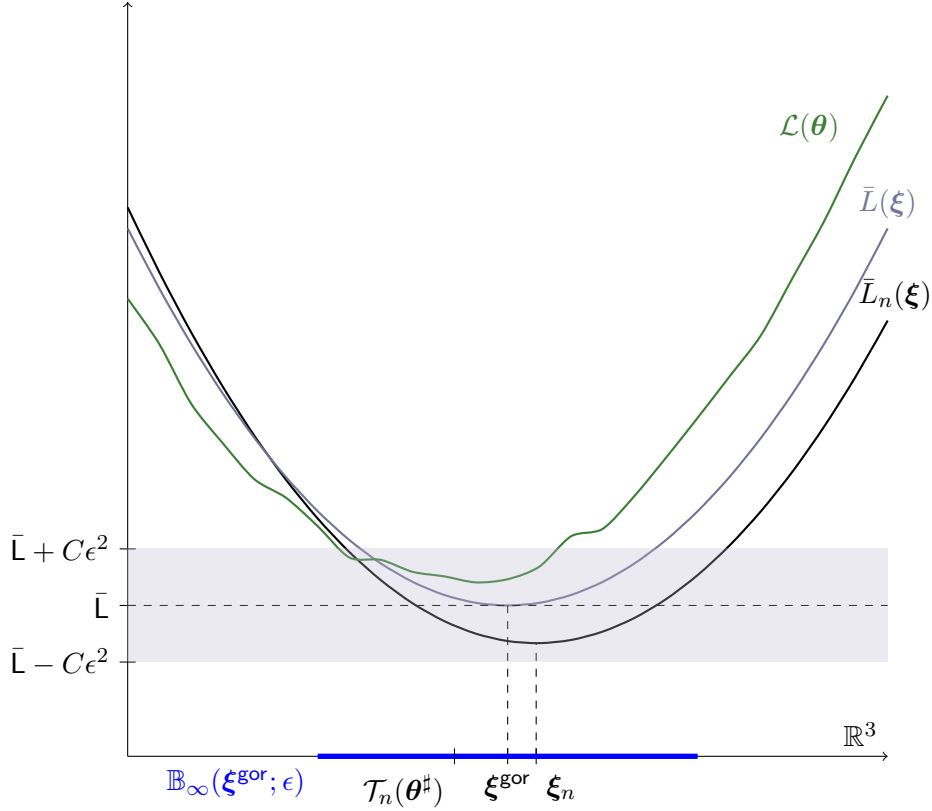

\subsection{Proof of Theorem~\ref{thm:one_step_main_HO}(a)}
First, we define the expanded (i.e., three dimensional) Gordon state evolution update for higher-order methods.

\begin{definition}[Expanded Gordon state evolution update: Higher-order methods] \label{def:expanded-gordon-HO}
Recall the model~\eqref{eq:model}, and let $Q$ denote a random variable drawn from the latent variable distribution $\mathbb{Q}$. Suppose that the loss function $\Lc$ takes the form~\eqref{eq:sec_order_gen}. Let $(Z_1, Z_2, Z_3)$ denote a triplet of independent standard Gaussian random variables and let $\alpha^{\sharp} \in \mathbb{R}$ and $\beta^{\sharp} \in \mathbb{R}_{\geq 0}$ denote arbitrary scalars.  Let
\[
\Omega = \omega\big(\parcomp^{\sharp} Z_1 +\perpcomp^{\sharp} Z_2\,,\,f(Z_1; Q)+\noisestd Z_3 \big).
\]
Then define the scalars
\[
\alpha^{\gor} = \EE [ Z_1 \Omega], \qquad \mu^{\gor} = \EE[Z_2 \Omega], \qquad \text{ and } \qquad \nu^{\gor} = \sqrt{\frac{\EE[\Omega^2] - (\EE[Z_1 \Omega])^2 - (\EE[Z_2 \Omega])^2}{\kappa - 1}}.
\]
\end{definition}

With this definition in hand, note that in order to prove Theorem~\ref{thm:one_step_main_HO}(a) it suffices to show that for a parameter $c_K$ depending only on the pair $(K_1, K_2)$ in Assumptions~\ref{ass:omega} and~\ref{ass:omega-lb}, we have
\begin{align} \label{eq:equiv-thm1a}
\Pro \left\{ \max \left( \left| \frac{\inprod{\mathcal{T}_n(\btsharp)}{\proj^{\perp}_{\thetastar} \btsharp}}{ \| \proj^{\perp}_{\thetastar} \btsharp \|_2} - \perpone^{\gor} \right|, \left| \| \proj^{\perp}_{S_{\#}} \mathcal{T}_n(\btsharp) \|_2 - \perptwo^{\gor} \right| \right) > \epsilon \right\} \leq C \exp \left( -c_K n \epsilon^4 \right).
\end{align}
Indeed, setting $\epsilon = \epsilon_0 := \left( \frac{\log (C/\delta)}{c_K \cdot n} \right)^{1/4}$, we have
\begin{align*}
\Pro \left\{ \max \left( \left| \frac{\inprod{\mathcal{T}_n(\btsharp)}{\proj^{\perp}_{\thetastar} \btsharp}}{ \| \proj^{\perp}_{\thetastar} \btsharp \|_2} - \perpone^{\gor} \right|, \left| \| \proj^{\perp}_{S_{\#}} \mathcal{T}_n(\btsharp) \|_2 - \perptwo^{\gor} \right| \right) > \epsilon_0 \right\} \leq \delta.
\end{align*}
Note that $(\perpcomp^+)^2 = \frac{\inprod{\mathcal{T}_n(\btsharp)}{\proj^{\perp}_{\thetastar} \btsharp}^2}{ \| \proj^{\perp}_{\thetastar} \btsharp \|^2_2} + \| \proj^{\perp}_{S_{\#}} \mathcal{T}_n(\btsharp) \|_2^2$ and $(\perpcompgordon)^2 = (\perpone^{\gor})^2 + (\perptwo^\gor)^2$. Applying the triangle inequality and adjusting constant factors, we have $\Pro( |\beta^+ - \beta^\gor| \geq c \epsilon_0) \leq \delta$, as desired. Consequently, we dedicate our effort toward establishing inequality~\eqref{eq:equiv-thm1a} by proving growth properties for $\widebar{L}_n$.

\subsubsection{Implementing step 4: Growth properties of $\bar{L}_n$} 
The following lemma guarantees growth conditions on the function $\widebar{L}_n$ when the one-step loss function takes the form~\eqref{eq:sec_order_gen}.  Its proof is deferred to Appendix~\ref{sec:growth-HO}.

\begin{lemma}
	\label{lem:ao-analysis}
	Suppose that the loss function $\mathcal{L}$ can be written in the form
	\eqref{eq:sec_order_gen} and let the scalarized auxiliary loss $\widebar{L}_n$ be as in Definition~\ref{def:scalarized-ao}.  Define the constant 
	\[
	\bar{\mathsf{L}} =  \sqrt{\Bigl(1 - \frac{1}{\kappa}\Bigr)\Bigl(\EE[\Omega^2] - (\EE[Z_1 \Omega])^2 - (\EE[Z_2 \Omega])^2\Bigr)},
	\]
	and let the tuple $(\parcompgordon, \perpone^{\gor}, \perptwo^{\gor})$ be as in Definition~\ref{def:expanded-gordon-HO}.
	Suppose that Assumptions~\ref{ass:omega} and~\ref{ass:omega-lb} hold with parameters $K_1$ and~$K_2$, respectively.
	Then, there exist positive constants $C_{K_1},  C'_{K_1}, C_{K_2}$ and $c_{K_1,K_2}$
	each depending on a subset of $\{K_1, K_2\}$ 
	 and universal positive constants $c, c', C, C'$ such that for all $\kappa \geq C'$ and all $\epsilon \in (0, c')$, the following hold:
		\begin{enumerate}[label=(\alph*)]
		\item The minimizer
		\[
		(\alpha_n, \mu_n, \nu_n) = \argmin_{(\alpha, \mu, \nu) \in \Pc(\mathbb{B}_2(C_{K_1}))} \widebar{L}_n(\alpha, \mu, \nu) 
		\]
		is unique and satisfies both
		\[
		\max\bigl\{\lvert \alpha_n - \alpha^{\gor} \rvert, \lvert \mu_n - \mu^{\gor} \rvert, \lvert \nu_n - \nu^{\gor}\rvert\bigr\} \leq C'_{K_1} \epsilon,
		\]
		and 
		\[
		\biggl \lvert \widebar{L}_n(\alpha_n, \mu_n, \nu_n) - \bar{\mathsf{L}} \biggr \rvert \leq   C'_{K_1} \epsilon,
		\]
		with probability at least $1-C\exp \{ -c_{K_1,K_2} \cdot n \}- Ce^{-cn\eps^2}$.
		\item The scalarized auxiliary loss $\widebar{L}_n$ is 
		$C_{K_2}$-strongly convex on the domain $\mathbb{B}_2(C_{K_1})$ with probability at least $1 - C\exp\{-c_{K_1,K_2} \cdot n\}$.
	\end{enumerate}
\end{lemma}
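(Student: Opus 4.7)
The plan is to reduce the scalarized auxiliary loss to an explicit closed form and then analyze this function directly. First, I would compute the maximum over $\bv$ using the explicit form of the convex conjugate of $g(\bu, \bX\btsharp; \by) = \|\omega(\bX\btsharp,\by) - \bu\|_2/\sqrt{n}$. Standard calculation gives $g^\ast(\bv) = \langle \bomega_\sharp, \bv\rangle$ on the ball $\{\|\bv\|_2\leq 1/\sqrt{n}\}$ (and $+\infty$ elsewhere), where $\bomega_\sharp = \omega(\bX\btsharp,\by)$. Substituting into Definition~\ref{def:scalarized-ao} and optimizing along the direction of the linear coefficient of $\bv$ collapses the problem to
\[
\widebar{L}_n(\alpha,\mu,\nu) = \tfrac{1}{\sqrt n}\bigl(\|\alpha\bz_1+\mu\bz_2+\nu\bg-\bomega_\sharp\|_2 - \nu\|\proj_{S_\sharp}^\perp\bh\|_2\bigr)_+.
\]
From this representation, noting that $\bz_1,\bz_2,\bg,\bh$ and $\bomega_\sharp$ involve independent blocks of Gaussians along with the sub-Gaussian entries of $\bomega_\sharp$, the natural candidate for the limit is the deterministic function
\[
\widebar{L}(\alpha,\mu,\nu) = \bigl(\sqrt{\alpha^2+\mu^2+\nu^2+\EE[\Omega^2]-2\alpha\EE[Z_1\Omega]-2\mu\EE[Z_2\Omega]} - \nu/\sqrt\kappa\bigr)_+,
\]
whose unique unconstrained minimizer is exactly $(\alpha^\gor,\mu^\gor,\nu^\gor)$ from Definition~\ref{def:expanded-gordon-HO} and whose minimum value is precisely $\bar{\mathsf L}$ (a short Lagrange calculation). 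I would verify this by a first-order optimality computation and check that $(\alpha^\gor,\mu^\gor,\nu^\gor)\in\Pc(\mathbb B_2(C_{K_1}))$ for some $C_{K_1}$ depending only on $K_1$, via Assumption~\ref{ass:omega}.

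Next, I would establish uniform concentration of $\widebar{L}_n$ around $\widebar L$ on the compact domain $\Pc(\mathbb B_2(C_{K_1}))$. The inner quadratic form $\tfrac{1}{n}\|\alpha\bz_1+\mu\bz_2+\nu\bg-\bomega_\sharp\|_2^2$ is a sum of terms that concentrate at rate $\tilde{\order}(n^{-1/2})$: the six inner products $\tfrac{1}{n}\langle\bz_i,\bz_j\rangle, \tfrac{1}{n}\langle\bz_i,\bg\rangle$ concentrate to $\delta_{ij}$ by standard $\chi^2$/Gaussian concentration, while $\tfrac{1}{n}\langle\bz_1,\bomega_\sharp\rangle, \tfrac{1}{n}\langle\bz_2,\bomega_\sharp\rangle$, and $\tfrac{1}{n}\|\bomega_\sharp\|_2^2$ concentrate to $\EE[Z_1\Omega],\EE[Z_2\Omega],\EE[\Omega^2]$ respectively via Hoeffding/Bernstein bounds under the sub-Gaussianity hypothesis (Assumption~\ref{ass:omega}); the orthogonal-projection term $\tfrac{1}{\sqrt n}\|\proj_{S_\sharp}^\perp\bh\|_2$ concentrates around $\sqrt{(d-2)/n}\approx 1/\sqrt\kappa$ by $\chi^2_{d-2}$ concentration. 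A covering argument over the bounded domain $\mathbb B_2(C_{K_1})$, combined with the Lipschitz continuity of $(\alpha,\mu,\nu)\mapsto\widebar{L}_n(\alpha,\mu,\nu)$ (with Lipschitz constant bounded by $\tfrac{1}{\sqrt n}(\|\bz_1\|_2+\|\bz_2\|_2+\|\bg\|_2+\|\proj_{S_\sharp}^\perp\bh\|_2) = \order(1)$ w.h.p.), upgrades pointwise to uniform control at rate $\order(\epsilon)$ with failure probability $\order(\exp(-cn\epsilon^2))$.

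The main obstacle is the strong convexity assertion (b), because the outer square-root structure gives a Hessian
\[
\nabla^2 \widebar{L}_n = \tfrac{1}{C_n\sqrt n}\,[\bz_1|\bz_2|\bg]^{\top}\bigl(I - \ww^\top/\|\w\|_2^2\bigr)[\bz_1|\bz_2|\bg], \qquad \w = \alpha\bz_1+\mu\bz_2+\nu\bg-\bomega_\sharp,\ C_n = \|\w\|_2/\sqrt n,
\]
at smooth points, and the rank-one correction $\w\w^\top/\|\w\|_2^2$ could in principle cancel the positive Gram contribution. Two ingredients handle this: first, Assumption~\ref{ass:omega-lb} together with the concentration results above ensures $C_n\geq\tfrac12\sqrt{K_2}$ uniformly on the domain with high probability, so the prefactor is bounded; second, concentration of $\tfrac{1}{n}[\bz_1|\bz_2|\bg]^\top[\bz_1|\bz_2|\bg]\to I_3$ together with the computation $\tfrac{1}{n}[\bz_1|\bz_2|\bg]^\top\w\to (\alpha-\EE[Z_1\Omega],\mu-\EE[Z_2\Omega],\nu)$ shows the rank-one correction, after normalization by $\|\w\|_2^2/n$, has operator norm strictly less than one on the domain (using again Assumption~\ref{ass:omega-lb} to bound the denominator away from zero). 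This yields a uniform lower bound on $\lambda_{\min}(\nabla^2\widebar{L}_n)\geq C_{K_2}$. Finally, combining $C_{K_2}$-strong convexity with the $\order(\epsilon)$ concentration of the minimum value, standard stability of minimizers yields $\|(\alpha_n,\mu_n,\nu_n)-(\alpha^\gor,\mu^\gor,\nu^\gor)\|_\infty\lesssim\epsilon$ and $|\widebar{L}_n(\alpha_n,\mu_n,\nu_n)-\bar{\mathsf L}|\lesssim\epsilon$ on the stated event, completing both parts of the lemma.
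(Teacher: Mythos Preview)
Your overall strategy matches the paper's closely: derive the closed form $\widebar{L}_n(\alpha,\mu,\nu)=\tfrac{1}{\sqrt n}\|\alpha\bz_1+\mu\bz_2+\nu\bg-\bomega_\sharp\|_2-\nu\|\proj_{S_\sharp}^\perp\bh\|_2/\sqrt n$ (the paper removes the positive part explicitly via a separate lemma, which you only implicitly handle), control the same finite list of inner products, and compute the Hessian. Your treatment of part~(b) is essentially equivalent to the paper's, just phrased via concentration of $\tfrac1n\bA^\top\w$ rather than the paper's orthogonal decomposition $\bomega_\sharp=\bA\bxi_0+\bomega^\perp$.

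There is, however, a genuine rate gap in your last step for part~(a). From uniform $\order(\epsilon)$ concentration of $\widebar{L}_n$ around $\widebar L$ together with $C_{K_2}$-strong convexity, the ``standard stability of minimizers'' argument yields only
\[
\tfrac{C_{K_2}}{2}\|\bxi_n-\bxi^{\gor}\|_2^2 \;\le\; \widebar{L}_n(\bxi^{\gor})-\widebar{L}_n(\bxi_n)\;\le\; 2\epsilon,
\]
i.e.\ $\|\bxi_n-\bxi^{\gor}\|_2\lesssim\sqrt{\epsilon}$, not $\epsilon$. Rephrased in the probability scale, this is $\epsilon$-closeness of the minimizer with failure probability $e^{-cn\epsilon^4}$ rather than the $e^{-cn\epsilon^2}$ the lemma requires; the loss would propagate downstream and degrade the eventual $n^{-1/4}$ deviation in Theorem~\ref{thm:one_step_main_HO}(a). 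The fix is cheap given what you already have: the same inner-product bounds you list give $\|\nabla\widebar{L}_n(\bxi^{\gor})\|_2\le C_{K_1}\epsilon$ (since $\nabla\widebar L(\bxi^{\gor})=0$), and then strong convexity of $\widebar{L}_n$ yields $\|\bxi_n-\bxi^{\gor}\|_2\le C_{K_1}\epsilon/C_{K_2}$ directly. The paper avoids the issue differently: it introduces a dual variable $\tau$ so that for fixed $\tau$ the problem is quadratic in $\bxi$, writes the minimizer $\bxi_n(\tau)=(\bA^\top\bA)^{-1}\bA^\top\bomega_\sharp+\tau n(\bA^\top\bA)^{-1}\bv_n$ and the optimal $\tau_n$ explicitly, and then shows each piece concentrates at rate $\epsilon$. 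Either route closes the gap; as written, your final sentence does not.
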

With each of the individual steps of the recipe completed, we can now put everything together to prove equation~\eqref{eq:equiv-thm1a}.

\subsubsection{Combining the pieces} 
Let the tuple $(\parcompgordon, \perpone^{\gor}, \perptwo^{\gor})$ be as in Definition~\ref{def:expanded-gordon-HO}. For each nonnegative scalar $\epsilon$, define the deviation set
\begin{align}
	\label{eq:deviation-set}
	D_{\epsilon} := \{\bt \in \mathbb{R}^d: \max\Bigl(\lvert \langle \bt, \btstar \rangle - \alpha^{\gor} \rvert, \Bigl\lvert \frac{\langle \bt, \proj_{\btstar}^{\perp}\bt^{\sharp}\rangle}{\| \proj_{\btstar}^{\perp} \bt^{\sharp}\|_2} - \mu^{\gor} \Bigr\rvert, \lvert \| \proj_{S_{\sharp}}^{\perp} \bt \|_2 - \nu^{\gor} \rvert \Bigr) \geq \epsilon\},
\end{align}
and note that the deviation set $D_{\epsilon}$ is amenable with respect to the subspace $S_{\sharp} = \mathsf{span}(\btstar, \btsharp)$.
Note that it suffices to bound $\Pro\{\mathcal{T}_n (\btsharp) \in D_\epsilon \}$.
To this end, note that
\begin{align}\label{eq:R3-to-B2}
	\Pro\{\mathcal{T}_n(\btsharp) \in D_{\epsilon} \} = \Pro\Bigl\{\argmin_{\bt \in \mathbb{R}^d} \mathcal{L}(\bt) \in D_{\epsilon}\Bigr\} \overset{\1}{=} \Pro\Bigl\{\argmin_{\bt \in \mathbb{B}_2(C_{K_1})} \mathcal{L}(\bt) \in D_{\epsilon}\Bigr\} + 2e^{-cn},
\end{align}
where step $\1$ follows upon applying Lemma~\ref{lem:bound-operator-norm} in the appendix.  

First, note that the loss $\Lc$ satisfies Assumption~\ref{ass:loss} as we can take the functions
\[
h(\bt, \btsharp) = 0 \qquad \text{ and } \qquad g(\bu, \bX \btsharp; \by) = \frac{1}{\sqrt{n}} \| \omega(\bX\btsharp, \by) - \bu \|_2.
\]  
Each of the properties (a)-(d) is evident as the norm $\| \cdot \|_2$ is convex, Lipschitz continuous, and coercive.  Moreover, the Lipschitz constant $C_L = 1$.

Next, recall the constant $\bar{\mathsf{L}}$ as defined in Lemma~\ref{lem:ao-analysis} and subsequently invoke Proposition~\ref{prop:first-three-steps} to obtain, for  constants $r \geq 1$ and $\epsilon' > 0$ to be specified later, the inequality 
\begin{align}\label{ineq:T1-T2-decomp-thm1a-HO}
	\Pro\Bigl\{\argmin_{\bt \in \mathbb{B}_2(C_{K_1})} \mathcal{L}(\bt) \in D_{\epsilon} \Bigr\} &\leq 
	T_1 + T_2 + 20e^{-n/2},
\end{align}
where 
\begin{align*}
T_1 &= 2 \Pro\Bigl\{\min_{(\alpha, \mu, \nu) \in \Pc( D \cap \mathbb{B}_2(C_{K_1}))}\widebar{L}_n(\alpha, \mu, \nu) \leq \bar{\mathsf{L}} - \frac{3 C_L^2 C_1}{r} + 2\epsilon' \Bigr\},\\
T_2 &= 2 \Pro\Bigl\{\min_{(\alpha, \mu, \nu) \in \Pc(\mathbb{B}_2(C_{K_1}))} \widebar{L}_n(\alpha, \mu, \nu) > \bar{\mathsf{L}} + \epsilon' \Bigr\}.
\end{align*}
Now, let $0 < \epsilon_1 \leq \epsilon$ be a constant to be specified later and define the two events
\begin{align*}
	\Ec_1 &= \Bigl\{ \| (\parcomp_n, \perpone_n, \perptwo_n) - (\parcomp^{\gor}, \perpone^{\gor}, \perptwo^{\gor}) \|_2 \leq  \epsilon_1, \bigl \lvert \widebar{L}_n(\alpha_n, \mu_n, \nu_n) - \bar{\mathsf{L}} \bigr \rvert \leq \epsilon_1\Bigr\},\\
	\Ec_2 &= \biggl\{	\min_{\|(\alpha, \mu, \nu) - (\alpha_n, \mu_n, \nu_n)\|_2 \geq \epsilon - \epsilon_1} \widebar{L}_n(\alpha, \mu, \nu) \geq \widebar{L}_n(\alpha_n, \mu_n, \nu_n) + c_{K_2} (\epsilon - \epsilon_1)^2\biggr\}.
\end{align*}
Assume for the moment that $\epsilon \leq c'$ (a fact that will be true for the eventual setting of $\epsilon$) and subsequently invoke Lemma~\ref{lem:ao-analysis}(a) to obtain
\[
\Pro\{\Ec_1\} \geq 1 - C\exp\{-c_{K_1, K_2} \cdot n\} - Ce^{-\widetilde{c}_{K_1} n\epsilon_1^2},
\]
for some constant $\widetilde{c}_{K_1}$ depending only on $K_1$.
Also apply Lemma~\ref{lem:ao-analysis}(b) to obtain the inequality
\[
\Pro\{\Ec_2\} \geq 1 - C\exp\{-c_{K_1, K_2} \cdot n\}.
\]
For the remainder of the proof, we work on the event $\Ec_1 \cap \Ec_2$.  We have
\begin{align} \label{ineq:bound-T2-HO}
	\min_{(\alpha, \mu, \nu) \in \Pc(D_{\epsilon} \cap \mathbb{B}_2(C_{K_1}))} \bar{L}_n(\alpha, \mu, \nu) &\geq \min_{\| (\alpha, \mu, \nu) - (\alpha^{\gor}, \mu^{\gor}, \nu^{\gor})\|_2 \geq \epsilon} \bar{L}_n(\alpha, \mu, \nu) \nonumber\\
	&\overset{\1}{\geq} \min_{\|(\alpha, \mu, \nu) - (\alpha_n, \mu_n, \nu_n)\|_2 \geq \epsilon - \epsilon_1} \widebar{L}_n(\alpha, \mu, \nu) \nonumber\\
	&\geq \widebar{L}_n(\alpha_n, \mu_n, \nu_n) + c_{K_2} (\epsilon - \epsilon_1)^2 \geq \bar{\mathsf{L}} - \epsilon_1 + c_{K_2} (\epsilon - \epsilon_1)^2,
\end{align}
where step $\1$ follows since on $\Ec_1$, if \mbox{$\|(\alpha, \mu, \nu) - (\alpha^{\gor}, \mu^{\gor}, \nu^{\gor})\|_2 \geq \epsilon$} then \mbox{$\|(\alpha, \mu, \nu) - (\alpha_n, \mu_n, \nu_n)\|_2 \geq \epsilon - \epsilon_1$}.
Now, set $\epsilon_1 = \frac{c_{K_2}}{4}\epsilon^2$, which is a valid choice provided $\epsilon \leq 4/c_{K_2}$.
For each such value of $\epsilon$, continuing from the inequality~\eqref{ineq:bound-T2-HO}, we obtain
\begin{align}\label{ineq:bound-T2-HO-2}
		\min_{(\alpha, \mu, \nu) \in \Pc(D_{\epsilon} \cap \mathbb{B}_2(C_{K_1}))} \bar{L}_n(\alpha, \mu, \nu) \geq \bar{\mathsf{L}} + c_{K_2}' \epsilon^2,
\end{align}
where $c_{K_2}' \leq c_{K_2}/2$ is a small enough constant depending only on $K_2$.  Continuing, set \\$r = 12 \cdot C_1 \cdot C_L^2 / (c_{K_2}' \cdot \epsilon^2)$ and $\epsilon' = c_{K_2}' \cdot \epsilon^2/4$.  Thus, in view of the inequality~\eqref{ineq:bound-T2-HO-2}, we obtain the bounds
\begin{align*}
	\max\{T_1, T_2\} \leq \Pro\{\Ec_1^c \cup \Ec_2^c \} \leq C\exp\{-c_{K_1, K_2} \cdot n\} + Ce^{-\widetilde{c}_{K_1,K_2}n\epsilon^4},
\end{align*}
where $\widetilde{c}_{K_1,K_2}$ is once again a constant depending solely on $K_1$ and $K_2$.
Substituting the bound in the display above into the inequality~\eqref{ineq:T1-T2-decomp-thm1a-HO}, we obtain
\[
	\Pro\Bigl\{\argmin_{\bt \in \mathbb{B}_2(C_{K_1})} \mathcal{L}(\bt) \in D_{\epsilon} \Bigr\} \leq C\exp\{-c_{K_1, K_2} \cdot n\} + Ce^{-\widetilde{c}_{K_1, K_2}n\epsilon^4} + 20e^{-n/2}.
\]
Combining the display above with the inequality~\eqref{eq:R3-to-B2}, we obtain
\begin{align} \label{eq:final-pb}
\Pro\{\mathcal{T}_n(\btsharp) \in D_{\epsilon}\} \leq C\exp\{-c_{K_1, K_2} \cdot n\} + Ce^{-\widetilde{c}_{K_1, K_2}n\epsilon^4} + 22e^{-n/2}.
\end{align}
To complete the proof, set $\epsilon = \order \left( \widetilde{c}^{-1}_{K_1, K_2} \cdot \left( \frac{\log (1/\delta)}{n} \right)^{1/4} \right)$, which we can ensure is a valid choice owing to the condition $n \geq C_{K_1,K_2} \log (1/\delta)$. Finally, we use this lower bound on $n$ to also bound the remaining two terms of the RHS in equation~\eqref{eq:final-pb} by $\order(\delta)$, thereby obtaining the claimed result.
\qed

\subsection{Proof of Theorem~\ref{thm:one_step_main_FO}(a)}

We begin by defining the first-order analog of the expanded Gordon state evolution update.

\begin{definition}[Expanded Gordon state evolution update: First-order methods]\label{def:expanded-gordon-FO}
Recall the model~\eqref{eq:model}, and let $Q$ denote a random variable drawn from the latent variable distribution $\mathbb{Q}$. Suppose the loss function $\Lc$ takes the form~\eqref{eq:GD_gen}. Let $(Z_1, Z_2, Z_3)$ denote a triplet of independent standard Gaussian random variables and let $\alpha^{\sharp} \in \mathbb{R}$ and $\beta^{\sharp} \in \mathbb{R}_{\geq 0}$ denote arbitrary scalars.  Let
\[
\Omega = \omega\big(\parcomp^{\sharp} Z_1 +\perpcomp^{\sharp} Z_2\,,\,f(Z_1; Q)+\noisestd Z_3 \big).
\]
Then define the scalars
\[
\alpha^{\gor} = \alpha^{\sharp} - 2\eta \cdot \EE[Z_1 \Omega], \qquad \perpone^{\gor} = \perpcomp^{\sharp} - 2\eta \cdot \EE[Z_2 \Omega], \qquad \text{ and } \qquad \perptwo^{\gor} = \frac{2 \eta}{\sqrt{\kappa}} \cdot \sqrt{\EE[\Omega^2]}.
\]
\end{definition}
As before, it suffices to show that for a parameter $c_K$ depending only on $K_1$ from Assumption~\ref{ass:omega}, we have
\begin{align} \label{eq:equiv-thm2a}
\Pro \left\{ \max \left( \left| \frac{\inprod{\mathcal{T}_n(\btsharp)}{\proj^{\perp}_{\thetastar} \btsharp}}{ \| \proj^{\perp}_{\thetastar} \btsharp \|_2} - \perpone^{\gor} \right|, \left| \| \proj^{\perp}_{S_{\#}} \mathcal{T}_n(\btsharp) \|_2 - \perptwo^{\gor} \right| \right) > \epsilon \right\} \leq C \exp \left( -c_K n \epsilon^4 \right).
\end{align}

\subsubsection{Implementing step 4: Growth properties of $\bar{L}_n$}

The following lemma guarantees growth conditions on the function $\widebar{L}_n$ when the one-step loss function takes the form~\eqref{eq:GD_gen}.  We provide its proof in Subsection~\ref{sec:growth-FO}.

\begin{lemma}
	\label{lem:ao-analysis-FO}
	Suppose that the loss function $\mathcal{L}$ takes the form
	\eqref{eq:GD_gen} and let the scalarized auxiliary loss $\widebar{L}_n$ be as in Definition~\ref{def:scalarized-ao}.  Define the constant 
	\[
	\bar{\mathsf{L}} = -\frac{1}{2}((\alpha^{\gor})^2 + (\mu^{\gor})^2 + (\nu^{\gor})^2),
	\]
	and let the tuple $(\parcompgordon, \perpone^{\gor}, \perptwo^{\gor})$ be as in Definition~\ref{def:expanded-gordon-FO}.
	Suppose that Assumption~\ref{ass:omega} holds with parameter $K_1$.
	Then, there exists $c_{K_1}, C_{K_1} > 0$ depending only on $K_1$ and universal positive constants $c, C, c', C_1, C_2$  such that for all $\kappa \geq C_1$  and all $\epsilon \in (0, c')$, the following hold:
		\begin{enumerate}[label=(\alph*)]
		\item The minimizer
		\[
		(\alpha_n, \mu_n, \nu_n) = \argmin_{(\alpha, \mu, \nu) \in \Pc(\mathbb{B}_2(C_2))} \widebar{L}_n(\alpha, \mu, \nu) 
		\]
		is unique and satisfies both
		\[
		\max\bigl\{\lvert \alpha_n - \alpha^{\gor} \rvert, \lvert \mu_n - \mu^{\gor} \rvert, \lvert \nu_n - \nu^{\gor}\rvert\bigr\} \leq C_{K_1}  \epsilon,
		\]
		and 
		\[
		\biggl \lvert \widebar{L}_n(\alpha_n, \mu_n, \nu_n) - \bar{\mathsf{L}} \biggr \rvert \leq C_{K_1} \epsilon,
		\]
		with probability at least 
		$1 - C e^{-cn\eps^2}$.
		\item The scalarized auxiliary loss $\widebar{L}_n$ is $1$-strongly convex on the domain $\mathbb{B}_2(C_2)$ with probability at least $1 - Ce^{-cn}$.
	\end{enumerate}
\end{lemma}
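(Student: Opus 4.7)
The plan is as follows. First, I would leverage the fact that in the first-order setting $g$ is \emph{linear} in $\bu$ to obtain an explicit closed form for $\widebar{L}_n$. Writing $\bomega := \omega(\bX\btsharp,\by)\in\mathbb{R}^n$, one has $g(\bu,\bX\btsharp;\by)=\tfrac{2\eta}{n}\langle\bu,\bomega\rangle$, whose convex conjugate is the indicator of the singleton $\{\tfrac{2\eta}{n}\bomega\}$. The inner maximization in Definition~\ref{def:scalarized-ao} therefore collapses to $\bv=\tfrac{2\eta}{n}\bomega$, and finiteness forces $\nu\geq 0$. Combining this with $\hscal(\alpha,\mu,\nu,\btsharp)=\tfrac{1}{2}(\alpha^2+\mu^2+\nu^2)-\alpha\alpha^\sharp-\mu\beta^\sharp$ (via Pythagoras and the expansion of $\langle\bt,\btsharp\rangle$), I obtain on $\{\nu\geq 0\}$ the quadratic-plus-linear expression
\begin{align*}
\widebar{L}_n(\alpha,\mu,\nu)=\tfrac{1}{2}(\alpha^2+\mu^2+\nu^2)-\alpha\alpha^\sharp-\mu\beta^\sharp+\tfrac{2\eta}{n}\Big(\alpha\langle\bz_1,\bomega\rangle+\mu\langle\bz_2,\bomega\rangle+\nu\langle\bg,\bomega\rangle-\nu\|\proj^\perp_{S_\sharp}\bh\|_2\|\bomega\|_2\Big).
\end{align*}
Part (b) is then immediate: this function has Hessian $\bI_3$, so it is $1$-strongly convex on any convex subset of the half-space, deterministically. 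Taking $C_2$ large enough so that $\Pc(\mathbb{B}_2(C_2))$ (which is the intersection of a ball and the half-space $\{\nu\geq 0\}$) contains the Gordon state—feasible under the a priori bound $\alpha^\sharp\vee\beta^\sharp\leq 3/2$ imported from Theorem~\ref{thm:one_step_main_FO}—suffices.

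For part (a), strong convexity yields uniqueness, and the first-order stationarity conditions produce the closed-form random minimizer
\begin{align*}
\alpha_n=\alpha^\sharp-\tfrac{2\eta}{n}\langle\bz_1,\bomega\rangle,\qquad \mu_n=\beta^\sharp-\tfrac{2\eta}{n}\langle\bz_2,\bomega\rangle,\qquad \nu_n=\Big(\tfrac{2\eta}{n}\|\proj^\perp_{S_\sharp}\bh\|_2\|\bomega\|_2-\tfrac{2\eta}{n}\langle\bg,\bomega\rangle\Big)_{+}.
\end{align*}
Comparing with Definition~\ref{def:expanded-gordon-FO} reduces to four standard concentration claims: (i) Bernstein's inequality applied to the sub-exponential products $Z_j\Omega$ (the product of a standard Gaussian and a $K_1$-sub-Gaussian is sub-exponential with Orlicz norm $\lesssim K_1$) gives $|\tfrac{1}{n}\langle\bz_j,\bomega\rangle-\EE[Z_j\Omega]|\lesssim K_1\,\epsilon$ with probability at least $1-2e^{-cn\epsilon^2}$ for $j\in\{1,2\}$; (ii) the same argument applied to $\Omega^2$ yields $|\tfrac{1}{n}\|\bomega\|_2^2-\EE[\Omega^2]|\lesssim K_1^2\,\epsilon$; (iii) $\bh$ is independent of the data, so $\chi^2$-concentration produces $\bigl|\tfrac{1}{\sqrt{d}}\|\proj^\perp_{S_\sharp}\bh\|_2-1\bigr|\lesssim\epsilon$; (iv) conditionally on $\bomega$, the term $\tfrac{1}{n}\langle\bg,\bomega\rangle$ is centered Gaussian with standard deviation $\tfrac{1}{n}\|\bomega\|_2\lesssim K_1/\sqrt{n}$, so it is $\lesssim K_1\epsilon/\sqrt{\kappa}$ with the same exponential probability. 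A union bound, together with $\nu^\gor=\tfrac{2\eta}{\sqrt{\kappa}}\sqrt{\EE[\Omega^2]}\geq 0$ and the $1$-Lipschitzness of $(\cdot)_+$ (which ensures $|\nu_n-\nu^\gor|\leq|\text{unconstrained}-\nu^\gor|$), delivers $\max(|\alpha_n-\alpha^\gor|,|\mu_n-\mu^\gor|,|\nu_n-\nu^\gor|)\leq C_{K_1}\epsilon$ with probability at least $1-Ce^{-cn\epsilon^2}$.

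To conclude, I would substitute the stationarity conditions back into $\widebar{L}_n$: each linear-in-state coefficient equals its corresponding state variable with a flipped sign (e.g.\ the $\alpha$-coefficient is $-\alpha^\sharp+\tfrac{2\eta}{n}\langle\bz_1,\bomega\rangle=-\alpha_n$), so the cross terms cancel half of the quadratic, leaving the clean identity $\widebar{L}_n(\alpha_n,\mu_n,\nu_n)=-\tfrac{1}{2}(\alpha_n^2+\mu_n^2+\nu_n^2)$. Comparing to $\bar{\mathsf{L}}=-\tfrac{1}{2}\bigl((\alpha^\gor)^2+(\mu^\gor)^2+(\nu^\gor)^2\bigr)$ and invoking the state concentration just proved then yields $|\widebar{L}_n(\alpha_n,\mu_n,\nu_n)-\bar{\mathsf{L}}|\leq C_{K_1}\epsilon$. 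The hard part, modest as it is, will be controlling $\nu_n$: three independent sources of randomness enter multiplicatively, and one must verify that the $\nu\geq 0$ constraint is not binding (which follows from $\nu^\gor\geq 0$ together with the contractivity of $(\cdot)_+$). Otherwise the proof is considerably simpler than in the higher-order case (Lemma~\ref{lem:ao-analysis}), because the linearity of $g$ collapses the inner maximization to a single point and bypasses any growth analysis of the dual.
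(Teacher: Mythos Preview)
Your proposal is correct and follows essentially the same route as the paper: derive the explicit quadratic form of $\widebar{L}_n$ by collapsing the conjugate, read off $1$-strong convexity from the Hessian $\bI_3$, solve the first-order conditions in closed form, apply Bernstein-type concentration to each coordinate, and substitute back to get $\widebar{L}_n(\alpha_n,\mu_n,\nu_n)=-\tfrac{1}{2}(\alpha_n^2+\mu_n^2+\nu_n^2)$. If anything, your treatment of the constraint $\nu\geq 0$ via the $1$-Lipschitzness of $(\cdot)_+$ is slightly more careful than the paper's, which simply writes the unconstrained stationarity expression for $\nu_n$.
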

Note that the strong convexity constant here is absolute (equal to $1$) instead of dependent on Assumption~\ref{ass:omega-lb} like in higher-order methods. We can now put everything together exactly like before.

\subsubsection{Combining the pieces}
The calculations here are very similar to before, so we only sketch the differences. First, consider the first order loss as in equation~\eqref{eq:GD_gen}
\[
\Lc(\bt; \btsharp, \bX, \by) = \frac{1}{2} \| \bt \|_2^2 - \langle \bt, \btsharp \rangle - \frac{2\eta}{n} \sum_{i=1}^{n} \omega(\langle \bx_i, \btsharp \rangle, y_i) \cdot \langle \bx_i, \bt \rangle,
\]
which corresponds to setting
\[
h(\bt, \btsharp) = \frac{1}{2} \| \bt\|_2^2 - \langle \bt, \btsharp \rangle, \qquad \hscal(\parcomp, \perpone, \perptwo) = \frac{1}{2}(\parcomp^2 + \perpone^2 + \perptwo^2) - (\parcomp \parcomp_{\sharp} + \perpone \perpcomp_{\sharp}),
\]
and 
\[
g(\bu, \bX \btsharp; \by) = -\frac{2\eta}{n} \langle \bu, \omega(\bX \btsharp, \by) \rangle.
\]
Note that 
\[
g^*(\bv, \bX\btsharp; \by) = \begin{cases} 0 & \text{ if } \bv = \frac{2\eta}{n}\cdot \omega(\bX\btsharp, \by), \\
	+\infty & \text{ otherwise}.\end{cases}
\]
Consequently, we obtain
\begin{align} \label{eq:Lbar-first-order}
	\widebar{L}_n(\parcomp, \perpone, \perptwo) = \;&\frac{\parcomp^2 + \perpone^2 + \perptwo^2}{2} - (\parcomp \parcomp_{\sharp} + \perpone \perpcomp_{\sharp}) - \frac{2\eta}{n} \cdot \langle \omega(\bX\btsharp, \by), \perptwo \bg + \parcomp \bz_1 + \perpone \bz_2 \rangle \nonumber\\
	&- \frac{2 \eta \perptwo}{n} \cdot \| \proj_{S_{\sharp}}^{\perp} \bh\|_2 \cdot \| \omega(\bX\btsharp, \by) \|_2.
\end{align}
Note that $g$ is a linear function of $\bu$, and that an application of Hoeffding's inequality yields $\| \omega(\bX \btsharp, \by) \|_2 \leq C_{K_1}$ with probability at least $1 - e^{-cn}$.  Thus, as is evident from this inequality and the displays above, $\Lc$ satisfies Assumption~\ref{ass:loss}, with Lipschitz constant $C_L = C_{K_1}$.

Now, with the tuple $(\parcompgordon, \perpone^{\gor}, \perptwo^{\gor})$ as in Definition~\ref{def:expanded-gordon-FO}, define the events
\begin{align*}
\mathcal{A}_1 &= \Bigl\{\max\bigl\{\lvert \alpha_n - \alpha^{\gor} \rvert, \lvert \mu_n - \mu^{\gor} \rvert, \lvert \nu_n - \nu^{\gor}\rvert\bigr\} \leq C_{K_1} \epsilon_1,  \bigl \lvert \widebar{L}_n(\alpha_n, \mu_n, \nu_n) - \bar{\mathsf{L}} \bigr \rvert \leq  \epsilon_1\Bigr\} \text{ and } \\
	\mathcal{A}_2 &= \Bigl\{	\min_{\|(\alpha, \mu, \nu) - (\alpha_n, \mu_n, \nu_n)\|_2 \geq \epsilon - \epsilon_1} \widebar{L}_n(\alpha, \mu, \nu) \geq \widebar{L}_n(\alpha_n, \mu_n, \nu_n) + (\epsilon - \epsilon_1)^2\Bigr\}.
\end{align*}
Carrying out the proof exactly as for higher-order methods but now with $\epsilon_1 = c_{K_1} \epsilon^2$ leads to the desired result.
\qed

\section{Proof of general results, part (b): Tighter bounds on parallel component}
\label{sec:random-init-signal}

The main result of this section is the following proposition, which---in words---shows that the one-dimensional projection of the empirical operator $\mathcal{T}_n$ onto the ground truth $\btstar$ concentrates around $\parcompgordon$ for both higher-order and first order methods at rate $\ordertil(n^{-1/2})$. 
\begin{proposition}
	\label{prop:concentration-signal}
	Consider the one-step empirical updates $\mathcal{T}_n(\bt)$ taking either of the forms~\eqref{eq:sec_order_gen} or~\eqref{eq:GD_gen} and the associated state evolution update $\parcomp^{\gor}$.  There are universal positive constants $C$ and $C'$ such that if $\kappa \geq C$ the following hold.
	\begin{itemize}
		\item [(a)] Suppose that the empirical update $\mathcal{T}_n(\bt)$ takes the first-order form~\eqref{eq:GD_gen} and consider the associated state evolution update $\parcomp^{\gor}$ as in Definition~\ref{def:starnext_main_FO}.  Then,
		\[
		\bigl \lvert \langle\btstar, \mathcal{T}_n(\bt) \rangle - \alpha^{\gor} \bigr\rvert  \leq C K_1  \cdot  \biggl(\frac{\log{(1/\delta)}}{n}\biggr)^{1/2},
		\]
		with probability at least $1 - \delta$.
		\item[(b)] Suppose that the empirical update $\mathcal{T}_n(\bt)$ takes the second-order form~\eqref{eq:sec_order_gen} and consider the associated state evolution update $\parcomp^{\gor}$ as in Definition~\ref{def:starnext_main_HO}.  Then,
		\[
		\bigl \lvert \langle\btstar, \mathcal{T}_n(\bt) \rangle - \alpha^{\gor} \bigr\rvert  \leq C K_1 \cdot  \biggl(\frac{\log^7{(1/\delta)}}{n}\biggr)^{1/2},
		\]
		with probability at least $1 - \delta$.
	\end{itemize}
\end{proposition}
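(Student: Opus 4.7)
The plan is to handle the two parts separately since the first-order update is a single linear pass over the samples whereas the higher-order update involves a matrix inverse that couples all samples. Part (a) will follow from a direct Bernstein-type bound, and part (b) will require a leave-one-out decoupling. Throughout, the key quantity $\omega_i := \omega(\langle \bx_i, \bt\rangle, y_i)$ is a sub-Gaussian random variable with Orlicz norm controlled by $K_1$ thanks to Assumption~\ref{ass:omega}.

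\paragraph{Part (a): First-order.} For the first-order update~\eqref{eq:GD_gen}, projecting onto $\btstar$ immediately yields the scalar identity
\begin{align*}
\langle \btstar, \mathcal{T}_n(\bt)\rangle - \alpha^{\gor} = -\frac{2\eta}{n}\sum_{i=1}^{n}\Bigl(\omega_i\,\langle \bx_i, \btstar\rangle - \EE[Z_1 \Omega]\Bigr),
\end{align*}
using Definition~\ref{def:starnext_main_FO} and the fact that $\langle \btstar, \bt\rangle = \alpha^{\sharp}$. Each summand is the product of two sub-Gaussians (noting that $\langle \bx_i, \btstar\rangle$ is standard normal because $\|\btstar\|_2 = 1$), hence sub-exponential with Orlicz-$\psi_1$ norm bounded by a constant multiple of $K_1$. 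Bernstein's inequality for sums of independent centered sub-exponentials then delivers the desired $C K_1\sqrt{\log(1/\delta)/n}$ rate. The stepsize $\eta \leq 1/2$ is absorbed into the universal constant. This is routine.

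\paragraph{Part (b): Higher-order via leave-one-out.} The difficulty for the higher-order update~\eqref{eq:second-order-update-sum} is that $\mathcal{T}_n(\bt) = (\bX^{\top}\bX)^{-1}\bX^{\top}\bm{\omega}$ (where $\bm{\omega} \in \real^n$ collects the $\omega_i$'s), so every term depends on every sample through the inverse. I will isolate the dependence on each $\bx_i$ by writing $\bM = \bX^{\top}\bX$ and $\bM_{-i} = \bM - \bx_i\bx_i^{\top}$, then using the Sherman--Morrison identity to obtain
\begin{align*}
\langle \btstar, \mathcal{T}_n(\bt)\rangle \;=\; \sum_{i=1}^n \frac{\omega_i\, \btstar^{\top}\bM_{-i}^{-1}\bx_i}{1 + \bx_i^{\top}\bM_{-i}^{-1}\bx_i}.
\end{align*}
The crucial feature is that $\bM_{-i}$ is independent of $\bx_i$. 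Applying Hanson--Wright conditionally on $\bM_{-i}$ shows that the denominator concentrates around $\mathrm{tr}(\bM_{-i}^{-1})$, while classical Wishart estimates give $\mathrm{tr}(\bM_{-i}^{-1}) = (d \pm \ordertil(\sqrt{d}))/(n-d)$ and a high-probability bound $\|\bM_{-i}^{-1}\|_{\mathrm{op}} \leq C/n$. Similarly, conditional on $\bM_{-i}$, the numerator has the same distribution (up to scale) as $\frac{1}{n}\langle \btstar, \bx_i\rangle$ plus a perturbation controlled by $\|\bM_{-i}^{-1} - \frac{1}{n-d}\bI\|_{\mathrm{op}}$. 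Combining these uniformly in $i$ via a union bound reduces the expression to
\begin{align*}
\langle \btstar, \mathcal{T}_n(\bt)\rangle \;\approx\; \frac{1}{n}\sum_{i=1}^{n}\omega_i\,\langle \bx_i, \btstar\rangle + \text{(small error)},
\end{align*}
at which point a Bernstein bound (as in part (a)) shows this concentrates around $\EE[Z_1 \Omega] = \alpha^{\gor}$.

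\paragraph{Main obstacle.} The principal difficulty is bookkeeping of logarithmic factors. Each intermediate approximation---Hanson--Wright for $\bx_i^{\top}\bM_{-i}^{-1}\bx_i$, the Gaussian tail bound for $\btstar^{\top}\bM_{-i}^{-1}\bx_i$, the operator-norm control on $\bM_{-i}^{-1}$, and the final sub-exponential Bernstein step---introduces its own logarithmic penalty that is then amplified by a union bound over $i \in [n]$. To keep each substitution valid we also need $\omega_i$ to be truncated at a level growing polylogarithmically in $1/\delta$, requiring a separate truncation argument that uses Assumption~\ref{ass:omega}; the truncation level enters multiplicatively through every subsequent bound. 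Carefully propagating all of these factors yields the claimed $K_1\sqrt{\log^7(1/\delta)/n}$ rate, with the high power absorbing all union-bound and conditional-concentration penalties. I expect the cleanest presentation to establish a sharpened analog of the leave-one-out lemma of \citet{zhang2020phase} as an intermediate result, stated for general sub-Gaussian weight functions rather than the specific sign-type weights for phase retrieval, and then apply it once to conclude.
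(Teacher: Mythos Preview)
Your plan for part~(a) is correct and coincides with the paper's argument.

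For part~(b), you correctly write down the Sherman--Morrison decomposition, and the paper does use exactly this identity---but only to control the \emph{bias} $|\EE\langle\btstar,\mathcal{T}_n(\bt)\rangle - \alpha^{\gor}|\leq C K_1/\sqrt{n}$ (their Lemma~\ref{lem:expectation-leave-one-out}). Your proposed route for the \emph{concentration} step has a real gap. After you replace each denominator by its concentration point and each numerator by a multiple of $\langle\bx_i,\btstar\rangle$, the residual in the $i$-th summand is, with high probability, of order $K_1\cdot\mathrm{polylog}/n$; a union bound over $i$ followed by the triangle inequality then gives a total residual of order $K_1\cdot\mathrm{polylog}$, too large by a factor $\sqrt{n}$. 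You cannot repair this by applying Bernstein to the leading sum $\tfrac{1}{n}\sum_i\omega_i\langle\bx_i,\btstar\rangle$ alone, because the residual terms remain correlated through the shared leave-one-out Gram matrices $\bM_{-i}$, and no independence structure is available to beat the triangle inequality on them.

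The paper circumvents this by separating bias from fluctuation and controlling the fluctuation $Z-\EE Z$ (where $Z=\langle\btstar,\mathcal{T}_n(\bt)\rangle$) directly via the moment inequality of \citet{boucheron2005moment}: for an independent resampling $Z_j'$ of the $j$-th observation, one has $\|Z-\EE Z\|_q \lesssim \sqrt{q}\,\bigl(\sum_j \|\EE[(Z-Z_j')^2\mid\{\bx_i,y_i\}_i]\|_{q/2}\bigr)^{1/2}$. Sherman--Morrison is then used only to compute and bound each single difference $Z-Z_j'$ (Lemma~\ref{lem:bounding-centered-moments}); the point is that this functional automatically accounts for the dependence across $j$. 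Tracking the polynomial growth in $q$ gives $\|Z-\EE Z\|_q \leq C K_1 q^{7/2}/\sqrt{n}$, and the $\log^7$ in the statement arises from converting this $q^{7/2}$ moment growth into a tail bound (Lemma~\ref{lem:truncation-tail-bound} with $\zeta=7/2$), not from the accumulation of union-bound and truncation penalties that you anticipate.
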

Clearly, the proposition directly implies part (b) of both Theorems~\ref{thm:one_step_main_HO} and~\ref{thm:one_step_main_FO}. Before providing the proof, we pause to make a few comments.  First, we note that
in the previous section, we showed concentration of $\langle\btstar, \mathcal{T}_n(\bt) \rangle$ with fluctuations of order $n^{-1/4}$, but additionally provided control over the random variable $\| \proj^{\perp}_{\btstar} \bt \|_2$, which has $d$ degrees of freedom.  On the other hand, by focusing directly on the quantity $\langle\btstar, \mathcal{T}_n(\bt) \rangle$, which has one degree of freedom, we are able to tighten the fluctuations to the order $n^{-1/2}$. 
Second, we comment briefly on the proof, especially for higher-order methods.  Our proof improves upon the strategy utilized in~\cite{zhang2020phase} by (i) proving a concentration inequality with exponential tails and (ii) extending the methodology beyond alternating minimization for phase retrieval to more general updates of the form~\eqref{eq:second-order-update-sum}.  This extension relies on a delicate combination of the leave-one-out technique of~\cite{zhang2020phase} with the moment inequalities in~\cite{boucheron2005moment}.  We turn now to the proof of the main proposition, proving the result for first-order and higher-order methods separately.

\begin{proof}[Proof of Proposition~\ref{prop:concentration-signal}(a): first-order methods]
First, recall the generic first-order update~\eqref{eq:GD_gen} and specify the operator
\[
\mathcal{T}_n(\bt) = \bt - \eta \cdot \frac{2}{n} \sum_{i=1}^{n} \omega(\langle \bx_i, \bt \rangle, y_i) \cdot \bx_i.
\]
Now, evaluating the quantity $\EE \{\langle \btstar, \mathcal{T}_n(\bt) \rangle \}$ and recalling Definition~\ref{def:starnext_main_FO}, we obtain the characterization
\begin{align}
	\label{eq:first-order-random-init-char}
\EE \{\langle \btstar, \mathcal{T}_n(\bt) \rangle \} = \langle \btstar, \bt \rangle - \eta \cdot \frac{2}{n} \sum_{i=1}^{n}\omega(\langle \bx_i, \bt \rangle, y_i) \cdot \langle \btstar, \bx_i \rangle  = \alpha(\bt) - 2\eta\cdot \EE\{Z_1 \Omega\} = \alpha^{\gor},
\end{align}
where we have drawn $\Omega$ according to equation~\eqref{eq:omegat_def_main} using the random variables\\ $Z_1 = \langle \bx_1, \btstar \rangle, Z_2 = \langle \bx_1, \proj_{\btstar}^{\perp} \bt \rangle, $ and 
$Z_3 \sim \NORMAL(0,\sigma^2)$.
Now, note that $\langle \btstar, \bx_i \rangle \sim \NORMAL(0, 1)$ and that by Assumption~\ref{ass:omega}, $\| \omega(\langle \bx_i, \bt \rangle, y_i) \|_{\psi_2} \leq K_1$.  Thus, we apply~\citet[Lemma 2.7.7]{vershynin2018high} to obtain the bound
\[
\| \omega(\langle \bx_i, \bt \rangle, y_i) \cdot \langle \btstar, \bx_i \rangle \|_{\psi_1} \leq K_1.
\]
Subsequently applying Bernstein's inequality in conjunction with the characterization of the expectation~\eqref{eq:first-order-random-init-char}, we obtain the inequality
\[
\Pro\Bigl\{ \bigl \lvert \langle \btstar, \mathcal{T}_n(\bt)\rangle - \alpha^{\gor} \bigr\rvert \geq t\Bigr\} \leq 2\exp\Bigl\{-c \min\Bigl(\frac{nt^2}{K_1^2}, \frac{nt}{K_1}\Bigr)\Bigr\}.
\]
The conclusion follows immediately from the above inequality. 
\end{proof}

\begin{proof}[Proof of Proposition~\ref{prop:concentration-signal}(b): higher-order methods] 
Recall the weight functions $\omega: \mathbb{R}^2 \rightarrow \mathbb{R}$ as in the equations~\eqref{eq:sec_order_gen} and~\eqref{eq:second-order-update-sum} and consider its separable extension to vector valued functions:
\begin{align*}
	\omega: \ \mathbb{R}^n \times \mathbb{R}^n &\rightarrow \mathbb{R}^n\\
				 (\bx, \by) &\mapsto (\omega(x_1, y_1), \omega(x_2, y_2), \dots \omega(x_n, y_n)).
\end{align*}
We can then re-write the updates~\eqref{eq:second-order-update-sum} using matrix notation as
\begin{align}
	\label{eq:empirics-AM}
	\mathcal{T}_n(\bt) = (\bX^{\top} \bX)^{-1} \bX^{\top} \omega(\bX \bt, \by),
\end{align}
where specifying $\omega(x, y) = \mathsf{sgn}(x) \cdot y$ recovers the alternating minimization update for phase retrieval and specifying $\omega(x, y) = \mathsf{sgn}(yx) \cdot y$ recovers the alternating minimization update for mixtures of linear regressions.  We also specify, for convenience, the following population operator
\begin{align}
	\label{eq:population-AM}
	\mathcal{T}(\bt) = \frac{1}{n} \EE \bigl\{\bX^{\top} \omega(\bX\bt, \by)\bigr\}.
\end{align}
Recalling the definition of $\parcomp^{\gor}$ as in Definition~\ref{def:starnext_main_HO}, we note that
\[
\langle \btstar, \mathcal{T}(\bt) \rangle = \frac{1}{n} \EE\{\langle  \omega(\bX\bt, \by), \bX \btstar \rangle\} = \EE\{ Z_1 \Omega \} = \alpha^{\gor},
\]
where we have let $Z_1$ denote the first component of the vector $\bX\btstar$ and $Z_2$ denote the first component of the vector $\bX\proj_{\btstar}^{\perp} \bt/\| \proj_{\btstar}^{\perp} \bt \|_2$.  We now state two lemmas whose proofs are postponed to Subsections~\ref{sec:proof-concentration-loo} and~\ref{sec:proof-expectation-leave-one-out}, respectively. 
\begin{lemma}
	\label{lem:concentration-leave-one-out}
	Under the setting of Proposition~\ref{prop:concentration-signal}, there exist universal, positive constants $c$ and $C$ such that for all $t > 0 $,
	\begin{align}
		\label{ineq:probability-leave-one-out}
		\Pr\bigl\{ \bigl \lvert \inprod{\btstar}{\mathcal{T}_n(\bt)} - \inprod{\btstar}{\EE \mathcal{T}_n(\bt)} \bigr \rvert \geq t \bigr\}& \leq C \exp\Bigl\{-c\Bigl(\frac{t \sqrt{n}}{K_1}\Bigr)^{2/7}\Bigr\} + e^{-cn}.
	\end{align}
\end{lemma}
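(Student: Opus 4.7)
The plan is to decouple the matrix inverse $(\bX^{\top}\bX)^{-1}$ from each sample $(\bx_i, y_i)$ via a leave-one-out device, and then control the resulting sum through a combination of Bernstein's inequality (for a principal term) and the moment inequalities of~\cite{boucheron2005moment} (for residual chaos-like terms). Writing $\omega_i := \omega(\inprod{\bx_i}{\bt}, y_i)$ and letting $\bX_{(i)} \in \R^{(n-1)\times d}$ denote $\bX$ with its $i$-th row removed, set $\bA_i := (\bX_{(i)}^{\top}\bX_{(i)})^{-1}$. The Sherman--Morrison formula applied to $(\bX^{\top}\bX)^{-1}\bx_i$ yields
\[
\inprod{\btstar}{\mathcal{T}_n(\bt)} = \sum_{i=1}^{n}\frac{\omega_i\cdot\btstar^{\top}\bA_i \bx_i}{1+\bx_i^{\top}\bA_i \bx_i}.
\]
The crucial feature is that $\bA_i \indpt (\bx_i, y_i)$, so conditionally on $\bX_{(i)}$ each summand is a rational function of an \emph{independent} Gaussian vector $\bx_i$ with coefficients measurable with respect to $\bX_{(i)}$.

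To exploit this, I would condition on the high-probability event $\mathcal{E} := \{\opnorm{n^{-1}\bX^{\top}\bX - \bI}\leq 1/2\}$, which satisfies $\Pr\{\mathcal{E}\}\geq 1 - e^{-cn}$ once $\kappa\geq C$. On $\mathcal{E}$, standard perturbation bounds give $\opnorm{\bA_i - (n-d)^{-1}\bI} = \mathcal{O}(n^{-3/2}\sqrt{d})$ uniformly in $i$ and $|\bx_i^{\top}\bA_i \bx_i - d/(n-d)| = \ordertil(n^{-1/2})$, so that each denominator $1+\bx_i^{\top}\bA_i \bx_i$ is bounded away from zero. Expanding each summand around its idealized form isolates a principal term $T_0 := \tfrac{1}{n}\sum_i \omega_i \inprod{\btstar}{\bx_i}$ plus a collection of residuals $R_1, R_2, \ldots$. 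The principal term has mean $\alpha^{\gor}$ (by Definition~\ref{def:starnext_main_HO}), and, using Assumption~\ref{ass:omega} together with the product-of-subgaussians bound, each $\omega_i \inprod{\btstar}{\bx_i}$ is sub-exponential with norm $\lesssim K_1$; Bernstein's inequality then gives $\Pr\{|T_0 - \alpha^{\gor}|\geq t\}\leq 2\exp\!\big(-cn\min(t^2,t)/K_1^2\big)$, which is \emph{stronger} than what the lemma asserts.

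Each residual $R_j$ is a sum (over $i$) of products between $\omega_i$ and a polynomial of some fixed degree $k_j$ in $\bx_i$ whose coefficients are $\bX_{(i)}$-measurable (these come from the Sherman--Morrison Taylor expansion and from the deviation of $\bA_i$ from $(n-d)^{-1}\bI$). Applying the Efron--Stein-type moment inequalities of~\cite{boucheron2005moment} to such sums---which convert bounds on discrete derivatives of a function of independent random variables into $L^p$ bounds on the function itself---yields $\|R_j\|_p \leq C K_1 \cdot p^{k_j/2}/\sqrt{n}$. Markov's inequality followed by optimization over $p$ gives tails of the form $\exp\!\big(-c(t\sqrt{n}/K_1)^{2/k_j}\big)$, and a careful bookkeeping of the degrees shows that the dominant residual has $k_j = 7$, producing the advertised $2/7$ exponent; a final union bound with $\mathcal{E}^c$ concludes the argument. \textbf{The main obstacle} is this final stage: controlling the residuals at the sharp $n^{-1/2}$ scale while retaining (near-)exponential tails. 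A direct application of Gaussian concentration only yields polynomial tails (as in~\cite{zhang2020phase}) or an $n^{-1/4}$ rate matching Section~\ref{sec:gordon}. The Boucheron et al.\ machinery is tailored for this setting and exploits the conditional independence made explicit by leave-one-out, but applying it requires careful tracking of the chaos degrees contributed by each of the three factors $\omega_i$, $\bx_i^{\top}\bA_i \bx_i$, and $\btstar^{\top}\bA_i \bx_i$, as well as their interaction via $\bA_i$---this bookkeeping is what ultimately sets the $2/7$ exponent in the tail.
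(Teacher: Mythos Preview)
Your plan has the right two ingredients—the Sherman--Morrison leave-one-out identity and the moment inequalities of Boucheron--Lugosi--Massart—but the paper orders them differently and thereby sidesteps the obstacle you flag at the end. Rather than first decomposing $Z := \inprod{\btstar}{\mathcal{T}_n(\bt)}$ into a principal term plus residuals and then applying BLM to each piece, the paper applies BLM \emph{directly to $Z$}: one has $\|Z - \EE Z\|_q \lesssim \sqrt{q}\,\bigl(\sum_j \|\EE[(Z-Z_j')^2\mid \cdot]\|_q\bigr)^{1/2}$, so the entire burden shifts to bounding moments of the single discrete derivative $Z - Z_j'$. Sherman--Morrison decomposes \emph{this difference} (not $Z$ itself) into three explicit terms, whose $2q$-th moments are controlled via unconditional inverse-Wishart moment bounds on $\|\bSig_j^{-1}\|_{\mathrm{op}}$ together with a separate lemma establishing $(\EE\|\sum_{i\neq j}\omega_i \bx_i\|_2^{2q})^{1/2q} \lesssim K_1 q^2 n$. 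This yields $\|Z - \EE Z\|_q \lesssim K_1 q^{7/2}/\sqrt{n}$ for all $q \lesssim n$, and the tail follows from a generic moment-to-tail conversion (with a truncation argument handling the range restriction on $q$, which produces the additive $e^{-cn}$).

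This route avoids several complications in your outline. No conditioning on $\mathcal{E}$ is needed (unconditional inverse-Wishart moments replace your pointwise spectral bounds, and conditioning would in any case sit awkwardly with the unconditional expectation $\EE Z$); no Taylor expansion of $(1+\bx_i^\top \bA_i\bx_i)^{-1}$ is needed; and the exponent $7/2$ is not a ``chaos degree'' but simply the accumulation of $q$-factors in the moment estimates ($\sqrt{q}$ from BLM, a factor $q$ from sub-exponentiality of each summand, and $q^2$ from $\|\sum_{i\neq j}\omega_i\bx_i\|_2$). More substantively, your proposal to bound each residual $R_j$ via BLM by viewing it as a degree-$k_j$ polynomial in $\bx_i$ with $\bX_{(i)}$-measurable coefficients has a structural gap: the BLM discrete derivative replaces $\bx_j$ by an independent copy, which perturbs \emph{every} $\bA_i$ for $i\neq j$, not just the $j$-th summand. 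The ``coefficients'' in your polynomial are therefore not fixed under the perturbation, and the degree heuristic does not translate into a moment bound without further work. If you trace that work through, you essentially recover the paper's computation of $Z - Z_j'$—so the principal-plus-residual split is a detour rather than a simplification.
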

\begin{lemma}
	\label{lem:expectation-leave-one-out}
	Under the setting of Proposition~\ref{prop:concentration-signal}, there exists a  universal, positive constant $C$ such that, 
	\[
	\lvert \inprod{\btstar}{\EE\mathcal{T}_n(\bt)} - \inprod{\btstar}{\mathcal{T}(\bt)} \rvert \leq \frac{C K_1}{\sqrt{n}}.
	\]
\end{lemma}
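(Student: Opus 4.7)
The plan is to derive a closed-form representation of $\inprod{\btstar}{\mathcal{T}_n(\bt)}$ via block matrix inversion and then control its bias from $\alpha^{\gor}$ through a conditional-expectation argument that exploits the rotational symmetry of Gaussian projections. By rotational invariance of the standard Gaussian distribution, I assume without loss of generality that $\btstar = e_1$. Writing $\bz := \bX e_1 \in \mathbb{R}^n$ for the first column of $\bX$ and $\bX_{-1} \in \mathbb{R}^{n \times (d-1)}$ for the submatrix of remaining columns, block matrix inversion yields
\begin{align*}
\inprod{\btstar}{\mathcal{T}_n(\bt)} \;=\; \frac{\bz^\top \bP \Omega}{\bz^\top \bP \bz}, \qquad \bP \;:=\; \bI - \bX_{-1}(\bX_{-1}^\top \bX_{-1})^{-1}\bX_{-1}^\top,
\end{align*}
where $\Omega := \omega(\bX \bt, \by)$. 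Crucially, $\bP$ is independent of $\bz$ conditional on $\bX_{-1}$, with $\mathrm{tr}(\bP) = n - d + 1$ almost surely.

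Letting $t_1 := \inprod{\bt}{\btstar}$ and $\bw := \bX_{-1} \bt_{-1}/\|\bt_{-1}\|_2 \sim \NORMAL(0, \bI_n)$ (a function of $\bX_{-1}$ that is independent of $\bz$), each entry $\Omega_i$ becomes a deterministic function of $z_i$ alone after conditioning on $(\bX_{-1}, \bq, \beps)$. Mutual independence of the $\{z_i\}$ then gives
\begin{align*}
\EE[\bz^\top \bP \Omega \mid \bX_{-1}, \bq, \beps] \;=\; \sum_{i=1}^n \bP_{ii}\,\psi(w_i, q_i, \epsilon_i), \qquad \EE[\bz^\top \bP \bz \mid \bX_{-1}] \;=\; n - d + 1,
\end{align*}
where $\psi(w, q, \epsilon) := \EE[Z \cdot \omega(t_1 Z + \|\bt_{-1}\|_2\, w,\, f(Z;q) + \noisestd \epsilon)]$ for an independent $Z \sim \NORMAL(0,1)$. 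Conditional on $\bw$, the projection $\bP$ is uniformly distributed among rank-$(n-d+1)$ projections whose range lies in $\bw^\perp$, which yields $\EE[\bP_{ii} \mid \bw] = \frac{n-d+1}{n-1}\bigl(1 - w_i^2/\|\bw\|_2^2\bigr)$. Combining this with the identity $\alpha^{\gor} = \EE[\tilde\psi(W)]$ for $\tilde\psi(w) := \EE_{Q, \epsilon}[\psi(w, Q, \epsilon)]$ and $W \sim \NORMAL(0,1)$, I would obtain
\begin{align*}
\EE\bigl[\bz^\top \bP(\Omega - \alpha^{\gor} \bz)\bigr] \;=\; -\frac{n(n-d+1)}{n-1} \cdot \EE\!\left[\left(\frac{w_1^2}{\|\bw\|_2^2} - \frac{1}{n}\right) \bigl(\tilde\psi(w_1) - \alpha^{\gor}\bigr)\right] \;=\; O(K_1),
\end{align*}
where the final estimate follows from Cauchy--Schwarz, the Beta-moment bound $\mathrm{Var}(w_1^2/\|\bw\|_2^2) = O(1/n^2)$, and the variance bound $\mathrm{Var}(\tilde\psi(W)) = O(K_1^2)$ (a consequence of Assumption~\ref{ass:omega} together with Jensen's inequality).

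Setting $N := \bz^\top \bP(\Omega - \alpha^{\gor} \bz)$ and $D := \bz^\top \bP \bz$, I then have $\EE[\inprod{\btstar}{\mathcal{T}_n(\bt)}] - \alpha^{\gor} = \EE[N/D]$, which I decompose as
\begin{align*}
\EE[N/D] \;=\; \frac{\EE[N]}{n-d+1} + \EE\!\left[N \left(\frac{1}{D} - \frac{1}{n-d+1}\right)\right].
\end{align*}
The first term is $O(K_1/n)$ by the previous step. For the second, restricting to the event $\mathcal{E} = \{D \geq (n-d+1)/2\}$---which holds with probability $1 - e^{-cn}$ by $\chi^2$-concentration, since $D \mid \bX_{-1} \sim \chi^2_{n-d+1}$---Cauchy--Schwarz with the crude moment bound $\EE[N^2] \leq O(K_1^2 n^2)$ (via $|N| \leq \|\bz\|_2 \cdot \|\Omega - \alpha^{\gor}\bz\|_2$ together with fourth moments of chi-squared and sub-Gaussian sums) and $\mathrm{Var}(D) = O(n)$ yields a contribution of order $\frac{1}{(n-d+1)^2} \cdot \sqrt{K_1^2 n^2} \cdot \sqrt{n} = O(K_1/\sqrt{n})$; on $\mathcal{E}^c$, finiteness of negative moments of $\chi^2_m$ for $m = \Omega(n)$ together with the exponentially small event probability renders the contribution negligible. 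The main subtlety lies in handling the ratio $N/D$ without losing the $1/\sqrt{n}$ rate: although $|N|$ itself has typical magnitude $K_1 n$, the expectation $\EE[N]$ is only of order $K_1$ by the approximate rotational symmetry of $\bP$, which is precisely what permits the target bound $O(K_1/\sqrt{n})$ to be achieved.
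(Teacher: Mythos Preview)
Your argument is correct and takes a genuinely different route from the paper. The paper applies the Sherman--Morrison formula \emph{row-wise}, writing
\[
\mathcal{T}_n(\bt) = \sum_{i=1}^n \frac{\bSig_i^{-1}\bx_i}{1+\bx_i^\top\bSig_i^{-1}\bx_i}\,\omega(\langle\bx_i,\bt\rangle,y_i),
\]
and then replaces the random denominator $1+\bx_i^\top\bSig_i^{-1}\bx_i$ by its deterministic proxy $1+\tfrac{d}{n-d-2}$; the resulting correction is controlled by a separate concentration lemma for $\bx_i^\top\bSig_i^{-1}\bx_i$ together with inverse-Wishart moment bounds. Your approach instead uses block inversion \emph{column-wise}, producing the clean ratio $N/D$ with $D\mid\bX_{-1}\sim\chi^2_{n-d+1}$, and then exploits the rotational invariance of $\bP$ given $\bw$ to show $\EE[N]=O(K_1)$ via a Beta-moment covariance bound. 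Your route is arguably more self-contained for this lemma---the conditional $\chi^2$ structure and the identity $\EE[\bP\mid\bw]=\tfrac{n-d+1}{n-1}(I-\bw\bw^\top/\|\bw\|_2^2)$ are elegant---while the paper's row-wise decomposition has the advantage of reusing exactly the machinery already built for the companion concentration result (Lemma~\ref{lem:concentration-leave-one-out}). Both yield the same $O(K_1/\sqrt{n})$ rate, with the $\sqrt{n}$ loss in each case coming from a Cauchy--Schwarz step that pairs a crude second-moment bound with the $O(\sqrt{n})$ fluctuation of the denominator.
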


To prove the proposition from these two lemmas, apply the triangle inequality to obtain the following decomposition
\begin{align}
	\label{ineq:decomp-proof-concentration-signal}
\bigl \lvert \inprod{\btstar}{\mathcal{T}_n(\bt)} - \inprod{\btstar}{\mathcal{T}(\bt)} \bigr\rvert \leq \bigl \lvert \inprod{\btstar}{\mathcal{T}_n(\bt)} - \inprod{\btstar}{\EE \mathcal{T}_n(\bt)} \bigr \rvert + \bigl \lvert \inprod{\btstar}{\EE\mathcal{T}_n(\bt)} - \inprod{\btstar}{\mathcal{T}(\bt)} \bigr \rvert.
\end{align}
The conclusion follows immediately upon applying Lemmas~\ref{lem:concentration-leave-one-out} and~\ref{lem:expectation-leave-one-out} to upper bound the two terms above.
\end{proof}

\subsection{Proof of Lemma~\ref{lem:concentration-leave-one-out}: Concentration of centered term}
\label{sec:proof-concentration-loo}
Using the shorthand 
\begin{align}
	\label{eq:shorthand-sigma-z}
\bz_i = \bx_i \omega(\langle \bx_i, \bt \rangle, y_i) \qquad \text{ and } \qquad \bSig = \sum_{i=1}^{n} \bx_i \bx_i^{\top}, 
\end{align}
and recalling the empirical updates $\mathcal{T}_n(\bt)$~\eqref{eq:empirics-AM}, we denote the random variable of interest
\begin{align}
	\label{eq:def-Z}
Z := \inprod{\btstar}{\mathcal{T}_n(\bt)}  = \inprod{\btstar}{\bSig^{-1}\sum_{i=1}^{n} \bz_i },
\end{align}
where in the last equality we have simply simply written $\mathcal{T}_n(\bt)$ using the shorthand~\eqref{eq:shorthand-sigma-z}.  Noting that $Z$ is a non-linear function of the $n$ independent samples $\{\bx_i, y_i\}_{i=1}^{n}$, we introduce some notation in order to isolate the contribution of the $i$-th sample.  For all $1 \leq j \leq n$,  let $\{\bx_j', y_j'\}$ denote an independent copy of the pair $\{\bx_j, y_j\}$, and define (cf. Eq.~\eqref{eq:shorthand-sigma-z})
\begin{align}
	\label{eq:shorthand-sigma-z-loo}
	\bz_j' = \bx_j'  \omega(\langle \bx_j', \bt \rangle, y_j'), \;\;\;\; \bSig_j = \sum_{i \neq j} \bx_i \bx_i^{\top}.
\end{align}
We then define
\begin{align*}
	Z_j' = (\btstar)^{\top} (\bx_j' (\bx_j')^{\top} + \bSig_j)^{-1}\Bigl(\bz_j' + \sum_{i\neq j} \bz_i\Bigr).
\end{align*}
Applying the Sherman-Morrison formula, we obtain the pair of identities
\begin{subequations}
	\begin{align}
		Z &= (\btstar)^{\top}\Bigl(\bSig_j^{-1} - \frac{\bSig_j^{-1} \bx_j \bx_j^{\top} \bSig_j^{-1}}{1 + \bx_j^{\top} \bSig_{j}^{-1}\bx_j}\Bigr)\Bigl(\bz_j + \sum_{i \neq j} \bz_i\Bigr),\label{eq:Z-expandj}\\
		Z_j^{'} &= (\btstar)^{\top}\Bigl(\bSig_j^{-1} - \frac{\bSig_j^{-1} \bx_j' \bx_j'^{\top} \bSig_j^{-1}}{1 + \bx_j'^{\top} \bSig_{j}^{-1}\bx_j'}\Bigr)\Bigl(\bz_j' + \sum_{i \neq j} \bz_i\Bigr).\label{eq:Zj-expandj}
	\end{align}
\end{subequations}
Note that in Eq.~\eqref{eq:Z-expandj}, the index $j$ is arbitrary and the same equation can be written for all $1 \leq j \leq n$. 

With this notation defined, we now state two lemmas. 
 Their proofs are deferred to Subsections~\ref{sec:proof-bounding-centered-moments} and~\ref{sec:proof-lemma-moment-sum-zj}, respectively. For the first lemma, recall that we use $\| X \|_q = ( \EE \lvert X\rvert^q )^{1/q}$ to denote the $L^q$ norm of a random variable $X$. 
\begin{lemma}
	\label{lem:bounding-centered-moments}
	Consider the random variable $Z$~\eqref{eq:def-Z}.  There exists a universal, positive constant $C$ such that for all integers $q$ satisfying $1 \leq q \leq \frac{n - d - 1}{16}$, it holds that
	\[
	\| Z - \EE Z \|_{q} \leq C K_1\frac{q^{7/2}}{\sqrt{n}}.
	\]
\end{lemma}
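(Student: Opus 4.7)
The plan is to apply the higher-moment generalization of the Efron--Stein inequality due to Boucheron, Bousquet, Lugosi, and Massart~(2005). For a function $Z = f(\bx_1, y_1, \dots, \bx_n, y_n)$ of $n$ independent samples with leave-one-out replacements $Z_j'$, this moment inequality gives
\[
\| Z - \EE Z \|_q \leq C \sqrt{q} \cdot \Bigl\| \sum_{j=1}^n (Z - Z_j')^2 \Bigr\|_{q/2}^{1/2}.
\]
Since $\sum_j (Z-Z_j')^2$ contains $n$ summands, applying the triangle inequality in $L^{q/2}$ reduces the task to showing a uniform one-sample moment bound of the form $\| Z - Z_j' \|_q \lesssim K_1 \, q^3 / n$. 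Combined with the inequality above, this will yield the advertised $K_1 q^{7/2}/\sqrt{n}$ rate, provided $q$ is small enough relative to $n-d$ for the relevant high-probability events to carry through the expectation.

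The starting point for bounding $Z - Z_j'$ is the explicit Sherman--Morrison expansion already recorded in equations~\eqref{eq:Z-expandj} and~\eqref{eq:Zj-expandj}, which expresses $Z - Z_j'$ as a sum of a handful of terms built from the scalars $\bx_j^\top \bSig_j^{-1} \bx_j$ and $\bx_j'^\top \bSig_j^{-1} \bx_j'$, the vectors $\bs := \bSig_j^{-1}\btstar$, $\bSig_j^{-1} \bx_j$, $\bSig_j^{-1}\bx_j'$, and the aggregate $\bu := \sum_{i \neq j} \bz_i$, together with the pair $(\bz_j, \bz_j')$. I plan to bound the $L^q$ norm of each such term separately by conditioning on the leave-one-out sigma-algebra generated by $\{(\bx_i, y_i)\}_{i \neq j}$, so that $\bSig_j$, $\bs$, and $\bu$ all become fixed. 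Under the condition $q \leq (n-d-1)/16$, standard Gaussian concentration for Wishart matrices gives $\|\bSig_j^{-1}\|_{\mathrm{op}} \lesssim 1/n$ off an event of probability at most $e^{-cn}$, and the Hanson--Wright inequality controls $\bx_j^\top \bSig_j^{-1} \bx_j$ and keeps the denominator $1 + \bx_j^\top \bSig_j^{-1} \bx_j$ both bounded and bounded away from zero with the required probability; such bad events contribute negligibly after taking the $q$-th moment because of the constraint on $q$.

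The remaining terms inside each summand are then linear or bilinear forms in the Gaussian vector $\bx_j$ (or $\bx_j'$) multiplied by the sub-Gaussian factor $\omega$ through $\bz_j$. After conditioning, a quantity like $(\btstar)^\top \bSig_j^{-1}\bx_j \cdot (\bx_j^\top \bSig_j^{-1} \bu) \cdot \omega(\langle \bx_j, \bt\rangle, y_j)$ is the product of a $\NORMAL(0, O(1/n^2))$ variable, a $\NORMAL(0, O(\|\bu\|_2^2/n^2))$ variable, and a sub-Gaussian variable of Orlicz norm $\lesssim K_1$; by independence of $\omega$ from the Gaussian part (conditional on the leave-one-out data) and Cauchy--Schwarz, each factor contributes an $O(\sqrt{q})$ multiplicative factor at the level of the $L^q$ norm, producing a total factor of $q^{3/2}$. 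An additional factor of $1/n$ comes from the combined scaling of $\bSig_j^{-1}$ and $\bs$, while $\|\bu\|_2$ is controlled by $\sqrt{n}\, \|\bSig_j\|_{\mathrm{op}}^{1/2}$ times the empirical norm of $\omega$, which is again sub-Gaussian with Orlicz norm $\lesssim K_1$. Careful bookkeeping of the three to four distinct terms in the Sherman--Morrison decomposition, including the $1/(1 + \bx_j^\top \bSig_j^{-1}\bx_j)$ prefactor, yields $\|Z - Z_j'\|_q \lesssim K_1 \, q^3 / n$, which is the bound needed above.

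The main obstacle is precisely this bookkeeping step: the aggregate $\bu = \sum_{i\neq j}\bz_i$ is, up to a factor of $n$, the leave-one-out empirical update itself, so it is not sub-Gaussian of $O(1)$ scale but rather of $O(\sqrt{n})$ scale, and one must check that each appearance of $\bu$ in the Sherman--Morrison expansion is paired with enough factors of $\bSig_j^{-1}$ to recover the desired $1/n$ decay. Equivalently, one must see that the relevant inner product $\bx_j^\top \bSig_j^{-1}\bu$ is approximately $\langle \bx_j, \mathcal{T}_n^{(-j)}(\bt)\rangle$, a Gaussian-sub-Gaussian product of $O(1)$ size, rather than anything larger; then the additional $1/n$ comes only from $\bs$ or from the second factor of $\bSig_j^{-1}$ in the cross-terms. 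Once this is done cleanly for each of the Sherman--Morrison summands and combined with the moment inequality, the claimed bound follows.
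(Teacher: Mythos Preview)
Your overall plan matches the paper's: Boucheron--Bousquet--Lugosi--Massart moment inequality, the Sherman--Morrison decomposition into the three terms $T_1,T_2,T_3$ of \eqref{eq:T1-Z-decomp}--\eqref{eq:T3-Z-decomp}, and the target $\|Z-Z_j'\|_q\lesssim K_1q^3/n$. Two points deserve correction.

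First, your claim that $\omega(\langle\bx_j,\bt\rangle,y_j)$ is independent of the Gaussian inner products $(\btstar)^\top\bSig_j^{-1}\bx_j$ and $\bx_j^\top\bSig_j^{-1}\bu$ conditional on the leave-one-out data is false: all three depend on the same $\bx_j$. The paper instead treats $\bz_j^\top\bv=\omega(\cdot)\,\langle\bx_j,\bv\rangle$ as a sub-exponential variable with $\|\cdot\|_{\psi_1}\leq K_1\|\bv\|_2$ (product of sub-Gaussians), which yields the needed conditional $L^{2q}$ bound on $T_1$ directly; for $T_2$ it drops the denominator $1+\bx_j^\top\bSig_j^{-1}\bx_j\geq 1$ and applies Cauchy--Schwarz to split the two Gaussian factors.

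Second, controlling $\|\bSig_j^{-1}\|_{\mathsf{op}}$ and $\|\bu\|_2$ via high-probability bounds and then arguing that the bad event is negligible is circular: to bound $\EE[|T_2|^{2q}\ind{E^c}]$ by Cauchy--Schwarz you would need $\EE|T_2|^{4q}<\infty$, which already requires moment control on $\|\bSig_j^{-1}\|_{\mathsf{op}}$. The paper works directly with moment bounds throughout. Lemma~\ref{lem:wishart_mineig_moments} gives $\EE\|\bSig_j^{-1}\|_{\mathsf{op}}^p\leq(C/n)^p$ for all $p<(n-d-1)/2$, and this is precisely where the constraint $q\leq(n-d-1)/16$ enters, since the argument needs $p=8q$. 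Lemma~\ref{lem:moment-sum-zj} gives $(\EE\|\bu\|_2^{2q})^{1/2q}\leq CK_1q^2n$ via a Rosenthal-type inequality rather than a spectral-norm bound. With these in hand, Cauchy--Schwarz applied to the outer expectation of the conditional bound $\EE[T_2^{2q}\mid\{\bx_i,y_i\}_{i\neq j}]\leq(Cq)^{2q}\|\bSig_j^{-1}\|_{\mathsf{op}}^{4q}\|\bu\|_2^{2q}$ gives $\|T_2\|_{2q}\leq CK_1q^3/n$ without any truncation step.
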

\begin{lemma}
	\label{lem:moment-sum-zj}
	There exists a constant $C > 0$ such that for all $q \geq 1$, 
	\[
	\Bigl(\E\Bigl\{\Bigl\| \sum_{i \neq j} \bz_i\Bigr\|_2^{2q}\Bigr\}\Bigr)^{1/2q} \leq C K_1 q^2 n.
	\]
\end{lemma}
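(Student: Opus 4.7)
The key observation is to recast the sum as a matrix--vector product: writing $\bX_{-j} \in \mathbb{R}^{(n-1) \times d}$ for the design matrix with its $j$-th row removed and $\bomega_{-j} := (\omega(\langle \bx_i, \bt\rangle, y_i))_{i \neq j} \in \mathbb{R}^{n-1}$, we have the deterministic identity $\sum_{i \neq j}\bz_i = \bX_{-j}^{\top}\bomega_{-j}$, whence $\|\sum_{i \neq j}\bz_i\|_2 \leq \|\bX_{-j}\|_{\mathsf{op}} \cdot \|\bomega_{-j}\|_2$. Applying Cauchy--Schwarz in $L^{2q}$ (equivalently, H\"older with exponents $(4q, 4q)$) reduces the task to independently controlling two scalar moments:
\[
\Bigl(\E\Bigl\|\sum_{i\neq j}\bz_i\Bigr\|_2^{2q}\Bigr)^{\!1/(2q)} \leq \bigl(\E\|\bX_{-j}\|_{\mathsf{op}}^{4q}\bigr)^{1/(4q)} \cdot \bigl(\E\|\bomega_{-j}\|_2^{4q}\bigr)^{1/(4q)}.
\]

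The first factor is handled by Gaussian Lipschitz concentration: the map $\bX_{-j} \mapsto \|\bX_{-j}\|_{\mathsf{op}}$ is $1$-Lipschitz in the $(n-1)d$ i.i.d.\ standard Gaussian entries of $\bX_{-j}$, and has expectation at most $\sqrt{n-1}+\sqrt{d}$ by Gordon's inequality. Hence $(\E \|\bX_{-j}\|_{\mathsf{op}}^{4q})^{1/(4q)} \leq \sqrt{n-1}+\sqrt{d}+C\sqrt{q}$, which under the standing hypothesis $\kappa \geq C$ collapses to $C(\sqrt{n} + \sqrt{q})$. For the second factor, note that $\|\bomega_{-j}\|_2^2 = \sum_{i \neq j}\omega_i^2$ is a sum of i.i.d.\ non-negative random variables whose summands are sub-exponential with $\psi_1$-norm at most $CK_1^2$ (since the $\omega_i$ are sub-Gaussian with $\psi_2$-norm at most $K_1$ by Assumption~\ref{ass:omega}). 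A standard Bernstein-type moment bound for sums of centered sub-exponentials combined with the deterministic mean contribution $(n-1)\E\omega_1^2 \leq CK_1^2 n$ and the AM--GM inequality yields $\|\sum_{i\neq j}\omega_i^2\|_{L^{2q}} \leq CK_1^2(n + q)$, and hence $(\E \|\bomega_{-j}\|_2^{4q})^{1/(4q)} \leq CK_1 \sqrt{n+q} \leq CK_1(\sqrt{n} + \sqrt{q})$.

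Multiplying the two factor bounds yields the clean estimate $(\E\|\sum_{i\neq j}\bz_i\|_2^{2q})^{1/(2q)} \leq CK_1(\sqrt{n}+\sqrt{q})^2 \leq 2CK_1(n + q)$. Since $(n+q)/(q^2 n) \leq 1/q^2 + 1/(q n) \leq 2$ for every $q, n \geq 1$, this is dominated by $CK_1 q^2 n$, establishing the claim. The main technical care required is in the Bernstein moment bound for sums of i.i.d.\ sub-exponentials---although standard, one must carefully separate the linear-in-$n$ mean contribution from the $\sqrt{nq} + q$ fluctuation term so that the resulting bound unifies cleanly across both small and large $q$; the rest of the argument is a clean application of Cauchy--Schwarz and Gaussian Lipschitz concentration. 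We remark in passing that this argument actually yields the strictly stronger estimate $(\E\|\sum_{i\neq j}\bz_i\|_2^{2q})^{1/(2q)} \leq CK_1(n + q)$, which could be useful elsewhere.
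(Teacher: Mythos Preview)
Your proof is correct and takes a genuinely different route from the paper's. The paper proceeds by centering, writing $\sum_{i\neq j}\bz_i = \sum_{i\neq j}(\bz_i-\E\bz_i) + \sum_{i\neq j}\E\bz_i$, bounding the deterministic mean term via the observation that $\E\bz_i$ lives in the two-dimensional subspace $\mathsf{span}(\bt,\btstar)$, and controlling the centered sum via the Ledoux--Talagrand moment inequality for norms of sums of independent vectors (their Theorem~6.20), which reduces matters to a first-moment term $\E\|\sum\widebar{\bz}_i\|_2 \lesssim K_1\sqrt{nd}$ and a max-term $(\E\max_i\|\widebar{\bz}_i\|_2^{2q})^{1/(2q)}$. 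Your argument instead exploits the multiplicative structure $\sum_{i\neq j}\bz_i = \bX_{-j}^\top\bomega_{-j}$ and decouples via Cauchy--Schwarz into an operator-norm moment (handled by Gaussian Lipschitz concentration) and a scalar $\ell_2$-norm moment (handled by sub-exponential Bernstein). Your approach is more elementary in that it sidesteps the Rosenthal-type machinery entirely, and as you note it delivers the sharper estimate $CK_1(n+q)$ in place of $CK_1 q^2 n$; the paper's route, on the other hand, is agnostic to the product structure $\bz_i = \omega_i\bx_i$ and would apply more directly in settings where no such matrix--vector factorization is available. Both rely on the ambient hypothesis $d \leq n$ (equivalently $\kappa \geq 1$) at comparable points.
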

We now use the moment bound from Lemma~\ref{lem:bounding-centered-moments} to obtain a tail bound.  Note that by assumption, $n \geq 2d$, so that $(n - d - 1)/16 \geq n/64$.  Additionally, since $Z = \langle \btstar, \mathcal{T}_n(\bt) \rangle$ by definition (recall Eq.~\eqref{eq:def-Z}),
we apply Lemma~\ref{lem:bound-operator-norm} to obtain the inequality $\Pro\{\lvert Z \rvert \geq CK_1\} \leq e^{-cn}$.  Thus, taking $\zeta = 7/2$ and invoking Lemma~\ref{lem:truncation-tail-bound}, we obtain the desired result.
\qed

\noindent It remains to prove the technical lemmas.

\subsubsection{Proof of Lemma~\ref{lem:bounding-centered-moments}}
\label{sec:proof-bounding-centered-moments}
For all $1 \leq j \leq n$, let $X_j = \{\bx_j, y_j\}$.  Following~\cite{boucheron2005moment}, define
\begin{align}
	V^{+} &= \E\biggl\{\sum_{j=1}^{n}\bigl(Z- Z_{j}'\bigr)_{+}^2 \Big \lvert \{\bx_i, y_i\}_{i=1}^{n}\biggr\},
\end{align}
and 
\begin{align}
	V^{-} &= \E\biggl\{\sum_{j=1}^{n}\bigl(Z- Z_{j}'\bigr)_{-}^2 \Big \lvert \{\bx_i, y_i\}_{i=1}^{n}\biggr\}.
\end{align}
With this notation in hand, we have that for all $q \geq 2$, 
\begin{align}
	\label{eq:decomposition-Z}
	\| Z - \EE Z \|_{q} \overset{\1}{\leq} \|(Z - \EE Z)_{+} \|_q + \| (Z - \EE Z)_{-} \|_{q} \overset{\2} \leq C\sqrt{q} \sqrt{\| V^{+} \|_{q/2}} + C\sqrt{q} \sqrt{\| V^{-} \|_{q/2}},
\end{align}
where step $\1$ follows from the triangle inequality and step $\2$ follows from~\citet[Theorem 2]{boucheron2005moment}.  Additionally, applying the triangle inequality in combination with the simple numeric inequalities $a_+^2 \leq a^2$, $a_-^2 \leq a^2$ yields the pair of inequalities
\begin{align*}
	\| V^{+} \|_{q} \leq \sum_{j=1}^{n} \bigl\| \E \bigl[(Z - Z_j')^2 \big \lvert \{\bx_i, y_i\}_{i=1}^{n}\bigr] \bigr\|_{q},
\end{align*}
and 
\begin{align*}
	\| V^{-} \|_{q} \leq \sum_{j=1}^{n} \bigl\| \E \bigl\{(Z - Z_j')^2 \big \lvert \{\bx_i, y_i\}_{i=1}^{n}\bigr\} \bigr\|_{q}.
\end{align*}
Combining the inequality~\eqref{eq:decomposition-Z} with the above two displays yields the inequality
\begin{align}
	\label{eq:Z-moment-ub-V}
	\| Z - \EE Z \|_{q}  \leq C\sqrt{q} \biggl(\sum_{j=1}^{n} \bigl\| \E \bigl[ (Z - Z_j')^2 \big \lvert \{\bx_i, y_i\}_{i=1}^{n}\bigr] \bigr\|_{q} \biggr)^{1/2}.
\end{align}
Continuing, we see that
\begin{align}
	\label{ineq:ri-cond-bound-z-diff-moment}
	\bigl\| \EE \bigl[(Z - Z_j')^2 \big \lvert \{\bx_i, y_i\}_{i=1}^{n}\bigr] \bigr\|_{q}  &\overset{\1}{\leq} \Bigl(\EE \Bigl\{
	\EE \bigl[(Z - Z_j')^{2q} \big \lvert \{\bx_i, y_i\}_{i=1}^{n}\bigr]
	\Bigr\}\Bigr)^{1/q}\nonumber\\
	& \overset{\2}{=} \Bigl(\EE \Bigl\{
	\EE \bigl[(Z - Z_j')^{2q} \big \lvert \{\bx_i, y_i\}_{i\neq j}\bigr]
	\Bigr\}\Bigr)^{1/q}
\end{align}
where step $\1$ follows by noting that the map $x \mapsto x^{a}$ is convex on $\mathbb{R}_{\geq 0}$ for $a \geq 1$ and applying Jensen's inequality; and step $\2$ follows by applying the tower property of conditional expectation to remove the conditioning on the sample $\{\bx_j, y_j\}$ .
Recalling the representations~\eqref{eq:Z-expandj} and~\eqref{eq:Zj-expandj}, we let
\begin{align}
	\label{eq:leave-one-out-Z-decomp}
	Z - Z_j' = \bigl(T_1 - T_1'\bigr) + \bigl(T_2 - T_2'\bigr) + \bigl(T_3- T_3'\bigr),
\end{align}
where
\begin{subequations}
\begin{align}
	T_1 &= (\btstar)^{\top}\bSig_j^{-1} \bz_j,\label{eq:T1-Z-decomp}\\
	T_2 &= (\btstar)^{\top} \Bigl(\frac{\bSig_j^{-1} \bx_j \bx_j^{\top} \bSig_j^{-1}}{1 + \bx_j^{\top} \bSig_{j}^{-1}\bx_j}\Bigr) \sum_{i \neq j} \bz_i, \label{eq:T2-Z-decomp}\\
	T_3&= (\btstar)^{\top} \Bigl(\frac{\bSig_j^{-1} \bx_j \bx_j^{\top} \bSig_j^{-1}}{1 + \bx_j^{\top} \bSig_{j}^{-1}\bx_j}\Bigr)\bz_j,\label{eq:T3-Z-decomp}
\end{align}
\end{subequations}
and $T_1', T_2', T_3'$ are equivalently defined, with $\bx_j', \bz_j'$ in place of $\bx_j, \bz_j$.  Now, applying the numeric inequality $(\sum_{i=1}^{k} a_i)^{2q} \leq k^{2q - 1}\sum_{i=1}^{k} a_i^{2q}$ to the term $(Z - Z_j')^{2q}$, using the decomposition~\eqref{eq:leave-one-out-Z-decomp} and noting that for $a = \{1, 2, 3\}$, the terms $T_a$ and $T_a'$ are identically distributed conditioned on $\{\bx_i, y_i\}_{i \neq j}$, we obtain the inequality
\begin{align}
	\label{ineq:bound-diff-z-t1-t3}
	\EE \bigl[(Z - Z_j')^{2q} \big \lvert \{\bx_i, y_i\}_{i\neq j}\bigr] \leq 6^{2q}\E\bigl[T_1^{2q} + T_2^{2q} + T_3^{2q} \lvert \{\bx_i, y_i\}_{i\neq j}\bigr].
\end{align}
Recalling the shorthand~\eqref{eq:shorthand-sigma-z}, plugging into the definition of $T_3$~\eqref{eq:T3-Z-decomp}, and noting that since $\bSig_j^{-1}$ is positive semidefinite, $(\bx_j^{\top} \bSig_j^{-1}\bx_j)/(1 + \bx_j^{\top} \bSig_j^{-1}\bx_j) \leq 1$, we have
\begin{align*}
T_3^{2q} =  \biggl((\btstar)^{\top} \frac{\bSig_j^{-1} \bx_j \bx_j^{\top} \bSig_j^{-1}\bx_j \omega(\langle \bx_j, \bt \rangle, y_j)}{1 + \bx_j^{\top} \bSig_{j}^{-1}\bx_j}\biggr)^{2q} \leq T_1^{2q}.
\end{align*}
Consequently, we obtain the bound
\begin{align}
	\label{ineq:ri-bound-t3}
		\E\bigl\{T_3^{2q} \lvert \{\bx_i, y_i\}_{i\neq j}\bigr\} \leq 	\E\bigl\{T_1^{2q} \lvert \{\bx_i, y_i\}_{i\neq j}\bigr\}.
\end{align}
We proceed to bound the conditional moments $\E\bigl\{T_1^{2q} \lvert \{\bx_i, y_i\}_{i\neq j}\bigr\}$ and $\E\bigl\{T_2^{2q} \lvert \{\bx_i, y_i\}_{i\neq j}\bigr\}$ in turn.  Recall the sub-exponential norm $\psi_1$ (see, for instance~\citet[Definition 2.7.5]{vershynin2018high}) and note that for any vector $\bu \in \mathbb{R}^{d}$:
\begin{align}
	\label{eq:subexponential-z}
	\| \bz_j^{\top} \bu \|_{\psi_1} = \| \omega(\langle \bx_j, \bt \rangle, y_j) \bx_j^{\top} \bu \|_{\psi_1} \overset{\1}{\leq} K_1 \| \bu \|_2,
\end{align}
where step $\1$ follows since $\omega(\langle \bx_j, \bt \rangle, y_j)$ is sub-Gaussian, $\bx_j^{\top} \bu$ is $\|\bu\|_2$-sub-Gaussian,  and  the product of sub-Gaussian random variables is sub-exponential (see for instance~\citet[Lemma 2.7.7]{vershynin2018high}).  Thus, 
\begin{align}
	\label{ineq:ri-bound-t1}
	\E\bigl[T_1^{2q} \lvert \{\bx_i, y_i\}_{i\neq j}\bigr] \overset{\1}{\leq} (C K_1 q)^{2q} \| \bSig_j^{-1} \btstar \|_{2}^{2q} \leq (C K_1 q)^{2q} \| \bSig_j^{-1} \|_{\mathsf{op}}^{2q},
\end{align}
where step $\1$ follows by recalling the definition of $T_1$~\eqref{eq:T1-Z-decomp}, applying the inequality~\eqref{eq:subexponential-z} for $\bu = \bSig_j^{-1} \btstar$, and using the $L^p$ norm characterization of sub-exponential random variables (see for instance~\citet[Proposition 2.7.1, part b.]{vershynin2018high}).  We now consider $T_2$.  We have
\begin{align}
	\label{ineq:ri-bound-t2}
\E\bigl\{T_2^{2q} \lvert \{\bx_i, y_i\}_{i\neq j}\bigr\} &\overset{\1}{\leq} \Bigl(\E\bigl\{\bigl(\bx_j^{\top}\bSig_j^{-1}\btstar\bigr)^{4q} \lvert \{\bx_i, y_i\}_{i\neq j}\bigr\} \Bigr)^{1/2} \Bigl(\E\bigl\{\bigl(\bx_j^{\top}\bSig_j^{-1}\sum_{i \neq j}\bz_i\bigr)^{4q} \lvert \{\bx_i, y_i\}_{i\neq j}\bigr\} \Bigr)^{1/2}\nonumber\\
&\overset{\2}{\leq} (Cq)^{2q} \| \bSig_j^{-1}\btstar \|_2^{2q} \Bigl\| \bSig_{j}^{-1} \sum_{i \neq j} \bz_i \Bigr\|_2^{2q} \nonumber\\
&\leq (Cq)^{2q} \| \bSig_{j}^{-1}\|_{\mathsf{op}}^{4q} \Bigl\| \sum_{i \neq j} \bz_i\Bigr\|_2^{2q},
\end{align}
where step $\1$ follows from substituting the definition of $T_2$~\eqref{eq:T2-Z-decomp}, using the fact that \\$\bx_j^{\top} \bSig_j^{-1}\bx_j \geq 0$
 and applying the Cauchy-Schwarz inequality; and step $\2$ follows from the $L^p$ norm characterization of sub-exponential random variables.  Now, plugging the bounds~\eqref{ineq:ri-bound-t3},~\eqref{ineq:ri-bound-t1}, and~\eqref{ineq:ri-bound-t2} into the inequality~\eqref{ineq:bound-diff-z-t1-t3}, we obtain
\begin{align*}
	\EE \bigl[(Z - Z_j')^{2q} \big \lvert \{\bx_i, y_i\}_{i=1}^{n}\bigr] \leq (Cq)^{2q} \| \bSig_{j}^{-1}\|_{\mathsf{op}}^{4q} \Bigl\| \sum_{i \neq j} \bz_i\Bigr\|_2^{2q} + (CK_1q)^{2q} \| \bSig_{j}^{-1}\|_{\mathsf{op}}^{2q}.
\end{align*}
Consequently, plugging the inequality above into the RHS of the inequality~\eqref{ineq:ri-cond-bound-z-diff-moment} and subsequently using the Cauchy-Schwarz inequality yields
\begin{align}
	\label{ineq:penultimate-martingale-ineq}
	\bigl\| \EE \bigl\{(Z - Z_j')^2 \big \lvert \{\bx_i, y_i\}_{i=1}^{n}\bigr\} \bigr\|_{q} \leq Cq^2\Bigl(\Bigl(\EE \bigl\{\| \bSig_{j}^{-1}\|_{\mathsf{op}}^{8q}\bigr\}  \EE \Bigl\{\Bigl\| \sum_{i \neq j} \bz_i\Bigr\|_2^{4q}\Bigr\}\Bigr)^{1/2q} +  K_1 \Bigl(\EE \bigl\{\| \bSig_{j}^{-1}\|_{\mathsf{op}}^{2q}\bigr\}\Bigr)^{1/q}\Bigr).
\end{align}
Now note that Lemma~\ref{lem:wishart_mineig_moments} from the appendix yields
\[
\E\left\{\|\bSig^{-1}\|_{\mathsf{op}}^p \right\} \leq \left(\frac{C}{n}\right)^p, \qquad \text{ for } 1 \leq p < \frac{n - d -1}{2}.
\]
Applying Lemmas~\ref{lem:moment-sum-zj} and~\ref{lem:wishart_mineig_moments} to the RHS of the inequality~\eqref{ineq:penultimate-martingale-ineq}, we obtain
\begin{align*}
	\bigl\| \EE \bigl\{(Z - Z_j')^2 \big \lvert \{\bx_i, y_i\}_{i=1}^{n}\bigr\} \bigr\|_{q} \leq \frac{C K_1 q^4}{n^2}, \qquad \text{ for } 1 \leq q < \frac{n - d - 1}{16}.
\end{align*}
Substituting the above bound into the RHS of the inequality~\eqref{eq:Z-moment-ub-V}, we have
\begin{align}
	\| Z - \EE Z \|_{q} \leq \frac{CK_1 q^{7/2}}{\sqrt{n}},  \qquad \text{ for } 1 \leq q < \frac{n - d - 1}{16},
\end{align}
which completes the proof.
\qed

\subsubsection{Proof of Lemma~\ref{lem:moment-sum-zj}}
\label{sec:proof-lemma-moment-sum-zj}
We begin by centering.  We have
\begin{align}
	\label{ineq:initial-lemma-moment-sum-zj}
\E\Bigl\{\Bigl\| \sum_{i \neq j} \bz_i\Bigr\|_2^{2q}\Bigr\} = \E\Bigl\{\Bigl\| \sum_{i \neq j} \big(\bz_i - \E \bz_i\big) + \sum_{i \neq j} \E \bz_i\Bigr\|_2^{2q}\Bigr\} \overset{\1}{\leq} 2^{2q-1}\Bigl( \E\Bigl\{\Bigl\| \sum_{i \neq j} \bz_i - \E \bz_i \Bigr\|_2^{2q} \Bigr\} + \Bigl\|\sum_{i \neq j}\E \bz_i \Bigr\|_2^{2q}\Bigr),
\end{align}
where step $\1$ used the numeric inequality $(a + b)^q \leq 2^{2q-1}(a^{2q} + b^{2q})$.  We tackle each of the two terms on the RHS of the above display in turn.

\paragraph{Bounding $\bigl\|\sum_{i \neq j}\E \bz_i \bigr\|_2^{2q}$.}
First, we apply the triangle inequality to obtain
\begin{align*}
	\Bigl\| \sum_{i \neq j} \E \bz_i \Bigr\|_2 \leq n \| \E\bz_i \|_2.
\end{align*}
Towards bounding the RHS of the inequality in the above display, we consider the subspace \mbox{$S = \mathsf{span}(\bt, \btstar)$} to obtain
\begin{align*}
	\E \bz_i = \E \{\bx_i \omega(\langle \bx_i, \bt \rangle, y_i)\} &= \E \bigl\{\proj_{S} \bx_i \omega(\langle \bx_i, \bt \rangle, y_i)\bigr\} + \E \bigl\{\proj_{S}^{\perp}\bx_i \omega(\langle \bx_i, \bt \rangle, y_i)\bigr\}\\
	&\overset{\1}{=}  \E \bigl\{\proj_{S} \bx_i \omega(\langle \bx_i, \bt \rangle, y_i)\bigr\} + \E \bigl\{\proj_{S}^{\perp}\bx_i \bigr\} \E\{\omega(\langle \bx_i, \bt \rangle, y_i)\}\\
	&\overset{\2}{=} \E \bigl\{\proj_{S} \bx_i \omega(\langle \bx_i, \bt \rangle, y_i)\bigr\} - \E \bigl\{\proj_{S}\bx_i \bigr\} \E\{\omega(\langle \bx_i, \bt \rangle, y_i)\}\,,
\end{align*}
where step $\1$ follows since $\bx_i \sim \NORMAL(0, \bI)$ so that by definition of the subspace $S$, the random vector $\proj_{S}^{\perp}\bx_i$ and the random variable $\omega(\langle \bx_i, \bt \rangle, y_i)$ are independent and step $\2$ follows since $\proj_{S}^{\perp} \bx_i = \bx_i - \proj_{S} \bx_i$ and $\E \bx_i = 0$.  Continuing from the display above, we apply the triangle inequality, Jensen's inequality, and the Cauchy-Schwarz inequality in succession to obtain the bound
\begin{align}
	\label{ineq:bound-norm-expec-z-1}
\| \E\bz_i \|_2 \leq \bigl(\E\{\| \proj_{S}\bx_i \|_2^2\} \E\{\omega(\langle \bx_i, \bt \rangle, y_i)^2\}\bigr)^{1/2} + \E \{\| \proj_{S}\bx_i \|_2\} \E \{ \lvert \omega(\langle \bx_i, \bt \rangle, y_i)\rvert\}.
\end{align}
Next, by Gram-Schmidt, 
\[
\proj_{S} \bx_i  = \frac{\langle \bx_i, \btstar \rangle}{\| \btstar \|_2^2}\btstar + \frac{\langle \bx_i, \proj_{\btstar}^{\perp} \bt \rangle}{\| \proj_{\btstar}^{\perp} \bt \|_2^2}\proj_{\btstar}^{\perp} \bt.
\]
Hence, 
\begin{align}
	\label{ineq:bound-expec-norm-projected-x}
\E\{ \| \proj_{S}\bx_i \|_2^2\} = \E\Bigl\{\frac{\langle \bx_i, \btstar \rangle^2}{\|\btstar \|_2^2}\Bigr\} + \E\Bigl\{\frac{\langle \bx_i, \proj_{\btstar}^{\perp} \bt \rangle^2}{\| \proj_{\btstar}^{\perp} \bt \|_2^2}\Bigr\} \overset{\1}{=} 2,
\end{align}
where step $\1$ follows since for any vector $\vb$, $\langle \bx_i, \vb \rangle \sim \NORMAL(0, \| \vb \|_2^2)$.  Note additionally that by Jensen's inequality $\E\| \proj_{S} \bx_i \|_2 \leq \sqrt{\E\| \proj_{S} \bx_i \|_2^2}$.  Thus, substituting the bound~\eqref{ineq:bound-expec-norm-projected-x} into the RHS of the inequality~\eqref{ineq:bound-norm-expec-z-1}, we obtain
\[
\| \E \bz_i \|_2 \leq \sqrt{2} \bigl(\E\{\omega(\langle \bx_i, \bt \rangle, y_i)^2\}\bigr)^{1/2} + \sqrt{2} \E \{ \lvert \omega(\langle \bx_i, \bt \rangle, y_i)\rvert\} \overset{\1}{\leq} CK_1,
\]
where step $\1$ follows by noting that by Assumption~\ref{ass:omega}, $\omega(\langle \bx_i, \bt \rangle, y_i)$ is a sub-Gaussian random variable, and further bounding its moments using the $L^{p}$ characterization of sub-Gaussian random variables~\citet[Proposition 2.5.2 (ii)]{vershynin2018high}.  Taking stock, we have shown that
\begin{align}
	\label{ineq:bound-expected-moments-sum-z}
	\bigl\|\sum_{i \neq j}\E \bz_i \bigr\|_2^{2q} \leq (CK_1 n)^{2q}.
\end{align}

\paragraph{Bounding $\E\bigl\{\bigl\| \sum_{i \neq j} \bz_i - \E \bz_i \bigr\|_2^{2q}\bigr\}$.}  To reduce notation, let
\[
\widebar{\bz}_i := \bz_i - \E \bz_i.
\]
Then, applying~\citet[Theorem 6.20]{ledoux2013probability}, we obtain
\begin{align}
	\label{decomp:moments-sum-z}
	\bigl(\E\bigl\{\bigl\| \sum_{i \neq j} \widebar{\bz}_i \bigr\|_2^{2q}\bigr\}\bigr)^{1/2q} \leq C \frac{2q}{\log{2q}} \bigl(\E\bigl\{\bigl\| \sum_{i \neq j} \widebar{\bz}_i \bigr\|_2\bigr\} + \bigl(\E\bigl\{\max_{i \neq j} \| \widebar{\bz}_i \|_2^{2q}\bigr\} \bigr)^{1/2q}\bigr).
\end{align}
To bound the first term, note that we have
\begin{align*}
	\E\bigl\{\bigl\| \sum_{i \neq j} \widebar{\bz}_i \bigr\|_2\bigr\} \overset{\1}{\leq} \bigl(\E\bigl\{\bigl\| \sum_{i \neq j} \widebar{\bz}_i \bigr\|_2^2\bigr\}\bigr)^{1/2} &\overset{\2}{=} \Bigl( \sum_{i \neq j} \E \| \widebar{\bz}_i \|_2^2 \Bigr)^{1/2} = \Bigl(\sum_{i \neq j, k \in [d]} \E \{X_{ik}^2 \omega(\langle \bx_i, \bt \rangle, y_i)^2 \}\Bigr)^{1/2}.
\end{align*}
Here, we have used $X_{ik}$ to denote the $ik$-th entry of the matrix $\bX$.  Step $\1$ follows by Jensen's inequality and step $\2$ follows since the random vectors $(\widebar{\bz}_i)_{1 \leq i \leq n}$ are zero-mean and independent.  Now, we apply the Cauchy-Schwarz inequality to obtain
\begin{align*}
	\E \{X_{ik}^2 \omega(\langle \bx_i, \bt \rangle, y_i)^2 \} \leq \EE\{X_{ik}^4\}^{1/2} \EE\{ \omega(\langle \bx_i, \bt \rangle, y_i)^4\}^{1/2} \overset{\1}{\leq} CK_1^2,
\end{align*}
where step $\1$ follows by noting that the random variables $X_{ik}$ and $ \omega(\langle \bx_i, \bt \rangle, y_i)$ are sub-Gaussian and subsequently using the $L^{p}$ characterization of sub-Gaussian random variables.  Combining the bounds in the above two displays, we obtain the inequality
\begin{align}
	\label{ineq:sum-centered}
	\E\bigl\{\bigl\| \sum_{i \neq j} \widebar{\bz}_i \bigr\|_2\bigr\} \leq CK_1\sqrt{nd}.
\end{align}
Turning to the next term, let $\widebar{Z}_{ik}$ denote the $ik$-th entry of the matrix $\widebar{\bZ} = [\widebar{\bz_1} \mid \widebar{\bz_2} \mid \dots \mid \widebar{\bz_n}]^{\top}$, and note
\begin{align}
	\label{ineq:max-moment}
	\E\bigl\{\max_{i \neq j} \| \widebar{\bz}_i \|_2^{2q}\bigr\} \overset{\1}{\leq} \sum_{i \neq j} \E \{\| \widebar{\bz}_i \|_2^{2q}\} \overset{\2}{\leq} nd^{q-1}\E\Bigl\{\sum_{k=1}^{d} \widebar{Z}_{ik}^{2q}\Bigr \} \overset{\3}{\leq} (C K_1 qn)^{2q}.
\end{align}
Step $\1$ follows since $\| \widebar{\bz}_i \|_2$ are non-negative random variables, step $\2$ follows by applying Jensen's inequality to the term $(1/d \sum_{i=1}^{d} \widebar{Z}_{ik}^{2})^{q}$, and step $\3$ follows from the $L^{p}$ characterization of sub-exponential random variables.  Finally, using the facts that $d \leq n$ and $q \geq 1$, we substitute inequalities~\eqref{ineq:sum-centered} and~\eqref{ineq:max-moment} into the inequality~\eqref{decomp:moments-sum-z} to obtain the inequality
\begin{align*}
	\bigl(\E\bigl\{\bigl\| \sum_{i \neq j} \widebar{\bz}_i \bigr\|_2^{2q}\bigr\}\bigr)^{1/2q} \leq \frac{C K_1}{\log{2q}}q^2 n.
\end{align*}
Consequently, we have
\begin{align}
	\label{ineq:conclusion-bounding-sum-centered}
	\E\bigl\{\bigl\| \sum_{i \neq j} \bz_i - \E \bz_i \bigr\|_2^{2q}\bigr\} \leq (CK_1q^2 n)^{2q}.
\end{align}

\paragraph{Putting it all together.}

Substituting the inequalities~\eqref{ineq:bound-expected-moments-sum-z} and~\eqref{ineq:conclusion-bounding-sum-centered} into the inequality~\eqref{ineq:initial-lemma-moment-sum-zj} yields
\begin{align*}
	\E\Bigl\{\Bigl\| \sum_{i \neq j} \bz_i\Bigr\|_2^{2q}\Bigr\} \leq (C K_1 q^2n)^{2q},
\end{align*}
and the lemma is proved upon raising each side of the inequality in the display above to the power $1/(2q)$.
\qed

\subsection{Proof of Lemma~\ref{lem:expectation-leave-one-out}: Controlling the bias}
\label{sec:proof-expectation-leave-one-out}
We recall the leave-one-out notation $\bSig_j$ and $\bz_j$~\eqref{eq:shorthand-sigma-z-loo} as well as the empirical update $\mathcal{T}_n(\bt)$~\eqref{eq:empirics-AM} and apply the Sherman-Morrison formula to obtain
\begin{align*}
\mathcal{T}_n(\bt) = (\bX^{\top}\bX)^{-1}\bX^{\top}\omega(\bX \bt, \by) &= \sum_{i=1}^{n} \Bigl(\bSig_{i} + \bx_i \bx_i^{\top}\Bigr)^{-1}\bx_i \omega(\langle \bx_i, \bt \rangle, y_i) \\
&= \sum_{i=1}^{n}\frac{\bSig_i^{-1}\bx_i}{1 + \bx_i^{\top} \bSig_i^{-1} \bx_i}\omega(\langle \bx_i, \bt \rangle, y_i).
\end{align*}
Before proceeding, we state the following lemma, whose proof we provide in Subsection~\ref{sec:proof-quadratic-form-sigma}.
\begin{lemma}
	\label{lem:quadratic-form-sigma}
	Under the setting of Proposition~\ref{prop:concentration-signal}, there exist universal, positive constants $c_0, c$, and $C$ such that for all $c_0/\sqrt{n} < t \leq 1$,
\begin{align*}
\Pr\Bigl\{\Bigl \lvert \bx_i^{\top}\bSig_{i}^{-1} \bx_i - \frac{d}{n - d -2} \Bigr \rvert \geq t\Bigr\} \leq C \exp\{-c (t\sqrt{n})^{2/3}\}.
\end{align*}
\end{lemma}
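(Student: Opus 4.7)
The plan is to write
\[
\bx_i^\top \bSig_i^{-1}\bx_i - \frac{d}{n-d-2} \;=\; \underbrace{\bigl(\bx_i^\top \bSig_i^{-1}\bx_i - \Tr(\bSig_i^{-1})\bigr)}_{=:\,W_1} \;+\; \underbrace{\bigl(\Tr(\bSig_i^{-1}) - \EE[\Tr(\bSig_i^{-1})]\bigr)}_{=:\,W_2},
\]
using the identity $\EE[\Tr(\bSig_i^{-1})] = d/(n-d-2)$ for an inverse Wishart on $n-1$ samples, and then control $W_1$ and $W_2$ separately. The crucial structural fact exploited throughout is that $\bSig_i$ is independent of $\bx_i$ (by leave-one-out), so that we may freely condition on $\bSig_i$ when working with $W_1$. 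We will show a moment bound of the form $\|W_1+W_2\|_q \leq C q^{3/2}/\sqrt{n}$ for $q$ in a suitable range, which by Markov's inequality (as in the proof of Lemma~\ref{lem:concentration-leave-one-out}) yields exactly the tail $\exp\{-c(t\sqrt{n})^{2/3}\}$ after optimizing over $q$.

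For $W_1$: conditionally on $\bSig_i$, the random variable $\bx_i^\top \bSig_i^{-1} \bx_i$ is a Gaussian quadratic form centered at $\Tr(\bSig_i^{-1})$, so the moment form of the Hanson--Wright inequality gives
\[
\bigl(\EE[|W_1|^q \mid \bSig_i]\bigr)^{1/q} \;\leq\; C\sqrt{q}\,\|\bSig_i^{-1}\|_F + C q \,\|\bSig_i^{-1}\|_{\mathrm{op}}.
\]
Taking unconditional expectations and invoking Lemma~\ref{lem:wishart_mineig_moments} to bound $\|\,\|\bSig_i^{-1}\|_{\mathrm{op}}\,\|_q \lesssim 1/n$ for $q \leq (n-d-1)/2$, together with the trivial bound $\|\bSig_i^{-1}\|_F \leq \sqrt{d}\,\|\bSig_i^{-1}\|_{\mathrm{op}}$, we obtain $\|W_1\|_q \lesssim \sqrt{qd}/n + q/n$, which in the near-linear regime $\kappa \geq C$ is at most $C q/\sqrt{n}$.

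For $W_2$: writing $\bSig_i = \bX_{(-i)}^\top \bX_{(-i)}$ with $\bX_{(-i)} \in \R^{(n-1)\times d}$ a standard Gaussian matrix and noting that $F(\bX_{(-i)}) = \Tr(\bSig_i^{-1})$ has gradient Frobenius norm $2\sqrt{\Tr(\bSig_i^{-3})} \leq 2\sqrt{d}\,\|\bSig_i^{-1}\|_{\mathrm{op}}^{3/2}$, we deduce that on the high-probability event $\{\|\bSig_i^{-1}\|_{\mathrm{op}} \leq C/n\}$ (guaranteed by Lemma~\ref{lem:wishart_mineig_moments}) the map $F$ is $O(\sqrt{d}/n^{3/2})$-Lipschitz. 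A standard truncation-plus-Gaussian-concentration argument (one can either smoothly extend $F$ outside the good event or truncate at the $q$-th moment level directly) then yields $\|W_2\|_q \lesssim \sqrt{qd}/n^{3/2} \lesssim \sqrt{q}/n$, which is strictly dominated by the bound on $\|W_1\|_q$. Combining the two contributions gives $\|W_1+W_2\|_q \leq Cq/\sqrt{n}$; converting to a tail bound via Markov's inequality and optimizing over $q$ (as in Lemma~\ref{lem:truncation-tail-bound}) produces the claimed inequality, with room to spare relative to the stated $2/3$ exponent.

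The main obstacle is the Lipschitz-gradient step in the treatment of $W_2$: the function $\Tr((\bX^\top\bX)^{-1})$ is not globally Lipschitz in $\bX$, so Gaussian Poincaré does not apply directly, and we must either work on a truncated good event (and then carefully absorb the complement into the moment or probability bound) or perform a further leave-one-out decomposition of $\bSig_i$ using the Sherman--Morrison formula exactly as in the proof of Lemma~\ref{lem:bounding-centered-moments}. Either route is straightforward given the machinery already established in the paper, but requires care to match moment bounds with the range $q \leq (n-d-1)/2$ imposed by Lemma~\ref{lem:wishart_mineig_moments}.
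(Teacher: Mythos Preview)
Your proposal is essentially correct and follows the same decomposition as the paper: split into $W_1=\bx_i^\top\bSig_i^{-1}\bx_i-\Tr(\bSig_i^{-1})$ and $W_2=\Tr(\bSig_i^{-1})-d/(n-d-2)$, control $W_1$ via Hanson--Wright (exploiting $\bx_i\indpt\bSig_i$) together with operator-norm control from Lemma~\ref{lem:wishart_mineig_moments}, and control $W_2$ by a separate concentration argument; the $2/3$ exponent then comes from the $W_2$ term via Lemma~\ref{lem:truncation-tail-bound}.

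The implementation details differ slightly. For $W_1$ the paper works directly at the tail level, conditioning on the high-probability event $\{\lambda_{\min}(\bSig_i)\geq c n\}$ and applying the Hanson--Wright tail inequality, whereas you pass through moments; both are fine. For $W_2$ the paper takes the second of your two suggested routes: a leave-one-out Sherman--Morrison expansion of $\Tr(\bSig^{-1})$ combined with the Boucheron--Lugosi--Massart moment inequality (exactly as in Lemma~\ref{lem:bounding-centered-moments}), yielding $\|W_2\|_q\leq Cq^{3/2}/\sqrt{n}$ directly and hence the stated $2/3$ exponent without truncation. Your first route via truncated Gaussian Lipschitz concentration would also work and, as you note, would nominally give a sharper moment bound, but the leave-one-out argument avoids the delicacy of extending the non-globally-Lipschitz map $\bX\mapsto\Tr((\bX^\top\bX)^{-1})$ and dovetails cleanly with the machinery already built for Lemma~\ref{lem:concentration-leave-one-out}.
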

Taking this lemma as given, we proceed to prove Lemma~\ref{lem:expectation-leave-one-out}.
Introduce the shorthand 
\[
U_i = \bx_i^{\top} \bSig_i^{-1} \bx_i - \frac{d}{n- d- 2}, 
\]
so that 
\begin{align}
	\label{decomp:quad-form}
\inprod{\btstar}{\mathcal{T}_n(\bt)} = \sum_{i=1}^{n} \frac{(\btstar)^{\top}\bSig_i^{-1} \bx_i}{1 + \frac{d}{n - d- 2} + U_i} \cdot \omega(\langle \bx_i, \bt \rangle, y_i) = T_1 + T_2,
\end{align}
where we let
\begin{align*}
	T_1 &= \sum_{i=1}^{n} \frac{(\btstar)^{\top}\bSig_i^{-1} \bx_i}{1 + \frac{d}{n - d- 2}} \cdot \omega(\langle \bx_i, \bt \rangle, y_i),\\
	T_2 &= \sum_{i=1}^{n} \frac{(\btstar)^{\top}\bSig_i^{-1} \bx_i U_i}{\bigl(1 + \frac{d}{n - d- 2} + U_i\bigr)\bigl(1 + \frac{d}{n - d- 2}\bigr)} \cdot \omega(\langle \bx_i, \bt \rangle, y_i).
\end{align*}
Note that
\begin{align*}
\EE \{T_1\} &\overset{\1}{=} \frac{n - d - 2}{n - 2} \cdot \inprod{\btstar}{\sum_{i=1}^{n} \EE \{\bSig_{i}^{-1}\} \EE \{\bx_i \omega(\langle \bx_i, \bt \rangle, y_i)\}} \\
&\overset{\2}{=} \frac{n(n - d - 2)}{(n-2)(n - d - 2)} \cdot \EE \{\inprod{\btstar}{\bx_i} \cdot \omega(\langle \bx_i, \bt \rangle, y_i)\},
\end{align*}
where step $\1$ follows since $\bSig_i$ and $\bx_i \omega(\langle \bx_i, \bt \rangle, y_i)$ are independent and step $\2$ follows since $\bSig_{i}^{-1}$ follows the inverse Wishart distribution 
with $n-1$ degrees
of freedom and scale matrix $\bI_{d}$.  Consequently, using the fact that $2d \leq n$ in conjunction with sub-Gaussianity of $\omega(\langle \bx_i, \bt \rangle, y_i)$ by Assumption~\ref{ass:omega}, we deduce
\begin{align}
	\label{ineq:quad-form-T1}
	\bigl \lvert \EE \{ T_ 1\} - \EE \{ \inprod{\btstar}{\bx_i} \cdot \omega(\langle \bx_i, \bt \rangle, y_i)\} \bigr \rvert  \leq \frac{C K_1}{n}.
\end{align}
We turn now to bounding the term $T_2$.  Note that the denominator is each summand of $T_2$ is lower bounded by $1$, since $\bSig^{-1}$ is a PSD matrix. This, in conjunction with the triangle inequality, yields
\begin{align*}
	\lvert \EE\{T_2\} \rvert \leq  \sum_{i=1}^{n} \EE\bigl\{\lvert U_i \omega(\langle \bx_i, \bt \rangle, y_i) (\btstar)^{\top} \bSig_{i}^{-1} \bx_i \rvert \bigr\}
	\overset{\1}{\leq} n \sqrt{\EE \{\langle \btstar, \bSig_{i}^{-1} \bx_i\rangle^2\}}\sqrt{\EE \{U_i^2 \omega(\langle \bx_i, \bt \rangle, y_i)^2 \}},
\end{align*}
where
step $\1$ follows from the Cauchy-Schwarz inequality.  Next, we have 
\[
\EE \{ \inprod{\btstar}{\bSig_{i}^{-1} \bx_i}^2\} = \EE\bigl\{\EE\{ \inprod{\btstar}{\bSig_{i}^{-1} \bx_i}^2 \mid \{\bx_j\}_{j \neq i}\}\} \overset{\1}{=} \EE \{\| \bSig_{i}^{-1} \btstar \|_{2}^{2}\} \overset{\2}{\leq} \frac{C}{n^2},
\]
where step $\1$ follows since $\bx_i$ and $\bSig_i^{-1}$ are independent, whence $\bx_i^{\top} \bSig_{i}^{-1}\btstar \sim \NORMAL(0, \| \bSig_{i}^{-1} \btstar \|_2^2)$ and step $\2$ makes use of Lemma~\ref{lem:wishart_mineig_moments} from the appendix.  Once more applying the Cauchy-Schwarz inequality, we obtain
\begin{align*}
\sqrt{\EE \{U_i^2 \omega(\langle \bx_i, \bt \rangle, y_i)^2 \}} \leq (\EE \{U_i^4\})^{1/4} (\EE\{\omega(\langle \bx_i, \bt \rangle, y_i)^4 \})^{1/4} \leq C K_1 (\EE \{U_i^4\})^{1/4},
\end{align*}
where in the last inequality, we have noted that by Assumption~\ref{ass:omega}, $\omega(\langle \bx_i, \bt \rangle, y_i)$ is a sub-Gaussian random variable and subsequently applied the $L^{p}$ characterization of sub-Gaussian random variables.  Now, the integration by parts formula for non-negative random variables implies
\begin{align*}
\EE \{U_i^4\} = \EE \{\lvert U_i \rvert^4\} = \int_{0}^{\infty} 4t^3 \Pr\{\lvert U_i \rvert \geq t\} \mathrm{d}t &\overset{\1}{\leq} \int_{0}^{c_0/\sqrt{n}} 4t^3 \mathrm{d}t + \int_{c_0/\sqrt{n}}^{\infty} 4Ct^3 e^{-c(t\sqrt{n})^{2/3}} \mathrm{d}t \leq \frac{C}{n^2},
\end{align*}
where step $\1$ follows by applying Lemma~\ref{lem:quadratic-form-sigma}.  Putting the pieces together, we obtain
\begin{align}
	\label{ineq:quad-form-T2}
	\lvert \EE \{T_2\} \rvert \leq \frac{CK_1}{\sqrt{n}}.
\end{align}
Finally, substituting the bound on $T_1$~\eqref{ineq:quad-form-T1} and the upper bound on $T_2$~\eqref{ineq:quad-form-T2} into the decomposition~\eqref{decomp:quad-form}, we obtain
\begin{align*}
	\lvert \EE\{\inprod{\btstar}{\mathcal{T}_n(\bt)} \} - \inprod{\btstar}{\mathcal{T}(\bt)} \rvert \leq \frac{C  K_1}{\sqrt{n}},
\end{align*}
as desired.
\qed

\subsubsection{Proof of Lemma~\ref{lem:quadratic-form-sigma}}
\label{sec:proof-quadratic-form-sigma}
We begin with the decomposition
\begin{align*}
	\Bigl \lvert \bx_i^{\top}\bSig_{i}^{-1} \bx_i - \frac{d}{n - d -2} \Bigr \rvert \leq \Bigl \lvert \bx_i^{\top}\bSig_{i}^{-1} \bx_i - \mathsf{Tr}(\bSig_i^{-1}) \Bigr \rvert + \Bigl \lvert \mathsf{Tr}(\bSig_i^{-1}) - \frac{d}{n - d- 2} \Bigr \rvert,
\end{align*}
so that
\begin{align}
	\label{ineq:main-quadratic-form-signal}
\Pr\Bigl\{\Bigl \lvert \bx_i^{\top}\bSig_{i}^{-1} \bx_i - \frac{d}{n - d -2} \Bigr \rvert \geq t\Bigr\} \leq \Pr\Bigl\{ \Bigl \lvert \bx_i^{\top}\bSig_{i}^{-1} \bx_i - \mathsf{Tr}(\bSig_i^{-1}) \Bigr \rvert \geq \frac{t}{2} \Bigr\} + \Pr\Bigl\{\Bigl \lvert \mathsf{Tr}(\bSig_i^{-1}) - \frac{d}{n - d- 2} \Bigr \rvert \geq \frac{t}{2} \Bigr\}.
\end{align}
The result is a consequence of the following claim:
\begin{subequations} \label{eq:suff-claim-lastlemma}
	\begin{align}
		\Pr\Big\{\Bigl \lvert \bx_i^{\top}\bSig_{i}^{-1} \bx_i - \mathsf{Tr}(\bSig_i^{-1}) \Bigr \rvert \geq \frac{t}{2}\Bigr\} &\leq 2\exp\Bigl\{-c\min\Bigl(\frac{(nt)^2}{d},  nt\Bigr)\Bigr\} + 2e^{-cn},\label{ineq:bound-deviation-T1}\\
		\Pr\Bigl\{\Bigl \lvert \mathsf{Tr}(\bSig_i^{-1}) - \frac{d}{n - d- 2} \Bigr \rvert \geq \frac{t}{2} \Bigr\} &\leq C\exp\{-c(t\sqrt{n})^{2/3}\} + e^{-cn}.	\label{ineq:bound-deviation-T2b}
\end{align}
\end{subequations}
Indeed, Lemma~\ref{lem:quadratic-form-sigma} follows immediately from substituting claim~\ref{eq:suff-claim-lastlemma} into inequality~\eqref{ineq:main-quadratic-form-signal} and using the condition $n \geq Cd$ to simplify.

\noindent It remains to prove claim~\eqref{eq:suff-claim-lastlemma}.

\paragraph{Proof of the inequality~\eqref{ineq:bound-deviation-T1}.}
To begin, define the event
\[
\mathcal{A}_i := \{\lambda_{\min} := \lambda_{\min}(\bSig_i) \geq c_2 d\},
\]
and apply~\citet[Theorem 4.6.1]{vershynin2018high} to obtain
\[
\Pr\{\mathcal{A}_i^{c}\} \leq 2e^{-cn}.
\]
We then see that
\begin{align}
	\label{ineq:bound-deviation-T1-initial}
\Pr\Big\{\Bigl \lvert \bx_i^{\top}\bSig_{i}^{-1} \bx_i - \mathsf{Tr}(\bSig_i^{-1}) \Bigr \rvert \geq t\Bigr\} \leq \Pr\Big\{\Bigl \lvert \bx_i^{\top}\bSig_{i}^{-1} \bx_i - \mathsf{Tr}(\bSig_i^{-1}) \Bigr \rvert \geq t, \mathcal{A}_i\Bigr\} + 2e^{-cn}.
\end{align}
Then, for any fixed $\bSig_i$, we note that $\EE_{\bx_i} \{\bx_i^{\top} \bSig_{i}^{-1} \bx_i\} = \mathsf{Tr}(\bSig_{i}^{-1})$ and subsequently apply the Hanson-Wright inequality~\citet[Theorem 6.2.1]{vershynin2018high}, to obtain
\begin{align}
	\label{ineq:bound-deviation-T1-fixed}
	\Pr\Big\{\Bigl \lvert \bx_i^{\top}\bSig_{i}^{-1} \bx_i - \mathsf{Tr}(\bSig_i^{-1}) \Bigr \rvert \geq t\Bigr\} &\leq 2\exp\Bigl\{-c\min\Bigl(\frac{t^2}{\| \bSig_{i}^{-1} \|_{F}^2}, \frac{t}{\| \bSig_{i}^{-1}\|_{\mathsf{op}}}\Bigr)\Bigr\}\nonumber\\
	&\overset{\1}{\leq} 2\exp\Bigl\{-c\min\Bigl(\frac{t^2}{d\| \bSig_{i}^{-1} \|_{\mathsf{op}}^2}, \frac{t}{\| \bSig_{i}^{-1}\|_{\mathsf{op}}}\Bigr)\Bigr\}\nonumber\\
	&= 2\exp\Bigl\{-c\min\Bigl(\frac{ \lambda^2_{\min}  t^2}{d}, \lambda^2_{\min} t\Bigr)\Bigr\},
\end{align}
where step $\1$ follows from the chain of inequalities (which hold for any matrix $\bA$) 
\[
\|\bA \|_{F}^{2} = \mathsf{Tr}(\bA^{\top}\bA) \leq d \| \bA \|_{\mathsf{op}}^2.
\]
Subsequently, 
substituting the inequality~\eqref{ineq:bound-deviation-T1-fixed} into the inequality~\eqref{ineq:bound-deviation-T1-initial}, we obtain
\begin{align*}
	\Pr\Big\{\Bigl \lvert \bx_i^{\top}\bSig_{i}^{-1} \bx_i - \mathsf{Tr}(\bSig_i^{-1}) \Bigr \rvert \geq t\Bigr\} \leq 2\exp\Bigl\{-c\min\Bigl(\frac{(nt)^2}{d},  nt\Bigr)\Bigr\} + 2e^{-cn}.
\end{align*}

\paragraph{Proof of the inequality~\eqref{ineq:bound-deviation-T2b}.}  The proof of this statement follows a similar strategy to that of Lemma~\ref{lem:concentration-leave-one-out}.  In lighten notation, we prove the inequality for the full matrix $\bSig$ rather than the leave-one-out matrix $\bSig_i$. Since $\bSig^{-1}$ follows an inverse Wishart distribution with $n$ degrees of freedom (instead of $n - 1$ for the matrix $\bSig_i^{-1}$), it suffices to show that
\begin{align} \label{eq:equiv-claim-last}
\Pr\Bigl\{\Bigl \lvert \mathsf{Tr}(\bSig^{-1}) - \frac{d}{n - d- 1} \Bigr \rvert \geq \frac{t}{2} \Bigr\} &\leq C\exp\{-c(t\sqrt{n})^{2/3}\} + e^{-cn}.
\end{align}

Define the random variable 
$Z := \mathsf{Tr}(\bSig^{-1})$,
and notice that the Sherman-Morrison formula implies 
\[
Z = \mathsf{Tr}(\bSig_{j}^{-1}) - \frac{1}{1 + \bx_j^{\top} \bSig_{j}^{-1} \bx_j}\mathsf{Tr}(\bSig_{j}^{-1} \bx_j \bx_j^{\top} \bSig_j^{-1}).
\]
Then, for each $j \in [n]$, define the random variable
\[
Z_{j}' := \mathsf{Tr}(\bSig_{j}^{-1}) - \frac{1}{1 + \bx_j'^{\top} \bSig_{j}^{-1} \bx_j'}\mathsf{Tr}(\bSig_{j}^{-1} \bx_j' \bx_j'^{\top} \bSig_j^{-1}),
\]
and note the following inequality, whose proof is analogous to the proof of the inequality~\eqref{eq:Z-moment-ub-V} used in Lemma~\ref{lem:concentration-leave-one-out}:
\begin{align}
	\label{ineq:Z-moment-ub-T2}
	\| Z - \EE Z \|_{q}  \leq C\sqrt{q} \biggl(\sum_{j=1}^{n} \bigl\| \E \bigl\{(Z - Z_j')^2 \big \lvert \{\bx_i\}_{i=1}^{n}\bigr\} \bigr\|_{q} \biggr)^{1/2}, \qquad \text{ for all  integers } q \geq 1. 
\end{align}
Subsequently applying Jensen's inequality to the function $x \mapsto x^{q}$, which is convex on $\mathbb{R}_{+}$ for $q \geq 1$ followed by the tower property of conditional expectation then yields
\[
\bigl\| \E \bigl\{(Z - Z_j')^2 \big \lvert \{\bx_i\}_{i=1}^{n}\bigr\} \bigr\|_{q} \leq \bigl(\E \bigl\{(Z - Z_j')^{2q} \bigr\}\bigr)^{1/q}.
\]
Continuing, we have
\begin{align*}
	\E \bigl\{(Z - Z_j')^{2q} \bigr\} &= \EE\biggl\{\biggl(\frac{\mathsf{Tr}(\bSig_{j}^{-1} \bx_j \bx_j^{\top} \bSig_j^{-1})}{1 + \bx_j^{\top} \bSig_{j}^{-1} \bx_j} - \frac{\mathsf{Tr}(\bSig_{j}^{-1} \bx_j' \bx_j'^{\top} \bSig_j^{-1})}{1 + \bx_j'^{\top} \bSig_{j}^{-1} \bx_j'}\biggr)^{2q} \biggr\}\\
	&\overset{\1}{\leq} 2^{2q} \cdot \EE\biggl\{\biggl(\frac{\mathsf{Tr}(\bSig_{j}^{-1} \bx_j \bx_j^{\top} \bSig_j^{-1})}{1 + \bx_j^{\top} \bSig_{j}^{-1} \bx_j}\biggr)^{2q} \biggr\}\\
	&\overset{\2}{\leq} 2^{2q} \cdot \EE\bigl\{\bigl(\mathsf{Tr}(\bSig_{j}^{-1} \bx_j \bx_j^{\top} \bSig_j^{-1})\bigr)^{2q} \bigr\}.
\end{align*}
Here step $\1$ follows by using the numeric inequality $(a + b)^{2q} \leq 2^{2q - 1}(a^{2q} + b^{2q})$ as well as the fact that $\bx_j, \bx_j'$ are identically distributed. On the other hand, step $\2$ follows since $\bSig_{j}^{-1}$ is positive semidefinite.  We then obtain the following chain of inequalities:
\begin{align}
	\label{ineq:trace-product-q-moment}
	\EE\bigl\{\bigl(\mathsf{Tr}(\bSig_{j}^{-1} \bx_j \bx_j^{\top} \bSig_j^{-1})\bigr)^{2q} \bigr\} = \EE\bigl\{\bigl(\mathsf{Tr}(\bSig_{j}^{-2} \bx_j \bx_j^{\top} )\bigr)^{2q} \bigr\} \overset{\1}{\leq} \EE\bigl\{\| \bSig_j^{-2} \|_{\mathsf{op}}^{2q}\bigr\}\EE\bigl\{\bigl(\mathsf{Tr}(\bx_j \bx_j^{\top})\bigr)^{2q} \bigr\},
\end{align}
where in step $\1$, we used the bound $\Tr(\bA_1 \bA_2) \leq \| \bA_1 \|_{\mathsf{op}} \mathsf{Tr}(\bA_2)$ (which holds as long as $\bA_1$ is positive semidefinite) as well as the fact that $\bSig_j$ and $\bx_j$ are independent.  Now, recall that $X_{jk}$ refers to the $jk$-th entry of the data matrix $\bX$ and note that
\begin{align}
	\label{ineq:trace-q-moment}
	\EE\bigl\{\bigl(\mathsf{Tr}(\bx_j \bx_j^{\top})\bigr)^{2q} \bigr\} = \EE\Bigl\{ \Bigl(\sum_{k=1}^{d} X_{jk}\Bigr)^{2q}\Bigr\} \overset{\1}{\leq} d^{2q} \EE X_{jk}^{4q} \overset{\2}{\leq} (Cdq)^{2q},
\end{align}
where step $\1$ follows by applying Jensen's inequality to the term $\left( (1/d) \cdot \sum_{k=1}^{d}X_{jk} \right)^{2q}$ and step $\2$ follows since $X_{jk} \sim \NORMAL(0, 1)$.  Additionally, we note that Lemma~\ref{lem:wishart_mineig_moments} implies
\begin{align}
	\label{ineq:Sigma-operator-norm-moments}
\EE\bigl\{\| \bSig_j^{-2} \|_{\mathsf{op}}^{2q}\bigr\} = \EE\bigl\{\| \bSig_j^{-1} \|_{\mathsf{op}}^{4q}\bigr\} \leq \Bigl(\frac{C}{n}\Bigr)^{4q}, \qquad \text{ for } 1 \leq q < \frac{n - d - 1}{16}.
\end{align}
Thus, substituting the inequalities~\eqref{ineq:trace-q-moment} and~\eqref{ineq:Sigma-operator-norm-moments} into the RHS of the inequality~\eqref{ineq:trace-product-q-moment} yields
\[
\EE\bigl\{\bigl(\mathsf{Tr}(\bSig_{j}^{-1} \bx_j \bx_j^{\top} \bSig_j^{-1})\bigr)^{2q} \bigr\} \leq \Bigl(\frac{Cq}{n}\Bigr)^{2q}, \qquad \text{ for } 1 \leq q < \frac{n - d - 1}{16}.
\]
Next, substituting the above display into the inequality~\eqref{ineq:Z-moment-ub-T2}, we obtain
\[
\| Z - \EE Z \|_{q} \leq \frac{C q^{3/2}}{\sqrt{n}}, \qquad \text{ for } 1 \leq q < \frac{n - d - 1}{16}.
\]
Now, invoking Lemma~\ref{lem:truncation-tail-bound}, we obtain the tail bound
\begin{align}
	\label{ineq:bound-deviation-T2}
	\Pr\bigl\{\bigl \lvert Z - \EE Z \bigr \rvert \geq t\bigr\} \leq Ce^{-c(t\sqrt{n})^{2/3}} + e^{-cn}.
\end{align}
Finally, recalling that
$\bSig^{-1}$ follows the inverse Wishart distribution with $n$ degrees of freedom and scale matrix $\bI_{d}$, we see that
$\EE Z = \mathsf{Tr}(\bSig^{-1}) - \frac{d}{n - d- 1}$, and this proves claim~\eqref{eq:suff-claim-lastlemma} as desired. 
\qed


\section{Proofs of results for specific models} \label{sec:proofs-specific}

Recall the shorthand $\rho = \perpcomp / \parcomp$ and $\anglecurr = \tan^{-1} (\rho)$. Also recall our definition of the good region:
\begin{align*}
\Goodset = \{ \bzeta = (\parcomp, \perpcomp): \; 0.55 \leq \parcomp \leq 1.05, \quad \text{ and } \quad \rho \leq 1/5 \}.
\end{align*}
Note that by definition, this ensures that $\perpcomp \leq 0.21$ for all $(\parcomp, \perpcomp) \in \Goodset$.
The shorthand $A_{\noisestd}(\rho) = \frac{2}{\pi}\tan^{-1}\left(\sqrt{\rho^2 + \noisestd^2 + \noisestd^2 \rho^2}\right)$ and $B_{\noisestd}(\rho) = \frac{2}{\pi} \frac{\sqrt{\rho^2 + \noisestd^2 + \noisestd^2\rho^2}}{1 + \rho^2}$ was defined in equation~\eqref{eq:AB-shorthand}. Using these, define the functions
\begin{align*} 
F(\parcomp, \perpcomp) &= 1 - \frac{1}{\pi} (2 \anglecurr - \sin(2\anglecurr)) \\
G(\parcomp, \perpcomp) &= \sqrt{\frac{4}{\pi^2}\sin^4(\anglecurr) + \frac{1}{\kappa - 1}\left(1 - (1 - \frac{1}{\pi}(2\anglecurr - \sin(2\anglecurr)))^2 - \frac{4}{\pi^2}\sin^4(\anglecurr) + \noisestd^2 \right)}, \\
g(\parcomp, \perpcomp) &= \sqrt{\frac{4}{\pi^2} \sin^4 \anglecurr + \frac{1}{\kappa} \Big\{ \parcomp^2 + \perpcomp^2 - 2\parcomp \left(1 - \frac{1}{\pi} (2 \anglecurr - \sin(2 \anglecurr)) \right) - 2\perpcomp \cdot \frac{2}{\pi} \sin^2 \anglecurr + 1 + \noisestd^2 \Big\} }, \\
F_{\noisestd}(\parcomp, \perpcomp) &= 1 - A_{\noisestd}(\rho) + B_{\noisestd}(\rho), \\
G_{\noisestd}(\parcomp, \perpcomp) &=  \sqrt{\rho^2 B_{\noisestd}(\rho)^2 + \frac{1}{\kappa - 1}\left(1 + \noisestd^2 - (1 - A_{\noisestd}(\rho) + B_{\noisestd}(\rho))^2 - \rho^2 B_{\noisestd}(\rho)^2\right)}, \text{ and } \\
g_{\noisestd}(\parcomp, \perpcomp) &= \sqrt{ \rho^2 B_{\noisestd}(\rho)^2
 + \frac{1}{\kappa}\left\{ \parcomp^2 + \perpcomp^2 - 2\parcomp(1 - A_{\noisestd}(\rho) + B_{\noisestd}(\rho)) - 2 \perpcomp \rho B_{\noisestd}(\rho) + 1 + \noisestd^2 \right\} }.
\end{align*}
The pair $(F, G)$ denotes the $(\parcompgordon, \perpcompgordon)$ map for the alternating minimization update for phase retrieval, $(F, g)$ the map for subgradient descent in phase retrieval with stepsize $\eta = 1/2$, $(F_{\noisestd}, G_{\noisestd})$ the map for alternating minimization for mixtures of regressions, and $(F_{\noisestd}, g_{\noisestd})$ the map for subgradient method in mixtures of regressions with stepsize $\eta = 1/2$.
With this notation defined, we collect some preliminary lemmas.

\subsection{Preliminary lemmas} \label{sec:prelim-lemmas}

The first two lemmas collect properties of the maps defined above, and are proved in Sections~\ref{app:pf-maps} and~\ref{app:pf-gradmaps} of the appendix, respectively.
A key consequence of these lemmas is that we obtain bounds on the the derivatives of the $\parcompgordon$ and $\perpcompgordon$ maps when evaluated for any element in this region.

\begin{lemma} \label{lem:map-ineqs}
Suppose $(\parcomp, \perpcomp, \noisestd, \kappa)$ are all nonnegative scalars. There is a universal positive constant $C$ such that the maps above satisfy the following relations.
\begin{enumerate}[label=(\alph*)]
\item For all $(\parcomp, \perpcomp)$ pairs, we have $\left( 1 - \frac{4\anglecurr^3}{3\pi} \right) \lor 0 \leq F_0 (\parcomp, \perpcomp)\leq 1$. Additionally, if $\rho \leq 1/5$, then $1 - F_0 \geq \frac{2}{5} \cdot \anglecurr^3$.
\item For all $(\parcomp, \perpcomp)$ pairs satisfying $\rho \geq 2$ and $\perpcomp \leq 1$, we have $F_0(\parcomp, \perpcomp) \geq 1.06 \parcomp$.
\item For all $(\parcomp, \perpcomp)$ pairs, $F_{\noisestd}(\parcomp, \perpcomp)$ is non-decreasing in $\noisestd$ and $F_{\noisestd}(\parcomp, \perpcomp) \leq 1 + \frac{2\noisestd^3}{3\pi}$ for all $\noisestd \geq 0$.
\item If $\noisestd \leq 0.5$ and $\kappa \geq C$, then for all $(\parcomp, \perpcomp)$ pairs, we have
\begin{align*}
\frac{1}{\sqrt{\kappa - 1}} (1 - [F_{\noisestd}(\parcomp, \perpcomp)]^2)^{1/2} \leq G_{\noisestd}(\parcomp, \perpcomp) \leq 0.8.
\end{align*}
\item If $\kappa \geq C$, then for all $(\parcomp, \perpcomp) \in \Goodset$, we have $[G_0(\parcomp, \perpcomp)]^2 \leq \frac{\anglecurr^3}{10}$.
\item For all $\noisestd \geq 0$, we have
\begin{align*}
\frac{\kappa - 1}{\kappa} \cdot [G_{\noisestd} (\parcomp, \perpcomp)]^2 \leq [g_{\noisestd}(\parcomp, \perpcomp)]^2 \leq [G_{\noisestd} (\parcomp, \perpcomp)]^2 + \frac{2}{\kappa} \left( (1 - \parcomp)^2 + \perpcomp^2 \right) + \frac{2}{\kappa} [\rho B_{\noisestd}(\rho)]^2 + \frac{2}{\kappa} [1 - F_{\noisestd}(\parcomp, \perpcomp)]^2.
\end{align*}
\item If $\noisestd \leq 0.5$ and $\kappa \geq C$, then for all $\rho \leq 2$, we have
\begin{align*}
\left( 1 + \frac{2\noisestd^3}{3\pi} \right)^{-1} \cdot \sqrt{ [\rho B_{\noisestd}(\rho)]^2 \cdot \frac{\kappa - 2}{\kappa - 1} + \frac{\noisestd^2}{2 (\kappa - 1)}} \leq \frac{G_{\noisestd}(\parcomp, \perpcomp)}{F_{\noisestd}(\parcomp, \perpcomp)} \leq \frac{4}{5} \cdot \rho + \frac{2\noisestd}{\sqrt{\kappa - 1}}.
\end{align*}
\end{enumerate}
\end{lemma}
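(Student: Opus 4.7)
The lemma consists of seven distinct claims about the Gordon state evolution maps, and the proof plan naturally splits along this classification. Parts (a), (b), and (e) concern the noiseless phase retrieval maps $F_0$ and $G_0$, whose dependence is captured entirely by the single angle $\phi = \tan^{-1}(\rho)$; parts (c), (d), and (g) concern the mixture-of-regressions maps $F_\sigma$ and $G_\sigma$, where the auxiliary substitution $u = \sqrt{\rho^2 + \sigma^2(1+\rho^2)}$ will simplify the algebra; and part (f) relates the first-order map $g_\sigma$ to the higher-order map $G_\sigma$ purely algebraically, by completing squares.

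For the trigonometric bounds in (a), (b), and (e), the plan is to use the standard Taylor-type inequalities $\tfrac{x^3}{6} - \tfrac{x^5}{120} \leq x - \sin x \leq \tfrac{x^3}{6}$ for $x \geq 0$, applied at $x = 2\phi$. Part (a) follows directly, and the strict gap $1 - F_0 \geq \tfrac{2}{5}\phi^3$ under $\rho \leq 1/5$ comes from the $x^5$ refinement (since $(2\phi)^2/5 \leq 4/125$ on that range). For part (b), setting $\rho = \tan\phi$ reduces the claim to showing $\rho F_0(\rho) \geq 1.06$ on $[\tan^{-1} 2, \pi/2)$; the boundary value is $\rho F_0(\rho) \approx 1.10$ at $\rho = 2$, the limit as $\rho \to \infty$ is $4/\pi \approx 1.27$, and monotonicity in between can be verified by direct calculation of $\tfrac{d}{d\phi}(\tan\phi \cdot F_0)$. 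Part (e) combines (a) with $\sin^4\phi \leq \phi^4 \leq \phi^3/5$ on $\Goodset$ to give $\tfrac{4\sin^4\phi}{\pi^2} \leq \tfrac{4\phi^3}{5\pi^2}$, and the $\order(1/(\kappa-1))$ remainder $\tfrac{1-F_0^2}{\kappa-1} \leq \tfrac{8\phi^3}{3\pi(\kappa-1)}$ (using $1-F_0^2 \leq 2(1-F_0)$) is absorbed once $\kappa$ is large.

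For the noise-dependent maps, the crucial substitution $u = u(\sigma)$ above gives $F_\sigma = 1 - \tfrac{2}{\pi}\tan^{-1} u + \tfrac{2u}{\pi(1+\rho^2)}$; differentiating with respect to $s = \sigma^2$ yields
\[
\frac{d F_\sigma}{ds} \;=\; \frac{s(1+\rho^2)}{\pi\, u(s)(1+u(s)^2)} \;\geq\; 0,
\]
proving monotonicity, and integrating against $u \geq \sqrt{s}$ and $(1+\rho^2)/(1+u^2) \leq 1$ yields $F_\sigma - F_0 \leq \tfrac{2\sigma^3}{3\pi}$, giving (c). Part (d) combines this with the probabilistic Bessel identity $1 + \sigma^2 = \EE[\Omega^2] \geq (\EE[Z_1\Omega])^2 + (\EE[Z_2\Omega])^2 = F_\sigma^2 + \rho^2 B_\sigma^2$, which gives both the nonnegativity needed to lower-bound $G_\sigma^2 \geq \tfrac{1 - F_\sigma^2}{\kappa-1}$ and, via the uniform estimate $u^2 \leq (1+\rho^2)(1+\sigma^2)$ implying $\rho^2 B_\sigma^2 \leq \tfrac{4(1+\sigma^2)}{\pi^2}$, the upper bound $G_\sigma \leq 0.8$ once $\kappa$ is large enough. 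Part (f) is purely algebraic: expanding the quadratic inside $g_\sigma^2$ as $(\alpha - F_\sigma)^2 + (\beta - \rho B_\sigma)^2 + (1 + \sigma^2 - F_\sigma^2 - \rho^2 B_\sigma^2)$ and using $(a-b)^2 \leq 2(a^2+b^2)$ together with the Bessel nonnegativity produces both inequalities after reconciling the $1/\kappa$ and $1/(\kappa-1)$ factors.

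The main obstacle will be part (g), which requires tight two-sided control of $G_\sigma/F_\sigma$ across the full two-parameter region $\rho \in [0,2]$, $\sigma \in [0,0.5]$. The lower bound is tractable: from (c), $F_\sigma \leq 1 + \tfrac{2\sigma^3}{3\pi}$, and for $\sigma \leq 0.5$ this implies $1 + \sigma^2 - F_\sigma^2 \geq \sigma^2/2$, which together with the identity $G_\sigma^2 = (\rho B_\sigma)^2\tfrac{\kappa-2}{\kappa-1} + \tfrac{1+\sigma^2 - F_\sigma^2}{\kappa-1}$ delivers the claimed estimate. The upper bound, however, cannot be obtained by the naive split $\sqrt{a+b} \leq \sqrt{a}+\sqrt{b}$, since the resulting pieces $B_\sigma/F_\sigma$ and $\sqrt{1+\sigma^2-F_\sigma^2}/F_\sigma$ do not individually match the target. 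Instead, the plan is to square the target inequality and verify it directly by separating the $\kappa$-independent, $\kappa^{-1/2}$, and $\kappa^{-1}$ contributions; the $\kappa$-independent inequality then reduces to the pointwise bound $B_\sigma/F_\sigma \leq 4/5$ on the box $[0,2]\times[0,0.5]$, which must be verified by calculus (the extreme points give the values $\approx 0.31$ at $\rho=0,\sigma=0.5$ and $\approx 0.53$ at $\rho=2,\sigma=0.5$, and the ratio is analyzed via joint monotonicity in $(\rho,\sigma)$). The cross term $\tfrac{16\rho\sigma}{5\sqrt{\kappa-1}}$ on the right then dominates the $\order(1/\kappa)$ residual once $\kappa$ exceeds a suitable absolute constant, closing the argument. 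This monotonicity verification and its interplay with the Bessel inequality is the technically most delicate step.
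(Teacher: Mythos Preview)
Your plan for parts (a), (d), (e), (f), and the lower bound in (g) lines up with the paper essentially verbatim. Parts (b) and (c) take different but valid routes: for (b) you check monotonicity of $\rho \mapsto \rho F_0$, while the paper uses the substitution $\zeta = \pi/2 - \phi$ and the Taylor bounds $\sin x \geq x - x^3/6$, $\tan^{-1} x \geq x - x^3/3$ to obtain $F_0 \geq \tfrac{4}{\pi}\tfrac{\alpha}{\beta} - \tfrac{8}{3\pi}(\tfrac{\alpha}{\beta})^3 \geq \tfrac{10}{3\pi}\tfrac{\alpha}{\beta} > 1.06\,\alpha/\beta \geq 1.06\,\alpha$ directly (avoiding any monotonicity check). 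For (c) you integrate $\partial F_\sigma/\partial(\sigma^2)$, while the paper instead observes that $\partial F_\sigma/\partial\rho \leq 0$, reduces to $\rho = 0$, and uses $\tan^{-1}\sigma \geq \sigma - \sigma^3/3$. Both routes are clean; the paper's are slightly shorter.

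The one genuine issue is the upper bound in part (g). Your assertion that the naive split $\sqrt{a+b} \leq \sqrt{a} + \sqrt{b}$ fails is incorrect, and your proposed absorption mechanism via the cross term $\tfrac{16\rho\sigma}{5\sqrt{\kappa-1}}$ breaks at $\sigma = 0$: there the cross term vanishes, yet the residual $\tfrac{1 - F_0^2}{F_0^2(\kappa - 1)}$ is of order $1/(\kappa - 1)$ and nonzero (e.g.\ at $\rho = 2$ it is roughly $2.3/(\kappa-1)$). The paper proceeds exactly by the three-term split $\sqrt{a+b+c} \leq \sqrt a + \sqrt b + \sqrt c$ applied to
\[
G_\sigma^2 \;\leq\; (\rho B_\sigma)^2 \;+\; \frac{\sigma^2}{\kappa-1} \;+\; \frac{(1+F_\sigma)(1-F_\sigma)}{\kappa-1},
\]
and then uses $1 - F_\sigma \leq 1 - F_0 \leq \tfrac{4\phi^3}{3\pi}$ together with $\phi \leq \rho$ and $\phi \leq 1.12$ on $\rho \leq 2$ to bound the third piece by $\tfrac{C\rho^2}{\kappa-1}$, giving $G_\sigma \leq \rho B_\sigma + \tfrac{1.05\rho}{\sqrt{\kappa-1}} + \tfrac{1.1\sigma}{\sqrt{\kappa-1}}$. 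The key step is then the algebraic identity
\[
\frac{\rho B_\sigma}{F_\sigma} \;=\; \rho\Bigl(1 - \frac{1 - A_\sigma}{1 - A_\sigma + B_\sigma}\Bigr),
\]
combined with the pointwise numerics $A_\sigma(\rho) \leq 0.74$ and $B_\sigma(\rho) \leq 0.4$ on the box $[0,2]\times[0,0.5]$, which give $\tfrac{1 - A_\sigma}{F_\sigma} \geq \tfrac14$ and hence $\rho B_\sigma/F_\sigma \leq \tfrac{3}{4}\rho$. The overflow term $\tfrac{1.05\rho}{F_\sigma\sqrt{\kappa-1}}$ (note: proportional to $\rho$, not $\sigma$) is then absorbed into the slack $\tfrac{4}{5} - \tfrac{3}{4}$ once $\kappa \geq C$. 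So the absorption is driven by strict inequality in $B_\sigma/F_\sigma < 4/5$, not by the $\rho\sigma$ cross term.
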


\begin{lemma} \label{lem:grad-map-ineqs}
Suppose $(\parcomp, \perpcomp, \noisestd, \kappa)$ are all nonnegative scalars. There is a universal positive constant $C$ such that the gradients of the maps above satisfy the following relations.
\begin{enumerate}[label=(\alph*)]
\item If $\noisestd \leq 1/2$, we have $\| \nabla F_{\noisestd}(\parcomp, \perpcomp) \|_1 \leq 0.5$ for all $\rho \leq 1/4$ and $\parcomp \geq 1/2$.
\item If $\noisestd \leq 1/2$ and $\kappa \geq C$, we have $\| \nabla G_{\noisestd}(\parcomp, \perpcomp) \|_1 \leq 0.98$ for all $\rho \leq 1/4$ and $\parcomp \geq 1/2$.
\item For all $(\parcomp, \perpcomp)$ pairs, we have $\| \nabla G(\parcomp, \perpcomp) \|_1 \leq \| \nabla G_0 (\parcomp, \perpcomp) \|_1$ and $\| \nabla g(\parcomp, \perpcomp) \|_1 \leq \| \nabla g_0 (\parcomp, \perpcomp) \|_1$.
\item If $\noisestd \leq 1/2$ and $\rho \leq 1/4$, we have $\| \nabla g_{\noisestd} (\parcomp, \perpcomp) \|_1 \leq \| \nabla G_{\noisestd} (\parcomp, \perpcomp) \|_1 + \frac{1}{\sqrt{\kappa}} \left( 3 + \| \nabla F_{\noisestd} (\parcomp, \perpcomp) \|_1 \right)$. 
\end{enumerate}
\end{lemma}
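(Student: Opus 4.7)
The plan is to attack the four parts in order, with parts (c) and (d) being algebraic consequences of identities and the main analytical work concentrated in parts (a) and (b). Throughout, a key simplification is that $F_{\noisestd}$, $G_{\noisestd}$, $A_{\noisestd}$, and $B_{\noisestd}$ depend on $(\parcomp,\perpcomp)$ only through the ratio $\rho=\perpcomp/\parcomp$. Hence $\partial_{\parcomp} H = -(\perpcomp/\parcomp^{2})\,H'(\rho)$ and $\partial_{\perpcomp}H = (1/\parcomp)\,H'(\rho)$ for any such $H$, which gives the unified identity
\begin{equation*}
\|\nabla H(\parcomp,\perpcomp)\|_{1} \;=\; \frac{1+\rho}{\parcomp}\,|H'(\rho)|.
\end{equation*}
So for part (a) it suffices to bound $|F_{\noisestd}'(\rho)|=|B_{\noisestd}'(\rho)-A_{\noisestd}'(\rho)|$ and combine with $\parcomp\geq 1/2$ and $\rho\leq 1/4$. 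First I would compute the derivatives of $A_{\noisestd}$ and $B_{\noisestd}$ in closed form using the intermediate variable $u(\rho)=\rho^{2}(1+\noisestd^{2})+\noisestd^{2}$, obtaining
\begin{equation*}
A_{\noisestd}'(\rho) \;=\; \tfrac{2}{\pi}\cdot\tfrac{\rho(1+\noisestd^{2})}{(1+u)\sqrt{u}},\qquad
B_{\noisestd}'(\rho) \;=\; \tfrac{2\rho[(1-\rho^{2})-\noisestd^{2}(1+\rho^{2})]}{\pi(1+\rho^{2})^{2}\sqrt{u}},
\end{equation*}
then uniformly upper-bound $|B_{\noisestd}'-A_{\noisestd}'|\cdot(1+\rho)$ over $\rho\in[0,1/4]$, $\noisestd\in[0,1/2]$. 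Since $u\geq \rho^{2}$ only when $\noisestd=0$ and otherwise $\sqrt{u}\geq\noisestd$, one has to treat $\rho\ll\noisestd$ and $\rho\gtrsim\noisestd$ separately; in each regime a crude triangle inequality plus the bounds $1+\rho\leq 5/4$ and $\parcomp\geq 1/2$ gives a sharp enough estimate to reach the threshold $0.5$.

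For part (b), I would differentiate $G_{\noisestd}^{2}=\rho^{2}B_{\noisestd}(\rho)^{2}+\tfrac{1}{\kappa-1}[1+\noisestd^{2}-F_{\noisestd}^{2}-\rho^{2}B_{\noisestd}^{2}]$ and apply $\nabla G_{\noisestd}=(2G_{\noisestd})^{-1}\nabla(G_{\noisestd}^{2})$. Using Lemma~\ref{lem:map-ineqs}(d) to lower-bound $G_{\noisestd}$ by $(\kappa-1)^{-1/2}(1-F_{\noisestd}^{2})^{1/2}$, the denominator is controlled, while the numerator reduces to $|\partial_{\rho}(\rho^{2}B_{\noisestd}^{2})|(1+\rho)/\parcomp$ plus a $1/(\kappa-1)$ correction. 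The leading term $\rho B_{\noisestd}$ is essentially $\rho$ times an $O(1)$ factor, and on $\rho\leq 1/4$ one gets $\|\nabla G_{\noisestd}\|_{1}\leq c<1$ with $c$ approaching a value strictly below $1$; the explicit constant $0.98$ is obtained by a short calculus check at the worst-case $\rho=1/4$ and $\noisestd=1/2$, combined with monotonicity in $\kappa$ (which forces us to also require $\kappa\geq C$ to absorb the $1/(\kappa-1)$ correction). I expect this numerical bookkeeping to be the main obstacle, as the $0.98$ constant leaves little slack, so we will need to track how $\parcomp\geq 1/2$ and $\rho\leq 1/4$ interact with the square-root denominator with care.

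Part (c) is short. Both pairs $(G,G_{0})$ and $(g,g_{0})$ satisfy identities of the form $G^{2}=G_{0}^{2}+c\,\noisestd^{2}$ and $g^{2}=g_{0}^{2}+c'\,\noisestd^{2}$ for constants $c=1/(\kappa-1)$ and $c'=1/\kappa$ that do not depend on $(\parcomp,\perpcomp)$. Hence $\nabla(G^{2})=\nabla(G_{0}^{2})$ and likewise for $g$, and the chain rule gives $\nabla G = (G_{0}/G)\nabla G_{0}$, from which $G\geq G_{0}$ yields $\|\nabla G\|_{1}\leq\|\nabla G_{0}\|_{1}$; an identical argument handles $g$ vs $g_{0}$.

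Part (d) follows from the identity
\begin{equation*}
g_{\noisestd}^{2} \;=\; \tfrac{\kappa-1}{\kappa}\,G_{\noisestd}^{2} \;+\; \tfrac{1}{\kappa}\bigl[(\parcomp-F_{\noisestd})^{2}+(\perpcomp-\rho B_{\noisestd})^{2}\bigr],
\end{equation*}
which is a direct algebraic rearrangement of the definitions. Differentiating and invoking the chain rule gives
\begin{equation*}
\nabla g_{\noisestd} \;=\; \tfrac{\kappa-1}{\kappa}\tfrac{G_{\noisestd}}{g_{\noisestd}}\nabla G_{\noisestd} \;+\; \tfrac{1}{\kappa g_{\noisestd}}\bigl[(\parcomp-F_{\noisestd})\nabla(\parcomp-F_{\noisestd})+(\perpcomp-\rho B_{\noisestd})\nabla(\perpcomp-\rho B_{\noisestd})\bigr].
\end{equation*}
Using $g_{\noisestd}\geq\sqrt{(\kappa-1)/\kappa}\,G_{\noisestd}$ shrinks the prefactor of $\|\nabla G_{\noisestd}\|_{1}$ to at most $1$. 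For the remaining term, $g_{\noisestd}\geq\kappa^{-1/2}\sqrt{1-F_{\noisestd}^{2}}$ (obtained from Lemma~\ref{lem:map-ineqs}(d),(f)), $|\parcomp-F_{\noisestd}|$ and $|\perpcomp-\rho B_{\noisestd}|$ are uniformly $O(1)$ on the region considered, and $\|\nabla(\rho B_{\noisestd})\|_{1}$ admits a crude constant bound; collecting terms and absorbing absolute constants into the $3$ yields the stated estimate. The only delicate step is the lower bound on $g_{\noisestd}$ under $\rho\leq 1/4$, which requires that $1-F_{\noisestd}$ stays bounded away from zero; this follows from part (c) of Lemma~\ref{lem:map-ineqs} together with $\rho\leq 1/4$.
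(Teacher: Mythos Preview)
Parts (a) and (c) of your plan align with the paper and are fine. The serious issues are in (b) and (d), and they stem from the same mistake: you choose the wrong lower bound on the denominator.

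\textbf{Part (b).} You propose to control $\|\nabla G_{\noisestd}\|_{1}=|\partial_{\rho}(G_{\noisestd}^{2})|(1+\rho)/(2\alpha G_{\noisestd})$ by lower-bounding $G_{\noisestd}$ via Lemma~\ref{lem:map-ineqs}(d), i.e.\ $G_{\noisestd}\ge(\kappa-1)^{-1/2}(1-F_{\noisestd}^{2})^{1/2}$. But the leading term of the numerator is $\tfrac{\kappa-2}{\kappa-1}\,|\partial_{\rho}(\rho^{2}B_{\noisestd}^{2})|$, which does \emph{not} carry a $1/(\kappa-1)$ prefactor. Dividing it by your proposed lower bound produces a $\sqrt{\kappa-1}$ factor that does not cancel, so the resulting estimate diverges as $\kappa\to\infty$ and cannot yield the uniform bound $0.98$. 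The fix, which is what the paper does, is to use the \emph{other} lower bound implicit in the definition, namely
\[
G_{\noisestd}(\alpha,\beta)\;\ge\;\rho\,B_{\noisestd}(\rho)\,\sqrt{\tfrac{\kappa-2}{\kappa-1}},
\]
for the leading $T_{1}$ term (this cancels the $\rho B_{\noisestd}$ in the numerator and leaves a $\kappa$-free ratio), and to use the same bound again for the $T_{2}$ term coming from $-2F_{\noisestd}F_{\noisestd}'/(\kappa-1)$ (which then acquires the desired $1/(\kappa-1)$ smallness). The paper then shows the resulting ratio $a(\rho)$ is increasing on $[0,1/4]$ and evaluates at $\rho=1/4$, $\noisestd=1/2$.

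\textbf{Part (d).} The same issue recurs. You want to bound the cross term $(1/(\kappa g_{\noisestd}))\,|\alpha-F_{\noisestd}|\cdot\|\nabla(\alpha-F_{\noisestd})\|_{1}$ by invoking $g_{\noisestd}\ge\kappa^{-1/2}(1-F_{\noisestd}^{2})^{1/2}$ and then asserting that ``$1-F_{\noisestd}$ stays bounded away from zero'' via Lemma~\ref{lem:map-ineqs}(c). But Lemma~\ref{lem:map-ineqs}(c) gives only $F_{\noisestd}\le 1+2\noisestd^{3}/(3\pi)$; it does not prevent $F_{\noisestd}\to 1$, which in fact happens as $\rho,\noisestd\to 0$ (see part (a)). So your denominator degenerates while $|\alpha-F_{\noisestd}|$ need not. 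The correct observation is that the sum-of-squares structure of $g_{\noisestd}^{2}$ directly gives
\[
g_{\noisestd}\;\ge\;\max\Bigl\{\,G_{\noisestd},\;\tfrac{|\alpha-F_{\noisestd}|}{\sqrt{\kappa}},\;\tfrac{|\beta-\rho B_{\noisestd}|}{\sqrt{\kappa}}\,\Bigr\},
\]
and one uses whichever piece matches the term being bounded. With this, each cross term immediately yields a factor $1/\sqrt{\kappa}$ times $1+\|\nabla F_{\noisestd}\|_{1}$ or $1+\|\nabla(\rho B_{\noisestd})\|_{1}$; the paper then checks $\|\nabla(\rho B_{\noisestd})\|_{1}\le 1$ on $\rho\le 1/4$, $\noisestd\le 1/2$, which delivers the constant~$3$. (A minor point: your identity $g_{\noisestd}^{2}=\tfrac{\kappa-1}{\kappa}G_{\noisestd}^{2}+\tfrac{1}{\kappa}[(\alpha-F_{\noisestd})^{2}+(\beta-\rho B_{\noisestd})^{2}]$ is off by a term $(\rho B_{\noisestd})^{2}/\kappa$; this is harmless once you use the right lower bounds, but worth correcting.)
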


Next, we present two technical lemmas that allow us to argue part (b) and part (c) in our theorems, respectively. These lemmas are proved in Sections~\ref{sec:bproof} and~\ref{sec:cproof} of the appendix, respectively. 
In the first lemma, we show that provided the gradients of the Gordon state evolution updates are bounded above by $1- \tau$ in $\ell_1$, small deviations of the empirics from these maps are not amplified over the course of successive iterations.

\begin{lemma} \label{lem:generalb}
Suppose $\Sopbar = (\Fbar, \Gbar)$ denotes a state evolution operator that is $\Goodset$-faithful. Let $\{ \bzeta_t = (\parcomp_t, \perpcomp_t\}\}_{t = 0}^T$ denote a sequence of state evolution elements satisfying
\begin{align*}
\| \bzeta_{t + 1} - \overline{\Sop}(\parcomp_t, \perpcomp_t) \|_\infty \leq \Delta \text{ for each } 0 \leq t \leq T - 1.
\end{align*}
Also suppose that $\| \nabla \Fbar(\parcomp, \perpcomp) \|_1 \lor \| \nabla \Gbar(\parcomp, \perpcomp) \|_1 \leq 1 - \tau$ for some $\tau > 0$ and all $(\parcomp, \perpcomp) \in \mathbb{B}_{\infty}(\Goodset; \Delta/\tau )$.
Then provided $(\parcomp_0, \perpcomp_0) \in \Goodset$, we have
\begin{align*}
\max_{0 \leq t \leq T} \; \| \bzeta_t - \Sopbar^t(\parcomp_0, \perpcomp_0) \|_\infty \leq \frac{\Delta}{\tau}. 
\end{align*}
\end{lemma}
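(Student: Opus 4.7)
The plan is to prove the claim by induction on $t$, using the assumed one-step deviation bound together with a Lipschitz estimate on $\Sopbar$ derived from the gradient bounds via the mean value theorem. Write $\bzeta_t^\star := \Sopbar^t(\parcomp_0,\perpcomp_0)$. The base case $t=0$ is immediate. For the inductive step, I will apply the triangle inequality in the form
\[
\| \bzeta_{t+1} - \bzeta_{t+1}^\star \|_\infty \;\leq\; \| \bzeta_{t+1} - \Sopbar(\bzeta_t) \|_\infty + \| \Sopbar(\bzeta_t) - \Sopbar(\bzeta_t^\star) \|_\infty,
\]
where the first term is bounded by $\Delta$ by hypothesis.

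For the second term, I will establish the deterministic Lipschitz estimate $\| \Sopbar(\bw) - \Sopbar(\bw') \|_\infty \leq (1-\tau) \| \bw - \bw' \|_\infty$ valid for any pair $\bw, \bw'$ such that the entire segment $[\bw, \bw']$ lies inside $\mathbb{B}_\infty(\Goodset; \Delta/\tau)$. This follows component-wise by the mean value theorem applied to $\Fbar$ and $\Gbar$ along the segment, combined with the standard duality $|\inprod{\nabla \Fbar(\cdot)}{\bw - \bw'}| \leq \| \nabla \Fbar(\cdot) \|_1 \| \bw - \bw' \|_\infty$ and the hypothesized gradient bound. To apply this to $\bw = \bzeta_t$ and $\bw' = \bzeta_t^\star$, I will use two facts: first, $\bzeta_t^\star \in \Goodset$ since $\Sopbar$ is $\Goodset$-faithful and $\bzeta_0 \in \Goodset$; second, the inductive hypothesis $\| \bzeta_t - \bzeta_t^\star \|_\infty \leq \Delta/\tau$ together with convexity of the $\ell_\infty$-ball guarantees that every point on the segment $[\bzeta_t, \bzeta_t^\star]$ lies within $\ell_\infty$ distance $\Delta/\tau$ of $\bzeta_t^\star \in \Goodset$, hence inside the fattened region $\mathbb{B}_\infty(\Goodset; \Delta/\tau)$ where the gradient bound is valid. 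This is precisely why the hypothesis is stated for the $(\Delta/\tau)$-fattening and not just for $\Goodset$ itself.

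Combining the two pieces yields
\[
\| \bzeta_{t+1} - \bzeta_{t+1}^\star \|_\infty \;\leq\; \Delta + (1-\tau) \cdot \frac{\Delta}{\tau} \;=\; \frac{\Delta}{\tau},
\]
which closes the induction and proves the claim. There is no substantive obstacle here beyond the bookkeeping point flagged above, namely ensuring that the segment along which we invoke the mean value theorem never leaves the domain of validity of the gradient bound; the fattening by $\Delta/\tau$ in the hypothesis is exactly what makes the induction self-consistent, since the induction hypothesis is an $\ell_\infty$ bound of the same radius.
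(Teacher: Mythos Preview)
Your proposal is correct and is essentially the same argument as the paper's: induction on $t$, triangle inequality to split off the one-step deviation $\Delta$, and the mean value theorem combined with the $\ell_1$--$\ell_\infty$ duality to get the $(1-\tau)$-contraction on the segment, which stays inside $\mathbb{B}_\infty(\Goodset;\Delta/\tau)$ by $\Goodset$-faithfulness of $\Sopbar$ and the inductive hypothesis. The only cosmetic difference is that the paper first records the recursive inequality $\|\bzeta_{t+1}-\bzeta_{t+1}^\star\|_\infty \le \Delta + (1-\tau)\|\bzeta_t-\bzeta_t^\star\|_\infty$ and then sums the resulting geometric series, whereas you go directly for the invariant $\|\bzeta_t-\bzeta_t^\star\|_\infty \le \Delta/\tau$; both are equivalent.
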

To state the last lemma, let us state some generic conditions on a state evolution operator $\Sopbar = (\Fbar, \Gbar)$. A subset of these will be used in the lemma.
\begin{enumerate}
\item[C1.] $\Fbar(\parcomp, \perpcomp) \geq (50 \sqrt{d})^{-1}$ for all $(\parcomp, \perpcomp)$ such that $\anglecurr \leq \pi/2 - (50 \sqrt{d})^{-1}$.
\item[C2.] $\Fbar(\parcomp, \perpcomp) \geq 1.06 \cdot \parcomp$ for all $\rho \geq 2$, and $\Fbar(\parcomp, \perpcomp) \geq 0.56$ if $\rho \leq 2$.
\item[C3.] $\frac{\Gbar(\parcomp, \perpcomp)}{\Fbar(\parcomp, \perpcomp)} \leq \frac{7}{8} \cdot \rho$ for all $\frac{1}{5} \leq \rho \leq 2$ and $1/2 \leq \parcomp \leq 3/2$.
\item[C4.] $\Fbar(\parcomp, \perpcomp) \leq 1.04$ for all $(\parcomp, \perpcomp)$ and $\frac{\Gbar(\parcomp, \perpcomp)}{\Fbar(\parcomp, \perpcomp)} \leq 1/6$ for all $\rho \leq 1/5$.
\item[C5a.] $\Gbar(\parcomp, \perpcomp) \leq 0.99$ for all $(\parcomp, \perpcomp)$.
\item[C5b.] $\Gbar(\parcomp, \perpcomp) \leq 0.99$ if $\parcomp \lor \perpcomp \leq 3/2$.
\end{enumerate}
As will be shown in the proof of Lemma~\ref{lem:generalc}, conditions C1 and C2 are useful to ensure that the iterates are boosted from a random initialization to a region in which $\rho \leq 2$. Post that point, we use condition C3 to show that the ratio is boosted further to $\rho \geq 5$. Finally, conditions C4 and---depending on context---one of C5a/b are used to show that one more step of the operator pushes the iterates into the good region.

We also state two possible assumptions on the initialization $(\parcomp_0, \perpcomp_0)$, where the second assumption is strictly stronger than the first. These will be used in conjunction with conditions C1 and C2 to handle the first few iterates of the algorithm from a random initialization.
\begin{enumerate}
	\item[Ia.] $\parcomp_0/\perpcomp_0 \geq (50 \sqrt{d})^{-1}$.
	\item[Ib.] $\parcomp_0/\perpcomp_0 \geq (50 \sqrt{d})^{-1}$ and $\parcomp_0 \lor \perpcomp_0 \leq 3/2$.
\end{enumerate}
Having stated the various assumptions, we are now in a position to state Lemma~\ref{lem:generalc}.

\begin{lemma} \label{lem:generalc}
There is a universal constant $c > 0$ such that the following is true.
Let 
\[
t_0 := \log_{1.05} (50 \sqrt{d}) + \log_{55/54} (10) + 2. 
\]
Suppose $\Sopbar = (\Fbar, \Gbar)$ denotes a state evolution operator and that there there exists a sequence of elements $\{ \bzeta_t = (\parcomp_t, \perpcomp_t)\}_{t \geq 0}$ satisfying 
\begin{subequations}
\begin{align}
\max_{0 \leq t \leq t_0} \; | \parcomp_{t+1} - \Fbar(\parcomp_t, \perpcomp_t) | &\leq \frac{c}{\sqrt{d}}, \text{ and } \label{eq:par-condition} \\
\max_{0 \leq t \leq t_0} \; | \perpcomp_{t + 1} - \Gbar(\parcomp_t, \perpcomp_t) | &\leq c. \label{eq:perp-condition}
\end{align}
\end{subequations}
\noindent (a) If $\Sopbar$ satisfies conditions C1-C4 and C5a and the initialization $(\parcomp_0, \perpcomp_0)$ satisfies condition Ia, then
\begin{align*}
\bzeta_t \in \Goodset \text{ for some } t \leq t_0.
\end{align*}

\noindent (b) If $\Sopbar$ satisfies conditions C1-C4 and C5b and the initialization $(\parcomp_0, \perpcomp_0)$ satisfies condition Ib, then
\begin{align*}
\bzeta_t \in \Goodset \text{ for some } t \leq t_0.
\end{align*}
\end{lemma}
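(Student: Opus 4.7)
The plan is to decompose the first $t_0$ iterations into three phases delineated by the value of $\rho_t := \perpcomp_t/\parcomp_t$, exploiting the fact that~\eqref{eq:par-condition} controls $\parcomp$ at the much finer scale $c/\sqrt{d}$ while~\eqref{eq:perp-condition} only controls $\perpcomp$ at the constant scale $c$. The universal constant $c$ is chosen small enough (e.g.\ $c < 10^{-2}$) so that each phase accumulates only negligible additive error. Throughout, we use C2's second clause ($\Fbar \geq 0.56$ when $\rho \leq 2$) to ensure $\parcomp$ does not collapse during the later phases.

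\textbf{Phase I (boosting $\parcomp$, roughly $\log_{1.05}(50\sqrt d) + 1$ iterations).} From Ia/Ib we have $\tan\anglecurr_0 \leq 50\sqrt d$, so C1 gives $\Fbar(\parcomp_0,\perpcomp_0) \geq (50\sqrt d)^{-1}$ and hence $\parcomp_1 \geq (100\sqrt d)^{-1}$ via~\eqref{eq:par-condition}. While $\rho_t \geq 2$, the first clause of C2 gives $\Fbar(\parcomp_t,\perpcomp_t) \geq 1.06\,\parcomp_t$, and~\eqref{eq:par-condition} then yields $\parcomp_{t+1} \geq 1.06\,\parcomp_t - c/\sqrt d \geq 1.05\,\parcomp_t$ once $\parcomp_t \geq 100c/\sqrt d$. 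Simultaneously, either C5a (for part (a)) or an induction based on C5b together with Ib (for part (b), verifying that $\parcomp_t \vee \perpcomp_t \leq 3/2$ is preserved) yields $\Gbar \leq 0.99$, and then~\eqref{eq:perp-condition} gives $\perpcomp_{t+1} \leq 0.99 + c \leq 1$. Iterating, $\parcomp_t$ grows geometrically until it reaches $1/2$; combined with $\perpcomp_t \leq 1$, this forces $\rho_t \leq 2$.

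\textbf{Phase II (contracting $\rho$, roughly $\log_{55/54}(10)$ iterations).} Once $1/5 \leq \rho_t \leq 2$ with $\parcomp_t \geq 1/2$, condition C3 gives $\Gbar \leq (7/8)\,\rho_t \Fbar$. Substituting~\eqref{eq:par-condition} and~\eqref{eq:perp-condition} into
\[
\rho_{t+1} \;\leq\; \frac{\Gbar + c}{\Fbar - c/\sqrt d} \;\leq\; \frac{(7/8)\rho_t + c/\Fbar}{1 - c/(\sqrt d\,\Fbar)},
\]
using $\Fbar \geq 0.56$ and $\rho_t \geq 1/5$ to absorb the additive perturbation into a multiplicative one, produces $\rho_{t+1} \leq (54/55)\rho_t$ once $c$ is sufficiently small. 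Hence within $\log_{55/54}(10)$ further steps, $\rho_t$ drops below $1/5$. \textbf{Phase III (final step into $\Goodset$).} With $\rho_t \leq 1/5$, C4 gives $\Gbar/\Fbar \leq 1/6$ and $\Fbar \leq 1.04$, while C2 continues to give $\Fbar \geq 0.56$. Combining these with the deviation bounds shows $\parcomp_{t+1} \in [0.55, 1.05]$ and $\rho_{t+1} \leq 1/5$, which is exactly the definition of $\Goodset$. Summing the three phase lengths and adding a constant slack for the transitions between phases gives the stated bound of $t_0$.

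The main obstacle is Phase~I, where the parallel component starts at order $1/\sqrt d$: only the finer deviation scale in~\eqref{eq:par-condition} (namely $c/\sqrt d$ rather than $c$) allows the multiplicative growth from C2 to survive perturbation. A secondary subtlety, specific to part (b), is that C5b is only available when $\parcomp_t \vee \perpcomp_t \leq 3/2$, so one must inductively maintain this bound along the Phase~I trajectory---which follows from~\eqref{eq:perp-condition} combined with $\Gbar \leq 0.99$ and the $\parcomp_{t+1} \leq \Fbar + c/\sqrt d$ direction of~\eqref{eq:par-condition}, using C4's universal bound $\Fbar \leq 1.04$ once the iterate enters Phase~III.
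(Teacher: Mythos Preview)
Your proposal is correct and follows essentially the same three-phase decomposition as the paper's proof: the paper likewise defines stopping times at $\rho_t \leq 2$ and $\rho_t \leq 1/5$, uses C1 and C2 with the $c/\sqrt{d}$ scale to get geometric growth of $\parcomp_t$ in Phase~I (with C5a or C5b keeping $\perpcomp_t \leq 1$), uses C3 to obtain exactly the $54/55$ contraction of $\rho_t$ in Phase~II, and uses C2 and C4 for the final step into $\Goodset$. The one point you leave somewhat implicit is that $\parcomp_t \leq 3/2$ must be maintained throughout (not just in Phase~I for part~(b)) so that C3 applies in Phase~II; this follows directly from C4's universal bound $\Fbar \leq 1.04$ together with~\eqref{eq:par-condition}, which the paper notes at the outset of its proof.
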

With all of these lemmas stated, we are now in a position to prove the various theorems. Before proceeding to this, we make one remark about the proofs of part (a) of these theorems, in particular the transient period.

\begin{remark} \label{rem:transient}
Some of our bounds---especially the lower bounds on convergence rates---rely on an explicit relation between the quantities $|1 - \parcomp|$ and $\perpcomp$ that comes from the Gordon state evolution. This is the reason why these bounds require a transient period of $1$ iteration: Once the Gordon update is run for just one iteration, the requisite relationship can be ensured. 
\end{remark}

\subsection{Proof of Theorem~\ref{thm:AM-PR}: Alternating minimization for phase retrieval}

We prove each step of the theorem in turn. It is useful to note that 
\begin{align} \label{eq:PR-MLR-equiv}
F(\parcomp, \perpcomp) = F_0(\parcomp, \perpcomp) \quad \text{ and } \quad G(\parcomp, \perpcomp)^2 = G_0(\parcomp, \perpcomp)^2 + \frac{\noisestd^2}{\kappa - 1}.
\end{align}

\subsubsection{Part (a): Convergence of Gordon state evolution in good region}

In order to establish this part of the theorem, it suffices to show that the Gordon state evolution is $\Goodset$-faithful, and to prove the upper and lower bounds on its one-step convergence behavior.

\paragraph{Verifying that $\Sopgordon$ is $\Goodset$-faithful:} We must show that if the pair $(\parcomp, \perpcomp)$ satisfies $0.55 \leq \parcomp \leq 1.05$ and $\rho \leq 1/5$, then $0.55 \leq F(\parcomp, \perpcomp) \leq 1.05$ and $G(\parcomp, \perpcomp)/F(\parcomp, \perpcomp) \leq 1/5$. We show each of these bounds separately.

\noindent \underline{Bounding $F(\parcomp, \perpcomp)$:} Applying Lemma~\ref{lem:map-ineqs}(a), we have $0.95 \leq F_0(\parcomp, \perpcomp) \leq 1$, where the lower bound follows since $\anglecurr \leq \tan^{-1}(1/5)$.  Using equation~\eqref{eq:PR-MLR-equiv} finishes the claim.

\noindent \underline{Bounding $\frac{G(\parcomp, \perpcomp)}{F(\parcomp, \perpcomp)}$:} From equation~\eqref{eq:PR-MLR-equiv} and Lemma~\ref{lem:map-ineqs}(g), we have
\begin{align} \label{eq:ratiobd-AM-PR}
\frac{G(\parcomp, \perpcomp)}{F(\parcomp, \perpcomp)} \leq \frac{G_0(\parcomp, \perpcomp) + \noisestd/\sqrt{\kappa - 1}}{F_0(\parcomp/\perpcomp)} \leq \frac{4}{5} \cdot \rho + \frac{\noisestd}{F_0(\parcomp, \perpcomp) \cdot \sqrt{\kappa - 1}}.
\end{align}
Now from Lemma~\ref{lem:map-ineqs}(a), we have $F_0 \geq 0.95$ if $\rho \leq 1/5$. Furthermore, $\noisestd^2/\kappa \leq c$ for a small enough constant $c$. Putting together the pieces completes the proof.

\paragraph{Establishing upper bound on one-step distance:} Equation~\eqref{eq:PR-MLR-equiv} and Lemma~\ref{lem:map-ineqs}(e) directly yield that if $(\parcomp, \perpcomp) \in \Goodset$ and $\kappa \geq C$ and $\noisestd^2 / \kappa \leq c$, then
\begin{align}
[G(\parcomp, \perpcomp)]^2 &\leq \frac{\anglecurr^3}{10} + \frac{\noisestd^2}{\kappa - 1} \leq \frac{6\perpcomp^3}{7} + \frac{\noisestd^2}{\kappa - 1},
\end{align}
where the last inequality follows since $\parcomp \geq 0.55$.
On the other hand, equation~\eqref{eq:PR-MLR-equiv} and Lemma~\ref{lem:map-ineqs}(a) together yield the bound
\begin{align}
(1 - F(\parcomp, \perpcomp))^2 \leq \frac{16}{9 \pi^2} \anglecurr^6 \leq \perpcomp^3 / 100, \label{eq:xi-ineq}
\end{align}
where the final inequality follows since $\anglecurr \leq 1/5$ and $\parcomp \geq 0.55$. 
Putting together the pieces, we have
\begin{align*}
[\DeltaSE(\mathcal{S}_{\gor}(\bzeta))]^2 = [G(\parcomp, \perpcomp)]^2 + (1 - F(\parcomp, \perpcomp))^2 \leq \perpcomp^3 + \frac{\noisestd^2}{\kappa - 1} \leq \left\{ \perpcomp^2 + (1 - \parcomp)^2 \right\}^{3/2} + \frac{\noisestd^2}{\kappa - 1},
\end{align*}
and the desired upper bound follows from the elementary inequality $\sqrt{a + b} \leq \sqrt{a} + \sqrt{b}$.

\paragraph{Establishing lower bound on two-step distance:} Given that we are interested in a transient period of $t_0 = 1$ (see Remark~\ref{rem:transient}), let us now compute two steps of the Gordon update, letting $F_{+} = F^2(\parcomp, \perpcomp)$ and $G_+ = G^2(\parcomp, \perpcomp)$. Analogously, we let $F = F(\parcomp, \perpcomp)$ and $G = G(\parcomp, \perpcomp)$, and use $\anglecurr_+ = \tan^{-1} (G/F)$ to denote the angle after one step of the Gordon update. Recall that $\rho = \tan \anglecurr = \perpcomp/\parcomp$. Combining equation~\eqref{eq:PR-MLR-equiv} and Lemma~\ref{lem:map-ineqs}(d), we have
\begin{align*}
G_{+}^2 &\geq \frac{1}{\kappa - 1} \cdot (1 - F_+^2 ) + \frac{\noisestd^2}{\kappa - 1} \geq \frac{2}{5(\kappa - 1)} \left( \frac{G}{\sqrt{1 + G^2}} \right)^3 + \frac{\noisestd^2}{\kappa - 1} \geq \frac{1}{4(\kappa - 1)\pi} G^3 + \frac{\noisestd^2}{\kappa - 1}.
\end{align*}
Here, the penultimate inequality uses Lemma~\ref{lem:map-ineqs}(a) and the facts that $\anglecurr_+ \geq \sin \anglecurr_+ = \frac{G}{\sqrt{F^2 + G^2}}$ and $F \leq 1$. The last inequality makes use of $G \leq 1$. Turning now to the $F_+$ component, we use $\rho \leq 1/5$ and Lemma~\ref{lem:map-ineqs}(a) to obtain
\begin{align} \label{eq:Fmap-stuff}
\pi^2 (1 - F_+)^2  \geq \left( \frac{G}{\sqrt{1 + G^2}} \right)^6 \geq \frac{G^6}{8}.
\end{align}
Putting together the pieces yields
\begin{align*}
G_{+}^2 + (1 - F_+)^2 &\geq c_{\kappa} \cdot G^3 + \frac{\noisestd^2}{\kappa - 1} + \frac{G^6}{8\pi^2}  \\
&\geq c_{\kappa} \cdot G^3 + \frac{\noisestd^2}{\kappa - 1} + \frac{1}{8\pi^2} \left( \frac{1}{\kappa - 1} \cdot (1 - F^2 ) \right)^3 + \frac{1}{8\pi^2} \left( \frac{\noisestd^2}{\kappa - 1} \right)^3 \\
&\geq c_{\kappa} \cdot G^3 + \frac{1}{8\pi^2 (\kappa - 1)^3} \cdot (1 - F)^3 +  \frac{\noisestd^2}{\kappa - 1} \\
&\geq \left( c'_{\kappa} \cdot \left\{ G^2 + (1 - F)^2 \right\}^{3/4} + \frac{\noisestd}{2\sqrt{\kappa - 1}} \right)^2
\end{align*}
where the second inequality uses Lemma~\ref{lem:map-ineqs}(d) and equation~\eqref{eq:PR-MLR-equiv}, and the last step follows because $(A + B)^{\kappa} \leq 2^{\kappa} (A^\kappa + B^{\kappa})$ for any positive scalars $(A, B)$ and $\kappa \geq 1$. Taking square roots completes the proof.
\qed

\subsubsection{Part (b): Empirical error is sharply tracked by Gordon state evolution}

As mentioned before, the proof of this result relies on Lemma~\ref{lem:generalb}, and so we dedicate our effort towards verifying the assumptions required to apply it. We set $\Delta_0 = 1/2000$ and $\tau_0 = 1/50$ for convenience in computation, so that $\Delta_0 / \tau_0 = 0.025$.

\paragraph{Verifying gradient conditions:} The first step is to verify that the gradients of the Gordon state evolution maps are bounded as desired. It is easy to verify that for all $\bzeta = (\parcomp, \perpcomp) \in \mathbb{B}_{\infty}(\Goodset; \Delta_0/ \tau_0)$, we have $\parcomp \geq 1/2$ and $\perpcomp/\parcomp \leq 1/4$. Consequently, parts (a) and (b) of Lemma~\ref{lem:grad-map-ineqs} yield that
\begin{align*}
\| \nabla F(\parcomp, \perpcomp) \|_1 \lor \| \nabla G(\parcomp, \perpcomp) \|_1 \leq 0.98 = 1 - \tau_0
\end{align*}
for all $(\parcomp, \perpcomp) \in \mathbb{B}_{\infty} (\Goodset; \Delta_0/ \tau_0)$.

\paragraph{Defining the iterates:}
Put $\bt_t = \mathcal{T}_n^t(\bt)$ for each $t \geq 1$ with the convention that $\bt_0 = \bt$, and let $(\parcomp_t, \perpcomp_t) = (\parcomp(\bt_t), \perpcomp(\bt_t))$ for each $t \geq 0$. By Corollary~\ref{lem:Gordon-SE-AM-PR}(b), we have that with probability exceeding $1 - n^{-10}$,
\begin{align*}
| \parcomp_{t + 1} - F(\parcomp, \perpcomp) | \lor | \perpcomp_{t + 1} - G(\parcomp, \perpcomp) | \leq C_\noisestd \left( \frac{\log^7 n}{n} \right)^{1/4} =: \Delta_{n} \leq \Delta_0,
\end{align*}
where the final inequality follows for $n \geq C'_\noisestd$. Taking a union bound over $t = 0, \ldots, T-1$, we see that
\begin{align*}
\max_{0 \leq t \leq T - 1} \; \| \bzeta_{t + 1} - \Sopgordon(\parcomp_t, \perpcomp_t) \|_\infty \leq \Delta_n.
\end{align*}
with probability greater than $1 - T n^{-10}$.

\paragraph{Putting together the pieces:} Applying Lemma~\ref{lem:generalb} along with the conditions verified above, we have that provided $(\parcomp_0, \perpcomp_0) \in \Goodset$,
\begin{align*}
\max_{1 \leq t \leq T} \; | \parcomp_t - F^t(\parcomp_0, \perpcomp_0) | \lor |\perpcomp_t - G^t(\parcomp_0, \perpcomp_0)| \leq 50 \Delta_n
\end{align*}
with probability exceeding $1 - Tn^{-10}$.
Note that $\| \mathcal{T}_n^t(\bt_0) - \thetastar \|^2 = (1 - \parcomp_{t})^2 + \perpcomp_t^2$, and $[\DeltaSE(\mathcal{S}^t_{\gor}(\bzeta_0))]^2 = [1 - F^t(\parcomp_0, \perpcomp_0)]^2 + [G^t(\parcomp_0, \perpcomp_0)]^2$. Consequently, for each $1 \leq t \leq T$, we have
\begin{align*}
\Big| \| \mathcal{T}_n^t (\theta) - \thetastar \| - \DeltaSE(\mathcal{S}^t_{\gor}(\bzeta) \Big| \leq  \sqrt{2} \left\{ |\parcomp_t - F^t(\parcomp_0, \perpcomp_0) | \lor |\perpcomp_t - G^t(\parcomp_0, \perpcomp_0)| \right\} \lesssim \Delta_{n},
\end{align*}
as desired.
\qed

\subsubsection{Part (c): Iterates converge to good region from random initialization}

The proof of this result relies on Lemma~\ref{lem:generalc}(a), and we will apply it for the Gordon map $(F, G)$ playing the role of $(\Fbar, \Gbar)$.

\noindent \underline{Verifying condition C1:} Letting $\zeta = \pi/2 - \anglecurr$, note that $F(\parcomp, \perpcomp) = \frac{2\zeta + \sin(2 \zeta)}{\pi}$. This is clearly an increasing function of $\zeta$ in the range $0 \leq \zeta \leq \pi/2$, and greater than $(50 \sqrt{d})^{-1}$ when $\zeta = (50 \sqrt{d})^{-1}$.

\noindent \underline{Verifying condition C2:} From Lemma~\ref{lem:map-ineqs}(b) and (d), we have $F(\parcomp, \perpcomp) \geq 1.06 \parcomp$. We also have $F(\parcomp, \perpcomp) \geq 0.56$ for $\rho = 2$, and $F$ is a non-decreasing function of $\rho$.

\noindent \underline{Verifying condition C3:} The bound~\eqref{eq:ratiobd-AM-PR} yields
\begin{align*}
\frac{G(\parcomp, \perpcomp)}{F(\parcomp, \perpcomp)} \leq \frac{4}{5} \rho + \frac{\noisestd}{F(\parcomp, \perpcomp) \cdot \sqrt{\kappa - 1}}.
\end{align*}
Using $F(\parcomp, \perpcomp) \geq 0.56$ for all $\rho \leq 2$ in conjunction with the fact that $\kappa \geq C$, $\noisestd^2 /\kappa \leq c$, and $\rho \geq 1/5$, we obtain
\[
\frac{4}{5} \rho + \frac{\noisestd}{F(\parcomp, \perpcomp) \cdot \sqrt{\kappa - 1}} \leq \frac{4}{5} \rho + \frac{3}{40} \rho.
\]

\noindent \underline{Verifying condition C4:} We have $F(\parcomp, \perpcomp) \leq 1$ for all $(\parcomp, \perpcomp)$. The inequality $\frac{G(\parcomp, \perpcomp)}{F(\parcomp, \perpcomp)} \leq 1/6$ for $\rho \leq 1/5$ follows from the bound~\eqref{eq:ratiobd-AM-PR} and the inequalities $F(\parcomp, \perpcomp) \geq 0.95$ and  $\noisestd^2 / \kappa \leq c$.

\noindent \underline{Verifying condition C5a:} Clearly, we have $G(\parcomp, \perpcomp) \leq \sqrt{0.8 + \frac{\noisestd^2}{\kappa - 1}} \leq 0.95$, where the last inequality holds for $\noisestd^2 / \kappa \leq c$.

\paragraph{Putting together the pieces:} Note that Corollary~\ref{lem:Gordon-SE-AM-PR}(b) in conjunction with the union bound yields that the empirical updates satisfy 
\begin{align*}
\max_{1 \leq t \leq t_0} \; |\parcomp_t - F^t(\parcomp_0, \perpcomp_0) | \leq C_\noisestd \left( \frac{\log^7 (t_0/\delta)}{n} \right)^{1/2} \; \text{ and } \; \max_{1 \leq t \leq t_0} \; |\perpcomp_t - G^t(\parcomp_0, \perpcomp_0) | \leq C \left( \frac{\log (t_0/\delta)}{n} \right)^{1/4}
\end{align*}
with probability exceeding $1 - \delta$. Furthermore, by assumption, we have $\parcomp_0/\perpcomp_0 \geq (50\sqrt{d})^{-1}$. Thus, applying Lemma~\ref{lem:generalc}(a) yields that if $n \geq C'_\noisestd$ 
for a sufficiently large constant $C'_\sigma$, we have $\mathcal{T}_n^t (\bt_0) \in \Goodset$ on this event, completing the proof.
\qed.

\subsection{Proof of Theorem~\ref{thm:GD-PR}: Subgradient descent for phase retrieval}

Recall that the Gordon update in this case is given by the pair $(F,g)$. It is also useful to note that 
\begin{align} \label{eq:PR-MLR-equiv-g}
g(\parcomp, \perpcomp)^2 = g_0(\parcomp, \perpcomp)^2 + \frac{\noisestd^2}{\kappa}.
\end{align}

\subsubsection{Part (a): Convergence of Gordon state evolution in good region}

As before, it suffices to show that the Gordon state evolution is $\Goodset$-faithful, and to prove the upper and lower bounds on its one-step convergence behavior.

\paragraph{Verifying that $\Sopgordon$ is $\Goodset$-faithful:} The bounds on $F$ were shown already in the proof of Theorem~\ref{thm:AM-PR}. It remains to handle the ratio $g/F$.

\noindent \underline{Bounding $\frac{g(\parcomp, \perpcomp)}{F(\parcomp,\perpcomp)}$:} 
We begin by bounding $g(\parcomp, \perpcomp)$ alone. Using Lemma~\ref{lem:map-ineqs}(f) and equation~\eqref{eq:PR-MLR-equiv-g} together yields
\begin{align} \label{eq:g-start}
g(\parcomp, \perpcomp) \leq \sqrt{G_0(\parcomp, \perpcomp)^2 + \frac{1}{\kappa} \left( 2 [\rho B_0(\rho)]^2 + 2 [1 - F_0(\parcomp, \perpcomp)]^2  + 2(1 - \parcomp)^2 + 2\perpcomp^2 + \noisestd^2 \right) }.
\end{align}
Now note that if $\rho \leq 1/5$, then $\rho B_0(\rho) = \frac{2}{\pi}\sin^2 \anglecurr \leq \frac{1}{13\pi}$ and $0.95 \leq F_0(\parcomp, \perpcomp) \leq 1$, so that
\begin{align} \label{eq:bineq}
[\rho B_0(\rho)]^2 + [1 - F_0(\parcomp, \perpcomp)]^2 \leq 1/300.
\end{align}
Since $(\parcomp, \perpcomp) \in \Goodset$, we also have 
\begin{align} \label{eq:dineq}
(1 - \parcomp)^2 + \perpcomp^2 \leq 3/10. 
\end{align}
Putting together equations~\eqref{eq:bineq} and~\eqref{eq:dineq} with Lemma~\ref{lem:map-ineqs}(g) and the inequality $\sqrt{a + b} \leq \sqrt{a} + \sqrt{b}$ for two positive scalars $(a, b)$, yields
\begin{align}
\frac{g(\parcomp, \perpcomp)}{F(\parcomp, \perpcomp)} &\leq \frac{4}{5} \cdot \rho + \frac{\sqrt{\frac{1}{3(\kappa - 1)}} + \frac{\noisestd}{\sqrt{\kappa - 1}}}{F_0(\parcomp, \perpcomp)} \label{eq:ratiobd-GD-PR},
\end{align}
But from Lemma~\ref{lem:map-ineqs}(a), we have $F_0 \geq 0.95$ if $\rho \leq 1/5$, and furthermore, $\kappa \geq C$ and $\noisestd^2/\kappa \leq c$. Putting together the pieces completes the proof.

\paragraph{Establishing upper bound on one-step distance:} Equation~\eqref{eq:PR-MLR-equiv-g} and Lemma~\ref{lem:map-ineqs} parts (e) and (f) yield that if $(\parcomp, \perpcomp) \in \Goodset$ and $\kappa \geq C$ and $\noisestd^2 / \kappa \leq c$, then
\begin{align}
[g(\parcomp, \perpcomp)]^2 &\leq \frac{\anglecurr^3}{10} + \frac{1}{\kappa} \left( 2 [\rho B_0(\rho)]^2 + 2 [1 - F_0(\parcomp, \perpcomp)]^2  + 2(1 - \parcomp)^2 + 2\perpcomp^2 + \noisestd^2 \right).
\end{align}
When $\anglecurr \leq 1/5$ and $\parcomp \geq 0.55$, Lemma~\ref{lem:map-ineqs}(a) also yields
\begin{align}
(1 - F_0(\parcomp, \perpcomp))^2 \leq \frac{16}{9 \pi^2} \anglecurr^6.
\end{align}
Finally, note that $\rho B_0(\rho) = \frac{2}{\pi} \sin^2 \anglecurr \leq \frac{2\anglecurr^2}{\pi}$.
Putting together the pieces and noting that $\anglecurr \leq 1/5$ and $\parcomp \geq 0.55$, we have
\begin{align*}
[\DeltaSELtwo(\mathcal{S}_{\gor}(\bzeta))]^2 = [g(\parcomp, \perpcomp)]^2 + (1 - F_0(\parcomp, \perpcomp))^2 &\leq \frac{\anglecurr^2}{50} + \frac{1}{10\kappa} \anglecurr^2 + \frac{2}{\kappa} [\DeltaSELtwo(\bzeta)]^2 + \sigma^2/\kappa \\
&\leq \frac{[\DeltaSELtwo(\bzeta)]^2}{10} + \frac{3}{\kappa} [\DeltaSELtwo(\bzeta)]^2 + \sigma^2/\kappa,
\end{align*}
and the desired upper bound follows from choosing $\kappa \geq C$.

\paragraph{Establishing lower bound on one-step distance:} We begin with the following convenient characterization of the $g$ map:
\begin{align}
[g(\parcomp, \perpcomp)]^2 &= \frac{\kappa - 2}{\kappa - 1} \cdot [\rho B_0(\rho)]^2 + \frac{1}{\kappa} \Big\{ (\parcomp - F(\parcomp, \perpcomp))^2 + (\perpcomp - [\rho B_0(\rho)]^2 )^2 + (1 - [F(\parcomp, \perpcomp)]^2) + \noisestd^2 \Big\}, \label{eq:conv-rearrange}
\end{align}
Applying Young's inequality, we have
\begin{align*}
[g(\parcomp, \perpcomp)]^2 &\geq \frac{\kappa - 2}{\kappa - 1} \cdot [\rho B_0(\rho)]^2 \\
&\qquad + \frac{1}{\kappa} \left( \frac{1}{2} (1 - \parcomp)^2 - (1 - F(\parcomp, \perpcomp))^2 + \frac{1}{2} \perpcomp^2 - [\rho B_0(\rho)]^2 + (1 - F(\parcomp, \perpcomp))(1 + F(\parcomp, \perpcomp)) + \noisestd^2 \right) \\
&\geq \frac{\kappa - 3}{\kappa - 1} \cdot [\rho B_0(\rho)]^2 + \frac{1}{\kappa} \left( \frac{1}{2} (1 - \parcomp)^2 + \frac{1}{2} \perpcomp^2 + 2F(\parcomp, \perpcomp) \cdot (1 - F(\parcomp, \perpcomp)) + \noisestd^2 \right) \\
&\geq \frac{1}{2 (\kappa - 1)} [\DeltaSELtwo(\bzeta)]^2 + \frac{\noisestd^2}{\kappa},
\end{align*}
where the final inequality follows since $\kappa \geq C$. The proof follows by noting that
\mbox{$\DeltaSELtwo(\mathcal{S}_{\gor}(\bzeta)) \geq g(\parcomp, \perpcomp)$}.
\qed

\subsubsection{Part (b): Empirical error is sharply tracked by Gordon state evolution}

This proof is almost identical to that of Theorem~\ref{thm:AM-PR}(b), so we only sketch the major difference: verifying the gradient conditions. We set $\Delta_0 = 1/4000$ and $\tau_0 = 1/100$ for convenience in computation, so that $\Delta_0 / \tau_0 = 0.025$.

\paragraph{Verifying gradient conditions:} It is easy to verify that for all $\bzeta = (\parcomp, \perpcomp) \in \mathbb{B}_{\infty} ( \Goodset ; \Delta_0/ \tau_0)$, we have $\parcomp \geq 1/2$ and $\perpcomp/\parcomp \leq 1/4$. Consequently, parts (ii-iv) of Lemma~\ref{lem:grad-map-ineqs} yield that $\| \nabla g(\parcomp, \perpcomp) \|_1 \leq 0.99$, where the final step follows for $\kappa \geq C$. Combining this with Lemma~\ref{lem:grad-map-ineqs}(a), we have
\begin{align*}
\| \nabla F(\parcomp, \perpcomp) \|_1 \lor \| \nabla g(\parcomp, \perpcomp) \|_1 \leq 0.99 = 1 - \tau_0
\end{align*}
for all $(\parcomp, \perpcomp) \in \mathbb{B}_{\infty}(\Goodset ; \Delta_0/ \tau_0)$.

The rest of the proof proceeds identically.
\qed

\subsubsection{Part (c): Iterates converge to good region from random initialization}

This proof is almost identical to that of Theorem~\ref{thm:AM-PR}(c), so we only sketch the differences. In this case, we will apply Lemma~\ref{lem:generalc}(b). Conditions C1 and C2 are verified exactly as before. It remains to verify conditions C3, C4, and C5b.

\noindent \underline{Verifying condition C3:} If $\rho \leq 2$, then $\rho B_0(\rho) = \frac{2}{\pi}\sin^2 \anglecurr \leq 0.85$ and $0.56 \leq 1 - F_0(\parcomp, \perpcomp) \leq 1$, so that
$[\rho B_0(\rho)]^2 + [1 - F_0(\parcomp, \perpcomp)]^2 \leq 1.1$.
Since $\rho \leq 2$ and $\parcomp \leq 3/2$, we also have 
$(1 - \parcomp)^2 + \perpcomp^2 \leq 10$.
Together with the bound~\eqref{eq:g-start} and Lemma~\ref{lem:map-ineqs}(g), this yields
\begin{align*}
\frac{g(\parcomp, \perpcomp)}{F(\parcomp, \perpcomp)} \leq \frac{4}{5} \rho + \frac{2\noisestd + \sqrt{22.2}}{F_0(\parcomp, \perpcomp) \cdot \sqrt{\kappa - 1}}.
\end{align*}
Since $F(\parcomp, \perpcomp) \geq 0.56$ for all $\rho \leq 2$, $\kappa \geq C$, $\noisestd^2 /\kappa \leq c$, and $\rho \geq 1/5$, we obtain
\[
\frac{2\noisestd + \sqrt{22.2}}{F_0(\parcomp, \perpcomp) \cdot \sqrt{\kappa - 1}} \leq \frac{3}{40} \rho.
\]

\noindent \underline{Verifying condition C4:} We have $F(\parcomp, \perpcomp) \leq 1$ for all $(\parcomp, \perpcomp)$, as before.
The inequality $\frac{g(\parcomp, \perpcomp)}{F(\parcomp, \perpcomp)} \leq 1/6$ for $\rho \leq 1/5$ follows from the bound~\eqref{eq:ratiobd-GD-PR} and the inequalities $F(\parcomp, \perpcomp) \geq 0.95$, $\kappa \geq C$, and  $\noisestd^2 / \kappa \leq c$.
\qed.

\noindent \underline{Verifying condition C5b:} Note also that $|\rho B_0(\rho)| \lor |1 - F_0(\parcomp, \perpcomp)| \leq 1$, and that $(1 - \parcomp)^2 + \perpcomp^2 \leq 5/2$ if $\parcomp \lor \perpcomp \leq 3/2$. Combining with equation~\eqref{eq:g-start} and Lemma~\ref{lem:map-ineqs}(d), we have
$g(\parcomp, \perpcomp) \leq \sqrt{0.8 + \frac{9 + \noisestd^2}{\kappa}} \leq 0.95$, where the last inequality holds for $\kappa \geq C$ and $\noisestd^2 / \kappa \leq c$.

\subsection{Proof of Theorem~\ref{thm:AM-MLR}: Alternating minimization for mixtures of regressions}

Recall that the Gordon update in this case is given by the pair $(F_{\noisestd}, G_{\noisestd})$. 

\subsubsection{Part (a): Convergence of Gordon state evolution in good region}

As before, it suffices to show that the Gordon state evolution is $\Goodset$-faithful, and to prove the upper and lower bounds on its one-step convergence behavior.

\paragraph{Verifying that $\Sopgordon$ is $\Goodset$-faithful:} We verify the two inequalities separately.

\noindent \underline{Bounding $F_{\noisestd}(\parcomp, \perpcomp)$:} From Lemma~\ref{lem:map-ineqs}(c), we have $F_0(\parcomp, \perpcomp) \leq F_{\noisestd}(\parcomp, \perpcomp) \leq 1 + \frac{2\noisestd^3}{3\pi} \leq 1.05$, where the final inequality holds since $\noisestd \leq c$. The lower bound on $F_0$ established in the previous proof completes the claim.

\noindent \underline{Bounding $\frac{G_{\noisestd}(\parcomp, \perpcomp)}{F_{\noisestd}(\parcomp,\perpcomp)}$:} From Lemma~\ref{lem:map-ineqs}(g), we directly have
\begin{align}
\frac{G_{\noisestd}(\parcomp, \perpcomp)}{F_{\noisestd}(\parcomp, \perpcomp)} &\leq \frac{4}{5} \cdot \rho + \frac{2\noisestd}{\sqrt{\kappa - 1}}  \label{eq:ratiobd-AM-MLR},
\end{align}
Noting that $\rho \leq 1/5$ and $\noisestd^2/\kappa \leq c$ completes the proof.

\paragraph{Establishing upper bound on one-step distance:} 
From Lemma~\ref{lem:map-ineqs}(g), we have
\begin{align*}
\frac{G_{\noisestd}(\parcomp, \perpcomp)}{F_{\noisestd}(\parcomp, \perpcomp)} &\leq \frac{4}{5} \cdot \frac{\perpcomp}{\parcomp} + \frac{2\noisestd}{\sqrt{\kappa - 1}}.
\end{align*}
 Now applying Lemma~\ref{lem:taninv}(b) yields
\begin{align*}
\tan^{-1} \left( \frac{G_{\noisestd}(\parcomp, \perpcomp)}{F_{\noisestd}(\parcomp, \perpcomp)} \right) \leq \frac{51}{50} \cdot \frac{4}{5} \cdot \tan^{-1} \left( \frac{\perpcomp}{\parcomp} \right) + \frac{2\noisestd}{\sqrt{\kappa - 1}};
\end{align*}
to complete the proof, note that $\DeltaSEangle(\bzeta) = \tan^{-1} (\perpcomp/\parcomp)$ for an element $\bzeta = (\parcomp, \perpcomp)$ of the state-evolution.  

\paragraph{Establishing lower bound on one-step distance:} Using the lower bound in Lemma~\ref{lem:map-ineqs}(g) in conjunction with the assumptions $\kappa \geq C$ and $\noisestd \leq c$, we obtain
\begin{align*}
\frac{G_{\noisestd}(\parcomp, \perpcomp)}{F_{\noisestd}(\parcomp, \perpcomp)} &\geq \frac{\rho B_{\noisestd}(\rho)}{2} + \frac{\noisestd}{1.8\sqrt{\kappa - 1}} \geq c \noisestd\rho + \frac{\noisestd}{1.8\sqrt{\kappa - 1}}.
\end{align*}
The second inequality follows by noting that $B_{\noisestd}(\rho) \geq 2c\noisestd$ for all $\rho$ for some absolute constant $c \in (0, 1]$. Note also that since $\noisestd$ is small, we have $1 - c\noisestd \geq 0.95$, and also that $\sigma^2 / (\kappa - 1) \leq c'$ for a small enough constant $c'$. Putting these together with Lemma~\ref{lem:taninv}(a) yields
\[
\tan^{-1} \left( \frac{G_{\noisestd}(\parcomp, \perpcomp)}{F_{\noisestd}(\parcomp, \perpcomp)} \right) \geq c \noisestd \tan^{-1} \rho + \frac{\noisestd}{2\sqrt{\kappa - 1}},
\]
as desired. 
\qed

\subsubsection{Part (b): Empirical error is sharply tracked by Gordon state evolution}

This proof is almost identical to that of Theorem~\ref{thm:AM-PR}(b), so we only sketch the major difference: verifying the gradient conditions. We set $\Delta_0 = 1/2000$ and $\tau_0 = 1/50$ for convenience in computation, so that $\Delta_0 / \tau_0 = 0.025$.

\paragraph{Verifying gradient conditions:} It is easy to verify that for all $\bzeta = (\parcomp, \perpcomp) \in \mathbb{B}_{\infty} (\Goodset; \Delta_0/ \tau_0)$, we have $\parcomp \geq 1/2$ and $\perpcomp/\parcomp \leq 1/4$. Consequently, parts (a) and (b) of Lemma~\ref{lem:grad-map-ineqs} yield that
\begin{align*}
\| \nabla F_{\noisestd}(\parcomp, \perpcomp) \|_1 \lor \| \nabla G_{\noisestd}(\parcomp, \perpcomp) \|_1 \leq 0.98 = 1 - \tau_0
\end{align*}
for all $(\parcomp, \perpcomp) \in \mathbb{B}_{\infty} (\Goodset; \Delta_0/ \tau_0)$.

The rest of the proof proceeds identically.
\qed

\subsubsection{Part (c): Iterates converge to good region from random initialization}

This proof is almost identical to that of Theorem~\ref{thm:AM-PR}(c), so we only sketch the differences. Conditions C1 and C2 are follow directly from the fact that $F_{\noisestd}(\parcomp, \perpcomp) \geq F_0(\parcomp, \perpcomp)$. It remains to verify conditions C3, C4, and C5a.

\noindent \underline{Verifying condition C3:} Lemma~\ref{lem:map-ineqs}(g) yields
\begin{align*}
\frac{G_{\noisestd}(\parcomp, \perpcomp)}{F_{\noisestd}(\parcomp, \perpcomp)} \leq \frac{4}{5} \rho + \frac{2\noisestd}{\sqrt{\kappa - 1}}.
\end{align*}
Since $1/5 \leq \rho \leq 2$ and $\noisestd^2 /\kappa \leq c$, we obtain
\[
\frac{2\noisestd}{\sqrt{\kappa - 1}} \leq \frac{3}{40} \rho.
\]

\noindent \underline{Verifying condition C4:} As argued before, when $\noisestd \leq c$, we have $F_{\noisestd}(\parcomp, \perpcomp) \leq 1.05$ for all $(\parcomp, \perpcomp)$. The inequality $\frac{G_{\noisestd}(\parcomp, \perpcomp)}{F_{\noisestd}(\parcomp, \perpcomp)} \leq 1/6$ for $\rho \leq 1/5$ follows from Lemma~\ref{lem:map-ineqs}(g) and the inequality $\noisestd \leq c$.

\noindent \underline{Verifying condition C5a.} We have $G_{\noisestd}(\parcomp, \perpcomp) \leq 0.8$ by Lemma~\ref{lem:map-ineqs}(d)
\qed.

\subsection{Proof of Theorem~\ref{thm:GD-MLR}: Subgradient descent for mixtures of regressions}

Recall that the Gordon update in this case is given by the pair $(F_{\noisestd}, g_{\noisestd})$. It is also useful to note that 
\begin{align} \label{eq:PR-MLR-equiv-final}
[g_{\noisestd}(\parcomp, \perpcomp)]^2 = G_{\noisestd}(\parcomp, \perpcomp)^2 + \frac{1}{\kappa} \left((\parcomp - F_{\noisestd}(\parcomp, \perpcomp))^2 + (\perpcomp - \rho B_{\noisestd}(\rho))^2 \right).
\end{align}

\subsubsection{Part (a): Convergence of Gordon state evolution in good region}

As before, it suffices to show that the Gordon state evolution is $\Goodset$-faithful, and to prove the upper and lower bounds on its one-step convergence behavior. Unlike before, we show the lower bound on one-step convergence first, since some steps here are used in the proof of the upper bound.

\paragraph{Verifying that $\Sopgordon$ is $\Goodset$-faithful:} The bounds on $F_{\noisestd}$ were already shown in the previous proof for AM.

\noindent \underline{Bounding $\frac{g_{\noisestd}(\parcomp, \perpcomp)}{F_{\noisestd}(\parcomp,\perpcomp)}$:} Putting together parts (a) and (c) of Lemma~\ref{lem:map-ineqs}, we have $|1 - F_{\noisestd}(\parcomp, \perpcomp)| \lesssim \anglecurr^3 \lor \noisestd^3 \lesssim \rho^3 + \noisestd^3$. 
We also have $\rho B_{\noisestd}(\rho) \lesssim (\rho \land 1) \cdot (\noisestd \lor (\rho \land 1)) \lesssim \noisestd$ and $F_{\noisestd}(\parcomp, \perpcomp) \geq 0.95$ for all $\rho \leq 1/5$.
Combining this with Lemma~\ref{lem:map-ineqs} parts (f) and (g) yields
\begin{align}
\frac{g_{\noisestd}(\parcomp, \perpcomp)}{F_{\noisestd}(\parcomp, \perpcomp)} &\leq \frac{4}{5} \cdot \rho + \frac{2\noisestd}{\sqrt{\kappa}} + \frac{2}{\sqrt{\kappa}} \left( (1 - \parcomp)^2 + \perpcomp^2 \right)^{1/2} + \frac{C}{\sqrt{\kappa}} \left( \noisestd + \rho^3 + \noisestd^3 \right) \notag \\
&\leq \frac{5}{6} \cdot \rho + \frac{C\noisestd }{\sqrt{\kappa}} + \frac{2}{\sqrt{\kappa}} \cdot \DeltaSELtwo(\parcomp, \perpcomp). \label{eq:ratiobd-GD-MLR}
\end{align}
Noting that $\rho \leq 1/5$ and $\noisestd^2/\kappa \leq c$ and setting $\kappa \geq C$ completes the proof.

\paragraph{Establishing lower bound on one-step distance:} Note that
\begin{align}
[g_{\noisestd}(\parcomp, \perpcomp)]^2 &= \frac{\kappa - 2}{\kappa - 1} \cdot [\rho B_{\noisestd}(\rho)]^2 + \frac{1}{\kappa} \Big\{ (\parcomp - F_{\noisestd}(\parcomp, \perpcomp))^2 + (\perpcomp - [\rho B_{\noisestd}(\rho)]^2 )^2 + (1 - [F_{\noisestd}(\parcomp, \perpcomp)]^2) + \noisestd^2 \Big\}, \label{eq:conv-rearrange-s}
\end{align}
Applying Young's inequality, we have
\begin{align}
[g_{\noisestd}(\parcomp, \perpcomp)]^2 &\geq \frac{\kappa - 2}{\kappa - 1} \cdot [\rho B_{\noisestd}(\rho)]^2  \notag \\
&\qquad + \frac{1}{\kappa} \left( \frac{1}{2} (1 - \parcomp)^2 - (1 - F_{\noisestd}(\parcomp, \perpcomp))^2 + \frac{1}{2} \perpcomp^2 - [\rho B_{\noisestd}(\rho)]^2 + (1 - F_{\noisestd}(\parcomp, \perpcomp))(1 + F_{\noisestd}(\parcomp, \perpcomp)) + \noisestd^2 \right) \notag \\
&= \frac{\kappa - 3}{\kappa - 1} \cdot [\rho B_{\noisestd}(\rho)]^2 + \frac{1}{\kappa} \left( \frac{1}{2} (1 - \parcomp)^2 + \frac{1}{2} \perpcomp^2 + 2F_{\noisestd}(\parcomp, \perpcomp) \cdot (1 - F_{\noisestd}(\parcomp, \perpcomp)) + \noisestd^2 \right) \notag \\
&\geq \frac{1}{2 (\kappa - 1)} [\DeltaSELtwo(\bzeta)]^2 + \frac{\noisestd^2}{1.5\kappa}, \notag
\end{align}
where the final inequality follows since $1 - F_{\noisestd} \gtrsim - \noisestd^3$, $\kappa \geq C$ and $\noisestd \leq c$. Dividing both sides of the above inequality by $[F_{\noisestd}(\parcomp, \perpcomp)]^2$ and noting that $[F_{\noisestd}(\parcomp, \perpcomp)]^2 \geq 0.95$ for all $\rho \leq 1/5$, we have
\begin{align} \label{eq:ratio-last}
\frac{[g_{\noisestd}(\parcomp, \perpcomp)]^2}{[F_{\noisestd}(\parcomp, \perpcomp)]^2} \geq \frac{2}{5 (\kappa - 1)} \rho^2 + \frac{\noisestd^2}{1.6\kappa}
\end{align}
where we have also used Lemma~\ref{lem:angle-l2}(a) to conclude that $\DeltaSELtwo(\bzeta) \geq 0.9 \rho$ for all $\rho \leq 1/5$. Now using the inequality $\sqrt{a + b} \geq \sqrt{\frac{a}{1 + c}} + \sqrt{\frac{bc}{1 + c}}$ (valid for any three non-negative scalars $(a, b, c)$) we have
\begin{align*}
\frac{[g_{\noisestd}(\parcomp, \perpcomp)]}{[F_{\noisestd}(\parcomp, \perpcomp)]} \geq c_{\kappa} \rho + \frac{\noisestd^2}{1.8\kappa},
\end{align*}
where $1 - c_{\kappa} \geq 0.9$. Using the fact that $\frac{\noisestd^2}{\kappa} \leq c$ and applying Lemma~\ref{lem:taninv}(a) completes the proof.
\qed

\paragraph{Establishing upper bound on two-step distance:} 
We require an explicit relationship between the parallel and perpendicular components in this proof (see Remark~\ref{rem:transient}), so we use a transient period $t_0 = 1$. For notational convenience, let $(F_{\noisestd}, g_{\noisestd})$ denote the pair $(F_{\noisestd}(\parcomp, \perpcomp), g_{\noisestd}(\parcomp, \perpcomp))$, and let $(F_+, g_+) = (F_{\noisestd}(F_{\noisestd}, g_{\noisestd}), g_{\noisestd}(F_{\noisestd}, g_{\noisestd}))$ denote the element of the state evolution obtained after two steps of the Gordon update. Let $\rho_+ = g_\noisestd/F_{\noisestd}$.
Equation~\eqref{eq:ratiobd-GD-MLR} yields
\begin{align} 
\frac{g_+}{F_+} \leq \frac{5}{6} \cdot \rho_+ + \frac{C\noisestd }{\sqrt{\kappa}} + \frac{2}{\sqrt{\kappa}} \cdot \DeltaSELtwo(F_{\noisestd}, g_{\noisestd}) &\leq  \frac{5}{6} \cdot \rho_+ + \frac{C\noisestd }{\sqrt{\kappa}} + \frac{1}{\sqrt{\kappa}} \cdot (\rho^3 + 2g_{\noisestd})  \label{eq:twostep} \\
&= \frac{5}{6} \cdot \rho_+ + \frac{C\noisestd }{\sqrt{\kappa}} + \rho^2 \cdot \frac{\rho}{\sqrt{\kappa - 1}} +  \frac{2\rho_+ \cdot F_{\noisestd}}{\sqrt{\kappa}} \notag \\
&\leq \frac{5}{6} \cdot \rho_+ + \frac{C\noisestd }{\sqrt{\kappa}} + \frac{\rho_+}{25} +  \frac{2.2\rho_+}{\sqrt{\kappa}} \notag \\
&\leq \frac{7}{8} \cdot \rho_+ + \frac{C\noisestd }{\sqrt{\kappa}}. \notag
\end{align}
Here, the second inequality follows since $\DeltaSELtwo(F_{\noisestd}, g_{\noisestd}) \leq |1 - F_{\noisestd}| + g_{\noisestd} \leq \frac{1}{2} (\rho^3 + \noisestd^3) + g_{\noisestd}$, the third inequality is a consequence of the relation~\eqref{eq:ratio-last} and the fact that $\rho \leq 1/5$ and $F_{\noisestd} \leq 1.1$ (which in turn follows from Lemma~\ref{lem:map-ineqs}(c) and $\noisestd \leq c$). 
Applying Lemma~\ref{lem:taninv}(b) completes the proof.
\qed

\subsubsection{Part (b): Empirical error is sharply tracked by Gordon state evolution}

This proof is almost identical to that of Theorem~\ref{thm:AM-PR}(b), so we only sketch the major difference: verifying the gradient conditions. We set $\Delta_0 = 1/4000$ and $\tau_0 = 1/100$ for convenience in computation, so that $\Delta_0 / \tau_0 = 0.025$.

\paragraph{Verifying gradient conditions:} It is easy to verify that for all $\bzeta = (\parcomp, \perpcomp) \in \mathbb{B}_{\infty}( \Goodset; \Delta_0/ \tau_0)$, we have $\parcomp \geq 1/2$ and $\perpcomp/\parcomp \leq 1/4$. Consequently, parts (a) and (b) of Lemma~\ref{lem:grad-map-ineqs} yield that
\begin{align*}
\| \nabla F_{\noisestd}(\parcomp, \perpcomp) \|_1 \lor \| \nabla g_{\noisestd}(\parcomp, \perpcomp) \|_1 \leq 0.99 = 1 - \tau_0
\end{align*}
for all $(\parcomp, \perpcomp) \in \mathbb{B}_{\infty}(\Goodset; \Delta_0/ \tau_0)$.
The rest of the proof proceeds identically.
\qed

\subsubsection{Part (c): Iterates converge to good region from random initialization}

This proof is almost identical to that of Theorem~\ref{thm:AM-MLR}(c), except that we use Lemma~\ref{lem:generalc}(b). Consequently, we only verify conditions C3, C4, C5b.

\noindent \underline{Verifying condition C3:} For all $1/5 \leq \rho \leq 2$ and $1/2 \leq \parcomp \leq 3/2$, equation~\eqref{eq:ratiobd-GD-MLR} and Lemma~\ref{lem:angle-l2}(b) in the appendix together yield
\begin{align*}
\frac{g_{\noisestd}(\parcomp, \perpcomp)}{F_{\noisestd}(\parcomp, \perpcomp)} &\leq \frac{5}{6} \cdot \frac{\perpcomp}{\parcomp} + \frac{C\noisestd}{\sqrt{\kappa}} + \frac{C}{\sqrt{\kappa}} \cdot \frac{\perpcomp}{\parcomp}.
\end{align*}
Since $1/5 \leq \rho \leq 2$ and $\noisestd^2 /\kappa \leq c$, we obtain
$\frac{C\noisestd}{\sqrt{\kappa}} \leq \frac{3}{40} \rho$ and choosing large enough $\kappa$ completes the proof.

\noindent \underline{Verifying condition C4:} As argued before, when $\noisestd \leq c$, we have $F_{\noisestd}(\parcomp, \perpcomp) \leq 1.05$ for all $(\parcomp, \perpcomp)$.
The inequality $\frac{g_{\noisestd}(\parcomp, \perpcomp)}{F_{\noisestd}(\parcomp, \perpcomp)} \leq 1/6$ for $\rho \leq 1/5$ follows from the bound~\eqref{eq:ratiobd-GD-MLR} and the inequalities $F_{\noisestd}(\parcomp, \perpcomp) \geq 0.95$, $\kappa \geq C$, and  $\noisestd^2 / \kappa \leq c$.

\noindent \underline{Verifying condition C5b:} Note also that the quantities $\rho B_{\noisestd}(\rho)$, $|1 - F_{\noisestd}(\parcomp, \perpcomp)|$, and $(1 - \parcomp)^2 + \perpcomp^2$ are all bounded by an absolute constant $C$ if $\noisestd \leq c$ and $\parcomp \lor \perpcomp \leq 3/2$. Combining with Lemma~\ref{lem:map-ineqs} parts (d) and (f), we have
$g_{\noisestd}(\parcomp, \perpcomp) \leq \sqrt{0.8 + \frac{C + \noisestd^2}{\kappa}} \leq 0.95$, where the last inequality holds for large enough $\kappa$ and small enough $\noisestd$.
\qed.

\newpage
\small

\subsection*{Acknowledgments}

Part of this work was performed when the authors were participants in the program on Probability, Geometry, and Computation in High Dimensions hosted at the Simons Institute for the Theory of Computing. KAC was supported in part by a National Science Foundation Graduate Research Fellowship and the Sony Stanford Graduate Fellowship. AP was supported in part by a research fellowship from the Simons Institute and National Science Foundation grant CCF-2107455. CT was supported in part by the National Science Foundation grant CCF-2009030, by an NSERC Discovery Grant, and by a research grant from KAUST.

\bibliographystyle{abbrvnat}
\bibliography{intro-refs,convergence-gordon}

\begin{thebibliography}{109}
\providecommand{\natexlab}[1]{#1}
\providecommand{\url}[1]{\texttt{#1}}
\expandafter\ifx\csname urlstyle\endcsname\relax
  \providecommand{\doi}[1]{doi: #1}\else
  \providecommand{\doi}{doi: \begingroup \urlstyle{rm}\Url}\fi

\bibitem[Abbasi et~al.(2019)Abbasi, Salehi, and
  Hassibi]{abbasi2019universality}
E.~Abbasi, F.~Salehi, and B.~Hassibi.
\newblock Universality in learning from linear measurements.
\newblock \emph{Advances in Neural Information Processing Systems},
  32:\penalty0 12372--12382, 2019.

\bibitem[Agarwal et~al.(2016)Agarwal, Anandkumar, Jain, and
  Netrapalli]{agarwal2016learning}
A.~Agarwal, A.~Anandkumar, P.~Jain, and P.~Netrapalli.
\newblock Learning sparsely used overcomplete dictionaries via alternating
  minimization.
\newblock \emph{SIAM Journal on Optimization}, 26\penalty0 (4):\penalty0
  2775--2799, 2016.

\bibitem[Amelunxen et~al.(2014)Amelunxen, Lotz, McCoy, and
  Tropp]{amelunxen2014living}
D.~Amelunxen, M.~Lotz, M.~B. McCoy, and J.~A. Tropp.
\newblock Living on the edge: Phase transitions in convex programs with random
  data.
\newblock \emph{Information and Inference: A Journal of the IMA}, 3\penalty0
  (3):\penalty0 224--294, 2014.

\bibitem[Aubin et~al.(2020)Aubin, Lu, Krzakala, and
  Zdeborova]{aubin2020generalization}
B.~Aubin, Y.~Lu, F.~Krzakala, and L.~Zdeborova.
\newblock Generalization error in high-dimensional perceptrons: Approaching
  bayes error with convex optimization.
\newblock In \emph{Conference on Neural Information Processing Systems
  (NeurIPS)}, 2020.

\bibitem[Balakrishnan et~al.(2017)Balakrishnan, Wainwright, and
  Yu]{balakrishnan2017statistical}
S.~Balakrishnan, M.~J. Wainwright, and B.~Yu.
\newblock Statistical guarantees for the {EM} algorithm: From population to
  sample-based analysis.
\newblock \emph{The Annals of Statistics}, 45\penalty0 (1):\penalty0 77--120,
  2017.

\bibitem[Bayati and Montanari(2011)]{bayati2011lasso}
M.~Bayati and A.~Montanari.
\newblock The {L}asso risk for {G}aussian matrices.
\newblock \emph{IEEE Transactions on Information Theory}, 58\penalty0
  (4):\penalty0 1997--2017, 2011.

\bibitem[Bayati et~al.(2015)Bayati, Lelarge, and
  Montanari]{bayati2015universality}
M.~Bayati, M.~Lelarge, and A.~Montanari.
\newblock Universality in polytope phase transitions and message passing
  algorithms.
\newblock \emph{The Annals of Applied Probability}, 25\penalty0 (2):\penalty0
  753--822, 2015.

\bibitem[Boucheron et~al.(2005)Boucheron, Bousquet, Lugosi, and
  Massart]{boucheron2005moment}
S.~Boucheron, O.~Bousquet, G.~Lugosi, and P.~Massart.
\newblock Moment inequalities for functions of independent random variables.
\newblock \emph{Annals of Probability}, 33\penalty0 (2):\penalty0 514--560,
  2005.

\bibitem[Brillinger(2012)]{brillinger2012generalized}
D.~R. Brillinger.
\newblock A generalized linear model with “{G}aussian” regressor variables.
\newblock In \emph{Selected Works of David Brillinger}, pages 589--606.
  Springer, 2012.

\bibitem[Candes et~al.(2015)Candes, Li, and Soltanolkotabi]{candes2015phase}
E.~J. Candes, X.~Li, and M.~Soltanolkotabi.
\newblock Phase retrieval via {W}irtinger flow: Theory and algorithms.
\newblock \emph{IEEE Transactions on Information Theory}, 61\penalty0
  (4):\penalty0 1985--2007, 2015.

\bibitem[Celentano et~al.(2020{\natexlab{a}})Celentano, Montanari, and
  Wei]{celentano2020lasso}
M.~Celentano, A.~Montanari, and Y.~Wei.
\newblock The {L}asso with general {G}aussian designs with applications to
  hypothesis testing.
\newblock \emph{arXiv preprint arXiv:2007.13716}, 2020{\natexlab{a}}.

\bibitem[Celentano et~al.(2020{\natexlab{b}})Celentano, Montanari, and
  Wu]{celentano2020estimation}
M.~Celentano, A.~Montanari, and Y.~Wu.
\newblock The estimation error of general first order methods.
\newblock In \emph{Conference on Learning Theory}, pages 1078--1141. PMLR,
  2020{\natexlab{b}}.

\bibitem[Chang et~al.(2021)Chang, Li, Oymak, and
  Thrampoulidis]{chang2021provable}
X.~Chang, Y.~Li, S.~Oymak, and C.~Thrampoulidis.
\newblock Provable benefits of overparameterization in model compression: From
  double descent to pruning neural networks.
\newblock In \emph{Proceedings of the AAAI Conference on Artificial
  Intelligence}, volume~35, pages 6974--6983, 2021.

\bibitem[Charisopoulos et~al.(2021)Charisopoulos, Chen, Davis, D{\'\i}az, Ding,
  and Drusvyatskiy]{charisopoulos2021low}
V.~Charisopoulos, Y.~Chen, D.~Davis, M.~D{\'\i}az, L.~Ding, and
  D.~Drusvyatskiy.
\newblock Low-rank matrix recovery with composite optimization: Good
  conditioning and rapid convergence.
\newblock \emph{Foundations of Computational Mathematics}, pages 1--89, 2021.

\bibitem[Chen and Chi(2018)]{chen2018harnessing}
Y.~Chen and Y.~Chi.
\newblock Harnessing structures in big data via guaranteed low-rank matrix
  estimation: Recent theory and fast algorithms via convex and nonconvex
  optimization.
\newblock \emph{IEEE Signal Processing Magazine}, 35\penalty0 (4):\penalty0
  14--31, 2018.

\bibitem[Chen et~al.(2019)Chen, Chi, Fan, and Ma]{chen2019gradient}
Y.~Chen, Y.~Chi, J.~Fan, and C.~Ma.
\newblock Gradient descent with random initialization: Fast global convergence
  for nonconvex phase retrieval.
\newblock \emph{Mathematical Programming}, 176\penalty0 (1):\penalty0 5--37,
  2019.

\bibitem[Chen and Dongarra(2005)]{chen2005condition}
Z.~Chen and J.~J. Dongarra.
\newblock Condition numbers of {G}aussian random matrices.
\newblock \emph{SIAM Journal on Matrix Analysis and Applications}, 27\penalty0
  (3):\penalty0 603--620, 2005.

\bibitem[Chi et~al.(2019)Chi, Lu, and Chen]{chi2019nonconvex}
Y.~Chi, Y.~M. Lu, and Y.~Chen.
\newblock Nonconvex optimization meets low-rank matrix factorization: An
  overview.
\newblock \emph{IEEE Transactions on Signal Processing}, 67\penalty0
  (20):\penalty0 5239--5269, 2019.

\bibitem[Daskalakis et~al.(2017)Daskalakis, Tzamos, and
  Zampetakis]{daskalakis2017ten}
C.~Daskalakis, C.~Tzamos, and M.~Zampetakis.
\newblock Ten steps of {EM} suffice for mixtures of two {G}aussians.
\newblock In \emph{Conference on Learning Theory}, pages 704--710. PMLR, 2017.

\bibitem[Davis et~al.(2020)Davis, Drusvyatskiy, and
  Paquette]{davis2020nonsmooth}
D.~Davis, D.~Drusvyatskiy, and C.~Paquette.
\newblock The nonsmooth landscape of phase retrieval.
\newblock \emph{IMA Journal of Numerical Analysis}, 40\penalty0 (4):\penalty0
  2652--2695, 2020.

\bibitem[Dempster et~al.(1977)Dempster, Laird, and Rubin]{dempster1977maximum}
A.~P. Dempster, N.~M. Laird, and D.~B. Rubin.
\newblock Maximum likelihood from incomplete data via the {EM} algorithm.
\newblock \emph{Journal of the Royal Statistical Society: Series B
  (Methodological)}, 39\penalty0 (1):\penalty0 1--22, 1977.

\bibitem[Deng et~al.(2021)Deng, Kammoun, and Thrampoulidis]{Deng2021Model}
Z.~Deng, A.~Kammoun, and C.~Thrampoulidis.
\newblock {A model of double descent for high-dimensional binary linear
  classification}.
\newblock \emph{Information and Inference: A Journal of the IMA}, 2021.

\bibitem[Dhifallah and Lu(2020)]{dhifallah2020precise}
O.~Dhifallah and Y.~M. Lu.
\newblock A precise performance analysis of learning with random features.
\newblock \emph{arXiv preprint arXiv:2008.11904}, 2020.

\bibitem[Dhifallah et~al.(2018)Dhifallah, Thrampoulidis, and
  Lu]{dhifallah2018phase}
O.~Dhifallah, C.~Thrampoulidis, and Y.~M. Lu.
\newblock Phase retrieval via polytope optimization: Geometry, phase
  transitions, and new algorithms.
\newblock \emph{arXiv preprint arXiv:1805.09555}, 2018.

\bibitem[Donoho et~al.(2009)Donoho, Maleki, and Montanari]{donoho2009message}
D.~L. Donoho, A.~Maleki, and A.~Montanari.
\newblock Message-passing algorithms for compressed sensing.
\newblock \emph{Proceedings of the National Academy of Sciences}, 106\penalty0
  (45):\penalty0 18914--18919, 2009.

\bibitem[Donoho et~al.(2011)Donoho, Maleki, and Montanari]{donoho2011noise}
D.~L. Donoho, A.~Maleki, and A.~Montanari.
\newblock The noise-sensitivity phase transition in compressed sensing.
\newblock \emph{IEEE Transactions on Information Theory}, 57\penalty0
  (10):\penalty0 6920--6941, 2011.

\bibitem[Duchi and Ruan(2019)]{duchi2019solving}
J.~C. Duchi and F.~Ruan.
\newblock Solving (most) of a set of quadratic equalities: Composite
  optimization for robust phase retrieval.
\newblock \emph{Information and Inference: A Journal of the IMA}, 8\penalty0
  (3):\penalty0 471--529, 2019.

\bibitem[Dwivedi et~al.(2020)Dwivedi, Ho, Khamaru, Wainwright, Jordan, and
  Yu]{dwivedi2020singularity}
R.~Dwivedi, N.~Ho, K.~Khamaru, M.~J. Wainwright, M.~I. Jordan, and B.~Yu.
\newblock Singularity, misspecification and the convergence rate of {EM}.
\newblock \emph{The Annals of Statistics}, 48\penalty0 (6):\penalty0
  3161--3182, 2020.

\bibitem[El~Karoui(2018)]{el2018impact}
N.~El~Karoui.
\newblock On the impact of predictor geometry on the performance on
  high-dimensional ridge-regularized generalized robust regression estimators.
\newblock \emph{Probability Theory and Related Fields}, 170\penalty0
  (1):\penalty0 95--175, 2018.

\bibitem[Feng et~al.(2021)Feng, Venkataramanan, Rush, and
  Samworth]{feng2021unifying}
O.~Y. Feng, R.~Venkataramanan, C.~Rush, and R.~J. Samworth.
\newblock A unifying tutorial on approximate message passing.
\newblock \emph{arXiv preprint arXiv:2105.02180}, 2021.

\bibitem[Fienup(1982)]{fienup1982phase}
J.~R. Fienup.
\newblock Phase retrieval algorithms: {A} comparison.
\newblock \emph{Applied optics}, 21\penalty0 (15):\penalty0 2758--2769, 1982.

\bibitem[Gao and Xu(2017)]{gao2017phaseless}
B.~Gao and Z.~Xu.
\newblock Phaseless recovery using the {G}auss--{N}ewton method.
\newblock \emph{IEEE Transactions on Signal Processing}, 65\penalty0
  (22):\penalty0 5885--5896, 2017.

\bibitem[Ge et~al.(2016)Ge, Lee, and Ma]{ge2016matrix}
R.~Ge, J.~D. Lee, and T.~Ma.
\newblock Matrix completion has no spurious local minimum.
\newblock \emph{Advances in Neural Information Processing Systems}, pages
  2981--2989, 2016.

\bibitem[Gerchberg(1972)]{gerchberg1972practical}
R.~W. Gerchberg.
\newblock A practical algorithm for the determination of phase from image and
  diffraction plane pictures.
\newblock \emph{Optik}, 35:\penalty0 237--246, 1972.

\bibitem[Ghosh and Ramchandran(2020)]{ghosh2020alternating}
A.~Ghosh and K.~Ramchandran.
\newblock Alternating minimization converges super-linearly for mixed linear
  regression.
\newblock In \emph{International Conference on Artificial Intelligence and
  Statistics}, pages 1093--1103. PMLR, 2020.

\bibitem[Ghosh et~al.(2019)Ghosh, Pananjady, Guntuboyina, and
  Ramchandran]{ghosh2019max}
A.~Ghosh, A.~Pananjady, A.~Guntuboyina, and K.~Ramchandran.
\newblock Max-affine regression: Provable, tractable, and near-optimal
  statistical estimation.
\newblock \emph{arXiv preprint arXiv:1906.09255}, 2019.

\bibitem[Ghosh et~al.(2020)Ghosh, Pananjady, Guntuboyina, and
  Ramchandran]{ghosh2020max}
A.~Ghosh, A.~Pananjady, A.~Guntuboyina, and K.~Ramchandran.
\newblock Max-affine regression with universal parameter estimation for
  small-ball designs.
\newblock In \emph{2020 IEEE International Symposium on Information Theory
  (ISIT)}, pages 2706--2710. IEEE, 2020.

\bibitem[Gordon(1985)]{gordon1985some}
Y.~Gordon.
\newblock Some inequalities for {G}aussian processes and applications.
\newblock \emph{Israel Journal of Mathematics}, 50\penalty0 (4):\penalty0
  265--289, 1985.

\bibitem[Gordon(1988)]{gordon1988milman}
Y.~Gordon.
\newblock On {M}ilman's inequality and random subspaces which escape through a
  mesh in $\mathbb{R}^n$.
\newblock In \emph{Geometric aspects of functional analysis}, pages 84--106.
  Springer, 1988.

\bibitem[Gunasekar et~al.(2013)Gunasekar, Acharya, Gaur, and
  Ghosh]{gunasekar2013noisy}
S.~Gunasekar, A.~Acharya, N.~Gaur, and J.~Ghosh.
\newblock Noisy matrix completion using alternating minimization.
\newblock In \emph{Joint European conference on machine learning and knowledge
  discovery in databases}, pages 194--209. Springer, 2013.

\bibitem[Hand and Voroninski(2019)]{hand2019global}
P.~Hand and V.~Voroninski.
\newblock Global guarantees for enforcing deep generative priors by empirical
  risk.
\newblock \emph{IEEE Transactions on Information Theory}, 66\penalty0
  (1):\penalty0 401--418, 2019.

\bibitem[Hardt and Wootters(2014)]{hardt2014fast}
M.~Hardt and M.~Wootters.
\newblock Fast matrix completion without the condition number.
\newblock In \emph{Conference on learning theory}, pages 638--678. PMLR, 2014.

\bibitem[Ho et~al.(2020)Ho, Khamaru, Dwivedi, Wainwright, Jordan, and
  Yu]{ho2020instability}
N.~Ho, K.~Khamaru, R.~Dwivedi, M.~J. Wainwright, M.~I. Jordan, and B.~Yu.
\newblock Instability, computational efficiency and statistical accuracy.
\newblock \emph{arXiv preprint arXiv:2005.11411}, 2020.

\bibitem[Jagatap and Hegde(2017)]{jagatap2017fast}
G.~Jagatap and C.~Hegde.
\newblock Fast, sample-efficient algorithms for structured phase retrieval.
\newblock In \emph{Advances in Neural Information Processing Systems}, pages
  4924--4934, 2017.

\bibitem[Jain and Kar(2017)]{jain2017non}
P.~Jain and P.~Kar.
\newblock Non-convex optimization for machine learning.
\newblock \emph{Foundations and Trends{\textregistered} in Machine Learning},
  10\penalty0 (3-4):\penalty0 142--363, 2017.

\bibitem[Jain et~al.(2013)Jain, Netrapalli, and Sanghavi]{jain2013low}
P.~Jain, P.~Netrapalli, and S.~Sanghavi.
\newblock Low-rank matrix completion using alternating minimization.
\newblock In \emph{Proceedings of the forty-fifth annual ACM symposium on
  Theory of computing}, pages 665--674, 2013.

\bibitem[Javanmard and Soltanolkotabi(2020)]{javanmard2020preciseregression}
A.~Javanmard and M.~Soltanolkotabi.
\newblock Precise statistical analysis of classification accuracies for
  adversarial training.
\newblock \emph{arXiv preprint arXiv:2010.11213}, 2020.

\bibitem[Javanmard et~al.(2020)Javanmard, Soltanolkotabi, and
  Hassani]{javanmard2020precise}
A.~Javanmard, M.~Soltanolkotabi, and H.~Hassani.
\newblock Precise tradeoffs in adversarial training for linear regression.
\newblock In \emph{Conference on Learning Theory}, pages 2034--2078. PMLR,
  2020.

\bibitem[Kammoun and Alouini(2021)]{kammoun2021precise}
A.~Kammoun and M.-S. Alouini.
\newblock On the precise error analysis of support vector machines.
\newblock \emph{IEEE Open Journal of Signal Processing}, 2:\penalty0 99--118,
  2021.

\bibitem[Klusowski et~al.(2019)Klusowski, Yang, and
  Brinda]{klusowski2019estimating}
J.~M. Klusowski, D.~Yang, and W.~Brinda.
\newblock Estimating the coefficients of a mixture of two linear regressions by
  expectation maximization.
\newblock \emph{IEEE Transactions on Information Theory}, 65\penalty0
  (6):\penalty0 3515--3524, 2019.

\bibitem[Kunstner et~al.(2021)Kunstner, Kumar, and
  Schmidt]{kunstner2021homeomorphic}
F.~Kunstner, R.~Kumar, and M.~Schmidt.
\newblock Homeomorphic-invariance of {EM}: Non-asymptotic convergence in {KL}
  divergence for exponential families via mirror descent.
\newblock In \emph{International Conference on Artificial Intelligence and
  Statistics}, pages 3295--3303. PMLR, 2021.

\bibitem[Kwon et~al.(2019)Kwon, Qian, Caramanis, Chen, and
  Davis]{kwon2019global}
J.~Kwon, W.~Qian, C.~Caramanis, Y.~Chen, and D.~Davis.
\newblock Global convergence of the {EM} algorithm for mixtures of two
  component linear regression.
\newblock In \emph{Conference on Learning Theory}, pages 2055--2110. PMLR,
  2019.

\bibitem[Ledoux and Talagrand(2013)]{ledoux2013probability}
M.~Ledoux and M.~Talagrand.
\newblock \emph{Probability in Banach Spaces: {I}soperimetry and processes}.
\newblock Springer Science \& Business Media, 2013.

\bibitem[Liang and Sur(2020)]{liang2020precise}
T.~Liang and P.~Sur.
\newblock A precise high-dimensional asymptotic theory for boosting and
  minimum-$l_1$-norm interpolated classifiers.
\newblock \emph{arXiv preprint arXiv:2002.01586}, 2020.

\bibitem[Loh and Wainwright(2012)]{loh2012high}
P.-L. Loh and M.~J. Wainwright.
\newblock High-dimensional regression with noisy and missing data: Provable
  guarantees with nonconvexity.
\newblock \emph{The Annals of Statistics}, 40\penalty0 (3):\penalty0
  1637--1664, 2012.

\bibitem[Loh and Wainwright(2015)]{loh2015regularized}
P.-L. Loh and M.~J. Wainwright.
\newblock Regularized {M}-estimators with nonconvexity: Statistical and
  algorithmic theory for local optima.
\newblock \emph{The Journal of Machine Learning Research}, 16\penalty0
  (1):\penalty0 559--616, 2015.

\bibitem[Lu and Zhou(2016)]{lu2016statistical}
Y.~Lu and H.~H. Zhou.
\newblock Statistical and computational guarantees of {L}loyd's algorithm and
  its variants.
\newblock \emph{arXiv preprint arXiv:1612.02099}, 2016.

\bibitem[Ma et~al.(2020)Ma, Wang, Chi, and Chen]{ma2020implicit}
C.~Ma, K.~Wang, Y.~Chi, and Y.~Chen.
\newblock Implicit regularization in nonconvex statistical estimation: Gradient
  descent converges linearly for phase retrieval, matrix completion, and blind
  deconvolution.
\newblock \emph{Foundations of Computational Mathematics}, 20\penalty0
  (3):\penalty0 451--632, 2020.

\bibitem[Maillard et~al.(2020)Maillard, Loureiro, Krzakala, and
  Zdeborov{\'a}]{maillard2020phase}
A.~Maillard, B.~Loureiro, F.~Krzakala, and L.~Zdeborov{\'a}.
\newblock Phase retrieval in high dimensions: Statistical and computational
  phase transitions.
\newblock In \emph{Advances in Neural Information Processing Systems},
  volume~33, pages 11071--11082, 2020.

\bibitem[Makkuva et~al.(2019)Makkuva, Viswanath, Kannan, and
  Oh]{makkuva2019breaking}
A.~Makkuva, P.~Viswanath, S.~Kannan, and S.~Oh.
\newblock Breaking the gridlock in mixture-of-experts: Consistent and efficient
  algorithms.
\newblock In \emph{International Conference on Machine Learning}, pages
  4304--4313. PMLR, 2019.

\bibitem[Mei et~al.(2018)Mei, Bai, and Montanari]{mei2018landscape}
S.~Mei, Y.~Bai, and A.~Montanari.
\newblock The landscape of empirical risk for nonconvex losses.
\newblock \emph{The Annals of Statistics}, 46\penalty0 (6A):\penalty0
  2747--2774, 2018.

\bibitem[Miolane and Montanari(2021)]{miolane2018distribution}
L.~Miolane and A.~Montanari.
\newblock The distribution of the {L}asso: Uniform control over sparse balls
  and adaptive parameter tuning.
\newblock \emph{Annals of Statistics}, 2021.

\bibitem[Montanari(2013)]{montanari2013statistical}
A.~Montanari.
\newblock Statistical estimation: From denoising to sparse regression and
  hidden cliques.
\newblock \emph{Statistical Physics, Optimization, Inference, and
  Message-Passing Algorithms: Lecture Notes of the Les Houches School of
  Physics: Special Issue}, 2013.

\bibitem[Montanari et~al.(2019)Montanari, Ruan, Sohn, and
  Yan]{montanari2019generalization}
A.~Montanari, F.~Ruan, Y.~Sohn, and J.~Yan.
\newblock The generalization error of max-margin linear classifiers:
  High-dimensional asymptotics in the overparametrized regime.
\newblock \emph{arXiv preprint arXiv:1911.01544}, 2019.

\bibitem[Neal and Hinton(1998)]{neal1998view}
R.~M. Neal and G.~E. Hinton.
\newblock A view of the {EM} algorithm that justifies incremental, sparse, and
  other variants.
\newblock In \emph{Learning in graphical models}, pages 355--368. Springer,
  1998.

\bibitem[Netrapalli et~al.(2015)Netrapalli, Jain, and
  Sanghavi]{netrapalli2015phase}
P.~Netrapalli, P.~Jain, and S.~Sanghavi.
\newblock Phase retrieval using alternating minimization.
\newblock \emph{IEEE Transactions on Signal Processing}, 63\penalty0
  (18):\penalty0 4814--4826, 2015.

\bibitem[Oymak and Soltanolkotabi(2016)]{oymak2016fast}
S.~Oymak and M.~Soltanolkotabi.
\newblock Fast and reliable parameter estimation from nonlinear observations.
\newblock \emph{arXiv preprint arXiv:1610.07108}, 2016.

\bibitem[Oymak and Tropp(2018)]{oymak2018universality}
S.~Oymak and J.~A. Tropp.
\newblock Universality laws for randomized dimension reduction, with
  applications.
\newblock \emph{Information and Inference: A Journal of the IMA}, 7\penalty0
  (3):\penalty0 337--446, 2018.

\bibitem[Oymak et~al.(2013)Oymak, Thrampoulidis, and Hassibi]{oymak2013squared}
S.~Oymak, C.~Thrampoulidis, and B.~Hassibi.
\newblock The squared-error of generalized {L}asso: A precise analysis.
\newblock In \emph{2013 51st Annual Allerton Conference on Communication,
  Control, and Computing (Allerton)}, pages 1002--1009. IEEE, 2013.

\bibitem[Oymak et~al.(2017)Oymak, Recht, and Soltanolkotabi]{oymak2017sharp}
S.~Oymak, B.~Recht, and M.~Soltanolkotabi.
\newblock Sharp time--data tradeoffs for linear inverse problems.
\newblock \emph{IEEE Transactions on Information Theory}, 64\penalty0
  (6):\penalty0 4129--4158, 2017.

\bibitem[Panahi and Hassibi(2017)]{panahi2017universal}
A.~Panahi and B.~Hassibi.
\newblock A universal analysis of large-scale regularized least squares
  solutions.
\newblock In \emph{Proceedings of the 31st International Conference on Neural
  Information Processing Systems}, pages 3384--3393, 2017.

\bibitem[Pananjady and Foster(2021)]{pananjady2021single}
A.~Pananjady and D.~P. Foster.
\newblock Single-index models in the high signal regime.
\newblock \emph{IEEE Transactions on Information Theory}, 67\penalty0
  (6):\penalty0 4092--4124, 2021.

\bibitem[Paquette et~al.(2020)Paquette, van Merri{\"e}nboer, Paquette, and
  Pedregosa]{paquette2020halting}
C.~Paquette, B.~van Merri{\"e}nboer, E.~Paquette, and F.~Pedregosa.
\newblock Halting time is predictable for large models: A universality property
  and average-case analysis.
\newblock \emph{arXiv preprint arXiv:2006.04299}, 2020.

\bibitem[Plan and Vershynin(2016)]{plan2016generalized}
Y.~Plan and R.~Vershynin.
\newblock The generalized {L}asso with non-linear observations.
\newblock \emph{IEEE Transactions on information theory}, 62\penalty0
  (3):\penalty0 1528--1537, 2016.

\bibitem[Rudelson and Vershynin(2006)]{rudelson2006sparse}
M.~Rudelson and R.~Vershynin.
\newblock Sparse reconstruction by convex relaxation: {F}ourier and {G}aussian
  measurements.
\newblock In \emph{2006 40th Annual Conference on Information Sciences and
  Systems}, pages 207--212. IEEE, 2006.

\bibitem[Salehi et~al.(2018)Salehi, Abbasi, and Hassibi]{salehi2018precise}
F.~Salehi, E.~Abbasi, and B.~Hassibi.
\newblock A precise analysis of {P}hasemax in phase retrieval.
\newblock In \emph{2018 IEEE International Symposium on Information Theory
  (ISIT)}, pages 976--980. IEEE, 2018.

\bibitem[Salehi et~al.(2019)Salehi, Abbasi, and Hassibi]{salehi2019impact}
F.~Salehi, E.~Abbasi, and B.~Hassibi.
\newblock The impact of regularization on high-dimensional logistic regression.
\newblock \emph{arXiv preprint arXiv:1906.03761}, 2019.

\bibitem[Stojnic(2009)]{stojnic2009various}
M.~Stojnic.
\newblock Various thresholds for $\ell_1$-optimization in compressed sensing.
\newblock \emph{arXiv preprint arXiv:0907.3666}, 2009.

\bibitem[Stojnic(2013{\natexlab{a}})]{stojnic2013framework}
M.~Stojnic.
\newblock A framework to characterize performance of {L}asso algorithms.
\newblock \emph{arXiv preprint arXiv:1303.7291}, 2013{\natexlab{a}}.

\bibitem[Stojnic(2013{\natexlab{b}})]{stojnic2013regularly}
M.~Stojnic.
\newblock Regularly random duality.
\newblock \emph{arXiv preprint arXiv:1303.7295}, 2013{\natexlab{b}}.

\bibitem[Stojnic(2013{\natexlab{c}})]{stojnic2013upper}
M.~Stojnic.
\newblock Upper-bounding $\ell_1$-optimization weak thresholds.
\newblock \emph{arXiv preprint arXiv:1303.7289}, 2013{\natexlab{c}}.

\bibitem[Sun(2021)]{sun2021}
J.~Sun.
\newblock Provable nonconvex methods/algorithms, 2021.
\newblock URL \url{https://sunju.org/research/nonconvex/}.

\bibitem[Sun et~al.(2018)Sun, Qu, and Wright]{sun2018geometric}
J.~Sun, Q.~Qu, and J.~Wright.
\newblock A geometric analysis of phase retrieval.
\newblock \emph{Foundations of Computational Mathematics}, 18\penalty0
  (5):\penalty0 1131--1198, 2018.

\bibitem[Sun and Luo(2016)]{sun2016guaranteed}
R.~Sun and Z.-Q. Luo.
\newblock Guaranteed matrix completion via non-convex factorization.
\newblock \emph{IEEE Transactions on Information Theory}, 62\penalty0
  (11):\penalty0 6535--6579, 2016.

\bibitem[Sur and Cand{\`e}s(2019)]{sur2019modern}
P.~Sur and E.~J. Cand{\`e}s.
\newblock A modern maximum-likelihood theory for high-dimensional logistic
  regression.
\newblock \emph{Proceedings of the National Academy of Sciences}, 116\penalty0
  (29):\penalty0 14516--14525, 2019.

\bibitem[Taheri et~al.(2020{\natexlab{a}})Taheri, Pedarsani, and
  Thrampoulidis]{taheri2020asymptotic}
H.~Taheri, R.~Pedarsani, and C.~Thrampoulidis.
\newblock Asymptotic behavior of adversarial training in binary classification.
\newblock \emph{arXiv preprint arXiv:2010.13275}, 2020{\natexlab{a}}.

\bibitem[Taheri et~al.(2020{\natexlab{b}})Taheri, Pedarsani, and
  Thrampoulidis]{taheri2020sharp}
H.~Taheri, R.~Pedarsani, and C.~Thrampoulidis.
\newblock Sharp asymptotics and optimal performance for inference in binary
  models.
\newblock In \emph{International Conference on Artificial Intelligence and
  Statistics}, pages 3739--3749. PMLR, 2020{\natexlab{b}}.

\bibitem[Taheri et~al.(2021)Taheri, Pedarsani, and
  Thrampoulidis]{taheri2021fundamental}
H.~Taheri, R.~Pedarsani, and C.~Thrampoulidis.
\newblock Fundamental limits of ridge-regularized empirical risk minimization
  in high dimensions.
\newblock In \emph{International Conference on Artificial Intelligence and
  Statistics}, pages 2773--2781. PMLR, 2021.

\bibitem[Tan and Vershynin(2019{\natexlab{a}})]{tan2019online}
Y.~S. Tan and R.~Vershynin.
\newblock Online stochastic gradient descent with arbitrary initialization
  solves non-smooth, non-convex phase retrieval.
\newblock \emph{arXiv preprint arXiv:1910.12837}, 2019{\natexlab{a}}.

\bibitem[Tan and Vershynin(2019{\natexlab{b}})]{tan2019phase}
Y.~S. Tan and R.~Vershynin.
\newblock Phase retrieval via randomized {K}aczmarz: Theoretical guarantees.
\newblock \emph{Information and Inference: A Journal of the IMA}, 8\penalty0
  (1):\penalty0 97--123, 2019{\natexlab{b}}.

\bibitem[Thrampoulidis(2016)]{thrampoulidis2016recovering}
C.~Thrampoulidis.
\newblock \emph{Recovering structured signals in high dimensions via non-smooth
  convex optimization: Precise performance analysis}.
\newblock PhD thesis, California Institute of Technology, 2016.

\bibitem[Thrampoulidis et~al.(2015{\natexlab{a}})Thrampoulidis, Abbasi, and
  Hassibi]{thrampoulidis2015lasso}
C.~Thrampoulidis, E.~Abbasi, and B.~Hassibi.
\newblock {L}asso with non-linear measurements is equivalent to one with linear
  measurements.
\newblock In \emph{Proceedings of the 28th International Conference on Neural
  Information Processing Systems-Volume 2}, pages 3420--3428,
  2015{\natexlab{a}}.

\bibitem[Thrampoulidis et~al.(2015{\natexlab{b}})Thrampoulidis, Oymak, and
  Hassibi]{thrampoulidis2015regularized}
C.~Thrampoulidis, S.~Oymak, and B.~Hassibi.
\newblock Regularized linear regression: A precise analysis of the estimation
  error.
\newblock In \emph{Conference on Learning Theory}, pages 1683--1709. PMLR,
  2015{\natexlab{b}}.

\bibitem[Thrampoulidis et~al.(2018{\natexlab{a}})Thrampoulidis, Abbasi, and
  Hassibi]{thrampoulidis2018precise}
C.~Thrampoulidis, E.~Abbasi, and B.~Hassibi.
\newblock Precise error analysis of regularized $ m $-estimators in high
  dimensions.
\newblock \emph{IEEE Transactions on Information Theory}, 64\penalty0
  (8):\penalty0 5592--5628, 2018{\natexlab{a}}.

\bibitem[Thrampoulidis et~al.(2018{\natexlab{b}})Thrampoulidis, Xu, and
  Hassibi]{thrampoulidis2018symbol}
C.~Thrampoulidis, W.~Xu, and B.~Hassibi.
\newblock Symbol error rate performance of box-relaxation decoders in massive
  {MIMO}.
\newblock \emph{IEEE Transactions on Signal Processing}, 66\penalty0
  (13):\penalty0 3377--3392, 2018{\natexlab{b}}.

\bibitem[Tian(2017)]{tian2017analytical}
Y.~Tian.
\newblock An analytical formula of population gradient for two-layered {R}e{L}u
  network and its applications in convergence and critical point analysis.
\newblock In \emph{International Conference on Machine Learning}, pages
  3404--3413. PMLR, 2017.

\bibitem[Vershynin(2018)]{vershynin2018high}
R.~Vershynin.
\newblock \emph{High-dimensional probability: An introduction with applications
  in data science}, volume~47.
\newblock Cambridge university press, 2018.

\bibitem[Wainwright(2019)]{wainwright2019high}
M.~J. Wainwright.
\newblock \emph{High-dimensional statistics: {A} non-asymptotic viewpoint},
  volume~48.
\newblock Cambridge University Press, 2019.

\bibitem[Waldspurger(2018)]{waldspurger2018phase}
I.~Waldspurger.
\newblock Phase retrieval with random {G}aussian sensing vectors by alternating
  projections.
\newblock \emph{IEEE Transactions on Information Theory}, 64\penalty0
  (5):\penalty0 3301--3312, 2018.

\bibitem[Wang et~al.(2019)Wang, Weng, and Maleki]{wang2019does}
S.~Wang, H.~Weng, and A.~Maleki.
\newblock Does {SLOPE} outperform bridge regression?
\newblock \emph{arXiv preprint arXiv:1909.09345}, 2019.

\bibitem[Wu and Zhou(2019)]{wu2019randomly}
Y.~Wu and H.~H. Zhou.
\newblock Randomly initialized {EM} algorithm for two-component {G}aussian
  mixture achieves near optimality in ${O}(\sqrt{n})$ iterations.
\newblock \emph{arXiv preprint arXiv:1908.10935}, 2019.

\bibitem[Xu et~al.(2016)Xu, Hsu, and Maleki]{xu2016global}
J.~Xu, D.~J. Hsu, and A.~Maleki.
\newblock Global analysis of expectation maximization for mixtures of two
  {G}aussians.
\newblock \emph{Advances in Neural Information Processing Systems}, 29, 2016.

\bibitem[Xu et~al.(2018)Xu, Hsu, and Maleki]{xu2018EM}
J.~Xu, D.~J. Hsu, and A.~Maleki.
\newblock Benefits of over-parameterization with {EM}.
\newblock In \emph{Advances in Neural Information Processing Systems},
  volume~31, 2018.

\bibitem[Xu and Jordan(1996)]{xu1996convergence}
L.~Xu and M.~I. Jordan.
\newblock On convergence properties of the {EM} algorithm for {G}aussian
  mixtures.
\newblock \emph{Neural computation}, 8\penalty0 (1):\penalty0 129--151, 1996.

\bibitem[Yang et~al.(2017)Yang, Balakrishnan, and
  Wainwright]{yang2017statistical}
F.~Yang, S.~Balakrishnan, and M.~J. Wainwright.
\newblock Statistical and computational guarantees for the {B}aum--{W}elch
  algorithm.
\newblock \emph{The Journal of Machine Learning Research}, 18\penalty0
  (1):\penalty0 4528--4580, 2017.

\bibitem[Yi et~al.(2014)Yi, Caramanis, and Sanghavi]{yi2014alternating}
X.~Yi, C.~Caramanis, and S.~Sanghavi.
\newblock Alternating minimization for mixed linear regression.
\newblock In \emph{International Conference on Machine Learning}, pages
  613--621. PMLR, 2014.

\bibitem[Zhang et~al.(2017)Zhang, Zhou, Liang, and Chi]{zhang2017nonconvex}
H.~Zhang, Y.~Zhou, Y.~Liang, and Y.~Chi.
\newblock A nonconvex approach for phase retrieval: Reshaped {W}irtinger flow
  and incremental algorithms.
\newblock \emph{Journal of Machine Learning Research}, 18, 2017.

\bibitem[Zhang(2020)]{zhang2020phase}
T.~Zhang.
\newblock Phase retrieval using alternating minimization in a batch setting.
\newblock \emph{Applied and Computational Harmonic Analysis}, 49\penalty0
  (1):\penalty0 279--295, 2020.

\bibitem[Zhang et~al.(2020)Zhang, Qu, and Wright]{zhang2020symmetry}
Y.~Zhang, Q.~Qu, and J.~Wright.
\newblock From symmetry to geometry: Tractable nonconvex problems.
\newblock \emph{arXiv preprint arXiv:2007.06753}, 2020.

\end{thebibliography}
\normalsize

\newpage
\appendix

\begin{center}
\bf{\LARGE{Appendix}}
\end{center}


\section{Heuristic derivations deferred from Section~\ref{subsec:heuristic-second-order}}

In this section, we collect two calculations that were deferred from Section~\ref{subsec:heuristic-second-order}.

\subsection*{Calculations to obtain equation~\eqref{eq:astar-opt}} \label{app:step4-heuristic}

We begin by applying the Cauchy--Schwarz inequality to maximize over $\bv$, obtaining
\[
\min_{\bt \in \real^d} \mathfrak{L}(\bt; \bt_t, \bh, \bg) = \min_{\bt \in \mathbb{R}^d} \max_{\| \bv \|_2 \leq 1} \frac{\| \bv \|_2}{\sqrt{n}}\bigl(\langle \bh, \proj_{S_t}^{\perp} \bt \rangle + \bigl\|\mathsf{sgn}(\bX \bt_t) \odot \by - \|\proj_{S_t}^{\perp}\bt \|_2 \bg - \bX \proj_{S_t} \bt \bigr\|_2 \bigr).
\]
Note that the objective is linear in the magnitude $\| \bv \|_2$; thus, maximizing over it is straightforward giving
\begin{align*}
\min_{\bt \in \real^d} \mathfrak{L}(\bt; \bt_t, \bh, \bg) &= \min_{\bt \in \mathbb{R}^d} \left( \langle \bh, \proj_{S_t}^{\perp} \bt \rangle + \bigl\|\mathsf{sgn}(\bX \bt_t) \odot \by - \|\proj_{S_t}^{\perp}\bt \|_2 \bg - \bX \proj_{S_t} \bt \bigr\|_2 \bigr)  \right)_{+}.
\end{align*}
Now, using the fact that $\min_{\bt \in \real^d}\left( f(\bt)\right)_+=\left(\min_{\bt \in \real^d} f(\bt)\right)_+$ for any function $f(\cdot)$  and noting that the function of interest in the display above is linear in the direction $\proj_{S_t}^{\perp} \bt/ \| \proj_{S_t}^{\perp} \bt\|_2$, we may minimize over the latter to obtain
\begin{align*}
\min_{\bt \in \real^d} \mathfrak{L}(\bt; \bt_t, \bh, \bg) &=  \left(\min_{\bt \in \mathbb{R}^d} \; -\| \proj_{S_t}^{\perp} \bt \|_2 \frac{\| \proj_{S_t}^{\perp} \bh\|_2}{\sqrt{n}} + \frac{1}{\sqrt{n}}\bigl\|\mathsf{sgn}(\bX \bt_t) \odot \by - \|\proj_{S_t}^{\perp}\bt \|_2 \bg - \bX \proj_{S_t} \bt \bigr\|_2 \right)_{+}.
\end{align*}

When written in this form, the scalarization is apparent; recall our scalars
\begin{align}
\parcomp = \langle \bt, \thetastar \rangle, \qquad \perpone = \frac{\langle \bt, \proj_{\thetastar}^{\perp}\bt_t\rangle}{\| \proj_{\thetastar}^{\perp}\bt_t \|_2}, \qquad \text{ and } \perptwo = \| \proj_{S_t}^{\perp} \bt \|_2,
\end{align}
and the analogous quantities for the current iterate
$\parcomp_t = \langle \bt_t, \thetastar \rangle$ and  
$\perpcomp_t = \| \proj_{\thetastar}^{\perp} \bt_t \|_2$.
Also recall the independent, $n$-dimensional Gaussian random vectors
$\bz_1 = \bX \thetastar$  and  $\bz_2 = \frac{\bX \proj_{\thetastar}^{\perp} \bt_t}{\| \proj_{\thetastar}^{\perp} \bt_t \|_2}$, using which we obtain
\[
\bX\bt_t = \parcomp_t \bz_1 + \perpcomp_t \bz_2, \qquad \by = \lvert \bz_1 \rvert, \qquad \text{ and } \qquad \bX\proj_{S_t}^{\perp} \bt = \parcomp \bz_1 + \perpone \bz_2.
\]
Thus,
\begingroup
\allowdisplaybreaks
\begin{align}
\min_{\bt \in \real^d} \mathfrak{L}(\bt; \bt_t, \bh, \bg) &= \min_{\parcomp \in \mathbb{R}, \perpone \in \mathbb{R}, \perptwo \geq 0} \Bigl( - \perptwo \frac{\| \proj_{S_t}^{\perp} \bh\|_2}{\sqrt{n}} + \frac{1}{\sqrt{n}}\bigl\| \underbrace{\mathsf{sgn}(\parcomp_t \bz_1 + \perpcomp_t \bz_2) \odot \lvert \bz_1 \rvert}_{\bomega_t}  - \perptwo \bg - \parcomp \bz_1 - \perpone \bz_2 \bigr\|_2 \Bigr)_{+}\nonumber\\
&= \min_{\parcomp \in \mathbb{R}, \perpone \in \mathbb{R}, \perptwo \geq 0} \Bigl(- \perptwo \frac{\| \proj_{S_t}^{\perp} \bh\|_2}{\sqrt{n}} + \frac{1}{\sqrt{n}}\bigl\| \bomega_t - \perptwo \bg - \parcomp \bz_1 - \perpone \bz_2 \bigr\|_2 \Bigr)_{+}\nonumber\\
&\overset{\1}{\approx}  \min_{\parcomp \in \mathbb{R}, \perpone \in \mathbb{R}, \perptwo \geq 0}\Bigl( - \frac{\perptwo}{\sqrt{\kappa}}  + \sqrt{ \EE \bigl\{\bigl(\Omega_t   - \perptwo H - \parcomp Z_1 - \perpone Z_2\bigr)^2\bigr\} } \Bigr)_{+},
\end{align}
where step $\1$ follows by concentration of the norms of sub-Gaussian random variables.  

Summarizing, we have 
\begin{align} \label{eq:astar-app}
A_{\star} = \Bigl( \min_{\parcomp \in \mathbb{R}, \perpone \in \mathbb{R}, \perptwo \geq 0} - \frac{\perptwo}{\sqrt{\kappa}}  + \sqrt{ \EE \bigl\{\bigl(\Omega_t   - \perptwo H - \parcomp Z_1 - \perpone Z_2\bigr)^2\bigr\} } \Bigr)_{+},
\end{align}
so that 
\[
A(\bg, \bh) \approx A_{\star}.
\]

\subsection*{Calculations to obtain equation~\eqref{eq:second-step-calc}} \label{app:step5-heuristic}
 Note that the random variable $H$ is zero-mean and independent of $Z_1$ and $Z_2$, whence we obtain
\[
-\frac{\nu}{\sqrt{\kappa}} + \sqrt{\EE\{(\Omega_t - \nu H - \alpha Z_1 - \mu Z_2)^2\}} = -\frac{\nu}{\sqrt{\kappa}} + \sqrt{\EE\{(\Omega_t - \alpha Z_1 - \mu Z_2)^2\} + \nu^2}.
\]
It is evident from the RHS of the display above that the minimizers $\widebar{\parcomp}$ and $\widebar{\perpone}$ are given by
\begin{align*}
	\widebar{\parcomp} = \EE\{ Z_1 \Omega_t\}, \qquad \text{ and } \widebar{\perpone} = \EE\{ Z_2 \Omega_t\}.
\end{align*}
Substituting these back into $A_{\star}$, we obtain
\begin{align*}
	A_{\star} &= \Bigl( \min_{\perptwo \geq 0} -\frac{\perptwo}{\sqrt{\kappa}} + \sqrt{\perptwo^2 + \bigl(\EE\{\Omega_t^2\} - (\EE\{Z_1 \Omega_t\})^2 - (\EE\{Z_2 \Omega_t\})^2 \bigr)}\Bigr)_{+}.
\end{align*}
Minimizing the above in $\perptwo$, we obtain
\[
\widebar{\perptwo} = \sqrt{\frac{\EE\{\Omega_t^2\} - (\EE\{Z_1 \Omega_t\})^2 - (\EE\{Z_2 \Omega_t\})^2}{\kappa - 1}},
\]
and this establishes the claimed scalarization.


\section{Auxiliary proofs for general results, part (a)}
\label{sec:aux-gordon}

In this appendix, we prove the technical lemmas stated in Section~\ref{sec:gordon}.

\subsection{Proofs of technical lemmas in steps 1--3}
In this subsection, we prove each of our technical lemmas used in the proof of Proposition~\ref{prop:first-three-steps}.

\subsubsection{Proof of Lemma~\ref{lem:bilinear}}
\label{subsec:proof-of-bilinear}
We require two additional lemmas that are proved at the end of this subsection.  The first lemma shows that the optimization can be done over a compact set. 
\begin{lemma}
	\label{lem:bounded-optimization}
 	Suppose that the loss $\mathcal{L}$ satisfies Assumption~\ref{ass:loss} and let $\kappa > 1$.  Let $D \subseteq \mathbb{B}_2(R)$ be a closed subset for some constant $R> 0$.  Then, there exists a positive constant $C_1$ depending only on $R$ such that for any scalar $r \geq C_L$ (where $C_L$ denotes the Lipschitz constant in Assumption~\ref{ass:loss}), we have
 		\[
 	\min_{\bt \in D} \Lc(\bt; \btsharp, \bX, \by) = \min_{\substack{\bt \in D \\ \bu \in \mathbb{B}_2(C_1\sqrt{n}) }}\max_{\bv \in \mathbb{B}_2(r)} \frac{1}{\sqrt{n}} \langle \bv, \bX \bt - \bu \rangle + F(\bu, \bt),
 	\]
 	with probability at least $1 - 2e^{-2n}$.  
\end{lemma}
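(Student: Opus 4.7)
}
The plan is to use Assumption~\ref{ass:loss}(a) to write $\Lc(\bt) = F(\bX\bt,\bt)$, and then view the minimization over $\bt$ as a constrained minimization over the pair $(\bt,\bu)$ with the implicit constraint $\bu = \bX\bt$ enforced (softly) by the penalty $\frac{1}{\sqrt n}\langle \bv,\bX\bt-\bu\rangle$. Maximizing this bilinear penalty over $\bv\in\mathbb{B}_2(r)$ is the standard dual-norm characterization $\max_{\|\bv\|_2\le r}\langle \bv,\bw\rangle = r\|\bw\|_2$, so the inner max evaluates cleanly to $\frac{r}{\sqrt n}\|\bX\bt-\bu\|_2 + F(\bu,\bt)$. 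The equivalence with $\min_{\bt\in D}\Lc(\bt)$ then reduces to showing (i) that the choice $\bu=\bX\bt$ is feasible, i.e.\ $\bX\bt\in\mathbb{B}_2(C_1\sqrt n)$ for an appropriate $C_1$, and (ii) that no other $\bu$ does better than $\bu=\bX\bt$.

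For step (i), I would use a standard operator-norm bound for a standard Gaussian matrix (e.g.\ \citet{vershynin2018high}, Theorem~4.4.5), which yields $\|\bX\|_{\mathsf{op}}\le 2(\sqrt n+\sqrt d)\le 4\sqrt n$ with probability at least $1-2e^{-2n}$, using $d\le n/2$ (which is implied by $\kappa>1$ up to a constant, and otherwise can be absorbed into $C_1$). On this event, $\|\bX\bt\|_2\le \|\bX\|_{\mathsf{op}}\cdot R\le 4R\sqrt n$ uniformly over $\bt\in D\subseteq \mathbb{B}_2(R)$, so we may take $C_1 = 4R$; plugging $\bu=\bX\bt$ into the minimax expression makes the bilinear term vanish and reproduces $F(\bX\bt,\bt)=\Lc(\bt)$. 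This immediately gives the ``$\ge$'' direction of the claimed equality.

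For step (ii), which is really the only nontrivial point, I would invoke the $C_L/\sqrt n$-Lipschitz property of $F$ in its first argument from Assumption~\ref{ass:loss}(b). For any feasible $\bu\in\mathbb{B}_2(C_1\sqrt n)$,
\[
F(\bu,\bt) \;\ge\; F(\bX\bt,\bt) - \tfrac{C_L}{\sqrt n}\,\|\bu-\bX\bt\|_2,
\]
so that, after evaluating the inner maximum by Cauchy--Schwarz,
\[
\tfrac{r}{\sqrt n}\|\bX\bt-\bu\|_2 + F(\bu,\bt) \;\ge\; \tfrac{r-C_L}{\sqrt n}\,\|\bX\bt-\bu\|_2 + F(\bX\bt,\bt) \;\ge\; F(\bX\bt,\bt) \;=\; \Lc(\bt),
\]
where the last inequality uses the hypothesis $r\ge C_L$. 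Thus $\bu=\bX\bt$ is (up to ties) optimal in the inner $\bu$-minimization, and the two sides agree.

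The main obstacle is essentially just the uniform control $\bX\bt\in\mathbb{B}_2(C_1\sqrt n)$ for all $\bt\in D$, which I handle via the operator-norm concentration above; everything else is a deterministic manipulation that rests on the Lipschitz hypothesis in Assumption~\ref{ass:loss}(b) and the dual-norm formula. Combining the two directions on the high-probability event yields the stated equality with probability at least $1-2e^{-2n}$, as claimed.
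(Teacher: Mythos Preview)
Your proposal is correct and follows essentially the same approach as the paper: both use a Gaussian operator-norm bound to ensure $\bX\bt\in\mathbb{B}_2(C_1\sqrt{n})$ uniformly over $D$ (giving feasibility of $\bu=\bX\bt$ and hence one inequality), and both use the $C_L/\sqrt{n}$-Lipschitz property of $F$ together with $r\ge C_L$ to show no other $\bu$ can do better (the reverse inequality). The only cosmetic difference is that you first evaluate the inner max via the dual-norm identity, whereas the paper keeps the variational form and sandwiches the bounded-$\bv$ problem between $\min_\bt\Lc(\bt)$ and the unconstrained Lagrangian; the substance is identical.
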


The following lemma uses the bilinear characterization of Lemma~\ref{lem:bounded-optimization} and subsequently invokes the CGMT to connect to the auxiliary loss $\mathfrak{L}_n$. 
\begin{lemma}
	\label{lem:invoke-CGMT}
	Let Assumption \ref{ass:loss} hold and recall the definition of the auxiliary loss $\mathfrak{L}_n$ in Definition \ref{def:aux-gordon-loss}. 
	For any compact set $D$ and any positive scalars $C_1$ and $r$, it holds that
	\[
	\Pro\Biggl\{\min_{\substack{\bt \in D \\ \bu \in \mathbb{B}_2(C_1\sqrt{n}) }}\max_{\bv \in \mathbb{B}_2(r)} \frac{1}{\sqrt{n}} \langle \bv, \bX \bt - \bu \rangle + F(\bu, \bt) \geq t\Biggr\} \leq 2\Pro\Biggl\{\min_{\substack{\bt \in D \\ \bu \in \mathbb{B}_2(C_1\sqrt{n}) }} \mathfrak{L}_n(\bt, \bu;r) \geq t\Biggr\} 
	\]
	If, in addition, $D$ is convex, then
		\[
	\Pro\Biggl\{\min_{\substack{\bt \in D \\ \bu \in \mathbb{B}_2(C_1\sqrt{n}) }}\max_{\bv \in \mathbb{B}_2(r)} \frac{1}{\sqrt{n}} \langle \bv, \bX \bt - \bu \rangle + F(\bu, \bt) \leq t\Biggr\} \leq 2\Pro\Biggl\{\min_{\substack{\bt \in D \\ \bu \in \mathbb{B}_2(C_1\sqrt{n}) }} \mathfrak{L}_n(\bt, \bu;r) \leq t\Biggr\} 
	\]
\end{lemma}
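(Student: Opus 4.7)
The strategy is to recognize this as a direct application of Proposition~\ref{thm:gordon} (CGMT) after isolating the Gaussian matrix $\bX$ in the bilinear term $\tfrac{1}{\sqrt{n}}\langle \bv, \bX\bt\rangle$ from its other appearances inside $F(\bu, \bt) = g(\bu, \bX\btsharp; \by) + h(\bt, \btsharp)$. Setting $S_\sharp := \mathsf{span}(\btstar, \btsharp)$ and splitting $\bX\bt = \bX\proj_{S_\sharp}\bt + \bX\proj_{S_\sharp}^\perp \bt$, I would observe that the $\bX$-dependence of $F$ enters only through $\bX\btsharp$ and $\by = f(\bX\btstar; \bq) + \noisestd \beps$, both of which are measurable with respect to the restriction of $\bX$ to $S_\sharp$. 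Since $\bX$ has i.i.d.\ Gaussian entries, its restriction to $S_\sharp^\perp$ is independent of these quantities, so it may be replaced, jointly in distribution with everything else, by an independent $n \times d$ standard Gaussian matrix $\bG$.

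With that replacement in hand, I would cast the expression in the PO form~\eqref{eq:CGMT-PO}: take $(\bt, \bu) \in D \times \mathbb{B}_2(C_1\sqrt{n})$ as the min-variable, $\bv \in \mathbb{B}_2(r)$ as the max-variable, $\bL = \proj_{S_\sharp}^\perp$, $\bM = \bI_n/\sqrt{n}$, and
\[
Q((\bt, \bu), \bv) := \tfrac{1}{\sqrt{n}}\langle \bv, \bX\proj_{S_\sharp}\bt - \bu\rangle + F(\bu, \bt).
\]
This $Q$ is linear (hence concave) in $\bv$, while the convexity of $g$ and $h$ in their first arguments furnished by Assumption~\ref{ass:loss}(b) gives convexity of $Q$ in $(\bt, \bu)$, so $Q$ is convex-concave. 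Reading off the AO from~\eqref{eq:CGMT-AO}, namely
\[
A = \min_{\bt, \bu}\max_{\bv}\; \tfrac{\|\bv\|_2}{\sqrt{n}}\langle \bh, \proj_{S_\sharp}^\perp \bt\rangle + \tfrac{\|\proj_{S_\sharp}^\perp \bt\|_2}{\sqrt{n}}\langle \bg, \bv\rangle + Q((\bt, \bu), \bv),
\]
and collapsing the two $\bv$-linear summands into a single inner product reproduces exactly $\min_{\bt, \bu} \mathfrak{L}_n(\bt, \bu; r)$ from Definition~\ref{def:aux-gordon-loss}. The two inequalities claimed in the lemma then follow from the two halves of Proposition~\ref{thm:gordon} applied to this PO/AO pair: part (a) supplies one direction using only compactness of the feasible sets, while part (b) supplies the other under the extra hypothesis that $D$ is convex, so that $D \times \mathbb{B}_2(C_1\sqrt{n})$ is convex and the convex-concave structure of $Q$ is usable.

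The lemma is essentially a packaging step, and there is no serious technical obstacle. The one point that requires care is the distributional replacement in the first paragraph: it hinges on the fact that the \emph{full} matrix $\bX$ has i.i.d.\ Gaussian entries, so that its restrictions to orthogonal subspaces are independent, combined with the observation that every $\bX$-dependence of $F$ factors through the single subspace $S_\sharp$. Once this decoupling is justified, identification with the PO/AO of Proposition~\ref{thm:gordon} and subsequent invocation of its two parts is bookkeeping.
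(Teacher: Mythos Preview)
Your proposal is correct and follows essentially the same route as the paper: decompose $\bX\bt$ along $S_\sharp = \mathsf{span}(\btstar,\btsharp)$, use independence of the Gaussian restriction to $S_\sharp^\perp$ from all other $\bX$-dependence in $F$ to replace it by a fresh Gaussian $\bG$, and then read off the PO/AO pair for Proposition~\ref{thm:gordon}. The paper's proof is terser (it does not explicitly name $\bL,\bM$ or verify convex--concavity of $Q$), but the argument is identical.
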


Lemma~\ref{lem:bilinear} follows immediately upon combining Lemmas~\ref{lem:bounded-optimization} and~\ref{lem:invoke-CGMT}. \qed

\paragraph{Proof of Lemma~\ref{lem:bounded-optimization}}
First, recall that Assumption~\ref{ass:loss} implies the existence of a function $F$ such that $\Lc(\bt) = F(\bX \bt, \bt)$.  Consequently, we obtain
\[
\min_{\bt \in D} \Lc(\bt) = \min_{\bt \in D, \bu \in \mathbb{R}^n} F(\bu, \bt) \quad \text{ s.t. } \quad \bu = \bX\bt.
\]
Applying~\citet[Theorem 6.1]{wainwright2019high} in conjunction with the assumptions $\kappa > 1$ and $D \subseteq \mathbb{B}_2(R)$ yields that the event
\[
\mathcal{A} = \{ \sup_{\bt \in D} \; \| \bX\bt \|_2 \leq 4R \sqrt{n} \}
\]
occurs with probability at least $1 - 2e^{-2n}$.  We carry out the rest of the proof on this event. For 
$C_1 > 4R$, we obtain
\begin{align}
	\label{eq:bounded-optimization-intermediate}
\min_{\bt \in D} \Lc(\bt) &= \min_{\substack{\bt \in D \\ \bu \in \mathbb{B}_2(C_1\sqrt{n}) }} F(\bu, \bt) \quad \text{ s.t. }\quad \bu = \bX\bt\nonumber\\
&= \min_{\substack{\bt \in D \\ \bu \in \mathbb{B}_2(C_1 \sqrt{n}) }} \max_{\bv \in \mathbb{R}^n} \; F(\bu, \bt) - \langle \bv, \bu - \bX \bt \rangle.
\end{align}
It remains only to prove that $\bv$ can further be constrained to a large enough ball.
 To this end, recall that by Assumption~\ref{ass:loss}(b), the function $F(\bu, \bt)$ is $C_L/\sqrt{n}$-Lipschitz in its first argument.  We thus obtain the inequality
 \begin{align}
 \min_{\substack{\bt \in D \\ \bu \in \mathbb{B}_2(C_1\sqrt{n}) }} \max_{\bv \in \mathbb{B}_2(r)} F(\bu, \bt) - \frac{1}{\sqrt{n}}\langle \bv, \bu - \bX \bt \rangle &\overset{\1}{\geq} \min_{\substack{\bt \in D \\ \bu \in \mathbb{B}_2(C_1\sqrt{n}) }} F(\bX \bt, \bt) + \max_{\bv \in \mathbb{B}_2(r)}  \biggl(\frac{\| \bv \|_2}{\sqrt{n}} - \frac{C_L}{\sqrt{n}}\biggr)\| \bu - \bX\bt \|_2\nn\\
 &\overset{\2}{\geq} \min_{\substack{\bt \in D \\ \bu \in \mathbb{B}_2(C_1\sqrt{n}) }} F(\bX \bt, \bt) = \min_{\substack{\bt \in D }} \Lc(\bt).
  \label{eq:bounded-optimization-low}
 \end{align} 
Step $\1$ follows by utilizing the Lipschitz continuity of $F$ in its first argument in conjunction with 
the Cauchy--Schwarz inequality
and step $\2$ follows from the assumption $r \geq C_L$.
On the other hand, it also holds that
 \begin{align}
 \label{eq:bounded-optimization-up}
 \min_{\substack{\bt \in D \\ \bu \in \mathbb{B}_2(C_1\sqrt{n}) }} \max_{\bv \in \mathbb{B}_2(r)} F(\bu, \bt) - \frac{1}{\sqrt{n}}\langle \bv, \bu - \bX \bt \rangle &\leq  \min_{\substack{\bt \in D \\ \bu \in \mathbb{B}_2(C_1\sqrt{n}) }} \max_{\bv \in \R^n} F(\bu, \bt) - \frac{1}{\sqrt{n}}\langle \bv, \bu - \bX \bt \rangle \nonumber\\
 &= \;\;\;\;\;\min_{\bt \in D} \Lc(\bt),
  \end{align} 
  where the equality holds due to equation \eqref{eq:bounded-optimization-intermediate}.
The desired result follows immediately by combining equations~\eqref{eq:bounded-optimization-low} and~\eqref{eq:bounded-optimization-up}.
\qed

\paragraph{Proof of Lemma~\ref{lem:invoke-CGMT}}
First, recall that by Assumption~\ref{ass:loss}, $F(\bu, \bt)$ can be decomposed as 
\[
F(\bu, \bt) = g(\bu,  \bX \btsharp; \by) + h(\bt, \btsharp).
\]
Next, recall the subspace $S_{\sharp} = \mathsf{span}(\btstar, \btsharp)$ and consider the orthogonal decomposition \\$\bt = \proj_{S_{\sharp}}\bt + \proj_{S_{\sharp}}^{\perp} \bt$.  Combining these two pieces, we obtain the representation
\begin{align*}
\frac{1}{\sqrt{n}} \langle \bv, \bX \bt - \bu \rangle + F(\bu, \bt) &= h(\bt, \btsharp) + g(\bu, \bX\btsharp; \by) - \frac{1}{\sqrt{n}} \langle \bv, \bu \rangle + \frac{1}{\sqrt{n}} \langle \bv, \bX \proj_{S_{\sharp}}\bt \rangle + \frac{1}{\sqrt{n}} \langle \bv, \bX\proj_{S_{\sharp}}^{\perp} \bt \rangle.
\end{align*}

Note that the Gaussian random variable $\langle \bv, \bX\proj_{S_{\sharp}}^{\perp} \bt \rangle$ is independent of all other randomness in the expression.  Thus, in that term, we replace the random matrix $\bX$ with an independent copy $\bG \in \mathbb{R}^{n \times d}$.  Turning to the variational problem of interest, we have
\begin{align*}
	&\min_{\substack{\bt \in D \\ \bu \in \mathbb{B}_2(C_1\sqrt{n}) }}\max_{\bv \in \mathbb{B}_2(r)} h(\bt, \btsharp) + g(\bu, \bX\btsharp; \by) - \frac{1}{\sqrt{n}} \langle \bv, \bu \rangle + \frac{1}{\sqrt{n}} \langle \bv, \bX \proj_{S_{\sharp}}\bt \rangle + \frac{1}{\sqrt{n}} \langle \bv, \bX\proj_{S_{\sharp}}^{\perp} \bt \rangle \\
	\overset{(d)}{=}&\min_{\substack{\bt \in D \\ \bu \in \mathbb{B}_2(C_1\sqrt{n}) }}\max_{\bv \in \mathbb{B}_2(r)} h(\bt, \btsharp) + g(\bu, \bX\btsharp; \by) - \frac{1}{\sqrt{n}} \langle \bv, \bu \rangle + \frac{1}{\sqrt{n}} \langle \bv, \bX \proj_{S_{\sharp}}\bt \rangle + \frac{1}{\sqrt{n}} \langle \bv, \bG\proj_{S_{\sharp}}^{\perp} \bt \rangle.
\end{align*}
At this juncture, we invoke the CGMT (Proposition~\ref{thm:gordon}) with
\[
P(\bG) = \min_{(\bt, \bu) \in D \times \mathbb{B}_2(C_1\sqrt{n})}\max_{\bv \in \mathbb{B}_2(r)} h(\bt, \btsharp) + g(\bu, \bX\btsharp; \by) - \frac{1}{\sqrt{n}} \langle \bv, \bu \rangle + \frac{1}{\sqrt{n}} \langle \bv, \bX \proj_{S_{\sharp}}\bt \rangle + \frac{1}{\sqrt{n}} \langle \bv, \bG\proj_{S_{\sharp}}^{\perp} \bt \rangle
\]
and 
\[
A(\bg, \bh) = \min_{(\bt, \bu) \in D \times \mathbb{B}_2(C_1\sqrt{n})}
\; \mathfrak{L}_n(\bt,\ub;r).
\]
 \qed

\subsubsection{Proof of Lemma~\ref{lem:scalarize-ao}}
\label{subsec:proof-scalarize-ao}
We begin by defining a few additional optimization problems. Under the setting of Lemma~\ref{lem:scalarize-ao}, recall the map $\Pc$ as defined in Definition~\ref{def:scalarized-set}, and the random vectors $\bz_1$ and $\bz_2$~\eqref{eq:zone-ztwo}. Define the minimum of the variational problem
	\begin{align}\label{eq:def-phi-var}
		\phi_{\mathsf{var},C_1,r} \;= &\min_{\substack{(\parcomp, \perpone, \perptwo) \in \Pc(D), \\ \bu \in \mathbb{B}_2(C_1\sqrt{n}) }} \; h_{\mathsf{scal}}(\parcomp, \perpone, \perptwo, \btsharp) + g(\bu, \bX \btsharp; \by) \notag \\
		& + \max_{0 \leq \rho \leq r}\; \frac{\rho}{\sqrt{n}} \cdot \Bigl( \| \perptwo \cdot  \bg + \parcomp \cdot \bz_1 + \perpone \cdot \bz_2 - \bu \|_2  - \perptwo \cdot  \| \proj_{S_{\sharp}}^{\perp} \bh \|_2 \Bigr),
	\end{align}
where we explicitly track the dependence on the pair $(C_1, r)$ as their scaling will be important in the proof.
Next, define the minimum of the constrained problem
	\begin{align}\label{eq:def-phi-con}
		\phi_{\mathsf{con},C_1} \;= &\min_{\substack{(\parcomp, \perpone, \perptwo) \in \Pc(D), \\ \bu \in \mathbb{B}_2(C_1\sqrt{n}) }}\; h_{\mathsf{scal}}(\parcomp, \perpone, \perptwo, \btsharp) + g(\bu, \bX \btsharp; \by) \notag \\
		&\;\;\;\;\;\;\; \mathrm{ s.t. }\;\;\;  \| \perptwo \cdot  \bg + \parcomp \cdot \bz_1 + \perpone \cdot \bz_2  - \bu \|_2  \leq  \perptwo \cdot  \| \proj_{S_{\sharp}}^{\perp} \bh \|_2.
	\end{align}
	Finally, define the minimum of the following closely related constrained problem, which---as we will show in Lemma \ref{lem:solve-constrained} below---is equal with high probability to the minimum of the scalarized auxiliary loss $\widebar{L}_n$:
	\begin{align} \label{eq:def-phi-scal}
		\phi_{\mathsf{scal},C_1} \;= &\min_{(\parcomp, \perpone, \perptwo) \in \Pc(D)} \max_{\bv \in \mathbb{R}^n} \;\; h_{\mathsf{scal}}(\parcomp, \perpone, \perptwo, \btsharp) - g^*(\bv, \bX \btsharp; \by) + \min_{\bu \in \mathbb{B}_2(C_1\sqrt{n}) } \langle \bu, \bv \rangle \nonumber\\
		&\;\;\;\;\;\;\; \mathrm{ s.t. }\;\;\;  \| \perptwo \cdot  \bg + \parcomp \cdot \bz_1 + \perpone \cdot \bz_2  - \bu \|_2  \leq  \perptwo \cdot  \| \proj_{S_{\sharp}}^{\perp} \bh \|_2.
	\end{align}

We now state two lemmas that establish the relation between the above optimization problems and will prove useful in the proof.
\begin{lemma}
	\label{lem:constrained-equivalence}
	Under the setting of Lemma~\ref{lem:scalarize-ao}, we have
		\[
	\Pro\Bigl\{\phi_{\mathsf{var},C_1,r}\leq \phi_{\mathsf{con},C_1} \leq \phi_{\mathsf{var},C_1,r} + \frac{3 C_L^2 C_1}{r}\Bigr\} \geq 1 - 6e^{-n/2}.
	\]
\end{lemma}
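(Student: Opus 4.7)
The lemma asserts a two-sided sandwich between the penalized problem $\phi_{\mathsf{var},C_1,r}$ and the constrained problem $\phi_{\mathsf{con},C_1}$, and I would establish the two directions separately. The lower bound $\phi_{\mathsf{var},C_1,r} \leq \phi_{\mathsf{con},C_1}$ is deterministic and immediate: for any $(\bt,\bu)$ feasible in $\phi_{\mathsf{con},C_1}$, the quantity $\|\perptwo\bg + \parcomp\bz_1 + \perpone\bz_2 - \bu\|_2 - \perptwo\|\proj_{S_{\sharp}}^{\perp}\bh\|_2$ is non-positive, so the inner maximum over $\rho \in [0,r]$ in the definition of $\phi_{\mathsf{var}}$ is attained at $\rho = 0$ with value $0$. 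The same $(\bt,\bu)$ therefore realizes the identical objective $h_{\mathsf{scal}}+g$ in the variational problem, and minimizing over a weakly larger feasible set yields the claim.

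For the harder direction $\phi_{\mathsf{con},C_1} \leq \phi_{\mathsf{var},C_1,r} + 3C_L^2C_1/r$, the plan is a penalty-to-constraint conversion driven by the Lipschitz continuity of $g$. Let $(\bt^*,\bu^*)$ attain $\phi_{\mathsf{var},C_1,r}$, and introduce the shorthand
\begin{align*}
\bw^* := \perptwo^*\bg + \parcomp^*\bz_1 + \perpone^*\bz_2, \qquad \rho^* := \perptwo^*\|\proj_{S_{\sharp}}^{\perp}\bh\|_2, \qquad \Delta^* := [\|\bw^* - \bu^*\|_2 - \rho^*]_{+}.
\end{align*}
When $\Delta^* = 0$ the pair $(\bt^*,\bu^*)$ is itself feasible for $\phi_{\mathsf{con},C_1}$, so the upper bound is immediate with zero slack. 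Otherwise the inner maximum over $\rho$ is attained at $\rho = r$, so $\phi_{\mathsf{var}} = h_{\mathsf{scal}}(\bt^*) + g(\bu^*) + r\Delta^*/\sqrt{n}$. Radially project $\bu^*$ toward $\bw^*$ to obtain $\bu^{\ddag}$ with $\|\bw^*-\bu^{\ddag}\|_2 = \rho^*$ and $\|\bu^*-\bu^{\ddag}\|_2 = \Delta^*$. Provided $\bu^{\ddag} \in \mathbb{B}_2(C_1\sqrt{n})$, testing $(\bt^*,\bu^{\ddag})$ as a competitor in the variational problem (where its objective reduces to $h_{\mathsf{scal}}+g(\bu^{\ddag})$ by feasibility) combined with the $C_L/\sqrt{n}$-Lipschitz continuity of $g$ from Assumption~\ref{ass:loss}(b) yields the chain $r\Delta^*/\sqrt{n} \leq g(\bu^{\ddag}) - g(\bu^*) \leq C_L\Delta^*/\sqrt{n}$, and hence $\phi_{\mathsf{con},C_1} \leq h_{\mathsf{scal}}(\bt^*) + g(\bu^{\ddag})$ is within $(C_L-r)\Delta^*/\sqrt{n} \leq 0$ of $\phi_{\mathsf{var},C_1,r}$ whenever $r \geq C_L$.

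The technical heart of the argument is to verify that $\bu^{\ddag}$ remains inside $\mathbb{B}_2(C_1\sqrt{n})$; this is where the probability budget $6e^{-n/2}$ is spent and where the slack $3C_L^2 C_1/r$ ultimately originates. I will introduce three high-probability events obtained from standard Gaussian-norm concentration (applied separately to $\bg$, to the pair $(\bz_1,\bz_2)$, and to $\proj_{S_{\sharp}}^{\perp}\bh$), each of probability at least $1-2e^{-n/2}$, on whose intersection $\|\bg\|_2 \vee \|\bz_1\|_2 \vee \|\bz_2\|_2 \vee \|\proj_{S_{\sharp}}^{\perp}\bh\|_2 \leq 2\sqrt{n}$. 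A triangle-inequality bound then gives $\|\bw^*\|_2 \vee \rho^* \leq 2R\sqrt{n}$, using the inclusion $\Pc(D) \subseteq \mathbb{B}_2(R)$, and hence $\|\bu^{\ddag}\|_2 \leq \|\bw^*\|_2 + \rho^* \leq 4R\sqrt{n} \leq C_1\sqrt{n}$ under the hypothesis $C_1 \geq 6R$.

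The main obstacle I anticipate is a clean accounting of the residual $3C_L^2 C_1/r$ slack: in the boundary regime where $\bu^*$ lies on $\partial \mathbb{B}_2(C_1\sqrt{n})$, the naive radial projection may need to be replaced by a convex interpolation along the segment joining $\bw^*$ and $\bu^*$ that both satisfies the constraint and stays inside the ball. The resulting displacement can be controlled by $\|\bu^*\|_2 \leq C_1\sqrt{n}$, and pushing this displacement through a second application of the $C_L/\sqrt{n}$-Lipschitz bound for $g$ generates the quadratic dependence on $C_L$ and the factor of $C_1$ in the slack, as well as the division by $r$ arising from rearranging the variational optimality inequality to isolate $\Delta^*$ in this boundary case.
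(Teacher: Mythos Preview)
Your approach matches the paper's---project the variational minimizer onto the constraint set and invoke the $C_L/\sqrt{n}$-Lipschitz property of $g$---and is correct. In fact, your middle paragraph already proves a statement \emph{stronger} than the lemma: on the high-probability event, the chain $\phi_{\mathsf{con}} \leq h_{\mathsf{scal}}(\bt^*) + g(\bu^{\ddag}) \leq \phi_{\mathsf{var}} + (C_L-r)\Delta^*/\sqrt{n} \leq \phi_{\mathsf{var}}$ yields equality $\phi_{\mathsf{con}} = \phi_{\mathsf{var}}$ with zero slack whenever $r \geq C_L$. The ``boundary obstacle'' you anticipate in the last paragraph never arises: your $\bu^{\ddag}$ lies on the segment from $\bu^*$ to $\bw^*$, and since $\bu^* \in \mathbb{B}_2(C_1\sqrt{n})$ by feasibility while $\|\bw^*\|_2 \leq 2(|\alpha^*|+|\mu^*|+|\nu^*|)\sqrt{n} \leq 2\sqrt{3}\,R\sqrt{n} < C_1\sqrt{n}$ on the event $\{\|\bg\|_2 \vee \|\bz_1\|_2 \vee \|\bz_2\|_2 \leq 2\sqrt{n}\}$, convexity of the ball forces $\bu^{\ddag} \in \mathbb{B}_2(C_1\sqrt{n})$ automatically. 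This observation also shows that your fourth concentration event on $\|\proj_{S_\sharp}^\perp \bh\|_2$ is unnecessary; three events already recover the constant $6e^{-n/2}$ exactly. The slack $3C_L^2C_1/r$ in the lemma statement is simply not sharp: the paper obtains it through a less direct route---first bounding $\psi(\bu^{\mathsf{var}})$ by a crude diameter-based contradiction, then dropping the nonnegative penalty term in the final Lipschitz comparison---whereas you retain the penalty and arrive at equality.
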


In interpreting Lemma \ref{lem:constrained-equivalence}, note that if the inner maximization over $\rho \geq 0$ in the definition of $\phi_{\mathsf{var}, C_1, r}$~\eqref{eq:def-phi-var} were unbounded, then it would be equivalent to the constrained minimum $\phi_{\mathsf{con}, C_1}$~\eqref{eq:def-phi-con}.  Lemma~\ref{lem:constrained-equivalence} uses Lipschitz continuity of the objective function, which holds thanks to Assumption \ref{ass:loss}, and demonstrates the impact of finite values of the scalar $r$ on the gap between the values of the two optimization problems.

\begin{lemma}\label{lem:solve-constrained}
	Under the setting of Lemma~\ref{lem:scalarize-ao}, we have the inequality
		\[
	\Pro\biggl\{\phi_{\mathsf{scal},C_1} = \min_{(\parcomp,\perpone,\perptwo)\in \Pc(D)} \; \widebar{L}_n(\parcomp, \perpone, \perptwo; \btsharp)\biggr\} \geq 1 - 8e^{-n/2}.
	\]
\end{lemma}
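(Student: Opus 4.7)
The plan is to explicitly solve the inner minimization over $\bu$ that appears in $\phi_{\mathsf{scal},C_1}$ and show that, on a high-probability event, its value agrees pointwise (in $\bv$) with the linear-in-$\bv$ term in $\widebar{L}_n$. Substituting the resulting identity back into $\phi_{\mathsf{scal},C_1}$ reproduces $\min_{(\parcomp,\perpone,\perptwo)\in \Pc(D)} \widebar{L}_n(\parcomp,\perpone,\perptwo;\btsharp)$ exactly, yielding the claimed equality. To set up the calculation, write $\bw = \perptwo\bg + \parcomp\bz_1 + \perpone\bz_2$ and $s = \perptwo\|\proj_{S_\sharp}^\perp \bh\|_2$, so that the inner problem becomes $\min\{\langle \bu,\bv\rangle : \|\bu-\bw\|_2 \leq s,\ \bu \in \mathbb{B}_2(C_1\sqrt{n})\}$.

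For any fixed $\bv \ne 0$, the unconstrained (i.e., ignoring $\mathbb{B}_2(C_1\sqrt{n})$) minimizer is $\bu^\star = \bw - s\,\bv/\|\bv\|_2$, giving value $\langle \bw,\bv\rangle - s\|\bv\|_2$; for $\bv = 0$ both sides trivially equal $0$. Hence the critical step is to certify that $\bu^\star \in \mathbb{B}_2(C_1\sqrt{n})$ uniformly over $\Pc(D)$, which reduces to controlling $\|\bw\|_2 + s \leq C_1\sqrt{n}$. Since $D \subseteq \mathbb{B}_2(R)$, every triple in $\Pc(D)$ satisfies $|\parcomp|,|\perpone|,\perptwo \leq R$, and standard chi-squared concentration gives $\|\bg\|_2,\|\bz_1\|_2,\|\bz_2\|_2 \leq 2\sqrt{n}$ and $\|\proj_{S_\sharp}^\perp \bh\|_2 \leq \|\bh\|_2 \leq 2\sqrt{d} \leq 2\sqrt{n}$, each with probability at least $1 - 2e^{-n/2}$. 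A union bound produces an event $\mathcal{E}$ of probability at least $1 - 8e^{-n/2}$ on which
\[
\|\bw\|_2 + s \;\leq\; R(\|\bg\|_2 + \|\bz_1\|_2 + \|\bz_2\|_2) + R\,\|\proj_{S_\sharp}^\perp \bh\|_2 \;\leq\; 8R\sqrt{n}.
\]
Taking $C_1$ to dominate $8R$ (enlarging the constant from Lemma~\ref{lem:bilinear} if necessary, since it depends only on $R$), we conclude that on $\mathcal{E}$ the box constraint $\bu \in \mathbb{B}_2(C_1\sqrt{n})$ is inactive at $\bu^\star$ for every admissible $(\parcomp,\perpone,\perptwo)$ and every $\bv$, so the inner minimum equals $\langle \bw,\bv\rangle - s\|\bv\|_2$ identically in $\bv$. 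Taking the max over $\bv \in \mathbb{R}^n$ on both sides and then the outer min over $\Pc(D)$ yields $\phi_{\mathsf{scal},C_1} = \min_{\Pc(D)}\widebar{L}_n$ on $\mathcal{E}$.

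I expect the main obstacle to be essentially bookkeeping rather than conceptual: verifying that the constant $C_1$ used in Lemma~\ref{lem:scalarize-ao} can be chosen to simultaneously satisfy both the compactification threshold $C_1 > 4R$ needed by Lemma~\ref{lem:bounded-optimization} and the inactivity threshold $C_1 \geq 8R$ required above. Because both are of the form $C_1 \geq c_0 R$ for explicit absolute constants $c_0$, one simply takes $C_1$ to be the maximum of the two, preserving the property that $C_1$ depends only on $R$ and is therefore the same constant employed throughout Section~\ref{sec:gordon}. A secondary technicality is that the outer $\max_{\bv}$ is over the unbounded set $\mathbb{R}^n$, but this is harmless because the convex conjugate $g^*(\cdot,\bX\btsharp;\by)$ of the (Lipschitz or linear) function $g$ has effective domain contained in a ball, so the supremum over $\bv$ is attained and the pointwise identity lifts to the suprema.
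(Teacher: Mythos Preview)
Your proposal is correct and follows essentially the same approach as the paper: both identify the explicit minimizer $\bu^\star = \bw - s\,\bv/\|\bv\|_2$ over the constraint ball and verify that it lies in $\mathbb{B}_2(C_1\sqrt{n})$ on the same high-probability event controlling $\|\bg\|_2,\|\bz_1\|_2,\|\bz_2\|_2,\|\proj_{S_\sharp}^\perp\bh\|_2$, then substitute back. The only cosmetic difference is that the paper derives the matching lower bound on $L_u$ via a Lagrangian and Sion's theorem, whereas you observe directly that $\bu^\star$ minimizes the linear functional over the Euclidean ball; your route is slightly more economical. One small slip: the intermediate bound $\|\bh\|_2 \leq 2\sqrt{d}$ holds with probability $1 - 2e^{-d/2}$, not $1-2e^{-n/2}$, but you only need $\|\proj_{S_\sharp}^\perp\bh\|_2 \leq 2\sqrt{n}$, which does hold with the stated probability since $d\leq n$.
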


Taking these lemmas as given, the proof of Lemma~\ref{lem:scalarize-ao} consists of three major steps, which we perform in sequence: we (i) scalarize the maximization over $\bv$; (ii) scalarize the minimization over $\bt$; and (iii) optimize over $\bu$.  We proceed now to the execution of these steps.
\medskip

\noindent \underline{Scalarize the maximization over $\bv$.}
Recall the auxiliary loss $\mathfrak{L}_n$ from Definition~\ref{def:aux-gordon-loss} and note that the Cauchy--Schwarz inequality implies that
\[
\min_{\substack{\bt \in D \\ \bu \in \mathbb{B}_2(C_1 \sqrt{n}) }}  \mathfrak{L}_n(\bt, \bu;r) = \min_{\substack{\bt \in D \\ \bu \in \mathbb{B}_2(C_1 \sqrt{n}) }} \max_{0 \leq \rho \leq r} F(\bu, \bt) + \frac{\rho}{\sqrt{n}} \cdot \Bigl(\langle \bh, \proj_{S_{\sharp}}^{\perp} \bt \rangle + \Bigl\| \| \proj_{S_{\sharp}}^{\perp} \bt \|_2 \cdot \bg + \bX\proj_{S_{\sharp}} \bt - \bu \Bigl\|_2 \Bigr).
\]
\medskip

\noindent \underline{Scalarize the minimization in $\bt$.}
We now show how to perform the  minimization  over the \emph{direction} of the projection vector $\proj_{S_{\sharp}}^{\perp} \bt$. We do this in two steps. 
\\
\indent 
First, we argue that we can decouple the minimization over its projection and its norm. For this, note by assumption that $D \subseteq \mathbb{B}_2(R)$ is amenable (recall Definition~\ref{def:amenable-set}), whence for any feasible value of the norm $\| \proj_{S_{\sharp}}^{\perp} \bt \|_2$, the set of feasible directions $\proj_{S_{\sharp}}^{\perp} \bt/\| \proj_{S_{\sharp}}^{\perp} \bt \|_2$ remains the same.  Thus, decoupling is indeed allowed. 
\\
\indent Second, we argue that we can minimize over the \emph{direction} of the projection vector $\proj_{S_{\sharp}}^{\perp} \bt$ despite the inner maximization over the variable $\rho$.  To this end, we note that because the optimal direction is the same irrespective of the choice of $\rho$, we may invoke \citet[Lemma 8]{kammoun2021precise}. In particular, recall that the function $F(\bu, \bt)$ only depends on the vector $\proj_{S_{\sharp}}^{\perp} \bt$ through its norm. Thus, the objective in the preceding display depends on the direction of the vector $\proj_{S_{\sharp}}^{\perp} \bt$ only through the linear term $\rho\langle \bh, \proj_{S_{\sharp}}^{\perp} \bt \rangle$, which,  for any $\rho\geq 0$, is minimized by   setting $\nicefrac{\proj_{S_{\sharp}}^{\perp} \bt}{\|\proj_{S_{\sharp}}^{\perp} \bt \|_2 }= -\nicefrac{\proj_{S_{\sharp}}^{\perp}\bh}{\|\proj_{S_{\sharp}}^{\perp} \bh \|_2 }$. We thus apply \citet[Lemma 8]{kammoun2021precise} to obtain the characterization
\begin{align}
	\label{eq:scalarize-bt-characterization}
\min_{\substack{\bt \in D \\ \bu \in \mathbb{B}_2(C_1 \sqrt{n}) }}  \mathfrak{L}_n(\bt, \bu;r) = &\min_{\substack{\bt \in D \\ \bu \in \mathbb{B}_2(C_1\sqrt{n}) }} \max_{0 \leq \rho \leq r} F(\bu, \bt)\nonumber\\ 
&+ \frac{\rho}{\sqrt{n}} \cdot \Bigl( \Bigl\| \| \proj_{S_{\sharp}}^{\perp} \bt \|_2 \cdot  \bg + \bX\proj_{S_{\sharp}} \bt - \bu \Bigl\|_2  - \| \proj_{S_{\sharp}}^{\perp} \bt \|_2\cdot  \| \bh \|_2 \Bigr).
\end{align}
Recalling the independent random variables $\bz_1$ and $\bz_2$~\eqref{eq:zone-ztwo},
write
\[
\by = f(\bz_1; \bq) + \beps \qquad \text{ and } \qquad \bX\btsharp = \alpha^{\sharp}\bz_1 + \beta^{\sharp} \bz_2.
\]
For each $\bt \in \real^d$, we have the orthogonal decomposition
\[
\bt = \proj_{S_{\sharp}} \bt + \proj_{S_{\sharp}}^{\perp} \bt = \parcomp(\bt) \cdot \btstar + \perpone(\bt) \cdot \frac{\proj_{\btstar}^{\perp} \btsharp}{\| \proj_{\btstar}^{\perp} \btsharp \|_2} + \nu(\bt) \cdot \frac{\proj_{S_{\sharp}}^{\perp} \bt}{\| \proj_{S_{\sharp}}^{\perp} \bt \|_2},
\]
where the second equality follows from the Gram--Schmidt orthogonalization.  Combining this with the characterization~\eqref{eq:scalarize-bt-characterization}, we obtain
\begin{align}
\min_{\substack{\bt \in D \\ \bu \in \mathbb{B}_2(C_1\sqrt{n}) }} \mathfrak{L}_n(\bt, \bu;r) = &\min_{\substack{(\parcomp, \perpone, \perptwo) \in \Pc(D), \\ \bu \in \mathbb{B}_2(C_1\sqrt{n}) }} \; h_{\mathsf{scal}}(\parcomp, \perpone, \perptwo, \btsharp) + g(\bu, \bX \btsharp; \by)\nonumber\\
& + \max_{0 \leq \rho \leq r}\; \frac{\rho}{\sqrt{n}} \cdot \Bigl( \| \perptwo \cdot  \bg + \parcomp \cdot \bz_1 + \perpone \cdot \bz_2 - \bu \|_2  - \perptwo \cdot  \| \proj_{S_{\sharp}}^{\perp} \bh \|_2 \Bigr) \nn
\\&=	\phi_{\mathsf{var},C_1,r}, 	\label{eq:scalar-in-bt}
\end{align}
where the last line follows by definition~\eqref{eq:def-phi-var}.
\medskip

\noindent \underline{Optimize over $\bu$.}
To be able to optimize over $\bu$, we recall the constant $\phi_{\mathsf{con},C_1}$~\eqref{eq:def-phi-con} and invoke Lemma \ref{lem:constrained-equivalence}, which yields the sandwich relation
\begin{align}\label{eq:Lemma7}
\phi_{\mathsf{var},C_1,r} \leq \phi_{\mathsf{con},C_1} \leq \phi_{\mathsf{var},C_1,r}+ \frac{3C_L^2C_1}{r}\,,
\end{align}
with probability at least $1-6e^{-n/2}$.
Next, we linearize the constrained objective of $\phi_{\mathsf{con},C_1}$~\eqref{eq:def-phi-con} in the optimization variable $\bu$.  To this end, recall from Assumption \ref{ass:loss} that $g$ is convex in its first argument and let $g^*$ denote its convex conjugate so that
\begin{align*} 
		\phi_{\mathsf{con},C_1}	\;= &\min_{\substack{(\parcomp, \perpone, \perptwo) \in \Pc(D), \\ \bu \in \mathbb{B}_2(C_1\sqrt{n}) }}\max_{\bv \in \mathbb{R}^n}\; h_{\mathsf{scal}}(\parcomp, \perpone, \perptwo, \btsharp) - g^*(\bv, \bX \btsharp; \by) + \langle \bv, \bu \rangle \\
	&\;\;\;\;\;\;\; \mathrm{ s.t. }\;\;\;  \| \perptwo \cdot  \bg + \parcomp \cdot \bz_1 + \perpone \cdot \bz_2  - \bu \|_2  \leq  \perptwo \cdot  \| \proj_{S_{\sharp}}^{\perp} \bh \|_2.
\end{align*}
Next, we perform three steps in sequence: (i) write the equivalent Lagrangian to the problem above; (ii) note that the minimization over $\bu$ is of a convex function over a compact constraint and the maximization over $\bv$ is of a concave function and invoke Sion's minimax theorem to swap the minimization over $\bu$ with the maximization over $\bv$; and (iii) re-write as a constrained optimization problem to obtain
	\begin{align*}
	\phi_{\mathsf{con},C_1}	= &\min_{(\parcomp, \perpone, \perptwo) \in \Pc(D)} \max_{\bv \in \mathbb{R}^n} \;\; h_{\mathsf{scal}}(\parcomp, \perpone, \perptwo, \btsharp) - g^*(\bv, \bX \btsharp; \by) + \min_{\bu \in \mathbb{B}_2(C_1\sqrt{n}) } \langle \bu, \bv \rangle\\
	&\;\;\;\;\;\;\; \mathrm{ s.t. }\;\;\;  \| \perptwo \cdot  \bg + \parcomp \cdot \bz_1 + \perpone \cdot \bz_2  - \bu \|_2  \leq  \perptwo \cdot  \| \proj_{S_{\sharp}}^{\perp} \bh \|_2 \nn\\
	&=\phi_{\mathsf{scal},C_1}.
\end{align*}
The last line follows by definition of $\phi_{\mathsf{con},C_1}$~\eqref{eq:def-phi-scal}.
We complete the proof by invoking Lemma~\ref{lem:solve-constrained}.\qed

\paragraph{Proof of Lemma~\ref{lem:constrained-equivalence}:}
Note that $\phi_{\mathsf{con},C_1}$ admits the variational representation
	\begin{align*}
	\phi_{\mathsf{con}, C_1} \;= &\min_{\substack{(\parcomp, \perpone, \perptwo) \in \Pc(D), \\ \bu \in \mathbb{B}_2(C_1\sqrt{n}) }}\; h_{\mathsf{scal}}(\parcomp, \perpone, \perptwo, \btsharp) + g(\bu, \bX \btsharp; \by) \\
	& + \max_{\rho \geq 0}\; \frac{\rho}{\sqrt{n}} \cdot \Bigl( \| \perptwo \cdot  \bg + \parcomp \cdot \bz_1 + \perpone \cdot \bz_2 - \bu \|_2  - \perptwo \cdot  \| \proj_{S_{\sharp}}^{\perp} \bh \|_2 \Bigr),
\end{align*}
whence we obtain the inequality $\phi_{\mathsf{con}, C_1} \geq \phi_{\mathsf{var}, C_1, r}$.  The rest of the section is devoted to the proof of the reverse inequality $\phi_{\mathsf{con}, C_1} \leq \phi_{\mathsf{var}, C_1, r} + 3 C_L^2 C_1 / r$.  To this end, fix an arbitrary triple $(\parcomp, \perpone, \perptwo) \in \Pc(D)$.  Given this triple, define the constraint function
\[
\psi(\bu) = \| \perptwo \cdot  \bg + \parcomp \cdot \bz_1 + \perpone \cdot \bz_2 - \bu \|_2  - \perptwo \cdot  \| \proj_{S_{\sharp}}^{\perp} \bh \|_2,
\]
as well as the loss functions
\[
L_n^{\mathsf{var}}(\bu) = \max_{0 \leq \rho \leq r}h_{\mathsf{scal}}(\parcomp, \perpone, \perptwo, \btsharp) + g(\bu, \bX \btsharp; \by) + \frac{\rho}{\sqrt{n}} \cdot \psi(\bu),
\]
and 
\[
L_n^{\mathsf{con}}(\bu) = \max_{\rho \geq 0}\;h_{\mathsf{scal}}(\parcomp, \perpone, \perptwo, \btsharp) + g(\bu, \bX \btsharp; \by) + \frac{\rho}{\sqrt{n}} \cdot \psi(\bu).
\]
Let $\bu^{\var}$ denote an arbitrary minimizer of the loss function $L_n^{\mathsf{var}}$ and suppose that $\bu^{\var}$ does not satisfy the constraint $\psi(\bu^{\var}) \leq 0$---if any such minimizer does satisfy this constraint, then $\phi_{\mathsf{var}, C_1, r} \geq \phi_{\mathsf{con}, C_1}$ and there is nothing to prove.  

The remainder of the proof is thus dedicated to showing the inequality \sloppy \mbox{$\phi_{\var, C_1, r} + 3 C_L^2 C_1 /r \geq \phi_{\con, C_1}$} under the proviso that the minimizer $\bu^{\var}$ satisfies the inequality $\psi(\bu^{\var}) \leq 0$.  To this end, consider the sublevel sets (constrained to a ball)
\[
S_{r'} := \{\bu: \psi(\bu) \leq {r'}\} \cap \mathbb{B}_2(C_1) = \mathbb{B}_2\Bigl(\perptwo \cdot  \bg + \parcomp \cdot \bz_1 + \perpone \cdot \bz_2; \;\;\perptwo \cdot  \| \proj_{S_{\sharp}}^{\perp} \bh \|_2 + {r'}\Bigr).
\]
Note that the set $S_0$ contains all of the feasible points we are interested in.  Next, define the event
\begin{align}
	\mathcal{A}_0 = \{\; \| \bg \|_2 \leq 2\sqrt{n},\; \| \bz_1 \|_2 \leq 2\sqrt{n},\; \| \bz_2 \|_2 \leq 2\sqrt{n}\; \} \label{eq:event_A1},
\end{align}
which, by~\citet[Theorem 3.1.1]{vershynin2018high}, occurs with probability at least $1 - 6e^{-n/2}$.  Working on this event, we note that since $C_1 \geq 6R$ by assumption, the set $S_0$ is non-empty and thus since ${S}_0$ is a non-empty, closed and compact set, projections onto it are well-defined.  

Now, note that ${S}_0$ and $S_{r}'$ are concentric balls, whence if $\bu \in S_{r'}$ and $\widetilde{\bu}$ denotes its projection onto the set ${S}_0$, the distance between the two points satisfies the inequality 
\begin{align}
	\label{ineq:nested-balls}
	\| \bu - \widetilde{\bu} \|_2 \leq {r'}.
\end{align}
We note the following inequality, which we take for granted now and prove at the end of the section,
\begin{align}
	\label{ineq:bound-psi-uvar}
	\psi(\bu^{\var}) \leq \frac{3C_LC_1\sqrt{n}}{r}. 
\end{align}
Consequently, we note the inclusion $\bu^{\var} \in S_{3C_L C_1\sqrt{n}/r}$.  
Letting $\widetilde{\bu}^{\var}$ denote the projection of $\bu^{\var}$ onto the set $S_0$, we obtain the chain of inequalities
\begin{align*}
	L_n^{\var} (\bu^{\var}) \overset{\1}{\geq} L_n^{\con}(\widetilde{\bu}^{\var}) - \frac{C_L \| \bu^{\var} - \widetilde{\bu}^{\var}\|_2}{\sqrt{n}} &\overset{\2}{\geq} L_n^{\con}(\bu^{\con}) - \frac{C_L \| \bu^{\var} - \widetilde{\bu}^{\var}\|_2}{\sqrt{n}}\\
	&\geq L_n^{\con}(\bu^{\con})  - \frac{3C_L^2 C_1}{r}.
\end{align*}
Above, step $\1$ follows since from Assumption \ref{ass:loss}, the function $g$ is $C_L/\sqrt{n}$-Lipschitz in its first argument, step $\2$ follows since $\bu^{\con}$ minimizes $L_n^{\con}$, and the final inequality follows from the inequality bounding distances~\eqref{ineq:nested-balls}, taking $r' = 3C_L C_1 \sqrt{n}/r$.  Taking stock, since the above inequality holds for all values $(\parcomp, \perpone, \perptwo) \in \Pc(D)$, we have shown that on the event $\Ac_0$
\[
\phi_{\con,C_1} - \frac{3C_L^2 C_1}{r} \leq \phi_{\var,C_1, r} \leq \phi_{\con,C_1},
\]
Rearranging this relation completes the proof. It remains to prove the claim~\eqref{ineq:bound-psi-uvar}.
\medskip

\noindent \underline{Proof of the inequality~\eqref{ineq:bound-psi-uvar}.}  Assume for the sake of contradiction that the inequality does not hold and let $\widetilde{\bu}^{\var}$ denote the projection of $\bu^{\var}$ onto the set $S_0$.  Then, since from Assumption \ref{ass:loss} the function $g$ is $C_L/\sqrt{n}$-Lipschitz in its first argument, we obtain the chain of inequalities
\[
L_n^{\var}(\bu^{\var}) \geq L_n^{\con}(\widetilde{\bu}^{\var}) - \frac{C_L \| \bu^{\var} - \widetilde{\bu}^{\var}\|_2}{\sqrt{n}} + \frac{r \psi(\bu^{\var})}{\sqrt{n}} \geq L_n^{\con}(\widetilde{\bu}^{\var})  - 2C_L C_1 + 3C_L C_1 > L_n^{\con}(\widetilde{\bu}^{\var}),
\]
where the penultimate inequality follows since $\bu \in \mathbb{B}_2(C_1\sqrt{n})$.
But the above display contradicts the fact that $\bu^{\var}$ minimizes $L_n^{\var}$, whence we obtain the desired result. \qed

\paragraph{Proof of Lemma~\ref{lem:solve-constrained}:}
Recall the value $\phi_{\mathsf{scal}, C_1}$~\eqref{eq:def-phi-scal}.  Now, we consider a fixed triplet $(\parcomp, \perpone, \perptwo) \in \Pc(D)$ and perform the minimization over $\bu$.  To this end, let
	\begin{align*}
	L_u = \; \min_{\bu \in \mathbb{B}_2(C_1\sqrt{n}) } \langle \bu, \bv \rangle \quad \mathrm{ s.t. } \quad \| \perptwo \cdot  \bg + \parcomp \cdot \bz_1 + \perpone \cdot \bz_2  - \bu \|_2  \leq  \perptwo \cdot  \| \proj_{S_{\sharp}}^{\perp} \bh \|_2.
\end{align*}
Then, introducing a Lagrange multiplier  and invoking Sion's minimax theorem to interchange minimization and maximization, we obtain
\begin{align*}
L_u &= \max_{\lambda \geq 0} \min_{\bu \in \mathbb{B}_2(C_1\sqrt{n})} \;\; \langle \bu, \bv \rangle + \frac{\lambda}{2} \| \perptwo \cdot  \bg + \parcomp \cdot \bz_1 + \perpone \cdot \bz_2  - \bu \|_2^2 - \frac{\lambda \nu^2}{2} \| \proj_{S_{\sharp}}^{\perp} \bh \|_2^2 \\
&\geq  \max_{\lambda \geq 0} \min_{\bu \in \mathbb{R}^n} \;\; \langle \bu, \bv \rangle + \frac{\lambda}{2} \| \perptwo \cdot  \bg + \parcomp \cdot \bz_1 + \perpone \cdot \bz_2  - \bu \|_2^2 - \frac{\lambda \nu^2}{2} \| \proj_{S_{\sharp}}^{\perp} \bh \|_2^2\\
&= \max_{\lambda \geq 0} \;\;\langle \perptwo \cdot  \bg + \parcomp \cdot \bz_1 + \perpone \cdot \bz_2, \bv \rangle - \frac{1}{2\lambda} \| \bv\|_2^2 - \frac{\lambda \nu^2}{2} \| \proj_{S_{\sharp}}^{\perp} \bh \|_2^2 \\
&= \langle \perptwo \cdot  \bg + \parcomp \cdot \bz_1 + \perpone \cdot \bz_2, \bv \rangle  - \perptwo \| \proj_{S_{\sharp}}^{\perp} \bh \|_2 \| \bv \|_2.
\end{align*}  
Conversely, let 
\[
\bar{\bu} = \perptwo \cdot  \bg + \parcomp \cdot \bz_1 + \perpone \cdot \bz_2 - \perptwo \| \proj_{S_{\sharp}}^{\perp} \bh \|_2 \cdot \frac{\bv}{\| \bv \|_2}.
\]
We claim that this choice is feasible with high probability for a large enough constant $C_1$ (which may depend on $R$), which means
we obtain
\[
L_u \leq \langle \bar{\bu}, \bv \rangle =  \langle \perptwo \cdot  \bg + \parcomp \cdot \bz_1 + \perpone \cdot \bz_2, \bv \rangle  - \perptwo \| \proj_{S_{\sharp}}^{\perp} \bh \|_2 \| \bv \|_2.
\]
To see that this is true, condition on the following event (recall the event $\Ac_0$~\eqref{eq:event_A1})
\[
\Ac'_0=\Ac_0\cap\{\;\|\proj_{S_{\sharp}}^{\perp} \bh\|_2\leq 2\sqrt{n}\;\},
\] 
which after applying~\citet[Theorem 3.1.1]{vershynin2018high} holds with $\Pro\{\mathcal{A}_0'\} \geq 1 - {8e^{-n/2}}$.
On this event, apply triangle inequality to obtain 
\[
\bar{\bu}/\sqrt{n}\leq4({\nu+|\alpha|+|\mu|})\leq 8R, 
\]
where in the last inequality, we recalled from the definition of the scalarized set $\Pc(D)$ that $\alpha^2+\mu^2+\nu^2=\|\bt\|_2^2$ and used the assumption that $D\subset \mathbb{B}_2(R)$. 
Evidently, on the event $\Ac'_0$,
\[
L_u =  \langle \perptwo \cdot  \bg + \parcomp \cdot \bz_1 + \perpone \cdot \bz_2, \bv \rangle  - \perptwo \| \proj_{S_{\sharp}}^{\perp} \bh \|_2 \| \bv \|_2.
\]
Thus, we obtain
\begin{align*}
\phi_{\mathsf{scal}, C_1} &= \min_{(\parcomp, \perpone, \perptwo) \in \Pc(D)} \max_{\bv \in \mathbb{R}^n} \;\; h_{\mathsf{scal}}(\parcomp, \perpone, \perptwo, \btsharp) - g^*(\bu, \bX \btsharp; \by) + \langle \perptwo \cdot  \bg + \parcomp \cdot \bz_1 + \perpone \cdot \bz_2, \bv \rangle  - \perptwo \| \proj_{S_{\sharp}}^{\perp} \bh \|_2 \| \bv \|_2\\
&=\min_{(\parcomp,\perpone,\perptwo)\in \Pc(D)} \; \widebar{L}_n(\parcomp, \perpone, \perptwo; \btsharp),
\end{align*}
which concludes the proof. \qed

\subsection{Establishing growth conditions for higher-order methods} \label{sec:growth-HO}
In this subsection, we prove Lemma~\ref{lem:ao-analysis}, specialized to second order methods where the loss function~\eqref{eq:sec_order_gen} takes the form 
\[
\Lc(\bt; \btsharp, \bX, \by) = \frac{1}{\sqrt{n}} \| \omega(\bX\btsharp, \by) - \bX\bt \|_2,
\]
which corresponds to setting 
\[
F(\bu, \bt) = \frac{1}{\sqrt{n}}\| \omega(\bX\btsharp, \by) - \bu \|_2, \quad h(\bt, \btsharp) = 0, \qquad \text{ and } \quad g(\bu, \bX \btsharp; \by) = \frac{1}{\sqrt{n}}\| \omega(\bX\btsharp, \by) - \bu \|_2.
\]
We now state several lemmas, which we will invoke in sequence.  The first specializes the function $\widebar{L}_n$ when the loss corresponds to a higher-order method.
\begin{lemma}
	\label{lem:remove-positive-restriction}
	Let the loss $\Lc$ correspond to a higher-order method as in equation~\eqref{eq:sec_order_gen}, and let $\widebar{L}_n$ denote the corresponding scalarized loss given by Definition~\ref{def:scalarized-ao}. Also recall the pair of random vectors $(\bz_1, \bz_2)$ from equation~\eqref{eq:zone-ztwo}.
	  There exists a universal positive constant $c$ such that with probability at least $1 - 6e^{-cn}$, it holds simultaneously for all scalars $\alpha,\mu\in\R$ and $\nu\geq0$ that
	\[
	\widebar{L}_n(\parcomp, \perpone, \perptwo; \btsharp) = \frac{1}{\sqrt{n}} \| \perptwo \cdot \bg + \parcomp \cdot \bz_1 + \perpone \cdot \bz_2 - \omega(\bX \btsharp, \by) \|_2 - \perptwo \frac{\| \proj_{S_{\sharp}}^{\perp} \bh \|_2}{\sqrt{n}}.
	\]
\end{lemma}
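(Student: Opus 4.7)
The plan is to first simplify $\widebar{L}_n$ down to a one-dimensional maximization problem by computing $g^*$ explicitly, and then to argue that a certain positive-part operator becomes vacuous by exploiting a Pythagorean decomposition of the Gaussian vector $\bg$.

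\textbf{Step 1: Compute $g^*$.} For higher-order methods we have $\hscal \equiv 0$ and $g(\bu, \bX\btsharp; \by) = \frac{1}{\sqrt{n}}\|\bw - \bu\|_2$, where $\bw := \omega(\bX\btsharp,\by)$. A direct calculation via the change of variables $\bx = \bu - \bw$ yields
\[
g^*(\bv, \bX\btsharp; \by) \;=\; \langle \bw, \bv\rangle + \sup_{\bx \in \R^n}\Bigl\{\langle \bx, \bv\rangle - \tfrac{1}{\sqrt{n}}\|\bx\|_2\Bigr\} \;=\; \begin{cases}\langle \bw, \bv\rangle & \text{if } \|\bv\|_2 \leq 1/\sqrt{n},\\ +\infty & \text{otherwise.}\end{cases}
\]
Plugging this into Definition~\ref{def:scalarized-ao} and writing $\bq(\alpha,\mu,\nu) := \nu \bg + \alpha \bz_1 + \mu \bz_2 - \bw$, we obtain
\[
\widebar{L}_n(\alpha,\mu,\nu;\btsharp) \;=\; \max_{\|\bv\|_2\leq 1/\sqrt{n}} \langle \bq(\alpha,\mu,\nu), \bv\rangle - \nu\|\proj_{S_\sharp}^\perp \bh\|_2\,\|\bv\|_2.
\]
Maximizing radially in $\bv$, this equals $\frac{1}{\sqrt{n}}\bigl(\|\bq(\alpha,\mu,\nu)\|_2 - \nu\|\proj_{S_\sharp}^\perp \bh\|_2\bigr)_+$.

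\textbf{Step 2: Removing the positive part uniformly.} It therefore suffices to show that on an event of probability at least $1 - 6e^{-cn}$, the inequality
\begin{equation}\label{eq:suff-unif}
\|\nu \bg + \alpha \bz_1 + \mu \bz_2 - \bw\|_2 \;\geq\; \nu \,\|\proj_{S_\sharp}^\perp \bh\|_2
\end{equation}
holds for every $\alpha,\mu\in\R$ and every $\nu\geq 0$ simultaneously. The main obstacle is precisely this uniformity over an unbounded set of parameters; my plan is to dispatch it by peeling off, from $\bg$, the component that is orthogonal to everything else in~\eqref{eq:suff-unif}.

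\textbf{Step 3: Pythagorean decomposition.} Observe that $\bg \sim \NORMAL(0, \bI_n)$ is independent of $\bX, \by, \bh$; in particular, it is independent of the (at most three-dimensional) random subspace $V := \mathrm{span}(\bz_1, \bz_2, \bw) \subseteq \R^n$. Write $\bg = \bg_\| + \bg_\perp$ for the corresponding orthogonal decomposition, where $\bg_\perp \perp V$. Since the three vectors $\bz_1, \bz_2, \bw$ all lie in $V$, applying Pythagoras yields, for every $(\alpha,\mu,\nu)$,
\[
\|\nu \bg + \alpha \bz_1 + \mu \bz_2 - \bw\|_2^2 \;=\; \|\nu \bg_\| + \alpha\bz_1 + \mu\bz_2 - \bw\|_2^2 + \nu^2 \|\bg_\perp\|_2^2 \;\geq\; \nu^2 \|\bg_\perp\|_2^2.
\]
Taking square roots and using $\nu\geq 0$, the bound~\eqref{eq:suff-unif} is implied by the \emph{single} deterministic inequality $\|\bg_\perp\|_2 \geq \|\proj_{S_\sharp}^\perp \bh\|_2$, which no longer depends on $(\alpha,\mu,\nu)$.

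\textbf{Step 4: Concentration.} Conditional on $V$, the residual $\|\bg_\perp\|_2^2$ has a $\chi^2$ distribution with at least $n - 3$ degrees of freedom, while $\|\proj_{S_\sharp}^\perp \bh\|_2^2 \sim \chi^2_{d-2}$ independently of $\bg$. Standard $\chi^2$ tail bounds give $\|\bg_\perp\|_2^2 \geq (n-3)/2$ and $\|\proj_{S_\sharp}^\perp \bh\|_2^2 \leq 2d$ each with probability at least $1 - 3e^{-cn}$. Since $\kappa = n/d \geq C$ for some sufficiently large universal constant $C$, we have $(n-3)/2 \geq 2d$, and a union bound delivers the required event of probability at least $1 - 6e^{-cn}$ on which \eqref{eq:suff-unif} holds uniformly. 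Combined with Step~1, this yields the stated identity for $\widebar{L}_n$ simultaneously over all $\alpha,\mu\in\R$ and $\nu\geq 0$.
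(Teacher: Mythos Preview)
Your argument is correct and takes a genuinely different, cleaner route than the paper. After the identical Step~1 reduction to the positive-part form, the paper proceeds by controlling three separate events---a cross-term bound $|\langle \bg,\, \alpha\bz_1+\mu\bz_2-\bomega_\sharp\rangle| \lesssim \sqrt{n}\,\|\alpha\bz_1+\mu\bz_2-\bomega_\sharp\|_2$, a lower bound on $\|\bg\|_2^2$, and an upper bound on $\|\bh\|_2^2$---and then completes a square inside the norm to exhibit nonnegativity. Your Pythagorean decomposition $\bg = \bg_\| + \bg_\perp$ relative to $V = \mathrm{span}(\bz_1,\bz_2,\bomega_\sharp)$ bypasses the cross-term bookkeeping entirely and makes the uniformity over the unbounded range of $(\alpha,\mu,\nu)$ transparent from the outset: the single deterministic comparison $\|\bg_\perp\|_2 \ge \|\proj_{S_\sharp}^\perp\bh\|_2$ does all the work. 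The paper's cross-term event, read literally, depends on $(\alpha,\mu)$ through $\bw$, and its uniform validity (which amounts to bounding $\|\proj_V\bg\|_2$ over a three-dimensional subspace) is left implicit; your route sidesteps this altogether. One minor point, shared with the paper's own proof: the tail of $\|\proj_{S_\sharp}^\perp\bh\|_2^2\sim\chi^2_{d-2}$ decays like $e^{-cd}$ rather than $e^{-cn}$, so strictly speaking the stated $1-6e^{-cn}$ should be $1-Ce^{-cd}$; this is harmless for the downstream use of the lemma.
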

This lemma is proved in Subsection~\ref{subsec:proof-lemma-remove-positive-restriction}.
Before stating the next lemma, we introduce the shorthand 
\[
\qquad \bA:= [\bz_1, \bz_2, \bg] \in \mathbb{R}^{n \times 3}, \qquad \text{ and } \qquad \bv_n := [0, 0, \nicefrac{\| \proj^{\perp}_{S_{\sharp}} \bh\|_2}{\sqrt{n}}]^{\top} \in \mathbb{R}^3,
\]
and
\[
\bomega_{\sharp} = \omega(\bX \btsharp, \by),
\]
so that letting $\bxi = [\parcomp, \perpone, \perptwo]$ and invoking Lemma~\ref{lem:remove-positive-restriction}, we obtain
\[
\widebar{L}_n(\bxi; \btsharp) = \frac{1}{\sqrt{n}} \| \bA \bxi - \bomega_{\sharp}\|_2 - \langle \bv_n, \bxi \rangle.
\]
Now, introduce the variable $\tau$ and write the above display in the form
\[
\widebar{L}_n(\bxi; \btsharp) = \inf_{\tau > 0} \; \underbrace{\frac{\tau}{2} + \frac{1}{2\tau n} \| \bA \bxi - \bomega_{\sharp}\|_2^2 - \langle \bv_n, \bxi \rangle}_{\widetilde{L}_n(\bxi, \tau)}.
\]
Next, recall the parameters $(K_1, K_2)$ from Assumptions~\ref{ass:omega} and~\ref{ass:omega-lb} and fix $\epsilon > 0$. For a universal constant $C > 0$ and  a constant $C_{K_1} > 0$ depending only on $K_1$, define the events
\begin{subequations} \label{eq:three-events}
\begin{align} \label{eq:event1}
\Ac_1 = \Bigl\{(1 - C\epsilon) \cdot \bI_3 \preceq \frac{1}{n} \bA^{\top} \bA \preceq (1 +C\epsilon) \cdot \bI_3\Bigr\},
\end{align}
\begin{align} \label{eq:event2}
	\Ac_2 = \Bigl\{\frac{1}{n^2}\lvert \langle \bg, \bomega_{\sharp} \rangle \rvert^2 \vee \Bigl\lvert \frac{1}{n^2}\langle \bz_1, \bomega_{\sharp}\rangle^2 - (\EE \{ Z_1 \Omega \})^2\Bigr\rvert \vee \Bigl\lvert \frac{1}{n^2}\langle \bz_2, \bomega_{\sharp} \rangle^2 - (\EE\{Z_2 \Omega \})^2 \Bigr \rvert \leq C_{K_1} \cdot \epsilon\Bigr\},
\end{align}
and 
\begin{align} \label{eq:event3}
	\Ac_3 &= \Bigl\{\Bigl\lvert \frac{1}{n} \| \bomega_{\sharp} \|_2^2 - \EE\{\Omega^2\} \Bigr\rvert \leq C_{K_1}\epsilon \Bigr\}.
\end{align}
\end{subequations}
Finally, note that on the event $\Ac_1$, $\min_{\bxi \in \real^3} \| \bA\bxi - \bomega_{\sharp}\|_2 = \| \left( \bI - \bA (\bA^\top \bA)^{-1} \bA^\top \right) \cdot \bomega_{\sharp}\|_2$.
\begin{lemma}
	\label{lem:row-space-A}
	Let Assumptions \ref{ass:omega} and \ref{ass:omega-lb} hold.
	 There exists a positive constant $C_{K_1,K_2}$ depending only on $K_1$ and $K_2$ and universal positive constants $c, C,$ and $C'$ such that for all $C' > \epsilon > 0$, the following hold.\\ 
	 \begin{itemize}
	\item[(a)] Recall events $\Ac_1, \Ac_2, $ and $\Ac_3$ as in equations~\eqref{eq:event1}--\eqref{eq:event3}.  On the event $\Ac_1 \cap \Ac_2 \cap \Ac_3$, we have
	\[
	\Bigl\lvert \frac{1}{\sqrt{n}} \| \left( \bI - \bA (\bA^\top \bA)^{-1} \bA^\top \right) \cdot \bomega_{\sharp}\|_2 - \sqrt{(\EE\{\Omega^2\} - (\EE\{Z_1 \Omega\})^2 - (\EE\{Z_2 \Omega\})^2} \Bigr\rvert \leq C_{K_1,K_2} \cdot \epsilon.
	\]
	\item[(b)] We have $\Pro \{ \Ac_1 \cap \Ac_2 \cap \Ac_3 \} \geq 1 - C e^{-cn \epsilon^2}$. 
	\end{itemize}
\end{lemma}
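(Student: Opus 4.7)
The plan is to separate the proof into part (a), a deterministic algebraic computation carried out on the intersection event $\Ac_1 \cap \Ac_2 \cap \Ac_3$, and part (b), a union bound over standard concentration estimates for each of the three events. First observe that the three columns of $\bA$ are mutually independent standard Gaussian vectors in $\R^n$: $\bz_1$ and $\bz_2$ are independent because $\btstar$ and $\proj_{\btstar}^{\perp}\btsharp / \| \proj_{\btstar}^{\perp}\btsharp \|_2$ are orthogonal unit vectors, and $\bg$ is independent of $(\bz_1, \bz_2, \bq, \beps)$ by construction, hence also independent of the random vector $\bomega_\sharp = \omega(\alpha^\sharp \bz_1 + \beta^\sharp \bz_2, f(\bz_1; \bq) + \sigma \beps)$.

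For part (a), working with the squared norm, I would use the algebraic identity
\[
\|(\bI - \bA(\bA^\top\bA)^{-1}\bA^\top)\bomega_\sharp\|_2^2 = \|\bomega_\sharp\|_2^2 - \bomega_\sharp^\top \bA(\bA^\top\bA)^{-1}\bA^\top\bomega_\sharp.
\]
Writing $\bM = \tfrac{1}{n}\bA^\top \bA$ and $\bb = \bA^\top \bomega_\sharp$, this becomes $\|\bomega_\sharp\|_2^2 - \tfrac{1}{n}\bb^\top \bM^{-1}\bb$. On $\Ac_1$, the inequality $\|\bM - \bI_3\|_{\mathsf{op}} \leq C\epsilon$ implies $\|\bM^{-1} - \bI_3\|_{\mathsf{op}} \lesssim \epsilon$ for small enough $\epsilon$, and hence $\bigl| \tfrac{1}{n^2}\bb^\top \bM^{-1}\bb - \tfrac{1}{n^2}\|\bb\|_2^2 \bigr| \lesssim \epsilon \cdot \tfrac{1}{n^2}\|\bb\|_2^2$. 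The three components $\tfrac{1}{n^2}\langle \bz_1, \bomega_\sharp\rangle^2$, $\tfrac{1}{n^2}\langle \bz_2, \bomega_\sharp\rangle^2$, and $\tfrac{1}{n^2}\langle \bg, \bomega_\sharp\rangle^2$ of $\tfrac{1}{n^2}\|\bb\|_2^2$ are controlled by $\Ac_2$, approximating $(\EE[Z_1 \Omega])^2$, $(\EE[Z_2 \Omega])^2$, and $0$ respectively up to $C_{K_1}\epsilon$. Combining these with $\Ac_3$, which yields $\tfrac{1}{n}\|\bomega_\sharp\|_2^2 = \EE[\Omega^2] + O(C_{K_1}\epsilon)$, gives
\[
\tfrac{1}{n} \|(\bI - \bA(\bA^\top\bA)^{-1}\bA^\top)\bomega_\sharp\|_2^2 = \EE[\Omega^2] - (\EE[Z_1\Omega])^2 - (\EE[Z_2\Omega])^2 + O(C_{K_1}\epsilon).
\]
To pass to the square root, I would invoke Assumption~\ref{ass:omega-lb}: the leading quantity is bounded below by $K_2 > 0$, so the square-root map is $(2\sqrt{K_2})^{-1}$-Lipschitz on the relevant interval and the $O(\epsilon)$ deviation is preserved with a constant depending only on $(K_1, K_2)$.

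For part (b), each event is handled by a standard concentration estimate. The event $\Ac_1$ follows from Gaussian concentration of the Wishart matrix with failure probability at most $2 e^{-cn\epsilon^2}$. The three items defining $\Ac_2$ are each handled by Bernstein's inequality applied to sums of iid sub-exponential summands: each product $(\bz_k)_i \cdot \omega_i$ (or $g_i \cdot \omega_i$) is a product of sub-Gaussians by Assumption~\ref{ass:omega}, hence sub-exponential with $\psi_1$-norm bounded by $C K_1$. The means are $\EE[Z_k \Omega]$ for $k=1,2$ and vanish for the $\bg$ term by independence. Bernstein yields a deviation of $\tfrac{1}{n}\langle \bz_k, \bomega_\sharp \rangle$ from its mean of order $\sqrt{\epsilon}$ with failure probability $\leq 2 e^{-cn\epsilon/K_1^2}$; squaring and using $|a^2 - b^2| \leq (|a|+|b|)|a-b|$ with $|a|, |b| \leq C K_1$ converts this to the required bound on the squared inner product. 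The event $\Ac_3$ follows similarly by applying Bernstein to the iid sub-exponential summands $\omega_i^2$. Since $\epsilon < C'$, the $e^{-cn\epsilon}$ bounds are dominated by the weaker $e^{-cn\epsilon^2}$ bound, and a union bound completes part (b).

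The main obstacle is careful bookkeeping in part (a): several $O(\epsilon)$ approximations accumulate, and one must verify that the final constant depends only on $(K_1, K_2)$ and not on $\btsharp$ or other problem-specific quantities. For instance, the prefactor $\tfrac{1}{n^2}\|\bb\|_2^2$ arising from the $\bM^{-1}$ perturbation must be bounded uniformly, which requires the sub-Gaussian control on $\Omega$ of Assumption~\ref{ass:omega}. The most conceptually important step is the use of Assumption~\ref{ass:omega-lb} at the very end, which bounds the argument of the square root away from zero; without it, the square-root Lipschitz argument fails and the $O(\epsilon)$ deviation cannot be preserved.
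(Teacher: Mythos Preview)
Your proposal is essentially the paper's own argument: for part (a) the paper expands the projection norm as $\tfrac{1}{n}\|\bomega_\sharp\|_2^2 - \tfrac{1}{n}(\bA^\top\bomega_\sharp)^\top(\bA^\top\bA)^{-1}(\bA^\top\bomega_\sharp)$, sandwiches the inverse using $\Ac_1$, controls the pieces via $\Ac_2,\Ac_3$, and finally invokes Assumption~\ref{ass:omega-lb} to make the square root Lipschitz; for part (b) the paper uses the same triple of Wishart concentration for $\Ac_1$ and Bernstein for $\Ac_2,\Ac_3$.

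One slip to flag in your part (b): your parametrization for $\Ac_2$ is off. If Bernstein gives a linear deviation of order $\sqrt{\epsilon}$ for $\tfrac{1}{n}\langle \bz_k,\bomega_\sharp\rangle$, then $|a^2-b^2|\le (|a|+|b|)|a-b|$ yields a squared deviation of order $K_1\sqrt{\epsilon}$, not the $C_{K_1}\epsilon$ that defines $\Ac_2$. The fix (and what the paper does) is to apply Bernstein at level $t=C_{K_1}\epsilon$ to obtain failure probability $\lesssim e^{-cn\epsilon^2}$ directly on the linear quantity; squaring then gives the required $O(\epsilon)$ bound, and your final remark about $e^{-cn\epsilon}$ dominating $e^{-cn\epsilon^2}$ becomes unnecessary.
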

This lemma is proved in Subsection~\ref{subsec:proof-lemma-row-space-A}.

\begin{lemma}
	\label{lem:minimize-lbar-higher-order}
	Let $\kappa>1$. Let Assumptions \ref{ass:omega} and \ref{ass:omega-lb} hold. There exist positive constant $c_{K_1,K_2}$ depending only on $K_1,K_2$, a positive constant $C_{K_1}$ depending only on $K_1$ and a universal, positive constant $C$ such that the following statements hold with probability at least $ 1 - Ce^{-c_{K_1,K_2}n}$. 
	\begin{itemize}
	\item[(a)] 
	For any $R \geq C_{K_1}$, we have the equivalence
	\begin{subequations}
	\begin{align}\label{eq:interchange-inf-min}
	\min_{\bxi \in \Pc(\mathbb{B}_2(R))} \inf_{\tau > 0}\; \widetilde{L}_n(\bxi, \tau) = \inf_{\tau > 0}	\min_{\bxi \in \Pc(\mathbb{B}_2(R))}\; \widetilde{L}_n(\bxi, \tau).
	\end{align}
	\item[(b)] The optimization problem on the RHS of equation~\eqref{eq:interchange-inf-min} admits unique minimizers
	\begin{align} \label{eq:unique-empirical-xi}
	\bxi_n(\tau) = (\bA^{\top} \bA)^{-1} \bA^{\top} \omega(\bX\btsharp, \by) + \tau \cdot n \cdot (\bA^{\top} \bA)^{-1} \bv_n,
	\end{align}
	and 
	\begin{align} \label{eq:unique-empirical-tau}
	\tau_n = \frac{\frac{1}{\sqrt{n}}\|(\bI_n - \bA(\bA^{\top} \bA)^{-1}\bA^{\top}) \cdot \omega(\bX \btsharp, \by) \|_2}{\sqrt{1 - n\cdot \bv_n^{\top} (\bA^{\top} \bA)^{-1} \bv_n}}.
	\end{align}
	\end{subequations}
\end{itemize}
\end{lemma}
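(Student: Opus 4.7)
\textbf{Proof proposal for Lemma~\ref{lem:minimize-lbar-higher-order}.}

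The strategy is to treat $\widetilde{L}_n$ as a jointly convex function of $(\bxi, \tau)$ on $\real^3 \times (0, \infty)$ and exploit this structure to interchange the minimization order and obtain explicit minimizers from first-order conditions. The joint convexity is the key structural observation: the quadratic-over-linear map $(\bxi, \tau) \mapsto \tfrac{1}{2\tau n}\|\bA\bxi - \bomega_\sharp\|_2^2$ is the perspective of the convex quadratic $\bxi \mapsto \tfrac{1}{2n}\|\bA\bxi - \bomega_\sharp\|_2^2$ and is therefore jointly convex for $\tau > 0$; the remaining terms $\tau/2$ and $-\langle \bv_n, \bxi\rangle$ are linear. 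The set $\Pc(\mathbb{B}_2(R)) = \{(\alpha,\mu,\nu) : \alpha^2 + \mu^2 + \nu^2 \leq R^2,\ \nu \geq 0\}$ is a convex compact subset of $\real^3$.

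For part (a), the equivalence $\min_{\bxi} \inf_\tau = \inf_\tau \min_\bxi$ is an immediate consequence of Fubini's principle for joint infima, once we verify that for each fixed $\tau > 0$ the inner minimum over $\bxi \in \Pc(\mathbb{B}_2(R))$ is attained. This follows because $\widetilde{L}_n(\cdot, \tau)$ is continuous and $\Pc(\mathbb{B}_2(R))$ is compact. Work on the intersection of the three events $\Ac_1, \Ac_2, \Ac_3$ from equations~\eqref{eq:three-events}, which by Lemma~\ref{lem:row-space-A}(b) has probability at least $1 - Ce^{-cn\epsilon^2}$ for suitable $\epsilon$, together with the event $\{\|\proj_{S_\sharp}^\perp \bh\|_2^2 \leq n(1 + 2/\kappa^{-1})\}$; standard $\chi^2$-concentration ensures this too has probability $1 - e^{-cn}$.

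For part (b), fix $\tau > 0$ and solve the first-order condition $\nabla_\bxi \widetilde{L}_n = \tfrac{1}{\tau n}\bA^\top(\bA\bxi - \bomega_\sharp) - \bv_n = 0$, which yields $\bxi_n(\tau)$ as in \eqref{eq:unique-empirical-xi}; the Hessian $\tfrac{1}{\tau n} \bA^\top \bA$ is positive definite on $\Ac_1$, so the quadratic in $\bxi$ is strongly convex and the minimizer is unique \emph{on $\real^3$}. Substituting back, using the orthogonal decomposition of $\bA\bxi_n(\tau) - \bomega_\sharp$ into its components in $\mathrm{col}(\bA)$ and $\mathrm{col}(\bA)^\perp$, produces
\[
\widetilde{L}_n^*(\tau) = \tfrac{\tau}{2}\bigl(1 - n\bv_n^\top (\bA^\top \bA)^{-1} \bv_n\bigr) + \tfrac{1}{2\tau n}\|(\bI - \bP)\bomega_\sharp\|_2^2 - \bv_n^\top (\bA^\top\bA)^{-1}\bA^\top \bomega_\sharp,
\]
where $\bP = \bA(\bA^\top \bA)^{-1}\bA^\top$. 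On $\Ac_1$ combined with $\|\proj_{S_\sharp}^\perp\bh\|_2^2/n \leq 1 + 2/\kappa - c$, one has $n\bv_n^\top(\bA^\top\bA)^{-1}\bv_n \leq (1+C\epsilon)\|\bv_n\|_2^2 < 1$ for $\kappa > 1$, so the coefficient of $\tau/2$ is strictly positive, making $\widetilde{L}_n^*$ strictly convex in $\tau$ on $(0,\infty)$ with a unique minimizer $\tau_n$ given by \eqref{eq:unique-empirical-tau}.

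The main obstacle, and the delicate step of the argument, is verifying that the unconstrained optimum $\bxi_n(\tau_n)$ actually lies in the constrained set $\Pc(\mathbb{B}_2(R))$ so that the $\real^3$ optimizer coincides with the $\Pc(\mathbb{B}_2(R))$ optimizer. Two checks are required. First, a norm bound: $\|\bxi_n(\tau_n)\|_2 \leq \tfrac{2}{n}\|\bA^\top \bomega_\sharp\|_2 + 2\tau_n \|\bv_n\|_2$; on $\Ac_1 \cap \Ac_2 \cap \Ac_3$ the first summand is bounded by $C_{K_1}$ using the concentration in $\Ac_2$, while $\tau_n \leq C_{K_1,K_2}$ follows from the upper bound of Lemma~\ref{lem:row-space-A}(a) on the numerator and the lower bound $1 - n\bv_n^\top(\bA^\top\bA)^{-1}\bv_n \geq c_\kappa > 0$. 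This yields $\|\bxi_n(\tau_n)\|_2 \leq C_{K_1}$, so any $R \geq C_{K_1}$ suffices. Second, the sign constraint $[\bxi_n(\tau_n)]_3 \geq 0$: expanding, $[\bxi_n(\tau_n)]_3 = [(\bA^\top\bA)^{-1}\bA^\top \bomega_\sharp]_3 + \tau_n n [(\bA^\top\bA)^{-1}\bv_n]_3$, where on $\Ac_1$ the first term is $\order(\epsilon)$ by $\Ac_2$ (since $\langle \bg, \bomega_\sharp\rangle/n$ is small), and the second term is approximately $\tau_n \|\proj_{S_\sharp}^\perp \bh\|_2/\sqrt{n}$, bounded below by $c_{K_1,K_2} > 0$ using the lower bound on $\tau_n$ from Assumption~\ref{ass:omega-lb} and the $\chi^2$-concentration of $\|\proj_{S_\sharp}^\perp \bh\|_2^2$. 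For $\epsilon$ smaller than a constant depending on $(K_1, K_2)$, the second term dominates and the sign constraint holds strictly. Combining these verifies that $\bxi_n(\tau_n)$ is interior to $\Pc(\mathbb{B}_2(R))$, completing part (b) and, via joint uniqueness, part (a).
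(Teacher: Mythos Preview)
Your proposal is correct and takes essentially the same approach as the paper: compute the unconstrained minimizer over $\bxi \in \R^3$ via first-order conditions, reduce to a scalar problem in $\tau$, and then verify that the resulting $\bxi_n(\tau_n)$ actually lies in $\Pc(\mathbb{B}_2(R))$ so that the constrained and unconstrained optimizers coincide. The paper's constraint verification is slightly less direct than yours: it bounds $\|\bxi_n(\tau_n)\|_2$ by invoking the concentration $\|\bxi_n(\tau_n) - \bxi^{\gor}\|_\infty \leq C'_{K_1}\epsilon$ (event~\eqref{eq:xi-concentration}, derived in the proof of Lemma~\ref{lem:ao-analysis}(a) from the same explicit formulas) together with $\|\bxi^{\gor}\|_2 \leq C''_{K_1}$, and it leaves the sign check $\nu_n \geq 0$ implicit in that same concentration (since $\nu^{\gor} > 0$ by Assumption~\ref{ass:omega-lb}). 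Your direct estimates on $\|\bxi_n(\tau_n)\|_2$ and your explicit treatment of the sign constraint are a bit more self-contained, but the logical content is the same.
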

We prove this lemma in Subsection~\ref{subsec:proof-lemma-minimize-lbar}.

\subsubsection{Proof of Lemma~\ref{lem:ao-analysis}(a)} 
The first part of the statement---uniqueness of the minimizer---is an immediate consequence of Lemma~\ref{lem:minimize-lbar-higher-order}; moreover, the unique minimizer is given by $\bxi_n(\tau_n)$.  It remains to prove the concentration properties.  As a preliminary step, we show that the dual variable $\tau_n$ concentrates around a deterministic quantity 
\[
\tau^{\gor} = \sqrt{\Bigl(\frac{\kappa}{\kappa - 1} \Bigr) (\EE\{ \Omega^2\} - (\EE\{ Z_1 \Omega\})^2 - (\EE\{ Z_2 \Omega\})^2)}.
\]
To this end, consider the events $\Ac_1, \Ac_2, \Ac_3$ from equation~\eqref{eq:three-events}, and note that Lemma~\ref{lem:row-space-A}(b) implies that $\Ac_1 \cap \Ac_2 \cap \Ac_3$ occurs with probability greater than $1 - C e^{-cn \epsilon^2}$. We carry out the proof on this event.

\paragraph{Bounding $\tau_n$~\eqref{eq:unique-empirical-tau}.} We proceed in three steps.  First, we bound the numerator; second, we bound the denominator; and finally we combine the two bounds.\\

\noindent\underline{Bounding the numerator.}
This step is immediate, since Lemma~\ref{lem:row-space-A}(a) yields
\[
\Bigl\lvert \frac{1}{\sqrt{n}} \| \left( \bI - \bA (\bA^\top \bA)^{-1} \bA^\top \right) \cdot \bomega_{\sharp}  \|_2 - \sqrt{(\EE\{\Omega^2\} - (\EE\{Z_1 \Omega\})^2 - (\EE\{Z_2 \Omega\})^2} \Bigr\rvert \leq C_{K_1,K_2} \cdot \epsilon.
\]

\noindent\underline{Bounding the denominator.} On the event $\Ac_1$, we note the sandwich relation
\[
D_L \leq \sqrt{1 - n\cdot \bv_n^{\top} (\bA^{\top} \bA)^{-1} \bv_n} \leq D_U,
\]
where we have let
\begin{align*}
	D_L &= \sqrt{1 - (1 + C\epsilon) \|\bv_n\|_2^2}, \qquad \text{ and } \qquad D_U = \sqrt{1 - (1 - C\epsilon) \|\bv_n\|_2^2}.
\end{align*}
Now, consider the event
\[
\Ac_4 = \bigl\{\bigl\lvert \| \bv_n \|_2^2 - \kappa^{-1} \bigr \rvert \leq C\epsilon \bigr\},
\]
noting that an application of Bernstein's inequality implies $\Pro\{\Ac_4\} \geq 1 - 2e^{-cn\epsilon^2}$.  Now, on the event $\Ac_4$, we further obtain the bounds (recalling also the assumption $\kappa>C$), 
\[
D_L \geq \sqrt{1 - \kappa^{-1}} - C'\epsilon, \qquad \text{ and } \qquad D_U \leq \sqrt{1 - \kappa^{-1}} + C'\epsilon.
\]

\noindent\underline{Putting the pieces together to control $\tau_n$.}  Now, we combine the two-sided bounds on both the numerator and denominator to obtain the inequality
\begin{align}\label{ineq:probabalistic-control-taun}
\Pro\bigl\{\bigl \lvert \tau_n - \tau^{\gor} \bigr \rvert \leq C_{K_1,K_2} \cdot \epsilon \bigr\} \geq 1 - Ce^{-cn\epsilon^2}.
\end{align}

\paragraph{Bounding the minimizer $\bxi_n(\tau_n)$~\eqref{eq:unique-empirical-xi}.} First, let $\be_1$ denote the first standard basis vector in $\real^3$ and consider the quantity
\[
\lvert \langle \be_1, \bxi_n(\tau_n) \rangle - \parcomp^{\gor} \rvert = \lvert \langle \be_1, (\bA^{\top} \bA)^{-1} \bA^{\top} \bomega_{\sharp}  + \tau \cdot n \cdot (\bA^{\top} \bA)^{-1} \bv_n\rangle - \parcomp^{\gor} \rvert.
\]
First, note that on the event $\Ac_1$, we can write $n\cdot (\bA^{\top} \bA)^{-1} = \bI_3 + \bB$, where $\| \bB \|_{\mathsf{op}} \leq C\epsilon$.  We thus obtain the decomposition
\[
\lvert \langle \be_1, (\bA^{\top} \bA)^{-1} \bA^{\top} \bomega_{\sharp}  + \tau_n \cdot n \cdot (\bA^{\top} \bA)^{-1} \bv_n\rangle - \parcomp^{\gor} \rvert \leq T_1 + T_2,
\]
where
\begin{align*}
	T_1 &= \Bigl \lvert \frac{1}{n} \langle \be_1, \bA^{\top} \bomega_{\sharp}  \rangle + \tau_n \langle \be_1, \bv_n \rangle  - \alpha^{\gor}\Bigr\rvert\\
	T_2 &= \Bigl \lvert  \frac{1}{n} \langle \be_1, \bB  \bA^{\top} \bomega_{\sharp}  \rangle + \tau_n \cdot \langle \be_1, \bB \bv_n \rangle \Bigr\rvert.
\end{align*}
Now, consider the event
\begin{align*}
	\Ac_5 = \Bigl\{\frac{1}{n}\lvert \langle \bg, \bomega_{\sharp} \rangle \rvert \vee \Bigl\lvert \frac{1}{n}\langle \bz_1, \bomega_{\sharp}\rangle - \EE \{ Z_1 \Omega \}\Bigr\rvert \vee \Bigl\lvert \frac{1}{n}\langle \bz_2, \bomega_{\sharp} \rangle - \EE\{Z_2 \Omega \} \Bigr \rvert \leq C'_{K_1} \epsilon\Bigr\},
\end{align*}
and note that on $\Ac_1 \cap \Ac_2 \cap \Ac_3$, we have
\begin{align}\label{eq:tau_n_again}
	\lvert \tau_n - \tau^{\gor}\rvert \leq C_{K_1,K_2} \cdot \epsilon
\end{align}
for a pair $(C'_{K_1}, C_{K_1,K_2})$ depending only on $K_1$ and only on $K_1,K_2$ respectively. In the following lines, we note that $C_{K_1,K_2}$ may change from line to line, but always depends only on $K_1,K_2$.
Note that
by applying Bernstein's inequality,  we obtain $\Pro\{\Ac_5\} \geq 1 - 6e^{-cn\epsilon^2}$. Onward, we work on the event $\bigcap_{k=1}^{5} \Ac_k$. First, applying the triangle inequality, we obtain the upper bound
\[
T_1 \leq \lvert \EE\{Z_1 \Omega\} - \parcomp^{\gor} \rvert + C_{K_1} \cdot \epsilon.
\]
Next, we have 
\begin{align}
T_2 &\leq \frac{1}{n}\| \bB\|_{\mathsf{op}}  \| \bA^{\top} \bomega_{\sharp}  \|_2 + \frac{1}{n} |\tau_n| \cdot \| \bB\|_{\mathsf{op}}  \| \bv_n  \|_2 \notag \\
&\leq C_{K_1} \cdot \epsilon. \label{eq:Cepsilon}
\end{align}
To establish inequality\eqref{eq:Cepsilon}, we employ the following steps. 
First, we bound $\| \bB \|_{\mathsf{op}}$ on event $\Ac_1$ and bound $\| \bA^{\top} \bomega_{\sharp} \|_2$ on event $\Ac_2$. Next, applying the Cauchy--Schwarz inequality yields $\max\{\EE\{Z_1 \Omega\}, \EE\{Z_2\Omega\}\} \leq  C_{K_1}$.  Third, we bound $\| \bv_n \|_2$ on event $\Ac_4$ and note that $\kappa > C$. Finally, we $|\tau_n|$ by invoking the bound~\eqref{eq:tau_n_again} and note that
$
\tau^{\gor} \leq C_{K_1} 
$
since $\kappa>C$ and $\E\{\Omega^2\}\leq C_{K_1}$.

Summarizing, we have shown that
\begin{subequations}\label{ineq:bound-minimizers-scalarized-ao}
\begin{align}
\Pro\{\lvert \langle \be_1, \bxi_n(\tau_n) \rangle - \parcomp^{\gor} \rvert \leq C_{K_1} \cdot \epsilon\} \geq 1 - Ce^{-cn\epsilon^2}.
\end{align}
Proceeding in a parallel manner, we obtain the two bounds
\begin{align}
\Pro\{\lvert \langle \be_2, \bxi_n(\tau_n) \rangle - \perpone^{\gor} \rvert \leq C_{K_1} \cdot \epsilon\} \geq 1 - Ce^{-cn\epsilon^2},
\end{align} 
\begin{align}
	 \Pro\{\lvert \langle \be_1, \bxi_n(\tau_n) \rangle - \perptwo^{\gor} \rvert \leq  C_{K_1} \cdot \epsilon\} \geq 1 - Ce^{-cn\epsilon^2}.
\end{align}
\end{subequations}

\paragraph{Bounding the minimum $\widebar{L}_n(\bxi_n(\tau_n), \btsharp)$.}  Note that 
\[
\widebar{L}_n(\bxi_n(\tau_n); \btsharp) = \frac{1}{\sqrt{n}} \| \bA\bxi_n(\tau_n) - \bomega_{\sharp} \|_2 - \langle \bv_n, \bxi_n(\tau_n) \rangle.
\]
Now, let $\bxi^{\gor}=[\alpha^\gor,\mu^\gor,\nu^\gor]^T\in\R^3$ and consider the event
\begin{align}\label{eq:xi-concentration}
\Ac_6 = \{\|\bxi_n(\tau_n) - \bxi^{\gor}\|_{\infty} \leq C_{K_1} \cdot \epsilon\},
\end{align}
noting that the inequalities~\eqref{ineq:bound-minimizers-scalarized-ao} imply $\Pro\{\Ac_6\} \geq 1 - Ce^{-cn\epsilon^2}$.  For the remainder of the proof, we will work on the event $\mathcal{A} = \bigcap_{k=1}^{6} \Ac_k$.  Adding and subtracting the quantity $\bxi^{\gor}$ yields
\begin{align*}
	\widebar{L}_n(\bxi_n(\tau_n); \btsharp) = \frac{1}{\sqrt{n}} \| \bA\bxi^{\gor} + \bA (\bxi_n(\tau_n) - \bxi^{\gor}) - \bomega_{\sharp} \|_2 - \langle \bv_n, \bxi^{\gor} \rangle - \langle \bv_n, \bxi_n(\tau_n) - \bxi^{\gor} \rangle.
\end{align*}
Thus, on the event $\Ac$, we obtain
\begin{align}
\Bigl \lvert \widebar{L}_n(\bxi_n(\tau_n); \btsharp)  - \frac{1}{\sqrt{n}} \| \bA\bxi^{\gor} - \bomega_{\sharp} \|_2 + \langle \bv_n, \bxi^{\gor} \rangle \Bigr\rvert \leq C_{K_1} \cdot \epsilon.\label{eq:sec-cost}
\end{align}
Next, we claim that
\begin{align}
\biggl \lvert\frac{1}{\sqrt{n}} \| \bA\bxi^{\gor} - \bomega_{\sharp} \|_2 - \langle \bv_n, \bxi^{\gor} \rangle - \sqrt{\Bigl(1 - \frac{1}{\kappa}\Bigr)\Bigl(\EE\{\Omega^2\} - (\EE\{Z_1 \Omega\})^2 - (\EE\{Z_2 \Omega\})^2\Bigr)} \biggr \rvert \leq C_{K_1} \cdot \epsilon.\label{eq:needs-explanation}
\end{align}
The proof of the lemma follows upon combining inequalities~\eqref{eq:sec-cost} and~\eqref{eq:needs-explanation}, so the only remaining piece is to establish inequality~\eqref{eq:needs-explanation}.

\paragraph{Proof of claim~\eqref{eq:needs-explanation}:} On event $\Ac_1 \cap \Ac_2 \cap \Ac_3$, we have
\begin{align*}
\left|\frac{1}{n}\|\bA\bxi^\gor\|_2^2 - \left(\left(\alpha^\gor\right)^2+\left(\mu^\gor\right)^2+\left(\nu^\gor\right)^2\right)\right|\leq C_{K_1}\cdot \eps\,,
\end{align*}
\begin{align*}
\left|\frac{2}{\sqrt{n}}\bomega_{\sharp}^T\bA\bxi^\gor + 2\left(\left(\alpha^\gor\right)^2+\left(\mu^\gor\right)^2\right)\right|\leq C_{K_1}\cdot \eps\,,
\end{align*}
and
\begin{align*}
\left|\frac{1}{n}\|\bomega_{\sharp}\|_2^2 - \EE\{\Omega^2\}\right|\leq C_{K_1}\cdot \eps\,,
\end{align*}
respectively.
Combining the above displays and noting that $\EE\{\Omega^2\}-\left(\alpha^\gor\right)^2-\left(\mu^\gor\right)^2=(\kappa-1)\left(\nu^\gor\right)^2$ yields
\[
\left|\frac{1}{\sqrt{n}}\| \| \bA\bxi^{\gor} - \bomega_{\sharp}  \|_2-\nu^\gor\sqrt{\kappa}\right|\leq C_{K_1}\cdot\eps.
\]
But, under event $\Ac_4$,
\[
\left|\langle \bv_n, \bxi^{\gor} \rangle - \frac{\nu^\gor}{\sqrt{\kappa}}\right|\leq C\cdot \eps
\]
Hence, the inequality~\eqref{eq:needs-explanation} follows by combining the above two displays and recalling the definition of $\nu^\gor$.
\qed

\subsubsection{Proof of Lemma~\ref{lem:ao-analysis}(b)}
We begin by defining the events
\begin{align} \label{eq:prime-event1}
\Ac'_1 = \Bigl\{\min_{\bxi \in \mathbb{R}^3} \frac{1}{\sqrt{n}}\| \bA \bxi - \bomega_{\sharp} \|_2 \geq c_{K_1,K_1} \Bigr\},
\end{align}
\[
 \Ac'_2 = \Bigl\{ \frac{1}{2} \bI_3 \preceq \frac{1}{n} \bA^{\top} \bA \preceq 2 \bI_3\Bigr\}, \qquad \text{ and } \qquad \Ac'_3 = \Bigl\{\frac{1}{\sqrt{n}}\| \bomega_{\sharp}\|_2 \leq C_{K_1} \Bigr\}.
\]
Note that on the event $\Ac'_2 \cap \Ac'_3$, we obtain the inequality
\begin{align}\label{ineq:upper-bound-quadratic}
\max_{\bxi \in \mathbb{B}_2(C')} \frac{1}{\sqrt{n}}\| \bA \bxi - \bomega_{\sharp}\|_2 \leq C_{K_1}.
\end{align}
Moreover, recall that  $\min_{\bxi \in \real^3} \| \bA\bxi - \bomega_{\sharp}\|_2 = \| \left( \bI - \bA (\bA^\top \bA)^{-1} \bA^\top \right) \cdot \bomega_{\sharp}\|_2$. Thus, 
applying Lemma~\ref{lem:row-space-A} for small enough $\epsilon$ in conjunction with Assumption~\ref{ass:omega-lb} yields the inequality $\Pro\{\Ac'_1\} \geq 1 - Ce^{-c'_{K_1,K_2} \cdot n}$. Finally, applying~\citet[Theorem 6.1]{wainwright2019high} yields $\Pro\{\Ac'_2\} \geq 1 - 2e^{-cn}$; and applying Bernstein's inequality (as each component of $\bomega_{\sharp}$ is $K$-sub-Gaussian by Assumption~\ref{ass:omega}) implies $\Pro\{\Ac'_3\} \geq 2e^{-cn}$. For the rest of the proof, we work on the event $\Ac'_1 \cap \Ac'_2 \cap \Ac'_3$. 

Note that on the event $\Ac'_1$, the function $\widebar{L}_n$ is twice continuously differentiable.  Thus, our strategy is to bound the minimum eigenvalue of the Hessian $\nabla^2 \widebar{L}_n$. For $\bxi$ in a bounded domain $\mathbb{B}_2(C)$,  we compute
\begin{align*}
\nabla^2 \widebar{L}_n(\bxi; \btsharp) &= \frac{1}{\| \bA\bxi - \bomega_{\sharp} \|_2} \cdot \frac{1}{\sqrt{n}} \cdot \Bigl( \bA^{\top} \bA - \frac{\bA^{\top} (\bA \bxi - \bomega_{\sharp})(\bA \bxi - \bomega_{\sharp} )^{\top}\bA}{\| \bA\bxi - \bomega_{\sharp} \|_2^2}\Bigr)\\
&\overset{\1}{\succeq} \frac{1}{C_{K_1}n} \cdot \Bigl( \bA^{\top} \bA - \frac{\bA^{\top} (\bA \bxi - \bomega_{\sharp}(\bA \bxi - \bomega_{\sharp} )^{\top}\bA}{\| \bA\bxi - \bomega_{\sharp} \|_2^2}\Bigr),
\end{align*}
where step $\1$ follows from inequality \eqref{ineq:upper-bound-quadratic}.
 Subsequently, we utilize the variational characterization of eigenvalues to obtain
\begin{align}\label{ineq:lower-bound-mineig}
	\lambda_{\min}(\nabla^2 \widebar{L}_n(\bxi; \btsharp) ) &\geq \frac{1}{{C_{K_1}}n} \min_{\| \bv \|_2 = 1} \langle \bv, \bA^{\top} \bA\bv \rangle - \frac{\langle \bv, \bA^{\top} (\bA \bxi - \bomega_{\sharp})(\bA \bxi - \bomega_{\sharp} )^{\top}\bA\bv \rangle}{\| \bA\bxi - \bomega_{\sharp} \|_2^2}\nonumber\\
	&= \frac{1}{{C_{K_1}}n} \min_{\| \bv \|_2 = 1} \| \bA \bv \|_2^2 - \frac{\langle \bA \bv, \bA \bxi - \bomega_{\sharp} \rangle^2}{\| \bA\bxi - \bomega_{\sharp} \|_2^2}.
\end{align}
Next, consider the orthogonal decomposition
\[
\bomega_{\sharp} = \bA \bxi_0 + \bomega^{\perp},
\]
where $\bA \bxi_0$ denotes an element in the 
column 
 space of the random matrix $\bA$ and $\bomega^{\perp}$ is orthogonal to the 
 column space of $\bA$.  Thus, 
\begin{align}\label{eq:variational-characterization}
	\min_{\| \bv \|_2 = 1} \| \bA \bv \|_2^2 - \frac{\langle \bA \bv, \bA \bxi - \bomega_{\sharp} \rangle^2}{\| \bA\bxi - \bomega_{\sharp} \|_2^2} &= \min_{\| \bv \|_2 = 1} \| \bA \bv \|_2^2 - \frac{\langle \bA \bv, \bA (\bxi - \bxi_0) - \bomega^{\perp}\rangle^2}{\| \bA(\bxi - \bxi_0)\|_2^2 + \| \bomega^{\perp} \|_2^2}\nonumber\\
	&= \min_{\| \bv \|_2 = 1} \| \bA \bv \|_2^2 - \frac{\langle \bA \bv, \bA (\bxi - \bxi_0)\rangle^2}{\| \bA(\bxi - \bxi_0)\|_2^2 + \| \bomega^{\perp} \|_2^2}\nonumber\\
	&\overset{\1}{=}  \min_{\| \bv \|_2 = 1} \| \bA \bv \|_2^2 - \frac{\| \bA \bv\|_2^2 \cdot \| \bA (\bxi - \bxi_0)\|_2^2}{\| \bA(\bxi - \bxi_0)\|_2^2 + \| \bomega^{\perp} \|_2^2},
\end{align}
where step $\1$ follows by applying the Cauchy--Schwarz inequality.  Now, 
\begin{align} \label{ineq:lower-bound-variational}
	\min_{\| \bv \|_2 = 1} \| \bA \bv \|_2^2 - \frac{\| \bA \bv\|_2^2 \cdot \| \bA (\bxi - \bxi_0)\|_2^2}{\| \bA(\bxi - \bxi_0)\|_2^2 + \| \bomega^{\perp} \|_2^2} \overset{\1}{\geq} \frac{n}{2} \cdot \frac{\| \bomega^{\perp} \|_2^2}{\| \bA(\bxi - \bxi_0)\|_2^2 + \| \bomega^{\perp} \|_2^2} \overset{\2}{\geq} c_K \cdot n
\end{align}
where step $\1$ follows by re-arranging and subsequently utilizing event $\Ac'_2$ to lower bound the term $\| \bA\bv \|_2$.  Step $\2$ follows by upper bounding the denominator using events $\Ac_2'$ and $\Ac_3'$ and lower bounding the numerator by using the event $\Ac_1'$.
 Finally, combining the lower bound~\eqref{ineq:lower-bound-mineig}, the equation~\eqref{eq:variational-characterization}, and the lower bound~\eqref{ineq:lower-bound-variational}, we obtain
\[
\lambda_{\min}(\nabla^2 \widebar{L}_n(\bxi; \btsharp) ) \geq c_K.
\]
The final step holds on the event $\Ac'_1 \cap \Ac'_2 \cap \Ac'_3$, which occurs with probability at least $1 - Ce^{-c'_{K_1,K_2} \cdot n}$.\qed

\subsubsection{Proof of Lemma~\ref{lem:remove-positive-restriction}}
\label{subsec:proof-lemma-remove-positive-restriction}
Recall the loss function $\Lc$~\eqref{eq:sec_order_gen}:
\[
\Lc(\bt; \btsharp, \bX, \by) = \frac{1}{\sqrt{n}} \| \omega(\bX\btsharp, \by) - \bX\bt \|_2,
\]
which corresponds to setting 
\[
F(\bu, \bt) = \frac{1}{\sqrt{n}}\| \omega(\bX\btsharp, \by) - \bu \|_2, \quad h(\bt, \btsharp) = 0, \quad \text{ and } \qquad g(\bu, \bX \btsharp; \by) = \frac{1}{\sqrt{n}}\| \omega(\bX\btsharp, \by) - \bu \|_2.
\]
Additionally, note that the convex conjugate is given by
\[
g^*(\bv, \bX\btsharp; \by) = \begin{cases}\langle \bv, \omega(\bX\btsharp, \by) \rangle & \text{ if } \| \bv \|_2 \leq \frac{1}{\sqrt{n}},\\
	+\infty & \text{ otherwise.} \end{cases}
\]
Substituting into the definition of the scalarized auxiliary loss $\widebar{L}_n$ (see Definition~\ref{def:scalarized-ao}), we obtain
\[
\widebar{L}_n(\alpha, \mu, \nu; \btsharp) = \biggl(\frac{1}{\sqrt{n}} \| \perptwo \cdot \bg + \parcomp \cdot \bz_1 + \perpone \cdot \bz_2 - \bomega_{\sharp} \|_2 - \perptwo \frac{\| \proj_{S_{\sharp}}^{\perp} \bh \|_2}{\sqrt{n}}\biggr)_{+}.
\]
Now, consider the shorthand
\[
\bw = \parcomp \cdot \bz_1 + \perpone \cdot \bz_2 - \bomega_{\sharp},
\]
and define the three events
\[
\Ec_1 = \Bigl\{\frac{1}{n}\lvert \langle \perptwo \cdot \bg, \bw\rangle \rvert \leq \frac{4\perptwo}{5\sqrt{n}}\|\bw\|_2\Bigr\}, \quad \Ec_2 = \Bigl\{\| \bg \|_2^2 \geq 4n/5\}, \quad \text{ and } \quad \Ec_3 = \{\| \bh \|_2^2 \leq 36d/25\}.
\]
Next, apply Bernstein's inequality to bound $\Pro\{\Ec_1^c\}$, Hoeffding's inequality to bound $\Pro\{\Ec_2^c\}$ and $\Pro\{\Ec_3^c\}$, and the union bound to obtain the inequality 
\[\Pro\{\Ec_1 \cap \Ec_2 \cap \Ec_3\} \geq 1 - 6e^{-cn}.
\]
Working on the intersection of these three events, we obtain
\begin{align*}
	\frac{1}{\sqrt{n}} \| \perptwo \cdot \bg + \bw \|_2  &= \sqrt{ \frac{4 \perptwo^2}{5n}\| \bg \|_2^2 + \frac{\perptwo^2}{5n} \| \bg \|_2^2 + \frac{1}{n}\langle \perptwo \cdot \bg, \bw \rangle + \frac{1}{n} \| \bw \|_2^2 }\\
	&\geq \sqrt{ \frac{16 \perptwo^2}{25} + \frac{4\perptwo^2}{25} - \frac{4\perptwo}{5\sqrt{n}}\| \bw \|_2 + \frac{1}{n} \| \bw \|_2^2 } = \sqrt{\frac{16 \perptwo^2}{25}  + \Bigl(\frac{2\perptwo}{5} - \frac{\| \bw \|_2}{\sqrt{n}}\Bigr)^2} \geq \frac{4\perptwo}{5}.
\end{align*}
Thus, we obtain
\[
\frac{1}{\sqrt{n}} \| \perptwo \cdot \bg + \bw \|_2 - \perptwo \frac{\| \proj_{S_{\sharp}}^{\perp} \bh \|_2}{\sqrt{n}} \geq \frac{4\perptwo}{5} - \frac{6\perptwo}{5\sqrt{\kappa}} \geq 0, 
\]
where the last inequality holds for $\kappa \geq C$ with $C$ a large enough constant.  Summarizing, we see that 
\[
\frac{1}{\sqrt{n}} \| \perptwo \cdot \bg + \parcomp \cdot \bz_1 + \perpone \cdot \bz_2 - \bomega_{\sharp} \|_2 - \perptwo \frac{\| \proj_{S_{\sharp}}^{\perp} \bh \|_2}{\sqrt{n}} \geq 0,
\]
whence 
\[
\widebar{L}_n(\alpha, \mu, \nu; \btsharp)  = \frac{1}{\sqrt{n}} \| \perptwo \cdot \bg + \parcomp \cdot \bz_1 + \perpone \cdot \bz_2 - \bomega_{\sharp} \|_2 - \perptwo \frac{\| \proj_{S_{\sharp}}^{\perp} \bh \|_2}{\sqrt{n}}
\]
as desired.
\qed

\subsubsection{Proof of Lemma~\ref{lem:row-space-A}}
\label{subsec:proof-lemma-row-space-A}

Recall the three events~\eqref{eq:three-events}. We prove each part of the lemma in turn.

\paragraph{Proof of part (a):} First, 
expand the norm to obtain
\[
\frac{1}{\sqrt{n}}\|(\bI_n - \bA(\bA^{\top} \bA)^{-1}\bA^{\top})\omega(\bX \btsharp, \by) \|_2 = \sqrt{ \frac{1}{n}\| \bomega_{\sharp}\|_2^2 - \frac{1}{n}(\bA^{\top}\bomega_{\sharp})^{\top}(\bA^{\top} \bA)^{-1} (\bA^{\top} \bomega_{\sharp})}.
\]
Now, on the event $\Ac_1$, we obtain the sandwich relation
\[
A_L \leq \sqrt{ \frac{1}{n}\| \bomega_{\sharp}\|_2^2 - \frac{1}{n}(\bA^{\top}\bomega_{\sharp})^{\top}(\bA^{\top} \bA)^{-1} (\bA^{\top} \bomega_{\sharp})} \leq A_U,
\]
where we have let
\begin{align*}
	A_L &= \sqrt{ \frac{1}{n}\| \bomega_{\sharp}\|_2^2 - \frac{1 + C\epsilon/2}{n^2}\|\bA\bomega_{\sharp}\|_2^2} =  \sqrt{ \frac{1}{n}\| \bomega_{\sharp}\|_2^2 - \frac{1 + C\epsilon/2}{n^2}(\langle \bg, \bomega_{\sharp}\rangle^2 + \langle \bz_1, \bomega_{\sharp}\rangle^2 + \langle \bz_2, \bomega_{\sharp} \rangle^2)}\\
	A_U &= \sqrt{ \frac{1}{n}\| \bomega_{\sharp}\|_2^2 - \frac{1 - C\epsilon/2}{n^2}\|\bA\bomega_{\sharp}\|_2^2} =\sqrt{ \frac{1}{n}\| \bomega_{\sharp}\|_2^2 - \frac{1 - C\epsilon/2}{n^2}(\langle \bg, \bomega_{\sharp}\rangle^2 + \langle \bz_1, \bomega_{\sharp}\rangle^2 + \langle \bz_2, \bomega_{\sharp} \rangle^2)}, 
\end{align*}
where the last equality in both lines follows by recalling that $\bA^{\top}\bomega_{\sharp} = [\langle \bz_1, \bomega_{\sharp} \rangle, \langle \bz_2, \bomega_{\sharp}\rangle, \langle \bg, \bomega_{\sharp} \rangle ]$. 
On the event $\Ac_2 \cap \Ac_3$, we obtain 
\[
A_L \overset{\1}{\geq} \sqrt{(\EE\{\Omega^2\} - (\EE\{Z_1 \Omega\})^2 - (\EE\{Z_2 \Omega\})^2 - C_{K_1} \epsilon} \overset{\2}{\geq}  \sqrt{(\EE\{\Omega^2\} - (\EE\{Z_1 \Omega\})^2 - (\EE\{Z_2 \Omega\})^2} - C_{K_1,K_2} \cdot \epsilon.
\]
Specifically, step $\1$ holds since by Assumption~\ref{ass:omega}, $\max\left\{\E\{Z_1\Omega\},\E\{Z_1\Omega\}\right\}\leq \sqrt{\E\{\Omega^2\}}\leq C{K_1}$.  Step $\2$ follows since by Assumption~\ref{ass:omega-lb}, the first term on the RHS is at least $K_2$.  
Proceeding similarly, we obtain the upper bound
\[
A_U \leq \sqrt{(\EE\{\Omega^2\} - (\EE\{Z_1 \Omega\})^2 - (\EE\{Z_2 \Omega\})^2} + C_{K_1,K_2} \epsilon.
\]
Putting the pieces together, we see that on the event $\Ac_1\cap\Ac_2\cap\Ac_3$,
\[
\Bigl\lvert \frac{1}{\sqrt{n}}\|(\bI_n - \bA(\bA^{\top} \bA)^{-1}\bA^{\top}) \cdot \omega(\bX \btsharp, \by) \|_2 -  \sqrt{(\EE\{\Omega^2\} - (\EE\{Z_1 \Omega\})^2 - (\EE\{Z_2 \Omega\})^2} \Bigr\rvert \leq C_{K_1,K_2} \epsilon,
\]
as claimed. \qed

\paragraph{Proof of part (b):} From~\citet[Theorem 6.1]{wainwright2019high}, we directly have $\Pro\{ \Ac_1 \} \geq 1 - C e^{-cn\epsilon^2}$. Next, recall for convenience the other two events
\begin{align*}
	\Ac_2 = \Bigl\{\frac{1}{n^2}\lvert \langle \bg, \bomega_{\sharp} \rangle \rvert^2 \vee \Bigl\lvert \frac{1}{n^2}\langle \bz_1, \bomega_{\sharp}\rangle^2 - (\EE \{ Z_1 \Omega \})^2\Bigr\rvert \vee \Bigl\lvert \frac{1}{n^2}\langle \bz_2, \bomega_{\sharp} \rangle^2 - (\EE\{Z_2 \Omega \})^2 \Bigr \rvert \leq C_{K_1} \cdot \epsilon\Bigr\},
\end{align*}
and 
\begin{align*}
	\Ac_3 &= \Bigl\{\Bigl\lvert \frac{1}{n} \| \bomega_{\sharp} \|_2^2 - \EE\{\Omega^2\} \Bigr\rvert \leq C_{K_1} \cdot \epsilon \Bigr\}.
\end{align*}
Next, recall that by Assumption~\ref{ass:omega}, we have the bound $\| \bomega_{\sharp} \|_{\psi_2} \leq K_1$.  Thus, we apply Bernstein's inequality to obtain 
\[
\Pro\Bigl\{ \Bigl \lvert \frac{1}{n} \langle \bz_1, \bomega_{\sharp} \rangle - \EE\{ Z_1 \Omega\}\Bigr\rvert \geq C_{K_1} \cdot \epsilon \Bigr\} \leq 2e^{-cn\epsilon^2}.
\]
Consequently, with probability at least $1 - 2e^{-cn\epsilon^2}$,  
\[
\Bigl\lvert\frac{1}{n^2}\langle \bz_1, \bomega_{\sharp}\rangle^2 - (\EE \{ Z_1 \Omega \})^2\Bigr\rvert = \Bigl \lvert \frac{1}{n} \langle \bz_1, \bomega_{\sharp} \rangle - \EE\{ Z_1 \Omega\}\Bigr\rvert \cdot \Bigl \lvert \frac{1}{n} \langle \bz_1, \bomega_{\sharp} \rangle + \EE\{ Z_1 \Omega\}\Bigr\rvert \overset{\1}{\leq} C_{K_1} \cdot \epsilon,
\]
where step $\1$ additionally used the fact that $\EE\{Z_1 \Omega\} \leq C_{K_1}$.  Bounding the other terms similarly and applying a union bound, we obtain $\Pro\{\Ac_2\} \geq 1 - 6e^{-cn\epsilon^2}$.  Finally, we apply Bernstein's inequality once more to obtain the inequality $\Pro\{ \Ac_3 \} \geq 1 - 2e^{-cn\epsilon^2}$. \qed

\subsubsection{Proof of Lemma~\ref{lem:minimize-lbar-higher-order}}
\label{subsec:proof-lemma-minimize-lbar}
Set $C_{K_1}$ to be a sufficiently large positive constant depending only on $K_1$. For any $R \geq C_{K_1}$, that may only depend on $K_1$, we note the following characterization:
\begin{align}
	\label{eq:tau-characterization}
	\min_{\bxi \in \Pc(B_2(R))} \widebar{L}_n(\bxi; \btsharp) &= \min_{\bxi \in \Pc(B_2(R))} \inf_{\tau > 0}\; \frac{\tau}{2} + \frac{1}{2\tau n} \| \bA \bxi - \omega(\bX \btsharp, \by) \|_2^2 - \langle \bv_n, \bxi \rangle \nonumber \\
	&=  \inf_{\tau > 0}\min_{\bxi \in \Pc(B_2(R))}\; \frac{\tau}{2} + \frac{1}{2\tau n} \| \bA \bxi - \omega(\bX \btsharp, \by) \|_2^2 - \langle \bv_n, \bxi \rangle,
\end{align}
where the second equality follows on event $\Ac'_1$~\eqref{eq:prime-event1}, which holds with probability \\$1-C\exp(-c_{K_1,K_2}n)$ and implies that the infimum over $\tau$ is achieved. This proves the part (a) of the lemma.

Next, we prove part (b). To do this, consider the \emph{unconstrained} minimization in \eqref{eq:tau-characterization} over $\bxi\in\R^3$. Note that this admits the unique minimizer
\[
\bxi_n(\tau) = (\bA^{\top} \bA)^{-1} \bA^{\top} \omega(\bX\btsharp, \by) + \tau \cdot n \cdot (\bA^{\top} \bA)^{-1} \bv_n.
\]
Substituting this value into the RHS of the optimization problem~\eqref{eq:tau-characterization} yields
\begin{align}
	\label{eq:opt-tau}
	\min_{\bxi \in \R^3)} \widebar{L}_n(\bxi; \btsharp) &= \inf_{\tau > 0} \; \frac{\tau}{2} + \frac{1}{2\tau n} \| \bA \bxi_n(\tau) - \omega(\bX \btsharp, \by) \|_2^2 - \langle \bv_n, \bxi_n(\tau) \rangle\nonumber\\
	&=\inf_{\tau > 0}\;\frac{1}{2\tau n}\|(\bI_n-\bA(\bA^{\top}\bA)^{-1}\bA^{\top})\omega(\bX \btsharp, \by)\|_2^2 + \frac{\tau}{2}(1-n\cdot\bv_n^{\top}(\bA^{\top}\bA)^{-1}\bv_n)\nonumber\\
	&\qquad\;\;\;-\bv_n^{\top}(\bA^{\top}\bA)^{-1}\bA\omega(\bX \btsharp, \by).
\end{align}
Consider the events
\[
\Ac''_1 = \{ \| \proj_{S_{\sharp}}^{\perp}\bh \|_2^2 \leq 36d/25\} \qquad \text{ and } \qquad \Ac''_2 = \Bigl\{(1 - c') \bI_3 \preceq \frac{1}{n} \bA^{\top} \bA \preceq (1 + c') \bI_3 \Bigr\}.
\]
Apply Bernstein's inequality to obtain $\Pro\{\Ac''_1\} \geq 1 - 2e^{-cn}$ and apply~\citet[Theorem 6.1]{wainwright2019high} to obtain $\Pro\{\Ac''_2\} \geq 1 - 2e^{-cn}$.  On the event $\Ac''_1 \cap \Ac''_2$, note that 
\[
1 - n\cdot\bv_n^{\top}(\bA^{\top}\bA)^{-1}\bv_n \geq 1 - (1 + c'/2) \| \bv_n \|_2^2 \geq 1 - (1 + c'/2) \cdot \frac{6}{5 \kappa} > c'' >0,
\]
where in the last inequality we used $\kappa> C$. 
Thus, on the event $\Ac'_1 \cap \Ac''_1 \cap \Ac''_2$, the optimization problem~\eqref{eq:opt-tau} admits the unique minimizer
\[
\tau_n = \frac{\frac{1}{\sqrt{n}}\|(\bI_n - \bA(\bA^{\top} \bA)^{-1}\bA^{\top}) \cdot \omega(\bX \btsharp, \by) \|_2}{\sqrt{1 - n\cdot \bv_n^{\top} (\bA^{\top} \bA)^{-1} \bv_n}}.
\]
We have thus far shown that $\bxi_n(\tau_n)$ is the unique minimizer of 
\begin{align}\label{eq:uncon-ksi}
\min_{\bxi \in \R^3} \widebar{L}_n(\bxi; \btsharp).
\end{align}
 But, recalling the event~\eqref{eq:xi-concentration}, we note that $\|\bxi_n(\tau_n)-\bxi^\gor\|_2\leq C'_{K_1}\eps$ with probability at least $1-Ce^{-cn\eps^2}$. On this event, using triangle inequality in conjunction with the fact that $\|\bxi^\gor\|_2\leq C''_{K_1}$ by Assumption 1 and $\kappa>C$, yields the inequality $\|\bxi_n(\tau_n)\|_2\leq C_{K_1}$ for a positive constant $C_{K_1}$ depending only on $K_1$.  Therefore, the minimizer of equation~\eqref{eq:uncon-ksi} remains $\bxi_n(\tau_n)$ even if we constrain $\|\bxi\|_2\leq R$, provided $R \geq C_{K_1}$. To finish the proof, recall by Definition \ref{def:scalarized-set} that since $\bxi=\bxi(\bt)$, we have $\|\bxi\|_2=\|\bt\|_2$.
\qed

\subsection{Establishing growth conditions for first-order methods} \label{sec:growth-FO}
In this subsection, we prove Lemma~\ref{lem:ao-analysis-FO}. Note that the events in this section are unrelated to
events defined in Section~\ref{sec:growth-HO}. Also note that universal constants, as well as those depending on 
$K_1$ 
may change from line to line.
Now, recall from equation~\eqref{eq:Lbar-first-order} that $\widebar{L}_n$ can be written as
\begin{align*}
\widebar{L}_n(\parcomp, \perpone, \perptwo) = \;&\frac{\parcomp^2 + \perpone^2 + \perptwo^2}{2} - (\parcomp \parcomp_{\sharp} + \perpone \perpcomp_{\sharp}) + \frac{2\eta}{n} \cdot \langle \omega(\bX\btsharp, \by), \perptwo \bg + \parcomp \bz_1 + \perpone \bz_2 \rangle \nonumber\\
&- \frac{2 \eta \perptwo}{n} \cdot \| \proj_{S_{\sharp}}^{\perp} \bh\|_2 \cdot \| \omega(\bX\btsharp, \by) \|_2.
\end{align*}
Note that the $1$-strong convexity claimed in part (b) of the lemma is evident from the expression above, so we focus our attention on proving part (a) in the next subsection.

\subsubsection{Proof of Lemma~\ref{lem:ao-analysis-FO}(a)}
Evidently, $\widebar{L}_n$~\eqref{eq:Lbar-first-order} is strongly convex and continuously differentiable.  The optimizers are given by the first order conditions
\begin{subequations}\label{eq:first-order-minimizers}
	\begin{align}
		\parcomp_n &= \parcomp^{\sharp} - \frac{2\eta}{n} \cdot \langle \bz_1, \omega(\bX\btsharp, \by) \rangle \\
		\perpone_n &= \perpcomp^{\sharp} - \frac{2 \eta}{n} \cdot \langle \bz_2, \omega(\bX \btsharp, \by) \rangle \\
		\perptwo_n &= \frac{2\eta}{n} \cdot \langle \bg, \omega(\bX\btsharp, \by) \rangle + \frac{2\eta}{n} \cdot \| \proj_{S_{\sharp}}^{\perp} \bh\|_2 \cdot \| \omega(\bX \btsharp, \by) \|_2.
	\end{align}
\end{subequations}
Now, consider the events
\[
\Ac_1 = \Bigl\{\Bigl \lvert \frac{1}{n} \langle \bg, \omega(\bX \btsharp, \by) \rangle \Bigr \rvert  \vee \Bigl \lvert \frac{1}{n} \langle \bz_1, \omega(\bX \btsharp, \by) \rangle - \EE\{ Z_1 \Omega\} \Bigr \rvert \vee \Bigl \lvert \frac{1}{n} \langle \bz_2, \omega(\bX \btsharp, \by) \rangle - \EE\{ Z_2 \Omega\} \Bigr \rvert  \leq C_{K_1} \cdot \epsilon \Bigr\},
\]
and 
\[
\Ac_2 = \Bigl\{\Bigl \lvert \frac{1}{\sqrt{n}} \| \proj_{S_{\sharp}}^{\perp} \bh \|_2 - \kappa^{-1/2} \Bigr \rvert \vee \Bigl \lvert \frac{1}{\sqrt{n}} \| \omega(\bX\btsharp, \by) \|_2 - \sqrt{\EE\{\Omega^2\}} \Bigr\rvert \leq C_{K_1} \cdot \epsilon\}, 
\]
and apply Bernstein's inequality to obtain the bound $\Pro\{\Ac_1 \cap \Ac_2\} \geq 1 - 10e^{-cn\min\{\epsilon^2, \epsilon\}}$.  Consequently, on the event $\Ac_1 \cap \Ac_2$, we obtain 
\[
\Pro\{ \| [\parcomp_n, \perpone_n, \perptwo_n] - [\parcomp^{\gor}, \perpone^{\gor}, \perptwo^{\gor}]\|_{\infty} \leq C_{K_1} \cdot \epsilon\} \geq 1 - 20e^{-cn\min\{\epsilon^2, \epsilon\}}
\]
Additionally, substitute the empirical minimizers~\eqref{eq:first-order-minimizers} into the loss $\widebar{L}_n$~\eqref{eq:Lbar-first-order} to obtain
\[
\widebar{L}_n(\parcomp_n, \perpone_n, \perptwo_n) =\; - \frac{1}{2}\cdot (\parcomp_n^2 + \perpone_n^2 + \perptwo_n^2).
\]
Next, recall from Lemma~\ref{lem:ao-analysis-FO} the constant
\[
\bar{\mathsf{L}} = - \frac{1}{2}\cdot ((\parcomp^{\gor})^2 + (\perpone^{\gor})^2 + (\perptwo^{\gor})^2).
\]
We also claim that $\alpha^\gor \vee \mu^\gor \vee \nu^\gor \leq C_{K_1}$; this can be verified from Definition~\ref{def:expanded-gordon-FO}, and applying the Cauchy--Schwarz inequality in conjunction with the assumed bounds $\alpha^{\sharp}, \beta^{\sharp} \leq 3/2$.

Combining the pieces, note that on the event $\Ac_1 \cap \Ac_2$ we obtain
\[
\lvert \widebar{L}_n(\parcomp_n, \perpone_n, \perptwo_n) - \bar{L}\rvert \leq \frac{C_{K_1} \cdot \epsilon}{2}\Bigl(\lvert \parcomp_n + \parcomp^{\gor} \rvert + \lvert \perpone_n + \perpone^{\gor} \rvert + \lvert \perptwo_n + \perptwo^{\gor} \rvert \Bigr) \leq C'_{K_1} \cdot  \epsilon,
\]
as desired. \qed


\section{Auxiliary proofs for general results, part (b)}
\label{sec:aux-random-init}
We state and prove two technical lemmas; the first is used throughout Section~\ref{sec:random-init-signal} and the second provides some basic properties about the initial point $\bt_0$.

\begin{lemma}
	\label{lem:wishart_mineig_moments}
	Let $n$ and $d$ be positive integers such that $n \geq 2d$.  Additionally, let $\bx_1, \dots, \bx_n \overset{\mathsf{iid}}{\sim} \NORMAL(0, \bI_d)$ and let $\bSig = \sum_{i=1}^{n} \bx_i \bx_i^{\top}$.  Then, there exists a universal positive constant $C$ such that for all integers $p$ where $1 \leq p < \frac{n - d - 1}{2}$, we have
	\[
	\E\left\{\|\bSig^{-1}\|_{\mathsf{op}}^p \right\} \leq \left(\frac{C}{n}\right)^p.
	\]
\end{lemma}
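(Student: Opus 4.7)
The plan is to reduce the moment bound to a tail-probability integral for the smallest singular value $s_{\min}(\bX)$ of the Gaussian matrix $\bX \in \R^{n \times d}$ with rows $\bx_i^\top$, and then to combine a concentration bound around the typical value with a small-ball estimate. Note that $\bSig = \bX^\top\bX$, so $\|\bSig^{-1}\|_{\mathsf{op}}^p = s_{\min}(\bX)^{-2p}$. Setting $\beta := \sqrt{n} - \sqrt{d-1}$, the hypothesis $n \geq 2d$ gives $\beta \geq (1 - 1/\sqrt{2})\sqrt{n}$, so $\beta \asymp \sqrt{n}$. The layer-cake representation then yields
\[
\E\bigl[s_{\min}(\bX)^{-2p}\bigr] \;=\; 2p \int_0^\infty v^{-2p-1} \Pr\{s_{\min}(\bX) \leq v\}\, dv,
\]
and I would split the integral at $v = \beta/2$ and $v = 2\beta$.

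On the two large-$v$ intervals $[\beta/2, 2\beta]$ and $[2\beta,\infty)$, I would bound $\Pr\{s_{\min}(\bX) \leq v\} \leq 1$ and carry out the elementary integrations to produce terms of order $\beta^{-2p} \asymp n^{-p}$. On the small-$v$ interval $(0, \beta/2]$, the Davidson--Szarek bound $\Pr\{s_{\min}(\bX) \leq \beta - t\} \leq e^{-t^2/2}$ alone is insufficient because $v^{-2p-1}$ is non-integrable at zero; instead I would invoke the classical small-ball estimate for the smallest singular value of a standard Gaussian matrix,
\[
\Pr\{s_{\min}(\bX) \leq v\} \;\leq\; (C_0\, v/\sqrt{n})^{n-d+1} \quad \text{for all } v \in [0,\beta],
\]
with an absolute constant $C_0$ (derivable from Edelman's explicit density of $s_{\min}(\bX)^2$ near zero, or equivalently from Rudelson--Vershynin type arguments). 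Substituting produces the integral $\int_0^{\beta/2} v^{n-d-2p}\, dv$, which converges precisely under the hypothesis $p < (n-d-1)/2$, and after using $\beta \leq \sqrt{n}$, careful bookkeeping collapses the prefactor $(C_0/\sqrt{n})^{n-d+1}$ against $(\sqrt{n})^{n-d+1-2p}$ to produce a bound of order $n^{-p}$, as desired.

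Combining the three contributions yields $\E[\|\bSig^{-1}\|_{\mathsf{op}}^p] \leq (C/n)^p$ for a universal $C$. The main subtlety I anticipate is verifying that the prefactor $C_0^{n-d+1}$ from the small-ball bound does not blow up relative to the factor $2^{n-d+1-2p}$ arising from the $\beta \leq \sqrt{n}$ substitution; the hypothesis $p < (n-d-1)/2$ guarantees $n-d+1-2p \geq 2$, which provides enough room for the argument to go through with an absolute $C_0$. An alternative route would be Bartlett's bidiagonal decomposition, expressing $s_{\min}(\bX)^{-2}$ in terms of independent chi-squared variates; however, the tail-integration approach above is more transparent and sidesteps the combinatorics of explicit inverse-Wishart moment formulas.
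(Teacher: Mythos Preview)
Your overall strategy is sound and close in spirit to the paper's: both arguments hinge on the small-ball behavior of $\lambda_{\min}(\bSig)$ near zero and pay for the remaining range with a trivial bound. The paper works directly with the density of $\lambda_{\min}$ (using the bound $f_{\lambda_{\min}}(t) \leq \frac{2^{(n-d-1)/2}\Gamma((n+1)/2)}{\Gamma(d/2)\Gamma(n-d+1)}\, t^{(n-d-1)/2}$ from Chen's Lemma~3.3), truncates at a single level $\lambda_{\min}=An$, and integrates $\int_0^{An} t^{-p} f_{\lambda_{\min}}(t)\,dt$ explicitly. After Stirling, the choice $A = (4e^2)^{-1}$ is exactly what collapses the Gamma-function prefactor against the power of $An$, yielding $(4e^2/n)^p$ with no leftover exponential factor.

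There is, however, a real gap in your constant bookkeeping. After substituting $\beta \leq \sqrt{n}$, the small-$v$ contribution becomes (up to the harmless $2p/(n-d-2p+1)$ factor)
\[
\Bigl(\tfrac{C_0}{\sqrt n}\Bigr)^{n-d+1}\cdot\Bigl(\tfrac{\sqrt n}{2}\Bigr)^{n-d-2p+1}
\;=\; n^{-p}\cdot C_0^{2p}\cdot\Bigl(\tfrac{C_0}{2}\Bigr)^{n-d-2p+1},
\]
and the last factor blows up whenever $C_0>2$: for $p=1$ it is $(C_0/2)^{n-d-1}$, exponential in $n$. Your proposed resolution, that ``$n-d+1-2p\geq 2$ provides enough room,'' does not address this; a lower bound on the exponent only makes the blow-up worse. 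Since the Edelman/Chen density gives $C_0 = 2e$ (after using $n-d+1 \geq n/2$), you cannot simply invoke ``an absolute $C_0$'' here.

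The fix is not to loosen $\beta$ to $\sqrt n$. Keep $\beta = (1-1/\sqrt 2)\sqrt n$ in the upper limit, so that the relevant ratio becomes $C_0\cdot\tfrac{\beta}{2\sqrt n} = e\,(1-1/\sqrt 2)\approx 0.80 < 1$; then $(C_0\beta/(2\sqrt n))^{n-d-2p+1}\leq 1$ and the contribution is genuinely $\leq (C/n)^p$. Equivalently, this is the layer-cake analogue of the paper's choice $A=(4e^2)^{-1}$: the truncation level must be chosen small enough relative to the small-ball constant, not just at scale $\sqrt n$.
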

\begin{proof}
	We begin by noting that $\|\bSig^{-1}\|_{\mathsf{op}}^p = \lambda_{\min}(\bSig)^{-p}$.  
	Our strategy is to truncate $\lambda_{\min}(\bSig)$ at the level $An$, for a constant $A > 0$ to be chosen later, and decompose
	\begin{align}
		\label{decomposition:wishart_moments}
		\E\left\{\lambda_{\min}(\bSig)^{-p}\right\} = \E\left\{\lambda_{\min}(\bSig)^{-p} \mathbbm{1}\{\lambda_{\min}(\bSig) \leq An\}\right\} + \E\left\{\lambda_{\min}(\bSig)^{-p} \mathbbm{1}\{\lambda_{\min}(\bSig) \geq An\}\right\}.
	\end{align}
	For ease of notation, we will denote the two terms in the above decomposition by 
	\begin{align*}
		T_1 &= \E\left\{\lambda_{\min}(\bSig)^{-p} \mathbbm{1}\{\lambda_{\min}(\bSig) \leq An\}\right\}\\
		T_2 &= \E\left\{\lambda_{\min}(\bSig)^{-p} \mathbbm{1}\{\lambda_{\min}(\bSig) \geq An\}\right\},
	\end{align*}
	and handle each term in turn.
	
	\paragraph{Bounding the term $T_1$.}  We write explicitly:
	\begin{align*}
		\E\left\{\lambda_{\min}(\bSig)^{-p} \mathbbm{1}\{\lambda_{\min}(\bSig) \leq An\}\right\} &= \int_{0}^{An} t^{-p} f_{\lambda_{\min}}(t) \mathrm{d}t,
	\end{align*}
	\begin{align*}
		\int_{0}^{An} t^{-p} f_{\lambda_{\min}}(t) dt &\overset{\mathsf{(i)}}{\leq} \frac{2^{\frac{n - d - 1}{2}}\Gamma(\frac{n+1}{2})}{\Gamma(\frac{d}{2})\Gamma(n - d + 1)}\int_{0}^{An} t^{-p} t^{\frac{1}{2}(n - d - 1)}dt \\
		&\overset{\mathsf{(ii)}}{=} \frac{2^{\frac{n - d - 1}{2}}\Gamma(\frac{n+1}{2})}{\left(\frac{1}{2}(n - d +1) - p\right)\Gamma(\frac{d}{2})\Gamma(n - d + 1)} (An)^{\frac{1}{2}(n - d - 1) - p + 1} \\
		&\overset{\mathsf{(iii)}}{\leq} \frac{8(A)^{\frac{1}{2}(n - d + 1) - p}}{\Gamma(n - d + 1)} n^{n - d - p} \\
		&\overset{\mathsf{(iv)}}{\leq} \frac{8}{2e}\left(\frac{1}{4e^2 n }\right)^p.
	\end{align*}
Step $\1$ follows by applying~\citet[Lemma 3.3]{chen2005condition} to upper bound the density $f_{\lambda_{\min}}(t)$.  Step $\2$ follows by noting that $(n - d - 1)/2 - p > 0$ and evaluating the integral exactly.  Step $\3$ from the inequality $\Gamma(\frac{d}{2})\left(\frac{n}{2}\right)^{\frac{1}{2}(n - d + 1)} > \Gamma(\frac{n +1}{2})$ (see the proof of~\citet[Lemma 4.1]{chen2005condition}) and the fact that $n - d + 1 \geq n/2$.  Step $\4$ follows by utilizing Stirling's inequality for the Gamma function and setting $A = (4e^2)^{-1}$.
	Summarizing, we have shown 
	\begin{align}
		\label{ineq:t1_control_wishart_moments}
		\E\left\{\lambda_{\min}(\bSig)^{-p} \mathbbm{1}\{\lambda_{\min}(\bSig) \leq An\}\right\} \leq \frac{8}{2e}\left(\frac{1}{4e^2 n }\right)^p,
	\end{align} 
	where $A = (4e^2)^{-1}$.
	
	\paragraph{Bounding the term $T_2$.} Note that the function $t \mapsto t^{-p}$ is decreasing for $p > 0$.  Consequently, 
	\begin{align}
		\label{ineq:t2_control_wishart_moments}
		\E\left\{\lambda_{\min}(\bSig)^{-p} \mathbbm{1}\{\lambda_{\min}(\bSig) \geq An\}\right\} \leq \left(\frac{1}{An}\right)^{p} \Pr \left\{\lambda_{\min}(\bSig) \geq An\right\} \leq \left(\frac{4e^2}{n}\right)^p.
	\end{align}
	
	The result follows immediately upon combining the decomposition~\eqref{decomposition:wishart_moments} along with the  upper bound on term $T_1$~\eqref{ineq:t1_control_wishart_moments} the upper bound on term $T_2$~\eqref{ineq:t2_control_wishart_moments}.
\end{proof}


\section{Auxiliary technical results for specific models}

We begin by proving the four corollaries for one-step updates from Theorems~\ref{thm:one_step_main_HO} and~\ref{thm:one_step_main_FO}, and then proceed to proofs of Fact~\ref{prop:Pop-PR} and the technical lemmas stated in Section~\ref{sec:prelim-lemmas}.

\subsection{Proof of Corollary~\ref{lem:Gordon-SE-AM-PR}} \label{pf-cor1}

We evaluate the Gordon updates explicitly and verify Assumptions~\ref{ass:omega} and~\ref{ass:omega-lb}. The corollary then follows by invoking Theorem~\ref{thm:one_step_main_HO}. Note that in this case, we
have $\wfun( x, y) = \sign(x) \cdot y$, so that
\begin{align*}
\Omega = \sign(\parcomp Z_1 + \perpcomp Z_2) \cdot \left( |Z_1| + \noisestd Z_3 \right). 
\end{align*}

\subsubsection{Evaluating Gordon state evolution update} Let us begin by evaluating the three expectations that appear in the claimed Gordon update in Definition~\ref{def:starnext_main_HO}. Clearly, we have
\begin{align} \label{eq:exp-Omega-AMPR}
\EE[\Omega^2] = \EE \left( |Z_1| + \noisestd Z_3 \right) = 1 + \noisestd^2.
\end{align}
Since $Z_3$ is independent of the pair $(Z_1, Z_2)$, we also have
\begin{align*}
\EE[Z_1\Omega] = \EE [\sign(\parcomp Z_1 + \perpcomp Z_2) \cdot   \sign(Z_1) \cdot Z_1^2] \quad \text{ and } \quad \EE[Z_2\Omega] = \EE [\sign(\parcomp Z_1 + \perpcomp Z_2) \cdot  |Z_1| \cdot Z_2 ].
\end{align*}
We evaluate these expectations by first transforming into polar coordinates. Let $Z_1 = R \cos \Phi$ and $Z_2 = R \sin \Phi$, where $R$ is a $\chi$-random variable with $2$ degrees of freedom and the random variable $\Psi \sim \mathsf{Unif}([0, 2\pi])$ is drawn independently.
The first expectation can then be written as
\begin{align*}
\EE[Z_1\Omega] = \EE \left[ \sign(\parcomp \cos (\Psi) + \perpcomp \sin (\Psi) ) \sign(\cos (\Psi) ) R^2 \cos^2(\Psi) \right] = \EE \left[ \sign( \cos(\Psi) \cdot \cos(\Psi - \anglecurr)) R^2 \cos^2 (\Psi) \right],
\end{align*}
where we have used the fact that $\tan \anglecurr = \perpcomp/\parcomp$.
Evaluating the final expectation explicitly, we obtain
\begin{align}
\EE[Z_1\Omega] = \EE [R^2] \cdot \left( \frac{1}{2\pi} \int_{0}^{2\pi}  \sign( \cos(\psi) \cdot \cos(\psi - \anglecurr)) \cos^2 (\psi) d\psi \right) \notag \\
\overset{\1}{=} \frac{1}{\pi} \left( \pi - 4 \int_{\pi/2}^{\pi/2+\anglecurr} \cos^2 \psi d \psi \right) = 1 - \frac{1}{\pi} (2 \anglecurr - \sin(2\anglecurr) ), \label{eq:exp1-AMPR}
\end{align} 
where step $\1$ follows from noting that 
$\sign( \cos(\psi) \cdot \cos(\psi - \anglecurr)) = 1$  for all $\psi \in [0, \pi/2] \cup [\pi/2 + \anglecurr, 3\pi/2] \cup [3\pi/2 + \anglecurr, 2\pi]$.
Proceeding similarly for the second expectation, we have
\begin{align}
\EE[Z_2 \Omega] &= \EE \left[ \sign( \cos(\Psi) \cdot \cos(\Psi - \anglecurr)) R^2 \cos (\Psi) \sin(\Psi) \right] \notag \\
&= \frac{1}{\pi} \int_{0}^{2\pi}  \sign( \cos(\psi) \cdot \cos(\psi - \anglecurr)) \cos (\psi) \sin (\psi) d\psi
= \frac{2}{\pi} \sin^2 (\anglecurr). \label{eq:exp2-AMPR}
\end{align}
Putting together equations~\eqref{eq:exp-Omega-AMPR},~\eqref{eq:exp1-AMPR}, and~\eqref{eq:exp2-AMPR} with Definition~\ref{def:starnext_main_HO}, some straightforward calculation yields the Gordon state evolution update~\eqref{eq:Gordon-AM-PR}.

\subsubsection{Verifying assumptions} To verify Assumption~\ref{ass:omega}, note that 
$
\Omega^2 \leq 2 Z_1^2 + 2 \noisestd^2 Z_3^2, 
$
so that $\EE[\exp(\Omega^2 / (2 + 2\noisestd^2)] \leq 1$. Thus, we have $\| \Omega \|_{\psi_2} \leq 2 (1 + \noisestd^2)$. 
To verify Assumption~\ref{ass:omega-lb}, note that from the calculations above,
\[
\EE[\Omega^2] - (\EE[Z_1 \Omega])^2 - (\EE[Z_2 \Omega])^2 = 1 + \sigma^2 - \left( 1 - \frac{1}{\pi} (2 \anglecurr - \sin(2 \anglecurr)) \right)^2 - \frac{4}{\pi^2} \sin^4 \anglecurr \geq \sigma^2,
\]
where the final inequality can be verified for each $0 \leq \anglecurr \leq \pi/2$. \qed

\subsection{Proof of Corollary~\ref{lem:Gordon-SE-GD-PR}} \label{pf-cor2}

In this case, we evaluate the expectations and verify Assumption~\ref{ass:omega}, and the result follows by invoking Theorem~\ref{thm:one_step_main_FO}. We
have $\wfun = x - \sign(x) \cdot y$, so that
\begin{align*}
\Omega = \parcomp Z_1 + \perpcomp Z_2 - \sign(\parcomp Z_1 + \perpcomp Z_2) \cdot \left( |Z_1| + \noisestd Z_3 \right).
\end{align*}

\subsubsection{Evaluating Gordon state evolution update} Note the definition of $\Omega$ in conjunction with equations~\eqref{eq:exp1-AMPR} and~\eqref{eq:exp2-AMPR}, and Remark~\ref{rem:weight-rel}.
Performing some straightforward algebra using Definition~\ref{def:starnext_main_FO}(b) yields the Gordon updates~\eqref{eq:Gordon-GD-PR}.

\subsubsection{Verifying Assumption~\ref{ass:omega}} We have the upper bound 
\[
\Omega^2 \leq 2 (1 + \parcomp^2) Z_1^2 + 2 \perpcomp^2 Z_2 + 2 \noisestd^2 Z_3^2, 
\]
so that $\EE[\exp(\Omega^2 / (2 \parcomp^2 + 2 \perpcomp^2 + 2\noisestd^2 + 2)] \leq 1$. Thus, we have 
\[
\| \Omega \|_{\psi_2} \leq 2 (\parcomp^2 + \perpcomp^2  + \noisestd^2 + 1) \leq 2 (10 + \noisestd^2),
\]
where the final inequality is a consequence of the assumption $\alpha \vee \beta \leq 3/2$.

\subsection{Proof of Corollary~\ref{lem:Gordon-SE-AM-MLR}} \label{pf-cor3}

We evaluate the Gordon state evolution update explicitly, and verify Assumptions~\ref{ass:omega} and~\ref{ass:omega-lb}. The corollary then follows by invoking Theorem~\ref{thm:one_step_main_HO}.
In this case, we
have $\wfun(x, y) = \sign(yx) \cdot y$, so that
\begin{align*}
\Omega = \sign(\parcomp Z_1 + \perpcomp Z_2) \cdot \sign(Q \cdot Z_1 + \noisestd Z_3) \cdot \left( Q \cdot Z_1 + \noisestd Z_3 \right).
\end{align*}
Here $Q$ is a Rademacher random variable. 

\subsubsection{Evaluating Gordon state evolution update} An immediate calculation yields \mbox{$\EE[\Omega^2] = 1 + \noisestd^2$}.
We now claim that the following equalities characterize the remaining two expectations:
\begin{align}
\E\left[Z_1 \Omega \right] &= 1 - \frac{2}{\pi}\tan^{-1}\left(\sqrt{\rho^2+\noisestd^2+\rho^2\noisestd^2}\right) + \frac{2}{\pi}\frac{\sqrt{\rho^2+\noisestd^2+\rho^2\noisestd^2}}{1+\rho^2}, \text{ and }
\label{eq:mixtures_AM_G1Y}
\\
\E\left[Z_2 \Omega \right] &= \frac{2}{\pi}\frac{\rho\sqrt{\rho^2+\noisestd^2+\noisestd^2\rho^2}}{1+\rho^2},
\label{eq:mixtures_AM_G2Y}
\end{align}
where we recall the notation $\rho = \perpcomp/\parcomp$. Taking this claim as given, combining it with Definition~\ref{def:starnext_main_HO}, and performing some algebra yields the Gordon state evolution update~\eqref{eq:Gordon-AM-MLR}. It remains to establish the two equalities. We prove claim~\eqref{eq:mixtures_AM_G1Y} below; the proof of claim~\eqref{eq:mixtures_AM_G2Y} is similar and omitted for brevity.

\paragraph{Proof of equation~\eqref{eq:mixtures_AM_G1Y}:} Note that
\begin{align*}
\E\left[Z_1 \Omega \right]  &= \E\left[Z_1 \cdot  \sign(\parcomp Z_1 + \perpcomp Z_2) \cdot\sign(QZ_1+ \noisestd Z_3)\cdot (QZ_1+ \noisestd Z_3)\right] \\
&\overset{\1}{=} \E\left[Z_1 \cdot  \sign(\parcomp Z_1+\perpcomp Z_2) \cdot |Z_1+ \noisestd Z_3| \right].
\end{align*}
In step $\1$, 
we multiplied the expression by $Q^2 = 1$ and used the fact that $Z_3 \overset{(d)}{=} QZ_3$. To compute this expectation tractably, we use a change of variables. Let $Z' = \frac{\parcomp Z_1 + \perpcomp Z_2}{\sqrt{\parcomp^2 + \perpcomp^2}}$ and 
write 
\[
Z_1 = \frac{\parcomp}{\sqrt{\parcomp^2 + \perpcomp^2}} Z' + \frac{\perpcomp}{\sqrt{\parcomp^2 + \perpcomp^2}} \Ztil,
\]
where $\Ztil$ is a standard Gaussian independent of the tuple $(Z', Z_3)$. Finally, define the following standard Gaussian variate that is independent of $Z'$:
\[
Z'' = \left(\frac{\perpcomp^2}{\parcomp^2 + \perpcomp^2} + \noisestd^2 \right)^{-1/2} \left( \frac{\perpcomp}{\sqrt{\parcomp^2 + \perpcomp^2}} \cdot \Ztil + \noisestd Z_3 \right),
\]
and use $\sbar = \sqrt{ \frac{\perpcomp^2}{\parcomp^2 + \perpcomp^2} + \noisestd^2}$ to denote the normalization constant. 
Substituting above, we obtain
\begin{align*}
\E\left[Z_1 \Omega \right] &= \frac{\parcomp}{\sqrt{\parcomp^2 + \perpcomp^2}} \cdot \E \left[ |Z'| \cdot \left| \frac{\parcomp}{\sqrt{\parcomp^2 + \perpcomp^2}} Z' + \sbar Z'' \right| \right] \\
&\qquad +  \frac{\perpcomp}{\sqrt{\parcomp^2 + \perpcomp^2}} \cdot \E \left[ \Ztil \cdot \sign(Z') \cdot \left| \frac{\parcomp}{\sqrt{\parcomp^2 + \perpcomp^2}} Z' + \sbar Z'' \right| \right] \\
&=  \frac{\parcomp}{\sqrt{\parcomp^2 + \perpcomp^2}} \cdot \underbrace{ \E \left[ |Z'| \cdot \left| \frac{\parcomp}{\sqrt{\parcomp^2 + \perpcomp^2}} Z' + \sbar Z'' \right| \right]}_{T_1} \\
&\qquad +  \frac{\perpcomp^2}{\sbar \cdot (\parcomp^2 + \perpcomp^2)} \cdot \underbrace{\E \left[ Z'' \cdot \sign(Z') \cdot \left| \frac{\parcomp}{\sqrt{\parcomp^2 + \perpcomp^2}} Z' + \sbar Z'' \right| \right]}_{T_2}
\end{align*}
We may now use polar coordinates to compute the two expectations; write $Z' = R \cos \Psi$ and $Z'' = R \sin \Psi$.  Let $\lambda = \tan^{-1} (\sbar \sqrt{1 + \rho^2} )$ for convenience, so that 
\[
\frac{\parcomp}{\sqrt{\parcomp^2 + \perpcomp^2}} Z' + \sbar Z'' = \sqrt{1 + \sigma^2} \cdot R \cos(\Psi - \lambda).
\]

\noindent \underline{Evaluating term $T_1$:} We have
\begin{align*}
T_1 = \sqrt{1 + \sigma^2} \cdot \E[R^2] \cdot \left( \frac{1}{2\pi} \int_{0}^{2\pi} |\cos (\psi) \cdot \cos(\psi - \lambda)| d \psi \right) \overset{\1}{=} \frac{\sqrt{1 + \sigma^2} }{\pi} \cdot \left( (\pi - 2\lambda) \cos \lambda + 2 \sin \lambda \right),
\end{align*}
where step $\1$ follows from evaluating the integral explicitly, noting that $\cos (\psi) \cdot \cos(\psi - \lambda)$ is nonnegative except in the range $(\pi/2, \pi/2+ \lambda) \cup (3\pi/2, 3\pi/2 + \lambda)$.

\noindent \underline{Evaluating term $T_2$:} We have
\begin{align*}
T_2 = \sqrt{1 + \sigma^2} \cdot \E[R^2] \cdot \left( \frac{1}{2\pi} \int_{0}^{2\pi} \sin (\psi) \cdot \sign(\cos (\psi)) \cdot |\cos(\psi - \lambda)| d \psi \right) \overset{\1}{=} \sqrt{1 + \sigma^2} \cdot \left( \frac{\pi - 2\lambda}{\pi} \right) \cdot \sin \lambda,
\end{align*}
where once again, step $\1$ follows from evaluating the integral explicitly, noting that $\cos (\psi) \cdot \cos(\psi - \lambda)$ is nonnegative except in the range $(\pi/2, \pi/2+ \lambda) \cup (3\pi/2, 3\pi/2 + \lambda)$.

\noindent \underline{Putting together the pieces:} Since $\lambda = \tan^{-1} (\sbar \sqrt{1 + \rho^2} )$, we have 
\[
\sin \lambda = \frac{\sqrt{\rho^2 + \sigma^2 + \rho^2 \sigma^2}}{\sqrt{1 + \sigma^2} \cdot \sqrt{1 + \rho^2}} \qquad \text{ and } \qquad  \cos \lambda = \frac{1}{\sqrt{1 + \sigma^2} \cdot \sqrt{1 + \rho^2}}.
\]
Consequently, 
\begin{align*}
\E[Z_1 \Omega] &= \left( \frac{\pi - 2 \lambda }{\pi} \right) \cdot \frac{1}{1 + \rho^2} + \frac{2}{\pi} \cdot \frac{\sqrt{\rho^2 + \sigma^2 + \rho^2 \sigma^2}}{1 + \rho^2} + \left( \frac{\pi - 2\lambda}{\pi}\right) \cdot \frac{\rho^2}{1 + \rho^2} \\
&= 1 - \frac{2\lambda}{\pi} + \frac{2}{\pi} \cdot \frac{\sqrt{\rho^2 + \sigma^2 + \rho^2 \sigma^2}}{1 + \rho^2},
\end{align*}
as claimed.

\subsubsection{Verifying assumptions} To verify Assumption~\ref{ass:omega}, note that $\Omega^2 \leq 2 Z_1^2 + 2 \noisestd^2 Z_3^2$, so that $\EE[\exp(\Omega^2 / (2 + 2\noisestd^2)] \leq 1$. Consequently $\| \Omega \|_{\psi_2} \leq 2 (1 + \sigma^2)$. To verify Assumption~\ref{ass:omega-lb}, note from the calculations above that
\[
\EE[\Omega^2] - (\EE[Z_1 \Omega])^2 -  (\EE[Z_2 \Omega])^2 = 1 - \left(1 - A_{\noisestd}(\rho) + B_{\noisestd} (\rho) \right)^2 - [\rho B_{\noisestd}(\rho)]^2 + \noisestd^2 := H(\rho),
\]
where $A_{\noisestd}(\rho)$ and $B_{\noisestd}(\rho)$ were defined in equation~\eqref{eq:AB-shorthand}. Note that $H'(\rho) = \frac{4 \rho (\rho^2 (\rho^2 + 2) + \noisestd^2) (\pi - 2\tan^{-1} (\sqrt{\rho^2 (\rho^2 + 2) + \noisestd^2})}{\pi^2 (\rho^2 + 1)^2 \sqrt{\rho^2 (\rho^2 + 2) + \noisestd^2}} \geq 0$, so that $H(\rho) \geq H(0) = \noisestd^2$.

\subsection{Proof of Corollary~\ref{lem:Gordon-SE-GD-MLR}} \label{pf-cor4}

We evaluate the various expectations and verify Assumption~\ref{ass:omega}. The corollary then follows by invoking Theorem~\ref{thm:one_step_main_FO}.
In this case, we
have $\wfun(x, y) = x - \sign(yx) \cdot y$, so that
\begin{align*}
\Omega = \parcomp Z_1 + \perpcomp Z_2 - \sign(\parcomp Z_1 + \perpcomp Z_2) \cdot \sign(Q \cdot Z_1 + \noisestd Z_3) \cdot \left( Q \cdot Z_1 + \noisestd Z_3 \right).
\end{align*}
Here $Q$ is a Rademacher random variable. 

\subsubsection{Evaluating Gordon state evolution update} Note the definition of $\Omega$ in conjunction with equations~\eqref{eq:mixtures_AM_G1Y} and~\eqref{eq:mixtures_AM_G2Y}, and Remark~\ref{rem:weight-rel}.
Performing some straightforward algebra using Definition~\ref{def:starnext_main_FO}(b) yields the Gordon updates~\eqref{eq:Gordon-GD-MLR}.

\subsubsection{Verifying Assumption~\ref{ass:omega}} As in the previous subgradient update, we have the upper bound $\Omega^2 \leq 2 (1 + \parcomp^2) Z_1^2 + 2 \perpcomp^2 Z_2 + 2 \noisestd^2 Z_3^2$, so that $\EE[\exp(\Omega^2 / (2 \parcomp^2 + 2 \perpcomp^2 + 2\noisestd)] \leq 1$. Thus, we have $\| \Omega \|_{\psi_2} \leq 2 (\parcomp^2 + \perpcomp^2  + \noisestd^2)$.

\subsection{Proof of Fact~\ref{prop:Pop-PR}} \label{pf-fact1}

Let us begin by restating the population update for alternating minimization as applied to phase retrieval:
\begin{align*}
\parcomppop = 1 - \frac{1}{\pi}(2\anglecurr - \sin(2\anglecurr))  \qquad \text{ and } \qquad \perpcomppop = \frac{2}{\pi}\sin^2(\anglecurr). 
\end{align*}
We refer to these as the $\Fbar$ and $\Gbar$ maps respectively and let $\mathcal{S}_{\pop}$ denote the population state evolution operator. In order to prove the desired fact, it suffices to verify that $\mathcal{S}_{\pop}$ is $\Goodset$-faithful, and to prove upper and lower bounds on its one-step convergence.

\paragraph{Verifying $\Goodset$-faithfulness:} This follows directly from the $\Goodset$-faithfulness of the $\Sopgordon$ update, since $\Fbar = F$ and $\Gbar \leq G$.

\paragraph{Upper bound on one-step convergence:}
First, note that $\Gbar(\parcomp, \perpcomp) \leq \frac{4}{\pi^2} \anglecurr^4$.
On the other hand, equation~\eqref{eq:xi-ineq} yields
\begin{align*}
(1 - \Fbar(\parcomp, \perpcomp))^2 &\leq \frac{16}{9 \pi^2} \anglecurr^6.
\end{align*}
Putting together the pieces, we have
\begin{align*}
[\DeltaSE(\mathcal{S}_{\pop}(\bzeta))]^2 = [\Gbar(\parcomp, \perpcomp)]^2 + (1 - \Fbar(\parcomp, \perpcomp))^2 \leq \perpcomp^4 \leq \left\{ \perpcomp^2 + (1 - \parcomp)^2 \right\}^{2},
\end{align*}
where the first inequality is a result of noting that $\anglecurr \leq \perpcomp / \parcomp$ and $\parcomp \geq 0.55$.

\paragraph{Lower bound on two-step convergence:}
Moving now to the lower bound, let us compute two steps of the Gordon update, letting $F_{+} = F^2(\parcomp, \perpcomp)$ and $G_+ = G^2(\parcomp, \perpcomp)$. Analogously, we let $F = F(\parcomp, \perpcomp)$ and $G = G(\parcomp, \perpcomp)$, and use $\anglecurr_+ = \tan^{-1} (G/F)$ to denote the angle after one step of the Gordon update. Recall that $\tan \anglecurr = \perpcomp/\parcomp$. We have
\begin{align*}
G_{+}^2 = \frac{4}{\pi^2} \cdot \sin^4 \anglecurr_+ = \frac{4G^4}{\pi^2(F^2 + G^2)^2} \geq \frac{4 \pi^2}{(\pi^2 + 4)^2} \cdot G^4 > \frac{G^4}{5},
\end{align*}
where the penultimate inequality uses the fact that $F \leq 1$ and $G \leq 2/\pi$, guaranteed by one step of the Gordon update. For the $F$ component, equation~\eqref{eq:Fmap-stuff} yields $\pi^2 (1 - F_+)^2  \geq \frac{G^6}{8}$. Furthermore, we have
$
(1 - F)^4 \lesssim \perpcomp^{12} \lesssim G^6,
$
so that putting together the pieces yields
\begin{align*}
G_{+}^2 + (1 - F_+)^2 \geq \frac{G^4}{5}  + c (1 - F)^4  \geq  c' \cdot \left\{ G^2 + (1 - F)^2 \right\}^{2},
\end{align*}
where the last step follows because $(A + B)^{\kappa} \leq 2^{\kappa} (A^\kappa + B^{\kappa})$ for any positive scalars $(A, B)$ and $\kappa \geq 1$. Taking square roots completes the proof.
\qed

\begin{remark} \label{lem:quad-angle-PR}
The claimed quadratic convergence holds in a region much larger than the good region $\GPR$. Indeed, it is straightforward to show that $\tan^{-1} (G/F) \leq \frac{2}{\pi} \cdot \left( \tan^{-1} (\perpcomp / \parcomp) \right)^2$, showing that the \emph{angle} converges quadratically fast, globally for any $\anglecurr < \pi/2$.
\end{remark}

\subsection{Proof of Lemma~\ref{lem:map-ineqs}} \label{app:pf-maps}

Since the lemma consists of several parts, we prove each in turn.

\subsubsection{Proof of part (a)}
Since the maps $F_0$ and $F$ coincide, we have $F_0(\parcomp, \perpcomp) = 1 - \frac{2\anglecurr - \sin(2\anglecurr)}{\pi}$, which is a non-increasing function of $\anglecurr$. Evaluating it at $\anglecurr = \{0, \pi/2\}$, we obtain $0 \leq F_0(\parcomp, \anglecurr) \leq 1$. Next, note that $\sin x \geq x - \frac{x^3}{3!}$ for $x \geq 0$ to obtain
\begin{align*}
F_0(\parcomp, \perpcomp) = 1 - \frac{2\anglecurr - \sin(2\anglecurr)}{\pi} \geq 1 - \frac{4}{3\pi} \anglecurr^3.
\end{align*}
Finally, using the fact that $\sin x \leq x - \frac{x^3}{3!} + \frac{x^5}{5!}$ for $x \geq 0$, we have that for $\rho \leq 1/5$,
\begin{align*}
1 - F_0(\parcomp, \perpcomp) = \frac{2\anglecurr - \sin(2\anglecurr)}{\pi} \geq \frac{2}{5} \anglecurr^3.
\end{align*}
Here the final inequality uses the fact that $2\anglecurr \leq 2\rho \leq 2/5$. 
\qed

\subsubsection{Proof of part (b)}
Introduce the change of variables $\zeta = \pi/2 - \anglecurr = \tan^{-1} (\parcomp/\perpcomp)$.  We have 
\begin{align*}
F_0(\parcomp, \perpcomp) = F(\parcomp, \perpcomp) = \frac{2 \zeta + \sin 2\zeta }{\pi}
\stackrel{\1}{\geq} \frac{4\zeta}{\pi} - \frac{4 \zeta^3}{3\pi}
\stackrel{\2}{\geq} \frac{4}{\pi} \left( \frac{\parcomp}{\perpcomp} \right) - \frac{8}{3\pi} \left( \frac{\parcomp}{\perpcomp} \right)^3
\end{align*}
where in step $\1$, we have used the fact that $\sin x \geq x - \frac{x^3}{3!}$ and in step $\2$ we have used the Taylor expansion of the $\tan^{-1}$ function to conclude that $x - \frac{x^3}{3} \leq \tan^{-1} x \leq x$.
Now using the facts that $\parcomp/\perpcomp \leq 1/2$ and $\perpcomp \leq 1$, respectively, we have
\begin{align*}
F(\parcomp, \perpcomp) \geq \frac{10}{3\pi} \left( \frac{\parcomp}{\perpcomp} \right) > 1.06 \cdot \parcomp
\end{align*}
as desired.
\qed

\subsubsection{Proof of part (c)}
Consider the map $f: (\rho, \noisestd) \mapsto 1 - A_{\noisestd}(\rho) + B_{\noisestd}(\rho)$, and note that $\frac{\partial f}{\partial \noisestd} = \frac{1}{\pi} \cdot \frac{\noisestd}{(2 \noisestd^2 + 2) \sqrt{\rho^2 + \noisestd^2 + \rho^2 \noisestd^2}}$, which is non-negative for each $\rho, \noisestd \geq 0$. Thus $F_{\noisestd}$ is non-decreasing in $\noisestd$ for each $\rho$, i.e., for each $(\parcomp, \perpcomp)$ pair. 

Also note that $\frac{\partial f}{\partial \rho} = - \frac{1}{\pi} \cdot \frac{(\noisestd^2 + 2) + \noisestd^2/\rho^2}{2(\rho^2+1)^2 \sqrt{\rho^2 + \noisestd^2 + \rho^2 \noisestd^2}}$ which is non-positive for each $\rho, \noisestd \geq 0$. Thus, 
\begin{align*}
F_{\noisestd}(\parcomp, \perpcomp) \leq 1 - A_{\noisestd}(0) + B_{\noisestd}(0) = 1 + \frac{2}{\pi} ( \noisestd - \tan^{-1} (\noisestd)) \leq  1 + \frac{2\noisestd^3}{3\pi},
\end{align*}
where the final inequality uses $\tan^{-1} x \geq x - x^3 / 3$.
\qed

\subsubsection{Proof of part (d)}
Note that we can simplify $G_{\noisestd}(\parcomp, \perpcomp) = \sqrt{[\rho B_{\noisestd}(\rho)]^2 \cdot \frac{\kappa - 2}{\kappa - 1} + \frac{1}{\kappa - 1}\left(1 + \noisestd^2 - [F_{\noisestd}(\parcomp, \perpcomp)]^2 \right) }$, from which the lower bound follows immediately. To prove the upper bound, note that
\begin{align}
G_{\noisestd}(\parcomp, \perpcomp) &\leq \sqrt{[\rho B_{\noisestd}(\rho)]^2 \cdot \frac{\kappa - 2}{\kappa - 1} + \frac{\noisestd^2}{\kappa - 1} + \frac{(1 + F_{\noisestd}(\parcomp, \perpcomp))}{\kappa - 1}  } \\
&\leq \sqrt{[\rho B_{\noisestd}(\rho)]^2 + \frac{\noisestd^2}{\kappa - 1} + \frac{1}{\kappa - 1} \cdot \left( 1 + \frac{\noisestd^3}{3\pi} \right) },
\end{align}
where the second step follows since $F_{\noisestd}(\parcomp, \perpcomp) \leq 1 + \frac{2\noisestd^3}{3\pi}$, as proved in the previous part. Now note that a straightforward calculation yields that for all $\rho \geq 0$, we have
\begin{align*}
\rho B_{\noisestd}(\rho) \leq \frac{2}{\pi} \sqrt{1 + \noisestd^2}.
\end{align*} 
Noting that $\noisestd \leq 1/2$, and choosing $\kappa \geq C$ for a large constant $C$, we have
\begin{align*}
G_{\noisestd}(\parcomp, \perpcomp) &\leq \sqrt{\frac{5}{\pi^2} + 0.2} \leq 0.8.
\end{align*}
\qed

\subsubsection{Proof of part (e)}
We have
\begin{align}
[G_0(\parcomp, \perpcomp)]^2 &= \frac{\kappa - 2}{\kappa - 1} \cdot \frac{4}{\pi^2} \cdot \sin^4 \anglecurr + \frac{1}{\kappa - 1} \cdot (1 - [F_0(\parcomp, \perpcomp)]^2 )  \notag \\
&\leq \frac{4}{\pi^2} \anglecurr^4 + \frac{1}{\kappa - 1} (1 - F(\parcomp, \perpcomp)) (1 + F(\parcomp, \perpcomp))  \notag \\
&\leq \frac{4}{\pi^2} \anglecurr^4 + \frac{8\anglecurr^3}{3\pi(\kappa - 1)}   \notag \\
&\leq \frac{\anglecurr^3}{10}. \label{eq:sigma-ineq}
\end{align}
Here the penultimate inequality makes use of parts (a) and (c) of the lemma to conclude that $1 - F(\parcomp, \perpcomp) \leq \frac{4\anglecurr^3}{3\pi}$ and $1 + F(\parcomp, \perpcomp) \leq 2$, respectively.
The last line follows since $\kappa \geq C$ and $\anglecurr \leq 1/5$. 
\qed

\subsubsection{Proof of part (f)}
By definition of the maps, performing some algebra yields
\begin{align}
[g_{\noisestd}(\parcomp, \perpcomp)]^2 &= \frac{\kappa - 2}{\kappa - 1} \cdot [\rho B_{\noisestd}(\rho)]^2 + \frac{1}{\kappa} \left\{ (\parcomp - F_{\noisestd}(\parcomp, \perpcomp))^2 + (\perpcomp - \rho B_{\noisestd}(\rho) )^2 + 1 - [F_{\noisestd}(\parcomp, \perpcomp)]^2 + \noisestd^2 \right\} \notag \\
&\leq [G_{\noisestd}(\parcomp, \perpcomp)]^2 + \frac{1}{\kappa} \left\{ (\parcomp - F_{\noisestd}(\parcomp, \perpcomp))^2 + (\perpcomp - \rho B_{\noisestd}(\rho) )^2 \right\}, \label{eq:keygG}
\end{align}
Using the numeric inequality $(a + b)^2 \leq 2a^2 + 2b^2$ twice, we obtain
\begin{align*}
(\parcomp - F_{\noisestd}(\parcomp, \perpcomp))^2 + (\perpcomp - \rho B_{\noisestd}(\rho) )^2 &\leq 2 (1 - \parcomp)^2 +  2\perpcomp^2 + 2 (1 - F_{\noisestd}(\parcomp, \perpcomp))^2 + 2 [\rho B_{\noisestd}(\rho)]^2,
\end{align*}
and combining the pieces completes the proof of the upper bound.

To prove the lower bound, note that
\[
\frac{\kappa - 2}{\kappa - 1} \cdot [\rho B_{\noisestd}(\rho)]^2 + \frac{1}{\kappa} \left\{ (\parcomp - F_{\noisestd}(\parcomp, \perpcomp))^2 + (\perpcomp - \rho B_{\noisestd}(\rho) )^2 + 1 - [F_{\noisestd}(\parcomp, \perpcomp)]^2 + \noisestd^2 \right\} \geq \frac{\kappa - 1}{\kappa} \cdot [G_{\noisestd}(\parcomp, \perpcomp)]^2.
\]
\qed

\subsubsection{Proof of part (g)}
Combining the fact that $G_{\noisestd}(\parcomp, \perpcomp) = \sqrt{\rho^2 B_{\noisestd}(\rho)^2 \cdot \frac{\kappa - 2}{\kappa - 1} + \frac{1}{\kappa - 1}\left(1 + \noisestd^2 - [F_{\noisestd}(\parcomp, \perpcomp)]^2 \right) }$ with the inequalities $F_{\noisestd}(\parcomp, \perpcomp) \leq 1 + \frac{2\noisestd^3}{3\pi}$ and $\noisestd \leq 1/2$, the lower bound on $\frac{G_{\noisestd}}{F_{\noisestd}}$ follows from straightforward calculation.

To prove the upper bound, begin by noting that since $1 - F_{\noisestd}(\parcomp, \perpcomp) \leq \frac{4\anglecurr^3}{3\pi}$, we have
\begin{align*}
G_{\noisestd}(\parcomp, \perpcomp) \leq \sqrt{\rho^2 B_{\noisestd}(\rho)^2 + \frac{\noisestd^2}{\kappa - 1} + \frac{8 \anglecurr^3}{3 \pi(\kappa - 1)} \cdot \left( 1 + \frac{\noisestd^3}{3\pi} \right) }.
\end{align*}
We now use the fact that $\anglecurr \leq \rho$,
and that $\phi \leq 1.12$ for all $\rho \leq 2$.
In conjunction with the assumption $\noisestd \leq 0.5$, we obtain
\begin{align*}
G_{\noisestd}(\parcomp, \perpcomp) &\leq \rho B_{\noisestd}(\rho) + \frac{1.05\rho}{\sqrt{\kappa - 1}} + \frac{1.1\noisestd}{\sqrt{\kappa - 1}}
\end{align*}
where we have also used the inequality $\sqrt{a + b + c} \leq \sqrt{a} + \sqrt{b} + \sqrt{c}$, valid for any three non-negative scalars $(a, b, c)$. 
Using the definition $F_{\noisestd}(\parcomp, \perpcomp) = 1 - A_{\noisestd}(\rho) + B_{\noisestd}(\rho)$, we have
\begin{align*}
\frac{G_{\noisestd}}{F_{\noisestd}} &\leq \rho \left( 1 - \frac{1 - A_{\noisestd}(\rho)}{1 + B_{\noisestd}(\rho) - A_{\noisestd}(\rho)} + \frac{1.05}{F_{\noisestd} \cdot \sqrt{\kappa - 1}}\right) + \frac{1.1 \noisestd}{F_{\noisestd} \cdot \sqrt{\kappa - 1}} \\
&\leq \rho \left( 1 - \frac{1 - A_{\noisestd}(\rho)}{1 + B_{\noisestd}(\rho) - A_{\noisestd}(\rho)} + \frac{2.1}{\sqrt{\kappa - 1}}\right) + \frac{2\noisestd}{\sqrt{\kappa - 1}},
\end{align*}
where in the final inequality, we have used the fact that $A_{\noisestd}(\rho) \geq 0$ for all $\rho$ and 
$F_{\noisestd} \geq F_0 \geq 0.56$ for all $\rho \leq 2$.
Now note that if $\noisestd \leq 0.5$ and $\rho \leq 2$, then a straightforward computation yields that $A_{\noisestd}(\rho) \leq 0.74$ and $B_{\noisestd}(\rho) \leq 0.4$. Since $1 + B_{\noisestd}(\rho) - A_{\noisestd} (\rho) \geq 0$, we have $\frac{1 - A_{\noisestd}(\rho)}{1 + B_{\noisestd}(\rho) - A_{\noisestd}(\rho)} \geq 1/4$. Putting together the pieces yields
\begin{align*}
\frac{G_{\noisestd}}{F_{\noisestd}} &\leq \rho \left( 0.75 + \frac{2.1}{\sqrt{\kappa - 1}}\right) + \frac{2\noisestd}{\sqrt{\kappa - 1}} \leq \frac{4}{5} \cdot \frac{\perpcomp}{\parcomp} + \frac{2\noisestd}{\sqrt{\kappa - 1}},
\end{align*}
where the final inequality uses the fact that $\kappa \geq C$. 
\qed

\subsection{Proof of Lemma~\ref{lem:grad-map-ineqs}} \label{app:pf-gradmaps}
Once again, we prove each part separately.

\subsubsection{Proof of part (a)}
By definition, we have $F_{\noisestd}(\parcomp, \perpcomp) = \Phi(\rho(\parcomp, \perpcomp))$ for a univariate. $\noisestd$-dependent function $\Phi$. By chain rule,
$\nabla F_{\noisestd}(\parcomp, \perpcomp) = \Phi'(\rho) \cdot \nabla \rho(\parcomp, \perpcomp)$. In addition, $\nabla \rho(\parcomp, \perpcomp) = \parcomp^{-1} \cdot (1, - \rho)$, so that $\| \nabla \rho(\parcomp, \perpcomp) \|_1 = \parcomp^{-1} (1 + \rho)$. 
Differentiating the univariate function $\Phi$, we obtain $| \Phi'(\rho)| = \frac{2\rho}{\pi} \cdot \frac{\rho^2 (2 + \noisestd^2) + \noisestd^2}{(1 + \rho^2)^2 \cdot \sqrt{\rho^2(1 + \noisestd^2) + \noisestd^2}}$. Thus, we have
\begin{align*}
\|\nabla F_{\noisestd}(\parcomp, \perpcomp) \|_1 = \frac{2\rho}{\pi \parcomp} \cdot \frac{\rho^2 (2 + \noisestd^2) + \noisestd^2}{\sqrt{\rho^2(1 + \noisestd^2) + \noisestd^2}} \cdot \frac{(1 + \rho)}{(1 + \rho^2)^2} =: \parcomp^{-1} \cdot f(\rho).
\end{align*}
We now claim that $f(\rho)$ is non-decreasing in the interval $[0, 0.25]$. This claim directly yields $\|\nabla F_{\noisestd}(\parcomp, \perpcomp) \|_1 \leq \parcomp^{-1} \cdot f(0.25) \leq 1/2$ for all $\parcomp \geq 0.5$, $\rho \leq 0.25$, and $\noisestd \leq 1/2$.

To prove that $f$ is non-decreasing, note that a straightforward calculation yields
\begin{align}
f'(\rho) &= \frac{2(\noisestd^2 + (2+ \noisestd^2)\rho^2)}{\pi (1 + \rho^2)^2\sqrt{\noisestd^2 + (1 + \noisestd^2) \rho^2}} \cdot \left[1 + 2\rho - \frac{\rho^2(1 + \rho)}{\sqrt{\noisestd^2 + (1 + \noisestd^2)\rho^2}} - \frac{4\rho^2(1 + \rho)}{1 + \rho^2}\right] \notag \\
&\qquad \qquad + \frac{8(2 + \noisestd^2)\rho^2(1 + \rho)}{\pi(1 + \rho^2)^2\sqrt{\noisestd^2 + (1 + \noisestd^2)\rho^2}}. \label{eq:derivative_f}
\end{align}
Since
\[
\frac{\rho^2 (1 + \rho)}{\sqrt{\noisestd^2 + (1 + \noisestd^2) \rho^2}} \leq \rho(1 + \rho),
\]
we have
\begin{align}
\label{ineq:penultimate_f_increasing}
1 + 2\rho - \frac{\rho^2(1 + \rho)}{\sqrt{\noisestd^2 + (1 + \noisestd^2)\rho^2}} - \frac{4\rho^2(1 + \rho)}{1 + \rho^2} \geq \frac{1 + \rho - 4\rho^2 - 3\rho^3 - \rho^4}{1 + \rho^2} \geq 0.
\end{align}
Here, the final inequality follows from the following argument: note that $\widetilde{f}: \rho \mapsto 1 + \rho - 4\rho^2 -3\rho^3 -\rho^4$ is concave and thus on the interval $[0, 0.25]$, it takes its minimizer at one of the endpoints. Furthermore, $\min \{ \widetilde{f}(0), \widetilde{f}(0.25) \} > 0$.
\qed

\subsubsection{Proof of part (b)}
Writing $G_{\noisestd}(\parcomp, \perpcomp) = \Gamma(\rho)$, note that
\begin{align*}
\Gamma(\rho) &= \sqrt{\rho^2 B_{\noisestd}(\rho)^2 \cdot \frac{\kappa - 2}{\kappa - 1} + \frac{1}{\kappa - 1}\left(1 + \noisestd^2 - [\Phi(\rho)]^2 \right) }
\end{align*}
By Lemma~\ref{lem:map-ineqs}(a), we have
$\Phi(\rho) \leq 1 + \frac{2\noisestd^3}{3\pi}$,
and so we obtain
\begin{align}
\label{lb:G}
\Gamma(\rho) &\geq \sqrt{\rho^2B_{\noisestd}(\rho)^2 \frac{\kappa - 2}{\kappa - 1} + \frac{1}{\kappa - 1}\left(1 + \noisestd^2 - \left(1 + \frac{2\noisestd^3}{3\pi}\right)^2\right)} \notag \\
& \geq \sqrt{\rho^2B_{\noisestd}(\rho)^2 \frac{\kappa - 2}{\kappa - 1} + \frac{\noisestd^2}{2(\kappa - 1)}} \geq \rho B_{\noisestd}(\rho) \sqrt{\frac{\kappa - 2}{\kappa - 1}},
\end{align}
where the first inequality holds since $\noisestd \leq 0.5$.  Now, define the function
\[
S(\rho) = [\rho B_{\noisestd}(\rho)]^2 + \frac{1}{\kappa - 1}\left(1 + \noisestd^2 - (1 - A_{\noisestd}(\rho) + B_{\noisestd}(\rho))^2 - \rho^2 B_{\noisestd}(\rho)^2\right),
\]
and note that $\Gamma(\rho) = \sqrt{S(\rho)}$ and $\Gamma'(\rho) = \frac{S'(\rho)}{2\sqrt{S(\rho)}}$.
We then have 
\begin{align}
\label{ineq:G_l1_bound}
\| \nabla G_{\noisestd}(\parcomp, \perpcomp) \|_1 = \frac{\parcomp^{-1}}{2} \cdot \frac{\lvert S'(\rho) \rvert}{\sqrt{S(\rho)}} (1 + \rho)  \overset{\1}{\leq} \frac{\lvert S'(\rho) \rvert}{\sqrt{S(\rho)}} (1 + \rho) =:  T_1 + T_2,
\end{align}
where step $\1$ follows since $\parcomp \geq 1/2$, and we have let 
\begin{subequations}
\begin{align}
T_1 &= \frac{2(\kappa - 2)}{(\kappa - 1)\sqrt{S(\rho)}} \rho(1+ \rho) B_{\noisestd}(\rho) \left(B_{\noisestd}(\rho) + \rho B_{\noisestd}'(\rho)\right) \label{eq:T1_appendix_convergence}\\
T_2 &= \frac{2(1 + \rho)}{(\kappa - 1)\sqrt{S(\rho)}}(1 - A_{\noisestd}(\rho) + B_{\noisestd}(\rho))(B_{\noisestd}'(\rho) - A_{\noisestd}'(\rho)). \label{eq:T2_appendix_convergence}
\end{align}
\end{subequations}
We will bound each of these terms in turn.

\paragraph{Bounding the term $T_1$~\eqref{eq:T1_appendix_convergence}.} An explicit computation yields
\begin{align}
\label{ub:first_ub_T1}
T_1 = \frac{1}{\sqrt{S(\rho)}} \frac{8 (\kappa - 2)}{\pi^2 (\kappa - 1)} \rho (1 + \rho) \frac{2\rho^2 + \noisestd^2 + \noisestd^2 \rho^2}{(1 + \rho^2)^3 } \overset{\1}{\leq} \frac{4}{\pi} \frac{(1 + \rho)(2 \rho^2 + \noisestd^2 (1 + \rho^2))}{(1 + \rho^2)^2 \sqrt{\rho^2 + \noisestd^2(1  + \rho^2)}},
\end{align}
where step $\1$ follows by using the inequality~\eqref{lb:G}.  Now, define the function
\[
a(\rho) :=  \frac{(1 + \rho)(2 \rho^2 + \noisestd^2 (1 + \rho^2))}{(1 + \rho^2)^2 \sqrt{\rho^2 + \noisestd^2(1  + \rho^2)}},
\]
so that the inequality~\eqref{ub:first_ub_T1} is equivalent to the inequality
\[
T_1 \leq \frac{4}{\pi} a(\rho).
\]
Then, note that
\begin{align*}
a'(\rho) &= \frac{\noisestd^4(1 - 2\rho^6 - 3\rho^5) + \rho^4(3 - 4\rho^2 - 6\noisestd^2\rho^2 - 6\rho -9\noisestd^2\rho) + \rho^4(1 - 3\noisestd^4)}{(1 + \rho^2)^3 (\rho^2 + \noisestd^2(1 + \rho^2))^{3/2}}\\
&\qquad \qquad + \frac{\rho^3(2 - 6\noisestd^2 -6\noisestd^4) + \rho^2(6\noisestd^2) + \rho(3\noisestd^2 - 3\noisestd^4)}{(1 + \rho^2)^3 (\rho^2 + \noisestd^2(1 + \rho^2))^{3/2}} \geq 0,
\end{align*}
where the last inequality can be verified for all $0 \leq \rho \leq 1/4$ and $0 \leq \noisestd \leq 1/2$.  Thus, for all $0 \leq \noisestd \leq 0.5$, $a(\rho)$ is an increasing function of $\rho$ on the interval $[0, 0.25]$.  Note also that the function $\noisestd \mapsto (a\noisestd^2 + 2d)/(c\sqrt{a\noisestd^2 + d})$ is increasing for $\noisestd \geq 0$.  Combining these pieces implies that
\begin{align}
	\label{ub:final_T1}
	T_1 \leq \frac{4}{\pi} a(\rho) \overset{\1}{\leq} 0.962,
\end{align}
where step $\1$ evaluated $a$ with $\rho = 0.25$ and $\noisestd = 0.5$.

\paragraph{Bounding the term $T_2$~\eqref{eq:T2_appendix_convergence}.} First, note that $1 - A_{\noisestd}(\rho) + B_{\noisestd}(\rho) = \Phi(\rho)$ and $B_{\noisestd}'(\rho) - A_{\noisestd}'(\rho) = \Phi'(\rho)$, so we have
\begin{align}
T_2 = \frac{2\Phi(\rho) \lvert \Phi'(\rho)\rvert(1 + \rho)}{\sqrt{S(\rho)}(\kappa - 1)} \overset{\1}{\leq} 2\frac{\left(1 + \frac{2\noisestd^3}{3\pi}\right)}{\kappa - 1} \cdot \frac{\lvert \Phi'(\rho) \rvert(1 + \rho)}{\sqrt{S(\rho)}} &\overset{\2}{\leq} 4\frac{\left(1 + \frac{2\noisestd^3}{3\pi}\right)}{\kappa - 1}\frac{2(2 \rho^2 + \noisestd^2(1 + \rho^2))(1 + \rho)}{\pi (1 + \rho^2)(\rho^2 + \noisestd^2(1 + \rho^2))}\nonumber\\
&\leq \frac{20\left(1 + \frac{2\noisestd^3}{3\pi}\right)}{\pi (\kappa - 1)},
\end{align}
where step $\1$ follows from Lemma~\ref{lem:map-ineqs}(a), step $\2$ follows by computing $\Phi'(\rho)$ explicitly and using the fact that the inequality~\eqref{lb:G} implies $\sqrt{S(\rho)} \geq \rho B_{\noisestd}(\rho) \sqrt{\frac{\kappa - 2}{\kappa - 1}} \geq \frac{\rho B_{\noisestd}(\rho)}{2}$.  Under the assumption $\kappa \geq C$, we thus see that 
\begin{align}
\label{ub:final_T2}
T_2 \leq 0.018.
\end{align}
Combining the upper bound on $T_1$~\eqref{ub:final_T1}, the upper bound on $T_2$~\eqref{ub:final_T2}, and the decomposition~\eqref{ineq:G_l1_bound} yields
\[
\| \nabla G_{\noisestd}(\parcomp, \perpcomp) \|_1 \leq 0.98
\]
for all $\parcomp \geq 1/2$ and $\rho \leq 1/4$, as desired.
\qed

\subsubsection{Proof of part (c)}
Let us establish the lemma for the map $G$; an identical argument also holds for the map $g$. Recall that for all $(\parcomp, \perpcomp)$ we have $[G(\parcomp, \perpcomp)]^2 = [G_0(\parcomp,\perpcomp)]^2 + \frac{\noisestd^2}{\kappa - 1}$. Taking a gradient of both sides of the equation with respect to $(\parcomp, \perpcomp)$ yields
\begin{align*}
2 \cdot [G(\parcomp, \perpcomp)] \cdot [\nabla G (\parcomp, \perpcomp)] = 2 \cdot [G_0(\parcomp, \perpcomp)] \cdot [\nabla G_0 (\parcomp, \perpcomp)],
\end{align*}
so that $\nabla G(\parcomp, \perpcomp) = \frac{G_0(\parcomp, \perpcomp)}{G (\parcomp, \perpcomp)} \cdot \nabla G_0 (\parcomp, \perpcomp)$. Taking $\ell_1$ norms on both sides and noting that $G(\parcomp, \perpcomp) \geq G_0(\parcomp, \perpcomp)$, we have $\| \nabla G(\parcomp, \perpcomp) \|_1 \leq \| \nabla G_0(\parcomp, \perpcomp) \|_1$, as desired.
\qed

\subsubsection{Proof of part (d)}
From equation~\eqref{eq:keygG} we see that for each $(\parcomp, \perpcomp)$ pair, the following relation holds:
\[
[g_{\noisestd}(\parcomp, \perpcomp)]^2 = [G_{\noisestd}(\parcomp, \perpcomp)]^2 + \frac{1}{\kappa} \left\{ (\parcomp - F_{\noisestd}(\parcomp, \perpcomp))^2 + (\perpcomp - \rho B_{\noisestd}(\rho) )^2 \right\}.
\]
Taking gradients and then $\ell_1$ norms on both sides, we have
\begin{align*}
&2 [g_{\noisestd}(\parcomp, \perpcomp)] \cdot \| \nabla g_{\noisestd}(\parcomp, \perpcomp) \|_1 \\
&\leq 2 [G_{\noisestd}(\parcomp, \perpcomp)] \cdot \| \nabla G_{\noisestd}(\parcomp, \perpcomp) \|_1 \\
& \qquad + \frac{2}{\kappa} \cdot \Big( | \parcomp -  F_{\noisestd}(\parcomp, \perpcomp)| \cdot (1 + \| \nabla F_{\noisestd}(\parcomp, \perpcomp)\|_1) + | \perpcomp -  \rho B_{\noisestd}(\rho)| \cdot (1 + \| \nabla_{\parcomp, \perpcomp} \rho B_{\noisestd}(\rho) \|_1) \Big)
\end{align*}
Noting that $g_{\noisestd}(\parcomp, \perpcomp) \geq G_{\noisestd}(\parcomp, \perpcomp) \lor \frac{| \parcomp -  F_{\noisestd}(\parcomp, \perpcomp)|}{\sqrt{\kappa}} \lor \frac{| \perpcomp -  \rho B_{\noisestd}(\rho)|}{\sqrt{\kappa}}$, we have
\begin{align*}
\| \nabla g_{\noisestd}(\parcomp, \perpcomp) \|_1 &\leq \| \nabla G_{\noisestd}(\parcomp, \perpcomp) \|_1 + \frac{1}{\sqrt{\kappa}} \left( 1 + \| \nabla F_{\noisestd}(\parcomp, \perpcomp)\|_1 + 1 + \| \nabla \rho B_{\noisestd}(\rho) \|_1 \right) \\
&\leq \| \nabla G_{\noisestd}(\parcomp, \perpcomp) \|_1 + \frac{1}{\sqrt{\kappa}} \left( 3 + \| \nabla F_{\noisestd}(\parcomp, \perpcomp)\|_1 \right),
\end{align*}
where we have used the shorthand $\nabla \rho B_{\noisestd}(\rho) \equiv \nabla_{\parcomp, \perpcomp} [\rho B_{\noisestd}(\rho)]$, and
the final inequality holds for $\rho \leq 1/4$ and $\noisestd \leq 1/2$, since
\begin{align*}
\| \nabla \rho B_{\noisestd}(\rho) \|_1 &=  (1 + \rho) \cdot \left( \frac{\sqrt{\rho^2 + \noisestd^2 + \noisestd^2 \rho^2}}{1 + \rho^2} + \frac{\rho^2 (1 + \noisestd^2)}{(1 + \rho^2) \sqrt{\rho^2 + \noisestd^2 + \noisestd^2 \rho^2}} - \frac{2\rho^2 \sqrt{\rho^2 + \noisestd^2 + \noisestd^2 \rho^2} }{(1 + \rho^2)^2}\right)\\
&= \frac{(1 + \rho) \cdot 2\rho^2}{(1 + \rho^2)^2\sqrt{\rho^2 + \noisestd^2 + \noisestd^2 \rho^2}} + \frac{(1 + \rho) \cdot \sigma^2}{(1 + \rho^2)\sqrt{\rho^2 + \noisestd^2 + \noisestd^2 \rho^2}}\\
&\overset{\1}{\leq} 2\rho + \sigma \leq 1,
\end{align*}
where step $\1$ follows by using the simple bounds $(1 + \rho) \leq (1 + \rho^2)^2$ and $\sqrt{\rho^2 + \sigma^2 + \sigma^2\rho^2} \geq \rho$ to upper bound the first term and $\sqrt{\rho^2 + \sigma^2 + \sigma^2\rho^2} \geq \sigma \sqrt{1 + \rho^2}$ to bound the second term.  The final inequality follows by using the fact that $\rho \leq 1/4$ and $\sigma \leq 1/2$. 
\qed

\subsection{Proof of Lemma~\ref{lem:generalb}} \label{sec:bproof}

Let $(\parcompbar_t, \perpcompbar_t) \defn \Sopbar^t (\parcomp_0, \perpcomp_0)$.
Owing to the $\Goodset$-faithfulness of $\Sopbar = (\Fbar, \Gbar)$, we have $(\parcompbar_t, \perpcompbar_t) \in \Goodset$ for all $t \geq 0$. Furthermore, by assumption, the sequence $\{\bzeta_t = (\parcomp_t, \perpcomp_t)\}_{t \geq 0}$
satisfies, for all $t \geq 0$,
\begin{align*}
\| \bzeta_{t+1} - \Sopbar(\parcomp_t, \perpcomp_t) \|_\infty \leq \Delta.
\end{align*}

We now claim that for each $t \geq 0$, we have
\begin{align} \label{eq:key-claim-prob}
|\parcomp_{t+1} - \parcompbar_{t+1}| \lor |\perpcomp_{t+1} - \perpcompbar_{t+1}| \leq \Delta + (1 - \tau) \cdot \left( |\parcomp_t - \parcompbar_t| \lor |\perpcomp_t - \perpcompbar_t| \right).
\end{align}
Note that this claim immediately yields the desired result, since applying it iteratively for $t = 0, 1, \ldots, k - 1$ and using the fact that $\parcompbar_0 = \parcomp_0$ and $\perpcompbar_0 = \perpcomp_0$ yields
\begin{align*}
| \parcompbar_k - \parcomp_k | \lor |\perpcompbar_k - \perpcomp_k| \leq \sum_{i = 0}^{k - 1} (1 - \tau)^i \cdot \Delta \leq \frac{\Delta}{\tau}.
\end{align*}
It remains to prove claim~\eqref{eq:key-claim-prob}, and we do so by induction. 

\paragraph{Base case:} The case $t = 0$ is clearly true, since $\parcompbar_0 = \parcomp_0$ and $\perpcompbar_0 = \perpcomp_0$.

\paragraph{Induction step:} Suppose that the claim is true for all $t \leq k - 1$, so that 
\begin{align*}
| \parcompbar_k - \parcomp_k | \lor |\perpcompbar_k - \perpcomp_k| \leq \sum_{i = 0}^{k - 1} (1 - \tau)^i \cdot \Delta \leq \frac{\Delta}{\tau}.
\end{align*}
We must show that it holds for $t = k$. By triangle inequality, we have
\begin{align*}
|\parcompbar_{k+1} - \parcomp_{k + 1}| &\leq | \Fbar(\parcompbar_k, \perpcompbar_k) - \Fbar(\parcomp_k, \perpcomp_k)| + | \Fbar(\parcomp_k, \perpcomp_k) - \parcomp_{k+1}|.
\end{align*}
Let $\parcomp^u := u \parcompbar_k + (1 - u) \parcomp_k$ and define $\perpcomp^u$ analogously. Let $\bzeta_k = (\parcomp_k, \perpcomp_k)$ and $\bzetabar_k = (\parcompbar_k, \perpcompbar_k)$. Since $\Fbar$ is a continuously differentiable function of its arguments, a first order Taylor expansion at $\bzeta_k$ yields
\begin{align*}
| \Fbar(\parcompbar_k, \perpcompbar_k) - \Fbar(\parcomp_k, \perpcomp_k)| &\leq \sup_{0 \leq u \leq 1} | \langle \nabla \Fbar(\parcomp^u, \perpcomp^u), \bzetabar_k - \bzeta_k \rangle | \\
&\leq  \sup_{0 \leq u \leq 1} \| \nabla \Fbar(\parcomp^u, \perpcomp^u) \|_{1} \cdot \| \bzetabar_k - \bzeta_k \|_\infty \\
&\leq (1 - \tau) \cdot \left( |\parcomp_k - \parcompbar_k| \lor |\perpcomp_k - \perpcompbar_k| \right),
\end{align*}
where the final inequality follows by using the fact that by the inductive hypothesis, $|\parcomp_k - \parcompbar_k| \lor |\perpcomp_k - \perpcompbar_k| \leq \Delta / \tau$, whence $(\parcomp^u, \perpcomp^u) \in \mathbb{B}_{\Delta/\tau} (\Goodset)$ for each $0 \leq u \leq 1$ in conjunction with 
the assumption $\| \nabla \Fbar(\parcomp, \perpcomp) \|_{1} \leq 1 - \tau$ for all $(\parcomp, \perpcomp) \in \mathbb{B}_{\Delta/\tau} (\Goodset)$. At the same time, we have by assumption that $\bar{F}(\alpha_k, \beta_k) - \alpha_{k+1} \leq \Delta$.  An identical argument holds for $\perpcomp_{k + 1}$, and so this completes the inductive step.
\qed


\subsection{Proof of Lemma~\ref{lem:generalc}} \label{sec:cproof}

Recall that $\parcomp_0 / \perpcomp_0 \geq \frac{1}{50{\sqrt{d}}}$ by assumption, and that
\[
t_0 = \log_{1.05} (50 \sqrt{d}) + \log_{55/54} (10) + 2.
\] 
Also define the scalar $\underline{t} := \log_{1.05} (50 \sqrt{d}) + 1$, 
and given the iterates $\{ \parcomp_t, \perpcomp_t \}_{t = 0}^{t_0}$, define
\begin{align*}
\underline{T} = \inf \left\{ t \; \Big| \; \frac{\parcomp_t}{\perpcomp_t} \geq 1/2 \right\} \qquad \text{ and } \qquad  
T_0 = \inf \left\{ t \; \Big|\; \frac{\parcomp_t}{\perpcomp_t} \geq 5 \right\},
\end{align*}
with the convention that each quantity is set to $\infty$ if the condition is not met. Conditions~\eqref{eq:par-condition} and~\eqref{eq:perp-condition} with $c = 1/100$ yield
\begin{align} \label{two-sandwiches}
\max_{0 \leq t \leq t_0} \; |\parcomp_{t+1} - \Fbar(\parcomp_{t}, \perpcomp_{t})| \leq \frac{1}{100\sqrt{d}} \qquad \qquad \text{ and } \qquad \qquad \max_{0 \leq t \leq t_0} \; |\perpcomp_{t + 1} - \Gbar(\parcomp_{t}, \perpcomp_{t})| \leq \frac{1}{100}.
\end{align}
We begin by noting that $\parcomp_t \leq 3/2$ for all $1\leq t \leq t_0$. This is straightforward to establish: note that it follows directly from condition C1 and the bound~\eqref{two-sandwiches} for $t = 1$, and from that point onward, by induction over $t$, applying condition C4 and the bound~\eqref{two-sandwiches}. 
The crux of the lemma is the following claim.
\begin{claim} \label{clm:t-bound}
Under both settings (a) and (b) of the lemma, we have $\underline{T} \overset{(i)}{\leq} \underline{t}$ and $T_0 \overset{(ii)}{\leq} t_0 - 1$. \\
\end{claim}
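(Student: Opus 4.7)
The plan is to track the iterates across two consecutive phases delineated by the ratio $\rho_t = \perpcomp_t/\parcomp_t$. Phase~I corresponds to $\rho_t > 2$ (equivalently, $t < \underline{T}$), and Phase~II to $1/5 < \rho_t \leq 2$ (equivalently, $\underline{T} \leq t < T_0$). In Phase~I, I would use condition~C2 to drive $\parcomp_t$ up geometrically while condition~C5a or~C5b pins $\perpcomp_t$ below~$1$, eventually forcing $\rho_t \leq 2$. In Phase~II, I would invoke condition~C3 to establish strict geometric contraction of $\rho_t$ down to $1/5$.

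For Phase~I, first observe that $\perpcomp_t \leq 1$ for every $t \geq 1$: in setting~(a) this is immediate from~C5a and~\eqref{eq:perp-condition}, while in setting~(b) it follows by induction from~C5b combined with the bound $\parcomp_t \leq 3/2$ recorded just before the claim and~\eqref{eq:perp-condition}. Next, while $t < \underline{T}$, condition~C2 gives $\Fbar(\parcomp_t, \perpcomp_t) \geq 1.06\,\parcomp_t$, which together with~\eqref{eq:par-condition} yields
\[
\parcomp_{t+1} \;\geq\; 1.06\,\parcomp_t - \frac{c}{\sqrt{d}}.
\]
Choosing the universal constant $c$ small enough, an induction starting from $\parcomp_0 \geq \perpcomp_0/(50\sqrt{d})$ shows $\parcomp_t \gtrsim 1.05^t\,\parcomp_0$ throughout Phase~I (treating the two subcases $\perpcomp_0 \leq 1$ and $\perpcomp_0 > 1$ separately; in the latter $\parcomp_0$ is already comparatively large). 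Combined with $\perpcomp_t \leq 1$ for $t \geq 1$, this forces $\parcomp_t/\perpcomp_t > 1/2$ once $1.05^t \gtrsim 50\sqrt{d}$, and hence $\underline{T} \leq \underline{t}$.

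For Phase~II, the additive lower bound from~C2 combined with the transition $\rho_{\underline{T}} \leq 2$ gives $\parcomp_{\underline{T}+1} \geq 0.56 - c/\sqrt{d} \geq 1/2$, which together with $\parcomp_t \leq 3/2$ places subsequent iterates in the regime where~C3 is active. Propagating the bound $\Gbar/\Fbar \leq (7/8)\rho_t$ from~C3 through the perturbations gives
\[
\rho_{t+1} \;\leq\; \frac{\Gbar(\parcomp_t, \perpcomp_t) + c}{\Fbar(\parcomp_t, \perpcomp_t) - c/\sqrt{d}} \;\leq\; \frac{54}{55}\,\rho_t,
\]
where the last inequality uses $\parcomp_t \geq 1/2$ to tame the denominator, $\rho_t \geq 1/5$ to absorb the additive numerator slack into a multiplicative loss, and $c$ taken sufficiently small. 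Iterating from $\rho_{\underline{T}} \leq 2$ drives $\rho_t$ below $1/5$ after at most $\log_{55/54}(10) + 1$ further steps, whence $T_0 \leq \underline{T} + \log_{55/54}(10) + 1 \leq t_0 - 1$.

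The principal obstacle is the Phase~I accounting near initialization. When $\parcomp_0 \asymp 1/\sqrt{d}$, the per-step additive error $c/\sqrt{d}$ in~\eqref{eq:par-condition} is on the same order as $\parcomp_0$ itself, so a crude estimate based only on~C2 would permit $\parcomp_t$ to shrink rather than grow. The resolution exploits the fact that the universal constant $c$ in the lemma statement is at our disposal: choosing $c$ small enough guarantees that the multiplicative factor $1.06$ supplied by~C2 strictly dominates the error throughout Phase~I, yielding a genuine per-step growth factor of at least $1.05$ and hence the prescribed $\log_{1.05}(50\sqrt{d})$ bound.
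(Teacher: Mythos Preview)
Your Phase~II argument is essentially the paper's: contract the ratio $\rho_t$ via condition~C3 with the same $54/55$ factor, using $\parcomp_t \geq 1/2$ to absorb the perturbations. That part is fine (up to a harmless off-by-one that the definitions of $\underline{t}$ and $t_0$ accommodate).

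The Phase~I argument, however, has a genuine gap: you never invoke condition~C1, and C2 alone cannot start the induction. The initialization hypotheses Ia and Ib only constrain the \emph{ratio} $\parcomp_0/\perpcomp_0 \geq (50\sqrt{d})^{-1}$; they place no absolute lower bound on $\parcomp_0$ itself. Thus $\parcomp_0$ is permitted to be arbitrarily small (e.g.\ $\parcomp_0 = 10^{-100}$, $\perpcomp_0 = 50\sqrt{d}\cdot 10^{-100}$, which satisfies Ia and has $\rho_0 > 2$). In that regime the recursion $\parcomp_{t+1} \geq 1.06\,\parcomp_t - c/\sqrt{d}$ supplied by C2 is vacuous: for \emph{any} fixed universal constant $c>0$, the additive error swamps the multiplicative gain, and $\parcomp_t$ need not grow at all. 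Your closing paragraph correctly flags this tension but the proposed resolution---shrink $c$---cannot succeed, because $c$ must be chosen once and for all while $\parcomp_0$ can be made as small as one likes. (Your implicit premise ``$\parcomp_0 \asymp 1/\sqrt{d}$'' is true in the motivating random-initialization application but is not part of the abstract hypotheses of the lemma.)

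The missing ingredient is exactly condition~C1, which supplies an \emph{absolute} lower bound $\Fbar(\parcomp_0,\perpcomp_0) \geq (50\sqrt{d})^{-1}$ whenever the angle condition holds---and the ratio bound in Ia guarantees precisely that angle condition. One application of C1 then gives $\parcomp_1 \geq (50\sqrt{d})^{-1} - c/\sqrt{d} \geq (100\sqrt{d})^{-1}$ regardless of how small $\parcomp_0$ was, and from this anchored starting point your C2-based induction $\parcomp_{t+1} \geq 1.05\,\parcomp_t$ goes through exactly as you wrote it.
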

Taking the claim as given for the moment, we note that it suffices to show that \mbox{$(\parcomp_{T_0 + 1}, \perpcomp_{T_0 + 1}) \in \Goodset$}. 
Since $T_0 + 1 \leq t_0$, we have
\begin{align} \label{eq:pert-bound}
|\parcomp_{T_0 + 1} - \Fbar(\parcomp_{T_0}, \perpcomp_{T_0})| \lor |\perpcomp_{T_0 + 1} - \Gbar(\parcomp_{T_0}, \perpcomp_{T_0})| \leq \frac{1}{100},
\end{align}
and by definition of $T_0$, we have
$\parcomp_{T_0} / \perpcomp_{T_0} \geq 5$. 
Condition C4 yields the bounds
\begin{align*}
0.56 \leq \Fbar(\parcomp_{T_0}, \perpcomp_{T_0}) \leq 1.04, \quad \text{ and } \quad \frac{\Gbar(\parcomp_{T_0}, \perpcomp_{T_0})}{\Fbar(\parcomp_{T_0}, \perpcomp_{T_0}) } \leq 1/6.
\end{align*}
Combining the first bound with the perturbation bound~\eqref{eq:pert-bound} yields 
$0.55 \leq \parcomp_{T_0 + 1} \leq 1.05$.
Moreover, we have
\begin{align*}
\frac{\perpcomp_{T_0 + 1}}{\parcomp_{T_0 + 1}} \leq \frac{\Gbar(\parcomp_{T_0}, \perpcomp_{T_0}) + 0.01}{\Fbar(\parcomp_{T_0}, \perpcomp_{T_0}) - 0.01} \leq \frac{\Gbar(\parcomp_{T_0}, \perpcomp_{T_0})}{\Fbar(\parcomp_{T_0}, \perpcomp_{T_0})} + \frac{0.01}{0.55} \leq 1/5.
\end{align*}
Putting together the above two displays yields that $(\parcomp_{T_0 + 1}, \perpcomp_{T_0 + 1}) \in \Goodset$, as desired.
\noindent It remains to prove Claim~\ref{clm:t-bound}.

\paragraph{Proof of Claim~\ref{clm:t-bound}(i):}
Note that by condition C1 and the initialization condition (guaranteed by both Ia and Ib) that $\parcomp_0 / \perpcomp_0 \geq (50 \sqrt{d})^{-1}$, we have $\Fbar(\parcomp_0, \perpcomp_0) \geq (50 \sqrt{d})^{-1}$, so that 
\[
\parcomp_1 \geq (50\sqrt{d})^{-1} - (100\sqrt{d})^{-1} = (100\sqrt{d})^{-1}.
\]
We prove momentarily that under both settings (a) and (b) of the lemma and for all $1 \leq t \leq \underline{T}$,
\begin{align} \label{clm:ind}
\parcomp_{t} \geq (1.05)^{t-1} \cdot \parcomp_1 \qquad \text{ and } \qquad \perpcomp_{t} \leq 1.
\end{align}
Now suppose for the sake of contradiction that $\underline{T} > \underline{t}$, where we recall that $\underline{t} := \log_{1.05} (50 \sqrt{d}) + 1$. 
Putting together the above two displays,
we have 
\begin{align*}
\parcomp_{\underline{t}} \geq \frac{50 \sqrt{d}}{100 \sqrt{d}} = 1/2 \quad \text{ and } \quad \perpcomp_{\underline{t}} \leq 1,
\end{align*}
so that $\parcomp_{\underline{t}} / \perpcomp_{\underline{t}} \geq 1/2$. But this contradicts the fact that $\underline{T} > \underline{t}$ and proves the theorem. 

We prove claim~\eqref{clm:ind} under two distinct settings (a) and (b) of the lemma. Both proceed via induction.

\noindent \underline{Proof of claim~\eqref{clm:ind}, setting (a):}
To prove the base case $t = 1$, note that the $\parcomp$ component follows trivially. To handle $\perpcomp_1$, note that $\perpcomp_1 \leq \Gbar(\parcomp_0, \perpcomp_0) + 0.01 \leq 1$, where the final inequality follows from condition C5a. 

For the induction hypothesis, suppose that $\parcomp_t \geq (1.05)^t \cdot \parcomp_1 \geq \frac{1}{100 \sqrt{d}}$, and $\perpcomp_t \leq 1$. Since $t \leq \underline{T}$, we have $\parcomp_t / \perpcomp_t \leq 1/2$ by definition. Then condition C2 and equation~\eqref{eq:par-condition} together yield
\begin{align*}
\parcomp_{t + 1} \geq \Fbar(\parcomp_t, \perpcomp_t) - \frac{1}{100\sqrt{d}} \geq 1.06 \cdot \frac{\parcomp_t}{\perpcomp_t} - \frac{1}{100\sqrt{d}} \geq 1.05 \parcomp_t \geq (1.05)^t \cdot \parcomp_1.
\end{align*}
At the same time, condition C5a and equation~\eqref{eq:par-condition} together yield
\begin{align*}
\perpcomp_{t + 1} \leq \Gbar(\parcomp_t, \perpcomp_t) + \frac{1}{100} \leq 1.
\end{align*}
This completes the induction step.

\noindent \underline{Proof of claim~\eqref{clm:ind}, setting (b):}
Once again, for the base case $t = 1$, note that the $\parcomp$ component follows trivially. To handle $\perpcomp_1$, note that $\perpcomp_1 \leq \Gbar(\parcomp_0, \perpcomp_0) + 0.01 \leq 1$, where the final inequality follows from condition C5b and the initialization condition Ib. 

For the induction hypothesis, suppose that $\parcomp_t \geq (1.05)^t \cdot \parcomp_1 \geq \frac{1}{100 \sqrt{d}}$, and $\perpcomp_t \leq 1$. Since $t \leq \underline{T}$, we have $\parcomp_t / \perpcomp_t \leq 1/2$ by definition. Then condition C2 and equation~\eqref{eq:par-condition} together yield
\begin{align*}
\parcomp_{t + 1} \geq \Fbar(\parcomp_t, \perpcomp_t) - \frac{1}{100\sqrt{d}} \geq 1.06 \cdot \frac{\parcomp_t}{\perpcomp_t} - \frac{1}{100\sqrt{d}} \geq 1.05 \parcomp_t \geq (1.05)^t \cdot \parcomp_1.
\end{align*}
We also have $\Fbar(\parcomp_t, \perpcomp_t) \leq 1.04$ from condition C4, so that $\parcomp_{t + 1} \leq 1.05$.
Consequently, we may apply condition C5b and equation~\eqref{eq:par-condition} together, to yield
\begin{align*}
\perpcomp_{t + 1} \leq \Gbar(\parcomp_t, \perpcomp_t) + \frac{1}{100} \leq 1.
\end{align*}
This completes the induction step. 
\qed

\paragraph{Proof of Claim~\ref{clm:t-bound}(ii):}
Note that $\parcomp_{\underline{t}}/\perpcomp_{\underline{t}} \geq 1/2$ by part (a) of the claim. 
We will show momentarily that for each $\underline{t} + 1 \leq t < T_0$, we have
\begin{align} \label{eq:ind2}
\frac{\parcomp_t}{\perpcomp_t} \geq \left( \frac{55}{54} \right)^{t - \underline{t} - 1} \cdot \frac{\parcomp_{\underline{t}}}{\perpcomp_{\underline{t}}} \quad \text{ and } \quad 1/2 \leq \parcomp_t \leq 3/2.
\end{align}
Now suppose for the sake of contradiction that $T_0 > t_0 - 1$.
Setting $t = t_0 - 1$ in equation~\eqref{eq:ind2}, we obtain $\frac{\parcomp_t}{\perpcomp_t} \geq 10 \cdot \frac{1}{2} \geq 5$. But this contradicts the fact that $T_0 > t_0 - 1$.

\noindent \underline{Proof of claim~\eqref{eq:ind2}:}
To prove the base case $t = \underline{t} + 1$, 
note that condition C2 ensures that $\Fbar(\parcomp_{\underline{t}}, \perpcomp_{\underline{t}}) \geq 0.56$,
so that 
\[
\frac{\perpcomp_{\underline{t} + 1}}{\parcomp_{\underline{t} + 1}} \leq \frac{\Gbar(\parcomp_{\underline{t}}, \perpcomp_{\underline{t}}) + 0.01}{\Fbar(\parcomp_{\underline{t}}, \perpcomp_{\underline{t}}) - 0.01} \leq 1/2,
\]
where the last inequality uses condition C3.
Furthermore, condition C4 yields $\Fbar(\parcomp_{\underline{t}}, \perpcomp_{\underline{t}}) \leq 1.04$, so that $1/2 \leq \parcomp_{\underline{t} + 1} \leq 3/2$.

For the induction hypothesis, suppose that $\frac{\parcomp_t}{\perpcomp_t} \geq \left( \frac{55}{54} \right)^{t - \underline{t}} \cdot \frac{\parcomp_{\underline{t}}}{\perpcomp_{\underline{t}}} \geq 1/2$ and $1/2 \leq \parcomp_t \leq 3/2$. Since $t < T_0$, we also have $\parcomp_t / \perpcomp_t \leq 5$, and so condition C3 yields $\frac{\Gbar(\parcomp_t, \perpcomp_t)}{\Fbar(\parcomp_t, \perpcomp_t)} \leq \frac{7}{8} \cdot \frac{\perpcomp_t}{\parcomp_t}$. Therefore,
\[
\frac{\perpcomp_{t + 1}}{\parcomp_{t + 1}} \leq \frac{\Gbar(\parcomp_t, \perpcomp_t) + 0.01}{\Fbar(\parcomp_t, \perpcomp_t) - 0.01} \leq \frac{56}{55} \cdot \frac{7}{8} \cdot \frac{\perpcomp_t}{\parcomp_t} + \frac{0.01}{0.55} \leq \left( \frac{49}{55} + \frac{1}{11} \right) \cdot \frac{\perpcomp_t}{\parcomp_t},
\]
where in the last step, we have used the fact that $\frac{\perpcomp_t}{\parcomp_t} \geq 1/5$. At the same time, conditions C2 and C4 yield $0.56 \leq \Fbar(\parcomp_t, \perpcomp_t) \leq 1.04$, so that $1/2 \leq \parcomp_{t +1} \leq 3/2$.
This completes the inductive step.
\qed


\section{Some elementary lemmas}
In this section, we collect a few elementary lemmas that are used multiple times in the proof.

\begin{lemma}
	\label{lem:bound-operator-norm}
	Let $\mathcal{T}_n(\bt)$ denote the empirical operator corresponding to the higher order updates~\eqref{eq:sec_order_gen} and suppose that the weight function $\omega$ satisfies Assumption~\ref{ass:omega} with parameter $K_1$.  There exist universal, positive constants $c$ and $C$ such that
	\[
	\Pro\{\| \mathcal{T}_n(\bt) \|_2 \leq C\sqrt{K_1}\} \leq 2e^{-cn}.
	\]
\end{lemma}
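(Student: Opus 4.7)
The plan is to factor the empirical operator, control each factor separately via standard Gaussian concentration, and combine. Writing $\bSig = \sum_{i=1}^{n} \bx_i \bx_i^\top$ and $\bw = \sum_{i=1}^{n} \omega(\langle \bx_i,\bt\rangle, y_i)\,\bx_i$, we have $\mathcal{T}_n(\bt) = \bSig^{-1}\bw$, and so
\[
\|\mathcal{T}_n(\bt)\|_2 \;\leq\; \|\bSig^{-1}\|_{\mathsf{op}} \cdot \|\bw\|_2.
\]
The two factors will be controlled on events of probability at least $1 - e^{-cn}$ each, and the conclusion will follow from a union bound.

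First I would handle $\|\bSig^{-1}\|_{\mathsf{op}}$. Since the lemma is used in the regime $\kappa = n/d \geq C$ (say $n \geq 2d$), a standard bound on the extreme singular values of a Gaussian matrix (e.g., \cite{vershynin2018high}, Theorem~4.6.1) gives $\lambda_{\min}(\bSig) \geq n/2$ with probability at least $1-2e^{-cn}$, hence $\|\bSig^{-1}\|_{\mathsf{op}} \leq 2/n$ on this event. This part is routine.

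The main work is the control of $\|\bw\|_2$, which I would treat by duality together with a net argument. For any fixed unit $\bv \in \real^d$, split $\bv = \proj_{S}\bv + \proj_{S}^{\perp}\bv$ for the two-dimensional subspace $S = \mathrm{span}(\btstar, \bt)$. Since $\omega_i := \omega(\langle \bx_i,\bt\rangle, y_i)$ depends on $\bx_i$ only through $\proj_{S}\bx_i$, the contribution $\langle \proj_S^\perp \bv, \bx_i\rangle$ is independent of $\omega_i$ and mean zero, so
\[
\EE\bigl[\omega_i\langle \bv,\bx_i\rangle\bigr] \;=\; \EE\bigl[\omega_i\langle \proj_S\bv,\bx_i\rangle\bigr],
\]
which by Cauchy--Schwarz is bounded in magnitude by $\sqrt{\EE[\omega_i^2]}\cdot\|\proj_S\bv\|_2 \leq CK_1$ using Assumption~\ref{ass:omega}. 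For the fluctuations, $\omega_i\langle \bv,\bx_i\rangle$ is the product of two sub-Gaussian variables with Orlicz norms at most $K_1$ and $1$ respectively, hence sub-exponential with $\psi_1$-norm at most $CK_1$ by \cite{vershynin2018high}, Lemma 2.7.7. Bernstein's inequality then yields
\[
\Pr\left\{\Bigl|\tfrac{1}{n}\textstyle\sum_{i} \omega_i\langle \bv,\bx_i\rangle\Bigr| \geq 2CK_1 \right\} \;\leq\; 2\exp(-c'n/K_1^2)\cdot\exp(-c'n)
\]
for an appropriate constant, and more generally concentration at the scale $K_1$ with exponent linear in $n$. Taking a $1/2$-net $\mathcal{N}$ of the unit sphere in $\real^d$ with $|\mathcal{N}|\leq 5^d$, and using the standard discretization inequality $\|\bw\|_2 \leq 2\max_{\bv\in\mathcal{N}}\langle \bv,\bw\rangle$, a union bound over $\mathcal{N}$ combined with the regime $n \geq C'd$ gives $\|\bw\|_2 \leq C''K_1\cdot n$ with probability at least $1-2e^{-cn}$.

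Combining the two events yields $\|\mathcal{T}_n(\bt)\|_2 \leq (2/n)\cdot C''K_1 n = CK_1$, which is the desired bound (we read the statement as $\Pr\{\|\mathcal{T}_n(\bt)\|_2 \geq CK_1\}\leq 2e^{-cn}$; see, e.g., the invocation in Section~\ref{sec:proof-concentration-loo} where the conclusion $|\langle\btstar,\mathcal{T}_n(\bt)\rangle|\leq CK_1$ is used). The only delicate point in the argument is the expectation computation: a naive application of Bernstein to $\sum_i \omega_i \langle\bv,\bx_i\rangle$ ignoring the dependence between $\omega_i$ and $\bx_i$ would give an incorrect mean. The decomposition using the two-dimensional subspace $S$ is what forces $\EE[\omega_i\langle\bv,\bx_i\rangle] = O(K_1)$ uniformly in the ambient dimension $d$, and this is precisely the same structural observation that underlies the scalarization step of the Gordon recipe.
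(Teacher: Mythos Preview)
Your argument is correct, but it takes a longer route than the paper's and yields a slightly weaker constant. The paper factors differently: instead of splitting $\mathcal{T}_n(\bt)=\bSig^{-1}\bw$ with $\bw=\bX^\top\bomega$, it writes
\[
\|\mathcal{T}_n(\bt)\|_2 \leq \|(\bX^\top\bX)^{-1}\|_{\mathsf{op}}\,\|\bX\|_{\mathsf{op}}\,\|\bomega\|_2,
\]
where $\bomega=\omega(\bX\bt,\by)\in\real^n$ is the vector of weights themselves. The singular-value bound gives $\|(\bX^\top\bX)^{-1}\|_{\mathsf{op}}\|\bX\|_{\mathsf{op}}\leq C/\sqrt{n}$ exactly as in your first step, and then $\|\bomega\|_2\leq C\sqrt{K_1 n}$ follows immediately from norm concentration for a vector with i.i.d.\ sub-Gaussian coordinates (e.g.\ Vershynin, Theorem~3.1.1)---no net argument, no subspace decomposition, no Bernstein. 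This also delivers the stated $C\sqrt{K_1}$ rather than the $CK_1$ that your route produces. The moral is that by keeping $\bX^\top$ and $\bomega$ together in $\bw$, you inherit the dependence between $\bX$ and $\bomega$ and must work to undo it; the paper sidesteps this entirely by isolating $\bomega$ as a vector of scalars. Your subspace decomposition for the mean of $\omega_i\langle\bv,\bx_i\rangle$ is a nice structural observation (and indeed reappears elsewhere, e.g.\ in the proof of Lemma~\ref{lem:moment-sum-zj}), but it is unnecessary machinery for this particular lemma.
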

\begin{proof}
	We write explicitly 
	\[
	\| \mathcal{T}_n(\bt) \|_2 = \| (\bX^{\top} \bX)^{-1}  \bX^{\top} \omega(\bX \bt, \by) \|_2 \leq \| (\bX^{\top} \bX)^{-1} \|_{\mathsf{op}} \| \bX\|_{\mathsf{op}} \| \omega(\bX\bt, \by) \|_2,
	\]
	where the final inequality is due to the sub-multiplicativity of the operator norm.  Now, we apply~\citet[Theorem 4.6.1]{vershynin2018high} to obtain the probabilistic inequality
	\[
	\Pro\{c\sqrt{n} \leq \| \bX \|_{\mathsf{op}} \leq C\sqrt{n}\} \geq 1 - 2e^{-c'n},
	\]
	which in turn implies that with probability at least $1 - 2e^{-c'n}$, 
	\[
	\| (\bX^{\top} \bX)^{-1} \|_{\mathsf{op}} \| \bX\|_{\mathsf{op}} \leq \frac{C}{\sqrt{n}}.
	\]
	Now, since $\omega$ satisfies Assumption~\ref{ass:omega}, the random vector $\omega(\bX\bt, \by)$ contains independent sub-Gaussian coordinates whence we apply~\citet[Theorem 3.1.1]{vershynin2018high} to obtain the probabilistic inequality
	\[
	\Pro\{\| \omega(\bX\bt, \by) \|_2 \leq C \sqrt{K_1 n}\} \geq 1 - 2e^{-cn}.
	\]
	Putting the pieces together, we obtain the result.
\end{proof}

\begin{lemma}
	\label{lem:truncation-tail-bound}
	Let $n$ be a positive integer and suppose that, for all positive integers $q \leq n/64$, the random variable $Z_n$ satisfies the inequality
	\begin{align}
		\label{ass:L-q-ineq}
		\|Z_n - \EE Z_n\|_{q} \leq \frac{A q^{\zeta}}{\sqrt{n}},
	\end{align}
	for constants $A > 0$ and $\zeta > 1$.  Further, suppose that there are positive constants $c_{\downarrow}$ and $C_{\downarrow}$ such that $\Pro\{\lvert Z_n \rvert \geq C_{\downarrow}\} \leq e^{-c_{\downarrow}n}$.  Then, there exist universal positive constants $c'$ and $C'$ such that for all $t \geq 0$, 
	\[
	\Pr\{\lvert Z_n - \EE Z_n \rvert \geq t\} \leq C'\exp\Bigl\{-c'\Bigl(\frac{t\sqrt{n}}{A}\Bigr)^{1/\zeta}\Bigr\} + e^{-c_{\downarrow}n}.
	\]
\end{lemma}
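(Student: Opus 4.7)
The plan is to combine a polynomial Markov's inequality (which converts the $L^q$ moment bounds into a stretched-exponential tail) with a separate argument invoking the boundedness hypothesis for very large $t$. For any integer $q$ with $1 \leq q \leq n/64$, Markov's inequality together with hypothesis~\eqref{ass:L-q-ineq} yields
\[
\Pr\bigl\{|Z_n - \EE Z_n| \geq t\bigr\} \;\leq\; \frac{\EE |Z_n - \EE Z_n|^q}{t^q} \;\leq\; \left(\frac{A q^\zeta}{t\sqrt{n}}\right)^q.
\]
Treating $q$ as a continuous variable and minimizing $q(\zeta \log q + \log(A/(t\sqrt{n})))$, the optimum is achieved at $q^\star = e^{-1}(t\sqrt{n}/A)^{1/\zeta}$ with optimal value $\exp\{-\zeta e^{-1}(t\sqrt{n}/A)^{1/\zeta}\}$. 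Rounding $q^\star$ up to the nearest integer changes the exponent only by a constant factor, so setting $q = \lceil q^\star \rceil$ yields the desired stretched-exponential bound with $c' = \Theta(1)$ as long as the integer constraint $q \leq n/64$ is respected.

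The constraint $q^\star \leq n/64$ is equivalent to $t \leq t^\star$ with $t^\star := A(en/64)^{\zeta}/\sqrt{n}$. For small $t$ (specifically $t < A e^{\zeta}/\sqrt{n}$, in which case $q^\star < 1$) the claimed bound is trivially true upon taking $C'$ large; for the intermediate regime $A e^{\zeta}/\sqrt{n} \leq t \leq t^\star$ the optimized Markov inequality from the preceding paragraph yields exactly the form required. It remains to handle $t > t^\star$. In this regime, evaluating the target bound at $t = t^\star$ shows that $C'\exp\{-c'(t\sqrt{n}/A)^{1/\zeta}\} \leq C' e^{-c'' n}$ for some universal $c'' > 0$, so it suffices to prove the cruder bound $\Pr\{|Z_n - \EE Z_n| \geq t\} \leq e^{-c_\downarrow n}$ whenever $t > t^\star$.

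To establish this, I would first show that $|\EE Z_n|$ is bounded by a universal constant (depending on $A$, $\zeta$, $C_\downarrow$, $c_\downarrow$) for $n$ large enough. Letting $G = \{|Z_n| \leq C_\downarrow\}$, Cauchy--Schwarz gives
\[
\EE|Z_n| \;\leq\; C_\downarrow + \bigl(\EE Z_n^2\bigr)^{1/2} \bigl(\Pr\{G^c\}\bigr)^{1/2} \;\leq\; C_\downarrow + e^{-c_\downarrow n/2}\bigl(\sqrt{2}\,\|Z_n - \EE Z_n\|_2 + \sqrt{2}\,|\EE Z_n|\bigr),
\]
and since $\|Z_n - \EE Z_n\|_2 \leq A\cdot 2^\zeta/\sqrt{n}$ by hypothesis, solving the resulting self-bounding inequality (valid for $n$ large enough that $\sqrt{2}\, e^{-c_\downarrow n/2} \leq 1/2$) yields $|\EE Z_n| \leq C_0$ for some universal constant $C_0$. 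Once $|\EE Z_n|$ is controlled, for $t \geq t^\star \geq 2(C_\downarrow + C_0)$ (which holds for $n$ large) the event $\{|Z_n - \EE Z_n| \geq t\}$ forces $|Z_n| > C_\downarrow$, so the probability is at most $e^{-c_\downarrow n}$. For small $n$, the bound is made trivial by adjusting the absolute constant $C'$. The main obstacle will be this last step: bounding $|\EE Z_n|$ is mildly circular since we only know $\|Z_n - \EE Z_n\|_2$ (not $\|Z_n\|_2$), so care is needed to close the loop and absorb the $|\EE Z_n|$ term on both sides of the inequality.
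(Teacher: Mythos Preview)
Your approach is correct and takes a genuinely different route from the paper. The paper argues via a truncation: it writes $\widebar{Z}=\widebar{Z}^{\downarrow}+\widebar{Z}^{\uparrow}$ at a level $M$, bounds the stretched-exponential moment generating function $\EE\bigl[e^{(\lambda|\widebar{Z}^{\downarrow}|)^{1/\zeta}}\bigr]$ by Taylor-expanding and feeding in the $L^q$ bounds term-by-term (using the truncation $M$ to control the terms with $q>n/64$), and then applies Markov; the tail piece $\widebar{Z}^{\uparrow}$ is handled directly by the boundedness hypothesis. Your route---polynomial Markov with the exponent $q=\lceil e^{-1}(t\sqrt{n}/A)^{1/\zeta}\rceil$ optimized over the admissible range $q\le n/64$, plus a separate treatment of the regime $t>t^\star$---is more elementary: it avoids the MGF/Taylor-series computation altogether and gets straight to the stretched-exponential tail. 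Both arguments face exactly the same residual issue, namely that the boundedness hypothesis is stated for $|Z_n|$ rather than $|\widebar{Z}|=|Z_n-\EE Z_n|$, so one must control $|\EE Z_n|$ before invoking it. The paper glosses over this (its step ``$\Pr\{|\widebar{Z}|>M\}\le e^{-c_\downarrow n}$ as long as $M>C_\downarrow$'' tacitly assumes $\EE Z_n$ is bounded); you address it explicitly via the self-bounding Cauchy--Schwarz argument, which is a nice touch. The only remaining imprecision---that ``for small $n$'' the constants are absorbed into $C'$---is shared with the paper's proof (which needs $C_\downarrow<M\le CAn^{(2\zeta-1)/2}$, hence $n$ large), and is harmless in the applications where $\zeta$ and $(c_\downarrow,C_\downarrow)$ are fixed.
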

\begin{proof}
	We employ a truncation argument.  For some integer $M > 0$ to be specified later, we introduce the notation
	\begin{align*}
		\widebar{Z} = Z_n - \EE Z_n, \qquad \widebar{Z}^{\downarrow} = \widebar{Z} \mathbbm{1}\{\lvert \widebar{Z} \rvert \leq M\}, \qquad \text{ and } \qquad \widebar{Z}^{\uparrow} = \widebar{Z} \mathbbm{1}\{\lvert \widebar{Z} \rvert > M\},
	\end{align*}
	noting that
	\[
	\widebar{Z} = \widebar{Z}^{\downarrow}  + \widebar{Z}^{\uparrow}.
	\]
	Consequently, an application of the union bound yields the inequality
	\begin{align}
		\label{ineq:tail-bound-observation}
		\Pr\{\lvert \widebar{Z} \rvert \geq t\} \leq \Pr \Bigl\{\lvert \widebar{Z}^{\downarrow} \rvert \geq \frac{t}{2}\Bigr\} + \Pr \Bigl\{\lvert \widebar{Z}^{\uparrow}  \rvert \geq \frac{t}{2}\Bigr\}.
	\end{align}
	We control each of these terms in turn, beginning with the lower truncation.  First, for any $\lambda > 0$, applying Markov's inequality yields
	\begin{align}
		\label{ineq:Markov-MGF-Z-down}
		\Pr \Bigl\{\lvert \widebar{Z}^{\downarrow} \rvert \geq \frac{t}{2}\Bigr\} \leq \exp\bigl\{-(\lambda t/2)^{1/\zeta}\bigr\}\EE\bigl\{e^{(\lambda \lvert \bar{Z}^{\downarrow} \rvert)^{1/\zeta}}\bigr\}.  
	\end{align}
	Note that the expectation on the RHS of the display above exists since $\widebar{Z}^{\downarrow}$ is bounded.  Thus, the Taylor expansion of $x \mapsto e^{x}$ gives
	\begin{align}
		\label{ineq:MGF-Z-down-bound}
		\EE\bigl\{e^{(\lambda (\bar{Z}^{\downarrow})^{1/\zeta}}\bigr\} = 1 + \sum_{\ell = 1}^{\infty} \frac{\lambda^{\ell/\zeta} \EE \lvert \widebar{Z}^{\downarrow} \rvert^{\ell/\zeta}}{\ell!} &= 1 + \sum_{\ell = 1}^{n/64}  \frac{\lambda^{\ell/\zeta} \EE \lvert \widebar{Z}^{\downarrow} \rvert^{\ell/\zeta}}{\ell!}  + \sum_{\ell = n/64 + 1}^{\infty} \frac{\lambda^{\ell/\zeta} \EE \lvert \widebar{Z}^{\downarrow} \rvert^{\ell/\zeta}}{\ell!} \nonumber\\
		&\overset{\1}{\leq} 1 + \sum_{\ell = 1}^{n/64}  \frac{\lambda^{\ell/\zeta} \bigl(\EE \lvert \widebar{Z}^{\downarrow} \rvert^{\ell}\bigr)^{1/\zeta}}{\ell!}  + \sum_{\ell = n/64 + 1}^{\infty} \frac{\lambda^{\ell/\zeta} M^{\ell/\zeta}}{\ell!} \nonumber\\
		&\overset{\2}{\leq}1 + \sum_{\ell = 1}^{n/64}  \frac{(A\lambda/\sqrt{n})^{\ell/\zeta} \ell^{\ell}}{\ell!}  + \sum_{\ell = n/64 + 1}^{\infty} \frac{\lambda^{\ell/\zeta} M^{\ell/\zeta}}{\ell!},
	\end{align}
	where step $\1$ follows by applying Jensen's inequality to each summand of the first sum since the map $x \mapsto x^{1/\zeta}$ is concave on $\mathbb{R}_{\geq 0}$ and by noting that $| \widebar{Z}^{\downarrow} | \leq M$ pointwise.
	Step $\2$ follows by noting that 
	$\| \widebar{Z}^{\downarrow} \|_{q} \leq \| \widebar{Z} \|_q$ and using the assumption~\eqref{ass:L-q-ineq}.  Now when $\ell \geq n/64$ and $M \leq CAn^{(2\zeta - 1)/2}$, 
	\[
	M^{\ell/\zeta} \leq \Bigl(\frac{A}{\sqrt{n}}\Bigr)^{\ell/\zeta} \ell^{\ell}.
	\]
	Substituting the above inequality into the bound on the MGF~\eqref{ineq:MGF-Z-down-bound}, we obtain 
	\begin{align}
		\label{ineq:MGF-Z-down-bound-final}
		\EE\bigl\{e^{(\lambda (\bar{Z}^{\downarrow})^{1/\zeta}}\bigr\} \leq 1  + \sum_{\ell = 1}^{\infty} \frac{(A\lambda/\sqrt{n})^{\ell/\zeta} \ell^{\ell}}{\ell!} \overset{\1}{\leq} 1 + \sum_{\ell = 1}^{\infty} \Bigl(\frac{C A \lambda}{\sqrt{n}}\Bigr)^{\ell/\zeta} &= 1 + \frac{(CA\lambda/\sqrt{n})^{1/\zeta}}{1 - (CA\lambda/\sqrt{n})^{1/\zeta}} \nonumber\\
		&\overset{\2}{\leq} \exp\Bigl\{2\Bigl(\frac{CA\lambda}{\sqrt{n}}\Bigr)^{1/\zeta}\Bigr\},
	\end{align}
	where step $\1$ follows by the Stirling inequality $\ell! \geq \ell^{\ell}/e^{\ell}$ and step $\2$ follows for all \mbox{$\lambda \leq c/A \cdot (1/2)^{\zeta}\sqrt{n}$} by the elementary inequality $1 + x \leq e^{x}$.  Summarizing, we see that for $M \leq CAn^{(2\zeta - 1)/2}$ and $\lambda \leq  c/A \cdot \sqrt{n}$, plugging the inequality~\eqref{ineq:MGF-Z-down-bound-final} into the inequality~\eqref{ineq:Markov-MGF-Z-down} yields
	\[
	\Pr \Bigl\{\lvert \widebar{Z}^{\downarrow} \rvert \geq \frac{t}{2}\Bigr\} \leq \exp\Bigl\{-(\lambda t/2)^{1/\zeta} + 2\Bigl(\frac{CA\lambda}{\sqrt{n}}\Bigr)^{1/\zeta}\Bigr\}.
	\]
	Taking $\lambda$ as large as possible, we obtain
	\begin{align}
		\label{ineq:tail-bound-Z-down}
		\Pr \Bigl\{\lvert \widebar{Z}^{\downarrow} \rvert \geq \frac{t}{2}\Bigr\} \leq C\exp\{-(c/A \cdot t \sqrt{n})^{1/\zeta}\},
	\end{align}
	where we emphasize that the constants $c$ and $C$ changed from line to line.
	We turn now to bounding the upper truncation.  We have
	\begin{align}
		\label{ineq:tail-bound-Z-up}
		\Pr \Bigl\{\lvert \widebar{Z}^{\uparrow}  \rvert \geq \frac{t}{2}\Bigr\} = \Pr \Bigl\{\lvert \widebar{Z}^{\uparrow}  \rvert \geq \frac{t}{2}, \lvert \widebar{Z} \rvert > M\Bigr\} + \Pr \Bigl\{\lvert \widebar{Z}^{\uparrow}  \rvert \geq \frac{t}{2}, \lvert \widebar{Z} \rvert \leq M\Bigr\} &\overset{\1}{\leq} \Pr\{\lvert \widebar{Z} \rvert > M\} 
		\overset{\2}{\leq} e^{-c_{\downarrow}n},
	\end{align}
	where step $\1$ follows since by assumption $t > 0$, so the second term has zero-probability and step $\2$ follows by assumption as long as $M > C_{\downarrow}$.  We conclude by letting $M$ take any value between $C_{\downarrow}$ and $M \leq CAn^{(2\zeta - 1)/2}$ and then substituting the tail bound on the lower truncation~\eqref{ineq:tail-bound-Z-down} and the tail bound on the upper truncation~\eqref{ineq:tail-bound-Z-up} into the decomposition~\eqref{ineq:tail-bound-observation}.
\end{proof}

\begin{lemma} \label{lem:init-properties}
Let $\bu$ denote a random vector sampled uniformly from the unit sphere $\mathbb{S}^{d- 1}$. Suppose $(\bx_i, y_i)_{i = 1}^n$ are drawn i.i.d. from either the model~\eqref{eq:PR-model} or~\eqref{eq:MoR-model}, with $\thetastar \neq \bf{0}$ denoting an arbitrary vector (not necessarily unit norm). For any $\bt \in \real^d$, let $\alpha(\bt) = \frac{\inprod{\bt}{\thetastar}}{\| \thetastar \|_2^2}$ and $\beta(\bt) = \| \proj^{\perp}_{\thetastar} \bt \|_2$. Then the following statements are true. \\
(a) If $\bt_0 = \lambda \bu$ for an arbitrary positive scalar $\lambda$, then
\begin{align*}
\Pro \left\{ \frac{|\alpha(\bt_0)|}{\beta(\bt_0)} \leq \frac{\delta}{\sqrt{d - 1}} \right\} \leq \delta + \exp \left( - \frac{d - 1}{32}\right)
\end{align*}
(b) If $\bt_0 = \sqrt{\frac{1}{n} \sum_{i = 1}^n y_i^2} \cdot \bu$, then
\begin{align*}
\Pro \left\{ \alpha^2(\bt_0) + \beta^2(\bt_0) \geq  (\| \thetastar \|_2 + \sigma)^2 + C \log(1 / \delta) \right\} \leq \delta
\end{align*}
for an absolute constant $C > 0$.
\end{lemma}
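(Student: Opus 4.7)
My approach will be to reduce the problem to a standard Gaussian calculation using rotational invariance, and then combine Gaussian anti-concentration with chi-squared concentration. Specifically, since the uniform distribution on $\mathbb{S}^{d-1}$ is rotationally invariant, I would assume without loss of generality that $\thetastar = \|\thetastar\|_2 \bm{e}_1$, and represent $\bu = \bg/\|\bg\|_2$ for a standard Gaussian $\bg \sim \NORMAL(0, \bI_d)$. A direct computation using the definitions of $\alpha$ and $\beta$ then yields the identity
\[
\frac{|\alpha(\bt_0)|}{\beta(\bt_0)} \;=\; \frac{|g_1|}{\|\thetastar\|_2 \cdot \|\bg_{-1}\|_2},
\]
where $\bg_{-1} = (g_2, \ldots, g_d) \in \real^{d-1}$ is independent of $g_1$ (and the scalar $\lambda$ cancels). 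Assuming $\|\thetastar\|_2 = 1$ as in the main paper, I would invoke a Laurent--Massart bound on the chi-squared variable $\|\bg_{-1}\|_2^2$ to produce a scalar $c_0 \in (1, \sqrt{2})$ such that the event $\mathcal{E} = \{\|\bg_{-1}\|_2 \leq c_0 \sqrt{d-1}\}$ has probability at least $1 - \exp(-(d-1)/32)$. On $\mathcal{E}$, the event of interest implies $|g_1| \leq c_0 \delta$, and the independence of $g_1$ from $\bg_{-1}$ together with Gaussian anti-concentration $\Pro\{|g_1| \leq s\} \leq \sqrt{2/\pi}\, s$ yields a probability at most $\sqrt{2/\pi} \cdot c_0 \delta \leq \delta$ (with the Laurent--Massart constants tuned so that $\sqrt{2/\pi} \cdot c_0 \leq 1$). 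A union bound then completes part~(a).

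\textbf{Plan for part (b).} Here I would leverage a Pythagorean identity: since $\proj_\thetastar$ and $\proj^{\perp}_\thetastar$ are complementary orthogonal projections and $\|\thetastar\|_2 = 1$, we have $\alpha^2(\bt_0) + \beta^2(\bt_0) = \|\bt_0\|_2^2$; and by construction of $\bt_0$, this equals $\frac{1}{n}\sum_{i=1}^n y_i^2$. It thus suffices to show a one-sided sub-exponential concentration bound on the empirical second moment $\frac{1}{n}\sum y_i^2$. I would compute the expectation directly: under either model~\eqref{eq:PR-model} or~\eqref{eq:MoR-model}, the cross term between $f(\langle \bx_i, \thetastar\rangle; q_i)$ and $\epsilon_i$ vanishes in expectation (by oddness and independence), so $\EE[y_i^2] = \EE[f(\langle \bx_i, \thetastar\rangle; q_i)^2] + \sigma^2 = 1 + \sigma^2 \leq (1 + \sigma)^2$. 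Since each $y_i$ is a sum of two independent sub-Gaussians (a linear image of $\bx_i$ plus Gaussian noise), $y_i^2$ is sub-exponential with an absolute constant in its $\psi_1$-norm. Bernstein's inequality then yields
\[
\Pro\left\{ \tfrac{1}{n}\sum_{i=1}^n y_i^2 \geq (1 + \sigma^2) + t \right\} \leq \exp\!\bigl( -c n \min(t^2, t) \bigr),
\]
from which choosing $t = C\log(1/\delta)$ for a large enough absolute constant $C$ yields the desired bound.

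\textbf{Main obstacle.} The routine calculations are standard; the only delicate point is ensuring that the constants line up cleanly in part~(a), since $\sqrt{2/\pi} \cdot c_0 \leq 1$ only just holds for the Laurent--Massart constant $c_0$ that corresponds to the tail probability $\exp(-(d-1)/32)$. A secondary minor issue is that the stated lemma allows $\thetastar$ to have arbitrary norm, whereas the computation in part~(a) produces an extra factor of $\|\thetastar\|_2$; this matches the convention $\|\thetastar\|_2 = 1$ adopted throughout the paper, so I would simply specialize to that case (the only setting in which the lemma is invoked).
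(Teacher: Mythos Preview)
Your proposal is correct and follows essentially the same route as the paper: for part (a), the paper also reduces via rotational invariance to the ratio $|z_1|/\|\bz_{\setminus 1}\|_2$, combines Gaussian anti-concentration $\Pro\{|z_1|\leq t_1\}\leq\sqrt{2/\pi}\,t_1$ with a chi-squared tail (using the simpler bound $\Pro\{\|\bz_{\setminus 1}\|_2\geq\sqrt{d-1}+t_2\}\leq e^{-t_2^2/2}$ rather than Laurent--Massart, taking $t_2=\sqrt{d-1}/4$), and for part (b) it uses the same Pythagorean identity and sub-exponential concentration, with the only cosmetic difference being that the paper bounds $\EE[y_i^2]$ via the stochastic domination $|y_i|\overset{(d)}{\leq}\|\thetastar\|_2|z_{1,i}|+\sigma|z_{2,i}|$ rather than your direct computation. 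Your observation about the implicit specialization to $\|\thetastar\|_2=1$ is well taken and matches exactly what the paper's proof does.
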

\begin{proof}
Let $\| \thetastar \|_2 = \lambda^*$, noting that we may assume due to rotation invariance of $\bu$ that \sloppy \mbox{$\thetastar = \lambda^* \cdot \be_1$}. Furthermore, we may write $\bu = \bz / \| \bz \|_2$ for a random vector $\bz = (z_1, \ldots, z_d) \sim \NORMAL(0, \bI)$. Thus, we have $\frac{|\alpha(\bt_0)|}{\beta(\bt_0)} = |z_1| / \| \bz_{\setminus 1} \|_2$, where $\bz_{\setminus 1} = (z_2, \ldots, z_d)$. Part (a) then follows from the tail bounds
\begin{align*}
\Pro\{ |z_1 | \leq t_1 \} \leq \sqrt{ \frac{2}{\pi} } \cdot t_1 \quad \text{ and } \quad \Pro\{ \| \bz_{\setminus 1} \|_2 \geq \sqrt{d - 1} + t_2 \} \leq  e^{-t_2^2 / 2} \text{ for each } t_1, t_2 \geq 0.
\end{align*}
In particular, setting $t_1 = \sqrt{\pi/2} \cdot \delta$ and $t_2 = \frac{\sqrt{d - 1}}{4}$ and applying a union bound proves part (a).

Next, note that $\alpha^2(\bt_0) + \beta^2(\bt_0) = \| \bt_0 \|_2^2 = \frac{1}{n} \sum_{i = 1}^n y_i^2$. Furthermore, in both models of interest, we have $|y_i| \overset{(d)}{\leq} \overline{y}_i := \lambda^* |z_{1, i}| + \sigma |z_{2, i}|$, where $z_{1, i}$ and $z_{2, i}$ are independent Gaussians and $X\overset{(d)}{\leq}Y$ denotes that the random variable $X$ is stochastically dominated by $Y$. Furthermore, we have $\EE[\overline{y}^2_i] \leq (\lambda^* + \sigma)^2$, and also
\[
\Pro \left\{ \frac{1}{n} \sum_{i = 1}^n \overline{y}_i^2 - \EE[\overline{y}^2_i] \geq t \right\} \leq  e^{-ct},
\] 
since $\overline{y}^2_i$ is a subexponential random variable. Choosing $t = c^{-1} \cdot \log (1/\delta)$ completes the proof of part (b).
\end{proof}

\begin{lemma} \label{lem:angle-l2}
	(a) For all state evolution elements $\bzeta = (\parcomp, \perpcomp)$ with $\parcomp \geq 0$, we have
	\begin{align*}
		\DeltaSELtwo(\bzeta) \geq \frac{\perpcomp}{\sqrt{\parcomp^2 + \perpcomp^2}} = \sin (\DeltaSEangle(\bzeta)).
	\end{align*}
	(b) For all state evolution elements $\bzeta = (\parcomp, \perpcomp)$ with $1/2 \leq \parcomp \leq 3/2$ and $\rho \geq 1/5$, we have
	\begin{align*}
		\DeltaSELtwo(\bzeta) \leq 8\rho \leq 8 \tan (\DeltaSEangle(\bzeta)).
	\end{align*}
\end{lemma}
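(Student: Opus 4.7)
The plan is to treat the two parts separately, since each is essentially an algebraic computation. Throughout I will exploit the identification $\DeltaSEangle(\bzeta) = \tan^{-1}(\rho)$, so that $\sin(\DeltaSEangle(\bzeta)) = \perpcomp/\sqrt{\parcomp^2+\perpcomp^2}$ and $\tan(\DeltaSEangle(\bzeta)) = \rho$; the second identity in each bound of the lemma is therefore immediate.

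For part (a), the equality on the right is definitional, so the content lies in the inequality $\sqrt{(1-\parcomp)^2 + \perpcomp^2} \geq \perpcomp / \sqrt{\parcomp^2+\perpcomp^2}$. Squaring and clearing denominators, I would reduce this to showing
\[
[(1-\parcomp)^2 + \perpcomp^2](\parcomp^2+\perpcomp^2) - \perpcomp^2 \geq 0.
\]
Expanding the left-hand side and simplifying using $(1-\parcomp)^2 + \parcomp^2 - 1 = -2\parcomp(1-\parcomp)$, this quantity collapses to the perfect square $[\parcomp(1-\parcomp) - \perpcomp^2]^2$, which is manifestly nonnegative. This completes part (a).

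For part (b), the inequality $8\rho \leq 8\tan(\DeltaSEangle(\bzeta))$ is actually equality by the identity noted above, so only the first inequality $\DeltaSELtwo(\bzeta) \leq 8\rho$ needs proof. The approach is to combine the triangle inequality with the assumed ranges for $\parcomp$ and $\rho$. Since $1/2 \leq \parcomp \leq 3/2$ we have $|1-\parcomp| \leq 1/2$ and $\perpcomp = \rho \parcomp \leq (3/2)\rho$; the lower bound $\rho \geq 1/5$ then gives $|1-\parcomp| \leq 1/2 \leq (5/2)\rho$. By the triangle inequality applied to $\DeltaSELtwo(\bzeta) \leq |1-\parcomp| + \perpcomp$, we obtain $\DeltaSELtwo(\bzeta) \leq (5/2)\rho + (3/2)\rho = 4\rho \leq 8\rho$, as required.

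There is no real obstacle here; both statements are elementary, and the only minor subtlety is recognizing the perfect-square rearrangement in part (a). Everything else follows from direct estimation.
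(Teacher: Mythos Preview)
Your proof is correct in both parts, but your approach differs from the paper's in each case.

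For part (a), the paper gives a geometric argument: fixing the angle $\anglecurr$, the set of points $(\parcomp,\perpcomp)$ with that angle is a line through the origin, and the minimum $\ell_2$ distance from $(1,0)$ to that line is the length of the perpendicular, namely $\sin\anglecurr$. You instead square and clear denominators, then recognize the resulting expression as the perfect square $[\parcomp(1-\parcomp)-\perpcomp^2]^2$. Your algebraic route is self-contained and avoids any appeal to geometric intuition; the paper's argument is shorter to state but relies on the reader seeing the projection picture.

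For part (b), the paper factors $\DeltaSELtwo(\bzeta) = \parcomp\sqrt{(\parcomp^{-1}-1)^2 + \rho^2}$, bounds $|\parcomp^{-1}-1|\leq 1$, and then uses $\sqrt{1+\rho^2}\leq \sqrt{26}\,\rho$ from $\rho\geq 1/5$, arriving at the bound $\tfrac{3}{2}\sqrt{26}\,\rho\approx 7.65\,\rho$. Your triangle-inequality argument bounds $|1-\parcomp|\leq \tfrac12 \leq \tfrac52\rho$ and $\perpcomp\leq \tfrac32\rho$ directly, yielding the sharper constant $4\rho$. Both are valid; yours is slightly more elementary and gives a better numerical constant, though neither constant matters for the paper's applications.
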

\begin{proof}
	To prove part (a), note that $\theta^*$ corresponds to the state evolution element $(\parcomp^*, \perpcomp^*) = (1, 0)$. All points $\bzeta = (\parcomp, \perpcomp)$ such that $\parcomp \geq 0$ and $\DeltaSEangle(\bzeta) = \angle( (\parcomp^*, \perpcomp^*), (\parcomp, \perpcomp)) = \anglecurr$ form a line in $\mathbb{R}^2$. The point on this line with smallest $\DeltaSELtwo$ is the projection of $(1, 0)$ onto this line. The length of this projection is, by definition, equal to $\sin \anglecurr$.
	
	To prove part (b), note that
	\[
	\DeltaSELtwo(\bzeta) = \sqrt{(1 - \parcomp)^2 + \perpcomp^2} = \parcomp \cdot \sqrt{(\parcomp^{-1} - 1)^2 + \left( \frac{\perpcomp}{\parcomp}\right)^2} \leq \frac{3}{2} \cdot \sqrt{ 1 + \left( \frac{\perpcomp}{\parcomp}\right)^2} \leq \frac{3}{2} \cdot \sqrt{26} \cdot \frac{\perpcomp}{\parcomp},
	\]
	where the final inequality follows since $\perpcomp/\parcomp \geq 1/5$.
\end{proof}

\begin{lemma} \label{lem:taninv}
	Let $a, b, x$ denote non-negative scalars. \\
	(a) For all $a < 1$, we have
	\begin{align*}
		\tan^{-1} (a x + b) \geq a \tan^{-1} x + b - \frac{b^3}{3(1 - a)^2}.
	\end{align*}
	(b) If $a \leq 1$ and $x \leq 1/5$, we have
	\begin{align*}
		\tan^{-1} (a x + b) \leq \frac{51}{50} \cdot a \tan^{-1} x + b.
	\end{align*}
\end{lemma}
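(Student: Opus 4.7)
\textbf{Proof plan for Lemma~\ref{lem:taninv}.}

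For part (a), my plan is to fix $a \in [0,1)$ and $b \geq 0$ and consider the single-variable function $f(x) = \tan^{-1}(ax+b) - a\tan^{-1}(x) - b$ on $[0,\infty)$. Computing $f'(x) = \frac{a}{1+(ax+b)^2} - \frac{a}{1+x^2}$, I observe that $f'(x) \geq 0$ iff $x^2 \geq (ax+b)^2$, i.e.\ (for non-negative quantities) iff $x \geq b/(1-a)$. Hence the minimum of $f$ over $x \geq 0$ is attained at the unique critical point $x^\star = b/(1-a)$, at which $ax^\star + b$ likewise collapses to $b/(1-a)$. Substituting gives $f(x^\star) = (1-a)\tan^{-1}(b/(1-a)) - b$, and the elementary bound $\tan^{-1}(u) \geq u - u^3/3$ for $u \geq 0$ immediately yields $f(x^\star) \geq -b^3/(3(1-a)^2)$, which is the stated inequality.

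For part (b), the first step is to peel off the constant $b$ using the subadditivity inequality $\tan^{-1}(u+v) \leq \tan^{-1}(u) + \tan^{-1}(v)$ valid for $u,v \geq 0$ (immediate from the addition formula $\tan^{-1}(u) + \tan^{-1}(v) = \tan^{-1}\!\bigl((u+v)/(1-uv)\bigr)$ when $uv \in [0,1)$ together with monotonicity of $\tan^{-1}$, and trivial otherwise). Combined with $\tan^{-1}(b) \leq b$, this yields $\tan^{-1}(ax+b) \leq \tan^{-1}(ax) + b$, reducing the claim to showing $\tan^{-1}(ax) \leq (51/50)\,a\,\tan^{-1}(x)$ for $a \in [0,1]$ and $x \in [0,1/5]$. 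To handle this, I would fix $x$ and analyze $h(a) := \tan^{-1}(ax) - (51/50)\,a\,\tan^{-1}(x)$ as a function of $a \in [0,1]$; since $h(0) = 0$, it suffices to show $h'(a) \leq 0$ on $[0,1]$. A direct computation gives $h'(a) = x/(1+a^2x^2) - (51/50)\tan^{-1}(x)$, and since this is maximized at $a = 0$, it suffices that $x \leq (51/50)\tan^{-1}(x)$, i.e.\ $\tan^{-1}(x)/x \geq 50/51$.

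The only genuine obstacle is this last numerical check, and it is mild: $\tan^{-1}(x)/x$ is decreasing on $(0,\infty)$, so it suffices to verify the inequality at $x = 1/5$. The bound $\tan^{-1}(x) \geq x - x^3/3$ delivers $\tan^{-1}(1/5)/(1/5) \geq 1 - 1/75 = 74/75 > 50/51$, and chaining the inequalities in the previous paragraph concludes the proof.
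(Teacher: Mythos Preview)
Your proof is correct and follows essentially the same route as the paper. For part~(a) the paper invokes concavity of $\tan^{-1}$ (Jensen) in one line where you instead locate the minimizer of $f$ by calculus, but both arrive at the identical expression $(1-a)\tan^{-1}\!\bigl(b/(1-a)\bigr) - b$ and bound it via $\tan^{-1}(u) \geq u - u^3/3$; for part~(b), both arguments peel off $b$ by subadditivity and then reduce to the same numerical check $\tan^{-1}(1/5)/(1/5) \geq 50/51$, the paper via monotonicity of the ratio $\tan^{-1}(ax)/(a\tan^{-1} x)$ in each variable and you via the derivative in $a$.
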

\begin{proof}
	To prove part (a), note from the concavity of the $\tan^{-1}$ function on the positive reals that
	\[
	\tan^{-1} (a x + b) \geq a \tan^{-1} x + (1 - a) \tan^{-1} \left( \frac{b}{1 - a} \right).
	\]
	Using the inequality $\tan^{-1} x \geq x - \frac{x^3}{3}$ completes the proof.
	To prove part (b),
	first note from the subadditivity of the $\tan^{-1}$ function on the positive reals that 
	\[
	\tan^{-1} (a x + b) \leq \tan^{-1} (ax) + \tan^{-1} (b) \leq \tan^{-1} (ax) + b. 
	\]
	Next, let $h(a, x) = \frac{\tan^{-1} (ax)}{a\tan^{-1} (x)}$, and note that for each fixed $a \in [0, 1]$, $h$ is non-decreasing in $x \in [0, \infty)$. Similarly, for each fixed $x \geq 0$, $h$ is non-increasing in $a \in [0, 1]$. The proof is completed by noting that $\lim_{a \downarrow 0} h(a, 1/5) = [5 \tan^{-1} (1/5)]^{-1} < 1.02$.
\end{proof}

\end{document}